\DeclareRobustCommand{\mklyxadded}[1]{\textcolor{lyxadded}\bgroup#1\egroup}
\DeclareRobustCommand{\mklyxdeleted}[1]{\textcolor{lyxdeleted}\bgroup\mklyxsout{#1}\egroup}
\DeclareRobustCommand{\mklyxsout}[1]{\ifx\\#1\else\sout{#1}\fi}
\numberwithin{equation}{section}
\numberwithin{figure}{section}
\numberwithin{table}{section}
\theoremstyle{plain}
\newtheorem{thm}{\protect\theoremname}[subsection]
\theoremstyle{definition}
\newtheorem{defn}[thm]{\protect\definitionname}
\theoremstyle{plain}
\newtheorem{lem}[thm]{\protect\lemmaname}
\theoremstyle{plain}
\newtheorem{cor}[thm]{\protect\corollaryname}
\theoremstyle{remark}
\newtheorem{rem}[thm]{\protect\remarkname}
\theoremstyle{definition}
\newtheorem{example}[thm]{\protect\examplename}
\theoremstyle{plain}
\newtheorem{prop}[thm]{\protect\propositionname}
\theoremstyle{plain}
\newtheorem{conjecture}[thm]{\protect\conjecturename}
\date{}
\newcounter{prcounter}
\let\saved@reset@nth@property\reset@nth@property \def\reset@nth@property#1#2#{\saved@reset@nth@property{#1}{#2}} \makeatother
\date{\today}
\newcommand{\lf}{\left}
\newcommand{\ri}{\right}
\newcommand{\md}{\middle}
\newcommand{\f}{\frac} 
\newcommand{\into}{\hookrightarrow}
\newcommand{\onto}{\twoheadrightarrow}
\newcommand{\xto}[1]{\xrightarrow{#1}}
\newcommand{\iso}{\xrightarrow{\sim}}
\newcommand{\wh}{\widehat}
\DeclareMathOperator{\supp}{supp}
\DeclareMathOperator{\rank}{rank}
\DeclareMathOperator{\Stab}{Stab}
\DeclareMathOperator{\Res}{Res}
\DeclareMathOperator{\vol}{vol}
\DeclareMathOperator{\diag}{diag}
\DeclareMathOperator{\ad}{ad}
\DeclareMathOperator{\Ad}{Ad}
\DeclareMathOperator{\tr}{tr}
\DeclareMathOperator{\Proj}{\mathbb P}
\DeclareMathOperator{\Mat}{\mathrm{Mat}}
\newcommand{\Ld}[1]{{}^L\!{#1}}
\newcommand{\m}[1]{\mathbf{#1}}
\newcommand{\mf}[1]{\mathfrak{#1}}
\newcommand{\mc}[1]{\mathcal{#1}}
\newcommand{\wtd}[1]{\widetilde{#1}}
\newcommand{\C}{\mathbb C}
\newcommand{\R}{\mathbb R}
\newcommand{\Q}{\mathbb Q}
\newcommand{\N}{\mathbb N}
\newcommand{\Z}{\mathbb Z}
\newcommand{\F}{\mathbb F}
\newcommand{\A}{\mathbb A}
\newcommand{\SL}{\mathrm{SL}}
\newcommand{\Sp}{\mathrm{Sp}}
\newcommand{\PGSp}{\mathrm{PGSp}}
\newcommand{\SO}{\mathrm{SO}}
\newcommand{\GL}{\mathrm{GL}}
\newcommand{\PU}{\mathrm{PU}}
\newcommand{\Alt}{\mathrm{Alt}}
\newcommand{\Frob}{\mathrm{Frob}}
\newcommand{\Gm}{\mathbb G_m}
\newcommand{\eps}{\epsilon}
\newcommand{\om}{\omega}
\newcommand{\lb}{\lambda}
\newcommand{\Om}{\Omega}
\newcommand{\bs}{\backslash}
\newcommand{\1}{\m 1}
\newcommand{\scn}{\mathrm{sc}}
\newcommand{\disc}{\mathrm{disc}}
\newcommand{\el}{\mathrm{ell}}
\newcommand{\ssm}{\mathrm{ss}}
\newcommand{\spl}{\mathrm{spl}}
\newcommand{\der}{\mathrm{der}}
\newcommand{\smc}{\mathrm{sc}}
\newcommand{\ns}{\mathrm{ns}}
\newcommand{\temp}{\mathrm{temp}}
\DeclareMathOperator{\St}{St}
\DeclareMathOperator{\Ram}{Ram}
\DeclareMathOperator{\dist}{dist}
\newcommand{\restprod}{\mathop{\sideset{}{'}\prod}}
\newcommand{\zl}[1]{{}^0\!#1}
\newcommand{\one}{\mathbbm{1}}
\theoremstyle{plain}
\theoremstyle{definition}
\theoremstyle{definition}
\newtheorem{rem}[thm]{Remark}
\newtheorem{thmq}[thm]{``Theorem''}
\newtheorem*{rems*}{Remarks}
\newtheorem*{disc*}{Discussion}
\newcommand{\p}{\mathfrak{p}}
\newcommand{\mO}{\mathcal{O}}
\newcommand{\bZ}{\mathbb{Z}}
\newcommand{\bN}{\mathbb{N}}
\newcommand{\bA}{\mathbb{A}}
   \def\MR#1{}
\providecommand{\conjecturename}{Conjecture}
\providecommand{\corollaryname}{Corollary}
\providecommand{\definitionname}{Definition}
\providecommand{\examplename}{Example}
\providecommand{\lemmaname}{Lemma}
\providecommand{\propositionname}{Proposition}
\providecommand{\remarkname}{Remark}
\providecommand{\theoremname}{Theorem}
\begin{document}
\title{Multi-Qubit Golden Gates}
\author{Rahul Dalal, Shai Evra, and Ori Parzanchevski} 
\maketitle
\begin{abstract}
Our goal in this paper is to construct optimal topological generators
for compact unitary Lie groups, extending the works of \cite{sarnak2015letter},\cite{Parzanchevski2018SuperGoldenGates}
on golden and super-golden gates to higher dimensions. To do so we consider a variant of the Sarnak--Xue Density Hypotheses \cite{sarnak1990Diophantine},\cite{sarnak1991bounds}
in the weight aspect for definite projective unitary groups and prove it using the endoscopic classification of automorphic representations. 

Our main motivation is to construct efficient multi-qubit universal gate sets for quantum computers. For example, we find a set of universal gates that, for a given accuracy, can heuristically approximate arbitrary unitary operations on 2 qubits with $\approx$10 times fewer ``expensive'' $T$-type gates than the standard Clifford+$T$ set. Our framework also covers the 2-qubit Clifford+CS gate set, well-known for being particularly friendly to fault-tolerant implementation. We thereby prove tight upper bounds on the required CS count for approximations (specifically, $4.8$x fewer non-Clifford gates than Clifford+$T$). 
\end{abstract}

\tableofcontents


\section{Introduction}\label{sec:intro}

\subsection{Background}
Practical quantum computers must be able to approximate any unitary operator in $U\left(2^{n}\right)$ acting on an $n$-qubit logical register.  Because physical qubits are noisy, each logical qubit is expected to be encoded in many physical qubits using a quantum-error-correcting code (QECC) and then acted on by fault-tolerant implementations of logical gates---so the action on the physical register never allows an error on a single physical qubit to spread to many others; see \cite{Nielsen2011QuantumComputationand}*{Chap 10} or the original threshold proofs \cite{AB97}.

The simplest fault-tolerant logical gates are transversal: they apply independent physical operations to each physical qubit.  The Eastin-Knill theorem \cite{EK09} shows, however, that any fixed QECC admits only a finite transversal subgroup. A standard workaround is to supplement the transversal gates with a finite set $T$ of ``expensive'' non-transversal gates implemented through more elaborate techniques such as the teleportation and magic-state distillation of \cite{GC99, BK05}. Together, the transversal gates and $T$ should generate a dense subset of $U(2^n)$ such that arbitrary elements can be approximated well by products with a small count of factors from $T$.

Motivated by these considerations, Sarnak and Parzanchevski introduced the purely group-theoretic notions of golden gates and super-golden gates for $PU(2)$ \cite{sarnak2015letter,Parzanchevski2018SuperGoldenGates}. These are topological generating sets which possess optimal covering properties as well as an efficient algorithm for navigation and approximation (Definition \ref{def:intro-GG}); a super-golden gate set is further a finite group equipped with a distinguished set $T$ that plays the role above. In \cite{Parzanchevski2018SuperGoldenGates} and \cite{EP22}, golden and super-golden
gate sets were constructed for $PU\left(2\right)$ and $PU\left(3\right)$,
respectively. The main goal of this paper is to extend these constructions as far as possible: in particular to a multi-qubit setting of $PU\left(2^b\right)$ for $b = 2,3$.

The gate library most often used in the quantum-computing literature---the one-qubit Clifford+$T$ set together with CNOT---works well for most known codes, but its $T$-count scales suboptimally.  Super-golden gate sets can, in principle, approximate generic two-qubit unitaries with asymptotically fewer costly $T$-type operations. One example we construct could theoretically save a multiplicative factor of 10; see Table \S\ref{tab:supergolden} for comparisons of our new gate sets with those built from $1$-qubit gates. Recent physical implementations of logical qubits built from QECC (e.g \cite{Google24, Que24}) make such constant-factor improvements practically relevant. As experimental progress pushes interest in more codes, potentially featuring more exotic transversal gates (e.g \cite{KT23}) or more idiosyncratic optimization requirements, alternate hyper-efficient gate sets may become increasingly valuable.

Finally, the abstract problem of constructing golden (and super-golden) gate sets is mathematically rich, requiring sophisticated machinery from number theory. Even the $PU(2)$ case required Ramanujan bounds for modular forms on quaternion algebras.  The higher-rank problem draws on recent advances proving the Ramanujan conjecture for conjugate self-dual automorphic forms on $\GL_{n}$, the fundamental lemma and endoscopic classification, explicit constructions of Arthur packets for classical groups, and trace-formula techniques for computing statistics of automorphic representations. We hope the present work highlights these surprising connections to active modern research areas.

\subsection{Results}
\subsubsection{Golden Gates Abstractly}
We begin by defining the notions of golden and super-golden gate sets
for a general compact Lie group. Let $L$ be a compact Lie group equipped
with a probability Haar measure $\mu=\mu_{L}$ and a bi-invariant
metric $d=d_{L}$. For $\varepsilon>0$, $x\in L$ and $X\subseteq L$,
let $B\left(x,\varepsilon\right)$ be the ball around $x$ of \emph{volume}
$\varepsilon$, and let $B\left(X,\varepsilon\right):=\bigcup_{x\in X}B\left(x,\varepsilon\right)$.
For $S\subseteq L$ and $\ell\in\mathbb{N}$, let $S^{\ell}\subset L$
(resp. $S^{\left(\ell\right)}\subset L$) be the set of words with shortest representation of length at most (resp. precisely) $\ell$ in $S$, and let $\left\langle S\right\rangle $
(resp. $\left\langle S\right\rangle _{sg}$) be the group (resp. semigroup)
generated by $S$.
\begin{defn}\label{def:intro-GG} 
A finite subset $S\subset L$ is said to be a \emph{Golden Gate Set } if it satisfies the following conditions:
\begin{enumerate}
\item \uline{Covering}: The covering rate of $S^{\left(\ell\right)}$ in
$L$ is optimal up to a polylogarithmic factor; namely, there exists
a fixed $c\geq1$, such that 
\[
\mu\left(L\setminus B\left(S^{(\ell)},\varepsilon_{\ell}\right)\right)\overset{{\scriptscriptstyle \ell\rightarrow\infty}}{\longrightarrow}0,\qquad\varepsilon_{\ell}=\frac{\left(\log\left|S^{(\ell)}\right|\right)^{c}}{\left|S^{(\ell)}\right|}.
\]
\item \uline{Growth}: The size of $S^{\left(\ell\right)}$ grows exponentially in $\ell$.
\item \uline{Navigation}: There is an efficient algorithm such that, given $g\in\left\langle S\right\rangle _{sg}\subset L$, the algorithm writes $g$ as a word of shortest possible length in $S$. 
\item \uline{Approximation}: There exists $N\geq1$ and a (heuristic, randomized) efficient algorithm such that given $g\in L$, $\varepsilon>0$, and $\ell$ satisfying $B\left(g,\varepsilon\right)\cap S^{\left(\ell\right)}\ne\emptyset$, the algorithm outputs an element from $B\left(g,\varepsilon\right)\cap S^{\left(\ell\cdot N\right)}$.
\end{enumerate}
A finite subgroup $C \subseteq L$ and set of finite-order elements $T\subseteq L$ are said to together form a \emph{Super-Golden Gate Set} if the analogous four properties hold for $\zl S^{(\ell)} C$ where $\zl S = CTC^{-1}$. 
\end{defn}


Following our main interest in quantum computation, in this paper, we shall only concern ourselves with the case where the compact Lie
group is the group of unitary or projective unitary $2^b\times 2^b$ matrices:
\[
U(2^b):=\left\{ g\in GL_{2^b}\left(\mathbb{C}\right)\;|\;g^{*}g=I\right\} \quad,\quad PU(2^b):=U(2^b)/\left\{ c\cdot I\mid c\in U\left(1\right)\right\} .
\]

Following \cite{sarnak2015letter,Parzanchevski2018SuperGoldenGates,EP22}, our constructions of golden and super-golden gate sets for $PU(n)$ come from certain arithmetic groups of unitary matrices that we call (almost) ``golden adelic groups" satisfying a ``class-number-1'' property:
\begin{thm}\label{thm:introgenmain} 
Let $n = 2^b = 4,8$ and $K'$ be a golden adelic group of a rank-$n$, definite arithmetic unitary group that is almost golden (resp. almost super-golden) at some prime $\mf p$ as in Definition \ref{def:gold-adelic}. Then there is a corresponding set $S_\mf p$ of golden (resp. super-golden) gates of $PU(n)$. 
\end{thm}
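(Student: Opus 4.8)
The plan is to construct $S_\mf p$ from the action of the $\mf p$-arithmetic subgroup of $G$ on the Bruhat--Tits building at $\mf p$, exactly as in the one-qubit and one-qutrit cases of \cite{sarnak2015letter,Parzanchevski2018SuperGoldenGates,EP22}, and then to verify the four requirements of Definition \ref{def:intro-GG} in turn. Write $G$ for the definite unitary group, so that $PU(n)$ is a quotient of an archimedean factor of $G$ carrying the bi-invariant metric and Haar measure; set $G_\mf p := G(F_\mf p)$, write $K' = K'_\mf p \times K'^{\mf p}$, and let $\Lambda$ be the image in $G_\mf p$ of $G(F)\cap(G_\mf p\times K'^{\mf p})$, a cocompact lattice. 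Since $G^{\mathrm{sc}}$ is simply connected and $\mf p$-isotropic, strong approximation makes the image of $\Lambda$ in $PU(n)$ dense; and the class-number-one property of Definition \ref{def:gold-adelic} makes $\Lambda$ act transitively on the vertices of a fixed type of the building $\mc B_\mf p$, simply transitively in the ``almost golden'' case and with a fixed finite vertex-stabiliser $C$ (the Clifford-type group) in the ``almost super-golden'' case. Identifying such vertices with elements of $\Lambda$ (resp. cosets of $C$) and then with their images in $PU(n)$, I would let $S_\mf p$ be the set attached to the neighbours of the base vertex --- concretely $S_\mf p = CTC^{-1}$, with $T$ recording the elementary building moves, so that in the super-golden case $(C,T)$ is the pair demanded by Definition \ref{def:intro-GG}. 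By construction, $S_\mf p^{(\ell)}$ matches, up to the action of $C$, the sphere of radius $\asymp\ell$ in $\mc B_\mf p$, and the word metric on $\langle S_\mf p\rangle$ is bi-Lipschitz to the building metric.

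Granting this, growth (property (2)) is immediate, since affine buildings of positive rank have exponentially growing spheres, so $|S_\mf p^{(\ell)}| \asymp q_\mf p^{c'\ell}$. For navigation (property (3)): given $g\in\langle S_\mf p\rangle_{sg}$, represent it as an element of $\Lambda\subseteq G(\mc O_F[1/\mf p])$, compute its image in $G_\mf p$ exactly, locate the vertex $g\cdot v_0$, and return a minimal gallery from $v_0$ to it; for $\GL_n$-type local groups this is an elementary-divisor / lattice-reduction computation running in polynomial time, the finitely many ambiguities coming from $C$ being resolved by enumeration, and the output word is of shortest length because building distance equals word length.

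For approximation (property (4)), given $g\in PU(n)$, $\varepsilon>0$ and $\ell$ with $B(g,\varepsilon)\cap S_\mf p^{(\ell)}\ne\emptyset$, I would search for $x\in\Lambda$ whose archimedean image is $\varepsilon$-close to $g$ and whose distance from $v_0$ in $\mc B_\mf p$ is at most $\ell N$ for a fixed $N$ --- the latter being a bound on the $\mf p$-valuations of the entries of an integral model of $x$. After fixing enough coordinates to bring $x$ within $\varepsilon/2$ of $g$, what remains is a Hermitian-norm and determinant equation over $\mc O_F[1/\mf p]$ in the other coordinates; solving it is the heuristic, randomized step, reducing to integer factorisation as in \cite{Parzanchevski2018SuperGoldenGates}, running in expected polynomial time under the usual heuristics and succeeding once $N$ is large (which the density input below ensures). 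Running navigation on $x$ then returns the desired word of length $\le\ell N$.

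The remaining property, covering (property (1)), is where the genuine arithmetic content lies. Expanding the empirical probability measure $\tfrac1{|S_\mf p^{(\ell)}|}\sum_{s\in S_\mf p^{(\ell)}}\delta_s$ on $PU(n)$ in irreducible characters, the mass it places on a small ball $B(g,\varepsilon)$ equals the main term $\mu(B(g,\varepsilon))$ plus an error which, representation by representation $\rho$ of $PU(n)$, is controlled by the operator norm of the relevant building (Hecke-type) operators on the space of automorphic forms on $G$ that are $K'$-unramified and of $K_\infty$-type matching $\rho$. If the Ramanujan conjecture held for $G$ one could bound each such norm by the tempered value up to a polynomial-in-$\ell$ loss, and a Peter--Weyl / $L^2$ computation would then yield the optimal covering rate $\varepsilon_\ell = (\log|S_\mf p^{(\ell)}|)^c/|S_\mf p^{(\ell)}|$, already the mechanism in the tree case. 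But Ramanujan fails here --- residual, CAP, and endoscopic representations occur --- so the heart of the argument, and the step I expect to be the main obstacle, is to show that such non-tempered representations are rare enough: precisely, the variant of the Sarnak--Xue density hypothesis \emph{in the weight aspect} established in this paper, bounding the number of exceptional $K'$-unramified automorphic representations whose $K_\infty$-type has dimension at most $D$ by a quantity sublinear in $D$. Feeding this into the character expansion, and using that $PU(n)$ has only polynomially many irreducibles of dimension at most $D$, shows the error is dominated by the main term for $\varepsilon_\ell$ as above, hence $\mu\left(L\setminus B(S_\mf p^{(\ell)},\varepsilon_\ell)\right)\to 0$. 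The proof of the weight-aspect density statement itself, carried out in the body of the paper, transports the problem to general linear groups through the endoscopic classification of automorphic representations, invokes the known Ramanujan bounds for conjugate self-dual cuspidal representations of $\GL_m$ for the tempered contribution, and controls the non-tempered part by counting Arthur parameters of bounded archimedean infinitesimal character that can be assembled from such cusp forms.
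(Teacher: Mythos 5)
Your proposal follows the paper's route closely: construct $S_\mf p$ from the simply transitive (or finite-stabilizer) action of the $\mf p$-arithmetic lattice on special-vertex orbits in the building, get growth from exponential sphere volumes, navigation from greedy descent along geodesics (Cartan invariants via elementary divisors), approximation from the Ross--Selinger heuristic, and covering from a weight-aspect Sarnak--Xue density theorem fed through the endoscopic classification.

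Two small things to watch. First, a terminological slip: ``almost golden'' in the paper does \emph{not} mean simply transitive (it means $G(F)\cap K'$ may be nontrivial), and ``super-golden'' is orthogonal to this --- it refers to $K'_\mf p$ stabilizing a higher-dimensional facet $\tau$ rather than a vertex; the finite group $C$ you call the Clifford-type group is the facet-stabilizer $\overline\Gamma$ in that case, and getting the correct generating data actually requires the ``traversable'' condition and the non-standard/decimated Cartan norms of \S\ref{sec:buildings}, without which the greedy navigation and the identity $S^{[\ell]}=(S\cup C)^{(\ell)}$ break. Second, in the covering step the paper does not literally count ``exceptional reps of archimedean dimension $\le D$ sublinearly in $D$''; the operative bound is the interpolation Theorem \ref{thm:intro-density} --- $|\mc F_\delta(\sigma,v)| \ll |\mc F_\delta|^{2/\sigma-\eps}|\mc F_\delta(\infty,v)|^{1-2/\sigma-\eps}$ --- proved shape-by-shape ($\Box$) rather than irrep-by-irrep, paired with a per-shape Hecke-operator bound tied to $\sigma_\Box$ and a projection bound for the ball-indicator onto $L^2_\Box$ (Theorems \ref{thm:exponentcomputations}, \ref{thm:projectionbound}, Corollary \ref{cor:Hecke-auto-rep}). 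Your irrep-of-$PU(n)$ bookkeeping can be made to work but the shape decomposition is what makes the exponents close up; as stated your version of the density input is too crude to give the optimal $\eps_\ell = (\log|S^{(\ell)}|)^c/|S^{(\ell)}|$.
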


\subsubsection{Concrete Gate Sets}
As some explicit examples:

\begin{thm}\label{thm:intro-main} 
Define the following Hermitian positive definite
$4\times4$ matrix, 
\[
H_3 =2\cdot\left(\begin{array}{cc}
I_{2} & A\\
-A & I_{2}
\end{array}\right),\quad A=\frac{\sqrt{-3}}{3}\cdot\left(\begin{array}{cc}
1 & 1\\
1 & -1
\end{array}\right),
\]
 and let $B\in GL_{4}\left(\mathbb{C}\right)$ be such that $H=B^{*}B$. For any prime $p \neq 2$, denote
\[
S_{p}:=\left\{ g\in M_{4}\left(\mathbb{Z}\left[\frac{1-\sqrt{-3}}{2}\right]\right)\;\md|\;{g^{*}H_3g=p'\cdot H_3,\;g\equiv I_{4}\mod 2\atop g\text{ is not a scalar matrix}}\right\} 
\]
\[
\mbox{where}\qquad p'=\begin{cases}
\begin{array}{c}
p\\
p^{2}
\end{array} & \begin{array}{c}
p\equiv 0,1 \mod 3\\
p\equiv2 \mod 3
\end{array}\end{cases}.
\]
Then the set, $S'_{p}=\left\{ BgB^{-1}\mid g\in S_{p}\right\} $,
is a golden gate set of $PU\left(4\right)$. 
\end{thm}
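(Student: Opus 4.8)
The plan is to realize this as a special case of Theorem~\ref{thm:introgenmain}. Put $n=4$, $F=\Q$, $E=\Q(\sqrt{-3})$ with $\mO_E=\Z[\omega]$, $\omega=\frac{1-\sqrt{-3}}{2}$, and let $G=U(V,H_3)$ be the unitary group of the Hermitian space $V=(E^4,H_3)$. A first routine check is that $H_3$ is positive definite: since $\overline{\sqrt{-3}}=-\sqrt{-3}$ and $\bigl(\begin{smallmatrix}1&1\\1&-1\end{smallmatrix}\bigr)$ is real symmetric we get $A^*=-A$, hence $H_3^*=H_3$, and $A^2=-\tfrac23 I_2$ gives that $H_3=2\bigl(I_4+\bigl(\begin{smallmatrix}0&A\\-A&0\end{smallmatrix}\bigr)\bigr)$ has eigenvalues $2(1\pm\sqrt{2/3})>0$. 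Thus $G(\R)\cong U(4)$ is compact, so $G$ is a definite rank-$4$ unitary group, and the linear map $g\mapsto BgB^{-1}$ (with $H_3=B^*B$) is precisely the isometry carrying the $H_3$-model $U(H_3)(\R)$ onto the standard $U(4)$; producing a golden gate set for $G$ in its $H_3$-model therefore produces one for $PU(4)$ after conjugating by $B$, which is why the theorem introduces $S'_p=BS_pB^{-1}$.

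Next I would exhibit the golden adelic group $K'=\prod_v K'_v$. At every place $v$ away from $2$ let $K'_v$ be the stabilizer of $\mO_{E,v}^4$ in $G(\Q_v)$; at $v=2$ let $K'_2$ be its principal congruence subgroup of level $2$ (this is exactly the condition $g\equiv I_4\bmod 2$); and at $\mf p\mid p$ take the maximal compact that turns the Hecke double coset of $S_p$ into a generator of golden (resp.\ super-golden) type. After clearing an inessential scalar from the form (which does not change $G$), the lattice $\mO_E^4$ is self-dual outside the ramified prime $3$, so $G$ and $\mO_E^4$ have good reduction away from $\{2,3\}$; the prime $\mf p$ above $p$ is split when $p\equiv1\bmod3$, inert when $p\equiv2\bmod3$, and ramified when $p=3$, and in every case the similitude $p'$ is chosen to be a local norm from $E_\mf p^\times$ --- indeed $p=N_{E/\Q}(\sqrt{-3})$ lies in $N(E_\mf p^\times)$ locally when $p\equiv1\bmod3$ or $p=3$, whereas only $p^2\in N(E_\mf p^\times)$ when $p$ is inert --- which is exactly the dichotomy defining $p'$. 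In the split case $G(\Q_p)\cong\GL_4(\Q_p)$ and the double coset of $S_p$ is that of $\diag(p,1,1,1)$, so $S_p$ acts on the Bruhat--Tits building of $PGL_4(\Q_p)$ with vertex degree $1+p+p^2+p^3$, which forces the exponential ``Growth'' of $S_p^{(\ell)}$; the inert and ramified cases are analogous on the relevant (quasi-split) unitary building. The ``Navigation'' and ``Approximation'' algorithms of Definition~\ref{def:intro-GG} then come from descent in this building combined with the class-number-$1$ property, and ``Covering'' is the output of the weight-aspect Sarnak--Xue density bound feeding Theorem~\ref{thm:introgenmain}; the hypothesis $p\ne2$ ensures the level and the Hecke prime are coprime so that the generator descends to $PU(4)$, and removing scalar matrices $cI$ with $c\bar c=p'$ deletes the contribution of the center, so that $S'_p$ actually generates $PU(4)$ rather than a proper subgroup.

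The one genuinely substantive point is to verify that the tuple above is \emph{golden adelic} and \emph{almost (super-)golden at $\mf p$} in the sense of Definition~\ref{def:gold-adelic}; this rests on the ``class-number-$1$'' assertion that $G(\Q)\backslash G(\A_f)/K'$ is a single point, equivalently that the genus of the Hermitian $\Z[\omega]$-lattice $(\mO_E^4,H_3)$ with full level-$2$ structure consists of a single isometry class. This is a finite computation, and $H_3$ is manufactured precisely so that it holds: because the discriminant of $H_3$ is supported at the single ramified prime $3$, the Siegel mass of the genus is small, and one checks directly --- using the mass formula as a cross-check together with an explicit neighbour/reduction argument --- that there is exactly one class and that imposing the level-$2$ condition does not split it. Granting this, Theorem~\ref{thm:introgenmain} applies verbatim and yields that $S'_p$ is a golden gate set of $PU(4)$. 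I expect this class-number and mass computation to be the main obstacle; everything else is bookkeeping about the local unitary groups at $p$ and $2$ and about the explicit conjugating isometry $B$.
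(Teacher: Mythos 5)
Your overall strategy matches the paper's: one checks that $K'=\{g\in G(\widehat{\mathbb Z})\mid g\equiv I_4\bmod 2\}$ is an (essentially) golden adelic subgroup of $G=U_4^{\mathbb Q(\sqrt{-3}),H_3}$ and then invokes the general golden-gate machinery (Theorem~\ref{thm:introgenmain}, i.e.\ Theorem~\ref{thm:auto-cover-main} + Theorem~\ref{thm:gold-def-gates}). You have also correctly flagged the class-number-one verification as the one genuinely substantive step. However, two points deserve correction.

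First, on the class number. You describe this as ``a finite computation \ldots\ using the mass formula as a cross-check together with an explicit neighbour/reduction argument.'' The paper in fact settles it \emph{entirely} by the Gan--Yu mass formula (Lemma~\ref{lem:pre-class}): one computes $R(G)=L(G)\lambda(G)=155520^{-1}$ (here $\mathrm{Ram}(H_3)=\emptyset$ so all $\lambda_\ell=1$), computes $|G(\mathbb Z)|=155520$ by enumerating $H_3$-isometries (essentially the $240$ norm-$3$ vectors of an $E_8$-realization of $\mathcal O_E^4$), and concludes $c(G)=1$ with no neighbouring step required. Proposition~\ref{prop:gold-exm-3} then still needs two extra checks you did not mention: that $r_2\colon G(\mathbb Z)\to G(\mathbb Z/2)\cong U_4(\mathbb F_2)$ is onto (via $|U_4(\mathbb F_2)|=77760=|G(\mathbb Z)|/2$) and that $G(\mathbb Z)\cap\ker r_2=\{\pm1\}\subseteq Z_G$; without the latter, $K'$ would only be ``almost'' rather than ``essentially'' golden, and the identification of $\Gamma$ with a central subgroup (needed so the gate set genuinely lands in $PU(4)$) would fail.

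Second, your explanation of the dichotomy $p'=p$ vs.\ $p'=p^2$ ``because the similitude $p'$ must be a local norm from $E_\mathfrak p^\times$'' is only a coincidental heuristic (and note $N_{E/\mathbb Q}(\sqrt{-3})=3$, not $p$). The actual mechanism in the paper is the computation of the modified Cartan norm in $\mathcal B(G_p)$: the gates are the $\gamma\in\bar\Lambda_p$ with $\|\bar a_\gamma\|=1$, and by Examples~\ref{ex:gatessplit}, \ref{ex:gatesunram}, \ref{ex:findinggatesnonsplit} this translates to $g^*Hg=\pi_p^{e}H$ with $e=1$ in the split and ramified cases and $e=2$ in the inert unramified case. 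Similarly, the condition $p\neq2$ is not about coprimality of level and Hecke prime; it is simply that $K'_2$ is a congruence subgroup rather than a special maximal compact, so $K'$ fails to be ``golden at $2$'' in the sense of Definition~\ref{def:gold-adelic} and the gate-set construction does not apply there.
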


\begin{thm}\label{thm:intro-main-super}
Let $S_{\zeta_3}(4)$ be the group of $4 \times 4$ monomial (i.e, generalized permutation) matrices with entries that are $3$rd roots of unity. Let $C_3$ be group with generators:
\[
C_3 := \lf \langle S_{\zeta_3}(4), \quad \f1{\sqrt{-3}}\begin{pmatrix}
  1 & 0 & -1 & -1 \\
 0 & 1 & -1 & 1 \\
 -1 & -1 & -1 & 0 \\
 -1 & 1 & 0 & -1    
\end{pmatrix} \ri \rangle \subseteq PU(4),
\]
and let
\[
T := \begin{pmatrix}
& & & 1\\
& & 1 &\\
& -1 & & \\
-1 & & & 
\end{pmatrix}.
\]
Then $C_3$ is a finite group isomorphic to $\PGSp_4(\F_3)$ and together with $T$ forms a super-golden gate set for $PU(4)$. 
\end{thm}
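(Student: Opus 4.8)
The plan is to realize the gate set $C_3$ and $T$ as coming from an explicit golden adelic group and then to apply Theorem~\ref{thm:introgenmain}, so the real work is (i) identifying the arithmetic unitary group, (ii) verifying the class-number-1 and ``almost super-golden at $\mf p$'' hypotheses of Definition~\ref{def:gold-adelic}, and (iii) pinning down the finite group structure. First I would set $E = \Q(\sqrt{-3})$, let $H$ be the Hermitian form whose Gram matrix is the $H_3$ appearing in Theorem~\ref{thm:intro-main} (rescaled to be integral over $\mO_E = \Z[\frac{1-\sqrt{-3}}{2}]$), and let $G = \PU(H)$ be the associated definite projective unitary group of rank $4$ over $\Q$. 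The prime to localize at is $\mf p = 2$: since $2$ is inert in $E/\Q$ and $H$ becomes split (hyperbolic) over $E_2$, $G(\Q_2) \cong \PGU_4(\Q_2)$ is a form with a building on which the congruence condition $g \equiv I_4 \bmod 2$ cuts out an Iwahori-type or parahoric subgroup; one checks that the corresponding local Hecke element is ``golden'' in the sense of Definition~\ref{def:gold-adelic} (the $T$-matrix, which has order a power of $2$ up to scalars and satisfies $T^* H_3 T = 2\cdot H_3$ type relation, plays the role of the distinguished expensive generator, which is what makes this the \emph{super}-golden rather than merely golden case).

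Next I would verify the global ``class number 1'' condition: the adelic double coset space $G(\Q)\backslash G(\A)/ K'$ with $K'$ the chosen open compact (maximal away from $2$, the congruence subgroup at $2$) is a single point. This is a finite computation in the genus of the Hermitian lattice $L = \mO_E^4$ with form $H_3$, subject to the level-$2$ structure; concretely one shows the corresponding Hermitian lattice has class number one in its genus, e.g.\ via a mass formula computation (Siegel--Weil for unitary groups) confirming the total mass matches $1/|\Aut(L)|$, or by an explicit neighbour/reduction argument. Granting this, $S_{\zeta_3}(4)$ and the extra $\frac{1}{\sqrt{-3}}$-matrix are exactly the units of $L$ of norm $1$ (i.e.\ the image of $K'$ at the finite point), so $C_3 = \langle S_{\zeta_3}(4), \ldots\rangle$ is by construction the global unit group modulo scalars, and $\zl S = C_3 T C_3^{-1}$ generates the arithmetic lattice whose images at the real place populate $PU(4)$ densely. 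Theorem~\ref{thm:introgenmain} then delivers the four properties of Definition~\ref{def:intro-GG} — covering from the weight-aspect Sarnak--Xue density bound proved via endoscopic classification, growth from the structure of the building (the relevant quotient tree/complex has exponential growth), and navigation/approximation from the unique-factorization structure of $\zl S^{(\ell)} C_3$ inherited from the local Cartan/Iwasawa-type decomposition at $\mf p = 2$.

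Finally, the identification $C_3 \cong \PGSp_4(\F_3)$ I would prove by a direct reduction-mod-$(\sqrt{-3})$ argument. The form $H_3$ has $A = \frac{\sqrt{-3}}{3}\binom{1\ \ 1}{1\ -1}$, so modulo the prime $\mf l = (\sqrt{-3})$ above $3$ the Hermitian structure degenerates and $H_3$ reduces to (a scalar times) a \emph{symplectic} form over $\F_3$ — the residue field $\mO_E/\mf l = \F_3$ is fixed by the conjugation — whence the unitary group reduces to $\GSp_4(\F_3)$ and its image in $PU(4)$ to $\PGSp_4(\F_3)$. I would show the reduction map $C_3 \to \PGSp_4(\F_3)$ is well-defined (the generators are $\mf l$-integral up to the scalars we quotient by) and both injective (no nontrivial element of the finite unit group lies in the congruence kernel, by a size/order bound) and surjective (the listed generators already hit a generating set of $\PGSp_4(\F_3)$, checkable directly since $|\PGSp_4(\F_3)| = 51840$ is small). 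The main obstacle I anticipate is step (ii): proving class number one with the level-$2$ structure and the specific $H_3$ is a nontrivial lattice-theoretic computation, and verifying that the congruence subgroup at $2$ is precisely a parahoric for which the abstract machinery of Definition~\ref{def:gold-adelic} applies — in particular checking that $T$ normalizes the right parahoric and that $\zl S = C_3 T C_3^{-1}$ exhibits the clean word structure needed for the navigation algorithm — is where the delicate local computation lies; everything else is either an invocation of Theorem~\ref{thm:introgenmain} or a finite group calculation.
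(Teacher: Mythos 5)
Your proposal correctly identifies the ambient arithmetic group — $E=\Q(\sqrt{-3})$ with form $H_3$, and the global unit group of the Hermitian $\mO_E$-lattice as the finite part — and the mass-formula strategy for verifying class number one is indeed what Proposition~\ref{prop:gold-exm-3} does (with $R(G)^{-1}=|G(\Z)|=155520$). However, there is a fundamental mislocalization that would break the construction: you localize at $\mf p = 2$, but the gate $T$ in the theorem comes from the \emph{ramified} prime $3$. The theorem is a consequence of Proposition~\ref{prop:supergold33}, not of anything at $p=2$. Your description of $T$ as having ``order a power of $2$ up to scalars and satisfying $T^* H_3 T = 2\cdot H_3$'' is incorrect: the stated $T$ is a signed permutation with $T^2 = -I_4$, so it is an involution in $PU(4)$, and it is obtained (after conjugating by $X_M$) from $T_{E,3}$, which acts at the place above $3$ by flipping a path in the ramified building. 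At $p=2$ you would instead land on Proposition~\ref{prop:supergold32} and get a different gate $T_{E,2}$, not the one in the theorem.

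Consequently you also miss the actual local structure that makes this a super-golden set: at the ramified prime, $G(\Z_3)$ is special (not hyperspecial), the relevant cocharacter lattice is half-integral, and the paper has to pass through the decimated, non-standard traversable $\tau$ of Example~\ref{ex:traversable}(5) (relying on Example~\ref{ex:cartanramunitarydecimated}) rather than a generic congruence-mod-$2$ parahoric. The group $K^{(3)}_s$ of Proposition~\ref{prop:supergold33} stabilizes a specific two-step path through a non-special vertex of $\mc B(U_4(\Q_3))$, and the ``almost $\tau$-super-golden'' verification there is the delicate index/size computation $[K:K^{(3)}_s]=120$ and $|G(\Z)\cap K^{(3)}_s|=1296$. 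Your appeal to ``the congruence condition $g\equiv I\bmod 2$'' from Theorem~\ref{thm:intro-main} conflates the golden case (split/unramified, at general $p$) with the super-golden one at $3$.

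Your reduction-mod-$(\sqrt{-3})$ sketch for identifying the finite group is in the right spirit — the paper does use that $(K\cap G(\Q))/\langle\zeta_3\rangle\cong\Sp_4(\F_3)$, established by reducing mod $\sqrt{3}$ via the special parahoric model at $3$ — but you still need the explicit conjugation $X_M$ that turns $K^{(\sqrt{-3})}_s\cap G(\Q)$ into $\{\pm1\}\cdot S_{\zeta_3}(4)$ and the preimage $C_W$ of a long Weyl element into the second listed generator. Without that change of basis, one cannot pass from the abstract statement about $G(\Z)$ to the concrete presentation of $C_3$ given in the theorem, nor exhibit $T$ as the explicit signed permutation. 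Finally, note that $|G(\Z)/\langle\zeta_6\rangle|=25920$, which you should reconcile carefully against your claim about $|\PGSp_4(\F_3)|=51840$ when doing the finite-group identification.
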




For compatibility with the current literature in quantum error correction, it is ideal for the finite group group $C$ to be the $2$-qubit Clifford group. In addition, fault-tolerant implementations of $T$ are usually done through the teleportation and magic-state distillation techniques of \cite{GC99, BK05} which require $T$ to be at a very low-level of the Clifford hierarchy (e.g, as described in \cite{pllaha2020weyl}). One such $2$-qubit $T$ that has attracted interest in theoretical and experimental setups is the controlled-S, or CS, gate (see e.g, \cite{glaudell2021optimal}, \cite{mukhopadhyay2024cs}, \cite{foxen2020demonstrating}, \cite{xu2020high}).

This Clifford+CS gate set fits into our framework:
\begin{thm}\label{thm:intro-main-super-clifford}
The controlled-S or CS gate
\[
\begin{pmatrix}
1 & & & \\
& 1 & & \\
& & 1 &\\
& & & i
\end{pmatrix} \subseteq PU(4)
\]
is at the $3$rd level of the $2$-qubit Clifford hierarchy and together with the $2$-qubit Clifford group forms a super-golden gate set for $PU(4)$.
\end{thm}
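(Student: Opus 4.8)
The statement concerns a concrete gate set, so the plan is to feed a suitable arithmetic datum into Theorem~\ref{thm:introgenmain} and then identify the output. It has two parts, and the first is routine: that $CS=\diag(1,1,1,i)$ lies in the third level $\mathcal C_3$ of the two-qubit Clifford hierarchy but not in $\mathcal C_2$ is a direct computation, since conjugation by $CS$ fixes the Paulis $Z\otimes I$ and $I\otimes Z$ and sends $X\otimes I\mapsto(X\otimes I)(I\otimes S)(CZ)$ and $I\otimes X\mapsto(I\otimes X)(S\otimes I)(CZ)$, which are products of Clifford gates; so, as $X_1,Z_1,X_2,Z_2$ and the central phases generate the two-qubit Pauli group, $CS$ normalizes the Pauli group into the Clifford group, i.e.\ $CS\in\mathcal C_3$, while $(X\otimes I)(I\otimes S)(CZ)$ is not a Pauli, so $CS\notin\mathcal C_2$. (This is well known; one merely records the computation.)

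The content is the super-golden claim, which by Theorem~\ref{thm:introgenmain} reduces to exhibiting a golden adelic group $K'$ of a rank-$4$ definite arithmetic unitary group, almost super-golden at a prime $\mf p$, whose associated super-golden gate set is the two-qubit Clifford group $C$ together with $T=CS$. Clearing the overall $1/\sqrt2$ scalars from the Hadamards, one sees that $C$ and $CS$ already sit in $PU(4)$ with matrix entries in $\mathbb Q(i)$, preserving the standard Hermitian form $I_4$ up to positive rational similitudes; so the relevant algebraic group is $G=PU(I_4)$ over $F=\mathbb Q$, with $I_4$ taken over the CM field $E=\mathbb Q(i)$, and the prime to invert is $\mf p=(2)$, which ramifies in $E/F$. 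One then chooses the level $K'$ — a parahoric at $\mf p$ together with congruence conditions away from $\mf p$ — so that (a) the resulting $\mf p$-arithmetic group $\Gamma_{K'}$ is exactly $\langle C,CS\rangle$, an arithmetic lattice in $PU(4)\times G(F_{\mf p})$ with dense image in $PU(4)$; (b) the stabilizer in $\Gamma_{K'}$ of the base vertex of the Bruhat--Tits building of $G$ at $\mf p$ is precisely $C$, which is consistent with $C$ being an extension of $\Sp_4(\mathbb F_2)$ by $(\mathbb Z/2\mathbb Z)^4$ — $\Sp_4(\mathbb F_2)$ being the reductive quotient over $\mathbb F_2$ of a maximal parahoric of the ramified unitary group $G$ at $\mf p$, and $(\mathbb Z/2\mathbb Z)^4$ its unipotent part; and (c) $CS$, a torsion element fixing a different vertex, plays the role of $T$, so that $\zl S^{(\ell)}C$ with $\zl S=CTC^{-1}$ is the set of elements of $\Gamma_{K'}$ at $\zl S$-word-distance $\ell$ from $C$.

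It then remains to verify the hypotheses of Definition~\ref{def:gold-adelic}: (i) $I_4$ is definite at the unique archimedean place (automatic); (ii) the local condition at $\mf p$ making the datum almost super-golden there — identifying the parahoric and the Hecke operators attached to $\zl S$; and (iii) the class-number-one property, that $G(F)\backslash G(\mathbb A_F)/K'G(F_\infty)$ is a single point, equivalently that $\Gamma_{K'}$ is transitive on the vertices of each type of the building. Property (iii) I would settle by an exact Hermitian mass-formula computation (Siegel/Hashimoto over $\mathbb Z[i]$) together with a direct verification that the genus attached to $(I_4,K')$ has a single isometry class, exactly as for $PU(2)$ and $PU(3)$ in \cite{Parzanchevski2018SuperGoldenGates,EP22}. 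Granting (i)--(iii), Theorem~\ref{thm:introgenmain} — whose proof rests on the weight-aspect density hypothesis established in the body of the paper — yields the four golden-gate properties for $\zl S^{(\ell)}C$, completing the proof.

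The main obstacle is the explicit identification in the second paragraph: choosing the level $K'$ so that $\Gamma_{K'}$ is \emph{literally} $\langle C,CS\rangle$, with vertex stabilizer the two-qubit Clifford group and $CS$ in the correct position relative to the parahoric. The cleanest route I see is the reverse one: starting from the standard presentation of $C\subset U(4)$, determine the parahoric at $\mf p=(2)$ whose reductive quotient over $\mathbb F_2$ is $\Sp_4(\mathbb F_2)$ and the congruence conditions away from $\mf p$ that cut $\Gamma_{K'}$ down to $\langle C,CS\rangle$, checking along the way the conjugacy type of $CS$ and its displacement of the base vertex. Verifying class number one is the other delicate point, since the mass formula must be evaluated exactly and no second isometry class may appear; should one appear, one falls back on the ``almost'' clause of Definition~\ref{def:gold-adelic}.
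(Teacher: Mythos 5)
Your Clifford-hierarchy computation for $CS$ is correct and is more elementary than what the paper does: the paper instead proves a building-theoretic Lemma~\ref{lem:super42altcliffordhierarchy} showing the gate $T'_G$ conjugates a particular congruence subgroup (identified with the Pauli group) into $G(\Z)$ (identified with the Clifford group), and only then identifies $T'_G$ with $CS$ via an explicit matrix $X_P$. Your direct check that $CS(X\otimes I)CS^{-1}=(X\otimes I)(I\otimes S)(CZ)$, etc., is clean and valid.

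The gap is in the arithmetic half, specifically in your choice of Hermitian form. You take $G=PU^{E,I_4}_4$ and propose to find a level $K'$ at $\mf p=(2)$ (plus congruence conditions elsewhere) so that $\Gamma_{K'}$ is literally $\langle C, CS\rangle$ with vertex stabilizer $C$. But with the form $I_4$ over $\mathcal O_E=\Z[i]$, the group $U_4^{\Q(i),I_4}(\Z)$ is the monomial group $\mu_4^4\rtimes S_4$, of order $6144$ (mod center, order $1536$), which is far smaller than the two-qubit Clifford group of order $11520$. Adding congruence conditions only \emph{shrinks} an arithmetic group; it cannot make the $\mathfrak p$-arithmetic group's vertex stabilizer jump from the monomial group up to the Clifford group. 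The underlying reason is that while the Clifford group does preserve $I_4$ up to similitude over $\Q(i)$, the similitude factors are powers of $2$, which are not units in $\Z_2[i]$ (recall $2$ ramifies in $\Q(i)$). So the Clifford group does \emph{not} stabilize the standard lattice $\Z_2[i]^4$ with form $I_4$; it stabilizes a different lattice in the building of $PU_4(\Q_2)$. Additionally, $\Z_2[i]^4$ with form $I_4$ is not a special vertex, so Lemma~\ref{lem:pre-class} does not apply directly to check class number one.

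The paper circumvents this exactly by building a specific Hermitian form $H_4\ne I_4$ (Propositions~\ref{prop:gold-exm-4} and \ref{prop:supergold42alt}): $H_4$ is constructed via Remark~\ref{rem:hermitianformconstruction} so that $U_4^{\Q(i),H_4}(\Z_2)$ is a \emph{special} parahoric (using a generator of the different ideal $(2)$ of $\Q_2(i)/\Q_2$) and so that $U_4^{\Q(i),H_4}(\Z)$ has order $46080$, which, after conjugation by an explicit $X_P\in\GL_4(\Z[i,1/2])$, is a lift of the projective Clifford group. Your ``reverse route'' --- starting from the Clifford group and determining the parahoric whose reductive quotient is $\Sp_4(\F_2)$ --- is the right instinct and would lead you to $H_4$; but as written your proposal commits to the integral model $U^{I_4}$ and would not produce the Clifford group as $\Gamma$. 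Fixing this amounts to doing essentially what the paper does in Propositions~\ref{prop:gold-exm-4}, \ref{prop:supergold42}, \ref{prop:supergold42alt}. You should also note that the traversability of the chosen facet $\tau$ (Definition~\ref{def:traversable}, Example~\ref{ex:traversable}(4)) must be checked before Theorem~\ref{thm:introgenmain} applies, a point your outline elides.
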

Theorem \ref{thm:intro-main-super-clifford} in particular gives an upper bound on the required CS count needed to approximate all but an asymptotically negligible set of unitaries that matches the worst-case lower bound of \cite{glaudell2021optimal}. 

Finally, the paper \cite{mohammadi2012discrete} provides the only example of a golden adelic group for $n = 2^b = 8$ and shows that no such groups exist for larger $n$. These and various other examples of golden and super-golden gates are described in more detail in Section \ref{subsec:gold-exm}. Theorems \ref{thm:intro-main-super} and \ref{thm:intro-main-super-clifford} come from Propositions \ref{prop:supergold33} and \ref{prop:supergold42alt} respectively together with the discussions afterwards. These also provide auxiliary structure needed for navigation in the gate sets.

We compare our new $2$-qubit super-golden gates to previous $2$-qubit gate sets in Section \ref{sec:comparisons}. In particular, the example in Theorem \ref{thm:intro-main-super-clifford} can in theory approximate to a given accuracy with 4.8x fewer $T$ gates than the current standard of three CNOT gates together with eight $1$-qubit gates approximated by the $1$-qubit Clifford+$T$ set. The example of Theorem \ref{thm:intro-main-super} needs $\approx$10x fewer $T$-gates. We note however that our approximation algorithm as in Definition \ref{def:intro-GG}(4) has very unoptimized constant factors; the $T$-counts it produces are much worse than these theoretical optima.  


\subsubsection{Future Work}
We see the results here as incomplete in some important ways. First, \cite{mohammadi2012discrete} only classified all golden subgroups of unitary groups with rank $n \geq 5$ while the paper \cite{Kir16} only gives a full classification over $\Q$. A full classification in the case $n = 2^b = 4$ is crucial to fully optimize practical considerations for $2$-qubit gate sets. In particular, we do not yet know the $2$-qubit super-golden gate set that approximates to a given accuracy with the fewest number of $T$-gates. 

Second, the algorithm we present for the approximation  Definition \ref{def:intro-GG}(4) is very far from optimal. To fully realize the theoretical efficiency improvements from the gate sets of Theorems \ref{thm:intro-main-super} and \ref{thm:intro-main-super-clifford}, it is crucial to improve the algorithm's constant-factor overhead to be closer to $1$-qubit cases as in \cite{BS23}. 

Finally, our working abstract framework for super-golden gates in Section \ref{subsubsec:supergolden} is not as general as possible and may be unnecessarily ruling out many interesting gate sets---see Remark \ref{rem:alternatesupergolden}. Furthermore, following \cite{EP24}, even gate sets coming from non-golden groups should be considered: the constant factors lost in the suboptimal covering bound may be made up for by a faster growth rate of the number of gates with a given $T$-count. 



\subsection{Techniques}
The key property of golden adelic groups we use is that they determine lattices acting simply transitively on $G_\mf p$-orbits in a Bruhat-Tits building. For example:
\begin{thm} \label{thm:intro-simp-tran} 
In the notations of Theorem \ref{thm:intro-main}, let $p\neq2$ and $\Lambda_{p}$ be the group generated by $S_{p}$ in $PU(4, H)$. Then $\Lambda_{p}$ is a $p$-arithmetic subgroup and $\Lambda_{p}$ acts simply transitively on the $G_p$-orbit of a hyperspecial vertex of a corresponding Bruhat-Tits building.

In the notations of Theorem \ref{thm:intro-main-super-clifford}, let $\Lambda_2$ be the group generated by $CS$ and the $2$-qubit Clifford group in $\PU(4)$.  Then $\Lambda_2$ is conjugate to a $(1+i)$-arithmetic subgroup that acts transitively on the long edges of a corresponding Bruhat-Tits building. 
\end{thm}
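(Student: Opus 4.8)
The plan is to deduce both assertions from the defining property of a golden adelic group---the ``class-number-$1$'' decomposition $\mathbf G(\A_f)=\mathbf G(\Q)\cdot K'$ supplied by Definition \ref{def:gold-adelic}, where $\mathbf G$ is the relevant definite arithmetic unitary-type group ($\mathbf G(\R)$ compact modulo center) and $K'=\prod_v K'_v$ its golden adelic level---via the classical strong-approximation argument, after matching the concrete group $\Lambda_\mf p=\langle S_\mf p\rangle$ (resp.\ $\Lambda_2$) with the corresponding abstract $\mf p$-arithmetic group.

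\textbf{Step 1: $\Lambda_\mf p$ adelically.} Let $H$ be the Hermitian form over the relevant imaginary quadratic field $E$ ($E=\Q(\sqrt{-3})$ in Theorem \ref{thm:intro-main}, $E=\Q(i)$ in Theorem \ref{thm:intro-main-super-clifford}); it is definite, so the image of $\mathbf G(\R)$ in $\PU(4,H)(\R)\cong\PU(4)$ is compact and $\Lambda_\mf p$ sits inside it. A matrix $g$ over the $E$-integers with $g^*Hg=p'\cdot H$ is exactly an integral $H$-similitude of similitude factor $p'$; since $p'$ is a unit times a power of $\mf p\mid p$, its image, read as a finite adele, lies in $G_\mf p\times\prod_{v\ne\mf p}K'_v$ provided $g\equiv I_4\bmod 2$ (the congruence being what places it in $K'_v$ at the bad place(s)), so its projective image lies in the $\mf p$-arithmetic subgroup of $\PU(4,H)(\Q)$ cut out by $K'$ away from $\mf p$. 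Conversely, the Cartan/affine-Weyl structure of $G_\mf p$ relative to the hyperspecial---resp.\ parahoric---subgroup $K'_\mf p$ shows that, modulo $K'_\mf p$, products of such similitudes and their inverses realize all of $G_\mf p$; together with the class-number-$1$ identification this forces $\langle S_\mf p\rangle$ to be exactly that $\mf p$-arithmetic group, the excluded scalar matrices being the kernel of projectivization. In particular $\Lambda_\mf p$ is $\mf p$-arithmetic.

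\textbf{Step 2: the (simply) transitive action.} Let $\mB_\mf p$ be the Bruhat--Tits building of $G_\mf p$ and $x_0$ the simplex whose $G_\mf p$-stabilizer is $K'_\mf p$: a hyperspecial vertex in Theorem \ref{thm:intro-main}, a long edge of the ramified building at $2$ in Theorem \ref{thm:intro-main-super-clifford}. Given $y=g\cdot x_0$ with $g\in G_\mf p$, view $g$ as a finite adele supported at $\mf p$ and write $g=\gamma k$ with $\gamma\in\mathbf G(\Q)$, $k\in K'$ by the class-number-$1$ decomposition. At $\mf p$, $\gamma^{-1}g=k_\mf p\in K'_\mf p$ fixes $x_0$, so $\gamma\cdot x_0=y$; at $v\ne\mf p$, $\gamma_v=k_v\in K'_v$, so the image of $\gamma$ lies in $\Lambda_\mf p$. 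Hence $\Lambda_\mf p$ acts transitively on $G_\mf p\cdot x_0$; for Theorem \ref{thm:intro-main-super-clifford} one additionally checks the long edges form a single $G_2$-orbit, so this is all of them. The stabilizer $\Stab_{\Lambda_\mf p}(x_0)$ is the image in $\PU(4)$ of the finite group $\mathbf G(\Q)\cap K'$ (finite because $\mathbf G(\R)$ is compact modulo center). In the golden case, the ``almost golden'' hypothesis together with the congruence modulo $2$ is arranged so that this image is trivial, whence the action is \emph{simply} transitive---the first statement. In the super-golden Clifford$+$CS case, $K'_2$ is a non-maximal parahoric and the image of $\mathbf G(\Q)\cap K'$ in $\PU(4)$ is (conjugate to) the $2$-qubit Clifford group $C$, so $\Lambda_2$ is transitive but not simply transitive on long edges; the phrase ``conjugate to a $(1+i)$-arithmetic subgroup'' merely records that Theorem \ref{thm:intro-main-super-clifford} writes its generators in the computational qubit basis rather than a basis diagonalizing $H$, so one first conjugates by the corresponding change of basis.

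\textbf{Main obstacle.} The formal skeleton is short; the substance lies in verifying that the explicit data genuinely defines a golden adelic group in the sense of Definition \ref{def:gold-adelic}: hyperspeciality of $K'_v$ at every finite $v$ outside the prescribed bad set, the exact level (resp.\ parahoric) at $2$, the class-number-$1$ computation, and---at $\mf p$---the fact that $p'=p^2$ exactly when $p$ is inert in $E$ (and $p'=p$ otherwise), which governs the local structure of $G_\mf p$ and makes the solution set of $g^*Hg=p'H$ surject onto $G_\mf p/K'_\mf p$. For Theorem \ref{thm:intro-main-super-clifford} there is the additional delicate point of identifying $K'_2$ as the stabilizer of a \emph{long} edge---not a vertex or a short edge---in the ramified Bruhat--Tits building of $G_2$, together with the single-orbit claim. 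These are finite but careful local computations, and are precisely where the $PU(2)$ and $PU(3)$ arguments of \cite{Parzanchevski2018SuperGoldenGates,EP22} get reworked in higher rank.
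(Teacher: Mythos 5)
Your proposal is correct and follows essentially the same route as the paper: the abstract machinery is Definition \ref{def:gold-adelic} and Lemma \ref{lem:gold-def-simp-tran} (class-number-one plus strong approximation gives transitivity of the $\mf p$-arithmetic group on the building orbit, with stabilizer $\Gamma = G(\Q)\cap K'$), the generation statement $\langle S_\mf p\rangle = \Lambda_\mf p$ is Proposition \ref{prop:gategeneration} (the building-walk argument you sketch), and the concrete instantiations are Propositions \ref{prop:gold-exm-3} and \ref{prop:supergold42alt} (mass-formula class-number-one verification plus the local parahoric identification at $2$).

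One small but concrete inaccuracy in your Step~2 for the Clifford$+$CS case: the stabilizer of the long edge $\tau$ in $\Lambda_2$ is the image of $G(\Q)\cap K''$, which by Proposition \ref{prop:supergold42alt} has order $3072$ and is a \emph{proper} index-$15$ subgroup of the $2$-qubit Clifford group $G(\Z)/\langle i\rangle$; the full Clifford group is the stabilizer of the hyperspecial \emph{vertex} $x_0$ (i.e.\ of $K_2 = G(\Z_2)$), not of the edge $\tau$ (i.e.\ of $K''_2$). Your parenthetical identification of the edge stabilizer with $C$ is therefore wrong, though it does not affect the theorem's claim, which in the second case asserts only transitivity, not a specific stabilizer.
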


Variants of theorem \ref{thm:intro-simp-tran}, combined with the Ross-Selinger algorithm \cite{ross2015optimal} (see Subsection \ref{sec:firstthree}), yields gate sets for $PU(4)$ that satisfy the properties of growth, navigation and approximation (Theorem \ref{thm:gold-def-gates}). The optimal covering property would follow if we could prove the na\"ive Ramanujan conjecture for the underlying algebraic group of $\Lambda_{p}$ (this was the method of proof used in \cite{Parzanchevski2018SuperGoldenGates,EP22}). However, one can construct counterexamples for the na\"ive Ramanujan conjecture for $n\geq4$ (see Theorem 1.4 in \cite{Lubotzky2005a}). 

To overcome this obstacle, we proceed according to the strategy suggested in \cite{Parzanchevski2018SuperGoldenGates} of replacing the na\"ive Ramanujan conjecture with a variant of the Sarnak-Xue Density Hypothesis (\cite{sarnak1990Diophantine,sarnak1991bounds}) in the weight aspect. We consider certain \emph{automorphic families} $\mc F$: weighted subsets of the discrete automorphic spectrum $\mc{AR}_\disc(G)$ defined by assigning numbers $\mc F(\pi)$ to each $\pi$. We also consider for any representation $\pi_{\mf p}$ of $G_{\mf p}$, the matrix coefficient decay
\[
\sigma(\pi_{\mf p}) := \inf \{p \;:\; p \geq 2, \; \pi_{\mf p} \text{ has matrix coefficients in } L^p_0(G_{\mf p}) \},
\]
where $L^p_0$ is defined by integration mod center on $G_\mf p/Z_{G_\mf p}$ using that $\pi_\mf p$ has unitary central character. Then:   
\begin{thm} \label{thm:intro-density} 
Let $G$ be a definite, $F$-algebraic unitary group for some number field $F$ and let $\mf p$ be a prime such that $G(F_{\mf p})$ is not compact. For the automorphic families $\mc F_\delta$ for small $\delta > 0$ defined in \S\ref{sec:densityhypothesis} roughly representing the decomposition over the automorphic spectrum of the indicator function of a ball of volume $\delta$ in $G_\infty$, denote
\[
|\mc F_\delta| = \sum_{\pi \in \mc{AR}_\disc(G)} \mc F_\delta (\pi), \qquad |\mc F_\delta(\sigma,v)| = \sum_{\substack{\pi \in \mc{AR}_\disc(G) \\ \sigma(\pi_{\mf p}) \geq \sigma}} \mc F_\delta (\pi).
\]
Then, for any $\epsilon>0$, there exist $c_{\epsilon}>0$ such that for any $\sigma \geq 2$ and small enough $\delta$:
\[
|\mc F_\delta(\sigma,v)| \leq c_\eps |\mc F_\delta|^{\f 2\sigma - \eps} |\mc F_\delta(\infty,v)|^{1 - \f2 \sigma - \eps}
\]
(where we note that both numbers under the exponents are $\leq 1$).

\end{thm}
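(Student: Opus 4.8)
The plan is to combine the endoscopic classification with a trace-formula estimate of Sarnak--Xue type, amplified by a large Hecke operator at $\mf p$. First I would invoke the classification of $\mc{AR}_\disc(G)$ by global Arthur parameters (Mok in the quasi-split case, and its known extension to the relevant inner forms): every $\pi$ lies in a packet attached to some $\psi=\boxplus_i\mu_i[n_i]$, with the $\mu_i$ conjugate self-dual cuspidal automorphic representations of general linear groups and $[n_i]$ the $n_i$-dimensional representation of the Arthur $SL_2$. The point of bringing in the Ramanujan conjecture for conjugate self-dual cuspidal forms is that it forces each $\mu_{i,\mf p}$ to be tempered, so the \emph{only} mechanism producing non-temperedness of $\pi_\mf p$ is the Arthur $SL_2$; tracking the exponents of the resulting Speh-type local Langlands quotient shows that $\sigma(\pi_\mf p)$ is governed by $a(\psi):=\max_i n_i$, with $\sigma(\pi_\mf p)=\infty$ occurring precisely for the one-dimensional $\pi$ (parameter $\chi[n]$, which is what $|\mc F_\delta(\infty,v)|$ counts) and $\sigma(\pi_\mf p)\geq\sigma$ forcing $a(\psi)\geq n(1-\tfrac2\sigma)+\tfrac2\sigma$. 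For any such $\pi$ the Satake parameter of $\pi_\mf p$ carries an entry of size $\asymp q_\mf p^{(a(\psi)-1)/2}$, so, fixing a suitable building translation $g\in G_\mf p$ and writing $h^{(m)}:=\one_{K_\mf p g^m K_\mf p}$ with $D_m$ its degree, one obtains the eigenvalue lower bound $\lambda_\pi(h^{(m)})\gg_\eps D_m^{\,1-1/\sigma-\eps}$ whenever $\sigma(\pi_\mf p)\geq\sigma$.

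Next I would run the second-moment argument. Take $f=f_{\delta,\infty}\otimes\big(h^{(m)}*(h^{(m)})^\vee\big)\otimes\one_{K^\mf p}$, where $f_{\delta,\infty}$ is a fixed smooth positive-definite approximation to the indicator of the volume-$\delta$ ball whose spectral expansion defines $\mc F_\delta$ (in particular $\mc F_\delta(\pi)\geq0$). Since $G$ is definite the automorphic spectrum is entirely discrete, so the trace formula is elementary: its spectral side $\sum_\pi\mc F_\delta(\pi)\,|\lambda_\pi(h^{(m)})|^2$ equals the geometric side $\sum_{\gamma}\vol(G_\gamma(F)\backslash G_\gamma(\bA))\,O_\gamma(f)$, and both are $\geq0$. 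Discarding the terms with $\sigma(\pi_\mf p)<\sigma$ bounds the spectral side below by $\gg_\eps|\mc F_\delta(\sigma,v)|\,D_m^{\,2-2/\sigma-\eps}$. On the geometric side, $\gamma=1$ contributes $\asymp|\mc F_\delta|\,D_m$ (at $m=0$ this is the identity that evaluates $|\mc F_\delta|$); central $\gamma\neq1$ drop out because a root of unity in a number field that is archimedeanly $\delta$-close to $1$ must equal $1$ once $\delta$ is small; and the remaining noncentral $\gamma$ are confined near the identity at $\infty$, so that a lattice-point count in $G(F)\backslash G(\bA)$ bounds their total by $\ll\delta^{c}D_m^{2}$ for some $c>0$, while the trivial representation alone already gives $|\mc F_\delta(\infty,v)|\gg\delta^{c}$, whence this error is $\ll|\mc F_\delta(\infty,v)|\,D_m^{2}$. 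Combining the three estimates yields
\[
|\mc F_\delta(\sigma,v)|\ \ll_\eps\ |\mc F_\delta|\,D_m^{\,2/\sigma-1+\eps}\ +\ |\mc F_\delta(\infty,v)|\,D_m^{\,2/\sigma+\eps},
\]
and since $2/\sigma-1<0$ for $\sigma>2$ one chooses $m$ (hence $D_m$, up to the bounded ratio by which consecutive Hecke degrees grow) so that the two terms balance, $D_m\asymp|\mc F_\delta|/|\mc F_\delta(\infty,v)|$ — legitimate once $\delta$ is small enough for this to lie in the admissible range of the error estimate. Substituting makes both terms $\asymp|\mc F_\delta|^{2/\sigma+\eps}|\mc F_\delta(\infty,v)|^{1-2/\sigma-\eps}$, and using $|\mc F_\delta|\leq1$ to replace $2/\sigma+\eps$ by $2/\sigma-\eps$ gives exactly $|\mc F_\delta(\sigma,v)|\leq c_\eps|\mc F_\delta|^{2/\sigma-\eps}|\mc F_\delta(\infty,v)|^{1-2/\sigma-\eps}$.

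The step I expect to be the main obstacle is the uniform control of the noncentral geometric terms: proving that their total really is $\ll\delta^{c}D_m^{2}$ with the implied constant and, crucially, the \emph{range of validity} uniform in $m$ and $\delta$. This amounts to estimating orbital integrals of the amplified Hecke operator $h^{(m)}*(h^{(m)})^\vee$ against the shrinking archimedean ball — a lattice-point count in which one must exploit that noncentral elements of a definite unitary group cannot lie too close to the identity, and keep explicit track of how the gain $\delta^{c}$ degrades as $m$ grows, since this is precisely what caps how large $D_m$ (equivalently, how small a density exponent) one is allowed to take. A secondary, more routine point is making the local input at $\mf p$ uniform across the cases where $\mf p$ splits or is inert in the CM extension defining $G$ (and where the auxiliary level is ramified), so that both the relation between $\sigma(\pi_\mf p)$ and $a(\psi)$ and the eigenvalue lower bound hold as stated; this also relies on the Ramanujan bound for conjugate self-dual regular algebraic cuspidal representations at $\mf p$.
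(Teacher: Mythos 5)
Your proposal takes a genuinely different route from the paper. The paper never introduces a Hecke operator at $\mf p$ in the proof of this theorem: instead it partitions $\mc{AR}_\disc(G)$ by Arthur shape $\Box$, bounds $\sigma(\pi_\mf p)$ uniformly over each shape via M\oe{}glin's exponent bounds and the Ramanujan theorem for conjugate self-dual cohomological forms (Theorem \ref{thm:decaybound}), and separately bounds the total $\mc F_\delta$-mass of each shape by combining Kirillov's orbit-method formula for $\tr_{V_\lb}(f_{v_0}^{\eps,Z}\star (f_{v_0}^{\eps,Z})^*)$ with the black-boxed DGG count $I^G_\Box(\mathrm{EP}_\lb \bar\1_{K'}) \ll \dim_\Box \lb$ (Theorem \ref{thm:shapebound}, itself proved by the endoscopic hyperendoscopy/stable trace formula machinery, but applied once and for all in the weight aspect). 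The density exponent then reduces to the purely combinatorial inequality \eqref{eq:shapeineq}. What this buys is precisely an avoidance of the two hard analytic steps your proposal leans on: one never has to optimize an amplifier length $m$, and one never has to control orbital integrals of $h^{(m)}\star(h^{(m)})^\vee$ against a shrinking archimedean ball.

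Within your route there are two steps that would need more care than the sketch suggests. First, the spectral lower bound $\lambda_\pi(h^{(m)}) \gg_\eps D_m^{1-1/\sigma-\eps}$ presupposes $\pi_\mf p$ is $K_\mf p$-spherical, but the family only requires $\pi_\mf p$ to have a $K'_\mf p$-fixed vector, and $K'_\mf p$ is typically a deep parahoric or Iwahori; for ramified $\pi_\mf p$ one must pass to Iwahori--Hecke operators (the $\one_{Q n_a Q}$ of \S\ref{sec:auto-cover}), for which eigenvalue \emph{lower} bounds in terms of $\sigma(\pi_\mf p)$ are considerably more delicate than upper bounds --- the paper only needs (and only proves, via \cite{Lubetzky2017RandomWalks}) the upper bound direction. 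Second, and as you yourself flag, the noncentral geometric estimate $\ll \delta^c D_m^2$ with $c$ at least as large as the exponent in $|\mc F_\delta(\infty,v)| \asymp \eps^{N_G-1} \asymp \delta$ is exactly the step that is hard to make uniform: the standard Diophantine argument bounds lattice points in a $\delta$-ball of height $\leq q^{Cm}$ by a power of $\delta$ that depends on the dimension of the relevant variety, and matching this exponent to $1$ (and keeping it uniform as $m$ grows and $D_m$ is chosen $\asymp |\mc F_\delta|/|\mc F_\delta(\infty,v)|$) is precisely what has obstructed direct Sarnak--Xue amplification for higher-rank definite groups. So your proposal is a coherent alternative strategy, but the parts you defer are the same parts the paper's shape-decomposition approach was designed to sidestep, and one cannot simply assert the geometric bound with the required exponent without a substantial new argument.
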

Theorem \ref{thm:intro-density} is proven through the heavy use of recent advances in the Langlands program, especially Arthur's work on the endoscopic classification of automorphic representations of classical groups.

\subsection{Outline}
This paper is organized as follows: In Sections \ref{sec:unitary} and \ref{sec:buildings}, we collect facts about arithmetic unitary groups and Bruhat-Tits buildings, emphasizing the details particularly important to this application. In Section \ref{sec:gold}, we define the notions of golden and super-golden adelic groups, show how they give rise to gate sets that satisfy the last three properties of Definition \ref{def:intro-GG}, and give many examples. 

Section \ref{sec:auto} begins the second half of the paper: we first review material about automorphic representations, the Generalized Ramanujan Conjectures, endoscopic classifications, automorphic families, and the Sarnak-Xue density hypothesis. We also make a key definition of the shape of an automorphic representation $\pi$. In Section \ref{sec:matrixcoefficient}, we relate the shape of $\pi$ to the local matrix coefficient decay $\sigma(\pi_{\mf p})$ at finite places $v$. We combine this with a bound on counts of $\pi$ with a given shape to prove Theorem \ref{thm:intro-density} in Section \ref{sec:densityhypothesis}. Finally, in Section \ref{sec:auto-cover}, we use Theorem \ref{thm:intro-density} to prove that gate sets coming from golden adelic groups satisfy the optimal covering property, thus proving Theorem \ref{thm:intro-main}. This requires a last input bounding operator norms of Hecke operators on irreducible representations $\pi_\mf p$ in terms of $\sigma(\pi_\mf p)$ in Corollary \ref{cor:Hecke-auto-rep}. 

A reader just interested in the quantum computing application could mostly just read parts of Sections \ref{sec:firstthree}-\ref{sec:comparisons}, skip the most technical proofs, refer back to previous sections for notation and background, and ignore everything Section \ref{sec:auto} or later. Conversely, a reader just interested in how the automorphic theory was inputted should start reading at Section \ref{sec:auto}.

\subsubsection*{Conditionality}
The proof of Theorem \ref{thm:intro-density} depends heavily on Mok and Kaletha-Minguez-Shin-White's endoscopic classifications for unitary groups \cite{Mok15} and \cite{KMSW14}. Both depend on the unpublished weighted twisted fundamental lemma. The second in addition pushes many technical details to a specific reference ``KMSb'' that is not yet publicly available. 

We note that \cite{AGIKMS24} recently resolved the dependence of \cite{Mok15} and \cite{KMSW14} on the unitary analogues of the unpublished references A25-27 in \cite{Art13}.

\subsubsection*{Acknowledgments}
We would like to thank Yiannis Sakellaridis for pointing out the importance of Kirillov's orbit method, Alexander Hazeltine and Alberto Minguez for help with \S\ref{sec:matrixcoefficient}, Mathilde Gerbelli-Gauthier for help with \S\ref{sec:densityhypothesis}, and David Schwein and Jiandi Zou for many helpful conversations about Bruhat-Tits theory. 

We would also like to thank Zachary Steir, Ian Texeira, and Harry Zhou for help understanding the quantum computing background.

RD was partially funded by Principal Investigator project PAT4832423 of the Austrian Science Fund (FWF) and NSF postdoctoral grant 2103149 while working on this project. SE was supported supported by ISF grant 1577/23. OP  was supported by ISF grant 2990/21.

\subsection{Notation}
\subsubsection{Conventions}
Let $f,g\,:\,\bN\rightarrow\mathbb{R}_{\geq0}$. We say:
\begin{itemize}
    \item $f \ll g$ if there is a constant $c$ such that $f(x) \leq c g(x)$ for all large enough $x$, 
    \item $f \asymp g$ if both $f \ll g$ and $g \ll f$,
    \item $f\lesssim g$ if for any $\epsilon>0$, there exists $c_{\epsilon}>0$ such that $f(x)\leq c_{\epsilon}\cdot \max\{g(x)^{1+\epsilon}, g(x)^{1 - \eps}\}$,
    \item $f\sim g$ if both $f\lesssim g$ and $g\lesssim f$.
\end{itemize}
In CS notations $f\ll g$ is equivalent to $f = O(g)$, $f \asymp$ is equivalent to $f = \Theta(g)$, and $f \lesssim g$ is the equivalent to $f = \tilde{O}(g)$ (the soft-O notation).

Given a number field $F$, denote its adeles by
\[
\A_F := \restprod_v F_v,
\]
i.e, the restricted product over localizations at all its places with respect to local integers. Given a set of places $S$ of $F$, we use standard upper- and lower-indexing notation: 
\[
F_S := \restprod_{v \in S} F_v, \qquad F^S  := \restprod_{v \notin S} F_v.
\]
If $X$ is a variety over $F$, then also define $X_S = X(F_S)$ and $X^S = X(F^S)$. 

We use the standard notational conventions of \cite{Art13}, \cite{Mok15}, and \cite{KMSW14} for Arthur parameters packets---$\psi$ is a parameter, $\psi_v$ its localization to place $v$, $\Pi_{\psi_v}$ the local $A$-packet, etc. 

Finally, if $\pi_1, \pi_2$ are representations of groups $G_1, G_2$ respectively, then $\pi_1 \boxtimes \pi_2$ is the outer-product representation of $G_1 \times G_2$. If $G$ is a group, $Z_G$ is its center (either abstract or algebraic depending on context). 

\subsubsection{Notation Reference}
We also use the following non-standard notations across many sections:
\begin{itemize}
    \item $\mc B$ is usually the Bruhat-Tits building of the local group $G$ being studied in the context,
    \item $\Sigma$ is a techinical modification of the set of fundamental weights of $G$ from \S\ref{sec:unramcartan}/\S\ref{sec:ramcartan},
    \item $\Sigma_0$ is a subset of $\Sigma$ as in \S\ref{sec:nonstandardcartan} which can be \emph{Weyl-complete} or \emph{navigable},
    \item $\Sigma^\scn, \Sigma_0^\scn$ are modifications of $\Sigma, \Sigma_0$ from \S\ref{sec:decimation}
    \item $\|\cdot\|_0, \|\cdot\|$ are the Cartan norm and modified Cartan norm of \ref{def:pre-norm}/\ref{dfn:ramcartannorm},
    \item $\|\cdot\|_{\Sigma_0}, \|\cdot\|_{\Sigma^\scn_0}$ are the non-standard Cartan norm and decimated Cartan norm of \ref{def:nonstandardcartan} and \S\ref{sec:decimation} respectively,
    \item $S_\star, \wtd S_\star, \zl S_\star, \zl{\wtd S_\star}, C_\star, \wtd C_\star$ are gate sets and special subsets defined in \ref{def:gateset}. Tilde-versions resolve technical issues with non-trivial centers,
    \item $S^{[\ell]}_\star, \wtd S^{[\ell]}_\star$ are subsets of the group generated by a gate set in Proposition \ref{prop:gategeneration}(2,3) that are slight modifications of $S^{(\ell)}$,
    \item $T_\star, \wtd T_\star, \zl{T_\star}, \zl{\wtd T_\star}$ are subsets of a super-golden gate set defined in \ref{def:supergoldengates},
    \item $\tau$ is a subset of $\mc B$ that can be (\emph{decimated} or \emph{non-standard}) \emph{traversable}  as in \ref{def:traversable}.
\end{itemize}
For the automorphic bounds in the second half:
\begin{itemize}
    \item $\Box$ is a shape of an automorphic representation as in \ref{def:shape},
    \item $G_F(\Box)$ is a group associated to shape $\Box$ in \eqref{eqn:shapefunctgroup}
    \item $L^2_\Box$ is the space of automorphic representations with a given shape $\Box$ in \ref{def:l2box}
    \item $\mathbb P_\star$ is the orthogonal projection onto $\star$ (e.g, as in \ref{def:l2box})
    \item $\|\lb\|, m(\lb), \dim(\lb), \dim_\Box(\lb)$ are various norms of infinitesimal characters from \S\ref{sec:infcarnorms},
    \item $N_G, N^\der_G, r_G, P_G$ are various dimensions associated to group $G$ from \S\ref{sec:infcarnorms}
    \item $\lb \in \Box$ means infinitesimal character $\lb$ is compatible with shape $\Box$ as in \ref{dfn:infcharcombo}
    \item $e(\Box)$ is a dimension associated to shape $\Box$ from \eqref{ebox},
    \item $\mc F$ is usually an automorphic family as in \ref{def:auto-family},
    \item $\sigma_\Box$ is a decay-bound associated to shape $\Box$ in \ref{def:sigmabox},
    \item $f^{\eps, Z}_{v_0}$ is a test function defined in \eqref{eq:fepsz} that is almost the indicator function of a ball.
\end{itemize}

\section{Arithmetic unitary groups}\label{sec:unitary}

In this section, we collect some facts about arithmetic unitary groups
and their class number.

\subsection{Definitions}
Let $F$ be a totally real number field and let $\mathcal{O}$ be
its ring of integers. Let $V$ be the set of places of $F$ and let
$V_{f}$ and $\infty$ be the subsets of finite and infinite
places, respectively. For any $v \in V$, let $F_v$ be the $v$-completion
of $F$ and, if $\mf p \in V_{f}$, let $\mathcal{O}_{\mf p}$ be the ring
of integers of $F_{\mf p}$ and $q_{\mf p}$ the residue field degree of
$F_{\mf p}$. For any $S\subset V$, let 
\[
\mathcal{O}\left[1/S\right]=F\bigcap_{\mf p\in V_{f}\setminus S}\mathcal{O}_{\mf p}
\]
be the ring of $S$-integers of $F$. When $S=\left\{ \mathfrak{p}\right\} \bigcup V_{\infty}$,
we abbreviate $\mathcal{O}\left[1/\mathfrak{p}\right] :=\mathcal{O}\left[1/S\right]$,
called the ring of $\mathfrak{p}$-integers of $F$. Note that for
$S=V_{\infty}$, we have $\mathcal{O}\left[1/S\right]=: \mathcal{O}$,
the ring of integers of $F$.

Let $E$ be a totally imaginary quadratic extension of $F$ and let
$\mathcal{O}_{E}$ be its ring of integers. Let $3\leq n\in\mathbb{N}$
and let $H\in GL_n\left(E\right)$ be a non-degenerate, Hermitian,
totally positive-definite matrix. Assume for simplicity that $H\in M_{n}\left(\mathcal{O}_{E}\right)$
and that $\mathrm{gcd}\left(H_{ij}\right)\in\mathcal{O}_{E}^{\times}$.
Denote by $U_{n}^{E,H}$ the
unitary group scheme over $\mathcal{O}$ with respect to $E$ and $H$ defined
for any $\mathcal{O}$-algebra $A$ by 
\[
U_{n}^{E,H}\left(A\right):=\left\{ g\in GL_n\left(A\otimes_{\mathcal{O}}\mathcal{O}_{E}\right)\,:\,g^{*}Hg=H\right\} .
\]
Define the special unitary and projective unitary group schemes $SU_{n}^{E,H}$ and $PU_{n}^{E,H}$ over $\mathcal{O}$ with respect to $E$ and $H$ by:
\[
SU_{n}^{E,H}:=\left\{ g\in U_{n}^{E,H}\,:\,\det\left(g\right)=1\right\} ,\qquad PU_{n}^{E,H}:=U_{n}^{E,H}/U_{1}^{E},
\]
where $U_{1}^{E}\left(A\right):=\left\{ x\in\left(A\otimes_{\mathcal{O}}\mathcal{O}_{E}\right)^{\times}\,:\,\bar{x}x=1\right\} $
is identified with the scalar matrices of $U_{n}^{E,H}$. Finally, define
\[
GU_n := \{ g \in GL_n (A\otimes_{\mathcal{O}}\mathcal{O}_{E} )\,:\,g^{*}Hg=\alpha H, \alpha \in (A\otimes_{\mathcal{O}}\mathcal{O}_{E})^\times\}.
\]

We have equalities on points: $PU_n(\R) = U_n(\R)/U_1(\R)$ and $PU_n(S) = GU_n(S)/(S \otimes_\mc O \mc O_E)^\times$ for local and global fields $S$. However, this does not necessarily hold more generally.

\begin{rem}
In our final construction, we will exclusively  use $U_n^{E,H}$ since it is the only case where we have access to the endoscopic classification and can prove optimal covering. However, the endoscopic classification is expected to be true for the other groups as well and these may be better suited to the construction of super-golden gates at non-split primes---see, e.g, Remark \ref{rem:GUadvantage}. We therefore consider more general groups in this section.
\end{rem}

Let $\disc(H) \in F^\times/N_{E/F}(E^\times)$ be the discriminant:
\begin{equation}\label{eq discH}
\disc(H) := (-1)^{n(n-1)/2} \det(\m H)
\end{equation}
for any matrix representation $\m H$ of $H$ (this is well-defined up to norms). Since $H$ is totally positive-definite, we recall a well-known fact (using the classification of Hermitian forms over $p$-adic fields from \cite{Jacobowitz1962Hermitianformsover}):
\begin{lem}\label{lem:rationalpoints}
At each place $v$, $U^{E,H}_n(F_v)$ is isomorphic to
\begin{itemize}
    \item The classical (i.e. complex-entry) compact unitary group of rank $n$ if $v|\infty$,
    \item $\GL_n(F_\mf p)$ if $v = \mf p$ is split finite,
    \item The unique quasisplit unitary group for extension $F_{\mf p} \otimes_E E/F_{\mf p}$ if $v = \mf p$ is non-split finite and either $n$ is odd or $\disc(H)$ is a norm from $F_{\mf p} \otimes_F E$ to $F_{\mf p}$,
    \item The unique non-quasisplit unitary group for extension $F_{\mf p} \otimes_F E/F_{\mf p}$ if $v = \mf p$ is non-split finite, $n$ is even, and $\disc(H)$ is not a norm from $F_{\mf p} \otimes_F E$ to $F_{\mf p}$.
\end{itemize}
\end{lem}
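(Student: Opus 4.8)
The plan is to localize at $v$, setting $E_v := E \otimes_F F_v$ with its induced involution, and to identify $U^{E,H}_n(F_v)$ in three cases. For $v\mid\infty$: since $E$ is totally imaginary, $F_v=\R$ and $E_v=\C$ with complex conjugation, and $H_v$ is a positive-definite Hermitian matrix because $H$ is totally positive-definite; by the inertia theorem all positive-definite rank-$n$ Hermitian forms over $\C/\R$ are isometric, and the unitary group of the standard one is the compact $U(n)$. For $v=\mf p$ split in $E$: then $E_\mf p\cong F_\mf p\times F_\mf p$ with the involution swapping the factors, so $g\in GL_n(E_\mf p)$ is a pair $(g_1,g_2)$ with $g^{\ast}$ the pair $(g_2^{\mathbf T},g_1^{\mathbf T})$; writing $H$ as $(H_1,H_1^{\mathbf T})$, the equation $g^{\ast}Hg=H$ forces $g_2=H_1^{-\mathbf T}g_1^{-\mathbf T}H_1^{\mathbf T}$, so projection to the first factor is an isomorphism $U^{E,H}_n(F_\mf p)\iso GL_n(F_\mf p)$, independent of $H$. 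These give the first two bullets.

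For $v=\mf p$ non-split, $E_\mf p$ is the quadratic field extension of $F_\mf p$ and $H_\mf p$ is a non-degenerate rank-$n$ Hermitian form for $E_\mf p/F_\mf p$. Here I would invoke Jacobowitz's classification \cite{Jacobowitz1962Hermitianformsover}: such forms are classified up to isometry by rank and discriminant in $F_\mf p^\times/N_{E_\mf p/F_\mf p}(E_\mf p^\times)\cong\Z/2\Z$, so there are exactly two isometry classes of rank $n$, with isomorphic unitary groups precisely when the forms are scalar-similar. Since $c^n\in N_{E_\mf p/F_\mf p}(E_\mf p^\times)$ for all $c\in F_\mf p^\times$ when $n$ is even (as $c^2=N_{E_\mf p/F_\mf p}(c)$), whereas $c\mapsto c^n$ is surjective modulo norms when $n$ is odd, it follows that for $n$ odd the two classes give one group — necessarily the unique quasisplit $U_n$ — while for $n$ even the trivial-discriminant class gives the quasisplit group and the non-trivial one its unique non-quasisplit inner form. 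To identify which class is which I would use the hyperbolic Hermitian form of even rank as a reference point: its Gram determinant is $(-1)^{n/2}$, so the sign $(-1)^{n(n-1)/2}$ in \eqref{eq discH} normalizes its discriminant to be trivial, and its unitary group is quasisplit; since the localization of the global $\disc(H)$ is $\disc(H_\mf p)$, the quasisplit case is exactly when $\disc(H)$ is a norm from $F_\mf p\otimes_F E$ to $F_\mf p$. This gives the last two bullets.

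I expect the only part needing genuine care to be the normalization bookkeeping in the non-split, even-rank case — matching ``trivial Hermitian discriminant'' with ``quasisplit'' and tracking the $(-1)^{n(n-1)/2}$ twist — together with correctly citing the uniqueness of both the quasisplit and the (unique) non-quasisplit unitary group over a $p$-adic field. Everything else reduces to the structure of $E_\mf p$ at the three kinds of places plus a direct application of the cited local classification of Hermitian forms.
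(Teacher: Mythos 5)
Your proof is correct, and it fills in exactly the argument the paper delegates to Jacobowitz's classification of local Hermitian forms: localize at $v$, handle the Archimedean and split cases directly, and in the non-split case invoke the rank-plus-discriminant classification together with the sign normalization in \eqref{eq discH} to match trivial discriminant with the quasisplit form. The one place to phrase more carefully is the assertion ``isomorphic unitary groups precisely when the forms are scalar-similar'': only the easy direction (scalar-similar $\Rightarrow$ isomorphic groups) is used to collapse the odd-$n$ case, while for $n$ even it is cleaner to note directly that the two discriminant classes have different Witt indices, hence different $F_\mf p$-rank, hence non-isomorphic unitary groups.
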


\subsection{Basic Structure}

Let $G$ be either $U_{n}^{E,H}$, $SU_{n}^{E,H}$, $PU_{n}^{E,H}$, or $GU_n^{E,H}$.
For any $v \in V$, denote $G_v :=G\left(F_v \right)$, and for any
$\mathfrak{p}\in V_{f}$, denote $K_{\mathfrak{p}}:=G\left(\mathcal{O}_{\mathfrak{p}}\right)$
and $\Gamma_{\mathfrak{p}}:=G\left(\mathcal{O}\left[1/\mathfrak{p}\right]\right)$.
Call $\Gamma_{\mathfrak{p}}$ the principal $\mathfrak{p}$-arithmetic
subgroup of $G$; any finite index subgroup of it is called a
$\mathfrak{p}$-arithmetic subgroup. Denote $G_{\infty}:=\prod_{v\in V_{\infty}}G_{\mf p}$
and $\Gamma :=G\left(\mathcal{O}\right)$. Call $\Gamma$ the
principal arithmetic subgroup of $G$; any finite index subgroup
of it is called an arithmetic subgroup.

\begin{lem}\label{lem:pre-BHC} 
The following hold for $G \neq GU_n^{E,H}$:
\begin{enumerate}
\item Any arithmetic subgroup of $G$ is finite.

\item Any $\mathfrak{p}$-arithmetic subgroup of $G$ is a cocompact
lattice of $G_{\mathfrak{p}}$.

\item Any $\mathfrak{p}$-arithmetic subgroup of $G$ is a dense subgroup
of $G_{\infty}$.
\end{enumerate}
\end{lem}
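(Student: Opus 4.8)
The plan is to obtain all three parts from classical reduction theory (Borel--Harish-Chandra and Godement's compactness criterion) together with strong approximation, the only real work being to check that their hypotheses are forced by $H$ being totally positive-definite. I would first record three structural facts. (a) $G_\infty=\prod_{v\mid\infty}G_v$ is compact: at a real place $v$ one has $E\otimes_F F_v\cong\bC$ and $H$ becomes a positive-definite Hermitian matrix, so $G_v$ is conjugate in $GL_n(\bC)$ to the standard compact $U(n)$ (resp.\ $SU(n)$, $PU(n)$). (b) Since $G$ is thus $F_v$-anisotropic for $v\mid\infty$, it is $F$-anisotropic, a nontrivial $F$-split torus remaining $F_v$-split. (c) For $\mathfrak p$ finite and $n\ge 3$ the group $G_\mathfrak p$ is \emph{non}-compact: if $\mathfrak p$ splits in $E$ then $G_\mathfrak p\cong GL_n(F_\mathfrak p)$ (resp.\ $SL_n$, $PGL_n$), and otherwise $G$ is an isotropic unitary group over $F_\mathfrak p$ because a Hermitian form in $\ge 3$ variables over a $p$-adic field is isotropic (this is in Jacobowitz, cited above for Lemma \ref{lem:rationalpoints}). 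Throughout I use that, $G$ being an affine $\mathcal O$-scheme, the groups $G(\mathcal O)$ and $G(\mathcal O[1/\mathfrak p])$ sit discretely inside $G(\A_F)$, and that an inclusion of (localized) rings of integers induces an injection on $G$-points.

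For (1): $G(\mathcal O)$ is the intersection of the discrete subgroup $G(F)$ with the open subgroup $G_\infty\times\prod_{v<\infty}G(\mathcal O_v)$ of $G(\A_F)$, hence discrete in it; projecting off the compact factor $\prod_{v<\infty}G(\mathcal O_v)$ it is discrete in the compact group $G_\infty$, so finite, and any arithmetic subgroup has finite index in it. For (2), take $S=\{\mathfrak p\}\cup V_\infty$; the same reasoning exhibits $\Gamma_\mathfrak p=G(\mathcal O[1/\mathfrak p])$ as a discrete subgroup of $G_S:=G_\mathfrak p\times G_\infty$, it is a lattice there by Borel--Harish-Chandra, and it is cocompact by Godement's criterion because $G$ is $F$-anisotropic. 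Finally I would push forward along the projection $G_S\to G_\mathfrak p$: restricted to $\Gamma_\mathfrak p$ this is just the inclusion $G(\mathcal O[1/\mathfrak p])\hookrightarrow G(F_\mathfrak p)$, so injective; its image is discrete (a convergent sequence of images lifts, using compactness of the $G_\infty$-fibres, to a convergent sequence in $G_S$, hence is eventually constant) and cocompact (a continuous image of the compact $G_S/\Gamma_\mathfrak p$). So $\Gamma_\mathfrak p$ is a cocompact lattice in $G_\mathfrak p$, and so is any finite-index subgroup.

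For (3) the engine is strong approximation for the simply connected, $F$-simple group $SU_n$ (an $F$-form of $SL_n$, type $A_{n-1}$). By (c), $SU_n(F_\mathfrak p)$ is noncompact, so the Kneser--Platonov theorem gives $SU_n(F)$ dense in $SU_n(\A_F^{\{\mathfrak p\}})$; intersecting with the open subgroup $SU_n(F_\infty)\times\prod_{v<\infty,\,v\ne\mathfrak p}SU_n(\mathcal O_v)$ and projecting to the archimedean factor shows $SU_n(\mathcal O[1/\mathfrak p])$ is dense in $SU_n(F_\infty)=G_\infty$, which is (3) for $G=SU_n$. For $G=PU_n$, the central isogeny $SU_n\twoheadrightarrow PU_n$ is surjective on $F_v$-points with compact source for $v\mid\infty$ (it is $SU(n)\twoheadrightarrow PU(n)$), so it carries the dense set $SU_n(\mathcal O[1/\mathfrak p])$ to a dense subset of $PU_n(F_\infty)$ lying inside $\Gamma_\mathfrak p=PU_n(\mathcal O[1/\mathfrak p])$. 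For $G=U_n$ I would argue through $1\to SU_n\to U_n\xrightarrow{\det}U_1\to 1$: the closure of $\Gamma_\mathfrak p$ already contains the image of $SU_n(\mathcal O[1/\mathfrak p])$, hence all of $SU_n(F_\infty)$, so it remains only to see that $\det(\Gamma_\mathfrak p)$ is dense in the compact torus $U_1(F_\infty)=\prod_{v\mid\infty}U(1)$; this amounts to density of the norm-one $\mathfrak p$-units of $E$ in $U_1(F_\infty)$, which follows from the analogous argument for $U_1$ once $U_1$ is $F_\mathfrak p$-isotropic, i.e.\ once $\mathfrak p$ splits in $E/F$.

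The main obstacle is this last point. Strong approximation is intrinsically a statement about simply connected groups, so transferring it to $U_n$ (and, more mildly, to $PU_n$) forces one to bookkeep with the central torus $U_1$; and for $U_n$ the density in (3) genuinely requires extra input on $\mathfrak p$ (at least that $\mathfrak p$ split in $E/F$, so that the norm-one $\mathfrak p$-unit group is infinite). For the groups actually used in the gate-set construction --- $PU_n$, and $SU_n$ --- this difficulty does not arise, and everything reduces cleanly to Borel--Harish-Chandra plus strong approximation for $SU_n$.
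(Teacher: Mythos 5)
Your parts (1) and (2) follow the paper's approach essentially verbatim: Borel--Harish-Chandra plus Godement for cocompactness, then project off a compact factor. For part (3), however, you take a genuinely different and more careful route. The paper disposes of (3) in one line, asserting that a cocompact lattice of $G_\infty\times G_{\mathfrak p}$ projects densely to $G_\infty$ when $G_{\mathfrak p}$ is noncompact. That statement is false for general cocompact lattices (take a product lattice $\Gamma_1\times\Gamma_2$), and for arithmetic lattices its validity rests precisely on irreducibility, which in turn is usually \emph{proved} by the strong-approximation argument you make explicit for $SU_n$ and then transfer through $SU(n)\twoheadrightarrow PU(n)$. Your version is thus a de-abbreviation of what the paper's one-liner must mean, and I think it is the right way to write the proof.

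More importantly, you have spotted a genuine inaccuracy in the lemma as stated. For $G=U_n$ and $\mathfrak p$ non-split in $E/F$, the unique prime $\mathfrak q$ of $E$ over $\mathfrak p$ is fixed by conjugation, so any $x\in U_1(\mathcal O[1/\mathfrak p])$ satisfies $2\,v_{\mathfrak q}(x)=v_{\mathfrak q}(\bar xx)=0$; hence $U_1(\mathcal O[1/\mathfrak p])=U_1(\mathcal O)$ is the finite group of roots of unity in $E$ (Kronecker). Then $\det(\Gamma_{\mathfrak p})$ is finite, so the closure of $\Gamma_{\mathfrak p}$ in $U_n(F_\infty)$ sits inside the proper closed subgroup $\det^{-1}(\mu(E))$, and assertion (3) fails. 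So for $U_n$ one genuinely needs $\mathfrak p$ split (or more precisely that the norm-one $\mathfrak p$-unit group be infinite), exactly as you say. The paper's proof does not make this distinction and is therefore incomplete for $U_n$. This does not propagate to the paper's main theorems, which produce gate sets in $PU(n)$ — and for $PU_n$, as you argue, density is inherited from $SU_n$ via the surjection of compact real groups — but it is worth flagging, and your strong-approximation-based write-up is the correct repair.
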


\begin{proof}
First note that, since $H$ is totally positive-definite, we get that for any $v\in V_{\infty}$, $G_{\mf p}\cong U\left(n\right)$, $SU\left(n\right)$ or $PU\left(n\right)$, the unitary, special unitary, or projective unitary compact Lie groups; hence, $G_{\infty}$ is a compact Lie group. By Borel--Harish-Chandra theory \cite{borel1962arithmetic}, we get that any finite arithmetic subgroup of $G$ is a cocompact lattice of $G_{\infty}$ and any $\mathfrak{p}$-arithmetic subgroup of $G$ is a cocompact lattice of $G_{\infty}\times G_{\mathfrak{p}}$. 

Claim 1 follows from the fact that a discrete subgroup of a compact group is finite. Claim 2 follows from the fact that the projection of a cocompact lattice of $G_{\infty}\times G_{\mathfrak{p}}$ onto the second component remains a cocompact lattice of $G_{\mathfrak{p}}$ (since $G_{\infty}$ is compact). Claim 3 follows from the fact that, since $\mathrm{rank}_{F_{\mathfrak{p}}}(G_{\mathfrak{p}}) \geq \frac{n}{2}-1>0$ (since $n\geq3$), $G_{\mathfrak{p}}$ is non-compact and therefore projecting a cocompact lattice of $G_{\infty}\times G_{\mathfrak{p}}$ to the first component is dense.
\end{proof}
Let $\mathbb{A}:=\prod'_{v\in V}F_{v}$ be the ring of adeles of $F$, let $\widehat{\mathcal{O}} := \prod'_{\mf p \in V_{f}}\mathcal{O}_{\mf p}$, let $F_{\infty} = \prod_{v\in V_{\infty}}F_{v}$, and consider $F$ embedded diagonally in $\mathbb{A}$. Then $\mathbb{A}$ is a locally compact ring, $F$ (embedded diagonally) is a discrete subring, $\widehat{\mathcal{O}}$ and $F_{\infty}\widehat{\mathcal{O}}$ (embedded coordinate-wise) are compact and open subrings, respectively, and $\mathbb{A}=F\cdot F_{\infty}\widehat{\mathcal{O}}$. 

\begin{lem}\label{lem:pre-U1E} 
Let $E$ be of class number one. Then
\[
U_{1}^{E}\left(\mathbb{A}\right)=U_{1}^{E}\left(F\right)\cdot\prod_{v\in V_{\infty}}U_{1}^{E}\left(F_{v}\right)\prod_{\ell\in V_{f}}U_{1}^{E}\left(\mathcal{O}_{\ell}\right).
\]
\end{lem}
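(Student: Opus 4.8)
The plan is to translate everything into idele language and reduce to the class-number-one hypothesis on $E$. Write $\mathbb{A}_E=\mathbb{A}_F\otimes_F E$, with $\bar{\cdot}$ the $\mathbb{A}_F$-linear extension of the nontrivial automorphism of $E/F$ and $N=N_{E/F}$ the associated norm, so that $U_1^E(\mathbb{A})=\{x\in\mathbb{A}_E^\times:\bar xx=1\}$ is the group of norm-one ideles of $E$. Set $U:=E_\infty^\times\cdot\widehat{\mathcal{O}_E}^\times=\prod_{w\mid\infty}E_w^\times\prod_{w\in V_{E,f}}\mathcal{O}_{E,w}^\times\subset\mathbb{A}_E^\times$. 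A place-by-place check gives $U_1^E(\mathbb{A})\cap U=\prod_{v\mid\infty}U_1^E(F_v)\prod_{\ell\in V_f}U_1^E(\mathcal{O}_\ell)$ (a norm-one idele that is a unit at every finite place has archimedean components in $\{z\in\mathbb{C}^\times:\bar zz=1\}=U_1^E(F_v)$ and finite components in $U_1^E(\mathcal{O}_\ell)$). Hence the lemma is equivalent to the inclusion $U_1^E(\mathbb{A})\subseteq U_1^E(F)\cdot U$: once we write $x=\alpha u$ with $\alpha\in U_1^E(F)$ and $u\in U$, the factor $u=\alpha^{-1}x$ automatically lies in $U_1^E(\mathbb{A})\cap U$ because $x$ and $\alpha$ both do.

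To prove that inclusion I would first use class-number-one in idelic form, $\mathbb{A}_E^\times=E^\times\cdot U$ (the surjection from $\mathbb{A}_E^\times$ onto the group of fractional ideals $\prod_w w^{v_w(\cdot)}$ has kernel $U$ and carries $E^\times$ onto the principal ideals), to write a given $x\in U_1^E(\mathbb{A})$ as $x=\alpha'u'$ with $\alpha'\in E^\times$ and $u'\in U$. The issue — and the crux of the argument — is that $\alpha'$ need not have norm one; it must be corrected by a unit of $\mathcal{O}_E$. Since $u'$ is a unit at every finite place, $N(u')$ is a unit at every finite place of $F$, and as $N(\alpha')=N(x)N(u')^{-1}=N(u')^{-1}$ lies in $F^\times$, we conclude $N(\alpha')\in\mathcal{O}_F^\times$. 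Therefore the fractional ideal $\mathfrak{b}:=\alpha'\mathcal{O}_E$ satisfies $\mathfrak{b}\,\bar{\mathfrak{b}}=(\alpha'\bar\alpha')\mathcal{O}_E=(N(\alpha'))\mathcal{O}_E=\mathcal{O}_E$, i.e. $\bar{\mathfrak{b}}=\mathfrak{b}^{-1}$.

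Next I would observe that any fractional ideal $\mathfrak{b}$ with $\mathfrak{b}\,\bar{\mathfrak{b}}=(1)$ has the form $\mathfrak{a}\,\bar{\mathfrak{a}}^{-1}$: factoring $\mathfrak{b}=\prod_w w^{n_w}$, the relation forces $n_w=0$ at each inert or ramified $w$ and $n_{\bar w}=-n_w$ at the two places above each split prime, so picking one place $w$ from each split pair and setting $\mathfrak{a}:=\prod w^{n_w}$ gives $\mathfrak{a}\,\bar{\mathfrak{a}}^{-1}=\mathfrak{b}$. Applying class-number-one a second time, $\mathfrak{a}=\delta\mathcal{O}_E$ for some $\delta\in E^\times$, so $\mathfrak{b}=(\delta/\bar\delta)\mathcal{O}_E=\alpha'\mathcal{O}_E$ and hence $\alpha'=\beta\cdot(\delta/\bar\delta)$ with $\beta\in\mathcal{O}_E^\times$. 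Now $\alpha:=\delta/\bar\delta\in U_1^E(F)$ — it manifestly has norm one — and $u:=\beta u'\in U$, while $\alpha u=\beta(\delta/\bar\delta)u'=\alpha'u'=x$; this exhibits $x\in U_1^E(F)\cdot U$ and finishes the proof.

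The main obstacle, then, is the realization that the class-number hypothesis has to be invoked twice: once to split an arbitrary norm-one idele modulo $U$, and then again to trivialize the auxiliary "conjugate-inverse" ideal $\mathfrak{a}$, which is what makes the leftover factor genuinely norm-one. Everything else is bookkeeping, chiefly the identification of $U_1^E(\mathbb{A})\cap U$ with the displayed product. One can compress the middle of the argument by instead invoking adelic Hilbert~90 — the map $z\mapsto z/\bar z$ surjects $\mathbb{A}_E^\times$ onto $U_1^E(\mathbb{A})$, checked place by place and using $\widehat{H}^{-1}(\Gal(E_w/F_v),\mathcal{O}_{E,w}^\times)=0$ at unramified $v$ to keep the lift integral for almost all $w$ — and then writing $z=\delta w$ with $\delta\in E^\times$, $w\in U$ gives $x=(\delta/\bar\delta)(w/\bar w)$ directly, with $\delta/\bar\delta\in U_1^E(F)$ and $w/\bar w\in U_1^E(\mathbb{A})\cap U$.
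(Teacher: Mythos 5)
Your proposal is correct, and the conclusion matches the paper's, but you take a genuinely different route. The paper directly constructs a single $\alpha\in E^{\times}$ whose support sits on one place from each split pair $\ell=\ell_{1}\ell_{2}$, matching the valuation of $x_{\ell_{1}}$ there and trivial elsewhere; the norm-one correction $\alpha/\bar\alpha$ is then automatic, and one observes that at non-split $\ell$ one already has $U_{1}^{E}(F_{\ell})=U_{1}^{E}(\mathcal{O}_{\ell})$ (the norm-one condition forces unit valuation) so nothing needs correcting there. You instead first factor $x=\alpha'u'$ using $\mathbb{A}_{E}^{\times}=E^{\times}U$ and only afterward repair $\alpha'$ to make it norm-one, which requires the auxiliary decomposition $(\alpha')=\mathfrak{a}\bar{\mathfrak{a}}^{-1}$ and a second principalization $\mathfrak{a}=(\delta)$. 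Both are valid; the paper's version is shorter because it targets the support of $\alpha$ from the start, avoiding the detour through $\alpha'$, whereas your version is more mechanical and spells out the split/inert/ramified casework implicitly through the relation $\mathfrak{b}\bar{\mathfrak{b}}=(1)$. The two ``applications'' of class-number-one in your argument are not really independent inputs — they are the same hypothesis invoked twice because you postpone the norm-one constraint rather than building it in. Your closing remark about adelic Hilbert~90 is the cleanest formulation of all and is worth keeping in mind: writing $x=z/\bar z$ with $z\in\mathbb{A}_{E}^{\times}$ integral almost everywhere and then decomposing $z=\delta w$ via class-number-one immediately produces $x=(\delta/\bar\delta)(w/\bar w)$, subsuming both your argument and the paper's.
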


\begin{proof}
Let $x=\left(x_{v}\right)\in U_{1}^{E}\left(\mathbb{A}\right)$, i.e.
$\overline{x_{v}}x_{v}=1$ for any $v$ and $x_{\ell}\in U_{1}^{E}\left(\mathcal{O}_{\ell}\right)$
for almost all finite places $\ell$. Note that if $\ell$ is non-split in $E$, then $U_{1}^{E}\left(F_{\ell}\right)=U_{1}^{E}\left(\mathcal{O}_{\ell}\right)$.
If $\ell = \ell_1 \ell_2$ is split in $E$, then $F_{\ell} \otimes_F E  = E_{\ell_1} \times E_{\ell_2}$ so we can write $x_\ell = (x_{\ell_1}, x_{\ell_2})$ for $x_{\ell_i} \in E_{\ell_i}$. 

Let $S$ be the finite set of split primes $\ell$ such that $|x_{\ell_1}|_{\ell_1 } \ne 1$. By the class-number-one hypothesis, there exists $\alpha \in E$ such that $|\alpha|_{\ell_1} = |x_{\ell_1}|_{\ell_1}$ and $|\alpha_{\ell_2}| =1$ for all $\ell \in S$ and $|\alpha|_{\ell_i} = 1$ for all $\ell \notin S$. Then $\alpha/ \bar \alpha \in U^E_1(F)$ and satisfies $|\alpha_{\ell_i}| = |x_{\ell_i}|_{\ell_i}$ for $i = 1,2$ and all split $\ell$.

Putting it all together, $x \in (\alpha/\bar \alpha) \prod_{v\in V_{\infty}}U_{1}^{E}\left(F_{v}\right)\prod_{\ell\in V_{f}}U_{1}^{E}\left(\mathcal{O}_{\ell}\right)$. 
\end{proof}

\begin{lem} \label{lem:pre-SA} 
Let $G$ be either $U_{n}^{E,H}$, $SU_{n}^{E,H}$, $PU_{n}^{E,H}$, or $GU_n^{E,H}$. If $G \ne  SU_{n}^{E,H}$ assume that $E$ is of class number one. 
Then for any prime $\mathfrak{p}$,
\[
G\left(\mathbb{A}\right)=G\left(F\right)\cdot G_{\infty}G_{\mathfrak{p}}\prod_{\mathfrak{p}\ne\ell\in V_{f}}K_{\ell}.
\]
\end{lem}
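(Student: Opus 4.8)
Write $\mathcal{K}:=G_\infty G_{\mf p}\prod_{\mf p\ne\ell\in V_f}K_\ell$, an \emph{open} --- hence closed --- subgroup of $G(\A)$ that contains the full local group $G_{\mf p}$. The asserted equality says exactly that $G$ has class number one with respect to $\mathcal{K}$, i.e.\ that $G(F)$ acts transitively on $G(\A)/\mathcal{K}$. My plan is to establish this first for the simply connected group $SU_n:=SU_n^{E,H}$ using strong approximation, and then to bootstrap to $U_n$, $GU_n$ and $PU_n$ (superscripts $E,H$ suppressed) by a d\'evissage along the determinant, resp.\ similitude, character, feeding in the class-number-one hypothesis on $E$ --- through Lemma \ref{lem:pre-U1E} and the identity $\A_E^\times=E^\times\cdot(E\otimes_{\Q}\R)^\times\cdot\widehat{\mathcal{O}}_E^\times$ it encodes --- to handle the cocenter.

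For $G=SU_n$: it is semisimple and simply connected, with $SU_n(F_{\mf p})$ noncompact since $\rank_{F_{\mf p}}SU_n\ge n/2-1>0$, exactly as in the proof of Lemma \ref{lem:pre-BHC}. By the strong approximation theorem (Kneser--Platonov) applied with respect to the single place $\mf p$, the subgroup $SU_n(F)\cdot SU_n(F_{\mf p})$ is dense in $SU_n(\A)$. Since $SU_n(F)\cdot\mathcal{K}$ is a union of left cosets of the open subgroup $\mathcal{K}$, it is closed; and it contains the dense subset $SU_n(F)\cdot SU_n(F_{\mf p})$ because $SU_n(F_{\mf p})=G_{\mf p}\subseteq\mathcal{K}$. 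Hence $SU_n(F)\cdot\mathcal{K}=SU_n(\A)$.

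For $G=U_n$, use the short exact sequence $1\to SU_n\to U_n\xrightarrow{\det}U_1^E\to 1$ of $\mathcal{O}$-group schemes. One checks that $\det$ is surjective on $F$-points (diagonalising $H$ over $E$, the matrix $\diag(\alpha,1,\dots,1)$ lies in $U_n(F)$ with determinant $\alpha$, for any $\alpha\in U_1^E(F)$), on $F_v$-points for every $v$ (at finite $v$ because $H^1(F_v,SU_n)=1$, at archimedean $v$ because already $\det\colon U(n)\onto U(1)$), and on $\mathcal{O}_\ell$-points for all but finitely many $\ell$ (the fibres are $SU_n$-torsors over the Henselian $\mathcal{O}_\ell$ with smooth connected special fibre, so Hensel's lemma and Lang's theorem produce a section). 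Given $g\in U_n(\A)$, write $\det(g)=\alpha u$ via Lemma \ref{lem:pre-U1E} with $\alpha\in U_1^E(F)$ and $u\in\prod_{v\mid\infty}U_1^E(F_v)\prod_\ell U_1^E(\mathcal{O}_\ell)$, lift $\alpha$ to $h\in U_n(F)$ and $u$ to $k\in\mathcal{K}$, and observe that $h^{-1}gk^{-1}\in SU_n(\A)$; the $SU_n$-case then concludes. For $G=GU_n$ run the analogous d\'evissage along the similitude $\mu\colon GU_n\to\Gm$, whose kernel is $U_n$: absorb the similitude of $g\in GU_n(\A)$ into a scalar in $Z(GU_n)=\Res_{E/F}\Gm$, whose adelic points decompose as $E^\times\cdot(E\otimes_{\Q}\R)^\times\cdot\widehat{\mathcal{O}}_E^\times$ exactly because $E$ has class number one, and invoke the $U_n$-case; for $n$ odd this is clean since then $GU_n=U_n\cdot Z(GU_n)$ already over $F$ and over each completion. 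Finally $PU_n=GU_n/Z(GU_n)$ as group schemes, with $GU_n(\A)\onto PU_n(\A)$ surjective (on $F_v$-points by Hilbert 90, on $\mathcal{O}_\ell$-points for almost all $\ell$), so the $PU_n$-case reduces formally to the $GU_n$-case.

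I expect the only genuinely delicate point --- everything else being standard --- to be the interplay between the global rational structure and the local integral structures in this d\'evissage: at the finitely many primes $\ell$ where $U_n^{E,H}$ (equivalently, the reduction of $H$) fails to be reductive over $\mathcal{O}_\ell$, one must check directly that $\det$, resp.\ $\mu$, maps $K_\ell$ onto the relevant integral torus points; and for $GU_n$, $PU_n$ with $n$ even the image $\mu(GU_n(F))\subseteq F^\times$ is strictly larger than $N_{E/F}(E^\times)=\mu(Z(GU_n)(F))$, so the reduction to $U_n$ needs the additional input that the relevant $\mf p$-ray class group of $F$ is trivial in the narrow sense --- which holds in the cases of interest (e.g.\ $F=\Q$) and follows from $h_E=1$ via the standard class-number relations in the CM extension $E/F$. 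The strong-approximation core in the $SU_n$ step, by contrast, is entirely routine.
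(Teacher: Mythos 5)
Your proof follows the same route as the paper's---strong approximation for the simply connected $SU_n$, then a d\'evissage to the remaining groups via Lemma~\ref{lem:pre-U1E} and the class-number-one hypothesis on $E$---but you are considerably more careful about how that d\'evissage actually works. The paper simply asserts set-theoretic factorizations such as ``$U_n = SU_n\cdot U_1^E$'' and ``$GU_n = SU_n\cdot\Res^{\mathcal{O}_E}_{\mathcal{O}_F}\mathbb{G}_m$''; read literally as equalities on adelic points these fail, because the multiplication maps $SU_n\times U_1^E\to U_n$ and $SU_n\times\Res^E_F\mathbb{G}_m\to GU_n$ have nontrivial $H^1$-obstructions (from $\mu_n^E$, resp.\ $U_1^E$). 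What those lines are shorthand for is exactly what you make explicit: push the adelic element through $\det$, resp.\ $\mu$, split the target using Lemma~\ref{lem:pre-U1E}, lift the rational piece back to $G(F)$ and the integral piece back to $\mathcal{K}$, and feed the kernel to the $SU_n$ case. Your observation that $\det$ surjects onto $U_1^E(F)$ via the rank-one rotation $\diag(\alpha,1,\dots,1)$ in a diagonalizing basis supplies the missing lift. The two subtleties you flag at the end are genuine and are glossed over in the paper too: the integral surjectivity of $\det$, resp.\ $\mu$, on $K_\ell$ at the finitely many primes $\ell$ of bad reduction (though in practice one can sidestep Hensel--Lang there entirely, since after an $\mathcal{O}_{E,\ell}$-diagonalization $K_\ell$ already contains the torus $U_1^E(\mathcal{O}_\ell)^{\oplus n}$ on which $\det$ is visibly surjective); and the narrow-ray-class obstruction on $\mu(GU_n(F))\cdot\mu(\mathcal{K})$ when $n$ is even. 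On the second point, be a little more careful: the condition you actually need is that $[\mathfrak{p}]$ generate the narrow class group of $F$, not that the narrow $\mathfrak{p}$-ray class group vanish, and the inference from $h_E=1$ is not automatic in general---it just happens not to bite since the paper only invokes the lemma for $U_n$, $SU_n$, and $PU_n$, and the last is a quotient statement that absorbs the ambiguity. In short, your proof is correct and is the honest version of what the paper's proof gestures at.
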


\begin{proof}
If $G=SU_{n}^{E,H}$, then by the strong approximation property we
get that $G\left(F\right)G_{\mathfrak{p}}$ is dense in $G\left(\mathbb{A}\right)$,
so since $K=G_{\infty}\prod_{\ell\in V_{f}}K_{\ell}$ is open, we
get the claim. If $G=U_{n}^{E,H}$, then by the claim for $SU_{n}^{E,H}$,
we get that $G\left(F\right)\cdot G_{\infty}G_{\mathfrak{p}}\prod_{\mathfrak{p}\ne\ell\in V_{f}}K_{\ell}$ contains $SU_{n}^{E,H}\left(\mathbb{A}\right)$. Then, since $U_n = SU^{E,H}_n  U_{1}^{E}$, the claim follows from Lemma~\ref{lem:pre-U1E}.

If $G = GU_n^{E,H}$, then we similarly use $GU_n^{E,H} = SU^{E,H}_n \Res^{\mc O_E}_{\mc O_F} \Gm$ and that $E$ has class number one. Finally, the result for $G = PU^{E,H}_n$ follows from that for $GU_n^{E/H}$: by Shapiro's lemma, we have surjections $GU_n^{E/H}(S) \onto PU_n^{E/H}(S)$ for $S = \A, F_{\mf p}, F$ under which $PU_n^{E,H}(\mc O_{F_{\mf p}})$ contains the image of $GU_n^{E,H}(\mc O_{F_{\mf p}})$. 
\end{proof}

\begin{lem}  \label{lem:pre-class-p} 
Assume either $G = SU_{n}^{E,H}$ or that $E$ is of class number one. 
Then for any prime $\mathfrak{p}$, there is a bijective map from $\Gamma_{\mathfrak{p}}\backslash G_{\mathfrak{p}}/K_{\mathfrak{p}}$
to $G\left(F\right)\backslash G\left(\mathbb{A}\right)/K$. 
\end{lem}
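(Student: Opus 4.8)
The plan is to exhibit the bijection via the adelic double-coset description, using the class-number-type statement of Lemma~\ref{lem:pre-SA} as the crucial input. First I would set up the natural map: given $g \in G_{\mf p}$, form the adelic element $\tilde g$ which is $g$ in the $\mf p$-component and the identity in every other component, and send the class $\Gamma_{\mf p} g K_{\mf p}$ to the class $G(F) \tilde g K$, where $K = G_\infty \prod_{\ell \in V_f} K_\ell$. One must check this is well-defined: if $g' = \gamma g k$ with $\gamma \in \Gamma_{\mf p} = G(\mc O[1/\mf p])$ and $k \in K_{\mf p}$, then viewing $\gamma$ diagonally in $G(\A)$, it lies in $G(F)$, and its components away from $\mf p$ (including the infinite places) lie in $K_\ell$ since $\gamma$ is a $\mf p$-integral point; hence $\tilde g' = \gamma \tilde g \tilde k$ with $\tilde k \in K$, so the target class is unchanged.

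The second step is surjectivity, which is exactly where Lemma~\ref{lem:pre-SA} enters. By that lemma (whose hypotheses --- $G = SU_n^{E,H}$ or $E$ of class number one --- are precisely the ones assumed here), every element of $G(\A)$ can be written as $\delta \cdot h$ with $\delta \in G(F)$ and $h \in G_\infty G_{\mf p} \prod_{\mf p \ne \ell} K_\ell$. The $G_\infty$-part of $h$ can be absorbed into $K$, so modulo $G(F)$ on the left and $K$ on the right, every adelic class has a representative of the form $\tilde g$ for some $g \in G_{\mf p}$; this is the image of $\Gamma_{\mf p} g K_{\mf p}$, giving surjectivity.

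For injectivity, suppose $G(F) \tilde g_1 K = G(F) \tilde g_2 K$, i.e. $\tilde g_1 = \delta \tilde g_2 \kappa$ with $\delta \in G(F)$ and $\kappa \in K$. Comparing components away from $\mf p$: at a finite place $\ell \ne \mf p$ we get $1 = \delta \kappa_\ell$, so $\delta \in K_\ell$ for all such $\ell$; at an infinite place we get $1 = \delta \kappa_v$, so $\delta \in G_\infty$ as well, hence $\delta$ lies in $G(F) \cap (G_\infty \prod_{\mf p \ne \ell} K_\ell) = G(\mc O[1/\mf p]) = \Gamma_{\mf p}$. At the place $\mf p$ we get $g_1 = \delta g_2 \kappa_{\mf p}$ with $\delta \in \Gamma_{\mf p}$ and $\kappa_{\mf p} \in K_{\mf p}$, so $\Gamma_{\mf p} g_1 K_{\mf p} = \Gamma_{\mf p} g_2 K_{\mf p}$, proving injectivity.

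The main obstacle --- really the only nontrivial point --- is the surjectivity step, and it is entirely handled by invoking Lemma~\ref{lem:pre-SA}; the rest is bookkeeping with the strong/weak approximation decomposition. One should just be slightly careful about the identification $G(\mc O[1/\mf p]) = G(F) \cap \big(G_\infty \prod_{\ell \ne \mf p} G(\mc O_\ell)\big)$, which is the definition of $S$-integral points with $S = \{\mf p\} \cup V_\infty$, and about the fact that the stabilizer arguments at the non-$\mf p$ places force $\delta$ into the right integral subgroup; these are standard but worth stating explicitly.
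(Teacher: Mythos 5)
Your proof is correct and follows exactly the same route as the paper: the same map on double cosets, injectivity checked by comparing components away from $\mathfrak{p}$, and surjectivity delivered by the strong-approximation-type statement Lemma~\ref{lem:pre-SA}. The only difference is that you spell out the well-definedness and injectivity bookkeeping that the paper calls ``obvious''; one small notational caution is that in your intersection $G(F)\cap\bigl(G_\infty\prod_{\ell\ne\mathfrak{p}}K_\ell\bigr)$ you should include the factor $G_{\mathfrak{p}}$ so that the set makes sense inside $G(\mathbb{A})$, after which the intersection is indeed $G(\mathcal{O}[1/\mathfrak{p}])=\Gamma_{\mathfrak{p}}$.
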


\begin{proof}
The map from $G_{\mathfrak{p}}$ to $G\left(\mathbb{A}\right)$, defined
by
\[
g\mapsto\left(g_{v}\right)_{v\in V},\;g_{v}=\begin{cases}
\begin{array}{c}
g\\
1
\end{array} & \begin{array}{c}
v=\mathfrak{p}\\
v\ne\mathfrak{p}
\end{array}\end{cases}
\]
induces a well defined map from $\Gamma_{\mathfrak{p}}\backslash G_{\mathfrak{p}}/K_{\mathfrak{p}}$
to $G\left(F\right)\backslash G\left(\mathbb{A}\right)/K$. This map
is obviously injective, so the claim boils down to proving it is surjective,
which follows from the strong approximation type result of Lemma~\ref{lem:pre-SA}. 
\end{proof}

\begin{cor} \label{cor:one-for-all}
Assume either $G = SU_{n}^{E,H}$ or that $E$ is of class number one. 
The following are equivalent:
\begin{itemize}
\item There exists $\mathfrak{p}$ such that $G_{\mathfrak{p}}=\Gamma_{\mathfrak{p}}K_{\mathfrak{p}}$.
\item For any $\mathfrak{p}$,  $G_{\mathfrak{p}}=\Gamma_{\mathfrak{p}}K_{\mathfrak{p}}$.
\end{itemize}
\end{cor}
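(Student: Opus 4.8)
The plan is to reduce both bullet points to a single condition on the adelic double coset space $G(F)\backslash G(\mathbb{A})/K$ that makes no reference to any choice of prime. First, I would observe that for any prime $\mathfrak{p}$ the set $\Gamma_{\mathfrak{p}}\backslash G_{\mathfrak{p}}/K_{\mathfrak{p}}$ always contains the class of the identity, namely $\Gamma_{\mathfrak{p}}K_{\mathfrak{p}}$; consequently the condition $G_{\mathfrak{p}}=\Gamma_{\mathfrak{p}}K_{\mathfrak{p}}$ holds if and only if this double coset space is a singleton.

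Next I would invoke Lemma~\ref{lem:pre-class-p}, whose hypotheses (either $G=SU_n^{E,H}$, or $E$ of class number one) are exactly those assumed here: it supplies a bijection $\Gamma_{\mathfrak{p}}\backslash G_{\mathfrak{p}}/K_{\mathfrak{p}}\iso G(F)\backslash G(\mathbb{A})/K$. A bijection preserves cardinality, so the left-hand side is a singleton if and only if the right-hand side is. Since $K=G_{\infty}\prod_{\ell\in V_{f}}K_{\ell}$ and the space $G(F)\backslash G(\mathbb{A})/K$ involve no choice of $\mathfrak{p}$, the assertion ``$|G(F)\backslash G(\mathbb{A})/K|=1$'' is prime-independent.

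Combining the two observations, for every prime $\mathfrak{p}$ the statement $G_{\mathfrak{p}}=\Gamma_{\mathfrak{p}}K_{\mathfrak{p}}$ is equivalent to the single prime-independent assertion $|G(F)\backslash G(\mathbb{A})/K|=1$; in particular it holds for one prime if and only if it holds for all primes, which is precisely the claimed equivalence. There is no genuine obstacle here: all the substance sits in Lemma~\ref{lem:pre-class-p}, which in turn rests on the strong-approximation result Lemma~\ref{lem:pre-SA} (and, for the $U_n$, $GU_n$, $PU_n$ cases, on Lemma~\ref{lem:pre-U1E}). The only point requiring a moment's care is the trivial observation that $\Gamma_{\mathfrak{p}}\backslash G_{\mathfrak{p}}/K_{\mathfrak{p}}$ is nonempty, so that ``equals a point'' and ``is a point'' coincide under the bijection.
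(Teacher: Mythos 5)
Your proof is correct and matches the paper's approach exactly: the corollary is stated immediately after Lemma~\ref{lem:pre-class-p} and is a direct consequence of the bijection $\Gamma_{\mathfrak{p}}\backslash G_{\mathfrak{p}}/K_{\mathfrak{p}}\iso G(F)\backslash G(\mathbb{A})/K$, observing that $G_{\mathfrak{p}}=\Gamma_{\mathfrak{p}}K_{\mathfrak{p}}$ is equivalent to the local double coset space being a singleton and that the adelic target is independent of $\mathfrak{p}$. The paper leaves this reasoning implicit (it is the content of the remark following the definition of class numbers), but your explicit argument is what is intended.
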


\subsection{Class Numbers}
Let $G$ be either $U_N^{E,H}$, $SU_N^{E,H}$, or $PU_n^{E,H}$.
By \cite{borel1962arithmetic}, $G\left(F\right)$ is a cocompact
lattice of $G\left(\mathbb{A}\right)$ and note that $K:=G_{\infty}G(\widehat{\mathcal{O}})\leq G\left(\mathbb{A}\right)$
is a compact open subgroup. Hence the double quotient space $G\left(F\right)\backslash G\left(\mathbb{A}\right)/G_{\infty}G(\widehat{\mathcal{O}})$,
is finite. 

\begin{defn}
Let $G$ be either $U_{n}^{E,H}$, $SU_{n}^{E,H}$, or $PU_{n}^{E,H}$.
Define the class and $\mathfrak{p}$-class numbers of $(G,K)$ to be 
\[
c\left(G\right):=\left|G\left(F\right)\backslash G\left(\mathbb{A}\right)/K\right|,\qquad c_{\mathfrak{p}}\left(G\right):=\left|\Gamma_{\mathfrak{p}}\backslash G_{\mathfrak{p}}/K_{\mathfrak{p}}\right|.
\]
\end{defn}

We note that by Lemma \ref{lem:pre-class-p} we get that
$c_{\mathfrak{p}}\left(G\right)=c\left(G\right)$, for any prime $\mathfrak{p}$, and that by Corollary \ref{cor:one-for-all}, if $G_{\mf p} = \Gamma_{\mf p} K_{\mf p}$ holds for one $\mf p$, then it holds for all $\mf p$.

When the chosen $K$ satisfies a particular property of being ``special maximal compact''\footnote{beware that \cite{gan2001exact} uses a non-standard definition of ``maximal parahoric'' that corresponds to the standard notion of special maximal compact. This is pointed out above (2.2) therein.} at all places $v$ (see \S\ref{s:compactopens}), the Mass formula of \cite{gan2001exact}  provides a good test if $(G,K)$ has class number one.

For any CM field $E/F$ and any Hermitian form $H\in M_{n}\left(E\right)$, denote by $\mathrm{disc}\left(E\right)$ the discriminant of $E$ and $\mathrm{disc}\left(H\right)$ that of $H$ as in \eqref{eq discH}. Denote by $\mbox{Ram}\left(E\right)$ the set of primes that divide $\mathrm{disc}(E)$ and $\mbox{Ram}\left(H\right)$ the sets of primes $v$ at which $\disc(H) \notin N_{E_{\mf q}/F_{\mf p}}(E_{\mf q}^\times)$. Denote by $\zeta_{F}\left(s\right)$ and $L\left(s,\chi_{E/F}\right)$ the (analytic continuations of the) Dedekind zeta function of $F$ and the Dirichlet $L$-function of $\chi_{E/F}$, the quadratic Dirichlet character associated to $E/F$ by class field theory. 

\begin{defn}
\label{def:pre-mass} Let $G=U_{n}^{E,H}$. Define the set of ramified
primes of $G$ to be 
\[
\mbox{Ram}\left(G\right):=\mbox{Ram}\left(E\right)\bigcup\mbox{Ram}\left(H\right),
\]
define the $\lambda$-constant of $G$ to be 
\[
\lambda\left(G\right):=\prod_{\ell\in\mbox{Ram}\left(G\right)}\lambda_{\ell}
\]
with
\[
\lambda_{\ell}=\begin{cases}
1/2 & 2\nmid n,\;\ell\in\mbox{Ram}\left(E\right)\\
(q_{\ell}^{n}+1)/(q_{\ell}+1) & 2\nmid n,\;\ell\not\in\mbox{Ram}\left(E\right)\\
1 & 2\mid n, \; \ell \not\in \mbox{Ram}(H)\\
(q_{\ell}^{n}-1)/(q_{\ell}+1) & 2\mid n,\;\ell\not\in\mbox{Ram}\left(E\right), \ell \in \mbox{Ram}(H)\\
(1/2)(q_{\ell}^{n}-1)/(q_{\ell}^
{n/2}+1) & 2\mid n,\;\ell\in\mbox{Ram}\left(E\right), \ell \in \mbox{Ram}(H), K_\ell \text{ extraspecial} \\
(1/2)(q_{\ell}^{n}-1)/(q_{\ell}+1) & 2\mid n,\;\ell\in\mbox{Ram}\left(E\right), \ell \in \mbox{Ram}(H), K_\ell \text{ not extraspecial}
\end{cases},
\]
define the $L$-special value of $G$ to be 
\begin{multline*}
L\left(G\right):=2^{-n[F:\Q]+1}\cdot\prod_{r=1}^{n}L\left(1-r,\chi_{E/F}^{r}\right)\\
=2^{-n[F:\Q]+1}\cdot\prod_{r=1}^{\lfloor n/2\rfloor}\zeta_{F}\left(1-2r\right)\cdot\prod_{r=1}^{\lfloor (n+1)/2\rfloor}L\left(2-2r,\chi_{E/F}\right),
\end{multline*}
and finally, define the mass constant of $G$ to be

\[
R\left(G\right):=L\left(G\right)\cdot\lambda\left(G\right).
\]
\end{defn}
Then:
\begin{lem}\label{lem:pre-class} 
Let $G=U_{n}^{E,H}$ and assume that $G(\mc O_{\mf p})$ is special maximal compact at all finite places $\mf p$. Then 
\[
c\left(G\right)=1\qquad\Leftrightarrow\qquad|G\left(\mathcal{O}\right)|^{-1}=R\left(G\right).
\]
\end{lem}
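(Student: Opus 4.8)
The plan is to identify $R(G)$ with the \emph{mass} attached to $(G,K)$, and then to observe that ``class number one'' says exactly that this mass is concentrated on the single double coset $G(F)\cdot K$. Concretely, I would fix representatives $1 = g_1, g_2, \dots, g_h \in G(\mathbb{A})$ for the finite set $G(F)\backslash G(\mathbb{A})/K$, so that $h = c(G)$, and set $\Gamma_i := G(F)\cap g_i K g_i^{-1}$. Since $K = G_\infty\, G(\widehat{\mathcal O})$ is compact open, each $g_i K g_i^{-1}$ is compact, so $\Gamma_i$ is a discrete subgroup of a compact group and hence finite (this is the mechanism behind Lemma \ref{lem:pre-BHC}(1)); moreover $\Gamma_1 = G(F)\cap K = G(\mathcal{O})$. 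One then sets $\mathrm{mass}(G,K) := \sum_{i=1}^{h}|\Gamma_i|^{-1}$.

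The elementary half of the argument is then immediate: every summand is strictly positive and the $i=1$ term equals $|G(\mathcal{O})|^{-1}$, so $\mathrm{mass}(G,K) \ge |G(\mathcal{O})|^{-1}$, with equality if and only if the sum has a single term, i.e.\ if and only if $c(G) = 1$. Thus the lemma is equivalent to the exact \emph{mass formula}
\[
\mathrm{mass}(G,K) = R(G) = L(G)\cdot\lambda(G).
\]

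The remaining work is to deduce this mass formula from \cite{gan2001exact}, whose hypotheses apply here precisely because $K$ is special maximal compact at every finite place. I would unwind that reference as a bookkeeping exercise with three ingredients: (i) the Tamagawa number $\tau(U_n^{E,H}) = \tau(SU_n)\cdot\tau(U_1) = 2$, which --- together with the archimedean volumes of the compact groups $U(n)$ and the chosen Haar normalisation --- produces the prefactor $2^{-n[F:\mathbb{Q}]+1}$; (ii) the local density at each unramified finite place $\ell\notin\mathrm{Ram}(G)$, where $K_\ell$ is hyperspecial and the infinite product over such $\ell$ telescopes into the product of completed special values $\prod_{r=1}^{n}L(1-r,\chi_{E/F}^{r})$ occurring in $L(G)$; and (iii) the local density at each $\ell\in\mathrm{Ram}(G)$, which must be identified with the tabulated constant $\lambda_\ell$ of Definition \ref{def:pre-mass}.

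I expect step (iii) to be the main obstacle. At a place ramified in $E$ with $n$ even there are two conjugacy classes of special maximal parahoric subgroups --- the ``extraspecial'' one and the other --- with genuinely different local densities, which is exactly why $\lambda_\ell$ is defined case-by-case. So the hard part will be to: (a) reconcile the hypothesis ``special maximal compact at all finite places'' with the precise side conditions under which \cite{gan2001exact} outputs a closed-form local factor, keeping in mind its nonstandard use of ``maximal parahoric'' flagged in the footnote; and (b) verify that, for each admissible choice of $K_\ell$, the local factor extracted from \cite{gan2001exact} is the listed $\lambda_\ell$ in every case, including the extraspecial/non-extraspecial dichotomy. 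Once step (iii) is settled, both directions of the lemma follow from the positivity argument of the second paragraph, and everything else is a direct transcription of the local density computations in \cite{gan2001exact}.
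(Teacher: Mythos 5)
Your approach is exactly the paper's: set $\mathrm{mass}(G,K) = \sum_{i}|\Gamma_i|^{-1}$, identify it with $R(G)$ via Gan--Yu, and exploit strict positivity of the summands to conclude $c(G)=1 \Leftrightarrow |G(\mathcal O)|^{-1}=R(G)$. You are also right to single out step (iii) as the residual obstacle, and the paper confirms your suspicion: Tables 1 and 2 of \cite{gan2001exact} do \emph{not} directly cover the cases where $\ell$ is ramified in $E$, $K_\ell$ is extraspecial, and $G_\ell$ is odd or even non-quasisplit. The paper closes this hole by invoking \cite{gan2001exact}*{(2.12)} together with an identification of the reductive quotient of the special fiber of the relevant parahoric integral model (read off from the tables of \cite{Tits1979Reductivegroupsover}), the fact that the parahoric has index two in the maximal compact (from \cite{HR10}), and finite-field point counts from \cite{ATLAS}. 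So your proposal is a correct roadmap; the only piece left unexecuted in your outline is precisely that supplementary computation, which cannot be read off from the Gan--Yu tables alone.
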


\begin{proof}
Let $\mu_{\mathrm{tam}}$ be the Tamagawa measure of $G\left(\mathbb{A}\right)$
and denote the mass of $G$ by
\[
\mathrm{Mass}\left(G\right):=\frac{\mu_{\mathrm{tam}}\left(G\left(F\right)\backslash G\left(\mathbb{A}\right)\right)}{\mu_{\mathrm{tam}}\left(K\right)}.
\]
On the one hand, Propositions 4.4 and 4.5 in \cite{gan2001exact},
yield
\[
\mathrm{Mass}\left(G\right)=R\left(G\right).
\]
Note that Tables 1 and 2 in \cite{gan2001exact} do not cover the cases of $\ell$ ramified, $K_\ell$ extraspecial, and $G_\ell$ either odd or even non-quasisplit. These cases can be filled in by \cite{gan2001exact}*{(2.12)}. For the first case, the reductive quotient of the special fiber of the corresponding parahoric integral model is $\SO_n$ since $G_\ell$ corresponds to type $C$-$BC_{(n-1)/2}$ in the tables of \cite{Tits1979Reductivegroupsover}. This has the same number of points as $\Sp_{n-1}$. In the second case, the reductive quotient is ${}^2\SO_n$ coming from type ${}^2\!B$-$C_{n/2}$. In both cases, the parahoric is index two in the maximal compact using results from \cite{HR10} (see \S\ref{sec:buildings} for more details) and formulas for point counts over finite fields can be found in \cite{ATLAS}*{\S2}. 

On the other hand, by Siegel mass formula, the mass of $G$ is equal
the sum over the representatives of the genus of $G$ weighted by
the reciprocal of the size of the associated arithmetic finite group:
\[
\mathrm{Mass}\left(G\right)=\sum_{g\in G\left(F\right)\backslash G\left(\mathbb{A}\right)/K}\left|G\left(F\right)\cap gKg^{-1}\right|^{-1}=\left|G\left(\mathcal{O}\right)\right|^{-1}+\sum_{1\ne g\in G\left(F\right)\backslash G\left(\mathbb{A}\right)/K}\cdots .
\]
Since each member in the sum is positive rational number, comparing
both estimates shows that $|G\left(\mathcal{O}\right)|^{-1}=R\left(G\right)$
if and only if $c\left(G\right)=1$.
\end{proof}

\begin{rem}\label{rem:classnumberone}
The are only finitely many values of $n$, $\mathrm{disc}\left(E\right)$,
and $\mathrm{disc}\left(H\right)$ such that the definite unitary
group $G=U_N^{E,H}$ has class number one (see \cite{borel1989finiteness}).
In fact, Mohammadi and Salehi-Golsefidy in \cite{mohammadi2012discrete}
showed that $n=8$ is the threshold for definite unitary groups of
class number one; namely, for any $n>8$, there are no class number
one definite unitary groups of the form $G=U_N^{E,H}$ and for
any $4<n\leq8$, there exists class number one definite unitary groups
of the form $G=U_N^{E,H}$.
\end{rem}

\section{Bruhat-Tits buildings}\label{sec:buildings}

This subsection summaries facts about the theory of Bruhat--Tits
buildings of unitary and general linear groups that we will need. The standard reference summarizing the theory is \cite{Tits1979Reductivegroupsover}. A modernized textbook treatment of the full details can be found in \cite{KP23}. 

\subsection{Description}

Let $\mathbb{F}$ be a non-Archimedean local field, $\mathcal{O}_{\mathbb{F}}\subset\mathbb{F}$
its ring of integers, $\varpi_{\mathbb{F}}\in\mathcal{O}_{\mathbb{F}}$ a choice uniformizer and $q_{\mathbb{F}} = \left| \mathcal{O}_{\mathbb{F}}/\varpi_{\mathbb{F}} \mathcal{O}_{\mathbb{F}}\right|$ its residue degree (e.g. $\mathbb{F} = \mathbb{Q}_{p}$, $\mathcal{O}_{\mathbb{F}} = \mathbb{Z}_{p}$, $\varpi_{\mathbb{F}}=p$ and $q_{\mathbb{F}}=p$). Let $G$ be the group of $\mathbb{F}$-rational points of an $\mathbb F$-almost-simple, connected reductive group $\mc G$. Then Bruhat--Tits theory constructs a pure, simplicial, infinite,
locally-finite, contractible complex $\mc B := \mathcal{B}\left(G\right)$ called the Bruhat-Tits building of $G$ such that $G$ acts simplicially on $\mathcal{B}\left(G\right)$ and transitively on its maximal faces (called chambers). The dimension of $\mathcal{B}\left(G\right)$ is equal to the semisimple rank $r=\mathrm{rank}_{\mathbb{F}}\left(G^\der\right)$. 

To define nice stabilizers, we also sometimes consider the enlarged building $\wtd{\mc B}(G)$ which is $\mc B(G)$ times a real vector space coming from the split part of the center of $G$. If $\tau \subseteq \mc B(G)$, we may consider it as $\wtd \tau \subseteq \wtd{\mc B}(G)$. Then the stabilizer $G_{\wtd \tau} = G_{\tau} \cap G^1$, where $G^1$ is the subgroup of $G$ on which all characters take values of valuation $0$, i.e. $G^1 = \bigcap_\chi \ker(\mathrm{val}\circ \chi)$, where $\chi$ runs over all characters from $G$ to $\mathbb{F}^\times$ and $\mathrm{val}\colon \mathbb{F}^\times \to \mathbb{Z}$ is the valuation map.

The building is equipped with a natural type function from its $0$-simplices to $\left[r\right]:=\left\{ 0,1,\ldots,r\right\} $, which is a bijection on the vertices of each chamber. We say the type of a $k$-simplex is the set consisting of the $k$ types of its vertices. If $G = \mathbf G(\mathbb F)$ for $\mathbf G$ semisimple and simply connected, then the $H$-orbit of a simplex of type $\tau$ is the set of all simplices of type $\tau$. In general, it might be bigger. 

Bruhat-Tits theory highlights some vertices as being special or hyperspecial. Hyperspecial vertices exist if and only if $G$ is unramified: i.e, $G=\mathbf{G}(\mathbb{F})$, where $\mathbf{G}$ is quasi-split over $\mathbb{F}$ and splits over an
unramified extension of $\mathbb{F}$. In this case, for hyperspecial $v_0$, there is known to be a reductive model $\mathbf{G}$ of $G$ over $\mathcal{O}_{\mathbb{F}}$ such that $K_{G}=\mathbf{G}(\mathcal{O}_{\mathbb{F}})$ is the stabilizer
of $v_{0}$. Special vertices always exist and similarly have stabilizer coming from a model with special fiber having the ``largest possible'' reductive quotient in some sense. We fix a choice of special $v_0$ that is hyperspecial if $G$ is unramified and without loss of generality assume to be of type $0$.

Lemma \ref{lem:rationalpoints} enumerates our cases of interest. We describe $\mc B$ in these cases; see \cite{KP23}*{\S15} for the full details:
\begin{example} \label{exa:pre-B(PGL_n(F))} 
Let $G=PGL_n\left(\mathbb{F}\right)$
be the projective general linear group of $\mathbb{F}$-points. The
Bruhat-Tits building $\mathcal{B}=\mathcal{B}\left(PGL_n\left(\mathbb{F}\right)\right)$
is the $\left(n-1\right)$-dimensional simplicial complex, whose vertices
are homotetic classes $\left[L\right]=\left\{ \alpha L\,|\,\alpha\in\mathbb{F}^{\times}\right\} $
of $\mathcal{O}_{\mathbb{F}}$-lattices $L$ in $\mathbb{F}^{n}$
and a collection of vertices $\sigma=\left\{ v_{0},\ldots,v_{k}\right\} $,
forms a face in $\mathcal{B}$ if there exists representatives $L_{i}\in v_{i}$,
$0\leq i\leq k$ such that $L_{0}\subsetneq\ldots\subsetneq L_{k}\subsetneq\varpi_{\mathbb{F}}^{-1}L_{0}$. 

The group $PGL_n\left(\mathbb{F}\right)$ acts on the homotetic
classes of lattices by matrix multiplication and this action extends
to the entire complex. The subgroup $PGL_n\left(\mathcal{O}_{\mathbb{F}}\right)$
is the stabilizer of the hyperspecial vertex $v_{0}=\left[\mathcal{O}_{\mathbb{F}}^{n}\right]$.
The group $PGL_n\left(\mathbb{F}\right)$ acts transitively on the
vertices of the building, hence all vertices are hyperspecial. The
degree of (i.e. the number of chambers containing) any $\left(n-2\right)$-dimensional
face is $q_{\mathbb{F}}+1$. 

For example, $\mathcal{B}\left(PGL_{2}\left(\mathbb{F}\right)\right)$
is the infinite $\left(q_{\mathbb{F}}+1\right)$-regular tree, and
$\mathcal{B}\left(PGL_{3}\left(\mathbb{F}\right)\right)$ is the infinite
$2$-dimensional simplicial complex, all of whose edges are of degree
$q_{\mathbb{F}}+1$ and all of whose vertices are contained in $2\left(q_{\mathbb{F}}^{2}+q_{\mathbb{F}}+1\right)$ edges.
\end{example}

\begin{example}
\label{exa:pre-B(PU_n(F))} Let $\mathbb{E}$ be a tame quadratic
extension of $\mathbb{F}$ and let $c\mapsto\bar{c}$ be
the non-trivial element in $\mathrm{Gal}\left(\mathbb{E}/\mathbb{F}\right)$.
Let $G=PU_{n}\left(\mathbb{F}\right)$ be the corresponding quasisplit projective
unitary group:
\[
PU_{n}\left(\mathbb{F}\right)=\left\{ g\in GL_n\left(\mathbb{E}\right)\,:\,g^{*}Jg=J\right\} /\left\{ cI_{n}\,:\,\bar{c}c=1,\,c\in\mathbb{E}^{\times}\right\} ,
\]
where $g^{*}=\bar{g}^{t}$ and $J=\left(\delta_{i,n+1-j}\right)_{i,j}$
is the anti-diagonal Hermitian form. Let $\sharp$ be the involution
on $PGL_n\left(\mathbb{E}\right)$ defined by $g^{\sharp}=J\left(g^{*}\right)^{-1}J$.
Note that $PU_{n}\left(\mathbb{F}\right)$ is the subgroup of $\sharp$-fixed
elements of $PGL_n\left(\mathbb{E}\right)$:
\[
PU_{n}\left(\mathbb{F}\right)=PGL_n\left(\mathbb{E}\right)^{\sharp}:=\left\{ g\in PGL_n\left(\mathbb{E}\right)\,:\,g^{\sharp}=g\right\} .
\]
Similarly, define the order $2$ automorphism $\sharp$ on $\mathcal{B}\left(PGL_n\left(\mathbb{E}\right)\right)$
as follows: on the $\mathcal{O}_{\mathbb{E}}$-lattices $L$ of $\mathbb{E}^{n}$,
define
\[
L^{\#}:=\left\{ v\in\mathbb{E}^{n}\;|\;vJ\bar{u}\in\mathcal{O}_{\mathbb{E}},\;\forall u\in L\right\} ,
\]
on the homothetic class of $\mathcal{O}_{\mathbb{E}}$-lattices (i.e.
the vertices) $v=\left[L\right]$, define $v^{\sharp}=\left[L^{\sharp}\right]$,
and on the faces $\sigma=\left\{ v_{0},\ldots,v_{k}\right\} $, define
$\sigma^{\sharp}=\left\{ v_{0}^{\sharp},\ldots,v_{k}^{\sharp}\right\} $. 

Then the Bruhat-Tits building of $PU_{n}\left(\mathbb{F}\right)$
is the subcomplex of $\sharp$-fixed faces of the Bruhat-Tits building
of $PGL_n\left(\mathbb{E}\right)$: i.e,
\[
\mathcal{B}\left(PU_{n}\left(\mathbb{F}\right)\right)=\mathcal{B}\left(PGL_n\left(\mathbb{E}\right)\right)^{\sharp}:=\left\{ \sigma\in\mathcal{B}\left(PGL_n\left(\mathbb{E}\right)\right)\,:\,\sigma^{\sharp}=\sigma\right\} .
\]
The group $PU_{n}\left(\mathbb{F}\right)=PGL_n\left(\mathbb{E}\right)^{\sharp}$
acts naturally on $\mathcal{B}\left(PU_{n}\left(\mathbb{F}\right)\right)=\mathcal{B}\left(PGL_n\left(\mathbb{E}\right)\right)^{\sharp}$.
The vertices of the buildings of $\mathcal{B}\left(PU_{n}\left(\mathbb{F}\right)\right)$
are either $\sharp$-fixed vertices, which are the special vertices (hyperspecial if $\mathbb E/\mathbb F$ is unramified),
or edges whose endpoints are swapped by $\sharp$, which are non-special.

Note that when $\mathbb E/\mathbb F$ is unramified, $v_{0}=\left[\mathcal{O}_{\mathbb{E}}^{n}\right]\in\mathcal{B}\left(PGL_n\left(\mathbb{E}\right)\right)^{\sharp}$ is a hyperspecial vertex and its stabilizer in $PU_{n}\left(\mathbb{F}\right)$ is $PU_{n}\left(\mathcal{O}_{\mathbb{F}}\right)$, the subgroup of
elements with coefficients in $\mathcal{O}_{\mathbb{F}}$. The dimension
of $\mathcal{B}\left(PU_{n}\left(\mathbb{F}\right)\right)$, which
is equal to the $\mathbb{F}$-rank $PU_{n}\left(\mathbb{F}\right)$,
is $\left\lfloor \frac{n}{2}\right\rfloor $.  When $\mathbb E/\mathbb F$ is ramified, a special vertex can instead be given by $v_0 = [\mc O_{\mathbb E}^{\lceil n/2 \rceil} \oplus (\mf D^{-1}_{\mathbb E/\mathbb F})^{\lfloor n/2 \rfloor}]$, where $\mf D_{\mathbb E/\mathbb F}$ is the different ideal.

For example, when $\mathbb E/\mathbb F$ is unramified, $\mathcal{B}\left(PU_{3}\left(\mathbb{F}\right)\right)$
is the infinite $\left(q_{\mathbb{F}}^{3}+1,q_{\mathbb{F}}+1\right)$-biregular
tree. All vertices are special and those of degree $q_{\mathbb{F}}^{3}+1$ are also hyperspecial. When $\mathbb E/\mathbb F$ is ramified, $\mathcal{B}\left(PU_{3}\left(\mathbb{F}\right)\right)$ is an infinite $(q_{\mathbb F} + 1)$-regular tree all of whose vertices are special with every other vertex satisfying a stronger property of being extraspecial. 
\end{example}

\begin{example}\label{exa:pre-B(PU'_n(F))}
Now, consider $\mathbb E/\mathbb F$ tame quadratic, $n$ even,  and $G=PU_n'$ the non-quasisplit unitary group instead preserving $J \diag(1, \dotsc, 1, a)$ where $a \notin N_{\mathbb E/\mathbb F}(\mathbb E^\times)$. Then we still have for an analogously defined $\sharp'$ that
\[
\mathcal{B}\left(PU'_{n}\left(\mathbb{F}\right)\right)=\mathcal{B}\left(PGL_n\left(\mathbb{E}\right)\right)^{\sharp'}. 
\]
However, this now has dimension $(n-2)/2$ instead of $n/2$ and a different structure. 

For example, if $\mathbb E/\mathbb F$ is unramified, then $\mc B(PU'_4(\mathbb F))$ is an infinite $(q_{\mathbb F}^3 + 1)$-regular tree with all vertices extraspecial. If on the other hand $\mathbb E/\mathbb F$ is ramified, then $\mc B(PU'_4(\mathbb F))$ is an infinite  $\left(q_{\mathbb{F}}^2+1,q_{\mathbb{F}}+1\right)$-biregular tree. The degree-$(q^2_{\mathbb{F}}+1)$ vertices are extraspecial. 
\end{example}

\begin{example}
For $G = GL_n$, the building $\mc B$ is that same as that of $PGL_n$. Since the center intersect a maximally split torus is in this case connected, the action of $GL_n$ on $\mc B$ factors through $PGL_n$.
\end{example}

\begin{example}
For $G = U_n$, the building $\mc B$ is the same as that of $PU_n$. However, when $n$ is even, the natural map $U_n(\mathbb F) = GL_n(\mathbb E)^\sharp$ to $PU_n(\mathbb F)$ is not necessarily a surjection so the action might be smaller. 
\end{example} 

\begin{rem}
\label{rem:pre-hs} In the unramified case, for $G=PGL_n\left(\mathbb{F}\right)$, $G=PU_n\left(\mathbb{F}\right)$, or $G = U_{2n+1}(\mathbb F)$, the group acts transitively on the hyperspecial vertices of $\mathcal{B}\left(G\right)$. However this is not true in general. 
\end{rem}

\begin{rem}
When $\mathbb E/\mathbb F$ is wildly ramified in \ref{exa:pre-B(PU_n(F))}/\ref{exa:pre-B(PU'_n(F))}, we can only guarantee that 
\[
\mathcal{B}\left(PU_{n}\left(\mathbb{F}\right)\right) \subseteq \mathcal{B}\left(PGL_n\left(\mathbb{E}\right)\right)^{\sharp}. 
\]
The fixed points contain extra pieces called ``barbs'', each one branching away from  $\mathcal{B}\left(PU_{n}\left(\mathbb{F}\right)\right)$ a finite distance. 

Nevertheless, the tree description and valencies of vertices for the low-rank examples holds in all cases and, by the end of \cite{Tits1979Reductivegroupsover}*{\S2.4}, can be read off from the integers $d(v)$ attached to each vertex in the tables at the end of \cite{Tits1979Reductivegroupsover} (Table \ref{tab:types-actions} gives a quick reference for our cases).  
\end{rem}

\subsection{Vertices, Chambers, and Compact Open Subgroups}

\subsubsection{Special Maximal Compacts}\label{s:compactopens}
All stabilizers in $G$ of vertices in $\mc B$ are maximal compact since every compact subgroup fixes a point of $\mc B$. Certain vertices of $\mc B$ are classified as special, extraspecial, or hyperspecial (see, e.g, \cite{KP23}*{Def 1.3.39, \S7.11}) and have stabilizers with particularly nice abstract behavior:

\begin{defn}
A compact open $K \leq G$ is called special (resp. extraspecial, hyperspecial) maximal compact if it is the stabilizer of a special (resp. extraspecial, hyperspecial) vertex in the (enlarged) building $\wtd{\mc B}$. 
\end{defn}

When $G$ is semisimple and simply connected, these are the same as the more standard notion of special parahorics (this is not always the case---see \cite{HR08} and \cite{HR10}*{\S8}). Hyperspecial maximal compact subgroups are also always special parahorics. In our particular case of $G = U^{E,H}$:

\begin{lem}\label{lem:specialismaximal}
Let $G = U_n^{E,H}(F_{\mf p})$ for some choices of $E,H,\mf p$ and let $K \subseteq G$ be a special parahoric subgroup corresponding to vertex $x_0 \in \wtd{\mc B}(G)$. Then except for the cases when $E_{\mf q}/F_{\mf p}$ is ramified and either $n$ is odd or $U_n^{E,H}$ is non-quasisplit, $K = \Stab_G(x_0)$ and is special maximal compact. 
\end{lem}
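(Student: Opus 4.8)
The plan is to express the difference between $K$ and the full stabilizer $\Stab_G(x_0)$ through the Kottwitz homomorphism and then to compute it. First I would recall, from Bruhat--Tits group-scheme theory (Haines--Rapoport \cite{HR08}, \cite{HR10}; see also \cite{KP23}), that inside the enlarged building the parahoric is the neutral component of the stabilizer: $K=\Stab_G(x_0)\cap\ker\kappa_G$, where $\kappa_G\colon G\to\pi_1(G)_I$ is the Kottwitz map ($I$ the inertia group), and since $\ker\kappa_G\subseteq G^1$ the intersection with $G^1$ hidden in $\Stab_G(x_0)$ is automatic. So $K=\Stab_G(x_0)$ is equivalent to $\Stab_G(x_0)\subseteq\ker\kappa_G$. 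If $\mf p$ splits in $E$ then $G=\GL_n(F_{\mf p})$, every special vertex is hyperspecial, and the stabilizer (in $G^1$) of such a vertex is the hyperspecial parahoric, so $K=\Stab_G(x_0)$; from now on I take $\mf p$ non-split, so that $U_n^{E,H}$ has no nontrivial $F_{\mf p}$-rational character and $G=G^1$.

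Next I would factor $\kappa_G$ through the determinant. Since $SU_n^{E,H}$ is simply connected, $\kappa_G$ kills it, so $\kappa_G=\kappa_{U_1^E}\circ\det$, and $\det$ induces an isomorphism $\pi_1(U_n^{E,H})\iso\pi_1(U_1^E)=\Z$. If $E_{\mf q}/F_{\mf p}$ is unramified, the norm-one torus $U_1^E$ splits over the maximal unramified extension, so $I$ acts trivially on $\pi_1(U_1^E)=\Z$ and $\kappa_{U_1^E}$ is a homomorphism into $\Z$; being trivial on the compact group $U_1^E(F_{\mf p})$, it forces $\kappa_G$ to vanish on all of $G$, whence $K=\Stab_G(x_0)$ for every vertex $x_0$. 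This disposes of all cases with $\mf p$ split or inert.

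It remains to treat $E_{\mf q}/F_{\mf p}$ ramified. Then $U_1^E$ splits only over a ramified quadratic extension, so $I$ acts by $-1$ on $\pi_1(U_1^E)=\Z$ and $\pi_1(G)_I=\Z/2\Z$; hence $[\Stab_G(x_0):K]$ divides $2$, and $K=\Stab_G(x_0)$ is equivalent to $\det\bigl(\Stab_G(x_0)\bigr)\subseteq\ker\kappa_{U_1^E}$, an index-two subgroup of the compact group $U_1^E(F_{\mf p})$. I would make this concrete: fixing a uniformizer $\varpi$ of $E_{\mf q}$, reduction mod $\varpi$ sends $U_1^E(F_{\mf p})\subseteq\mc O_{E_{\mf q}}^{\times}$ into $\{\pm1\}\subseteq\F_q^{\times}$ (conjugation acting trivially on the residue field), with $\ker\kappa_{U_1^E}$ the preimage of $1$. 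Any $g\in\Stab_G(x_0)$ preserves a lattice $L_0$ representing $x_0$, and acts on $L_0/\varpi L_0$ by a matrix $\bar g$ with $\det\bar g=\det(g)\bmod\varpi$; so $K=\Stab_G(x_0)$ amounts to the reduction $\Stab_G(x_0)\to\GL(L_0/\varpi L_0)$ landing in $\SL$. The reduced group preserves the reduction of the suitably scaled Hermitian form, and here the parity of $n$ and the discriminant of $H$ enter, via the classification of Hermitian $\mc O_{E_{\mf q}}$-lattices (\cite{Jacobowitz1962Hermitianformsover}; compare the local data recorded in the proof of Lemma~\ref{lem:pre-class}): when $n$ is even and $U_n^{E,H}$ is quasi-split one can take $L_0$ to be $\varpi$-modular, and then the factor $\varpi$ turns the Hermitian symmetry into an alternating one, so the reduced form is a non-degenerate alternating form on an $n$-dimensional $\F_q$-space and $\Stab_G(x_0)$ reduces into $\Sp_n(\F_q)\subseteq\SL_n(\F_q)$, giving $K=\Stab_G(x_0)$; whereas if $n$ is odd --- where $\F_q^{n}$ admits no non-degenerate alternating form --- or $n$ is even with $U_n^{E,H}$ non-quasisplit, the special vertex is represented only by lattices whose reduction carries a non-degenerate \emph{symmetric} form, so $\Stab_G(x_0)$ surjects onto $\OO_n(\F_q)$ (or its non-split form), which contains determinant $-1$; then $\kappa_G$ is onto $\Z/2\Z$ on $\Stab_G(x_0)$ and $[\Stab_G(x_0):K]=2$. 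This recovers the ``parahoric of index two in the maximal compact'' used for Lemma~\ref{lem:pre-class}. In the non-exceptional cases, $K=\Stab_G(x_0)$ together with $x_0$ being a special vertex of the enlarged building gives, by definition, that $K$ is a special maximal compact subgroup.

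The main obstacle I expect is the last step: establishing the precise reduced structure at the special vertex in each ramified sub-case --- that $\varpi$-modular Hermitian lattices occur exactly in the even quasi-split case and reduce to a symplectic space, while the odd and even-non-quasisplit special vertices reduce to orthogonal spaces (with their reflections). I would derive this from the classification of Hermitian forms over $p$-adic fields (\cite{Jacobowitz1962Hermitianformsover}) together with the explicit lattice models of Examples~\ref{exa:pre-B(PU_n(F))}--\ref{exa:pre-B(PU'_n(F))}, cross-checked against the local indices $C$-$BC_{(n-1)/2}$ and ${}^2\!B$-$C_{n/2}$ used for Lemma~\ref{lem:pre-class}; equivalently, one can read the two-element component group $\Stab_G(x_0)/K$ directly off the tables of \cite{Tits1979Reductivegroupsover} and \cite{HR10}, which also cover the wildly ramified primes arising in the $(1+i)$-arithmetic examples.
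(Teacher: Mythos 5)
Your proof is correct in spirit and is a detailed unpacking of what the paper disposes of in one line: the Kottwitz-theoretic criterion (the $\Lambda_M$ of \cite{HR10}) is precisely the route you are following. What you add is a concrete derivation of the dichotomy that the paper's citation leaves implicit: factoring $\kappa_G$ through $\det$ via the isomorphism $\pi_1(U_n^{E,H}) \iso \pi_1(U_1^E) = \Z$, observing that the target $X_*(U_1^E)_I^{\Frob}$ is torsion-free (indeed trivial) when $\mf p$ is split or inert, and then in the ramified case reducing the Hermitian lattice mod a uniformizer to see whether the residual form is alternating (so $\Stab_G(x_0)$ reduces into $\Sp_n(\F_q)\subseteq\SL_n(\F_q)$, forcing $\kappa_G\equiv 0$) or symmetric (so the stabilizer reduces onto an orthogonal group with determinant $-1$ present, detecting the $\Z/2$). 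This is sound in odd residue characteristic. Two remarks. In the inert case you describe $\kappa_{U_1^E}$ as mapping into $\Z$ and invoke compactness; in fact the target $X_*(U_1^E)_I^{\Frob}$ is already $0$ there since Frobenius acts by $-1$, which makes the conclusion immediate. More substantively, the alternating/symmetric dichotomy degenerates when the residue characteristic is $2$, which is exactly the case the paper needs at the primes above $2$ in its $\Q(i)$ and $\Q(\sqrt{-2})$ constructions; you correctly flag this and fall back on the tables of \cite{Tits1979Reductivegroupsover} and \cite{HR10} for wild ramification, which is also what the paper does, so the gap is acknowledged rather than real. Overall the content matches; your version just does explicitly what the paper delegates to the Gan--Yu and Haines--Rapoport references.
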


\begin{proof}
These are the cases with connected special fiber in the tables of \cite{gan2001exact}*{\S3}. This can also be seen by computing that the $\Lambda_M$ from \cite{HR10}*{Thm 1.0.1} has no torsion.
\end{proof}


\begin{rem}\label{rem:hermitianformconstruction}
Explicit Hermitian forms $H_{\mf p}$ such that $U^{E_{\mf q},H_{\mf p}}(\mathcal{O}_{\mf p})$ is special maximal compact can be found in \cite{gan2001exact}*{\S3}. For example, when $n$ is even and $G_{\mf p}$ is quasisplit, we may choose
\[
H_{\mf p} = \begin{pmatrix}
 & \m I_{n/2} \\
\m I_{n/2} & 
\end{pmatrix} \text{ or } 
\begin{pmatrix}
 & a_{E_{\mf q}} \m I_{n/2} \\
\bar a_{E_{\mf q}} \m I_{n/2} & 
\end{pmatrix}
\]
respectively for $\mf p$ unramified or $\mf p$ ramified with $a_{E_{\mf q}}$ a generator of the different ideal $\mf D_{E_{\mf q}/F_{\mf p}}$. When $G_{\mf p}$ isn't quasisplit and $E_{\mf q}/F_{\mf p}$ is unramified, we can similarly pick
\[
H_{\mf p} = \begin{pmatrix}
& & & \m I_{(n-2)/2}  \\
& 1 & 0 & \\
& 0 & -\pi_{E_\mf q} & \\
\m I_{(n-2)/2} & & &
\end{pmatrix}
\]
for uniformizer $\pi_{E_\mf q}$ of $F_\mf p$. 
When $E_\mf q/F_\mf p$ is ramified, then we can pick either 
\[
\begin{pmatrix}
& & & a_{E_{\mf q}} \m I_{(n-2)/2}  \\
& 1 & 0 & \\
& 0 & -\beta & \\
\bar a_{E_{\mf q}} \m I_{(n-2)/2} & & &
\end{pmatrix}
\text{ or } 
\begin{pmatrix}
\m I_{n-1} & \\
& \beta
\end{pmatrix}
\]
for the non-extraspecial or extraspecial case respectively and where $\beta$ is an element of $\mc O_\mf p$ that isn't a norm from $E_\mf q$. The extraspecial case is not described in \cite{gan2001exact} and can be seen by computing points over the residue field. 

These examples work even for $\mf p|2$. Global Hermitian forms $H$ that localize to these specific $H_{\mf p}$ can be constructed by a Chinese remainder theorem argument as in \cite{mohammadi2012discrete}*{\S11.3.2}. 
\end{rem}

We finally note:

\begin{lem}
Let $G = U^{E,H}$ and $\mf p \notin \Ram(E)$ such that $H$ has matrix representation $\m H \in \GL_n(\mc O_{\mf p} \otimes_{F_{\mf p}} E)$. Then $U^{E,H}(\mc O_{\mf p}) \subseteq U^{E,H}(F_{\mf p})$ is hyperspecial. 
\end{lem}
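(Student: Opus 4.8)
The plan is to produce an explicit smooth reductive model of $U^{E,H}_{F_{\mf p}}$ over $\mc O_{\mf p}$ and then invoke the standard characterization (quoted in \S\ref{sec:buildings}, following \cite{Tits1979Reductivegroupsover}*{\S3.8.1} and \cite{KP23}*{\S7.11}) of hyperspecial vertices as point-stabilizers of reductive integral models. The model will simply be $\mathbf G := U^{E,H}\times_{\mc O}\mc O_{\mf p}$, the base change of the unitary $\mc O$-group scheme of \S\ref{sec:unitary}, whose $\mc O_{\mf p}$-points are by construction $U^{E,H}(\mc O_{\mf p})$ and whose generic fibre is $U^{E,H}_{F_{\mf p}}$. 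First I would unwind what the hypotheses give at $\mf p$: since $\mf p\notin\Ram(E)$, the ring $\mc O_{\mf p}\otimes_{\mc O}\mc O_E$ is finite étale over $\mc O_{\mf p}$ — a product of one or two unramified discrete valuation rings according to whether $\mf p$ splits or is inert in $E$ — and the assumption that $H$ has a matrix representation $\m H\in\GL_n(\mc O_{\mf p}\otimes_{\mc O}\mc O_E)$ is exactly the statement that $\det\m H$ is a unit there, i.e. that the Hermitian lattice $((\mc O_{\mf p}\otimes\mc O_E)^n,\m H)$ is unimodular.

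The heart of the argument is to check that $\mathbf G$ is a reductive group scheme over $\mc O_{\mf p}$, and I would do this by descent along the finite étale cover $\mc O_{\mf p}\to\mc O_{\mf p}\otimes_{\mc O}\mc O_E$: the two projections of the tensor square induce the standard isomorphism $\mathbf G\times_{\mc O_{\mf p}}(\mc O_{\mf p}\otimes_{\mc O}\mc O_E)\iso\GL_{n}$ over that ring, with the Hermitian relation $g^*\m Hg=\m H$ expressing one of the two $\GL_n$-coordinates in terms of the other through $\m H$ and $\m H^{-1}$ — and this is precisely where unimodularity of $\m H$ is used, to keep the formula integral. Hence $\mathbf G$ is an affine $\mc O_{\mf p}$-group scheme that becomes $\GL_n$ after a finite étale base change, so it is smooth with connected reductive geometric fibres, i.e. a reductive group scheme; in the split case this is even immediate, since then $\mathbf G\cong\GL_{n,\mc O_{\mf p}}$ outright. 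This reductivity verification is essentially the only place real work happens, and the unimodularity hypothesis is exactly what makes it go through, so I expect it to be the main obstacle; everything else is bookkeeping.

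To conclude, the cited Bruhat--Tits fact says that for a reductive group scheme $\mathbf G$ over the (Henselian, hence complete) discrete valuation ring $\mc O_{\mf p}$, the subgroup $\mathbf G(\mc O_{\mf p})$ is the full stabilizer in the enlarged building $\wtd{\mc B}(U^{E,H}(F_{\mf p}))$ of a hyperspecial vertex; thus $U^{E,H}(\mc O_{\mf p})$ is hyperspecial maximal compact, as claimed. As a sanity check this is consistent with Lemma~\ref{lem:rationalpoints}: when $\mf p$ is inert the unimodularity of $\m H$ forces $\disc(H)$ to be a unit of $\mc O_{\mf p}$, hence a norm from the unramified extension $E_{\mf q}$, so $U^{E,H}_{F_{\mf p}}$ is the quasi-split unitary group and splits over the unramified extension $E_{\mf q}$ — precisely the setting in which hyperspecial vertices exist.
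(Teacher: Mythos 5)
Your proof is correct and takes essentially the same approach as the paper: the paper's entire proof is the one-line assertion that $U^{E,H}$ is a smooth group scheme over $\mc O_{\mf p}$ with reductive special fiber (hence its integral points are hyperspecial by the standard Bruhat--Tits characterization). You have simply spelled out the two steps the paper leaves implicit — the verification of reductivity via finite \'etale descent (using $\mf p \notin \Ram(E)$ for \'etaleness and unimodularity of $\m H$ to make the descent datum integral) and the appeal to the Bruhat--Tits fact that $\mathbf G(\mc O_{\mf p})$ stabilizes a hyperspecial vertex when $\mathbf G$ is reductive over $\mc O_{\mf p}$.
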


\begin{proof}
Then $U^{E,H}$ is a smooth group scheme over $\Z_p$ with reductive special fiber.     
\end{proof}

\subsubsection{The Chamber}

Since $G$ acts transitively on the chambers of $\mc B$, to understand the conjugacy classes of maximal parahorics, it suffices to understand the vertices of these chambers and their $G$-orbits. 

In our case of $G = U^{E,H}$, Table \ref{tab:types-actions} describes the structure of the chamber in each of the possibilities in lemma \ref{lem:rationalpoints}. Much of the information is from \cite{KP23}: the types can be looked up in table 6.4.1 and \S10.7b,c, extra special vertices are discussed in remark 7.11.9, and the intersection of the action of $G$ with the automorphisms of a chamber can be found by computing the image of $\pi_1(G)_I^\Frob$ in $\pi_1(G_{\ad})_I^\Frob$ (using Corollary 11.7.5 to interpret $\pi_1(G_{\ad})_I^\Frob$ as the setwise stabilizer of a chamber in $G_{\ad}(\mathbb F)$ mod its pointwise stabilizer).

We note that there is a unique extraspecial (hyperspecial if the group is unramified) orbit in all cases except when $v=\mf p$ is inert unramified and $n$ is even.

\begin{table}[ht!]
    \centering
    \begin{tabular}{ccc|ccccc}
        \multirow{2}{*}{$n$} & \multirow{2}{*}{Q-split?} & \multirow{2}{*}{$v$} & \multicolumn{2}{c}{\textbf{Label}} & \multirow{2}{*}{\textbf{$G$-action}} & \multicolumn{2}{c}{\textbf{Orbits}} \\
        &&&\cite{KP23} & \cite{Tits1979Reductivegroupsover} && xs & s \\
        \midrule
         &  & split & $A_{n-1}$ & $A_{n-1}$ & transitive & 1 & 0 \\
        \midrule
        \multirow{4}{*}{even} & \multirow{2}{*}{yes} & ur. & $C_{n/2}$ & ${}^2\!A'_{n-1}$ & trivial & 2 & 0 \\
        & & ram. & $B_{n/2}^\vee$ & $B$-$C_{n/2}$ & involutive  & 1 & 0\\
        \cmidrule{2-8}
        & \multirow{2}{*}{no} & ur. & $(C_{(n-2)/2}^\vee, C_{(n-2)/2})$ &  ${}^2\!A''_{n-1}$ & trivial & 2 & 0 \\
        & & ram. & $BC_{(n-2)/2}$ & ${}^2\!B$-$C_{n/2}$ & trivial & 1 & 1\\
        \midrule
        \multirow{2}{*}{odd} & \multirow{2}{*}{yes} & ur. & $(BC_{(n-1)/2}, C_{(n-1)/2})$ & ${}^2\!A'_{n-1}$ & trivial & 1 & 1 \\
        & & ram. & $BC_{(n-1)/2}$ & $C$-$BC_{(n-1)/2}$ & trivial & 1 & 1 \\ 
        \bottomrule
    \end{tabular}
    \vspace{\parskip}
    \begin{tabular}{p{0.9\textwidth}}
        \textbf{Note:} Labels in \cite{KP23} are in tables 1.3.3-4, here $C_1 = C^\vee_1 = B_1 = B^\vee_1 := A_1$, $B_2 := C_2$, $B^\vee_2 := C^\vee_2$. For labels in \cite{Tits1979Reductivegroupsover}, $B$-$C_2 := C$-$B_2$ and ${}^2\!B$-$C_{2} := {}^2\!C$-$B_2$. xs is extraspecial, s is special but not xs, xs$\implies$hyperspecial if $G$ is unramified.
    \end{tabular}
    \caption{Vertices in Chamber for $G = U^{E,H}_n(F_{\mf p})$}
    \label{tab:types-actions}
\end{table}

\subsection{Cartan Invariants} \label{sec:cartan}

Now we define a key invariant related to a notion of ``distance'' between points in the building.

\subsubsection{Unramified Case}\label{sec:unramcartan}

Assume first $G$ is unramified and $K_G$ is the stabilizer of the hyperspecial
vertex $v_{0}$ of $\wtd{\mathcal{B}}\left(G\right)$. Let $A_G\cong\left(\mathbb{F}^{\times}\right)^{r}$
be a maximally split torus of $G$ and let $X_{+}(A_G)$
be a positive Weyl chamber in the cocharacter lattice $X_*(A_{G})$ and $\Phi^*$ the corresponding set of simple roots. Then the following Cartan
decomposition holds: 
\begin{equation}\label{eq:cartan}
G=K_G\cdot X_{+}(A_G)\cdot K_G := \bigsqcup_{a\in X_{+}(A_G)}K_Ga(\varpi_\mathbb F)K_G.
\end{equation}
For any $h\in G$, define 
$a_h \in X_{+}(A_G)$ to be the unique element such that $h\in K_G a\left(h\right) K_G$.

To resolve a technicality when $G$ has non-anisotropic center, let 
\[
\bar A_G := A_G/Z^\spl_G := A_G/(A_G \cap Z_G)^0,
\]
where $Z^\spl_G$ is the maximal split torus in $Z_G$. Define $\bar a_h$ to be the image of $a_h$ in this quotient. There is a dominance ordering on $X_{+}(\bar A_G)$ given by
\[
a \preceq b \iff b-a \in X_{+}(\bar A_G).
\]
Let $\Sigma$ be the set of minimal non-zero elements of $(X_{+}(\bar A_G), \preceq)$. Then $X_{+}(\bar A_G)$ is exactly the non-negative integer linear combinations of elements of $\Sigma$. In general, $|\Sigma|$ might be larger than the semisimple rank $r_\ssm(G)$ (e.g. $G = \SL_n$, $n \geq 3$). However, when they are equal, $X_{+}(\bar A_G) = \Z_{\geq 0}^\Sigma$ as semigroups. In this case, index a dual basis to $\Sigma$ by elements $\alpha' \in \Phi^*_{mod}$ that we will call modified simple roots. These are the same as the (non-multipliable) simple roots $\alpha \in \Phi^*$  when $Z_G \cap A_G$ is connected. Otherwise, they are scalings of the $\alpha$.


We can now make the key definition that will let us construct gate sets.
\begin{defn} \label{def:pre-norm} 
Define the Cartan norm on $G$ to be: 
\[
\|\cdot\|_0 := \|\cdot\|_{G,0}\colon G\rightarrow\mathbb{N}_{0}\quad,\quad\|h\|_{G,0}=\|a_h\|_{G,0}=\sum_{\alpha \in \Phi^*} l_\alpha |\alpha(a_h)|.
\]
where $l_\alpha$ is the coefficient of $\alpha$ in the root $\alpha_0$ added to produce the affine root system for $G$ from its spherical root system at a special point (note that $\|g\|_0 = \alpha_0(a_g)$ on the positive chamber). 

In the case where $X_{+}(\bar A_G) = \Z_{\geq 0}^\Sigma$, define the modified Cartan norm to be:
\[
\|\cdot\| := \|\cdot\|_{G}\colon G\rightarrow\mathbb{N}_{0}\quad,\quad\|h\|_{G}=\|a_h\|_{G}=\sum_{\alpha' \in \Phi^*_{mod}} |\alpha'(a_h)|.
\]
\end{defn}
Given two points $v_1,v_0 \in \mc B$, we can also define norms of $\|v_0 - v_1\|$, where $v_0 - v_1$ is always interpreted as an element of $X_+(\bar A_G)$ in a common apartment. Note also that
\begin{equation}\label{eqn:sigmanorm}
\Sigma = \{\alpha \in X_+(\bar A_G) \;:\; \|\alpha\| = 1\}. 
\end{equation}

To justify the name norm:
\begin{lem}\label{lem:triangleineq}
For all $x, y \in G$:
\begin{enumerate}
    \item The Cartan norm satisfies $\|xy\|_0 \leq \|x\|_0 + \|y\|_0$,
    \item If $G$ has all simple factors of type $A,B$, or $C$, then the modified Cartan norm satisfies $\|xy\| \leq \|x\| + \|y\|$.
\end{enumerate}

\begin{proof}
By the $W$-metric space interpretation of the building, we have that in the positivity ordering $\leq$ (as opposed to the dominance ordering $\preceq$), $a_{xy} \leq a_x + a_y$. For (1), it then suffices to show that $\alpha_0(a_x + a_y - a_{xy}) \geq 0$, which follows since $\alpha_0$ is dominant. 

For (2), the result similarly follows if $\sum_i \alpha'_i$ is dominant, which is true under the conditions on $G$. 
\end{proof}

\end{lem}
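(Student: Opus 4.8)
The plan is to derive both inequalities from a single fact about Cartan invariants in the building, after which each part is a one-line check in the root system. Write $v_0$ for the chosen special vertex; by definition $a_h \in X_+(\bar A_G)$ is the dominant element representing $hv_0 - v_0$ in a common apartment, and since left translation by $x$ sends the pair $(v_0, yv_0)$ to $(xv_0, xyv_0)$, the invariant of this latter pair is again $a_y$. The input I would invoke is the standard sub-additivity of these vector-valued distances in a Euclidean building: $a_{xy} \le a_x + a_y$ in the \emph{positivity} order on $X_*(\bar A_G)$, i.e.\ $a_x + a_y - a_{xy}$ is a non-negative integral combination of positive coroots. This follows from the $W$-metric description of $\mc B$ by a retraction/folding argument, and for $\GL_n$ it is precisely the classical estimate comparing the elementary divisors of $xy$ with those of $x$ and $y$. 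I would stress that the analogous statement with the dominance order $\preceq$ in place of $\le$ is \emph{false}: in $\GL_3$ one can have $a_x = a_y = (1,0,0)$ and $a_{xy} = (1,1,0)$ in the usual labelling of dominant cocharacters of the diagonal torus, so that $a_x + a_y - a_{xy} = (1,-1,0)$ is a positive coroot but not a dominant cocharacter. Pinning down the correct order is the one step here that is not purely formal.

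Granting this, part (1) is immediate. Since $l_\alpha$ is the mark of $\alpha$ in the highest root $\alpha_0 = \sum_{\alpha \in \Phi^*} l_\alpha\,\alpha$, one has $\|h\|_0 = \alpha_0(a_h)$ on the dominant chamber, whence
\[
\|x\|_0 + \|y\|_0 - \|xy\|_0 = \alpha_0\bigl(a_x + a_y - a_{xy}\bigr) \ge 0,
\]
the inequality holding because the highest root $\alpha_0$ is a dominant weight and so pairs non-negatively with every positive coroot.

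For part (2) the same computation -- now using that on $X_+(\bar A_G) = \Z_{\ge 0}^\Sigma$ each modified simple root reads off a coordinate, so that $\|h\| = \bigl(\sum_{\alpha' \in \Phi^*_{mod}} \alpha'\bigr)(a_h)$ -- reduces the claim to showing that $\sum_{\alpha' \in \Phi^*_{mod}} \alpha'$ is a dominant weight, and this is the step I expect to require the most care. I would check it type by type over the admissible simple factors, keeping in mind that $\|\cdot\|$ is only defined when $|\Sigma|$ equals the semisimple rank, which already excludes $\SL_n$. For a type-$A$ factor the modified roots are the ordinary simple roots and $\sum_i \alpha_i = \alpha_0$ is the highest root, hence dominant; in fact $\|\cdot\| = \|\cdot\|_0$ there. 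For a type-$C_m$ factor -- the case relevant to the even quasisplit unitary groups of Lemma~\ref{lem:rationalpoints}, where $X_*(\bar A_G)$ is the coroot lattice -- the minimal dominant cocharacters are $e_1^\vee, e_1^\vee + e_2^\vee, \dots, e_1^\vee + \dots + e_m^\vee$, so $\Phi^*_{mod}$ consists of the short simple roots together with \emph{half} the long simple root; their sum telescopes to $e_1$, which pairs non-negatively with every positive coroot. (Equivalently, here $\|\cdot\| = \tfrac12\|\cdot\|_0$, so (2) follows from (1) by scaling.) Type-$B$ factors go the same way. The two substantive points are therefore the positivity-order sub-additivity above and this last analysis -- in particular identifying $\Phi^*_{mod}$ correctly, since it is not literally $\Phi^*$ in the type-$C$ case, and this is also exactly where non-reduced factors of type $BC$ would obstruct the argument.
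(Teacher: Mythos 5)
Your proof is correct and follows exactly the paper's route: both derive the triangle inequality from the positivity-order sub-additivity $a_{xy} \leq a_x + a_y$ of Cartan invariants in the building and reduce the inequality to the dominance of $\alpha_0$ (for part (1)) and of $\sum_{\alpha'} \alpha'$ (for part (2)). The added content --- the $\GL_3$ counterexample showing why the dominance order must be replaced by the positivity order, and the explicit type-by-type verification that $\sum_{\alpha'} \alpha'$ is dominant, including the correct identification of $\Phi^*_{mod}$ as scalings of $\Phi^*$ in type $C$ --- accurately fills in what the paper compresses into ``which follows since $\alpha_0$ is dominant'' and ``which is true under the conditions on $G$.''
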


The Cartan norm $\|a\|_0$ has a clean interpretation in terms of $\mc B$. Let $\mathrm{dist}$ be the graph metric on the vertices in the $1$-skeleton of $\mathcal{B}$.

\begin{lem}\label{lem:per-build-height} 
For any $h\in H$, its Cartan norm $n=\|h\|_{G,0}$ satisfies the following:
\[
\mathrm{dist}\left(h.v_{0},v_{0}\right)=\|h\|_0.
\]
\end{lem}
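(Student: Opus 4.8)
The statement asserts that the Cartan norm $\|h\|_0$ equals the graph distance in the $1$-skeleton of $\mathcal B$ from $v_0$ to $h.v_0$, where $\|h\|_0 = \alpha_0(a_h)$ on the dominant chamber. The plan is to reduce the computation to an apartment. Since $v_0$ and $h.v_0$ are two vertices of $\mathcal B$, there is an apartment $\mathbb A$ containing both; using the Cartan decomposition \eqref{eq:cartan}, after translating by $K_G$ we may assume $h = a_h(\varpi_{\mathbb F})$ lies in the maximally split torus $A_G$, so that $h.v_0$ and $v_0$ lie in the standard apartment and $h.v_0 - v_0 = a_h \in X_+(\bar A_G)$ (interpreted in $X_*(\bar A_G)$). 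So it suffices to compute the graph distance between $v_0$ and the vertex $v_0 + a_h$ inside a single apartment, where the $1$-skeleton is the one induced from the affine Coxeter complex.

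First I would recall that in the (thick) building, the gallery distance and combinatorial distances are realized inside any apartment containing the two vertices — this is the standard ``apartments are retracts / convex'' fact for Bruhat--Tits buildings (see \cite{KP23}). Thus $\mathrm{dist}(v_0, h.v_0)$ computed in $\mathcal B$ agrees with the distance computed in the affine Coxeter complex $\mathbb A$. Next, inside $\mathbb A$, the edges emanating from a special vertex $v$ connect $v$ to the vertices $v + \mu$ where $\mu$ ranges over a specific finite set of cocharacters — precisely the ``minimal'' lattice vectors determined by the affine root system. Concretely, any vertex adjacent to $v_0$ in the $1$-skeleton is $v_0 + \mu$ for $\mu$ a minimal vector, and the highest root $\alpha_0$ evaluates to $\pm 1$ (in the relevant normalization) on each such $\mu$; more precisely the key fact is that $\alpha_0$ is the linear functional on the apartment that changes by exactly $1$ across each wall-crossing step along a minimal geodesic, so that walking from $v_0$ to $v_0 + a_h$ along $1$-cells, the value of $\alpha_0$ increases by at most $1$ per step, giving $\mathrm{dist}(v_0, v_0 + a_h) \geq \alpha_0(a_h) = \|h\|_0$. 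For the reverse inequality, I would exhibit an explicit path: write $a_h$ as a sum of $\alpha_0(a_h)$ minimal-length steps each changing $\alpha_0$ by $1$ (using dominance of $a_h$ and the structure of the affine Weyl group on the dominant chamber — one can move toward $a_h$ one ``brick'' at a time), which realizes the distance $\leq \|h\|_0$.

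The cleanest way to organize the reverse inequality is probably to induct on $\|h\|_0$: if $a_h \neq 0$, pick a minimal vector $\mu$ (an element of $\Sigma$ or a related minimal vertex vector) with $a_h - \mu$ still dominant and $\alpha_0(\mu) = 1$ — such a $\mu$ exists because $\alpha_0$ is a positive combination of simple roots and $a_h$ dominant nonzero forces some fundamental-weight-direction component to be removable — then $v_0 + a_h$ is adjacent to $v_0 + (a_h - \mu)$, and by induction $\mathrm{dist}(v_0, v_0 + (a_h-\mu)) = \alpha_0(a_h) - 1$. Combined with the $\geq$ direction this closes the argument.

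The main obstacle I expect is the ``one-brick-at-a-time'' step: verifying that for every nonzero dominant $a_h$ one can peel off a minimal vertex-vector $\mu$ keeping dominance and with $\alpha_0(\mu)=1$, and simultaneously that every such $\mu$ really is a $1$-cell direction in $\mathcal B$ (not merely in the Coxeter complex of the *adjoint* group). This is where the hyperspeciality of $v_0$ and the precise shape of the affine root system — encoded in the coefficients $l_\alpha$ of $\alpha_0$ appearing in Definition \ref{def:pre-norm} — enter, and it needs to be checked uniformly across the cases of Table \ref{tab:types-actions}; a case-free proof would go through the $\widetilde W$-distance / gate property of galleries in the Coxeter complex, identifying $\alpha_0$ as the unique (up to scaling) affine-root height function realizing the edge-distance to $v_0$.
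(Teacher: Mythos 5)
Your proposal is correct and follows essentially the same route as the paper's: reduce via bi-invariance and the Cartan decomposition to $h \in A_G$ acting by translation on an apartment containing $v_0$, then compute the $1$-skeleton distance there using the fact that a chamber is the convex hull of $0$ and the rescaled fundamental coweights $\lb_\alpha/l_\alpha$. Your lower-bound/upper-bound packaging via $\alpha_0$ fills in some detail the paper leaves implicit (the paper simply asserts that the graph distance equals the sum of coordinates in the basis $\lb_\alpha/l_\alpha$), and the "one-brick-at-a-time" concern you flag at the end is a real point that the paper's short proof glosses over in the same way.
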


\begin{proof}
Note that for any $h\in G$ and any $k_{1},k_{2}\in K_G$, we get
\[
\mathrm{dist}\left(k_{1}hk_{2}.v_{0},v_{0}\right)=\mathrm{dist}\left(h.\left(k_{2}.v_{0}\right),k_{1}^{-1}.v_{0}\right)=\mathrm{dist}\left(h.v_{0},v_{0}\right).
\]
Hence by the Cartan decomposition, it suffice to assume that $h=a\in A_G$.
The split torus $A_G$ acts by a translation on the apartment of $\mathcal{B}$
corresponding to it, which we assume to be $v_{0}$. 
As a fact about buildings, one chamber of this apartment is the convex hull of $0$ and the $\lb_\alpha/l_\alpha$ where $\lb_\alpha$ are the (non-divisible) fundamental coweights corresponding to the $\alpha \in \Phi^*$. 

Therefore, $\mathrm{dist}(h.v_0, v_0)$ is the sum of the coordinates of $h.v_0 - v_0$ in the basis of $\lb_\alpha/l_\alpha$. This is exactly $\sum_{\alpha \in \Phi^*} l_\alpha |\alpha(h)|$.
\end{proof}

Similarly, the modified Cartan norm is a weighted graph distance whenever it can be defined.

\begin{example}\label{exa:pre-Weyl} 
Let $G=PGL_n\left(\mathbb{F}\right)$ or $GL_N(\mathbb F)$ and $K_{G} = PGL_n\left(\mathcal{O}_{\mathbb{F}}\right)$ or $GL_N(\mc O_\mathbb F)$, respectively.  
Then $\bar A_G$ is a quotient of the group of diagonal 
matrices $\mbox{diag}\left(x_{1},\ldots,x_{n}\right)$ where $x_{i}\in\mathbb{F}^{\times}$. 

The relative root system is the absolute root system which is type-$A_{n-1}$, so
\[
X_{+}(\bar A_G)=\left\{ (m_1, \dotsc, m_n ) \in \Z^n/\langle (1, \dotsc, 1) \rangle \;|\; m_{1}\geq\ldots\geq m_{n}\right\}. 
\]
The Cartan norm is the same as the modified Cartan norm and is defined on $X_{+}(A_G)$ by 
\[
\|(m_1,\ldots,m_n)\| = \|(m_1,\ldots,m_n)\|_0 = \sum_{1\leq i\leq n-1}\left(m_{i}-m_{i+1}\right) = m_1 - m_n. 
\]
The set $\Sigma$ is the set of standard fundamental weights. 
\end{example}



\begin{example}\label{ex:cartanunitary}
Let $G$ be quasisplit $U_n(\mathbb F)$ with respect to the unramified quadratic extension $\mathbb E$ and standard antidiagonal Hermitian form $\m J$. Let $K_G = U_n(\mc O_\mathbb F)$ be the hyperspecial. Then we can choose a maximal torus consisting of elements $\diag(x_1, \dotsc, x_n)$ with $x \in \mathbb E^\times$ and $x_i \overline{x_{n-i+1}} = 1$. Inside this, $A_G = \bar A_G$ is those elements with $x_i \in \mathbb F^\times$. Then,
\[
X_*(A_G) = \{ (m_1, \dotsc, m_n) \in \Z^n  \;|\;  m_i = -m_{n-i+1}  \}.
\]

The spherical relative root system is the restriction of the roots of $G_{\overline F} \cong \GL_N$ to $A_G$ which is type-$C_{\lfloor n/2 \rfloor}$ if $n$ is even and type-$BC_{\lfloor n/2 \rfloor}$ when $n$ is odd. This extends to affine root systems $C_{\lfloor n/2 \rfloor}$ and $(BC_{\lfloor n/2 \rfloor}, C_{\lfloor n/2 \rfloor})$ respectively. Either way, the non-multipliable part is spherical $C_{\lfloor n/2 \rfloor}$ extending to affine $C_{\lfloor n/2 \rfloor}$ so 
\[
X_{+}(A_G) = \{ (m_1, \dotsc, m_n) \in X_*(A_G) \;|\; m_{i} \in \mathbb{Z},\;m_{1} \geq \cdots \geq m_{\lfloor n/2 \rfloor } \geq 0 \}
\]
and the Cartan norm is defined on $X_{+}(A_H)$ by 
\[
\| (m_1, \dotsc, m_n) \|_0 = \lf( \sum_{i=1}^{\lfloor n/2 \rfloor - 1} 2 \cdot (m_i - m_{i+1}) \ri) + 1 \cdot (2m_{\lfloor n/2 \rfloor}) = 2m_1. 
\]
The set $\Sigma$ is the all elements of the form $(1, \dotsc, 1, 0, \dotsc, 0, -1, \dotsc, -1) \in X_*(A)$ which is of the correct size to define modified Cartan norm:
\[
\| (m_1, \dotsc, m_n) \| = \lf( \sum_{i=1}^{\lfloor n/2 \rfloor - 1} (m_i - m_{i+1}) \ri) + m_{\lfloor n/2 \rfloor} = m_1.
\]
Then $\Sigma$ is exactly the elements of modified Cartan norm $1$. 

In this specific case, we get a clean replacement for lemma \ref{lem:per-build-height}: the modified Cartan norm is exactly half the graph distance. 
\end{example}

\subsubsection{Ramified or Non-Quasisplit Cases}\label{sec:ramcartan}
Now assume $G$ is general and let $A_G$ be a maximally split torus corresponding to apartment $\mc A \subseteq \mc B(G)$ with dominant chamber $\mc A_+$. Then $M = Z_G(A)$ is a minimal Levi subgroup. Let $\bar A_G = A_G/Z_G^\spl$ as before. 

Define $X_1(A_G)$ to be the subgroup of translations in the action of $N_G(A)$ on the enlarged apartment $\wtd{\mc A} \simeq X_*(A_G) \otimes_\Z \R$ (this is the same as the $\Lambda$ defined on \cite{cartier1979representations}*{pg 140}). Note that $X_*(A_G) \subseteq X_1(A_G) \subseteq \wtd{\mc A}$ and is a free group of the same rank. There is also therefore an action of the Weyl group $\Om(G,A_G)$ on $X_1(A_G)$ and $\mc A_+$ determines a dominant subset $X_{1+}(A_G) \subseteq X_1(A_G)$ and dominance ordering $\preceq$. 

Let $\wtd K$ be the stabilizer of special vertex $x_0$ in $\mc A$. In many cases, $\wtd K$ is a the special parahoric for $x_0$ by lemma \ref{lem:specialismaximal}. Then \cite{cartier1979representations}*{pg 140} gives a Cartan decomposition
\[
G = \bigsqcup_{a \in X_{1+}(A_G)} \wtd K n_a \wtd K,
\]
where $n_a$ is a choice of element of $N_G(A_G)$ that acts as translation by $a$ on $\mc A$. 

To compute $X_1(A_G)$, we may use \cite{HR10}*{Thm 1.0.1}: there is a Kottwitz map $M \to X^*(Z(\wh M))^\Frob_I$, where $\star^\Frob_I$ denotes the Frobenius invariants of the inertia coinvariants and $\wh M$ is the complex dual group of the algebraic group underlying $M$. This induces an isomorphism through $n_a$:
\[
X_1(A_G) \simeq X^*(Z(\wh M))^\Frob_I/\mathrm{torsion}. 
\]
In other words, all $n_a$ can be chosen to be in $M$ and the elements of $M$ acting trivially on $\mc A$ are the preimage of torsion under the Kottwitz map. We also have the translation group $X_*(A_G) \subseteq X_1(A_G)$ embedded through $X_*(A_G) = X^*(\wh A_G) = X^*(\wh A_G)^\Frob_I$ and functoriality of the Kottwitz homomorphism. 

Let $X_1(\bar A_G)$ be the image of $X_1(A_G)$ under the map $\wtd{\mc A} \to \mc A$. Given $g \in G$, we can define $a_g$ to be the $a \in X_{1+}(A)$ such that $\wtd K g \wtd K = \wtd K n_{a_g} \wtd K$ and $\bar a_g$ to be the image of $a_g$ in $X_1(\bar A_G)$. Finally, as before, define $\Sigma$ to be the set of minimal elements of $(X_{1+}(\bar A_G), \preceq)$. This allows us to define modified Cartan norms $\|\cdot \|$ analogously to Definition \ref{def:pre-norm}. 

To satisfy lemma \ref{lem:per-build-height}, the Cartan norm $\|\cdot\|_0$ needs to be defined differently since the chamber for ramified groups is smaller (see e.g \cite{KP23}*{Def 6.3.4} describing the affine roots of $G$). As before, let $\alpha_0$ be the (non-multipliable) root added to the relative spherical root system of $G$ to produce its affine root system. The root subgroup $U_{\alpha_0}$ has a filtration with jumps at $(1/e)\Z + y$ for some $e \in \Z^+$ and $y \in \Q$. 

\begin{defn}\label{dfn:ramcartannorm}
Define the Cartan norm on possibly ramified $G$ to be
\[
\|\cdot\|_0 := \|\cdot\|_{G,0} : G \to \N_0 \qquad, \qquad \|h\|_{G_0} = \|a_h\|_{G,0} = e \sum_{\alpha \in \Phi^*} l_\alpha |\alpha(a_h)|,
\]
similarly to definition \ref{def:pre-norm}. 
\end{defn}

Note that the analogue of the triangle inequality \ref{lem:triangleineq} holds in these ramified cases too.

\begin{example}\label{ex:cartanramunitary}
Let $G = U_n(\mathbb F)$ for $n$ even and $U_n$ the quasisplit unitary group with respect to ramified extension $\mathbb E/\mathbb F$ and standard antidiagonal Hermitian form $\m J$ with alternating signs. 

Then, we may choose $A_G$ as in example \ref{ex:cartanunitary} and $M$ is the diagonal maximal torus $T$ containing it. We then normalize
\[
X^*(Z(\wh M)) = X^*(\wh T) = X_*(T) \cong \Z^n
\]
using the isomorphism $T \simeq (\Res^{\mathbb E}_{\mathbb F} \Gm)^{n/2}$ by looking at the first $n/2$ coordinates. The action of $I$ is through a $\Z/2$ that swaps coordinate $2i+1$ with $2i+2$ and that of $\Frob$ is trivial. Then
\[
X_1(A_G) = X_*(T)_I = 1/2 X^*(T)^I = 1/2 X_*(A_g). 
\]
Therefore, fix coordinates
\[
X_*(A_G) = \{(m_1, \dotsc, m_{n/2}) \in \Z^n\} \subseteq \{(m_1, \dotsc, m_{n/2}) \in 1/2\Z^n\} = X_1(A_G)
\]

Next $G$ has spherical relative root system $C_{n/2}$ embedded in $X_*(A_G)$ as in Example \ref{ex:cartanunitary}. However, this now extends to affine root system $B^\vee_{n/2}$ and the added root has jumps in its root group filtration at half-integers. Therefore, the positive chambers are defined by
\[
m_1 \geq \cdots \geq m_{n/2} \geq 0
\]
and the Cartan norm on the positive chamber can be given by 
\begin{multline*}
\| (m_1, \dotsc, m_n) \|_0 = 2 \lf[ 1 \cdot (m_1 - m_2) + \lf( \sum_{i=2}^{n/2 - 1} 2 \cdot (m_i - m_{i+1}) \ri) + 1 \cdot (2m_{n/2}) \ri] \\
= 2m_1 + 2m_2.
\end{multline*}

On the other hand, since $X_{1+}(A_G) = 1/2X_+(A_G)$ the modified Cartan norm is exactly twice that of Example \ref{ex:cartanunitary}:
\[
\| (m_1, \dotsc, m_n) \| = 2m_1.
\]
This can now only be interpreted as a weighted graph distance in $\mc B$.  
\end{example}

\begin{example}\label{ex:cartannqsunitary}
Now consider $G = U'_n(\mathbb F)$ for $n$ even and $U'_n$ the non-quasisplit unitary group with respect to to an unramified $\mathbb E/\mathbb F$ and Hermitian form
\[
\m J' = \begin{pmatrix}
& & & \m J \\
& 1 & 0 & \\
& 0 & -\beta & \\
\m J & & & 
\end{pmatrix},
\]
where $J$ is the antidiagonal matrix with alternating signs and $\beta$ is not a norm from $\mathbb E^\times$ to $\mathbb F^\times$. Then $A_G$ is elements $\diag(x_1, \dotsc, x_n)$ with $x_i \in \mathbb F^\times$ such that $x_{n/2}, x_{n/2+1} = 1$ and $x_i = x_{n-i+1}^{-1}$ for $1 \leq i \leq n/2-1$ making $M \cong (\Res^{\mathbb E}_{\mathbb F} \Gm)^{n/2-1} \times U'_2$.

Denote the first factor by $T$. Then since $\GL_2$ has simply connected derived subgroup,
\[
X^*(Z(\wh M)) = X^*(\wh T) \times X^*(Z(\wh{U'_2})) = X_*(T) \times \Z = \{(m_1, \dotsc, m_n, x) \in \Z^n\}
\]
on which $I$ acts trivially and $\Frob$ acts through a $\Z/2$ that swaps coordinate $2i+1$ with $2i+2$ and takes $x \mapsto -x$. This computes
\[
X_1(A_G)  = X_*(T)^\Frob \times 1 = X_*(A_G)
\]
so fix coordinates
\[
X_*(A_G) = \{(m_1, \dotsc, m_{n/2-1}) \in \Z^n\}  = X_1(A_G). 
\]

Next, $G$ has spherical root system $BC_{n/2-1}$ embedded in $X_*(A_G)$ by restricting the roots of $G_{\overline F} \cong \GL_N$ to $A_G$. This extends to affine root system $(C^\vee_{n/2-1}, C_{n/2-1})$. Since the non-reduced piece is the spherical $C_{n/2-1}$ extended to affine $C_{n/2-1}$, this produces the same dominant chambers
\[
m_1 \geq \cdots \geq m_{n/2-1} \geq 0,
\]
Cartan norm
\[
\|(m_1, \dotsc, m_{n/2-1})\|_0 = 2m_1,
\]
and modified Cartan norm
\[
\|(m_1, \dotsc, m_{n/2-1})\| = m_1
\]
as the case of quasisplit $U_{n/2-1}$ in example \ref{ex:cartanunitary}. In particular, the modified Cartan norm is half the graph distance. 
\end{example}

\begin{example}\label{ex:cartanramnqsunitary}
Consider the same $U'_n(\mathbb F)$ as Example \ref{ex:cartannqsunitary} except now with $\mathbb E/\mathbb F$ ramified. 

Then
\[
X^*(Z(\wh M)) = \{(m_1, \dotsc, m_n, x) \in \Z^n\}
\]
as in \ref{ex:cartannqsunitary} except the roles of $I$ and $\Frob$ in the action are reversed. This computes
\[
X_1(A_G) = (X_*(T)_I \times \Z/2)_{\mathrm{non-torsion}} = 1/2X_*(A_G).
\]
In particular, since there was torsion to remove, $\wtd K$ is no longer a special parahoric subgroup. We fix coordinates
\[
X_*(A_G) = \{(m_1, \dotsc, m_{n/2-1}) \in \Z^n\} \subseteq \{(m_1, \dotsc, m_{n/2-1}) \in 1/2\Z^n\} = X_1(A_G). 
\]

Next, $G$ still has spherical root system $BC_{n/2-1}$ as in \ref{ex:cartannqsunitary}. However, this now extends to affine root system $BC_{n/2-1}$ and the added root has jumps in its root group filtration at half-integers. Therefore, the positive chambers are defined by
\[
m_1 \geq \cdots \geq m_{n/2-1} \geq 0
\]
and the Cartan norm on the positive chamber can be given by 
\[
\| (m_1, \dotsc, m_{n/2-1}) \|_0 = 2 \cdot \lf[ \lf( \sum_{i=1}^{n/2 - 2} 2 \cdot (m_i - m_{i+1}) \ri) + 1 \cdot (2m_{n/2-1}) \ri] = 4m_1.
\]
The modified Cartan norm can be computed exactly as in \ref{ex:cartanramunitary}:
\[
\| (m_1, \dotsc, m_{n/2-1}) \| = 2m_1,
\]
but this can now be interpreted simply as half the graph distance. 
\end{example}

\subsubsection{Non-Standard Cases}\label{sec:nonstandardcartan}
The analysis of super golden gates may require one more complication. Define $X_1(A_G)$, $X_{1+}(A_G)$, and $\Sigma$ as above. Assume that $|\Sigma| = \rank_{\ssm}(G)$ and also define the $\alpha' \in \Phi^*_{mod}$ as above. 
\begin{defn}\label{def:weylcomplete}
A subset $\Sigma_0 \subseteq \Sigma$ is called \emph{Weyl-complete} if every element of $\Sigma$ can be written as a sum of Weyl-conjugates of elements of $\Sigma_0$.
\end{defn}

\begin{defn}\label{def:nonstandardcartan}
Given Weyl-complete subset $\Sigma_0 \subsetneq \Sigma$, define the \emph{non-standard Cartan norm}
\[
\|\cdot\|_{\Sigma_0} : G \to \N_0 \qquad, \qquad \|h\|_{\Sigma_0} = \|a_h\|_{\Sigma_0} = \sum_{\alpha' \in \Phi^*_{mod}} w_{\Sigma_0}(\alpha') |\alpha'(a_h)|,
\]
where if $\alpha'$ is dual to $\lb \in \Sigma$, $w_{\Sigma_0}(\alpha') := w_{\Sigma_0}(\lb)$ is the minimum number of Weyl translates of elements of $\Sigma_0$ needed to sum to 
$\lb$. 

If $\Sigma_0 = \Sigma$, we let $\|\cdot \|_{\Sigma_0} := \|\cdot\|$ be the modified Cartan norm. 
\end{defn}

\begin{defn}\label{def:goodnsnorm}
Call a Weyl-complete subset $\Sigma_0 \subseteq \Sigma$ \emph{navigable} if
\[
\sum_{\alpha'\in \Phi^*_{mod}}w_{\Sigma_0}(\alpha') \alpha'
\]
is dominant.
\end{defn}

\begin{lem}\label{lem:triangleineqns}
If $\Sigma_0$ is navigable and Weyl-complete, then for all $x,y \in G_\mf p$, $\|xy\|_{\Sigma_0} \leq \|x\|_{\Sigma_0} + \|y\|_{\Sigma_0}$.
\end{lem}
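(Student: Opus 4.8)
The plan is to mimic the proof of Lemma \ref{lem:triangleineq}(2), reducing everything to a statement about dominance of a single element in $X_*(A_G) \otimes \R$. First I would recall that the $W$-metric space interpretation of the building gives, in the positivity (not dominance) ordering $\leq$, the basic inequality $a_{xy} \leq a_x + a_y$ as elements of $X_{1+}(\bar A_G)$. Equivalently, $a_x + a_y - a_{xy} = \sum_i c_i \alpha_i^\vee$ with $c_i \geq 0$, where the $\alpha_i^\vee$ are the simple coroots. So it suffices to show that the linear functional defining $\|\cdot\|_{\Sigma_0}$ on the dominant chamber, namely
\[
\rho_{\Sigma_0} := \sum_{\alpha' \in \Phi^*_{mod}} w_{\Sigma_0}(\alpha')\, \alpha',
\]
is dominant, i.e. takes nonnegative values on all simple coroots; then $\rho_{\Sigma_0}(a_x + a_y - a_{xy}) \geq 0$, which rearranges to $\|a_{xy}\|_{\Sigma_0} \leq \|a_x\|_{\Sigma_0} + \|a_y\|_{\Sigma_0}$ after noting that on the dominant chamber $\|a\|_{\Sigma_0} = \rho_{\Sigma_0}(a)$.

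But dominance of $\rho_{\Sigma_0}$ is \emph{exactly} the definition of $\Sigma_0$ being navigable (Definition \ref{def:goodnsnorm}), so this step is immediate by hypothesis. The one genuine subtlety is the following: the norm $\|h\|_{\Sigma_0}$ is defined via the sum $\sum_{\alpha'} w_{\Sigma_0}(\alpha') |\alpha'(a_h)|$ over \emph{all} modified simple roots, using absolute values, whereas $\rho_{\Sigma_0}$ only agrees with this expression when $a_h$ lies in the dominant chamber (where all $\alpha'(a_h) \geq 0$). Since $a_x, a_y, a_{xy}$ are all chosen dominant, $\|a_x\|_{\Sigma_0} = \rho_{\Sigma_0}(a_x)$ and likewise for $a_y$; and although $a_x + a_y - a_{xy}$ need not itself be dominant, we only need $\rho_{\Sigma_0}$ evaluated on it to be nonnegative, which is precisely what navigability (via $a_x + a_y - a_{xy}$ being a nonnegative combination of simple coroots together with $\rho_{\Sigma_0}$ dominant) gives. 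I would also use Weyl-completeness only implicitly — it guarantees $w_{\Sigma_0}$ is finite and well-defined on every $\lb \in \Sigma$, so that $\|\cdot\|_{\Sigma_0}$ makes sense in the first place.

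The main (and essentially only) obstacle is making sure the bookkeeping with $X_1(A_G)$ versus $X_*(A_G)$ and the passage from $W$-distance in the building to the combinatorial inequality $a_{xy} \leq a_x + a_y$ is clean in the possibly-ramified, possibly-non-quasisplit setting of \S\ref{sec:ramcartan}; but this is exactly the content already established for $\|\cdot\|_0$ in Lemma \ref{lem:triangleineq} and the remark following Definition \ref{dfn:ramcartannorm}, so I would simply invoke it. Thus the proof is: (1) $a_{xy} \leq a_x + a_y$ in the positivity order; (2) on the dominant chamber $\|\cdot\|_{\Sigma_0}$ equals the linear functional $\rho_{\Sigma_0}$; (3) navigability says $\rho_{\Sigma_0}$ is dominant, hence nonnegative on $a_x + a_y - a_{xy}$; (4) conclude. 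I expect this to be a short proof, three or four lines, parallel to Lemma \ref{lem:triangleineq}(2).
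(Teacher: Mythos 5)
Your proof is correct and is exactly the argument the paper intends: the paper's proof simply says ``this is the same argument as Lemma \ref{lem:triangleineq},'' and you have correctly unwound that reference — $a_{xy}\leq a_x+a_y$ in the positivity order, $\|\cdot\|_{\Sigma_0}$ agrees with the linear functional $\rho_{\Sigma_0}=\sum_{\alpha'}w_{\Sigma_0}(\alpha')\alpha'$ on the dominant chamber, and navigability is precisely the statement that $\rho_{\Sigma_0}$ is dominant.
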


\begin{proof}
This is the same argument as lemma \ref{lem:triangleineq}. 
\end{proof}

\begin{example}\label{ex:nonstandardcartansplit} 
Consider the example of $\GL_n$ and recall the coordinates from Example \ref{exa:pre-Weyl}. Then $\Sigma_0 = \{(1, 0, \dotsc, 0)\} \subseteq \Sigma$ is Weyl-complete and produces non-standard Cartan norm:
\begin{multline*}
\|(m_1, \dotsc, n_n)\|_{\Sigma_0} = 1 \cdot (m_1 - m_2) + 2 \cdot (m_2 - m_3) + \cdots + (n-1) \cdot (m_{n-1} - m_n) \\
= m_1 + m_2 + \cdots + m_{n-1} - (n-1)m_n. 
\end{multline*}
In particular, $\Sigma_0$ is navigable. 
\end{example}

\begin{example}\label{ex:nonstandardcartanunitary}
Consider the example of quasiplit unramified unitary groups, recall the coordinates from Example \ref{ex:cartanunitary} and fix $n=4$. Then  $\Sigma_0 = \{(1,0,0,-1)\} \subseteq \Sigma$ is Weyl-complete and produces non-standard Cartan norm:
\[
\|(m_1, \dotsc, m_4)\|_{\Sigma_0} = 1\cdot (m_1 - m_2) + 2 \cdot m_2 = m_1 + m_2. 
\]
In particular, $\Sigma_0$ is navigable. 
\end{example}

\begin{example}\label{ex:nonstandardcartanramunitary}
Consider the example of quasiplit ramified unitary groups, recall the coordinates from Example \ref{ex:cartanramunitary}, and fix $n=4$. Then  $\Sigma_0 = \{(1/2,0)\} \subseteq \Sigma$ is Weyl-complete and produces non-standard Cartan norm:
\[
\|(m_1, m_2)\|_{\Sigma_0} = 1 \cdot 2(m_1 - m_2) + 2 \cdot 2m_2 = 2m_1 + 2m_2. 
\]
In particular, $\Sigma_0$ is navigable. 
\end{example}

\subsection{Discrete Actions and General Gate Sets}\label{sec:gengatesets}
Our construction of gate sets will have to do with an analysis of discrete subgroups acting on $\mc B$ whose action is transitive on certain special vertices. 

Fix $K \leq G$ a special maximal compact (hyperspecial in the unramified case) and a positive Weyl chamber.
Let $\Sigma$ be the set of minimal non-zero elements of $(X_{1+}(\bar A_G), \preceq)$. Assume that $|\Sigma| = \rank_\ssm G$ so we can define a modified Cartan norm $\|\cdot\|$. 

Let $\Lambda \leq G$ be a discrete subgroup.
As some technicalities when $Z_G$ isn't anisotropic, let $\bar \Lambda = \Lambda/(\Lambda \cap Z^\spl_G)$ and note that $\gamma \mapsto \bar a_\gamma$ is well-defined on $\bar \Lambda$ and the action of $\Lambda$ on $\mc B$ factors through $\bar \Lambda$. Also, for each $\alpha \in \Sigma$, choose once and for all a lift $\wtd \alpha \in X_{1+}(A_G)$ and let $\wtd \Sigma$ be the set of lifts. Since under our assumptions, $X_{1+}(A_G)= \Z_{\geq 0}^{\Sigma}$, we can extend this to well-defined map 
\[
X_{1+}(\bar A_G) \to X_{1+}(A_G) : \alpha \mapsto \wtd \alpha. 
\]


\begin{defn}\label{def:gateset}
Let $\Lambda \leq G$ be a discrete subgroup  such that $G = \Lambda \cdot K$. Define the corresponding Gate set
\[
S_\Lambda := \{\gamma \in \bar \Lambda \;:\; \bar a_\gamma \in \Sigma\}.
\]
The corresponding lifted Gate set is
\[
\wtd S_\Lambda = \{\gamma \in \Lambda \;:\; a_\gamma \in \wtd \Sigma\}. 
\]
In addition, define:
\[
C_\Lambda = \bar \Lambda \cap K/(K \cap Z^\spl_G), \qquad \wtd C_\Lambda = \Lambda \cap K. 
\]
Finally, denote by $\zl S_\Lambda$ and $\zl \wtd S_\Lambda$ a choice of coset representatives for $S_\Lambda / C_\Lambda$ and $ \wtd S_\Lambda/ \wtd C_\Lambda$ respectively.
\end{defn}
Note that quotienting by $\Lambda \cap Z_G^\spl$ gives a bijection between these choices of coset representatives. We denote the inverse as:
\[
\zl S_\Lambda \iso \zl \wtd S_\Lambda : s \mapsto \wtd s.
\]
This necessarily satisfies $\wtd{\bar a}_{s_i} = a_{\wtd s_i}$. 
\begin{prop}\label{prop:gategeneration}
In the notation of definition \ref{def:gateset},
\begin{enumerate}
    \item All $\gamma \in \bar \Lambda$ are of the form $s_1 \cdots s_kc$ for $c \in C_\Lambda$ and $s_i \in \zl S_\Lambda$. 
    \item Let $S_\Lambda^{[\ell]} :=  \zl S_\Lambda^{(\ell)} C_\Lambda$ be the set of $\gamma \in \bar \Lambda$ for which the minimum possible $k$ in an expression as above is exactly $\ell$. Then for $\ell \geq 1$,
    \[
    S_\Lambda^{[\ell]} = (S_\Lambda \cup C_\Lambda)^{(\ell)} = \{\gamma \in \bar \Lambda : \|\bar a_\gamma\| = \ell\},
    \]
    where $\|\bar a_\gamma\|$ is the modified Cartan norm. 
    \item Let $\wtd S_\Lambda^{[\ell]} :=  \zl \wtd S_\Lambda^{(\ell)} \wtd C_\Lambda = \{ \wtd s_1 \cdots \wtd s_\ell \wtd c$ \;:\; $s_1 \cdots s_\ell c \in S_\Lambda^{[\ell]},\; \wtd c \in \wtd C_\Lambda\}$.  Then similarly for $\ell \geq 1$,
    \[
    \wtd S_\Lambda^{[\ell]} = (\wtd S_\Lambda \cup \wtd C_\lambda)^{(\ell)} = \{\gamma \in \Lambda : a_\gamma = \wtd \alpha \text{ for some } \alpha \in X_{1+}(\bar A_G) \text{ with } \|\alpha\| = \ell\}.
    \]
\end{enumerate}
\end{prop}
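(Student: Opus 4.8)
\emph{Strategy.} All three parts come from a single geometric fact: under the action of $\bar\Lambda$ on $\mc B$, the word length coming from $\zl S_\Lambda$ agrees with the modified Cartan norm. Since $G=\Lambda\cdot K$ and $K=\Stab_G(x_0)$ for the special vertex $x_0$ underlying the Cartan decomposition, the orbit $\mc O:=G\cdot x_0\subseteq\mc B$ equals $\bar\Lambda\cdot x_0$, and $\gamma C_\Lambda\mapsto\gamma x_0$ is a bijection $\bar\Lambda/C_\Lambda\iso\mc O$ with fibres the cosets of $\bar\Lambda\cap K=C_\Lambda$. For $x,y\in\mc O$ write $\mathrm{pos}(x,y)\in X_{1+}(\bar A_G)$ for the relative position, so $\mathrm{pos}(x_0,\gamma x_0)=\bar a_\gamma$; it is $G$-invariant, and in any apartment $\mc A'$ through $x,y$ identified with $X_*(\bar A_G)\otimes\R$ by an isometry sending $x$ to $0$ and $y$ to a dominant point it is simply ``$y-x$''. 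One checks $S_\Lambda=\bigsqcup_{s\in\zl S_\Lambda}sC_\Lambda$ (because $\bar a_{sc}=\bar a_s$ for $c\in C_\Lambda\subseteq K$) and that $s\mapsto sx_0$ is a bijection from $\zl S_\Lambda$ onto the set of $\Sigma$-neighbours of $x_0$ (points $y\in\mc O$ with $\mathrm{pos}(x_0,y)\in\Sigma$). Hence a word $s_1\cdots s_k$ in $\zl S_\Lambda$ traces the $\Sigma$-edge-path $x_0,s_1x_0,\dots,s_1\cdots s_kx_0$ in $\mc O$, and conversely, so the $\zl S_\Lambda$-word length of $\gamma$ modulo $C_\Lambda$ equals the $\Sigma$-graph distance $d_\Sigma(x_0,\gamma x_0)$.

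\emph{Upper bound.} If $\gamma=s_1\cdots s_kc$ with $c\in C_\Lambda$ then $\bar a_\gamma=\bar a_{s_1\cdots s_k}$ (right multiplication by $c\in K$ preserves the Cartan double coset), so by the triangle inequality for the modified Cartan norm (Lemma \ref{lem:triangleineq}(2), or Lemma \ref{lem:triangleineqns}, applicable since all simple factors in our cases are of type $A$, $B$ or $C$) together with \eqref{eqn:sigmanorm}, $\|\bar a_\gamma\|\le\sum_i\|\bar a_{s_i}\|=k$. Thus $\|\bar a_\gamma\|\le d_\Sigma(x_0,\gamma x_0)$.

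\emph{Lower bound and part (1).} Induct on $\ell:=\|\bar a_\gamma\|$. If $\ell=0$ then $\gamma$ fixes $x_0$, hence $\gamma\in\bar\Lambda\cap K=C_\Lambda$. If $\ell\ge1$, use the standing hypothesis $|\Sigma|=\rank_\ssm G$, under which $X_{1+}(\bar A_G)=\Z_{\ge0}^\Sigma$, to write $\bar a_\gamma=\alpha_1+\dots+\alpha_\ell$ with $\alpha_i\in\Sigma$. The crucial input is the geodesic‑extension property of the building: since $\mathrm{pos}(x_0,\gamma x_0)=\alpha_1+(\alpha_2+\dots+\alpha_\ell)$ with both summands dominant, there is $y\in\mc O$ with $\mathrm{pos}(x_0,y)=\alpha_1$ and $\mathrm{pos}(y,\gamma x_0)=\alpha_2+\dots+\alpha_\ell$; concretely, choose an apartment $\mc A'$ through $x_0$ and $\gamma x_0$, identify it with $X_*(\bar A_G)\otimes\R$ so that $x_0\mapsto0$ and $\gamma x_0\mapsto\bar a_\gamma$, and put $y:=\alpha_1$, which lies in $\mc O\cap\mc A'=x_0+X_1(\bar A_G)$ since $\alpha_1\in\Sigma\subseteq X_1(\bar A_G)$, reading off the two positions from $G$-invariance and translation-equivariance of $\mathrm{pos}$ inside $\mc A'$. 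By transitivity of $\bar\Lambda$ on $\mc O$ pick $s_1\in\zl S_\Lambda$ with $s_1x_0=y$; then $\bar a_{s_1^{-1}\gamma}=\mathrm{pos}(s_1x_0,\gamma x_0)=\mathrm{pos}(y,\gamma x_0)$ has norm $\ell-1$, so by induction $s_1^{-1}\gamma=s_2\cdots s_\ell c$ and $\gamma=s_1\cdots s_\ell c$. This proves (1) and that $d_\Sigma(x_0,\gamma x_0)\le\ell$; with the previous step, $d_\Sigma(x_0,\gamma x_0)=\|\bar a_\gamma\|$.

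\emph{Conclusion, and the main difficulty.} The minimal $k$ in an expression $\gamma=s_1\cdots s_kc$ is thus $\|\bar a_\gamma\|$, which is the third equality of (2); and since $S_\Lambda=\zl S_\Lambda C_\Lambda$ with $C_\Lambda$ normalising $S_\Lambda$ (both because $\bar a_{ksk'}=\bar a_s$ for $k,k'\in K$), one may push all $C_\Lambda$-letters in a word in $S_\Lambda\cup C_\Lambda$ to the right, identifying $(S_\Lambda\cup C_\Lambda)^{(\ell)}$ with $\zl S_\Lambda^{(\ell)}C_\Lambda$ and showing $\ell$ is minimal for both, which is the middle equality. Part (3) follows by transporting (2) through the central isogeny $\Lambda\to\bar\Lambda$ with kernel $\Lambda\cap Z^\spl_G$: the fixed bijections $\zl{\wtd S_\Lambda}\iso\zl S_\Lambda$, $\wtd C_\Lambda\iso C_\Lambda$ and the compatibility $a_{\wtd s}=\wtd{\bar a_s}$ lift a minimal factorisation of $\bar\gamma$ to $\gamma=\wtd s_1\cdots\wtd s_\ell\wtd c$, and additivity of the Cartan invariant along the geodesic of the previous step, now in the enlarged building $\wtd{\mc B}$, gives $a_\gamma=\wtd{\alpha_1}+\dots+\wtd{\alpha_\ell}=\wtd{\bar a_\gamma}$ with $\|\bar a_\gamma\|=\ell$. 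I expect the hard part to be the geodesic-extension step: producing $y\in\mc O$ with the prescribed relative positions, which rests on any two points of $\mc B$ lying in a common apartment, on the identification $\mc O\cap\mc A'=x_0+X_1(\bar A_G)$ for any apartment $\mc A'\ni x_0$, and on $G$-invariance of $\mathrm{pos}$ --- equivalently, that the top term $\bar Kn_{\mu+\nu}\bar K$ genuinely occurs in $\bar Kn_\mu\bar K\cdot\bar Kn_\nu\bar K$ and is realised by a single element of $\bar\Lambda$. A secondary nuisance is the three-way bookkeeping among the special maximal compact $K$, the Cartan group $X_1$, and the quotient $\bar A_G$, together with the monotonicity of $\|\cdot\|$ under $\preceq$ underlying the upper bound (valid precisely when $\Sigma$ is navigable).
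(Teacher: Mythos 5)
Your proof is correct and follows essentially the same route as the paper: induction on $\|\bar a_\gamma\|$, peeling off one $\Sigma$-step at a time via transitivity of $\bar\Lambda$ on the orbit $\mc O$ (which is where $G=\Lambda K$ enters), together with the triangle inequality of Lemma \ref{lem:triangleineq} for the upper bound and the tightness of that inequality for part (3). Your factorization $\gamma=s_1(s_1^{-1}\gamma)$ with $\bar a_{s_1^{-1}\gamma}=\mathrm{pos}(s_1 x_0,\gamma x_0)$ is in fact a slightly cleaner way of writing the paper's inductive step, which introduces $\gamma'$ with $\gamma'v_0=v_1'$ and then states the same identity less transparently.
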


\begin{proof}
Let $\gamma \in \bar \Lambda$ and pick a common apartment $\mc A$ of $v_0$ and $\gamma v_0$. Choose an embedding $X_1(\bar A_G) \into \mc A$ by setting $v_0 = 0$ and orienting it in some way such that $\gamma v_0$ corresponds to the point $v_1 = \bar a_\gamma \in X_{1+}(\bar A_H)$.  

If $v_1 \neq 0$, then there is $0 \preceq v'_1 \in X_*(\bar A_H)$ such that $v_1 - v_1' \in \Sigma$. Since $G = \Lambda \cdot K_H$, there is $\gamma' \in \bar \Lambda$ such that  $v_1 = \gamma'. v_0$.  Then, 
\[
\bar a_{\gamma (\gamma')^{-1}} = |\bar a_{(\gamma')^{-1}} - \bar a_{\gamma^{-1}}| = \bar a_\gamma - \bar a_{\gamma'} \in \Sigma,
\]
where the absolute value denotes the Weyl conjugate in $X_{1+}(\bar A_G)$. In total, we have produced $s \in S_\Lambda$ and $\gamma' \in \bar \Lambda$ such that $\gamma = s\gamma'$ and $\bar a_{\gamma'} \prec \bar a_{\gamma}$. We may without loss of generality choose $s \in \zl S_\Lambda$.

Inductively reducing $\bar a_{\gamma'}$ further, we can find $s_1, \dotsc, s_k \in \zl S_\Lambda$ and $\gamma' \in \bar \Lambda$ such that $\gamma =  s_1 \dotsc s_k \gamma' $ and $\bar a_{\gamma'} = 0$. Since $\bar a_x = 0$ if and only if $x \in K_H Z_G^\spl$, this implies that $\gamma' \in C_\Lambda$ which proves (1)

For (2), the induction for (1) and equation \eqref{eqn:sigmanorm} also gives that the claimed set is contained in $S_\Lambda^{(\ell)}$. Containment the other way follows from the triangle inequality \ref{lem:triangleineq} that $\|\bar a_{xy}\| \leq \|\bar a_x\| + \|\bar a_y\|$.  

For (3), we repeat the argument for (1) and (2). Note that the inequality-squeezing for (2) forces that if $\gamma = s_1 \cdots s_\ell \in S_\Lambda^{[\ell]}$, then $\bar a_\gamma = \bar a_{s_1} + \cdots + \bar a_{s_\ell}$. Therefore, $a_\gamma = a_{\wtd s_1} + \cdots + a_{\wtd s_\ell} = \wtd{\bar a}_\gamma$. 
\end{proof}

\begin{rem}
It is more intuitive to think of 
\[
s_1 \cdots s_kc = c's_1 \cdots s_k
\]
for $c'$ in the stabilizer of $(s_1 \cdots s_k) \tau$. 
\end{rem}

\begin{rem}\label{rem:nonstandardgategeneration}
Given Weyl-complete $\Sigma_0 \subseteq \Sigma$ as in \ref{def:weylcomplete}, we may define analogous  
\[
S_{\Lambda, \Sigma_0} := \{\gamma \in \bar \Lambda \;:\; \bar a_\gamma \in \Sigma_0\}, 
\]
etc. If $\Sigma_0$ is furthermore navigable as in \ref{def:goodnsnorm}, then the natural variant of Proposition \ref{prop:gategeneration} then with analogous statements like:
\[
S_{\Lambda, \Sigma_0}^{[\ell]} = (S_{\Lambda, \Sigma_0} \cup C_\Lambda)^{(\ell)} = \{\gamma \in \bar \Lambda : \|\bar a_\gamma\|_{\Sigma_0} = \ell\}
\]
in terms of the non-standard Cartan norm from \ref{def:nonstandardcartan}. Note that the argument for navigation needs the alternate triangle inequality \ref{lem:triangleineqns}. 
\end{rem}

\subsubsection{Decimation}\label{sec:decimation}
We can sometimes use one more trick to sparsify the gate sets. Define through the Kottwitz map 
\[
G_0 := \ker(G \to G_{\ad} \to \pi_1(G_{\ad})^\Frob_I).
\]
Then $G_0$ is the elements of $G$ that fix pointwise every chamber of $\mc B(G)$ that they stabilize. This gives that for every point $v_0 \in \mc B(G)$, $G_0v_0 = G^\scn v_0$, where $G^\scn$ is the simply connected cover of the derived subgroup $G^\der$; stated otherwise:
\[
G_0 = \{g \in G \;:\; a_g \in X_{1+}(A_{G^\scn}) \subseteq X_{1+}(A_G)\}
\]
Therefore define $\Sigma^\scn$ and $\|\cdot\|_{\Sigma_0^\scn}$ analogously to before except with respect to $X_{1+}(A_{G^\scn})$.

Next, note that if $\Lambda K = G$ and we define $\Lambda^\scn, K_0$ by intersecting with $G_0$, we have that $\Lambda^\scn K_0 = G_0$. By lemma \ref{lem:specialismaximal}, note also that $K_0 = K$ in all cases when $G_0 \neq G$.  

\begin{defn}\label{def:decimatedgateset}
If $\Lambda K = G$, then define decimated gate sets analogously to Definition \ref{def:gateset}: i.e
\[
S_{\Lambda^\scn} = \{\gamma \in \bar \Lambda^\scn \;:\: \bar a_\gamma \in \Sigma^\scn\}, \text{ etc.}
\]
Given Weyl-complete $\Sigma_0^\scn \subseteq \Sigma^\scn$, we may also define non-standard decimated gate sets $S_{\Lambda^\scn, \Sigma^\scn_0}$ and in Remark \ref{rem:nonstandardgategeneration}. 
\end{defn}

\begin{cor}\label{cor:decimatedgategeneration}
For Weyl-complete and navigable $\Sigma_0^\scn$, the natural variant of Proposition \ref{prop:gategeneration} holds for standard and non-standard decimated gate sets: i.e.
\[
S_{\Lambda^\scn, \Sigma^\scn_0}^{[\ell]} = (S_{\Lambda^\scn, \Sigma^\scn_0} \cup C_{\Lambda^\scn})^{(\ell)} = \{\gamma \in \bar \Lambda^\scn : \|\bar a_\gamma\|_{\Sigma^\scn_0} = \ell\}, \text{ etc.}
\]
\end{cor}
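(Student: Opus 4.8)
The plan is to run the proof of Proposition~\ref{prop:gategeneration}---in the non-standard form of Remark~\ref{rem:nonstandardgategeneration} when $\Sigma_0^\scn\subsetneq\Sigma^\scn$---essentially verbatim, with the data $(G,\Lambda,K,\Sigma,\Sigma_0)$ replaced throughout by $(G_0,\Lambda^\scn,K_0,\Sigma^\scn,\Sigma_0^\scn)$. When $G_0=G$ the assertion is literally Proposition~\ref{prop:gategeneration}, so one may assume $G_0\subsetneq G$, in which case $K_0=K$ by Lemma~\ref{lem:specialismaximal} is again a special maximal compact, now of $G_0$. All that is needed, then, is to check that the handful of structural facts feeding that proof survive restriction to $G_0$.

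First I would record the Cartan decomposition for $G_0$: since $G_0=\{g\in G:a_g\in X_{1+}(A_{G^\scn})\}$ (see \S\ref{sec:decimation}), intersecting the Cartan decomposition of $G$ with $G_0$ gives
\[
G_0=\bigsqcup_{a\in X_{1+}(A_{G^\scn})}K_0\,n_a\,K_0,
\]
and the characterization ``$\bar a_x=0$ iff $x\in K_0 Z_G^{\spl}$'' is unchanged, so $\gamma\mapsto\bar a_\gamma$ behaves on $\bar\Lambda^\scn$ exactly as it did on $\bar\Lambda$. Second, the surjectivity $\Lambda^\scn K_0=G_0$ is already recorded in \S\ref{sec:decimation}, and together with $G_0 v_0=G^\scn v_0$ it realizes each vertex of $X_{1+}(A_{G^\scn})$ as $\gamma.v_0$ for some $\gamma\in\bar\Lambda^\scn$---the surjectivity input to the inductive reduction. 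Third, the inductive reduction (peeling off one generator so as to lower the norm by exactly one) is the very mechanism of Proposition~\ref{prop:gategeneration}, respectively of Remark~\ref{rem:nonstandardgategeneration} in the non-standard case, and it goes through since $\Sigma_0^\scn$ is Weyl-complete, which is one of our hypotheses. Fourth, the reverse inclusion ``a length-$\ell$ word has $\|\cdot\|_{\Sigma_0^\scn}\le\ell$'' uses only the triangle inequality $\|xy\|_{\Sigma_0^\scn}\le\|x\|_{\Sigma_0^\scn}+\|y\|_{\Sigma_0^\scn}$, which holds on $G_0$ because $G^\scn$ has the same simple factors as $G$ (types $A,B,C$ in all cases we use) and $\Sigma_0^\scn$ is navigable, so Lemma~\ref{lem:triangleineq}(2) / Lemma~\ref{lem:triangleineqns} applies. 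Granting these, the argument of Proposition~\ref{prop:gategeneration}(1)--(2) produces the first two equalities, and part~(3) follows by the same inequality-squeezing, using that in a length-$\ell$ reduction $\bar a_\gamma=\sum_i\bar a_{s_i}$ forces $a_\gamma=\sum_i a_{\wtd s_i}$.

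I do not expect a genuine obstacle: the content is a transcription of an argument already carried out, and the only points demanding care are bookkeeping ones. One must confirm that $K_0$ is indeed a special maximal compact of $G_0$ in the (few) cases with $G_0\subsetneq G$, which is Lemma~\ref{lem:specialismaximal}, and that $|\Sigma^\scn|=\rank_\ssm(G^\scn)$ so that the modified and non-standard Cartan norms on $G_0$ are even defined---an implicit standing hypothesis (as in \S\ref{sec:nonstandardcartan}), checked case-by-case for the unitary groups of interest alongside the explicit computations of \S\ref{sec:ramcartan}--\S\ref{sec:nonstandardcartan}. Everything else is literally the already-established argument applied to $G_0$.
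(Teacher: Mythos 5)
The paper states this corollary without proof, treating it as an immediate consequence of Proposition \ref{prop:gategeneration}, Remark \ref{rem:nonstandardgategeneration}, and the setup of \S\ref{sec:decimation}; your proposal is a correct and complete justification that the transcription indeed goes through. You have correctly identified the four structural inputs that must survive the passage from $(G,\Lambda,K,\Sigma_0)$ to $(G_0,\Lambda^\scn,K_0,\Sigma_0^\scn)$---the Cartan decomposition restricted to $X_{1+}(A_{G^\scn})$ (which works because $K_0=K\subseteq G_0$ in the relevant cases), the surjectivity $\Lambda^\scn K_0=G_0$ already noted in \S\ref{sec:decimation}, Weyl-completeness for the descent step, and navigability via Lemma \ref{lem:triangleineqns} for the reverse inclusion---and there is no gap.
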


\begin{example}\label{ex:cartanramunitarydecimated}
In the even, quasisplit, ramified case of Example \ref{ex:cartanramunitary} (and the coordinates there), $X_1(A_{G^\scn})$ is the $(m_1, \dotsc, m_{n/2}) \in 1/2\Z^n$ such that $m_1$ is an integer. Then $\Sigma^\scn$ is $(1, 0, \dotsc, 0)$ together with all elements of $i$-many $1/2$'s followed by $(n/2-i)$-many $0$'s for $1 \leq i \leq n/2$. Then
\[
\|(m_1, \dotsc, m_n)\|_{\Sigma^\scn} = (m_1 - m_2) + \lf(\sum_{i=2}^{n/2} 2(m_i - m_{i+1}) \ri) + 2m_{n/2} = m_1 + m_2
\]
so $\Sigma^\scn$ is navigable. 

Fix $n=4$, and also consider $\Sigma^{\scn}_0 = (1/2,1/2)$. Then $w_{\Sigma^\scn_0}((1,0)) = 2$, giving non-standard modified Cartan norm 
\[
\|(m_1, m_2)\|_{\Sigma^\scn_0} = 2m_1,
\]
In particular, $\Sigma_0^\scn$ is navigable.
\end{example}





\section{Golden Adelic Subgroups and Gate Sets}\label{sec:gold}

In this section we introduce the notions of golden and super-golden adelic groups, use them to construct gate sets of $U(n)$, and prove that these gate sets satisfy the first three properties of Definition \ref{def:intro-GG}: growth, navigation and approximation (Subsection \ref{sec:firstthree}). 

We end this section by giving examples of golden and super-golden groups for $n=4$ (Subsection \ref{subsec:gold-exm}), noting that all previous constructions comes from such golden and super-golden groups (\cite{sarnak2015letter,Parzanchevski2018SuperGoldenGates} for $n=2$ and \cite{EP22} for $n=3$), and discussing practical considerations for their use in quantum computing (Subsection \ref{sec:comparisons}).


\subsection{Definition and First Properties}\label{subsec:gold-def}

Let $G=U_{n}^{E,H}$, $SU_{n}^{E,H}$ or $PU_{n}^{E,H}$. 
Let $K'=\prod_{\ell}K'_{\ell}\leq G(\widehat{\mathcal{O}})$ be a finite index subgroup. 


\begin{defn}\label{def:gold-adelic} 
The adelic subgroup, $K'\leq G (\widehat{\mathcal{O}})$, is said to be \emph{golden} if
\begin{enumerate}
    \item $G\left(\mathbb{A}\right)=G\left(F\right)\cdot G_{\infty}K'$.
    \item $G\left(F\right) \cap K'=\left\{ 1\right\}$
\end{enumerate}
We say it is \emph{golden at $\mf p$} if $K'_{\mf p}$ is the stabilizer of a special vertex in the (enlarged) building $\wtd{\mc B}$ (in, this case, $K'_\mf p = K_\mf p$ necessarily. By lemma \ref{lem:specialismaximal}, $K'_\mf p$ is also often special parahoric). We say it is \emph{$\tau$-super-golden} at $\mf p$ if $K'_\mf p$ is the stabilizer of a facet $\tau \subseteq \wtd{\mc B}$ of dimension $>0$.

Finally, if only the weaker condition $G(F) \cap K' \subseteq Z_G(F)$ holds instead of (2), we call $K'$ \emph{essentially golden/$\tau$-super-golden}. If (2) doesn't hold, we call $K'$ \emph{almost golden/$\tau$-super-golden}. 
\end{defn}

Beware that the first equation is difficult to satisfy---in particular, it requires $G$ to have class number one. Remark \ref{rem:classnumberone} therefore makes golden adelic subgroups quite rare. 

Given $K' \leq G(\wh{\mc O})$ and prime $\mf p$, define
\begin{equation}\label{eq:latticedef}
\Gamma := \Gamma^{K'} := G(F) \cap K' \qquad \Lambda_\mf p := \Lambda^{K'}_\mf p := G(F) \cap (K')^{\mf p}. 
\end{equation}

\begin{lem}\label{lem:L2variants}
If $K'\leq G(\widehat{\mathcal{O}})$ is an almost golden adelic
group, then: 
\begin{enumerate}
\item
We have an isomorphism of $G_{\infty}$-sets
\[
\Gamma \bs G_{\infty}\rightarrow G\left(F\right)\backslash G\left(\mathbb{A}\right)/K',\qquad g\mapsto G\left(F\right)\left(g,1,1,\ldots\right)K'
\]
inducing the following isomorphism of $G_{\infty}$-representations
\[
L^{2}\left(\Gamma \bs G_{\infty}\right)\cong L^{2}\left(G\left(F\right)\backslash G\left(\mathbb{A}\right)\right)^{K'}.
\]
\item
For primes $\mf p$, we have an isomorphism 
\[
\Lambda_\mf p \bs G_{\infty} \times G_\mf p/K'_\mf p  \rightarrow G\left(F\right)\backslash G\left(\mathbb{A}\right)/K',\qquad (g_\infty, g_\mf p) \mapsto G\left(F\right)\left(g_\infty, g_\mf p, 1, \dotsc, \right)K'
\]
inducing the following isomorphism preserving the $G_{\infty}$ and $G_\mf p/K'_\mf p$ Hecke algebra-actions:
\[
L^{2}\left(\Lambda_\mf p \bs G_{\infty} \times G_\mf p\right)^{K'_\mf p} \cong L^{2}\left(G\left(F\right)\backslash G\left(\mathbb{A}\right)\right)^{K'}.
\]
\end{enumerate}
\end{lem}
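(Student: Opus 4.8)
The plan is to prove both parts by direct bookkeeping with adelic double cosets; the only nontrivial inputs are the golden condition $G(\mathbb{A}) = G(F)\cdot G_\infty K'$ (used for surjectivity) and the elementary observations that $K'$, viewed inside $G(\mathbb{A})$, has trivial component at every archimedean place, and that $\Gamma = G(F)\cap K'$ (resp. $\Lambda_\mf p = G(F)\cap (K')^\mf p$) is exactly the set of rational points whose components away from $\infty$ (resp. away from $\{\mf p,\infty\}$) all lie in the relevant compact. Since $G_\infty$ is compact, $\Gamma$ and $\Lambda_\mf p$ are finite by Lemma \ref{lem:pre-BHC}, so $L^2(\Gamma\backslash G_\infty)$ is simply the space of $\Gamma$-invariant vectors in $L^2(G_\infty)$ and there is no subtlety with torsion; note also that property (2) of goldenness is never used, only (1), so the statement indeed applies to almost golden $K'$.

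For part (1), I would first check the map $g\mapsto G(F)(g,1,1,\dots)K'$ is well defined on $\Gamma\backslash G_\infty$: for $\gamma\in\Gamma$, the diagonal rational element $\gamma$ times $(g,1,1,\dots)$ has archimedean part $\gamma_\infty g$ and has all finite components equal to those of $\gamma$, and right multiplication by the finite idele with components $\gamma_\ell^{-1}$ — which lies in $K'$ because $\gamma\in K'$ — removes the finite part, so $(\gamma_\infty g,1,1,\dots)\in G(F)(g,1,\dots)K'$. Surjectivity is immediate from $G(\mathbb{A})=G(F)G_\infty K'$, since any class then has a representative in $G_\infty$ embedded with trivial finite part. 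For injectivity, if $(g',1,\dots)=\gamma(g,1,\dots)k$ with $\gamma\in G(F)$ and $k\in K'$, comparing finite components forces $\gamma_\ell=k_\ell^{-1}\in K'_\ell$ for all $\ell$, hence $\gamma\in\Gamma$, while comparing archimedean components (where $k$ is trivial) gives $g'=\gamma_\infty g$. The map is visibly equivariant for the right translation actions of $G_\infty$ on both sides, which is legitimate on the right-hand side because $G_\infty$ centralizes $K'$; transporting the $G_\infty$-invariant measures and averaging over $K'$ then upgrades this bijection to the claimed unitary isomorphism $L^2(\Gamma\backslash G_\infty)\cong L^2(G(F)\backslash G(\mathbb{A}))^{K'}$ intertwining $G_\infty$.

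For part (2), I would first record that splitting $K'=K'_\mf p(K')^\mf p$ in the golden identity yields $G(\mathbb{A})=G(F)\cdot(G_\infty\times G_\mf p)\cdot(K')^\mf p$, because $G_\infty K'_\mf p$ lies inside $G_\infty\times G_\mf p$ as a subset of $G(\mathbb{A})$. With this, the argument repeats that of part (1): well-definedness on $\Lambda_\mf p$-cosets uses that an element of $\Lambda_\mf p$ has all components away from $\mf p$ in $K'$ and so is cleared by an element of $(K')^\mf p\subseteq K'$, while right multiplication of the $G_\mf p$-coordinate by $K'_\mf p$ is absorbed into $K'$; surjectivity is the displayed decomposition; and injectivity compares components at $\ell\notin\{\mf p,\infty\}$ to force $\gamma\in\Lambda_\mf p$, compares archimedean components to pin down $g_\infty$ up to $\Lambda_\mf p$, and leaves a residual $K'_\mf p$-ambiguity at $\mf p$. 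The right action of $G_\infty$ and the right $G_\mf p/K'_\mf p$ Hecke action on $L^2(\Lambda_\mf p\backslash G_\infty\times G_\mf p)^{K'_\mf p}$ correspond under this bijection to the $G_\infty$- and $K'_\mf p$-biinvariant Hecke actions on $L^2(G(F)\backslash G(\mathbb{A}))^{K'}$, giving the asserted equivariance.

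The only genuinely fiddly point, and the one I expect to require the most care, is passing from the bijection of double-coset spaces to an isometry of $L^2$-spaces: one must fix compatible Haar measures (a probability measure on $G_\infty$, the chosen Haar measure on $G_\mf p$, and the corresponding product measure on $G(\mathbb{A})$) and invoke the quotient-integration (Weil) formula to see that the $G_\infty$- (resp. $G_\infty\times G_\mf p$-) invariant measures on the two sides agree up to a single global normalization constant coming from the volume of $K'$ (resp. $(K')^\mf p$), after which $K'$- or $K'_\mf p$-averaging identifies the invariant subspaces. This is routine, but everything else in the proof is purely formal manipulation of the adelic coset description.
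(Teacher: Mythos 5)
Your proof is correct and follows exactly the same strategy as the paper's one-sentence proof, which invokes the golden decomposition $G(\A)=G(F)\,G_\infty K'$ for surjectivity and the identities $G(F)\cap K'=\Gamma$ and $G(F)\cap (K')^{\mf p}=\Lambda_{\mf p}$ for injectivity. You have simply spelled out the coset bookkeeping, the equivariance check, and the measure-matching step that the paper leaves implicit.
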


\begin{proof}
For both claims, surjectivity follows since $G(\A) = G(F) G_\infty K'$ and injectivity since $G(F) \cap K' = \Gamma$ and $G(F) \cap (K')^\mf p = \Lambda_\mf p$. 
\end{proof}


\begin{lem}\label{lem:gold-def-simp-tran}
Let $K'\leq G(\widehat{\mathcal{O}})$ be a almost golden adelic subgroup and let $\mathfrak{p}$ be a prime. If $K'$ is golden at $\mf p$ (resp. $\tau$-super-golden), then:
\begin{enumerate}
\item $\Lambda_\mf p$ acts transitively on $G_\mf p/K'_\mf p$ with stabilizer $\Gamma$
\item $\Lambda_\mf p/(Z_{G_\mf p}^\spl \cap \Lambda_\mf p)$ acts transitively on $G_{\mf p} v_0$ (resp. $G_{\mf p} \tau$) with stabilizer $\Gamma/(Z_{G_\mf p}^\spl \cap \Gamma)$. 
\item $\Lambda_\mf p \cap Z^\spl_{G_\mf p}$ acts simply transitively on the fibers of $\Lambda_\mf p \to \Lambda_\mf p/(Z_{G_\mf p}^\spl \cap \Lambda_\mf p)$.
\end{enumerate}
In particular, when $K'$ is golden, the above actions are all simple transitive.
\end{lem}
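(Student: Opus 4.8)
The plan is to derive all three assertions from the defining strong–approximation property $G(\A)=G(F)\,G_\infty\,K'$ of an almost golden adelic subgroup, together with the description of stabilizers in $\mc B$ from \S\ref{sec:buildings}. For (1) I would unwind strong approximation place by place, in the spirit of Lemma~\ref{lem:pre-class-p} and Lemma~\ref{lem:L2variants}(2): given $g_\mf p\in G_\mf p$, apply $G(\A)=G(F)G_\infty K'$ to the adele equal to $g_\mf p$ at $\mf p$ and to $1$ at every other place, writing it as $\gamma\,g_\infty\,k$ with $\gamma\in G(F)$, $g_\infty\in G_\infty$, $k=(k_\ell)\in K'=\prod_\ell K'_\ell$. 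Comparing components: $K'$ has trivial archimedean part, so $g_\infty=\gamma_\infty^{-1}$; at each finite $\ell\neq\mf p$ one gets $\gamma_\ell=k_\ell^{-1}\in K'_\ell$, whence $\gamma\in G(F)\cap(K')^\mf p=\Lambda_\mf p$; and at $\mf p$ one gets $g_\mf p\in\gamma_\mf p K'_\mf p$. Thus $\Lambda_\mf p$ acts transitively on $G_\mf p/K'_\mf p$, with stabilizer of the base coset equal to $\Lambda_\mf p\cap K'_\mf p=G(F)\cap K'=\Gamma$.

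Assertion (3) is then purely formal: by definition $\bar\Lambda_\mf p=\Lambda_\mf p/(\Lambda_\mf p\cap Z^\spl_{G_\mf p})$ is a quotient group, so $\Lambda_\mf p\cap Z^\spl_{G_\mf p}$ acts simply transitively by translation on the fibers of $\Lambda_\mf p\twoheadrightarrow\bar\Lambda_\mf p$ (and likewise for $\Gamma$). For the transitivity in (2): since $Z^\spl_{G_\mf p}\subseteq Z_{G_\mf p}$ acts trivially on $\mc B$, the $\Lambda_\mf p$-action on $\mc B$ factors through $\bar\Lambda_\mf p$; and since $K'_\mf p=\Stab_{\wtd{\mc B}}(\wtd v_0)$ fixes $v_0$ (resp.\ $\Stab_{\wtd{\mc B}}(\wtd\tau)$ fixes $\tau$), there is a $\Lambda_\mf p$-equivariant surjection $G_\mf p/K'_\mf p\twoheadrightarrow G_\mf p v_0$ (resp.\ $G_\mf p\tau$), so transitivity of $\bar\Lambda_\mf p$ on the orbit follows from (1). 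The final ``in particular'' is immediate, since $K'$ golden means $\Gamma=\{1\}$, hence $\bar\Gamma=\{1\}$ and all the stabilizers collapse.

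The one non-formal point --- which I expect to be the main obstacle --- is the precise stabilizer in (2), namely that $\Stab_{\bar\Lambda_\mf p}(v_0)=\bar\Gamma:=\Gamma/(\Gamma\cap Z^\spl_{G_\mf p})$. This stabilizer is the image in $\bar\Lambda_\mf p$ of $\Lambda_\mf p\cap\Stab_{\mc B}(v_0)$, so I would first record that $\Stab_{\mc B}(v_0)=Z^\spl_{G_\mf p}K'_\mf p$ (resp.\ with $\tau$): passing from $\wtd{\mc B}$ to $\mc B$ enlarges the stabilizer only by the translations in $G_\mf p/G^1_\mf p$, which for the groups of Lemma~\ref{lem:rationalpoints} are realized by the maximal split central torus (the anisotropic part of $Z_{G_\mf p}$ is compact and already lies in $K'_\mf p$). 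It then remains to check
\[
\Lambda_\mf p\cap\bigl(Z^\spl_{G_\mf p}K'_\mf p\bigr)\;\subseteq\;\bigl(\Lambda_\mf p\cap Z^\spl_{G_\mf p}\bigr)\cdot\Gamma,
\]
the reverse inclusion being clear.

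This containment is vacuous unless $Z^\spl_{G_\mf p}\neq\{1\}$ --- among our groups that happens only for $G=U_n^{E,H}$ at a prime $\mf p$ split in $E$, where $G_\mf p\cong\GL_n(F_\mf p)$ and $Z^\spl_{G_\mf p}$ is the scalar torus $F_\mf p^\times$. In that case, given $\gamma\in\Lambda_\mf p$ fixing $v_0$ and written $\gamma=zk$ with $z\in Z^\spl_{G_\mf p}$, $k\in K'_\mf p$, I would note that $\det\gamma\in U_1^E(F)$ is a unit away from $\mf p$, so its divisor is a power of $\mf q/\overline{\mf q}$ for a prime $\mf q\mid\mf p$ of $E$; using that $E$ has class number one (forced by the golden hypothesis, cf.\ Lemma~\ref{lem:pre-U1E}) one produces a scalar $c'I\in\Lambda_\mf p\cap Z^\spl_{G_\mf p}$ with the matching $\mf p$-adic valuation and $(c'I)^{-1}\gamma\in K'_\mf p$, hence $(c'I)^{-1}\gamma\in\Gamma$. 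Concretely $c'=(\pi/\bar\pi)^k$ for a uniformizer $\pi$ of $\mf q$, which has norm one and is a unit at every $\ell\neq\mf p$, so $c'I\in K'_\ell$ for all such $\ell$ (this uses that each $K'_\ell$ contains the relevant central units, which holds in all the cases we use). The argument with $\tau$ in place of $v_0$ is identical, completing (2).
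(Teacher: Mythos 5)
For (1) and (3), your argument matches the paper's: (1) is strong approximation unwound place by place as in Lemma~\ref{lem:pre-class-p}, and (3) is the formal statement about quotient groups. For (2), you correctly identify the subtle step: knowing $\Stab_{G_\mf p}(v_0)=K'_\mf p Z^\spl_{G_\mf p}$ reduces the stabilizer claim to the containment
\[
\Lambda_\mf p\cap K'_\mf p Z^\spl_{G_\mf p}\ \subseteq\ \bigl(\Lambda_\mf p\cap Z^\spl_{G_\mf p}\bigr)\cdot\Gamma,
\]
which is not formal. The paper's own proof only states that ``(2) follows since $\tau$ has stabilizer exactly $K'_\mf p Z^\spl_{G_\mf p}$'' and silently passes over this containment, so your treatment is actually more careful than the paper's. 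Note that the containment is nonvacuous: applying the golden identity $G(\A)=G(F)\,G_\infty K'$ to the adele that is a uniformizing scalar at a split $\mf p$ and trivial elsewhere produces an element of $\Lambda_\mf p\cap K'_\mf p Z^\spl_{G_\mf p}$ of valuation $1$, which need not a priori be scalar.

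There is, however, a concrete gap in your resolution. You take $c'=(\pi/\bar\pi)^k$ with $\pi$ a uniformizer of $\mf q$ and claim $c'I\in K'_\ell$ for all $\ell\neq\mf p$ on the grounds that ``each $K'_\ell$ contains the relevant central units.'' That fails literally for the paper's congruence subgroups: e.g.\ for $K'=K_1^{(2)}$ in Proposition~\ref{prop:gold-exm-3} one needs $c'\equiv 1\pmod 2$, while $(\pi/\bar\pi)^k$ has in general a nontrivial image in $(\mc O_E/2\mc O_E)^\times\cong\F_4^\times$. The repair is to allow $c'=(\pi/\bar\pi)^k u$ with $u\in\mc O_E^\times$ a root of unity chosen to kill the residues at all $\ell\neq\mf p$; this works exactly when the global units $\mc O_E^\times$ surject onto the finite quotient cut out by $K'$ away from $\mf p$ (e.g.\ $\langle\zeta_6\rangle\twoheadrightarrow\F_4^\times$ for $K_1^{(2)}$). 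That surjectivity does hold in all of the paper's examples, but it is a genuine additional verification needed to complete (2), not a formality, and your hedge ``which holds in all the cases we use'' is not a substitute for checking it.
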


\begin{proof}
Since $K'_{\mathfrak{p}}$ is a stabilizer of $v_0$ or $\tau$, respectively, (1) follows from the set equalities
\[
G_{\mathfrak{p}}=\Lambda_{\mathfrak{p}}\cdot K'_{\mathfrak{p}}\qquad\mbox{and}\qquad\Lambda_{\mathfrak{p}}\cap K'_{\mathfrak{p}}=\Gamma,
\]
which in turn follow from the assumption on the group $K'$ (similarly to lemma \ref{lem:pre-class-p}). (2) follows since $\tau$ has stabilizer exactly $K'_\mf p Z^\spl_{G_\mf p}$ and (3) is automatic. 
\end{proof}

\subsection{Golden Gate Sets}\label{subsec:goldengatesets}
We now discuss how to construct gate sets from golden and super-golden adelic subgroups. 

\subsubsection{Golden Case}

\begin{defn}\label{def:goldengates}
Let $K'$ be an almost golden arithmetic subgroup of $U_n^{E,H}$ that is golden at $\mf p$. Then define the gate set and lifted gate set 
\[
S_\mf p := S^{K'}_{\mf p} := S_{\Lambda^{K'}_\mf p}, \qquad \wtd S_\mf p := \wtd S^{K'}_{\mf p} := \wtd S_{\Lambda^{K'}_\mf p}
\]
and analogous sets coset representatives $\zl S_\mf p, \zl \wtd S_\mf p$ as in Definitions \eqref{eq:latticedef}, \ref{def:gateset}. Note that when $K$ is golden, we have $\zl S_\Lambda = S_\Lambda$ and $\zl \wtd S_\Lambda = \wtd S_\Lambda$. 

We may also define non-standard and decimated versions following Remark \ref{rem:nonstandardgategeneration} or Definition \ref{def:decimatedgateset} respectively.
\end{defn}
Proposition \ref{prop:gategeneration} immediately gives that
\begin{itemize}
    \item $\zl S_\mf p \cup \Gamma$ generates $\bar \Lambda_\mf p$,
    \item $S_\mf p^{[\ell]} = (S_\mf p \cup \Gamma)^{(\ell)} = \{\gamma \in \bar \Lambda_\mf p \;:\; \|a_\gamma\|_{G_\mf p}' = \ell\}$.
\end{itemize}

\begin{example}
If $\mf p_\spl$ is split, then by Example \ref{exa:pre-Weyl}, $\|\cdot\|_{G_{\mf p_\spl}}$ is the graph distance on the $1$-skeleton of $\mc B$ so 
\[
S_{\mf p_\spl} = \{\gamma \in \bar \Lambda_{\mf p_\spl} \;:\; \dist(\gamma.v_0, v_0) = 1\}.
\]
If $\mf p_\ns$ in non-split unramified and $H_{\mf p_\ns}$ is also unramified, then by Example \ref{ex:cartanunitary}, $\|\cdot\|_{G_{\mf p_\ns}}$ is half the graph distance on the $1$-skeleton of $\mc B$. In addition, $Z^\spl_{G_{\mf p_\ns}} = 1$ so we can ignore center technicalities. Therefore, 
\[
S_{\mf p_\ns} = \wtd S_{\mf p_\ns} = \{\gamma \in \Lambda_{\mf p_\ns} \;:\; \dist(\gamma.v_0, v_0) = 2\}.
\]
\end{example}

\subsubsection{Super-golden Case}\label{subsubsec:supergolden}

In the $\tau$-super-golden case, we need to first restrict our choice of $\tau$:
\begin{defn}\label{def:traversable}
Let $\tau \subseteq \mc B$ contain a special vertex $x_0$. For each $x \in Gx_0$, define $(G\tau)_x \subseteq G\tau$ to be the cosets containing $g$ such that $gx_0 = x$. 

Let $\Sigma_\tau \subseteq \Sigma$ be $\lb$ such that for each (equiv. any) $x \in Gx_0$ such that $\|x - x_0\| = \lb$, there is $\tau_{0,x} \in (G\tau)_{x_0}$, $\tau_{1,x} \in (G\tau)_x$, and $r_x \in \Z_{>0}$ such that any automorphism $\varphi$ of $\mc B$ such that $\varphi(\tau_{0,x}) = \tau_{1,x}$ satisfies $\varphi^{r_x}(\tau_{0,x}) = \tau_{0,x}$. 

We say that $\tau$ is \emph{traversable} if $\Sigma_\tau$ is Weyl-complete (as in \ref{def:weylcomplete}) and navigable (as in \ref{def:goodnsnorm}).  If $\Sigma_\tau = \Sigma$, we say that $\tau$ is \emph{standard traversable}. Otherwise, it is \emph{non-standard traversable}.  If all the $\tau_{0,x}$ are $C_\mf p$ translates and each $\tau_{1,x}$ is an automorphism of the corresponding $\tau_{0,x}$, we say that $\tau$ is \emph{simply traversable}. 

Finally if $\Sigma^\scn \neq \Sigma$, we define analogous notions of \emph{decimated traversable} with respect to the analogous $\Sigma_\tau \subseteq \Sigma^\scn$.
\end{defn}


In this case, there is a particularly nice structure we can give $C_\mf p$ and $\zl S_\mf p$. 

\begin{defn}\label{def:supergoldengates}
Let $K'$ be a golden arithmetic subgroup of $U_n^{E,H}$ that is $\tau$-super-golden at $\mf p$ for $\tau$ traversable. Then, following Definitions \eqref{eq:latticedef}, \ref{def:gateset} and Remark \ref{rem:nonstandardgategeneration}, set
\[
C_\mf p := C^{K'}_\mf p := C_{\Lambda^{K'}_\mf p} = \overline \Gamma, \qquad \wtd C_\mf p := \wtd C^{K'}_\mf p := \wtd C_{\Lambda^{K'}_\mf p} = \Gamma.
\]
and
\[
S_\mf p := S^{K'}_{\mf p} := S_{\Lambda^{K'}_\mf p, \Sigma_\tau}, \qquad \wtd S_\mf p := \wtd S^{K'}_{\mf p} := \wtd S_{\Lambda^{K'}_\mf p, \Sigma_\tau}.
\]
In the decimated traversable case, make analogous definitions following \ref{def:decimatedgateset}. 

Fix a set of representatives $v_i$ of the orbits $C_\mf p \bs \{v \in Gv_0 \;:\; \|v - v_0\|_{G_\mf p} = 1\}$ and for each $i$, fix choices of $\tau_{0,v_i}$ and $\tau_{1, v_i}$ as in Definition \ref{def:traversable}. Since $C_\mf p$ acts transitively on $(G\tau)_{v_0}$, we may without loss of generality choose the $v_i$ so that we may choose all $\tau_{0, v_i} = \tau$. Then define:
\begin{gather*}
T_\mf p := T^{K'}_\mf p := \{\gamma \in S_\mf p \;:\; \gamma \tau = \tau_{1,v_i} \text{ for some } i\} \\
\wtd T_\mf p := \wtd T^{K'}_\mf p := \{\gamma \in \wtd S_\mf p \;:\; \gamma \tau = \tau_{1,v_i} \text{ for some } i\}
\end{gather*}
Choose representatives $\zl T_\mf p$ for the cosets $T_\mf p / C_\mf p$ and $\zl \wtd T_\mf p$ for $\wtd T_\mf p/ \wtd C_\mf p$ such that each $\wtd t_i \in \zl \wtd T_\mf p$ lifts a $t_i \in \zl T_\mf p$.

\end{defn}
In the golden case, $\zl T_\mf p = T_\mf p$ and $\zl \wtd T_\mf p = \wtd T_\mf p$. As some quick properties:
\begin{lem}\label{lem:supergatesprops}
The following hold
\begin{enumerate}
    \item $C_\mf p$ is a finite group.
    \item For each $v_i$, there is exactly one $t_i \in \zl T_\mf p$ with $t_i \tau = \tau_{1, v_i}$ by lemma \ref{lem:gold-def-simp-tran}. 
    \item For each $t_i \in T_\mf p$, $t_i^{r_{v_i}} \in \Gamma$. In particular, $t_i$ has finite order that divides $r_{v_i}$ when $K'$ is golden. 
    \item $S_\mf p = \{c_2tc_2^{-1}c_1 \;:\; c_1, c_2 \in C_\mf p, \; t \in \zl T_\mf p\}$ (note here that $c_2tc_2^{-1}c_1 = c_1' c_2tc_2^{-1}$ for $c'_1$ in the stabilizer of $c_2tc_2^{-1} v_0$). In particular, $S_\mf p \cup C_\mf p$ is generated by finite-order elements. 
    \item  
    $\wtd S_\mf p = \{c_2\wtd tc_2^{-1} c_1 \;:\; c_1, c_2 \in \wtd C_\mf p, \; \wtd t \in \zl \wtd T_\mf p\}$. 
\end{enumerate}
\end{lem}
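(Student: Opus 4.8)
I would handle the five claims in order, deriving (1)--(3) from results already in place and obtaining (4)--(5) as the super-golden repackaging of Proposition~\ref{prop:gategeneration} built on the transitivity of Lemma~\ref{lem:gold-def-simp-tran}. For (1), $C_\mf p=\bar\Gamma$ is the image of $\Gamma=G(F)\cap K'$, which is an arithmetic subgroup of $G=U_n^{E,H}$, hence finite by Lemma~\ref{lem:pre-BHC}(1); a quotient of a finite group is a finite group. For (2), I would invoke Lemma~\ref{lem:gold-def-simp-tran}(2): $\bar\Lambda_\mf p$ acts transitively on $G_\mf p\tau$ with the stabilizer of $\tau$ equal to $C_\mf p$. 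Since $\tau_{1,v_i}\in(G\tau)_{v_i}\subseteq G_\mf p\tau$, the set of $\gamma\in\bar\Lambda_\mf p$ with $\gamma\tau=\tau_{1,v_i}$ is a single left coset $\gamma_iC_\mf p$; as $\gamma_i$ carries the special vertex $v_0$ of $\tau$ (we arranged $\tau_{0,v_i}=\tau$) to the special vertex $v_i$ of $\tau_{1,v_i}$, its dominant displacement equals $v_i-v_0\in\Sigma_\tau$, so $\gamma_iC_\mf p\subseteq S_\mf p$ and hence $\subseteq T_\mf p$. Distinct $i\neq j$ give distinct cosets, since $\gamma_iC_\mf p=\gamma_jC_\mf p$ would force $\tau_{1,v_i}=\gamma_i\tau=\gamma_j\tau=\tau_{1,v_j}$ and so $v_i=v_j$. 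Thus $T_\mf p=\bigsqcup_i\gamma_iC_\mf p$, and the unique representative of $\zl T_\mf p$ lying in $\gamma_iC_\mf p$ is the asserted $t_i$.

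For (3), I would view $t_i$ (or any element of $T_\mf p$ mapping $\tau$ to $\tau_{1,v_i}$) as an automorphism $\varphi$ of $\mc B$ with $\varphi(\tau_{0,v_i})=\varphi(\tau)=\tau_{1,v_i}$; the clause of the traversability hypothesis (Definition~\ref{def:traversable}, applied to $\Sigma_\tau$) that defines $r_{v_i}$ then gives $\varphi^{r_{v_i}}(\tau)=\tau$, i.e.\ $t_i^{r_{v_i}}$ stabilizes $\tau$, so $t_i^{r_{v_i}}\in C_\mf p$ by Lemma~\ref{lem:gold-def-simp-tran}(2); passing to the lift $\wtd t_i$ and recalling $\wtd C_\mf p=\Gamma$ is finite by (1) (and trivial when $K'$ is golden) yields the finite-order statement, the split-center bookkeeping being absorbed exactly as in Proposition~\ref{prop:gategeneration}. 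For (4), Proposition~\ref{prop:gategeneration}(2) in the non-standard form of Remark~\ref{rem:nonstandardgategeneration} identifies $S_\mf p$ with the $\gamma\in\bar\Lambda_\mf p$ whose dominant displacement $\bar a_\gamma$ lies in $\Sigma_\tau$, i.e.\ with $\gamma v_0$ a vertex $v$ such that $v-v_0\in\Sigma_\tau$. Writing $v=c_2v_i$ with $c_2\in C_\mf p$ (the $v_i$ represent the $C_\mf p$-orbits of such vertices, and $C_\mf p$ fixes $v_0$ since it stabilizes $\tau$ and acts transitively on $(G\tau)_{v_0}$), the elements $c_2^{-1}\gamma$ and $t_i$ both carry $v_0$ to $v_i$, so $c_2^{-1}\gamma=t_ic_1$ with $c_1\in C_\mf p$ by the stabilizer description of Lemma~\ref{lem:gold-def-simp-tran}(2) applied to $G_\mf p v_0$; hence $\gamma=c_2t_ic_1=c_2t_ic_2^{-1}(c_2c_1)$, of the claimed shape. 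Conversely $c_2t_ic_2^{-1}c_1$ sends $v_0$ to $c_2v_i$, and $c_2$, being a building automorphism fixing $v_0$, preserves the dominant displacement, so $\bar a_{c_2t_ic_2^{-1}c_1}=v_i-v_0\in\Sigma_\tau$ and the element lies in $S_\mf p$. The ``generated by finite-order elements'' clause is then immediate from (1) and (3).

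Claim (5) is the image of (4) under $\Lambda_\mf p\onto\bar\Lambda_\mf p$: by Proposition~\ref{prop:gategeneration}(3) the constraint $a_\gamma\in\wtd\Sigma_\tau$ pins down a unique lift of each word inside its $(\Lambda_\mf p\cap Z^\spl_{G_\mf p})$-fiber, so lifting $\gamma=c_2t_ic_2^{-1}c_1$ and using that the lifts of $c_1,c_2\in C_\mf p$ lie in $\wtd C_\mf p=\Gamma$ while $\wtd t_i$ is the chosen lift of $t_i$ gives $\wtd\gamma=c_2\wtd t_ic_2^{-1}c_1$; the reverse inclusion is clear. The step I expect to cost real work is the geometric core of (4): showing that displacing $v_0$ to any admissible neighbour can always be realized, after left and right multiplication by $C_\mf p$, by one of the finitely many chosen $t_i$, and dually that conjugating $t_i$ by $C_\mf p$ keeps one inside the gate set. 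This is where the choice of orbit representatives $v_i$, the traversability hypotheses on $\tau$, and the transitivity of Lemma~\ref{lem:gold-def-simp-tran} all have to be aligned correctly; the rest is bookkeeping, largely the split-center tilde-decorations inherited from Proposition~\ref{prop:gategeneration}.
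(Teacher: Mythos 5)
The paper gives no proof of this lemma, so there is nothing to compare against; judged on its own terms, your argument for (1), (3), and the direction $\{c_2tc_2^{-1}c_1\}\subseteq S_\mf p$ in (4)--(5) is sound.  The concrete gap is in the forward inclusion $S_\mf p\subseteq\{c_2tc_2^{-1}c_1\}$: having produced $\gamma=c_2t_ic_1$ with $t_i^{-1}c_2^{-1}\gamma=:c_1$ a $v_0$-stabilizer, you conclude $c_1\in C_\mf p$ ``by Lemma~\ref{lem:gold-def-simp-tran}(2) applied to $G_\mf p v_0$.''  But in the $\tau$-super-golden case, Lemma~\ref{lem:gold-def-simp-tran}(2) is the statement about the action on $G_\mf p\tau$ with stabilizer $\overline\Gamma$, not about $G_\mf p v_0$.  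Since here $K'_\mf p\subsetneq K_\mf p$, strong approximation only gives that $\Lambda_\mf p$ acts transitively on $G_\mf p/K_\mf p$ with some stabilizer $\Lambda_\mf p\cap K_\mf p$, and one computes $[\Lambda_\mf p\cap K_\mf p:\Gamma]=[K_\mf p:K'_\mf p]>1$.  So the $v_0$-stabilizer in $\bar\Lambda_\mf p$ is strictly larger than $\overline\Gamma$, and your $c_1$ stabilizes $v_0$ but is not a priori in $C_\mf p=\overline\Gamma$.  You need an extra argument, e.g.\ peeling off a further conjugate of $t_j$ using the traversability of $\tau$ (essentially, that the $v_0$-stabilizer permutes $(G\tau)_{v_0}$ and this is accounted for by allowing $c_2$ to vary and $i\mapsto j$), rather than a direct invocation of Lemma~\ref{lem:gold-def-simp-tran}.

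Two smaller points.  In (2), you assert $\gamma_i v_0=v_i$ from $\gamma_i\tau=\tau_{1,v_i}$; this holds only if $v_0$ is the unique vertex of $\tau$ in its $G_\mf p$-orbit type (so that the type-preserving $\gamma_i$ has no choice).  That is true for all the $\tau$ in Example~\ref{ex:traversable}, but you should state it, since otherwise $\bar a_{\gamma_i}=v_i-v_0$ and hence $\gamma_i\in S_\mf p$ do not follow.  Second, be aware that the paper's Definition~\ref{def:supergoldengates} is internally in tension on what $C_\mf p$ is: it sets $C_\mf p=\overline\Gamma$ (the $\tau$-stabilizer), but then asserts ``$C_\mf p$ acts transitively on $(G\tau)_{v_0}$,'' and the concrete examples (e.g.\ the Clifford group in Proposition~\ref{prop:supergold42alt}) take the full $v_0$-stabilizer.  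Under the $\overline\Gamma$ reading, (2) is fine but the forward direction of (4) has the gap above; under the $v_0$-stabilizer reading, (4) becomes easy but (2) breaks because $T_\mf p$ need not be a union of right $C_\mf p$-cosets.  Any correct proof has to pick a reading and reconcile the definitions accordingly, which your sketch does not yet do.
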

We can therefore make choices of coset representatives:
\[
\zl S_\mf p \subseteq \{ctc^{-1} \;:\; c \in C_\mf p, t \in \zl T_\mf p\}, \qquad \zl{\wtd S}_\mf p \subseteq \{c\wtd tc^{-1} \;:\; c \in C_\mf p, t \in \zl T_\mf p\}.
\]
Note that these are both exact equalities in the golden case. Either way, Proposition \ref{prop:gategeneration} and Remark \ref{rem:nonstandardgategeneration} then give that
\begin{itemize}
    \item $\zl S_\mf p \cup C_\mf p$ generates $\bar \Lambda_\mf p$,
    \item $S_\mf p^{[\ell]} :=  \zl S_\mf p^{(\ell)} C_\mf p = (S_\mf p \cup \overline \Gamma)^{(\ell)} =  \{\gamma \in \bar \Lambda_\mf p \;:\; \|a_\gamma\|_{\Sigma_0}' = \ell\}$. 
\end{itemize}

\begin{example}\label{ex:traversable}
Here are some traversable $\tau$ and the corresponding $C_\mf p$ and $T_\mf p$. All examples are simply traversable except (2). 
\begin{enumerate}
\item
$G = \GL_n$ and $\tau \ni x_0$ a chamber in $\mc B_G$: $\Sigma_\tau = \Sigma$, the $\tau_{0,x}$ range over $\Om_{x_0}$-translates of $\tau$, and the $\tau_{1,x}$ are rotations of $\tau_{0,x}$.

Then $C_\mf p$ acts transitively on the chambers containing $x_0$. The $t_i$ are all of the form $r^i$, where $r$ has order $n$ and acts on $\tau$ as an $n$-cycle on its vertices. 
 
Note that the $\GL_2$-case is what was used in \cite{Parzanchevski2018SuperGoldenGates} where $\mc B$ is a tree and there is a single $t_i$ of order two that flips an edge. 

\item 
$G$ a quasisplit, unramified unitary group and $\tau \ni x_0$ a chamber in $\mc B_G$: $\Sigma_\tau = \Sigma$, the $\tau_{0,x}$ range over Weyl translates of $\tau$, and the $\tau_{1,x}$ range over $\Om_y$-translates of $\tau_{0,x}$ where $y$ is another vertex of $\tau_{0,x}$. 

Then $C_\mf p$ acts transitively on the chambers containing $x_0$. There are many different $t_i$ which are complicated to describe. 

\item 
$G$ a quasisplit, unramified $U_4$ and $\tau \ni x_0$ a two-edge path connecting $x_0$ to a hyperspecial vertex through a non-special vertex: $\Sigma_\tau$ is the $\Sigma_0$ from Example \ref{ex:nonstandardcartanunitary}, the $\tau_{0,x}$ range over $\Om_{x_0}$-translates of $\tau$ and the $\tau_{1,x}$ are reflections of $\tau_{0,x}$ across the middle non-special vertex. 

Then $C_\mf p$ acts transitively on the hyperspecial vertices at graph-distance two from $x_0$ through a non-special vertex. There is one $t_i$ whose square is in $C_\mf p$.

\item 
$G$ a quasisplit, ramified $U_4$ and $\tau \ni x_0$ an edge connecting $x_0$ to a special vertex: $\Sigma_\tau$ is the $\Sigma_0$ from Example \ref{ex:nonstandardcartanramunitary}, the $\tau_{0,x}$ range over $\Om_{x_0}$-translates of $\tau$ and the $\tau_{1,x}$ are reflections of $\tau_{0,x}$ across the middle. 

Then $C_\mf p$ acts transitively on special vertices at graph-distance one from $x_0$. There is one $t_i$ whose square is in $C_\mf p$.

\item 
$G$ a quasisplit, ramified $U_4$ and $\tau \ni x_0$ a two-edge path connecting $x_0$ to a special vertex through a non-special vertex: $\Sigma^\scn_\tau$ is \{(1,0)\} as in Example \ref{ex:cartanramunitarydecimated}, and the corresponding super-golden gate set behaves similarly to (3) except for being decimated. 

\item 
$G$ a non-quasisplit $U_4$ and $\tau$ the union of two edges connected at a vertex not in $Gx_0$: $\Sigma_\tau = \Sigma$, the $\tau_{0,x}$ range over $(G\tau)_{x_0}$, and the $\tau_{1,x}$ are reflections of the $\tau_{0,x}$ across their middles. 

Then $C_\mf p$ acts transitively on the distance-$2$ vertices from $x_0$ and there is only one $t_i$ whose square is in $C_\mf p$. 
\end{enumerate}
\end{example}

\subsection{Some Useful Computations}\label{sec:gatecomputations}

\subsubsection{Finding Gates}\label{sssec findinggates}
Fix $K'$ an almost golden arithmetic subgroup of $U_n^{E,H}$. We explain some cases of how to find the gate sets $S_\mf p$ at each place $\mf p$ where $K'$ is golden or $\tau$-super-golden for $\tau$ traversable. The various sub-pieces $\zl S_\mf p$, $C_\mf p$, $T_\mf p$, etc. can be easily constructed from the full set $S_\mf p$. 

\begin{example}\label{ex:gatesunram}
Assume $\mf p|\mf q$ is unramified and $H$ is unramified. The $G_\mf p = U^{E_\mf q, H_\mf p}(F_\mf p)$ and the gates $S_\mf p$ are exactly the $g \in K' \cap U^{E,H}(F)$ such that the modified Cartan norm $\|a_{g_\mf p}\| = 1$. The $a_{g_\mf p}$ can be computed by the $U^{E,H}(F_\mf p)$- or equivalently $\GL_n(E_\mf q)$-Cartan decomposition and $\|a_{g_\mf p}\| = 1$ is equivalent to largest entry of $a_{g_\mf p}$ being $1$ as in Example \ref{ex:cartanunitary}. 

By the standard algorithm for computing Cartan decompositions for $\GL_n$, $\|a_{g_\mf p}\| = 1$ is therefore equivalent to 
\[
g \in \pi_{\mf q}^{-1} \Mat_{n \times n}(\mc O_{\mf p}) \setminus \Mat_{n \times n}(\mc O_{\mf p}),
\]
where $\pi_{\mf q}$ is a uniformizer for $E_{\mf q}$ (equiv. $F_{\mf p}$). Assume further that the scalar matrix $\pi_{\mf q} \in (K')^v$ (e.g, without loss of generality if $E$ has class number one and $K'$ is a principal congruence subgroup). Then scaling by $\pi_{\mf q}$ gives:
\[
S_{\mf p} = \{\pi_{\mf q}^{-1}g \subseteq \Lambda_\mf p \; : \; g^*Hg = \pi_{\mf p}^2 H, \, g \in \Mat_{n \times n}(\mc O_{\mf p}) \times (K')^\mf p,\, g \notin \pi_\mf q K' \}, 
\]
where without loss of generality $\pi_{\mf p} = \pi_\mf q$ is a uniformizer for $F_{\mf p}$. 
\end{example}

\begin{example}\label{ex:gatessplit}
Assume $\mf p = \mf p_1\mf p_2$ is split and $K' = U^{E,H}(\mc O_{\mf p})$ is golden at $v$. Then $G_{\mf p} = \GL_n(E_{\mf p_1})$. Fix lifts $\wtd \Sigma$ to have smallest coordinate $0$. Then the gates $S_{\mf p}$ are exactly the $g \in K' \cap U^{E,H}(F)$ such that the modified Cartan norm $\|a_{g_{\mf p_1}}\| = 1$. By Example \ref{exa:pre-Weyl}, these are also exactly the $g$ with $a_{g_{\mf p_1}}$ having largest coordinate $1$ and smallest $0$. 

Again using the standard algorithm for Cartan decompositions for $\GL_n$, $\|a_{g_{\mf p_1}}\| = 1$ is then equivalent to
\[
g \in \pi_{\mf p_1}^{-1} \Mat_{n \times n}(\mc O_{\mf p_1}) \setminus \Mat_{n \times n}(\mc O_{\mf p_1}) \cup \pi_{\mf p_1}^{-1} \GL_n(\mc O_{\mf p_1}),
\]
where $\pi_{\mf p_1}$ is a uniformizer for $E_{\mf p_1}$. If we similarly assume further that $\pi_{\mf p_1} \in (K')^\mf p$, then we get
\[
S_{\mf p} = \{\pi_{\mf p_1}^{-1}g \subseteq \Lambda_\mf p \; : \; g^*Hg = \pi_{\mf p} H, \, g \in \Mat_{n \times n}(\mc O_{\mf p}) \times (K')^\mf p,\, g \notin \pi_{\mf p_1} K' \cup \GL_n(\mc O_{\mf p_1}) \},
\]
where $\pi_{\mf p} = \pi_{\mf p_1}\pi_{\mf p_1}^*$ is a uniformizer for $F_{\mf p}$. 
\end{example}

\begin{example}\label{ex:findinggatesnonsplit}
The other cases when $\mf p|\mf q$ is non-split work similarly to Example \ref{ex:gatesunram} except inputting the computations of modified Cartan norms from Examples  \ref{ex:cartanramunitary}, \ref{ex:cartannqsunitary}, and \ref{ex:cartanramnqsunitary}. We get
\[
S_{\mf p} = \{\pi_{\mf q}^{-1}g \subseteq \Lambda_\mf p \; : \; g^*Hg = \pi_{\mf p}^e H, \, g \in \Mat_{n \times n}(\mc O_{\mf p}) \times (K')^\mf p,\, g \notin \pi_{\mf q} K' \},
\]
where
\[
e = \begin{cases}
2 & E_{\mf q}/F_{\mf p} \text{ unramified}\\
1 & E_{\mf q}/F_{\mf p} \text{ ramified}
\end{cases}.
\]
\end{example}

\begin{example}\label{ex:findingnonstandardgates}
The non-standard gates of Examples \ref{ex:nonstandardcartanunitary} and \ref{ex:nonstandardcartanramunitary} can be selected from the corresponding standard gates of Example \ref{ex:findinggatesnonsplit} as those where the second coordinate of their Cartan invariant is $0$. 
\end{example}

\begin{example}
When $n$ is even and $\mf p | \mf q$ is ramified, the gates in the standard decimated case of Example \ref{ex:cartanramunitarydecimated} are the (proper) subset:
\[
S_{\mf p} \subsetneq \{\pi_{\mf q}^{-2}g \subseteq \Lambda_\mf p \; : \; g^*Hg = \pi^2_{\mf p} H, \, g \in \Mat_{n \times n}(\mc O_{\mf p}) \times (K')^\mf p,\, g \notin \pi_{\mf q} K' \}
\]
of the $s$ with Cartan invariants having first two coordinates $(1/2, 1/2, \dotsc)$ or $(1, 0, \dotsc)$. 

If $n=4$, in the non-standard decimated case we instead get (proper) subset
\[
S_{\mf p} \subsetneq \{\pi_{\mf q}^{-1}g \subseteq \Lambda_\mf p \; : \; g^*Hg = \pi^2_{\mf p} H, \, g \in \Mat_{4 \times 4}(\mc O_{\mf p}) \times (K')^\mf p,\, g \notin \pi_{\mf q} K' \}
\]
of the $s$ with Cartan invariant $(1/2,1/2)$. 
\end{example}

\subsubsection{Sizes and Growth Rates}
To understand the sizes of gate sets $\zl S_\mf p$ and $S^{[\ell]}_\mf p$, we recall the following well-known fact about buildings:
\begin{prop}
The degree $\deg(v_0)$ of a hyperspecial vertex in the $1$-skeleton of the building $\mc B$ for group $H/\mathbb F$ is the number of maximal proper parabolics in $H(k_\mathbb F)$ for the corresponding integral model. 
\end{prop}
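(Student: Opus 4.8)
The plan is to pass from the full building to the link of $v_0$, identify that link with a spherical Tits building over the residue field, and then count its vertices. First I would record the relevant objects: since $v_0$ is hyperspecial it is the vertex attached to a reductive $\mathcal O_{\mathbb F}$-model $\underline H$ of $H$ (the ``corresponding integral model'' of the statement), with $\mathrm{Stab}_H(v_0) = \underline H(\mathcal O_{\mathbb F}) =: K$; write $\overline H := \underline H \times_{\mathcal O_{\mathbb F}} k_{\mathbb F}$ for its special fiber, a connected reductive group over the finite residue field $k_{\mathbb F}$, so that ``$H(k_{\mathbb F})$'' is to be read as $\overline H(k_{\mathbb F})$. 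By the definition of the link in a simplicial complex, the vertices of $\mathrm{lk}_{\mc B}(v_0)$ are exactly the vertices $w \ne v_0$ of $\mc B$ for which $\{v_0, w\}$ is an edge; hence $\deg(v_0)$ is precisely the number of vertices of $\mathrm{lk}_{\mc B}(v_0)$.

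Next I would invoke the key input from Bruhat--Tits theory (e.g.\ \cite{KP23} or \cite{Tits1979Reductivegroupsover}): for a \emph{hyperspecial} vertex, $\mathrm{lk}_{\mc B}(v_0)$ is $K$-equivariantly isomorphic, as a typed simplicial complex, to the spherical building $\mc B^{\mathrm{sph}}(\overline H)$ of $\overline H$ over $k_{\mathbb F}$. Then the standard dictionary between spherical buildings and parabolic subgroups finishes the count: the simplices of $\mc B^{\mathrm{sph}}(\overline H)$ are in inclusion-reversing bijection with the proper parabolic $k_{\mathbb F}$-subgroups of $\overline H$, with chambers corresponding to minimal parabolics and, at the opposite extreme, the $0$-simplices (vertices) corresponding to the maximal proper ones; and $P \mapsto P(k_{\mathbb F})$ is injective on such subgroups, $k_{\mathbb F}$ being finite and a parabolic being its own normalizer. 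Distinct vertices of the link therefore correspond to distinct maximal proper parabolic subgroups of $\overline H(k_{\mathbb F})$, and every such subgroup so arises, giving $\deg(v_0) = \#\{\text{maximal proper parabolic subgroups of } \overline H(k_{\mathbb F})\}$. As a sanity check, for $\PGL_n$ this recovers $\deg(v_0) = \sum_{i=1}^{n-1} |\PGL_n(k_{\mathbb F})/P_i(k_{\mathbb F})|$ (sum over the $n-1$ standard maximal parabolics), which matches the counts stated in Example \ref{exa:pre-B(PGL_n(F))}.

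There is no genuine obstacle, as this is a well-known fact; the work lies only in assembling the standard ingredients correctly. The one point deserving care is to pin down the precise form of the ``link of a hyperspecial vertex $=$ spherical building of the reductive quotient'' statement and to check that the identification is type-preserving, so that ``vertices of the link'' really does match ``neighbours of $v_0$ in the $1$-skeleton'' and not some set of higher-dimensional faces. One should also note that a possibly nontrivial central torus or fundamental group of $\overline H$ is harmless --- neither central tori nor central isogenies change the parabolic subgroups or the building --- and that the degenerate case of semisimple rank $0$ is just the trivial equality $0 = 0$.
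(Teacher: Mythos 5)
The paper states this proposition without proof, citing it as a ``well-known fact about buildings'' and moving directly on to Proposition \ref{prop:Kdoublecosetsize}; there is therefore no internal proof to compare against. Your argument is the standard one and is correct: reduce the degree count to counting vertices of the link $\mathrm{lk}_{\mc B}(v_0)$, invoke the Bruhat--Tits identification of the link of a hyperspecial vertex with the spherical Tits building of the special fiber $\overline{\underline H} := \underline H \times_{\mathcal O_{\mathbb F}} k_{\mathbb F}$ of the reductive model $\underline H$, and use the fact that vertices of a spherical building correspond bijectively to maximal proper parabolic subgroups. You are also right to flag the two details that need checking --- that the link identification is type-preserving so that vertices correspond to vertices, and that $P \mapsto P(k_{\mathbb F})$ is injective on parabolic subgroup schemes because a parabolic is its own normalizer and $k_{\mathbb F}$ is finite --- and your remark that central tori and central isogenies affect neither the parabolics nor the building correctly disposes of the mismatch between $\overline{\underline H}$ and its adjoint form. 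The sanity check against Example \ref{exa:pre-B(PGL_n(F))} is apt. In short, this is the proof the authors had in mind when they omitted it.
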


\begin{prop}[{\cite{Cass95}*{Prop. 1.5.2}}]\label{prop:Kdoublecosetsize}
Let $v_0$ be a special vertex of $\mc B$ and $\lb \in X_{1+}(\bar A_G)$. Let $B_\lb$ be the number of points $v \in \mc B$ such that $v - v_0 = \lb \in X_+(\bar A_G)$. Then
\[
C_1 q_\mathbb F^{\langle \lb, 2\rho_G \rangle} \leq B_\lb \leq C_2 q_\mathbb F^{\langle \lb, 2\rho_G \rangle} 
\]
for some constants $C_1, C_2$ depending on $H$ and where $\rho_G$ is the half-sum of positive roots. 
\end{prop}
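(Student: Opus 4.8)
The plan is to identify $B_\lb$ with the cardinality of a double-coset space and then run the standard computation underlying Macdonald's formula for the size of $K\varpi_{\mathbb F}^{\lb}K/K$.

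First I would set $K=\Stab_G(v_0)$, a special (hyperspecial if $G$ is unramified) maximal compact. By the Cartan decomposition --- \eqref{eq:cartan} in the unramified case and its analogue from \cite{cartier1979representations}*{pg.\ 140} in general --- the map $g\mapsto g.v_0$ induces a bijection between $\{v\in\mc B : v-v_0=\lb\}$ (restricted to the orbit $G.v_0$, which is all that intervenes in the application, and which is everything when $\lb$ lies in the cocharacter lattice of $\bar A_G$) and $K n_\lb K/K$, where $n_\lb\in N_G(A_G)$ realizes the translation $\lb$ on the apartment. Hence $B_\lb=[K : K\cap n_\lb K n_\lb^{-1}]$, and the problem is purely local and group-theoretic.

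Second I would compute this index using the root-group filtration of $K$ relative to a Borel $B=A_G U$ in whose closed positive chamber $\lb$ is dominant. Conjugation by $n_\lb$ dilates the piece of $K$ in each positive relative-root direction $\alpha$ by the factor $q_{\mathbb F}^{m_\alpha\langle\alpha,\lb\rangle}$ (with $m_\alpha$ the multiplicity of $\alpha$ in the relative, possibly non-reduced, affine root datum), contracts the negative directions, and fixes the torus part; so, up to the Weyl-group bookkeeping, $B_\lb=q_{\mathbb F}^{\sum_{\alpha>0}m_\alpha\langle\alpha,\lb\rangle}=q_{\mathbb F}^{\langle\lb,2\rho_G\rangle}$, where $2\rho_G=\sum_{\alpha>0}m_\alpha\alpha$ is the sum of the positive roots counted with multiplicity. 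The correction is Macdonald's factor $W(q_{\mathbb F}^{-1})/W_\lb(q_{\mathbb F}^{-1})$, a ratio of Poincar\'e polynomials of $W=\Om(G,A_G)$ and of the stabilizer $W_\lb$ of $\lb$; since $W_\lb$ is a standard parabolic subgroup of $W$, this ratio lies in $[1,\,W(q_{\mathbb F}^{-1})]\subseteq[1,\,\prod_i(1-q_{\mathbb F}^{-1})^{-1}]\subseteq[1,\,2^{\rank_\ssm G}]$ for $q_{\mathbb F}\ge 2$, which furnishes admissible constants $C_1=1$ and $C_2=2^{\rank_\ssm G}$.

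The only genuine subtlety is the Weyl-group bookkeeping (cosets in $Kn_\lb K/K$ "collapse" by the volume ratios $\mathrm{vol}(K)/\mathrm{vol}(K\cap wKw^{-1})$) together with, in the ramified and non-quasisplit unitary cases of Examples \ref{ex:cartanramunitary}--\ref{ex:cartanramnqsunitary}, reading off the correct multiplicities $m_\alpha$ from the relative affine root datum; but since the statement only asks for the two-sided bound with unspecified constants, one can afford the crude estimate above rather than reproduce the exact Macdonald formula. In the write-up I would in fact simply invoke \cite{Cass95}*{Prop.\ 1.5.2} (equivalently Macdonald's book, or \cite{cartier1979representations}*{\S4}), where this computation is carried out in full generality.
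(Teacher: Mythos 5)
Your proposal is correct and takes essentially the same route as the paper: both of you identify $B_\lb$ with $|\wtd K\, n_\lb\, \wtd K/\wtd K|$ via the Cartan decomposition and then invoke \cite{Cass95}*{Prop.\ 1.5.2} (the paper just unpacks the exponent as $|\det\Ad_{\mf n} m|$, which gives $q_{\mathbb F}^{\langle\lb,2\rho_G\rangle}$). Your intermediate Macdonald-formula sketch and explicit constants $C_1=1$, $C_2=2^{\rank_\ssm G}$ are a welcome elaboration but not a different argument, since you yourself note you would ultimately cite Casselman as the paper does.
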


\begin{proof}
Let $M$ be the centralizer of $A_G$ and $m \in M$ such that $a_m = \lb$. Let $\wtd K$ be the stabilizer of $v_0$. Then by the Cartan decomposition, $B_\lb = |\wtd K m \wtd K/\wtd K|$.

Let $P$ be a parabolic for $M$ and $N$ the corresponding unipotent with Lie algebra $\mf b$. We now apply \cite{Cass95}*{Prop. 1.5.2}: $|\det\Ad_{\mf n} m| = \langle \lb, 2 \rho_G \rangle$ so the result follows. 
\end{proof}

\begin{cor}\label{cor:exponentialgrowth}
Let $K' < G^\infty$ be almost golden at $\mf p$ (resp. almost $\tau$-super-golden for $\tau$ standard traversable) and $S_\mf p$ the corresponding gate set. Let $M = \max_{\lb \in \Sigma} \langle \lb, 2\rho_{G_\mf p} \rangle$ and $k$ the number of $\lb \in \Sigma$ realizing this maximum. Then $|S_\mf p^{[\ell]}| \asymp \ell^{k-1} q_{\mf p}^{M\ell}$.
\end{cor}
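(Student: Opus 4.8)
The plan is to identify $|S_\mf p^{[\ell]}|$, up to a bounded constant, with the number of points of modified Cartan norm $\ell$ in a single $G_\mf p$-orbit inside the building $\mc B$, and then to estimate that number by an elementary generating-function computation. Concretely, I would first use Proposition \ref{prop:gategeneration}(2) --- and, in the super-golden case, the description following Lemma \ref{lem:supergatesprops}, which applies because ``standard traversable'' forces $\Sigma_\tau = \Sigma$, so $\|\cdot\|_{\Sigma_\tau}$ is just the modified Cartan norm $\|\cdot\|$ --- to rewrite
\[
S_\mf p^{[\ell]} \;=\; \{\gamma \in \bar\Lambda_\mf p \;:\; \|\bar a_\gamma\| = \ell\}.
\]
Since $K'$ is golden (resp.\ $\tau$-super-golden) at $\mf p$, Lemma \ref{lem:gold-def-simp-tran} shows that $\bar\Lambda_\mf p$ acts transitively on the $G_\mf p$-orbit $G_\mf p v_0 \subseteq \mc B$ of the distinguished special vertex $v_0$ (in the super-golden case one pushes the transitive action on $G_\mf p \tau$ forward along $\sigma \mapsto$ its distinguished special vertex, which is well-defined because $K'_\mf p$ fixes $v_0 \in \tau$), and that the stabilizer $\bar C$ of $v_0$ in $\bar\Lambda_\mf p$ is finite (by Lemma \ref{lem:gold-def-simp-tran}(2) together with finiteness of arithmetic subgroups, Lemma \ref{lem:pre-BHC}). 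Because $\|\bar a_\gamma\| = \|\gamma v_0 - v_0\|$ depends only on $\gamma v_0$, the orbit map $\gamma \mapsto \gamma v_0$ carries $S_\mf p^{[\ell]}$ onto $\{v \in G_\mf p v_0 : \|v - v_0\| = \ell\}$ with every fibre of cardinality exactly $|\bar C|$. Hence, writing $B_\lb$ for the quantity of Proposition \ref{prop:Kdoublecosetsize} (whose comparison constants $C_1, C_2$ are uniform in $\lb$),
\[
|S_\mf p^{[\ell]}| \;=\; |\bar C| \sum_{\substack{\lb \in X_{1+}(\bar A_{G_\mf p}) \\ \|\lb\| = \ell}} B_\lb \;\asymp\; \sum_{\substack{\lb \in X_{1+}(\bar A_{G_\mf p}) \\ \|\lb\| = \ell}} q_\mf p^{\langle \lb,\, 2\rho_{G_\mf p}\rangle}.
\]

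Next I would evaluate the remaining sum. Since $|\Sigma| = \rank_\ssm G_\mf p$, we have $X_{1+}(\bar A_{G_\mf p}) = \Z_{\geq 0}^{\Sigma}$ as monoids, so each $\lb$ with $\|\lb\| = \ell$ is uniquely $\lb = \sum_{\alpha \in \Sigma} c_\alpha \alpha$ with $c_\alpha \in \Z_{\geq 0}$, $\sum_\alpha c_\alpha = \ell$, and $\langle\lb, 2\rho_{G_\mf p}\rangle = \sum_\alpha c_\alpha m_\alpha$ with $m_\alpha := \langle\alpha, 2\rho_{G_\mf p}\rangle \in \Z_{>0}$ (integrality coming from the proof of Proposition \ref{prop:Kdoublecosetsize}). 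Ordering $\Sigma = \{\alpha_1, \dotsc, \alpha_r\}$ so that $m_{\alpha_1} = \dotsb = m_{\alpha_k} = M$ while $m_{\alpha_i} < M$ for $i > k$, the sum equals the coefficient extraction
\[
\sum_{\|\lb\| = \ell} q_\mf p^{\langle\lb,\, 2\rho_{G_\mf p}\rangle} \;=\; [x^\ell] \prod_{i=1}^{r} \frac{1}{1 - q_\mf p^{m_{\alpha_i}} x}.
\]
This rational function has its smallest-modulus pole at $x = q_\mf p^{-M}$, of order exactly $k$ and with positive leading coefficient $\prod_{i > k}(1 - q_\mf p^{\,m_{\alpha_i} - M})^{-1}$, so standard asymptotics for coefficients of rational series (equivalently, a partial-fraction expansion whose dominant term is $\binom{\ell + k - 1}{k - 1} q_\mf p^{M\ell}$, every other term being smaller by a polynomial or an exponential factor; or a direct argument sorting the $\lb$ by the ``deficit'' $M\ell - \langle\lb, 2\rho_{G_\mf p}\rangle \geq 0$) give $[x^\ell]\prod_i (1 - q_\mf p^{m_{\alpha_i}} x)^{-1} \asymp \ell^{k-1} q_\mf p^{M\ell}$. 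Feeding this into the previous display yields $|S_\mf p^{[\ell]}| \asymp \ell^{k-1} q_\mf p^{M\ell}$.

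I expect the only step needing genuine care to be the reduction in the first paragraph: one must verify that the class-number-one input of Lemma \ref{lem:gold-def-simp-tran} really does turn $\gamma \mapsto \gamma v_0$ into a surjection onto a single $G_\mf p$-orbit with uniformly finite fibres, and that in the super-golden case $S_\mf p^{[\ell]}$ still carries the clean norm-description above despite being assembled from the coset representatives $\zl S_\mf p$. Once the count is reduced to $\sum_{\|\lb\| = \ell} B_\lb$, the remainder is routine.
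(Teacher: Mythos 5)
Your proof is correct and follows essentially the same route as the paper: the same reduction via Lemma \ref{lem:gold-def-simp-tran} and Proposition \ref{prop:gategeneration} to counting vertices of fixed modified Cartan norm in $G_\mf p v_0$, then Proposition \ref{prop:Kdoublecosetsize} and an elementary estimate of the resulting sum, which you package as a partial-fraction analysis of $\prod_i(1-q_\mf p^{m_{\alpha_i}}x)^{-1}$ rather than the paper's direct deficit-counting (you mention that variant too). One small caution: the parenthetical claim that $m_\alpha = \langle\alpha, 2\rho_{G_\mf p}\rangle \in \Z_{>0}$ should not be relied on --- in ramified cases $\Sigma \subseteq X_{1+}(\bar A_{G_\mf p})$ can contain half-integral cocharacters (cf.\ Examples \ref{ex:cartanramunitary} and \ref{ex:cartanramunitarydecimated}), so the $m_\alpha$ need not be integers; fortunately your generating-function step only uses positivity of the $m_\alpha$, so the argument is unaffected.
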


\begin{proof}
By lemma \ref{lem:gold-def-simp-tran} and the versions of Proposition \ref{prop:gategeneration} as used above, 
\[
|S_\mf p^{[\ell]}| = |C_\mf p| |\{v \in G_\mf p v_0 \subseteq \mc B \;:\; \|v - v_0\| = \ell\}|
\]
By Proposition \ref{prop:Kdoublecosetsize}, if we let $\lb_0$ realize the maximum value of $\langle \lb, 2 \rho_G \rangle$, this gives
\[
|S_\mf p^{[\ell]}| \asymp \sum_{\substack{\lb \in X_+(\bar A_{G_\mf p})\\\|\lb\| = \ell}} q_\mf p^{\langle \lb, 2 \rho_G \rangle}  \asymp \binom{\ell + k-1}{k-1} q_\mf p^{\langle \ell \lb_0, 2\rho_G\rangle},
\]
where the last line is by counting how many terms in the sum are equal to $q_\mf p^{\langle \ell \lb_0, 2\rho_G\rangle}$ and noting that the number of terms equal to $q_\mf p^{\langle \ell \lb_0, 2\rho_G\rangle - c}$ is larger by a factor that is most polynomial in $c$ and that $\sum_{c=0}^\infty c^n q_\mf p^{-c}$ converges for all $n$.  
\end{proof}

\begin{cor}\label{cor:exponentialgrowthnonstandard}
Let $K' < G^\infty$ be almost $\tau$-super-golden at $\mf p$ for $\tau$ non-standard traversable. Let $M = \max_{\lb \in \Sigma} w_{\Sigma_\tau}(\lb)^{-1} \langle \alpha, 2\rho_{G_\mf p} \rangle$ recalling the definition of $w_{\Sigma_\tau}$ from \ref{def:nonstandardcartan}. Let $k$ be the number of $\lb \in \Sigma$ realizing this maximum. Then $|S_\mf p^{[\ell]}| \asymp \ell^{k-1} q_{\mf p}^{M\ell}$.
\end{cor}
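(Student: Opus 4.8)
The plan is to run the weighted version of the proof of Corollary~\ref{cor:exponentialgrowth}, with the non-standard Cartan norm $\|\cdot\|_{\Sigma_\tau}$ of Definition~\ref{def:nonstandardcartan} playing the role of the modified Cartan norm and with the non-standard variant of Proposition~\ref{prop:gategeneration} from Remark~\ref{rem:nonstandardgategeneration} replacing Proposition~\ref{prop:gategeneration} itself. First I would note, exactly as in the standard case, that Lemma~\ref{lem:gold-def-simp-tran} together with Remark~\ref{rem:nonstandardgategeneration} identifies $S_\mf p^{[\ell]}$, up to an $\ell$-independent multiplicative constant, with the set of vertices $v \in G_\mf p v_0 \subseteq \mc B$ such that $\|v - v_0\|_{\Sigma_\tau} = \ell$. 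Grouping these by their Cartan invariant $\lb = v - v_0 \in X_{1+}(\bar A_{G_\mf p})$ and applying Proposition~\ref{prop:Kdoublecosetsize} to each class gives
\[
|S_\mf p^{[\ell]}| \;\asymp\; \sum_{\substack{\lb \in X_{1+}(\bar A_{G_\mf p}) \\ \|\lb\|_{\Sigma_\tau} = \ell}} q_\mf p^{\langle \lb, 2\rho_{G_\mf p} \rangle}.
\]

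Next I would turn the right-hand side into a lattice-point count. Writing $\lb = \sum_{\mu \in \Sigma} c_\mu \mu$ via $X_{1+}(\bar A_{G_\mf p}) = \Z_{\geq 0}^\Sigma$, one has $\|\lb\|_{\Sigma_\tau} = \sum_\mu w_{\Sigma_\tau}(\mu) c_\mu$ and, by bilinearity of the pairing, $\langle \lb, 2\rho_{G_\mf p} \rangle = \sum_\mu c_\mu \langle \mu, 2\rho_{G_\mf p} \rangle$. By the definition of $M$ we have $\langle \mu, 2\rho_{G_\mf p} \rangle \leq M\, w_{\Sigma_\tau}(\mu)$ for each $\mu$, so $\langle \lb, 2\rho_{G_\mf p} \rangle \leq M\ell$ with equality forcing $c_\mu = 0$ for every non-maximizing $\mu$; the maximum is realized on the $k$ indices $\mu$ attaining the maximal ratio. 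Writing $q_\mf p^{\langle \lb, 2\rho_{G_\mf p}\rangle} = q_\mf p^{M\ell}\, q_\mf p^{-c(\lb)}$ with $c(\lb) = \sum_\mu c_\mu \bigl( M w_{\Sigma_\tau}(\mu) - \langle \mu, 2\rho_{G_\mf p}\rangle \bigr) \geq 0$, the upper bound proceeds as in Corollary~\ref{cor:exponentialgrowth}: for a fixed value $c$ the non-maximizing coordinates are $O(c)$-bounded, so there are $O(c^{|\Sigma| - k})$ choices for them, after which the $k$ maximizing coordinates are constrained by a single linear equation with $O(\ell^{k-1})$ nonnegative solutions; since $\sum_{c \geq 0} c^{|\Sigma|} q_\mf p^{-c}$ converges, this yields $|S_\mf p^{[\ell]}| \ll q_\mf p^{M\ell} \ell^{k-1}$.

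For the matching lower bound I would exhibit $\asymp \ell^{k-1}$ many $\lb$ of norm $\ell$ contributing $\gg q_\mf p^{M\ell}$. If the maximizing indices $\mu_1, \dots, \mu_k$ have $\gcd_i w_{\Sigma_\tau}(\mu_i) = 1$ this is immediate, since $\sum_i w_{\Sigma_\tau}(\mu_i) c_{\mu_i} = \ell$ then has $\asymp \ell^{k-1}$ nonnegative solutions for $\ell$ large, each with $\langle \lb, 2\rho_{G_\mf p}\rangle = M\ell$. The one genuinely new point compared to the standard case is the possibility $g := \gcd_i w_{\Sigma_\tau}(\mu_i) > 1$, when that equation may be unsolvable. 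Here I would use that $\Sigma_\tau$, being Weyl-complete, is nonempty, so it contains some $\mu^\ast$ with $w_{\Sigma_\tau}(\mu^\ast) = 1$; setting $c_{\mu^\ast}$ equal to the bounded residue $r \in \{0, \dots, g - 1\}$ with $r \equiv \ell \pmod g$ makes $\ell - r$ a large multiple of $g$, so $\sum_i w_{\Sigma_\tau}(\mu_i) c_{\mu_i} = \ell - r$ has $\asymp \ell^{k-1}$ nonnegative solutions. Each resulting $\lb = r\mu^\ast + \sum_i c_{\mu_i}\mu_i$ satisfies $\|\lb\|_{\Sigma_\tau} = \ell$ and $\langle \lb, 2\rho_{G_\mf p}\rangle = M\ell - O(1)$, hence contributes $\gg q_\mf p^{M\ell}$, and summing gives $|S_\mf p^{[\ell]}| \gg q_\mf p^{M\ell}\ell^{k-1}$.

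I expect the only real obstacle to be bookkeeping: checking that the constants in Proposition~\ref{prop:Kdoublecosetsize}, the size of $C_\mf p$, and the fiber sizes coming from Lemma~\ref{lem:gold-def-simp-tran} are all uniform in $\ell$, and handling the $\gcd$ correction above so that the estimate $|S_\mf p^{[\ell]}| \asymp \ell^{k-1} q_\mf p^{M\ell}$ holds for every large $\ell$ rather than only along a congruence class. No input beyond what was already used for Corollary~\ref{cor:exponentialgrowth} should be required.
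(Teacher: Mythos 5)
Your proof is correct and matches the paper's own, which is literally just ``this is the same argument as Corollary~\ref{cor:exponentialgrowth}'' --- i.e., the same lattice-point count with the non-standard Cartan norm substituted for the modified one via Remark~\ref{rem:nonstandardgategeneration}. The gcd concern you flag as ``the one genuinely new point'' is worth raising but in fact never occurs: writing any $\mu \in \Sigma$ as a sum of $w_{\Sigma_\tau}(\mu)$ Weyl-translates $w_j(\sigma_j)$ of dominant elements $\sigma_j \in \Sigma_\tau$ and using that $\langle w_j(\sigma_j), 2\rho_{G_\mf p}\rangle \leq \langle \sigma_j, 2\rho_{G_\mf p}\rangle$ (both arguments dominant) gives $\langle \mu, 2\rho_{G_\mf p}\rangle \leq w_{\Sigma_\tau}(\mu)\max_{\sigma \in \Sigma_\tau}\langle \sigma, 2\rho_{G_\mf p}\rangle$, so the maximal ratio $M$ is always realized by some $\sigma^\ast \in \Sigma_\tau$ with $w_{\Sigma_\tau}(\sigma^\ast)=1$, hence the gcd of the maximizers' weights is $1$ and your residue workaround is sound but superfluous.
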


\begin{proof}
This is the same argument as lemma \ref{cor:exponentialgrowth}. 
\end{proof}

\begin{example}
If $\mf p$ is split then $G_\mf p = GL_n/F_\mf p$. If $K'$ is almost golden at $\mf p$, then $|S_\mf p| = \deg(v_0)$. This is the sum of the sizes of the Grassmanians $G(n,k)$ for $1 \leq k \leq n-1$ over the residue field of $F_\mf p$ giving
\[
|\zl S_\mf p| =  \sum_{i=1}^{n-1} {\binom{n}{k}}_{q_\mf p} := \sum_{i=1}^{n-1} \f{(1 - q_\mf p^n) \cdots (1 - q_\mf p^{n-k+1})}{(1 - q_\mf p) \cdots (1 - q_\mf p^k)} = (1 + \1_{n \text{ odd}})q_{\mf p}^{\lfloor n^2/4 \rfloor} + O(q_{\mf p}^{\lfloor n^2/4 \rfloor - 1}).
\]

In comparison, $2\rho_G = (n-1, n-3, \dotsc, -n+1)$, which has maximized pairing with the middle fundamental weight in $\Sigma$ (there are $2$ if $n$ is odd). This gives
\[
|S^{[\ell]}_\mf p| \asymp \ell^{\1_{n \text{ odd}}}q_{\mf p}^{\lfloor n^2/4 \rfloor \ell}
\]
in both the golden and super-golden cases. 
\end{example}

\begin{example}\label{ex:growthratens}
If $\mf p$ is non-split, then $G_\mf p$ is a unitary group. Coordinatize 
\[
X_*(A_G) = \{(m_1, \dotsc, m_n) \in \Z^n \; : \; m_i = -m_{n+1-i}\}
\]
with $m_{n/2} = m_{n/2+1} = 0$ if $G_\mf p$ is non-quasisplit. We still have $2\rho_G = (n-1, n-3, \dotsc, -n+1)$. Then, inspecting Examples \ref{ex:cartanunitary}-\ref{ex:cartanramnqsunitary}, the maximized pairing is with the $(m_i) \in \Sigma$ with $m_i = (1 + \1_{\mf p \text{ ramified}})^{-1}$ for $1 \leq i \leq \lfloor n/2 \rfloor - \1_{G_{\mf p} \text{ n.qs.}}$. This gives
\[
|S_\mf p^{[\ell]}| \asymp q_{\mf p}^{A\ell}, \qquad A = \begin{cases}
2 \lfloor n^2/4 \rfloor & \mf p \text{ unramified, } G_\mf p \text{ quasisplit} \\
\lfloor n^2/4 \rfloor & \mf p \text{ ramified, } G_\mf p \text{ quasisplit} \\
n^2/2 - 2 & \mf p \text{ unramified, } G_\mf p \text{ non-quasisplit} \\
n^2/4 - 1 & \mf p \text{ ramified, } G_\mf p \text{ non-quasisplit} 
\end{cases}.
\]
\end{example}

\begin{example}
In the non-standard $\tau$ for $U_4$ case of Example \ref{ex:nonstandardcartanunitary}/\ref{ex:traversable}(3), $2\rho_G = (3,1,-1,3)$ and the pairing in \ref{cor:exponentialgrowthnonstandard} is maximized for $\lb = (1,0,0,-1)$. This gives
\[
|S_\mf p^{[\ell]}| \asymp q_\mf p^{6 \ell}. 
\]
In the similar non-standard case of Example \ref{ex:nonstandardcartanramunitary}/\ref{ex:traversable}(4), we similarly get 
\[
|S_\mf p^{[\ell]}| \asymp q_\mf p^{3 \ell}. 
\]
Finally, in the decimated non-standard case of Example \ref{ex:cartanramunitarydecimated}/\ref{ex:traversable}(5), the maximizing $\lb$ is $(1/2,1/2,-1/2, -1/2)$, giving
\[
|S_\mf p^{[\ell]}| \asymp q_\mf p^{4 \ell}.
\]
\end{example}




\subsubsection{Another Criterion for Class Number One}

In this subsection we introduce an alternative ``elementary'' criterion to lemma \ref{lem:pre-class} to prove that some $G=U_{n}^{E,H}$ is of class number one (equivalently, that $G(\wh{\mc O})$ is an almost golden group).

First we record the following general Lemma.

\begin{lem} \label{lem:transitive-criterion-general}
Let $X$ be a connected $k$-regular graph and $v_0$ a vertex in the graph.
For $\Lambda$ acting on $G$, denote $C = \mathrm{Stab}_\Gamma(v_0)$ and $S = \{g\in \Lambda \,|\, \mathrm{dist}(g.v_0,v_0) = 1 \}$.
If $|S/C| = k$, then $\Lambda$ acts transitively on the vertices of $X$.
\end{lem}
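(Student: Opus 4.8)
The plan is to argue that the orbit $\Lambda.v_0$ is both open and closed in the connectivity sense: it is nonempty, and if it contains a vertex it contains all neighbors of that vertex, so by connectedness of $X$ it is everything. First I would set $Y := \Lambda.v_0$ and note $v_0 \in Y$. For the inductive/propagation step, fix $w \in Y$, say $w = g.v_0$ for some $g \in \Lambda$, and let $w'$ be any neighbor of $w$. Then $g^{-1}.w'$ is a neighbor of $v_0$, i.e. $\mathrm{dist}(g^{-1}.w', v_0) = 1$. So it suffices to show that \emph{every} neighbor of $v_0$ lies in $\Lambda.v_0$; applying $g$ then puts $w'$ in $Y$.

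The core of the argument is thus the local claim: the set of neighbors of $v_0$ is contained in $\Lambda.v_0$. Here is where the hypothesis $|S/C| = k$ enters. Consider the map $S \to \{\text{neighbors of } v_0\}$ sending $s \mapsto s.v_0$; this is well-defined since $s \in S$ means exactly $\mathrm{dist}(s.v_0,v_0)=1$. The right $C$-cosets of $S$ are precisely the fibers of this map: if $s_1.v_0 = s_2.v_0$ then $s_2^{-1}s_1 \in \mathrm{Stab}_\Lambda(v_0) = C$ and $s_2^{-1}s_1 \in S$ (one should check $S$ is closed under the relevant products, or rather just observe $s_2^{-1}s_1.v_0 = v_0$ has distance $0$, hence $s_2^{-1}s_1$ fixes $v_0$, hence lies in $C$; note $C \subseteq S$ is false in general — $C$ fixes $v_0$ so distance is $0$ — so I should be careful and instead say the fibers are the left cosets $s C$ intersected with $S$, which since $sC \subseteq \{x : x.v_0 = s.v_0\}$ and $s.v_0$ is a neighbor means $sC \subseteq S$). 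Thus the fibers of $S \to \{\text{neighbors of }v_0\}$ are exactly the cosets in $S/C$, so the image has size $|S/C| = k$. But $X$ is $k$-regular, so $v_0$ has exactly $k$ neighbors; hence the map is surjective and every neighbor of $v_0$ is of the form $s.v_0$ with $s \in \Lambda$, i.e. lies in $\Lambda.v_0$.

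Combining: $\Lambda.v_0$ contains $v_0$ and is closed under passing to neighbors, so by connectedness of $X$ it is all of the vertex set, i.e. $\Lambda$ acts transitively. The main (really the only) obstacle is the bookkeeping in the local claim — making precise that $S/C$ indexes the fibers of $s \mapsto s.v_0$, in particular that $sC \subseteq S$ whenever $s \in S$ (because every element of $sC$ sends $v_0$ to the same neighbor $s.v_0$) and that distinct cosets give distinct neighbors. Once that is set up, $k$-regularity forces surjectivity onto the neighbor set and the connectedness argument finishes it. (I note the statement as written has a typo — ``$\Lambda$ acting on $G$'' should be ``$\Lambda$ acting on $X$'' — which I would silently correct.)
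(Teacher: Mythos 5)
Your proof is correct and takes essentially the same route as the paper's: establish that all neighbors of $v_0$ lie in $\Lambda.v_0$ using $|S/C|=k$, then propagate by connectedness (the paper phrases the propagation as induction on $\mathrm{dist}(v,v_0)$, which is the same thing). You supply a bit more of the coset bookkeeping showing that $S/C$ indexes the neighbors of $v_0$ inside the orbit, which the paper states without detail; you also correctly identify the typos in the statement ($G$ should be $X$, and $\mathrm{Stab}_\Gamma$ should be $\mathrm{Stab}_\Lambda$).
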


\begin{proof} 
First denote $N(v_0)$ the set of neighboring vertices of $v_0$ in the graph, and note that $k = |N(v_0)|$, and that $|S/C|$ denotes the number of neighbors of $v_0$ in the orbit of $v_0$ by $\Lambda$.
In particular, by $|S/C| = k$ we get that $N(v_0) \subset \Lambda.v_0$.

Now let $v$ be a general vertex in the graph, denote $n = \mathrm{dist}(v,v_0)$, and we shall prove that $v \in \Lambda.v_0$ by induction on $n$.
The base case $n=1$ follows from $N(v_0) \subset \Lambda.v_0$, and assume the validity of the claim for $n-1$.
Since the graph is connected there is path from $v_0$ to $v$ and let $u$ be the neighbor of $v$ satisfying  $n-1 = \mathrm{dist}(u,v_0)$.
By the induction assumption there is $g\in \Lambda$ such that $g.v_0 =u$. 
From this we get
\[
1 = \mathrm{dist}(v,u) = \mathrm{dist}(v,g.v_0) = \mathrm{dist}(g^{-1}.v,v_0),  
\]
namely $g^{-1}.v \in N(v_0)$, and since $N(v_0) \subset \Lambda.v_0$ we get $v \in \Lambda.v_0$, as needed.
\end{proof}

The above Lemma specialize to the following Corollary when we take $X$ to be the Bruhat-Tits building of $GL_d(F_{\mf p})$ (whose underlying graph is indeed regular).

\begin{cor} \label{cor:transitive-criterion-special}
Let $G = GL_d(F_{\mf p})$, $K = GL_d({\mc O}_{\mf p})$, $\mc B$ the Bruhat-Tits building of $G$, $v_0$ the vertex in $\mc B$ whose stabilizer is $K$, and $\deg(v_0) = \sum_{i=1}^{n-1} {n\choose i}_{q_{\mf p}}$, the degree of $v_0$ in $\mc B$.
For a discrete subgroup $\Lambda \leq G$, denote $C = \Lambda \cap K$ and $S = \{g\in \Lambda \,|\, \mathrm{dist}(g.v_0,v_0) = 1 \}$.
If $|S|/|C| = \deg(v_0)$ then $G = \Lambda \cdot K \cdot Z_G$.
\end{cor}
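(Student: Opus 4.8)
The plan is to deduce this from Lemma~\ref{lem:transitive-criterion-general}, applied with $X$ the $1$-skeleton of $\mc B = \mc B(GL_d(F_{\mf p}))$. The point is that transitivity of the $\Lambda$-action on the vertex set of $\mc B$ is precisely the assertion $G = \Lambda\cdot\Stab_G(v_0) = \Lambda\cdot K\cdot Z_G$, since the stabilizer in $GL_d(F_{\mf p})$ of the vertex $v_0 = [\mc O_{\mf p}^d]$ is $Z_G\cdot K$ (as $g\mc O_{\mf p}^d$ is homothetic to $\mc O_{\mf p}^d$ exactly when $g\in F_{\mf p}^{\times}\cdot GL_d(\mc O_{\mf p})$).

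First I would check the graph-theoretic hypothesis of the Lemma. By Example~\ref{exa:pre-B(PGL_n(F))} the action of $GL_d(F_{\mf p})$ on $\mc B$ factors through $PGL_d(F_{\mf p})$, which acts transitively on vertices; hence $X$ is regular, with valence $\deg(v_0)$, the number of vertices adjacent to $v_0$. By the lattice model, these neighbors correspond bijectively to the nonzero proper $\F_{q_{\mf p}}$-subspaces of $\varpi_{\mf p}^{-1}\mc O_{\mf p}^d/\mc O_{\mf p}^d\cong\F_{q_{\mf p}}^{d}$, so $\deg(v_0)=\sum_{i=1}^{d-1}\binom{d}{i}_{q_{\mf p}}$, matching the count in the statement.

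Next I would reconcile the subgroup $C$ used in the two statements: Lemma~\ref{lem:transitive-criterion-general} is phrased with $C=\Stab_\Lambda(v_0)$, while here $C=\Lambda\cap K$. One always has $\Lambda\cap K\subseteq\Stab_\Lambda(v_0)$, and I claim the hypothesis $|S|/|C|=\deg(v_0)$ forces equality. Indeed $S$ is a union of right $\Stab_\Lambda(v_0)$-cosets (if $\dist(gv_0,v_0)=1$ then also $\dist(ghv_0,v_0)=1$ for $h\in\Stab_\Lambda(v_0)$), and $S\neq\emptyset$ as $\deg(v_0)\geq1$, so $|S|<\infty$ makes $\Stab_\Lambda(v_0)$ finite; a finite subgroup consists of finite-order elements, whose determinants are roots of unity in $F_{\mf p}^{\times}$ hence units, and any element of $\Stab_G(v_0)=Z_G\cdot K$ with unit determinant lies in $K$. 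Therefore $\Stab_\Lambda(v_0)=\Lambda\cap K=C$, so the hypothesis $|S|/|C|=\deg(v_0)$ becomes exactly the hypothesis $|S/C|=k$ of the Lemma (with $k=\deg(v_0)$ the valence of $X$, and $|S/C|$ the number of neighbors of $v_0$ lying in $\Lambda.v_0$). Applying Lemma~\ref{lem:transitive-criterion-general} then yields that $\Lambda$ acts transitively on the vertices of $\mc B$, i.e.\ $G=\Lambda\cdot K\cdot Z_G$.

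The one place that requires care — and the main (minor) obstacle — is exactly this reconciliation of $C=\Lambda\cap K$ with the genuine point-stabilizer, together with the attendant observation that $|S|/|C|$ really counts the neighbors of $v_0$ in the $\Lambda$-orbit; the remainder is a direct invocation of Lemma~\ref{lem:transitive-criterion-general}, Example~\ref{exa:pre-B(PGL_n(F))}, and the lattice description of $\mc B$ (the case $d=1$ being trivial since then $G=Z_G$).
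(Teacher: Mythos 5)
Your proof is correct and follows essentially the same route as the paper: invoke Lemma~\ref{lem:transitive-criterion-general} to get transitivity of $\Lambda$ on vertices, then unwind what transitivity means via $\Stab_G(v_0) = Z_G\cdot K$. One genuine difference is worth noting: the Lemma is phrased with $C = \Stab_\Lambda(v_0)$, while the Corollary's $C$ is $\Lambda\cap K$, and these need not coincide a priori since $\Stab_G(v_0)=Z_G K\supsetneq K$. The paper's proof silently elides this; you explicitly close the gap by observing that the hypotheses force $\Stab_\Lambda(v_0)$ to be finite, hence torsion, hence (via the determinant having unit valuation) contained in $K$ — so $\Stab_\Lambda(v_0)=\Lambda\cap K$ after all, and $|S|/|C|=|S/C|$ is the count the Lemma needs. (A slightly slicker variant: $K$ is normal in $Z_G K$ with $Z_G K/K\cong\Z$ torsion-free, so any finite subgroup of $Z_G K$ lands in $K$.) This added care strengthens rather than departs from the paper's argument; the rest — the lattice description of $\mc B$, the valence count, the passage from orbit-transitivity to $G=\Lambda\cdot K\cdot Z_G$ — matches the paper.
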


\begin{proof}
By Lemma \ref{lem:transitive-criterion-general} we get that $\Lambda$ acts transitively on the vertices of the building $\mc B$.
Since $G/Z_G \leq \mathrm{Aut}(\mc B)$ and $K = \mathrm{Stab}_G(v_0)$, we get that $G/Z_G \leq \Lambda\cdot K$.
\end{proof}

Combining the above Corollary together with the Lemma \ref{lem:pre-class-p}, which states that the class number is equal the $\mf p$-class number for any prime $\mf p$, we get the our second criterion for proving class number one (which avoids calculating the mass formula).

\begin{prop} \label{prop:second-criterion-class1}
Let $G = U_{n}^{E,H}$ with $E$ of class number one, $\mf p$ a split prime such that $G({\mc O}_{\mf p})$ is hyperspecial, and $v_0$ the vertex in the Bruhat-Tits building of $G(F_{\mf p})$ whose stabilizer is $G({\mc O}_{\mf p})$.
If 
\[
|\{g \in M_d(\mc O_E)\;|\; g^* H g = q_\mf p H\}|/ |G({\mc O})| = \sum_{i=0}^n {n\choose i}_{q_{\mf p}},
\]
then $G$ is of class number one.
\end{prop}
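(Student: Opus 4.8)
\noindent\emph{Proof sketch.} The plan is to apply the transitivity criterion of Corollary~\ref{cor:transitive-criterion-special} to the principal $\mf p$-arithmetic group $\Gamma_\mf p = G(\mc O[1/\mf p])$ acting on the Bruhat--Tits building $\mc B$ of $G_\mf p$, and then read off $c(G)=1$ via Lemma~\ref{lem:pre-class-p}. Since $\mf p$ is split, Lemma~\ref{lem:rationalpoints} identifies $G_\mf p\cong\GL_n(F_\mf p)$, and after replacing $H$ by a $\GL_n(\mc O_E)$-equivalent form if needed (which changes neither $c(G)$ nor the quantity in the hypothesis, being induced by an isomorphism of $\mc O$-group schemes) we may assume a matrix representative of $H$ is a unit at $\mf p$, so that $K_\mf p=G(\mc O_\mf p)\cong\GL_n(\mc O_\mf p)$ and, by Example~\ref{exa:pre-B(PGL_n(F))}, $\mc B$ is the building of $PGL_n(F_\mf p)$ with $v_0$ the class of $\mc O_\mf p^n$ and vertex degree $\deg(v_0)=\sum_{i=1}^{n-1}{\binom ni}_{q_\mf p}$. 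The one structural nuisance is the central split torus $Z=U_1^E(F_\mf p)\cong F_\mf p^\times$, which acts trivially on $\mc B$, so that $\mathrm{Stab}_{G_\mf p}(v_0)=K_\mf p Z$ strictly contains $K_\mf p$; I would defer this to the end of the argument, where, writing $\mf q=(\varpi)$ for one of the two primes of $E$ over $\mf p$ (using class number one of $E$), the element $\varpi/\bar\varpi\in U_1^E(\mc O[1/\mf p])=\Gamma_\mf p\cap Z$ has $\mf q$-valuation $1$ and hence generates $Z/(Z\cap K_\mf p)$, so once transitivity yields $G_\mf p=\Gamma_\mf p K_\mf p Z$ we upgrade it to $G_\mf p=\Gamma_\mf p K_\mf p$, i.e. $c_\mf p(G)=1$, and then $c(G)=c_\mf p(G)=1$. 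By the connectivity induction underlying Lemma~\ref{lem:transitive-criterion-general}/Corollary~\ref{cor:transitive-criterion-special}, transitivity of $\Gamma_\mf p$ on the vertices of $\mc B$ will in turn follow once we know $N(v_0)\subseteq\Gamma_\mf p v_0$, where $N(v_0)$ is the set of neighbours of $v_0$.

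The heart of the matter is to convert the numerical hypothesis into the geometric statement $N(v_0)\subseteq\Gamma_\mf p v_0$. Set $X:=\{g\in M_n(\mc O_E):g^*Hg=q_\mf p H\}$. First I would note that $G(\mc O)$ acts freely on $X$ on the right (each $g\in X$ is invertible over $E$), and that the assignment
\[
g\;\longmapsto\; g_\mf p\cdot\mc O_\mf p^n ,
\]
sending $g$ to the lattice cut out by the component $g_\mf p$ of its localization in the $\GL_n(F_\mf p)$-factor at $\mf p$, descends to an \emph{injection} of $X/G(\mc O)$ into the set of $\mc O_\mf p$-lattices $L$ with $\pi_\mf p\mc O_\mf p^n\subseteq L\subseteq\mc O_\mf p^n$: injectivity is the computation that two elements of $X$ with equal localizations mod $\GL_n(\mc O_\mf p)$ have ratio in $G(\mc O[1/\mf p])\cap\prod_vK_v=G(\mc O)$ (using that $q_\mf p$ is a unit away from $\mf p$, which forces integrality and invertibility there). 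The Smith-normal-form analysis of $g^*Hg=q_\mf p H$ at the split place shows that the elementary divisors of $g_\mf p$ all lie in $\{1,\pi_\mf p\}$ — which is precisely why the image sits inside that lattice set — once $v_\mf p(q_\mf p)=1$ (automatic when $F=\Q$, as in all our applications). Since that set of lattices has exactly $\sum_{i=0}^n{\binom ni}_{q_\mf p}$ elements, its members being in bijection with the subspaces $L/\pi_\mf p\mc O_\mf p^n\subseteq\mathbb F_{q_\mf p}^n$, we obtain $|X|/|G(\mc O)|\le\sum_{i=0}^n{\binom ni}_{q_\mf p}$, with equality — our hypothesis — precisely when \emph{every} such lattice is realized by some $g\in X$.

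Granting that equality, two of these lattices ($\mc O_\mf p^n$ and $\pi_\mf p\mc O_\mf p^n$) have class $v_0$, while the other $\deg(v_0)$ are exactly the representatives of the neighbours of $v_0$. Choose $g_0\in X$ with $g_0\cdot\mc O_\mf p^n=\mc O_\mf p^n$ (possible by the realization just obtained); then $g_0$ is a unit at $\mf p$, and its image $[g_0]$ in $PGL_n(F_\mf p)$ fixes $v_0$. For any $g\in X$ the element $\gamma_g:=g_0^{-1}g$ lies in $\Gamma_\mf p$ (trivial multiplier, integral away from $\mf p$) and satisfies $\gamma_gv_0=[g_0]^{-1}\bigl([g]v_0\bigr)$, where $[\,\cdot\,]$ denotes the image in $PGL_n(F_\mf p)$; as $g$ ranges over $X$ the points $[g]v_0$ sweep out all of $N(v_0)\cup\{v_0\}$, and the building automorphism $[g_0]^{-1}$ fixes $v_0$, so $N(v_0)\subseteq\Gamma_\mf p v_0$. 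This closes the loop with the first paragraph and proves $c(G)=1$.

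The step I expect to be the main obstacle is the bookkeeping in the second paragraph: proving the lattice map is injective modulo $G(\mc O)$ and identifying its image exactly, i.e. the Smith-normal-form analysis of $\{g:g^*Hg=q_\mf p H\}$ at a split prime (which quietly uses $v_\mf p(q_\mf p)=1$), together with threading the central split torus $U_1^E$ correctly through every identification. The remaining steps are either cited results or short computations.
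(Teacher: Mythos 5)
Your proof is correct and follows essentially the same route as the paper's: both convert the numerical hypothesis into the geometric statement $N(v_0)\subseteq\Gamma_{\mf p} v_0$ via the bijection between $X/G(\mc O)$ and the lattices $L$ with $\varpi_{\mf p}\mc O_{\mf p}^n\subseteq L\subseteq\mc O_{\mf p}^n$, apply Corollary~\ref{cor:transitive-criterion-special}, and then absorb the central split torus to pass from $G_{\mf p}=\Lambda_{\mf p}K_{\mf p}Z_{G}(F_{\mf p})$ to $G_{\mf p}=\Lambda_{\mf p}K_{\mf p}$. The one small variation is that you exhibit $\varpi/\bar\varpi\in U_1^E(\mc O[1/\mf p])$ directly from $h_E=1$ to handle the center, whereas the paper reads the existence of an element of $\Lambda_{\mf p}\cap\pi_1^{-1}C_{\mf p}$ out of the numerical count; both ultimately rest on the same class-number-one hypothesis on $E$ that is already needed for Lemma~\ref{lem:pre-class-p}.
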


\begin{proof}
Denote $\Lambda_{\mf p}  = G({\mc O}[1/\mf p]) \leq G(F_{\mf p})$, $C_{\mf p} = G({\mc O}) = \Lambda_{\mf p} \cap G({\mc O}_{\mf p})$, $S_{\mf p} = \{g \in \Lambda_{\mf p}\,|\, \mathrm{dist}(g.v_0,v_0) = 1\}$, and $\wh S_{\mf p} = \{g \in \Mat_{n \times n}(\mc O_E) \,|\, g^* H g = q_\mf p H\}$

As in Example \ref{ex:gatessplit}, if we factor $p = \pi_1 \pi_2$ in $\mc O_E$, then
\[
S_\mf p = \pi_1^{-1} \wh S_\mf p  \setminus  (\Lambda_\mf p \cap (C_\mf p \cup \pi_1^{-1} C_\mf p))
\]
so 
\[
|\wh S_\mf p|/|C_\mf p| = |\pi_1^{-1} \wh S_\mf p/C_\mf p| = |S_\mf p|/|C_\mf p| + 1 + \1_{\Lambda_\mf p \cap \pi_1^{-1} C_\mf p \neq \emptyset}. 
\]
The argument of lemma \ref{lem:transitive-criterion-general} also gives that $|S_\mf p|/|C_\mf p| \leq \sum_{i=1}^{n-1} {n\choose i}_{q_{\mf p}}$ so our assumptions force
\[
|S_\mf p|/|C_\mf p| = \sum_{i=1}^{n-1} {n\choose i}_{q_{\mf p}}, \qquad \Lambda_\mf p \cap \pi_1^{-1} C_\mf p \neq \emptyset.
\]
Therefore, Corollary \ref{cor:transitive-criterion-special} gives $G(F_{\mf p}) = \Lambda_{\mf p} \cdot G({\mc O}_{\mf p}) \cdot Z_G(F_\mf p) = \Lambda_{\mf p} \cdot G({\mc O}_{\mf p})$, and by Lemma \ref{lem:pre-class-p}, we get the class number one assertion.
\end{proof}

\subsection{Growth, Navigation, and Approximation}\label{sec:firstthree}

Next we summarize how the gate sets that correspond to golden adelic groups satisfy the last three properties in the Definition \ref{def:intro-GG} of golden gate sets; namely, growth, navigation and approximation.

\begin{thm}\label{thm:gold-def-gates}
Let $G=U_{n}^{E,H}$, let $K' \leq G(\widehat{\mathcal{O}})$ be an almost golden adelic subgroup, let $\mathfrak{p}$ be a prime at which $K'$ is golden (resp. $\tau$-super-golden for $\tau$ traversable), and let $S_{\mathfrak{p}}$ (resp. $\zl S_\mf p \cup C_\mf p$ which is generated by elements of finite order by \ref{lem:supergatesprops}(4)) be the gate set corresponding to $K'$ as in definition \ref{def:goldengates} (resp. \ref{def:supergoldengates}). Then $S_{\mf p}$ satisfies the following properties:

\begin{enumerate}
\item \uline{Growth}: 
$|S_\mf p^{[\ell]}|$ grows exponentially in $\ell$. 
\item \uline{Navigation}: The group generated by $S_{\mathfrak{p}}$ is $\bar \Lambda_{p} := \bar \Lambda_{\mf p}^{K'}$ 
and it has the following efficient solution for its word problem: Given $1 \ne g \in \bar \Lambda_{\mf p}$, find an element $s\in \zl S_\mf p$ such that $a_{sg} \prec a_g$, and proceed by induction on $a_g$. The algorithm will terminate in $O(|\zl S_{\mf p}|\cdot \|a_g\|))$ time which (for a fixed $\mf p$) is polynomial in the input $g$. 
\item \uline{Approximation}: If $\mc O_F$ is Euclidean, there exists a heuristic polynomial algorithm such that given $g\in PU(n)$, $\varepsilon>0$, and $\ell\in\bN$ if we write $g = \prod_{1\leq i < j \leq n}\prod_{k=1}^3 g^{i,j}_k$, $g^{i,j}_k \in U(n)$, as in the decomposition given in Lemma \ref{lem:decom-U(n)} below and if $B\left(g^{i,j}_k,\varepsilon\right)\cap S_{\mf p}^{[\ell]}\ne\emptyset$ for any $i,j,k$, then the algorithm outputs an element in $B\left(g ,\varepsilon N \right)\cap S_{\mf p}^{[\ell\cdot A]}$, where $A = 3 \cdot {n \choose 2}$. 
\end{enumerate}
\end{thm}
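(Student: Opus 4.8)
# Proof Proposal for Theorem \ref{thm:gold-def-gates}

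The plan is to handle the three properties essentially independently, invoking the structural results already assembled. For \textbf{Growth}, the work is already done: Corollary \ref{cor:exponentialgrowth} (in the golden or standard super-golden case) and Corollary \ref{cor:exponentialgrowthnonstandard} (in the non-standard traversable case) give $|S_\mf p^{[\ell]}| \asymp \ell^{k-1} q_\mf p^{M\ell}$ with $M > 0$, since $\langle \lb, 2\rho_{G_\mf p}\rangle > 0$ for every nonzero dominant $\lb$ and $\Sigma$ is nonempty because $G_\mf p$ is non-compact (its $F_\mf p$-rank is positive, as $n \geq 3$). So this part is a one-line citation.

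For \textbf{Navigation}, I would invoke Proposition \ref{prop:gategeneration} (and its variants, Remark \ref{rem:nonstandardgategeneration} and Corollary \ref{cor:decimatedgategeneration}) to see that $\langle \zl S_\mf p \cup C_\mf p\rangle = \bar\Lambda_\mf p$ and that membership-with-length is detected by the modified Cartan norm. The key algorithmic claim is the reduction step: given $1 \neq g \in \bar\Lambda_\mf p$, produce $s \in \zl S_\mf p$ with $\bar a_{sg} \prec \bar a_g$. This is exactly the inductive step executed inside the proof of Proposition \ref{prop:gategeneration}(1): choose $v_1' \preceq \bar a_g$ with $\bar a_g - v_1' \in \Sigma$ (or $\in \Sigma_\tau$ in the traversable case), use transitivity of $\bar\Lambda_\mf p$ on $G_\mf p v_0$ (Lemma \ref{lem:gold-def-simp-tran}) to find $\gamma' \in \bar\Lambda_\mf p$ hitting the corresponding vertex, and set $s = \gamma'(\ldots)$ so that $g = s\gamma'$ with $\bar a_{\gamma'} \prec \bar a_g$. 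One then iterates; since $\|\bar a_g\|$ strictly decreases by $1$ at each step (by the triangle inequality \ref{lem:triangleineq}, resp. \ref{lem:triangleineqns}, the decrease is exactly one unit of norm), the algorithm terminates after $\|\bar a_g\|$ iterations. Each iteration tests the finitely many elements of $\zl S_\mf p$ against $g$ — computing a single Cartan invariant and comparing — so the cost is $O(|\zl S_\mf p| \cdot \|\bar a_g\|)$, and since $\|\bar a_g\|$ is linear in the bit-length of $g$ for fixed $\mf p$ (the Cartan invariant is read off from valuations of matrix entries), this is polynomial. The output word has shortest possible length because its length equals $\|\bar a_g\|$, which lower-bounds the length of any word by the triangle inequality.

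For \textbf{Approximation}, the strategy is to reduce the approximation of a general $g \in PU(n)$ to approximating the $3\binom{n}{2}$ factors $g^{i,j}_k$ coming from the decomposition of Lemma \ref{lem:decom-U(n)} (a product of $\binom{n}{2}$ block-$SU(2)$-type rotations each split into $3$ pieces, as in the Euler-angle / Ross--Selinger style decomposition). For each factor, the hypothesis gives $B(g^{i,j}_k, \varepsilon) \cap S_\mf p^{[\ell]} \neq \emptyset$, and one runs the Ross--Selinger algorithm \cite{ross2015optimal} (which requires $\mc O_F$ Euclidean to solve the relevant relative norm equations) to heuristically, in polynomial time, actually \emph{produce} an element of $B(g^{i,j}_k, \varepsilon') \cap S_\mf p^{[\ell]}$ for a slightly enlarged $\varepsilon'$; multiplying these $A = 3\binom{n}{2}$ outputs together gives an element of $\bar\Lambda_\mf p$ within $B(g, \varepsilon N)$ for $N$ absorbing the error accumulation (bi-invariance of the metric makes the product of $\varepsilon'$-balls land in a ball of radius $\asymp A\varepsilon'$, up to the volume-vs-radius conversion implicit in the definition of $B(\cdot,\varepsilon)$), and of Cartan norm at most $\ell \cdot A$ by the triangle inequality \ref{lem:triangleineq}, hence in $S_\mf p^{[\ell A]}$ (more precisely in $\bigcup_{m \leq \ell A} S_\mf p^{[m]}$, which is what $S^{(\ell A)}$ means).

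The main obstacle is the \textbf{Approximation} step: one must check that the Ross--Selinger machinery genuinely applies to the arithmetic lattices $\Lambda_\mf p$ arising here — that the relevant ``unit-group / norm-equation'' subproblems at the prime $\mf p$ are solvable by a Euclidean algorithm over $\mc O_F$, and that the factor-by-factor approximation composes without the error blowing up super-polynomially or the word-length exceeding $\ell A$. The length bound is the cleanest part (pure triangle inequality); the honest difficulty is bookkeeping the heuristic polynomial-time guarantee and the constant $N$ through the decomposition of Lemma \ref{lem:decom-U(n)}, which is presumably where the bulk of the deferred argument lives.
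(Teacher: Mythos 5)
Your treatment of \textbf{Growth} and \textbf{Navigation} matches the paper's proof. For Growth the paper also just cites Corollary~\ref{cor:exponentialgrowth}; for Navigation the paper likewise points to the inductive step inside Proposition~\ref{prop:gategeneration}(1), with the one additional remark (which you should include) that the Cartan invariant $a_g$ at each step is computed via the integer normal form algorithm on $GL_n(F_{\mf p})$ (split case) or on $GL_n(E_{\mf q}) \supset G_{\mf p}$ (non-split case), invoking uniqueness of the Cartan decomposition \eqref{eq:cartan}.

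Your \textbf{Approximation} step has a genuine gap. You claim that, given $B(g^{i,j}_k,\varepsilon)\cap S_\mf p^{[\ell]}\ne\emptyset$, the Ross--Selinger algorithm ``produces an element of $B(g^{i,j}_k,\varepsilon')\cap S_\mf p^{[\ell]}$,'' and then you multiply these together. But that is not what Ross--Selinger outputs. Theorem~\ref{thm:gold-def-RS-1} and Corollary~\ref{cor:gold-def-RS-2} produce an $(\varepsilon,m)$-approximation: a matrix $h\in M_n(\mc O_K[\sqrt{-d}])$ with $m^{-1/2}h\in U^{H'}(n)$ close to the target. Such $h$ lies in a larger ambient set of integral matrices (roughly $\varpi_\mf p^{-i}m^{-1/2}M_n(\mc O_E)\cap U^{H'}(n)$), but there is no reason for it to satisfy the congruence conditions cutting out $K'$, nor to lie in $\bar\Lambda_\mf p$, let alone in $S_\mf p^{[\ell]}$. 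The paper bridges exactly this gap: after conjugating $H$ to a diagonal $H'$ by Gram--Schmidt and locating the right $m,i$ via Proposition~\ref{prop:gategeneration}(2) and \S\ref{sssec findinggates}, it observes that $S_\mf p^{[\ell]}$ constitutes a \emph{fixed finite fraction} of this larger integral set, and therefore \emph{resampling} Ross--Selinger outputs will heuristically hit $S_\mf p^{[\ell A]}$ with high probability. That resampling argument is the heart of the ``heuristic'' qualifier in Definition~\ref{def:intro-GG}(4) and is entirely absent from your write-up; without it, the proof does not land in the gate set at all. (Your triangle-inequality and bi-invariance bookkeeping for the norm $\le \ell A$ and the distance $\le \varepsilon N$ is fine once the outputs are actually in $\bar\Lambda_\mf p$.)
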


\begin{rem}
If $S_{\mf p}$ posses the covering property (as well as the growth property), then by \cite[Corollary 3.2]{Parzanchevski2018SuperGoldenGates}, for any $\varepsilon >0$, there exists $\ell = O(\log(\frac{1}{\varepsilon}))$, such that for any $g\in U(n)$, then $B\left(g,\varepsilon\right)\cap S_{\mf p}^{[\ell]}\ne\emptyset$.
Note that a simple union bound gives $\ell = \Omega(\log(\frac{1}{\varepsilon}))$, for generic elements $g \in U(n)$.
Therefore, in claim (3) of Theorem \ref{thm:gold-def-gates}, assuming we have the covering property, in the generic case we can replace condition $B\left(g^{i,j}_k,\varepsilon\right)\cap S_{\mf p}^{[\ell]}\ne\emptyset$ for any $i,j,k$, with the condition $B\left(g,\varepsilon\right)\cap S_{\mf p}^{[\ell]}\ne\emptyset$, where the constant $A$ now is potentially bigger.
\end{rem}

\begin{proof} [Proof of Theorem \ref{thm:gold-def-gates}]

In order:

\noindent \uline{Growth}: 
This follows from Corollary \ref{cor:exponentialgrowth}.

\noindent \uline{Navigation}: 
Both termination and run time follow by the proof of \ref{prop:gategeneration}(1). By uniqueness of the Cartan decomposition \ref{eq:cartan}, we can compute $a_g$ at each step by the integer normal form algorithm on $G_{\mf p} = GL_n(F_{\mf p})$ when $v$ is split or on the bigger group $GL_n(E_{\mf q}) \supset G_{\mf p}$ when $w$ lies over $v$ and $v$ is non-split.  


\noindent \uline{Approximation}: 
This follows as a consequence (Corollary \ref{cor:gold-def-RS-2}) of the algorithm of Ross and Selinger \cite{ross2015optimal,Parzanchevski2018SuperGoldenGates} for approximating elements in $U(2)$ by matrices with integer coefficients. We sketch this in the subsequent part of this section. 

To apply those results, Proposition \ref{prop:gategeneration}(2) and the work of \S\ref{sssec findinggates} find $i$ so that $S_\mf p^{[k]} \subseteq U_n^{E/H}(F) \cap \varpi_\mf p^{-i} M_n(\mc O_E \otimes \mc O_{F_\mf p} )$ if and only if $k \leq \ell$. Furthermore, by Gram-Schmidt, we can find a diagonal $H'$ such that $U_n^{E,H}/F \cong U_n^{E,H'}/F$ so we can find $m \in \N$ with only ramified factors such that $U_n^{E/H}(\mc O_F) \subseteq m^{-1/2}M_n(\mc O_E) \cap U^{H'}(n)$. These two together provide the inputs $m,H'$ in Corollary \ref{cor:gold-def-RS-2} below. 

Since $S_\mf p^{[\ell]}$ is then a finite fraction of $\varpi_\mf p^{-i}m^{-1/2}M_n(\mc O_E) \cap U^{H'}(n)$, sampling enough outputs will heuristically generate one in $S_\mf p^{[\ell]}$ with high probability. 
\end{proof}

In the following Theorem we state a slight generalization of the Ross-Selinger heuristic algorithm (see \cite{Parzanchevski2018SuperGoldenGates} for a discussion on the heuristic nature of the algorithm).
We introduce the following notion of $(\varepsilon,m)$-approximation w.r.t. some ring of integers.

\begin{defn}
Let $K$ be a totally real number field such that its ring of integers $\mc O_K$ is Euclidean, let $d \in \N$, and let $H' \in M_2(\mc O_K[\sqrt{-d}])$ be a diagonal definite Hermitian matrix. 
For $\varepsilon>0$, $m \in \mathbb{N}$, and a unitary matrix $g \in \in U^{H'}(2)$, we say that $h\in M_{2}(\mc O_K[\sqrt{-d}])$ is an $(\varepsilon,m)$-approximation  of $g$ (w.r.t. $\mc O_K[\sqrt{-d}]$) if 
\[
\tilde{h} = m^{\frac{-1}{2}}\cdot h\in U^{H'}(2) \qquad \mbox{and} \qquad 1 - \frac{|\mbox{Trace}(g^{*}\cdot\tilde{h})|}{2} < \varepsilon^2.
\]    
\end{defn}

\begin{thmq}[{\cite{ross2015optimal}, \cite[Thm 2.6, \S2.3]{Parzanchevski2018SuperGoldenGates}}]\label{thm:gold-def-RS-1} 
Let $K$, $d$, and $H'$ be as above.
Then, there is a randomized, heuristic efficient algorithm, such that:
\begin{enumerate}
\item Given $\varepsilon>0$, $m \in \mathbb{N}$, and a diagonal unitary matrix $g \in \in U^{H'}(2)$, it finds $h\in M_{2}(\mc O_K[\sqrt{-d}])$ which is an $(\varepsilon,m)$-approximation of $g$, if such a matrix exists.
\item Given $\varepsilon>0$, $m \in \mathbb{N}$, and a unitary matrix $g \in \in U^{H'}(2)$, written as a product of three diagonal unitary matrices $g = g_1 g_2 g_3$, and assume each $g_i$ has an $(\varepsilon,m)$-approximation in $M_{2}(\mc O_K[\sqrt{-d}])$, then the algorithm finds $h\in M_{2}(\mc O_K[\sqrt{-d}])$ which is an $(3\varepsilon,m^3)$-approximation of $g$.
\end{enumerate}
\end{thmq}

\begin{proof}
For (1) the original statement of the Theorem 2.6 in \cite{Parzanchevski2018SuperGoldenGates} is with $d=1$ and $H = I$, but their arguments works just as well for any fixed $d$ by modifying the lattices studied and any fixed diagonal $H'$ by the discussion after (3.13) therein. 
(2) clearly follows from (1).
\end{proof}

Theorem \ref{thm:gold-def-RS-1}(2), can be generalized to higher dimensional unitary groups.
To do so we recall the following decomposition of any unitary matrix as a bounded product of level $2$ unitary matrices (by \cite[Section 4.5.1]{Nielsen2011QuantumComputationand}), each of which can be written as a product of three one-parameter diagonal unitary matrices (by \cite{ross2015optimal}).

\begin{lem} \cite[Section 4.5.1]{Nielsen2011QuantumComputationand} \label{lem:decom-U(n)}
Any unitary matrix $g\in U(n)$, can be written as a product of level $2$ unitary matrices $g = \prod_{1\leq i < j\leq n} g^{i,j}$, namely $(g^{i,j})_{k,\ell} = 0$ if $k,\ell \not\in \{i,j\}$.  
Note that any level $2$ unitary matrix can be identify as an element of $U(2)$, and write $g^{i,j} = g^{i,j}_1 g^{i,j}_2 g^{i,j}_3$, a product of three diagonal unitary matrices $g^{i,j}_k$, as in \cite{ross2015optimal}.
\end{lem}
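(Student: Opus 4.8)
The plan is to establish the lemma's two assertions separately; both are classical and we would recall them only for completeness. For the level-$2$ decomposition we would run the two-level (Givens) reduction of \cite{Nielsen2011QuantumComputationand}*{\S4.5.1}. Given $g\in U(n)$, one clears the first column below the diagonal by left-multiplying successively by two-level unitaries supported on the coordinate pairs $\{1,2\},\{1,3\},\dots,\{1,n\}$: at each such step the relevant length-$2$ sub-vector of the current first column is nonzero, so there is a $U(2)$ rotation on those two coordinates carrying it to a coordinate multiple of $e_1$, and the residual phase freedom even lets one normalize the surviving pivot entry to $1$. After these $n-1$ steps unitarity forces the whole first row and column to equal $e_1$, so we are reduced to a unitary on the remaining $n-1$ coordinates; recursing with the pairs $\{2,3\},\dots,\{2,n\}$, then $\{3,4\},\dots$, and stopping at the last $2\times 2$ block on $\{n-1,n\}$, the pairs used are exactly the $\binom{n}{2}$ pairs $\{i,j\}$ with $i<j$, each exactly once. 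Inverting the reduction yields $g=\prod_{1\le i<j\le n}g^{i,j}$ with $(g^{i,j})_{k,\ell}=\delta_{k,\ell}$ unless $\{k,\ell\}\subseteq\{i,j\}$, which is the claim; the only thing to monitor is the bookkeeping of the elimination order, which is exactly what \cite{Nielsen2011QuantumComputationand}*{\S4.5.1} carries out.

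For the second assertion we would identify each level-$2$ block $g^{i,j}$ with an element of $U(2)$ and apply the Euler-angle ($Z$--$X$--$Z$) decomposition: writing $g^{i,j}=e^{i\varphi}u$ with $u\in SU(2)$ one has $u=R_z(\alpha)\,R_x(\beta)\,R_z(\gamma)$, and since $R_x(\beta)=H\,R_z(\beta)\,H$ with $H$ the fixed Hadamard matrix, $g^{i,j}$ becomes a product $g^{i,j}_1\,g^{i,j}_2\,g^{i,j}_3$ of three unitaries each diagonal with respect to a fixed orthonormal basis (the standard basis for the two outer factors, the Hadamard basis for the middle one), after absorbing the scalar $e^{i\varphi}$ into $g^{i,j}_1$. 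This is exactly the input format of the Ross--Selinger algorithm \cite{ross2015optimal}, and it is what will feed Theorem~\ref{thm:gold-def-RS-1}(2).

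Neither step is a real obstacle --- both are routine linear algebra. In a sense the only point requiring care is organizational: producing an ordered product over \emph{all} $\binom{n}{2}$ pairs (rather than merely ``at most $\binom{n}{2}$ two-level unitaries'', as the estimate is usually phrased), which the choice of Givens order above handles, together with the harmless conventions that level-$2$ unitaries are read as elements of $U(2)$ and that the middle Euler factor is diagonal only relative to a fixed rotated basis.
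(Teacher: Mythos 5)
The paper does not prove this lemma --- it is stated as a citation to \cite{Nielsen2011QuantumComputationand}*{\S4.5.1} and \cite{ross2015optimal} with no proof environment --- so there is nothing in the paper to compare against; what you have done is reconstruct the argument behind the citation, and your reconstruction is correct. Your Givens-elimination pass is exactly the two-level decomposition of Nielsen--Chuang, and you rightly flag and handle the one bookkeeping point the paper's phrasing silently demands: the standard statement gives ``at most $\binom{n}{2}$'' two-level factors, whereas the lemma is written as an indexed product over all pairs $i<j$, which you realize by fixing the elimination order $(1,2),\dots,(1,n),(2,3),\dots,(n-1,n)$ and allowing identity factors when an entry is already zero. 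You also implicitly correct the paper's typo in the level-$2$ condition --- it should read $(g^{i,j})_{k,\ell}=\delta_{k,\ell}$ for $\{k,\ell\}\not\subseteq\{i,j\}$ rather than $(g^{i,j})_{k,\ell}=0$, as you wrote. The Euler $Z$--$X$--$Z$ decomposition with $R_x=HR_zH$ for the second assertion is likewise the standard reduction used in \cite{ross2015optimal}, with the understanding (which you make explicit) that ``diagonal'' for the middle factor means diagonal in the Hadamard-rotated basis.
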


We now state the straightforward higher dimensional generalization of Theorem \ref{thm:gold-def-RS-1}(2), which we use in the proof of Claim (3) in Theorem \ref{thm:gold-def-gates} (where $m$ is running over powers of a fixed prime). 

\begin{cor} \label{cor:gold-def-RS-2}
For any $n \in \N$, denote $N = \frac{3n(n-1)}{2}$, and let $K$, $d$, and $H'$ be as above.
Then, there is a heuristic efficient algorithm such that given $\varepsilon > 0$, $m \in \mathbb{N}$, and a unitary matrix, $g\in U(n)$, and assume each $g^{i,j}_k$ has an $(\varepsilon,m)$-approximation w.r.t. $\mc O_K[\sqrt{-d}]$, then the algorithm finds $h\in M_n(\mc O_K[\sqrt{-d}])$ which is an $(\varepsilon N, m^N)$-approximation of $g$.

\end{cor}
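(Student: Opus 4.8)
The plan is to run the two-dimensional algorithm on each block of the Gaussian-elimination decomposition and then recombine. First I would apply Lemma~\ref{lem:decom-U(n)} to write $g=\prod_{1\le i<j\le n}g^{i,j}$ as a product of the $\binom{n}{2}$ level-$2$ unitaries, and factor each $g^{i,j}=g^{i,j}_1g^{i,j}_2g^{i,j}_3$ into three one-parameter (conjugates of diagonal) unitaries as in \cite{ross2015optimal}. By hypothesis each of the $3\binom{n}{2}=N$ matrices $g^{i,j}_k$ has an $(\varepsilon,m)$-approximation over $\mc O_K[\sqrt{-d}]$, so Theorem~\ref{thm:gold-def-RS-1}(2), applied to the $2\times2$ unitary $g^{i,j}$ in the $\{i,j\}$-coordinates, returns $h^{i,j}\in M_2(\mc O_K[\sqrt{-d}])$ together with a unitary rescaling $\tilde h^{i,j}$ of it that is a $(3\varepsilon,m^3)$-approximation of $g^{i,j}$; moreover one may take $\tilde h^{i,j}$ close to $g^{i,j}$ as $U(2)$-matrices up to an overall scalar (which, being central, causes no trouble downstream), since the moving parts are diagonal unitaries approximated entrywise and the intervening Cliffords are exact. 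The algorithm then outputs $h:=\prod_{1\le i<j\le n}\widehat{h^{i,j}}$, where $\widehat{\,\cdot\,}$ embeds a level-$2$ block into $M_n(\mc O_K[\sqrt{-d}])$ (see the last paragraph); since each of the $\binom{n}{2}$ factors contributes a common denominator of norm $m^{3}$, the rescaled product $\tilde h$ of the $\widehat{h^{i,j}}$ has denominator of norm $m^{N}$, matching the $(m^{N})^{-1/2}$ in the definition because $N=3\binom{n}{2}$.

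For the error estimate I would work with the bi-invariant distance on $PU(n)$,
\[
\rho(A,B):=\Big(1-\tfrac{|\mathrm{Tr}(A^*B)|}{n}\Big)^{1/2}=\tfrac{1}{\sqrt{2n}}\,\min_{\phi}\|A-e^{i\phi}B\|_F ,
\]
which inherits from $\|UAV\|_F=\|A\|_F$ (for unitary $U,V$) the product inequality $\rho(A_1A_2,B_1B_2)\le\rho(A_1,B_1)+\rho(A_2,B_2)$ --- take the global phase to be the sum of the two optimal ones and insert one unitary factor into the triangle inequality for $\|\cdot\|_F$. The block bound $1-|\mathrm{Tr}((g^{i,j})^*\tilde h^{i,j})|/2<(3\varepsilon)^2$ reads $\rho_2(g^{i,j},\tilde h^{i,j})<3\varepsilon$, and since the embeddings agree with the identity off the $\{i,j\}$-coordinates and $\tilde h^{i,j}$ is $U(2)$-close to $g^{i,j}$, the same bound persists for $\rho_n(g^{i,j},\widehat{\tilde h^{i,j}})$. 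Summing over the $\binom{n}{2}$ blocks gives $\rho_n(g,\tilde h)<3\varepsilon\binom{n}{2}=N\varepsilon$, i.e.\ $1-|\mathrm{Tr}(g^*\tilde h)|/n<(N\varepsilon)^2$, so $h$ is an $(\varepsilon N,m^N)$-approximation of $g$; the normalization by $n$ rather than $2$ in the definition leaves a factor $2/n\le1$ of slack, which comfortably absorbs any constants lost by being less careful with the phases. The running time is $3\binom{n}{2}$ Ross--Selinger calls plus a $\binom{n}{2}$-fold matrix product, hence efficient --- exactly what Theorem~\ref{thm:gold-def-gates}(3) feeds on.

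The step requiring care --- bookkeeping rather than a genuine obstruction, which is why the generalization is \emph{straightforward} --- is the embedding $\widehat{\,\cdot\,}$ and the assertion that the assembled matrix truly lies in $M_n(\mc O_K[\sqrt{-d}])$ with the complementary coordinates contributing nothing to $\rho_n$. The point is that in every case that arises the denominator is a genuine ring element $\pi\in\mc O_K[\sqrt{-d}]$ with $\pi\bar\pi=m$ --- a generator of the relevant prime(-power) ideal over which Ross--Selinger works, which exists because the mere existence of an $(\varepsilon,m)$-approximation forces $m=|\det h^{i,j}_k|^2$ to be a norm from $\mc O_K[\sqrt{-d}]$. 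Writing the combined block as $h^{i,j}=\pi^{3}\tilde h^{i,j}$, I would set $\widehat{h^{i,j}}$ equal to $h^{i,j}$ on the $\{i,j\}$-coordinates and to $\pi^{3}I_{n-2}$ off them; dividing uniformly by $\pi^{3}$ then returns $\tilde h^{i,j}$ on the block and the identity off it, so no stray relative phase enters, and $\prod_{i<j}\widehat{h^{i,j}}$ is $\pi^{-N}$ --- equivalently $m^{-N/2}$ up to a unit-modulus scalar, invisible in the defining trace condition --- times an element of $M_n(\mc O_K[\sqrt{-d}])$. The residual verifications (that Theorem~\ref{thm:gold-def-RS-1}(2) indeed produces an element of $M_2(\mc O_K[\sqrt{-d}])$ with denominator $\pi^{3}$, and that the abstract $(m')^{-1/2}$-normalization in the definition agrees with $\pi^{-k'}$ up to a scalar) are purely mechanical.
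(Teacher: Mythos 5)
Your proof follows essentially the same route as the paper's: decompose $g$ via Lemma~\ref{lem:decom-U(n)} into level-$2$ unitaries and their diagonal sub-factors, approximate each by the Ross--Selinger algorithm, multiply, and close with the triangle inequality for the bi-invariant metric $d(g,h)=\sqrt{1-|\mathrm{Tr}(g^*h)|/n}$. The one structural difference is that you invoke Theorem~\ref{thm:gold-def-RS-1}(2) to handle each $2\times 2$ block $g^{i,j}$ wholesale (a $(3\varepsilon,m^3)$-approximation per block), whereas the paper applies the theorem directly at the level of the individual factors $g^{i,j}_k$, producing $N$ matrices $h^{i,j}_k$ and multiplying all of them; these are the same computation accounted for at different granularity, and the error and denominator tallies agree.

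Where your write-up genuinely adds value is the attention you pay to the embedding of a $2\times 2$ block into $M_n(\mc O_K[\sqrt{-d}])$: the paper's proof simply asserts $h^{i,j}_k\in M_n(\mc O_K[\sqrt{-d}])$ with $m^{-1/2}h^{i,j}_k$ unitary, which implicitly requires a scalar of norm $m$ on the off-block that literally equals $m^{1/2}$ --- not a ring element in general. Your observation that one should instead place $\pi$ with $\pi\bar\pi=m$ on the complementary diagonal (which exists precisely because an $(\varepsilon,m)$-approximation exists, forcing $m$ to be a norm from the ring, and is indeed how the Ross--Selinger outputs are constructed), and that the resulting unit-modulus discrepancy $m^{-1/2}\pi$ is invisible in the $|\mathrm{Tr}|$-based approximation condition, patches this cleanly. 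This is a cosmetic gap in the paper's proof rather than a substantive one, but naming it and supplying the fix is the right instinct; the remainder of your argument (the bi-invariance giving $d(AB,CD)\leq d(A,C)+d(B,D)$, the bookkeeping of $3\binom{n}{2}=N$ calls, and the denominator $m^N$) matches the paper's reasoning.
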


\begin{proof}
By Theorem \ref{thm:gold-def-RS-1}, for each $1\leq i<j\leq n$ and $k=1,2,3$, there exists $h^{i,j}_k \in M_n(\mc O_K[\sqrt{-d}])$ such that $\widetilde{h^{i,j}_k} = m^{-1/2} h^{i,j}_k \in U^{H'}(n)$ and $1 - |\mbox{Trace}((g^{i,j}_k)^{*}\cdot \widetilde{h^{i,j}_k})|/n < \varepsilon^2$.
Denote $h = \prod_{i,j,k} h^{i,j}_k \in M_n(\mc O_K[\sqrt{-d}])$, then $\tilde{h} = m^{-N/2} h \in U^{H'}(n)$ and by the triangle inequality for the bi-invariant metric $ d\,:\,U(n)\times U(n)\rightarrow\mathbb{R}_{\geq0}$, defined by $d(g,h)= \sqrt{1-|\mbox{Trace}(g^{*}\cdot h)|/n}$, we get that $d(g,\tilde{h}) \leq \sum_{i,j,k} d(g^{i,j}_k,\widetilde{h^{i,j}_k}) \leq N \varepsilon$, hence $h$ is an $(N\varepsilon,m^N)$-approximation of $g$. 
%
\end{proof}

\subsection{Explicit Constructions of Golden Adelic Groups}\label{subsec:gold-exm}
Let us present several examples of golden and super-golden adelic groups. Table 8.1 in \cite{Kir16} gives a very comprehensive list of unitary groups over $\Q$ with class number one though in a form that takes some work to translate into what we need. 

\subsubsection{Previous Constructions}

\begin{example} \label{exa:gold-exm-2} 
In dimension $n=2$, several constructions were given in \cite{Parzanchevski2018SuperGoldenGates,sarnak2015letter}.
Let us present just one example of a golden adelic group which is super-golden at $p=3$:
\[
G :=U_{2}^{\mathbb{Q}\left[\sqrt{-1}\right],I}/\mathbb{Q},\quad K':=\left\{ g\in G\left(\widehat{\mathbb{Z}}\right)\mid g\equiv I\mod 2\right\} .
\]
\end{example}

\begin{example} \label{exa:gold-exm-3} 
In dimension $n=3$, several constructions were given in \cite{EP22,BEMP23}.
Here are two such examples of golden adelic groups:
\[
G:=U_{3}^{\mathbb{Q}\left[\sqrt{-1}\right],I}/\mathbb{Q},\quad K':=\left\{ g\in G\left(\widehat{\mathbb{Z}}\right)\mid\forall i,\;g_{i,i}\equiv1\mod 2+2i\right\},
\]
\[
G:=U_{3}^{\mathbb{Q}\left[\sqrt{-3}\right],I}/\mathbb{Q},\quad K':=\left\{ g\in G\left(\widehat{\mathbb{Z}}\right)\mid\forall i,\;g_{i,i}\equiv1\mod 3\right\}.
\]
Furthermore, as a consequence of the work of Mumford \cite{mumford1979algebraic} (see \cite{BEMP23} for the details), one get that for the $3\times3$ Hermitian positive definite matrix
\[
H:=\left(\begin{array}{ccc}
3 & \lambda & \lambda\\
\bar{\lambda} & 3 & \lambda\\
\bar{\lambda} & \bar{\lambda} & 3
\end{array}\right)\quad\mbox{where}\quad\lambda:=\frac{1+\sqrt{-7}}{2},
\]
the following is a golden adelic group which is super-golden at $p=2$:
\[
G:=U_{3}^{\mathbb{Q}\left[\sqrt{-7}\right],H}/\mathbb{Q},\quad K':=\left\{ g\in G\left(\widehat{\mathbb{Z}}\right)\mid\forall i>j,\;g_{i,j}\equiv0\mod 2\right\}.
\]
\end{example}

\subsubsection{New Constructions: $\Q(\sqrt{-3})$} \label{sec:goldenexamples3}
Below we present our constructions of golden and super-golden groups. 
We start with the case of $E/F = \Q(\sqrt{-3})/\Q$. 

\begin{rem} \label{rem:E8}
The motivation for the following construction come from reading \cite{wilson2012eightfold} and \cite{bayer1999lattices}, where the authors presents elegant constructions of the $E_8$ lattice in $4$-dimensional spaces (field extensions in \cite{bayer1999lattices}) over the Eisenstein quadratic imaginary field $E = \Q(\sqrt{-3})$. 

In fact, our examples over other quadratic extensions also appear to come from realizing the integers in the extension as the $E_8$ lattice, suggesting a deeper, abstract reason why this is a fruitful place to look. 
\end{rem}


\begin{prop} \label{prop:gold-exm-3} 
For the $4\times4$ Hermitian positive definite matrix,
\[
H_3 :=2\cdot\left(\begin{array}{cc}
I & A\\
-A & I
\end{array}\right)\quad\mbox{where}\quad A:=\frac{\sqrt{-3}}{3}\cdot\left(\begin{array}{cc}
1 & 1\\
1 & -1
\end{array}\right),
\]
define 
\[
G:=U_{4}^{\mathbb{Q}\left[\sqrt{-3}\right],H_3}/\mathbb{Q}.
\]
Then the following are essentially golden adelic groups:
\[
K_1^{(2)}:=\{ g\in G(\widehat{\mathbb{Z}})\mid g\equiv I\mod 2\}, \qquad K_1^{(\sqrt 3)} :=\{ g\in G(\widehat{\mathbb{Z}})\mid g\equiv I\mod{\sqrt 3}\}
\]
\end{prop}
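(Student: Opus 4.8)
The goal is to verify the two defining properties of an \emph{essentially golden} adelic group (Definition \ref{def:gold-adelic}) for each of the two congruence subgroups $K' = K_1^{(2)}$ and $K' = K_1^{(\sqrt 3)}$: first, the class-number-one property $G(\mathbb{A}) = G(F)\cdot G_\infty K'$, and second, the weaker injectivity $G(F)\cap K' \subseteq Z_G(F)$. The first step is to reduce the class-number statement for $K'$ to the class-number statement for the full hyperspecial/special level $G(\widehat{\mathcal O})$. Since each $K'$ here is a principal congruence subgroup of $G(\widehat{\mathcal O})$, it suffices to show $G = U_4^{\mathbb{Q}[\sqrt{-3}],H_3}$ has class number one (i.e.\ $c(G)=1$) together with a local surjectivity check at the two bad primes $2$ and $3$: that every class in $G(F)\backslash G(\mathbb A)/G(\widehat{\mathcal O})$ has a representative whose components at $2$ and $3$ can be adjusted, by an element of $G(F)$ fixing the other components, into $K'_2$ and $K'_{\sqrt 3}$ respectively. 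Because $c(G)=1$, the only class is the trivial one, and this reduces to checking that the arithmetic group $\Gamma = G(\mathcal O)$ surjects onto $G(\mathcal O_\ell)/K'_\ell$ for $\ell = 2, 3$ — which is a concrete finite computation with $4\times4$ matrices over small finite quotients of $\mathbb{Z}[\frac{1+\sqrt{-3}}{2}]$.

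For the class-number-one claim itself I would invoke Proposition \ref{prop:second-criterion-class1} rather than the mass formula of Lemma \ref{lem:pre-class}: pick a split prime $p$ (e.g.\ $p \equiv 1 \bmod 3$, so $p$ splits in $\mathbb{Q}[\sqrt{-3}]$ and $G(F_p) \cong \mathrm{GL}_4(\mathbb{Q}_p)$) at which $G(\mathcal O_p)$ is hyperspecial — which holds since $p \notin \mathrm{Ram}(E) = \{3\}$ and $H_3$ has a matrix representation that is unimodular over $\mathcal O_p$ (note $\det H_3$ and the entries are $p$-units). Then one must verify the single numerical identity
\[
\frac{\bigl|\{g \in M_4(\mathcal O_E) : g^* H_3 g = p\, H_3\}\bigr|}{|G(\mathcal O)|} = \sum_{i=0}^4 \binom{4}{i}_{\!p}.
\]
This amounts to: (a) computing the finite group $G(\mathcal O) = U_4^{\mathbb{Q}[\sqrt{-3}],H_3}(\mathbb{Z})$ explicitly — I expect this is a recognizable finite group (the $E_8$-lattice connection in Remark \ref{rem:E8} strongly suggests it is related to a complex reflection group or a unitary group over $\mathbb{F}_3$, plausibly of order tied to $|\mathrm{GU}_4(\mathbb{F}_3)|$ or a quotient thereof); and (b) counting the ``Hecke sphere'' $\{g : g^*H_3 g = pH_3\}$ over $\mathcal O_E$, which for the smallest admissible $p$ (say $p=7$) is a bounded enumeration once one has a good basis for the lattice. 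Alternatively, I could run Lemma \ref{lem:pre-class} directly: since $\mathrm{disc}(E) = -3$, $\mathrm{Ram}(E) = \{3\}$, and one checks whether $\mathrm{Ram}(H_3) = \emptyset$ or $\{3\}$, then evaluate $L(G) = 2^{-4+1}\zeta_\mathbb{Q}(-1)\zeta_\mathbb{Q}(-3)L(0,\chi)L(-2,\chi)$ with $\chi = \chi_{\mathbb{Q}[\sqrt{-3}]/\mathbb{Q}}$ the quadratic character of conductor $3$, multiply by the appropriate $\lambda_3$ from Definition \ref{def:pre-mass}, and check this equals $|G(\mathcal O)|^{-1}$.

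For the injectivity property, I must show $G(F)\cap K' \subseteq Z_G(F) = \{\pm I\}$ (the scalars in $U_4$ that are rational, i.e.\ $\mu_2$ since $\sqrt{-3}\notin\mathbb{Q}$ forces the scalar to square to $1$). An element $\gamma \in G(\mathbb{Q})$ with $\gamma \equiv I \bmod 2$ (resp.\ $\bmod\sqrt 3$) lies in the finite group $G(\mathcal O)$ once one knows $\gamma$ has entries in $\mathcal O_E$ — which follows because $G(\mathcal O[1/\ell])\cap K'$, being discrete in the compact $G_\infty$ with integral congruence conditions forcing integrality, is contained in $G(\mathcal O)$. So this reduces to: the only elements of the finite group $G(\mathcal O)$ congruent to $I$ modulo $2$ (resp.\ modulo $\sqrt 3$) are $\pm I$ — again a finite check once $G(\mathcal O)$ is in hand, and morally the statement that the reduction maps $G(\mathcal O) \to G(\mathcal O/2)$ and $G(\mathcal O)\to G(\mathcal O/\sqrt 3)$ have kernel contained in the center. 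The word ``essentially'' in the conclusion (as opposed to plain ``golden'') reflects exactly that $-I$ survives in the kernel.

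\textbf{Main obstacle.} The genuine work is the class-number-one verification — concretely identifying the finite group $G(\mathcal O)$ and evaluating either side of the Proposition \ref{prop:second-criterion-class1} identity (or the mass formula). Everything else is bookkeeping modulo that computation: the reduction of the congruence-level statement to the full-level statement is formal (principal congruence subgroup plus $c(G) = 1$), and the two surjectivity-at-bad-primes checks and the injectivity check are finite group computations that are immediate once $G(\mathcal O)$ and its reductions mod $2$ and mod $\sqrt 3$ are known. I would expect the paper to either cite the relevant entry of the classification in \cite{Kir16} or \cite{mohammadi2012discrete} for $c(G) = 1$, or to record the explicit finite group and the Hecke-sphere count; the $E_8$ heuristic of Remark \ref{rem:E8} is what makes one confident the numerology works out, and is likely the organizing principle behind the choice of $H_3$.
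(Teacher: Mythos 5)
Your overall strategy matches the paper's: verify $c(G)=1$, then check for each $K'$ that the reduction map on $G(\mathcal O)$ at the bad prime is surjective onto $G(\widehat{\mathcal O})/K'$ with kernel in the center. Your proposal to use Proposition \ref{prop:second-criterion-class1} (transitive action on a split-prime tree) instead of the mass formula Lemma \ref{lem:pre-class} is a legitimate alternative; the paper actually uses the mass formula, computing $R(G) = 155520^{-1}$ (here $\Ram(H_3)=\emptyset$, so $\lambda(G)=1$) and checking $|G(\Z)|=155520$ via the $E_8$-lattice interpretation (240 norm-3 vectors).

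There is, however, a concrete error in your injectivity argument. You claim $Z_G(F)=\{\pm I\}$ ``since $\sqrt{-3}\notin\Q$ forces the scalar to square to $1$.'' This is wrong: $Z_G(F)=U_1^E(\Q)=\{x\in E^\times : x\bar x=1\}$, which for $E=\Q(\sqrt{-3})$ is infinite (rational points on $a^2+3b^2=1$), and more relevantly the torsion in $G(\Z)$ is $\mu_6=\langle\zeta_6\rangle$, not $\mu_2$. As a consequence your prediction that the kernel of reduction mod $\sqrt 3$ is $\{\pm I\}$ is false: $-1\not\equiv 1\pmod{\sqrt 3}$ (since $\sqrt{-3}\nmid 2$), whereas $\zeta_3-1=\sqrt{-3}\,\zeta_6$, so $\zeta_3\equiv 1\pmod{\sqrt 3}$. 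The paper indeed finds $G(\Z)\cap K_1^{(\sqrt 3)}=\{1,\zeta_3,\zeta_3^2\}=\mu_3$, of order $3$, not $2$. The conclusion (essentially golden) still holds because $\mu_3\subseteq Z_G(F)$, but your index bookkeeping would be off by a factor if you tried to carry it out: the paper uses $[G(\widehat\Z):K_1^{(\sqrt 3)}]=|\Sp_4(\F_3)|=51840=|G(\Z)|/3$, not $|G(\Z)|/2$.

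You also under-address the ramified prime $3$: your reduction to ``finite group computations mod $\sqrt 3$'' glosses over the fact that $G(\Z_3)$ is merely special, not hyperspecial, so the special fiber of the relevant integral model has reductive quotient $\Sp_4(\F_3)$ (rather than a naive $U_4(\F_3)$ picture), and one needs Bruhat-Tits input to identify $G(\widehat\Z)/K_1^{(\sqrt 3)}$ and its order. The mod-$2$ case, where $2$ is inert and the level is hyperspecial, behaves as you expect ($G(\Z/2)\cong U_4(\F_2)$ of order $77760=|G(\Z)|/2$, kernel $\{\pm 1\}$); the mod-$\sqrt 3$ case does not.
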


\begin{proof}
We note that $H_3$ was chosen so that $U^{E,H_3}$ is quasisplit at all places and $U^{E,H_3}(\Z_p)$ is special at $p = 3$ and hyperspecial at all other $p$, e.g, as in Remark \ref{rem:hermitianformconstruction}.  In particular $\Ram(H) = \emptyset$ so all the $\lambda_\ell$ in \ref{def:pre-mass} are $1$. Plugging in the $L$-values:
\[
R(G)=155520^{-1}.
\]
We may also compute that $|G\left(\mathbb{Z}\right)| = 155520$. One method that runs in a few minutes on a personal laptop is to first minimally scale $H$ to have integral entries, note that the diagonal entries are then all $3$, find all\footnote{there are 240 of these corresponding to the roots of an interpretation of $\mc O_E^4$ as the $E_8$ lattice.} $v \in \mc O_E^4$ of norm $3$, and finally build up all possible $\mc O_E$-matrices preserving $H$ through choosing the rows one-by-one from this set of $v$. Therefore, by lemma \ref{lem:pre-class}, $G$ has class number one. 

Since $K_1^{(2)}$ is the kernel of the reduction map $r_2 : G(\wh \Z) \to G(\Z/2)$, it therefore suffices to show that $r_2|_{G(\Z)}$ is surjective with kernel contained in $Z_G(\wh \Z)$. The prime $2$ is unramified in $G$, hence $G\left(\mathbb{Z}/2\mathbb{Z}\right)\cong U_{4}\left(\mathbb{F}_{2}\right)$ and by a standard formula, $\left|U_{4}\left(\mathbb{F}_{2}\right)\right|= 77760 = |G(\Z)|/2$. Another computer check finally shows that $G\left(\mathbb{Z}\right)\cap\ker r_2 = \{ \pm 1 \}$.

For $K_1^{(\sqrt 3)}$, since $G(\wh \Z)$ is special at $3$, $G(\Z/3\Z) \cong \Sp_4(\F_3[x]/x^2)$ with reductive quotient $\Sp_4(\F_3)$. Furthermore, this composite map $G(\wh \Z) \to \Sp_4(\F_3)$ can be described exactly by reducing matrix entries mod $\sqrt{3}$. Therefore, by a standard formula, $[K : K_1^{(\sqrt 3)}] = |\Sp_4(\F_3)| = 51840 = |G(\wh Z)|/3$. A computer check then gives that $G(\Z) \cap K_1^{(\sqrt 3)} = \{1, \zeta_3, \zeta_3^2\}$, which has $3$ elements.
\end{proof}

This example gives gate sets $S_p$ for all $p$ with growth rates:
\[
|S_p^{[\ell]}| \asymp \begin{cases} p^{4 \ell} & p \equiv 1 \pmod 3 \\ p^{8\ell} & p \equiv 2 \pmod 3, \\ 81^\ell & p = 3 \end{cases}.
\]
It also can produce super-golden examples:

\begin{prop}\label{prop:supergold32}
Consider $G = U_{4}^{\mathbb{Q}\left[\sqrt{-3}\right],H_3}/\mathbb{Q}$ as in Proposition \ref{prop:gold-exm-3}. Choose $X_2 \in \GL_4(\Z_2[\sqrt{-3}])$ such that
\[
X_2^*H_3X_2 \equiv \begin{pmatrix}
 & & & 1 \\
 & & 1 & \\
 & 1 & & \\
 1 & & & 
\end{pmatrix}
\pmod 4, 
\]
for example,
\[
X_2 := 
\begin{pmatrix}
  -\frac{1}{2}-\frac{i \sqrt{3}}{2} & \frac{1}{2}+\frac{i}{2 \sqrt{3}} &
   \frac{1}{2}-\frac{i}{2 \sqrt{3}} & \frac{2 i}{\sqrt{3}} \\
 -\frac{1}{2}-\frac{i}{2 \sqrt{3}} & -\frac{1}{2}-\frac{i \sqrt{3}}{2} & \frac{2
   i}{\sqrt{3}} & -\frac{1}{2}+\frac{i}{2 \sqrt{3}} \\
 0 & \frac{1}{2}-\frac{i \sqrt{3}}{2} & -\frac{1}{2}-\frac{i \sqrt{3}}{2} & 0 \\
 \frac{1}{2}-\frac{i \sqrt{3}}{2} & 0 & 0 & -\frac{1}{2}-\frac{i \sqrt{3}}{2} 
\end{pmatrix}.
\]
Define
\[
K^{(2)}_s := \lf\{g \in G(\wh \Z) \, \md| \, X_2^{-1}gX_2 \in \begin{pmatrix}
 \mc O_{E_2} &  \mc O_{E_2} &  \mc O_{E_2} &  \mc O_{E_2} \\
 2\mc O_{E_2} &  \mc O_{E_2} &  \mc O_{E_2} &  \mc O_{E_2} \\
 2\mc O_{E_2} &  \mc O_{E_2} &  \mc O_{E_2} &  \mc O_{E_2} \\
 4\mc O_{E_2} &  2\mc O_{E_2} &  2\mc O_{E_2} &  \mc O_{E_2}
\end{pmatrix} \ri\}.
\]
Then $K^{(2)}_s$ is almost $\tau$-super golden for (non-standard traversable) $\tau$ as in Example \ref{ex:traversable}(3). 
\end{prop}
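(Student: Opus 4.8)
The plan is to verify the two clauses of being \emph{almost $\tau$-super-golden}: that $G(\A)=G(F)\,G_\infty\,K^{(2)}_s$, and that $(K^{(2)}_s)_2$ is the stabilizer of the (non-standard traversable) path $\tau$ of Example~\ref{ex:traversable}(3). Since $x^{2}+x+1$ is irreducible over $\F_2$, the prime $2$ is inert in $E=\Q(\sqrt{-3})$, so $E_2/\Q_2$ is the unramified quadratic extension and $G_2=U^{E_2,H_3}(\Q_2)$ is the quasisplit unramified unitary group in $4$ variables, with building $\mc B$ the $\widetilde C_2$-building of Example~\ref{exa:pre-B(PU_n(F))}; by the choice of $H_3$ in Proposition~\ref{prop:gold-exm-3}, $G(\Z_2)$ is the hyperspecial stabilizer of a vertex $v_0$.

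\textbf{Local step.} Put $J':=X_2^{*}H_3X_2$, so $J'\equiv J\pmod 4$ by the choice of $X_2$. Since Hermitian $\mc O_{E_2}$-forms congruent modulo a sufficiently high power of $2$ are isometric (cf.\ the classification of \cite{Jacobowitz1962Hermitianformsover}), conjugation by $X_2$ identifies $G_2$ with $U^{E_2,J'}(\Q_2)\cong U^{E_2,J}(\Q_2)$ acting on lattices in $E_2^{4}$, and carries $(K^{(2)}_s)_2$ to the subgroup cut out by the displayed block pattern. That pattern has the shape of a parahoric subgroup (block upper triangular with prescribed powers of $2$ below the diagonal), and I would match it, using the lattice-chain description of Example~\ref{exa:pre-B(PU_n(F))}, with the stabilizer of the self-dual lattice chain representing the two-edge geodesic joining the hyperspecial vertex $v_0$ to the hyperspecial vertex $g.v_0$ with $\bar a_g=(1,0,0,-1)$ through the non-special midpoint --- i.e.\ with $\Stab_{G_2}(\tau)$ for $\tau$ the path of Example~\ref{ex:traversable}(3) and $\Sigma_\tau=\Sigma_0=\{(1,0,0,-1)\}$; the mod-$4$ congruence is exactly what is needed to see that the two extreme lattices of the chain are self-dual, so that their classes are the hyperspecial endpoints. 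This identification, though elementary, is the bookkeeping-heavy heart of the proof.

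\textbf{Global step.} By Proposition~\ref{prop:gold-exm-3}, $G$ has class number one, so by Lemma~\ref{lem:pre-class-p} the $\{2\}$-arithmetic group $\Gamma_2=G(\Z[1/2])$ acts transitively on $G_2.v_0$, with $\Stab_{\Gamma_2}(v_0)=G(\Z)$. As $K^{(2)}_s$ agrees with $G(\widehat{\mc O})$ away from $2$, the fibration of $G(F)\backslash G(\A)/K^{(2)}_s$ over the one-point set $G(F)\backslash G(\A)/G(\widehat{\mc O})$ has fiber $G(\Z)\backslash G(\Z_2)/(K^{(2)}_s)_2=G(\Z)\backslash\{\text{type-}\tau\text{ paths based at }v_0\}$. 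Hence $G(\A)=G(F)G_\infty K^{(2)}_s$ is equivalent to $G(\Z)$ acting transitively on the type-$\tau$ paths through $v_0$, equivalently to $|G(\Z):\Stab_{G(\Z)}(\tau)|$ equalling their number, which by Proposition~\ref{prop:Kdoublecosetsize} is of size $\asymp 2^{\langle(1,0,0,-1),\,2\rho_{G}\rangle}=2^{6}$. I would establish this transitivity by the same finite enumeration used in Proposition~\ref{prop:gold-exm-3} to compute $|G(\Z)|=155520$ (building all $\mc O_E$-matrices preserving $H_3$ from short vectors, then checking which stabilize $\tau$). Combined with the local step and Lemma~\ref{lem:gold-def-simp-tran}, this gives $(K^{(2)}_s)_2=\Stab_{G_2}(\tau)$ together with $G(\A)=G(F)G_\infty K^{(2)}_s$, i.e.\ $K^{(2)}_s$ is $\tau$-super-golden at $2$ except possibly for clause~(2).

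\textbf{Conclusion and the hard part.} Finally $G(F)\cap K^{(2)}_s=\Stab_{G(\Z)}(\tau)=:C_\mf p$, and this finite group properly contains the scalar subgroup $Z_G(\Z)=\mu_6$ (again visible from the enumeration of the local step), so clause~(2) of Definition~\ref{def:gold-adelic} fails even in its weak ``essentially'' form; hence $K^{(2)}_s$ is \emph{almost} $\tau$-super-golden, as claimed. The principal obstacle is the local step: correctly matching the unwieldy conjugate $X_2^{-1}(K^{(2)}_s)_2X_2$ to the parahoric of $\tau$ rather than of a neighbouring facet --- in particular checking the self-duality of the extreme lattices and the jumps of the chain, which is what forces $\Sigma_\tau$ to be $\{(1,0,0,-1)\}$ and not all of $\Sigma$.
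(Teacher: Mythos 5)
Your proposal follows essentially the same route as the paper's proof: identify $(K^{(2)}_s)_2$ as the stabilizer of the two-step lattice chain $\mc O_{E_2}^4 \supset \mc O_{E_2}^3 \oplus 2\mc O_{E_2} \supset \mc O_{E_2} \oplus 2\mc O_{E_2}^2 \oplus 4\mc O_{E_2}$ via conjugation by $X_2$, then reduce the class-number condition (using that $G(\wh\Z)$ has class number one by Proposition~\ref{prop:gold-exm-3}) to checking that $[G(\Z) : G(\Z) \cap K^{(2)}_s]$ equals the local index $[K_2 : (K^{(2)}_s)_2]$, verified by finite enumeration. The logic of your "global step" (the fibration of $G(F)\backslash G(\A)/K^{(2)}_s$ over a point with fiber $G(\Z)\backslash K_2/(K^{(2)}_s)_2$) is exactly the implicit content of the paper's line "since $K$ is almost golden, $K^{(2)}_s$ is almost golden if and only if $|G(\Z)\cap K^{(2)}_s|=1728$."

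One imprecision worth flagging: you estimate the number of type-$\tau$ paths through $v_0$ by Proposition~\ref{prop:Kdoublecosetsize}, citing $\asymp 2^{\langle(1,0,0,-1),2\rho_G\rangle}=2^6$. That proposition counts \emph{vertices} $v$ with $v - v_0 = \lambda$ (i.e., double cosets $K n_\lambda K/K$), not paths $\tau$; these differ here since distinct paths can share an endpoint, and it is only an asymptotic in $q$, not an exact count. The paper gets the exact index $[K_2:(K^{(2)}_s)_2]=90$ directly from the building combinatorics ($45$ choices of first edge, $2$ of second), and then needs $|G(\Z)\cap K^{(2)}_s| = 155520/90 = 1728$ exactly. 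Since you ultimately defer to the same finite enumeration as in Proposition~\ref{prop:gold-exm-3}, your argument still lands correctly, but the invocation of Proposition~\ref{prop:Kdoublecosetsize} is the wrong tool for an exact index and slightly conflates vertices with paths.
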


\begin{proof}
Note that $X_2^{-1} G(\Q_2) X_2$ then has a chamber corresponding to the lattice  $\mc O_{E_2}^{4-i} \oplus 2 \mc O_{E_2}^i$ for $0 \leq i \leq 3$. The non-special vertex corresponds to $i=1,3$ which are dual to each other under $X_2^* H_3 X_2$. Therefore lattices $\mc O_{E_2}^4$ and $\mc O_{E_2} \oplus 2\mc O_E^2 \oplus 4\mc O_2$ are hyperspecial connected through a two-step path through a non-special vertex and $K^{(2),s}$ exactly stabilizes both. 

We can also compute that $[K : K^{(2)}_s] = [K_2 : K^{(2)}_s] = 90 = G(\Z)/1728$ (this is counting the choices of $\tau$ starting at the hyperspecial vertex $K$: there are $45$ choices for the first edge and $2$ per each of these for the second). Therefore, since $K$ is almost golden, $K^{(2)}_s$ is almost golden if and only if $|G(\Z) \cap K^{(2)}_s| = 1728$, which holds. 
\end{proof}

\begin{prop}\label{prop:supergold33}
Consider $G = U_{4}^{\mathbb{Q}\left[\sqrt{-3}\right],H_3}/\mathbb{Q}$ as in Proposition \ref{prop:gold-exm-3}. Choose $X_3 \in \GL_4(\Z_3[\sqrt{-3}])$ such that for some $C \in (\Z/3)[\sqrt{-3}]$,
\[
X_3^*H_3X_3 \equiv C\begin{pmatrix}
 & & & \sqrt{-3} \\
 & & \sqrt{-3} & \\
 & -\sqrt{-3} & & \\
 -\sqrt{-3} & & & 
\end{pmatrix}
\pmod 3,
\]
for example, 
\[
X_3 := 
\begin{pmatrix}
 4 & -3 & i \sqrt{3} & -2 i \sqrt{3} \\
 3 & 4 & -2 i \sqrt{3} & -i \sqrt{3} \\
 -4 i \sqrt{3} & -i \sqrt{3} & -1 & -4 \\
 -i \sqrt{3} & 4 i \sqrt{3} & 4 & -1   
\end{pmatrix}.
\]
Define
\[
K^{(3)}_s := \lf\{g \in G(\wh \Z) \, \md| \, X_3^{-1}gX_3 \in \begin{pmatrix}
 \mc O_{E_{\sqrt{-3}}} &  \mc O_{E_{\sqrt{-3}}} &  \mc O_{E_{\sqrt{-3}}} &  \mc O_{E_{\sqrt{-3}}} \\
 \mc O_{E_{\sqrt{-3}}} &  \mc O_{E_{\sqrt{-3}}} &  \mc O_{E_{\sqrt{-3}}} &  \mc O_{E_{\sqrt{-3}}} \\
 3 \mc O_{E_{\sqrt{-3}}} &  3 \mc O_{E_{\sqrt{-3}}} &  \mc O_{E_{\sqrt{-3}}} &  \mc O_{E_{\sqrt{-3}}} \\
 3 \mc O_{E_{\sqrt{-3}}} &  3\mc O_{E_{\sqrt{-3}}} &  \mc O_{E_{\sqrt{-3}}} &  \mc O_{E_{\sqrt{-3}}}
\end{pmatrix} \ri\}.
\]
Then $K^{(3)}_s$ is almost $\tau$-super golden for (decimated, non-standard traversable) $\tau$ as in Example \ref{ex:traversable}(5). 
\end{prop}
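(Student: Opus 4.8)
The plan is to follow the proof of Proposition~\ref{prop:supergold32} verbatim in structure, with the ramified prime $3$ playing the role of the unramified prime $2$. Three things need to be checked: (i) that $K^{(3)}_{s,\mf p}$ at $\mf p=(\sqrt{-3})$ is the stabilizer, in the enlarged building $\wtd{\mc B}(G(\Q_3))$, of a two-edge path $\tau$ of the type in Example~\ref{ex:traversable}(5); (ii) that this $\tau$ is decimated non-standard traversable with $\Sigma^\scn_\tau=\{(1,0)\}$ — but this is precisely the content of Examples~\ref{ex:cartanramunitarydecimated} and \ref{ex:traversable}(5), so it suffices to verify that their hypotheses apply; and (iii) condition~(1) of Definition~\ref{def:gold-adelic}, i.e.\ that $K^{(3)}_s$ is almost golden, the ``almost'' being genuine since the computation below gives $|G(\Z)\cap K^{(3)}_s|>1$.

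For step (i), I would first note that $3$ ramifies in $E=\Q(\sqrt{-3})$ with uniformizer $\sqrt{-3}$, that $U_4^{E,H_3}$ is quasisplit at $3$ (arranged in Proposition~\ref{prop:gold-exm-3}), and that over $\Z_3$ the form $H_3$ is $\GL_4(\mc O_{E_{\mf p}})$-equivalent to the alternating antidiagonal form of Remark~\ref{rem:hermitianformconstruction}. The matrix $X_3$ realizes such an equivalence modulo $3$; this is enough precision because $3=-(\sqrt{-3})^2$ and the combinatorics of a length-two path in $\mc B(\PU_4(\Q_3))$ only sees the Hermitian form modulo $(\sqrt{-3})^2$, the remaining discrepancy being absorbed by an element of $K_3$. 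In the $X_3$-conjugated frame the block condition defining $K^{(3)}_{s,\mf p}$ unwinds, exactly as in Proposition~\ref{prop:supergold32}, to the condition that $X_3^{-1}gX_3$ stabilizes the lattice chain $\Lambda_0:=\mc O_{E_{\mf p}}^4\supset\Lambda_1:=\mc O_{E_{\mf p}}\oplus\mc O_{E_{\mf p}}\oplus 3\mc O_{E_{\mf p}}\oplus 3\mc O_{E_{\mf p}}$. Using the realization of $\mc B(\PU_4(\Q_3))$ as the $\sharp$-fixed subcomplex of $\mc B(\GL_4(E_{\mf p}))$ (Example~\ref{exa:pre-B(PU_n(F))}, ramified case), I would check that $[\Lambda_0]$ and $[\Lambda_1]$ are special vertices joined by a two-edge path through a non-special vertex, and that the intersection of the three vertex stabilizers in $G(\Q_3)^1$ is exactly $K^{(3)}_{s,\mf p}$; this identifies $\tau$ with the facet of Example~\ref{ex:traversable}(5) and settles (ii).

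For step (iii): since $K^{(3)}_s$ and $K=G(\widehat{\mc O})$ agree away from $3$, we have $[K:K^{(3)}_s]=[K_3:K^{(3)}_{s,\mf p}]$, which equals the number of two-edge paths of the prescribed type emanating from the base special vertex $x_0$; I would obtain this by a neighbor count in $\mc B(\PU_4(\Q_3))$, just as the analogous count gave $90$ in Proposition~\ref{prop:supergold32}. Proposition~\ref{prop:gold-exm-3} already gives $G(\A)=G(\Q)G_\infty K$, so $G(\A)=G(\Q)G_\infty K^{(3)}_s$ holds if and only if the finite group $G(\Z)=G(\mc O)$ acts transitively on $K/K^{(3)}_s$, which by orbit--stabilizer is equivalent to $|G(\Z)\cap K^{(3)}_s|\cdot[K:K^{(3)}_s]=|G(\Z)|=155520$. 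I would verify this numerical identity by the same explicit enumeration of $\mc O_E$-matrices preserving $H_3$ that was used in Proposition~\ref{prop:gold-exm-3}. The main obstacle is step (i): correctly tracking the vertex types in the ramified building $\mc B(\PU_4(\Q_3))$ — the extraspecial/special distinction, the different-ideal twist in the ramified special vertex, and the decimation bookkeeping matching $\tau$ to Example~\ref{ex:traversable}(5) — and confirming that the mod-$3$ normalization of $H_3$ really records the data a length-two path needs; once the lattice-chain picture is pinned down, the index count and the final computer verification are routine.
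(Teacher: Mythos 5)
Your proposal is correct and mirrors the paper's proof essentially verbatim: identify $K^{(3)}_{s,\mf p}$ as the stabilizer of the two-edge lattice chain $\mc O_{E_{\sqrt{-3}}}^4 \supset \mc O_{E_{\sqrt{-3}}}^2 \oplus 3\mc O_{E_{\sqrt{-3}}}^2$ through the non-special vertex, recognize this as the traversable $\tau$ of Example~\ref{ex:traversable}(5), and reduce the almost-golden condition to the index count $[K:K^{(3)}_s] = 120$ together with the computer verification $|G(\Z)\cap K^{(3)}_s| = 1296$. The paper supplies the explicit neighbor count ($40\times 3$) that your sketch defers, but the structure and all key reductions are the same.
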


\begin{proof}
Note that $X_3^{-1} G(\Q_3) X_3$ then has a chamber corresponding to the lattices $\mc O_{E_{\sqrt{-3}}}^{4-i} \oplus \sqrt{3} \mc O_{E_{\sqrt{-3}}}^i$ for $0 \leq i \leq 3$. The non-special vertex corresponds to $i=2$. Therefore lattices $\mc O_{E_{\sqrt{-3}}}^4$ and $\mc O_{E_{\sqrt{-3}}}^2 \oplus 3\mc O_{E_{\sqrt{-3}}}^2$ are special connected through a two-step path through a non-special vertex and $K^{(3)}_s$ exactly stabilizes both. 

We then compute as in Proposition \ref{prop:supergold32} that $[K : K^{(3), s}] = [K_3 : K^{(3), s}_3] = 120 = |G(\Z)|/1296$ (there are $40$ choices for the first edge and $3$ per each of these for the second). It therefore suffices to check by computer that $|G(\Z) \cap K^{(3), s}| = 1296$ which holds. 
\end{proof}

$K^{(3)}_s$ and $K^{(2)}_s$ therefore both produce super-golden gate sets for $PU(4)$ with finite group 
\[
C_p  = G(\Z)/\langle\zeta_6\rangle \cong {}^2\!A_2(2) \cong C_2(3)
\]
and a single extra element that we call $T_{E,2}$ and $T_{E,3}$ respectively. They have growth rates
\[
|S_p^{[\ell]}| \asymp \begin{cases} 64^\ell & K^{(2),s} \\ 81^\ell & K^{(3)}_s \end{cases}. 
\]

Let $K^{(\sqrt{-3})}_s$ be the same as $K^{(3)}_s$ except that the congruence condition is only mod $\sqrt{-3}$. If we let
\[
X_M := \begin{pmatrix}
0 & 0 & 0 & 1 \\
 0 & 0 & -1 & 0 \\
 0 & \frac{i}{\sqrt{3}} & -\frac{i}{\sqrt{3}} & \frac{i}{\sqrt{3}} \\
 -\frac{i}{\sqrt{3}} & 0 & \frac{i}{\sqrt{3}} & \frac{i}{\sqrt{3}}
\end{pmatrix},
\]
then $X_M^{-1} K^{(\sqrt{-3})}_s X_M$ is $\{\pm 1\}$ times the group of monomial matrices with entries that are $3$rd roots of unity. 

Using the isomorphism $(K \cap G(\Q))/\langle \zeta_3 \rangle \cong \Sp_4(\F_3)$ from the proof of Proposition \ref{prop:gold-exm-3} and noting that it maps $(K^{(\sqrt{-3})}_s \cap G(\Q))/\langle \zeta_3 \rangle$ to a parabolic subgroup, $K \cap G(\Q)$ is generated by $K^{(\sqrt{-3})}_s \cap G(\Q)$ and the preimage $C_W$ of a long Weyl element. We may choose $C_W$ so that
\[
X_M^{-1} C_W X_M = \f1{\sqrt{-3}}\begin{pmatrix}
  1 & 0 & -1 & -1 \\
 0 & 1 & -1 & 1 \\
 -1 & -1 & -1 & 0 \\
 -1 & 1 & 0 & -1    
\end{pmatrix}.
\]
Using the techniques of \ref{ex:findinggatesnonsplit}, we may choose (possibly without loss of generality conjugating $K^{(3)}_s$ within $K^{(\sqrt{-3})}_s$):
\[
X_M^{-1} T_{E,3} X_M = 
\begin{pmatrix}
&&& 1\\
&&1&\\
&-1&&\\
-1&&&
\end{pmatrix}.
\]
This gives Theorem \ref{thm:intro-main-super}. 

Finally we note that while the hermitian positive definite matrix $H_3$ gives golden and super golden gate sets with large finite group $C_p \cong \,^2A_2(2)$, one can instead take the identity matrix $I$ and also get a golden gate sets with a smaller finite group, $C_p \cong \langle \zeta_6\rangle^{4-1} \rtimes S_4$. At this time it is not clear whether super golden gate sets can be obtained from this form.

\begin{prop} \label{prop:gold-exm-3-stand} 
Define 
\[
G:=U_{4}^{\mathbb{Q}\left[\sqrt{-3}\right],I}/\mathbb{Q}.
\]
Then $G(\widehat{\mathbb{Z}})$ is an almost golden adelic group.
\end{prop}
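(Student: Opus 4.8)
The plan is to prove that $G = U_4^{\Q(\sqrt{-3}),I}$ has class number one; by the equivalence recorded just before Lemma~\ref{lem:transitive-criterion-general} this is exactly condition~(1) of Definition~\ref{def:gold-adelic} for $K' = G(\wh\Z)$, i.e. $G(\A) = G(\Q)\cdot G_\infty\cdot G(\wh\Z)$. Condition~(2) fails outright -- e.g. $-I$, and more generally the whole monomial group $G(\Z)$, lies in $G(\Q)\cap G(\wh\Z)$ -- so $G(\wh\Z)$ will be \emph{almost} (not even essentially) golden, which is all that is claimed.

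Unlike in Proposition~\ref{prop:gold-exm-3}, where $H_3$ was engineered so that $U^{E,H_3}(\Z_\ell)$ is special maximal compact at \emph{every} finite place, the standard form $I$ fails this at the ramified prime $3$, so Lemma~\ref{lem:pre-class} is not directly available. (Concretely, an $\mc O_E$-unitary matrix preserving $I$ must be monomial with root-of-unity entries -- a column of $\mc O_E^4$ of norm $1$ has a single nonzero, necessarily unit, entry -- so $|G(\Z)| = 4!\cdot 6^4 = 31104$, while $\disc(I)$ is a norm at every place, $\Ram(G) = \{3\}$ with $\lambda_3 = 1$, hence $R(G) = L(G) = 155520^{-1}$ just as in Proposition~\ref{prop:gold-exm-3}; since $31104^{-1}\neq R(G)$, the mass formula in the form of Lemma~\ref{lem:pre-class} cannot hold for $K = G(\wh\Z)$.) Instead I would invoke the elementary criterion Proposition~\ref{prop:second-criterion-class1}. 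Its hypotheses are met: $E = \Q(\sqrt{-3})$ has class number one, the rational prime $7$ splits in $E$ (as $7\equiv 1 \bmod 3$, and $7\nmid\disc(E)=-3$), and since $I \in \GL_4(\mc O_E\otimes_\Z\Z_7)$ the last lemma of \S\ref{s:compactopens} shows $G(\Z_7)$ is hyperspecial. The right-hand side of the criterion is $\sum_{i=0}^{4}{\binom{4}{i}}_{7} = 1 + 400 + 2850 + 400 + 1 = 3652$. It then remains to verify, by a short computer enumeration carried out exactly as in the proof of Proposition~\ref{prop:gold-exm-3} -- list the finitely many $v \in \mc O_E^4$ with $v^*v = 7$, then assemble the matrices $g \in M_4(\mc O_E)$ with $g^*g = 7I$ row by row -- that
\[
\#\{g \in M_4(\mc O_E)\;:\; g^*g = 7I\} \;=\; 3652\cdot|G(\Z)| \;=\; 3652\cdot 31104 .
\]
Proposition~\ref{prop:second-criterion-class1} then gives $c(G) = 1$, which is precisely the asserted almost-goldenness of $G(\wh\Z)$.

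The one genuinely non-formal ingredient is the finite count in the displayed identity; everything else is bookkeeping. The conceptual point to get right is the observation in the previous paragraph -- that the mass-formula criterion is \emph{not} applicable for the identity form (because of the non-special integral structure at $3$) and that one must route the argument through the transitivity criterion of Corollary~\ref{cor:transitive-criterion-special}/Proposition~\ref{prop:second-criterion-class1} at a split prime instead. An alternative would be to retain the mass formula and compute the parahoric index $[K^{H_3}_{3} : K^{I}_{3}]$ (which the numbers above suggest equals $5$), but that is more delicate than simply running the criterion at $\mathfrak{p} = 7$.
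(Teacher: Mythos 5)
Your proposal is correct (modulo the advertised computer check) but takes a genuinely different route from the paper's. You are right that the hypothesis of Lemma~\ref{lem:pre-class} is violated for the identity form: $G(\Z_3)$ is not special maximal compact at the ramified prime $3$, so the table of $\lambda_\ell$ in Definition~\ref{def:pre-mass} does not apply and the na\"ive $R(G)=155520^{-1}$ fails to match $|G(\Z)|^{-1}=31104^{-1}$. The paper, however, does not abandon the mass-formula strategy: it cites Lemma~5.8 of \cite{BEMP23} to identify $G(\Z_3)$ as a maximal parahoric, then recomputes the local density factor at $3$ directly from \cite{gan2001exact} via the ratio of special-fiber point counts (the displayed $\lambda_3 = |Sp_4(\F_3)|\,3^{10}/(|G(\F_3)|\,3^{N(G/\F_3)})$, which effects the needed factor of $5$), so that the corrected $R(G)$ does equal $31104^{-1}$ and Lemma~\ref{lem:pre-class} (in its general Gan--Hanke--Yu form) concludes. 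Your alternative -- bypassing the non-special parahoric entirely by running Proposition~\ref{prop:second-criterion-class1} at the split prime $\mathfrak p=7$ -- is logically cleaner and more self-contained, but trades a delicate local-density recomputation for a much larger enumeration: you must count the roughly $3652\cdot 31104\approx 1.1\times 10^8$ integral matrices with $g^*g=7I$ rather than just the $31104$ elements of $G(\Z)$. Both are valid; the paper's route is lighter once the correct local factor at $3$ is in hand (and is essentially the ``more delicate'' alternative you mention at the end), while yours is preferable if one wishes to avoid parahoric mass computations at ramified primes altogether.
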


\begin{proof}
We note that $G$ is quasisplit at all places and $G(\Z_p)$ is hyperspecial at all $p \ne 3$.  
At $p=3$, we have that $G(\Z_3)$ is a maximal parahoic subgroup (see Lemma 5.8 in \cite{BEMP23}, it is stated for $n=3$ but the proof works verbatim for all $n \geq 2$).
By \cite{gan2001exact} we get that $\lambda_3 = \frac{3^{N(Sp_4/\mathbb{F}_3))}\cdot Sp_4(\mathbb{F}_3)}{3^{N(G/\mathbb{F}_3)}\cdot G(\mathbb{F}_3)} = \frac{1}{5}$, and for any $p\ne 3$ we have $\lambda_p = 1$, hence $\lambda(G) = \frac{1}{5}$.
Since $G$ is an inner form of $U_4^{\mathbb{Q}\left[\sqrt{-3}\right],H_3}$, and since $L(G)$ is invariant under taking inner forms, we get that $L(G) = 155520^{-1}$, hence $R(G) = 31104^{-1}$.
On the other hand, $G(\mathbb{Z})$ is comprised of exactly the monomial matrices with non-zero coefficients in the unit group of the ring of integers, hence $G(\mathbb{Z}) \cong \langle \zeta_6 \rangle^4 \rtimes S_4$, and therefore $|G(\mathbb{Z})| = 6^4\cdot 4! = 31104$.
Therefore, by lemma \ref{lem:pre-class}, $G$ has class number one. 
\end{proof}

\subsubsection{New Constructions: Rank-$8$}
Next, we present the rank-$8$ golden group of \cite{mohammadi2012discrete}. 
\begin{prop}
For the $8 \times 8$ Hermitian positive definite matrix
\[
{}^8\!H_3 := 
\begin{pmatrix}
H_3 & 0 \\
0 & H_3
\end{pmatrix}.
\]
Then the following is an almost golden adelic group
\[
G := U_8^{\Q[\sqrt{-3}], {}^8\!H_3}/\Q, \qquad K := G(\wh \Z)
\]
\end{prop}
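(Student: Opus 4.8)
The proposition reduces to the assertion $c(G) = 1$ for $G = U_8^{\Q[\sqrt{-3}],\,{}^8\!H_3}$. Indeed, for $K' = G(\wh\Z)$ the defining condition~(1) of Definition~\ref{def:gold-adelic} reads $G(\A) = G(\Q)\,G_\infty\,G(\wh\Z)$, which is exactly $c(G) = 1$, while condition~(2) fails trivially since $G(\Q) \cap G(\wh\Z) = G(\Z)$ is nontrivial; so ``almost golden'' is here equivalent to $c(G) = 1$. I would prove the latter by the mass-formula criterion of Lemma~\ref{lem:pre-class}, following the template of the proof of Proposition~\ref{prop:gold-exm-3}; up to $\GL_8(E)$-equivalence, $G$ is the rank-$8$ class-number-one unitary group of \cite{mohammadi2012discrete}.

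First I would check the hypothesis of Lemma~\ref{lem:pre-class}, that $G(\Z_p)$ is special maximal compact at every finite $p$. Since $\disc({}^8\!H_3) = \det(H_3)^2$ is a square, it is trivial in $\Q^\times/N_{E/\Q}(E^\times)$, so $\Ram({}^8\!H_3) = \emptyset$ and ${}^8\!H_3$ is quasisplit at all places; concretely it is the orthogonal sum of two copies of the everywhere-quasisplit form $H_3$. For $p \ne 3$ the Hermitian $\mc O_E$-lattice $(\mc O_E^8, {}^8\!H_3)$ is locally a sum of unramified Hermitian lattices (as $(\mc O_E^4, H_3)$ is, by the proof of Proposition~\ref{prop:gold-exm-3}), so $G(\Z_p)$ is hyperspecial. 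At $p = 3$ the extension is ramified and $G$ is quasisplit even, so by Lemma~\ref{lem:specialismaximal} the relevant special parahoric equals the full vertex stabilizer and is special maximal compact; using the classification of Hermitian forms over $\Q_3$, one checks that $(\mc O_{E,3}^8, {}^8\!H_3)$ is isometric to the standard special model of Remark~\ref{rem:hermitianformconstruction} for $n = 8$ (regroup the two antidiagonal block decompositions coming from the two copies of $H_3$ into a single one), so that $G(\Z_3)$ is the stabilizer of this special vertex.

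Next I would evaluate the mass constant $R(G) = L(G)\lambda(G)$. Because $n = 8$ is even and $3 \notin \Ram({}^8\!H_3)$, the table in Definition~\ref{def:pre-mass} gives $\lambda_3 = 1$, hence $\lambda(G) = 1$ and $R(G) = L(G) = 2^{-7}\prod_{r=1}^{4}\zeta(1-2r)\prod_{r=1}^{4}L(2-2r,\chi_{E/\Q})$, with $\chi_{E/\Q}$ the quadratic character of conductor $3$; substituting the classical values of $\zeta$ at $-1,-3,-5,-7$ and of $L(\,\cdot\,,\chi_{E/\Q})$ at $0,-2,-4,-6$ (generalized Bernoulli numbers for the character mod $3$) produces an explicit rational $R(G)^{-1}$. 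Finally I would verify $|G(\Z)|^{-1} = R(G)$ by computing the automorphism group of $(\mc O_E^8, {}^8\!H_3)$ exactly as in Proposition~\ref{prop:gold-exm-3}: rescale to an integral form, enumerate its finitely many minimal vectors, and assemble all norm-preserving $\mc O_E$-matrices one row at a time. The equality $|G(\Z)|^{-1} = R(G)$ then yields $c(G) = 1$ by Lemma~\ref{lem:pre-class}, which is condition~(1), so $G(\wh\Z)$ is almost golden.

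The main obstacle is the finite computation of $|G(\Z)|$: $(\mc O_E^8, {}^8\!H_3)$ is a rank-$8$ Hermitian lattice (a $16$-dimensional $\Z$-lattice) whose automorphism group is substantially larger than in the $n = 4$ case, so although the row-by-row enumeration is conceptually routine it must be organized with care to terminate in reasonable time; this is essentially the computation carried out in \cite{mohammadi2012discrete}. A secondary point requiring attention is confirming that the block-sum model really yields a special maximal compact at $p = 3$ (a smaller parahoric would change $\lambda_3$ and invalidate the table in Definition~\ref{def:pre-mass}), which is settled by the $\Q_3$-classification of Hermitian forms together with Remark~\ref{rem:hermitianformconstruction}.
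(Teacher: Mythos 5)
Your approach is correct and is essentially the paper's secondary argument: invoke Lemma \ref{lem:pre-class}, check $\Ram({}^8\!H_3)=\emptyset$ (so $\lambda(G)=1$), verify the specialness of $G(\Z_3)$, and compare $R(G)$ with $|G(\Z)|^{-1}$. The paper's primary justification is to cite the main theorem of \cite{mohammadi2012discrete} directly, and the mass-formula check is offered as a corroboration; either path is fine.

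Where you part ways from the paper is in the evaluation of $|G(\Z)|$, and here the paper has a shortcut that avoids the heavy rank-$8$ enumeration you are (rightly) worried about. Because ${}^8\!H_3 = H_3 \oplus H_3$ and the Hermitian $\mc O_E$-lattice $(\mc O_E^4, H_3)$ is indecomposable (its trace form is the indecomposable $\Z$-lattice $E_8$, so any Hermitian orthogonal splitting would induce a $\Z$-orthogonal splitting of $E_8$), uniqueness of orthogonal decompositions for positive definite lattices gives
\[
G(\Z) \;=\; \Aut_{\mc O_E}\bigl(L \oplus L, H_3 \oplus H_3\bigr) \;\cong\; \Aut_{\mc O_E}(L,H_3)^2 \rtimes \Z/2 \;=\; U_4^{\Q[\sqrt{-3}],H_3}(\Z)^2 \rtimes \Z/2,
\]
so $|G(\Z)| = 2\cdot 155520^2 = 48372940800$, and one separately verifies $R(G)^{-1} = 48372940800$. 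This reuses the rank-$4$ computation already done in Proposition \ref{prop:gold-exm-3} instead of re-enumerating automorphisms of a $16$-dimensional $\Z$-lattice with $480$ minimal vectors, which would be painful to do row-by-row as you propose. Everything else in your write-up (the identification of ``almost golden'' with $c(G)=1$, the computation $\disc({}^8\!H_3)=\det(H_3)^2$ a square hence $\Ram({}^8\!H_3)=\emptyset$, $\lambda_3=1$ from the $n$ even, $\ell\notin\Ram(H)$ row of Definition \ref{def:pre-mass}, and the specialness at $p=3$ via the block rearrangement) matches the paper.
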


\begin{proof}
Then $U_8^{\Q[\sqrt{-3}], {}^8\!H_3}$ is quasisplit at all places and $U_8^{\Q[\sqrt{-3}], {}^8\!H_3}(\Z_p)$ is hyperspecial at all $p \neq 3$ and special at $p=3$. The result then follows from the main result of \cite{mohammadi2012discrete}. We can also compute that $R(G) = 48372940800^{-1}$ and that $U_8^{\Q[\sqrt{-3}], {}^8\!H_3}(\Z)$ is the $U_4^{\Q[\sqrt{-3}], \!H_3}(\Z)$ from \ref{prop:gold-exm-3} squared semidirect $\Z/2$ which has size $2 \cdot 155520^2 = 48372940800$. 
\end{proof}

This gives golden gate sets for $3$ qubits. 

\subsubsection{New Constructions: $\Q(i)$}\label{sec:goldenexamples4}
Moving on to $E/F = \Q(i)/\Q$:

\begin{prop}\label{prop:gold-exm-4}
For the $4 \times 4$ Hermitian positive definite matrix
\[
H_4 := \begin{pmatrix}
 4 & 0 & 2+i & 1+i \\
 0 & 4 & -1+i & 2-i \\
 2-i & -1-i & 2 & 0 \\
 1-i & 2+i & 0 & 2
\end{pmatrix},
\]
define
\[
G := U^{\Q(i), H_4}_4/\Q.
\]
Then $G(\wh \Z)$ is an almost golden adelic subgroup of $G$. 
\end{prop}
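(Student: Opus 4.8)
The plan is to imitate verbatim the structure of the proof of Proposition~\ref{prop:gold-exm-3-stand} (the $\Q(\sqrt{-3})$ rank-$4$ case), since the same mass-formula criterion of Lemma~\ref{lem:pre-class} applies once we check that $U^{\Q(i),H_4}_4$ is quasisplit at every place and that $G(\Z_p)$ is special maximal compact everywhere. First I would verify the local structure of $H_4$: compute $\det(\m H_4)$ and the discriminant $\disc(H_4)\in\Q^\times/N_{E/F}(E^\times)$ via \eqref{eq discH}, check that $H_4$ is totally positive-definite (its unique archimedean place gives the compact $U(4)$), and then run Lemma~\ref{lem:rationalpoints} at each finite $p$. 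The only ramified prime for $E=\Q(i)$ is $p=2$, so for $p\neq 2$ one just needs $\m H_4\in\GL_4(\Z_p\otimes\mc O_E)$ --- i.e. $\det(\m H_4)$ is a $p$-adic unit --- to conclude $U^{\Q(i),H_4}_4(\Z_p)$ is hyperspecial (the last Lemma before \S\ref{s:compactopens}). At $p=2$ one must check $G$ is quasisplit there (equivalently $\disc(H_4)$ is a local norm, since $n=4$ is even) and that $H_4$ localizes at $2$, up to $\GL_4(\mc O_{E_2})$-equivalence, to one of the standard special-maximal-compact forms from Remark~\ref{rem:hermitianformconstruction} (namely $\bigl(\begin{smallmatrix}&\m I_2\\\m I_2&\end{smallmatrix}\bigr)$ for $E_2/\Q_2$ unramified... wait, $\Q_2(i)/\Q_2$ is ramified, so the relevant model is $\bigl(\begin{smallmatrix}&a_{E_2}\m I_2\\\bar a_{E_2}\m I_2&\end{smallmatrix}\bigr)$ with $a_{E_2}$ a generator of the different). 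This local comparison at $2$ is the technical heart.

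Granting the local analysis, the arithmetic part is a two-sided computation exactly as in Proposition~\ref{prop:gold-exm-3-stand}. On the analytic side, compute $R(G)=L(G)\cdot\lambda(G)$ from Definition~\ref{def:pre-mass}: since $G$ is an inner form of the quasisplit $U_4$ over $E=\Q(i)$ and $L(G)$ depends only on $E$ and $n$, I would get $L(G)$ from the product $2^{-4}\zeta_\Q(-1)\zeta_\Q(-3)L(0,\chi)L(-2,\chi)$ with $\chi=\chi_{\Q(i)/\Q}$ the quartic... no, quadratic character of conductor $4$; and $\lambda(G)=\prod_{\ell\in\Ram(G)}\lambda_\ell$ where $\Ram(G)=\Ram(E)\cup\Ram(H_4)=\{2\}\cup\Ram(H_4)$, with each $\lambda_\ell$ read off the table in Definition~\ref{def:pre-mass} according to whether $K_\ell$ is extraspecial (relevant only if $2\mid n$, $\ell\in\Ram(E)$, $\ell\in\Ram(H_4)$). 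On the group side, one computes $|G(\Z)|$ by a finite search: minimally scale $H_4$ to integral entries, enumerate the finitely many $v\in\mc O_E^4$ of the appropriate norm, and build up all $\mc O_E$-matrices preserving $H_4$ row by row, exactly the laptop computation described after $R(G)=155520^{-1}$ in the proof of Proposition~\ref{prop:gold-exm-3}. Then Lemma~\ref{lem:pre-class} gives $c(G)=1$ iff $|G(\Z)|^{-1}=R(G)$, and a direct comparison of the two rational numbers finishes it; since $G(\wh\Z)$ almost golden is by definition equivalent to class number one for $G=U_n^{E,H}$ (the remark after Definition~\ref{def:gold-adelic} together with Remark~\ref{rem:classnumberone}), we are done.

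The main obstacle I expect is pinning down the local data at the wild prime $p=2$: $\Q_2(i)/\Q_2$ is ramified with nontrivial wild part, so I must carefully compute $\disc(H_4)$ modulo $N_{E_2/\Q_2}(E_2^\times)$ to decide quasisplitness, and then determine which special-maximal-compact (extraspecial or not) the model $U^{\Q(i),H_4}_4(\Z_2)$ gives --- because that choice changes which row of the $\lambda_\ell$ table in Definition~\ref{def:pre-mass} applies and hence the exact value of $R(G)$. I would resolve this by diagonalizing $H_4$ over $\mc O_{E_2}$ (Gram--Schmidt / Jacobowitz's classification of Hermitian forms over $2$-adic fields, as cited for Lemma~\ref{lem:rationalpoints}) and matching against the explicit forms in Remark~\ref{rem:hermitianformconstruction}, or equivalently by computing $|U^{\Q(i),H_4}_4(\F_2)|$-type point counts on the special fibre of the parahoric model and comparing with the formulas in \cite{gan2001exact}*{(2.12)} and \cite{ATLAS}*{\S2}, just as was done for the extraspecial cases in the proof of Lemma~\ref{lem:pre-class}. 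Everything else is routine bookkeeping and a finite machine computation whose output is simply asserted.
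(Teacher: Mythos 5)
Your proposal is essentially the same as the paper's own proof: verify (via the construction of Remark~\ref{rem:hermitianformconstruction} and Jacobowitz-type local diagonalization) that $G(\Z_p)$ is hyperspecial at $p\neq 2$ and special at $p=2$ with $G$ quasisplit everywhere, then invoke Lemma~\ref{lem:pre-class} by computing $R(G)^{-1}$ and $|G(\Z)|$ (both equal $46080$ in the paper). The paper compresses the local analysis at $p=2$ to a single sentence --- noting $\mathfrak D_{\Q_2(i)/\Q_2}=(2)$ so that $H_4$ is equivalent to a scaling of the antidiagonal form of Remark~\ref{rem:hermitianformconstruction} --- whereas you spell it out, but the mechanism is identical.

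One small but worth-noting correction to your ``main obstacle'' paragraph: the extraspecial/non-extraspecial dichotomy in the $\lambda_\ell$ table of Definition~\ref{def:pre-mass} only enters when $\ell\in\Ram(H)$, i.e.\ when $G_\ell$ is non-quasisplit (with $n$ even). Once you verify that $\disc(H_4)=(-1)^{6}\det(\m H_4)=1$ is a norm at every finite place (in fact a direct Schur-complement computation gives $\det(\m H_4)=1$, since the off-diagonal block $B$ satisfies $B^*B=7I_2$), you have $\Ram(H_4)=\emptyset$, so the only ramified prime is $2\in\Ram(E)$ and the table gives $\lambda_2=1$ with no further case analysis. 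The determination of whether $K_2$ is extraspecial is therefore irrelevant to the mass computation here --- the deciding question is only quasisplitness. This does not create a gap (you correctly plan to check quasisplitness anyway), but it means your anticipated ``technical heart'' at $p=2$ is simpler than you fear.
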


\begin{proof}
$H_4$ is selected so that $G(\Z_p)$ is hyperspecial at all $p \neq 2$ and special at $p=2$ following Remark \ref{rem:hermitianformconstruction}. Note that the different ideal of $\Q_2(i)/\Q_2$ is $(2)$ so we can choose $H_4$ to be equivalent to the antidiagonal matrix of all ones over $\Z_2[i]$. 

Then the argument follows applying lemma \ref{lem:pre-class} as in Proposition \ref{prop:gold-exm-3}: here, both $R(G)^{-1}$ and $|G(\Z)|$ are $46080$. 
\end{proof}

$H_4$ also produces some super-golden gate sets:

\begin{prop}\label{prop:supergold42}
Consider $G = U_{4}^{\mathbb{Q}\left[i\right],H_4}/\mathbb{Q}$ as in Proposition \ref{prop:gold-exm-4}. Choose $X_2 \in \GL_4(\Z_2[i])$ such that
\[
X_2^*H_4X_2 \equiv \begin{pmatrix}
 & & & 1 \\
 & & 1 & \\
 & 1 & & \\
 1 & & & 
\end{pmatrix}
\pmod 4, 
\]
for example,
\[
X_2 := 
\begin{pmatrix}
 -1 & -2 & -2 & -1 \\
 1 & 1 & -2 & 1 \\
 2 & 2 & 1-2 i & 2-i \\
 -1-2 i & -i & 3 & -i 
\end{pmatrix}.
\]
Define
\[
K' := \lf\{g \in G(\wh \Z) \, \md| \, X_2^{-1}gX_2 \in \begin{pmatrix}
 \mc O_{E_{1+i}} &  \mc O_{E_{1+i}} &  \mc O_{E_{1+i}} &  \mc O_{E_{1+i}} \\
 \mc O_{E_{1+i}} &  \mc O_{E_{1+i}} &  \mc O_{E_{1+i}} &  \mc O_{E_{1+i}} \\
 2\mc O_{E_{1+i}} &  2\mc O_{E_{1+i}} &  \mc O_{E_{1+i}} &  \mc O_{E_{1+i}} \\
 2\mc O_{E_{1+i}} &  2\mc O_{E_{1+i}} &  \mc O_{E_{1+i}} &  \mc O_{E_{1+i}}
\end{pmatrix} \ri\}.
\]
Then $K'$ is almost $\tau$-super golden for (decimated non-standard traversable) $\tau$ as in Example \ref{ex:traversable}(5). 
\end{prop}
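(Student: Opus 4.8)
The plan is to run the argument in exact parallel with Propositions~\ref{prop:supergold32} and~\ref{prop:supergold33}. Two things need checking: that $K'_2$ is the stabilizer in $G(\Q_2)$ of a positive-dimensional facet $\tau$ of the type described in Example~\ref{ex:traversable}(5), and that $K'$ retains the class-number-one property~(1) of Definition~\ref{def:gold-adelic}. Property~(2) will visibly fail, which is exactly why the conclusion is \emph{almost}, rather than fully or essentially, $\tau$-super-golden.

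For the local step, one first notes that $X_2$ is chosen so that $X_2^{*}H_4X_2$ is $\GL_4(\mc O_{E_{1+i}})$-equivalent to the standard antidiagonal Hermitian form $J$ (the congruence $X_2^{*}H_4X_2\equiv J\pmod 4$ over $\Z_2[i]$ certifies this, cf.\ Remark~\ref{rem:hermitianformconstruction}), so that $X_2^{-1}G(\Q_2)X_2$ is the quasisplit unitary group of $J$, with building $\mc B$. A direct computation in $\GL_4(\mc O_{E_{1+i}})$ then identifies the block-congruence pattern defining $K'$ with the simultaneous $\GL_4$-stabilizer of the two lattices $\mc O_{E_{1+i}}^{4}$ and $\mc O_{E_{1+i}}^{2}\oplus 2\,\mc O_{E_{1+i}}^{2}$; since $2=-i(1+i)^2$, these are distinct special vertices of $\mc B$ joined by a two-step path through a non-special vertex, and their common stabilizer is the stabilizer of that path $\tau$. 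Conjugating back by $X_2$ yields $K'_2=\Stab_{G(\Q_2)}(\tau)$, with $\tau$ the decimated non-standard traversable path of Example~\ref{ex:traversable}(5). The delicate point is that $E_{1+i}/\Q_2=\Q_2(i)/\Q_2$ is \emph{wildly} ramified, so, unlike the tamely ramified situation of Proposition~\ref{prop:supergold33}, the fine structure of $\mc B$ involves the ``barbs'' mentioned in \S\ref{sec:buildings}; the lattice stabilizers, and hence $K'_2$, are unaffected by this, and the valency data needed below can still be read off from the tables of \cite{Tits1979Reductivegroupsover}.

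For the global step, since $K'$ coincides with $K=G(\wh\Z)$ away from $2$ one has $[K:K']=[K_2:K'_2]$, and this index equals the number of two-step paths $\tau$ of the prescribed type issuing from the base vertex $x_0$; I would compute it from the local building combinatorics over the residue field $\F_2$ in the same manner the count $40\cdot 3=120$ is obtained over $\F_3$ in the proof of Proposition~\ref{prop:supergold33}. By Proposition~\ref{prop:gold-exm-4} the pair $(G,K)$ has class number one, so its mass is $|G(\Z)|^{-1}=46080^{-1}$, and the mass-formula comparison of Lemma~\ref{lem:pre-class} --- used exactly as in the proofs of Propositions~\ref{prop:supergold32}--\ref{prop:supergold33} --- reduces property~(1) for $K'$ to the single identity
\[
|G(\Z)\cap K'|=\frac{|G(\Z)|}{[K:K']}.
\]
This I would verify by enumerating $G(\Z)$ (of order $46080$, built from the minimal vectors of $H_4$ as in Proposition~\ref{prop:gold-exm-4}) and counting the elements whose $X_2$-conjugate lies in the prescribed pattern. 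Since $iI_4$, and in fact many non-scalar elements of $G(\Z)$, lie in $K'$, we have $G(\Q)\cap K'\neq\{1\}$, so Definition~\ref{def:gold-adelic}(2) fails and the result is as stated.

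The step I expect to be the main obstacle is the local one: confirming that the $2\times2$ block-congruence pattern is the stabilizer of the correct traversable $\tau$, rather than of an adjacent facet, and extracting the right valencies for the index, all in the presence of the wild ramification of $\Q_2(i)/\Q_2$ and the consequently barbed structure of $\mc B$. Once that lattice dictionary is fixed, the index computation and the machine verification of $|G(\Z)\cap K'|$ are routine, just as in Propositions~\ref{prop:supergold32} and~\ref{prop:supergold33}.
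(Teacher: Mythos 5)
Your proposal follows the same outline as the paper's proof: identify $K'_2$ as the stabilizer of a two-step lattice path, compute $[K:K']=[K_2:K'_2]$ from the local building combinatorics, and then compare $|G(\Z)\cap K'|$ with $|G(\Z)|/[K:K']$ to transfer the class-number-one property from $K$ to $K'$ via Lemma \ref{lem:pre-class}. The lattice dictionary you set up (the block-congruence pattern as the common stabilizer of $\mc O_{E_{1+i}}^4$ and $\mc O_{E_{1+i}}^2\oplus 2\mc O_{E_{1+i}}^2$) is correct and matches the paper.

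There is, however, one genuine gap in your treatment of the wild ramification. You notice that $\Q_2(i)/\Q_2$ is wildly ramified and that $\mc B(\GL_4(E_{1+i}))^\sharp$ contains ``barbs,'' but you then assert that ``the lattice stabilizers, and hence $K'_2$, are unaffected by this.'' That is not the right resolution, and in fact it sidesteps the only non-routine point of the proof. The issue is not whether the stabilizer is a well-defined compact open subgroup (it always is); it is whether the path $\tau$ cut out by the two lattices actually lies in $\mc B(G_{\Q_2})$, rather than partly inside a barb of the $\sharp$-fixed complex. If $\tau$ sat in a barb it would not be a facet of $\mc B(G_{\Q_2})$, and then $K'_2$ would fail the requirement of Definition \ref{def:gold-adelic} that it be the stabilizer of a positive-dimensional facet of $\wtd{\mc B}$, so $K'$ would not be $\tau$-super-golden at all. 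The paper's proof flags this explicitly and explains that the congruence $X_2^{*}H_4X_2 \equiv J \pmod 4$ --- as opposed to merely $\pmod 2$ --- is exactly what guarantees $\tau$ avoids the barbs. You do cite the mod-$4$ congruence, but you attribute to it only the role of certifying $\GL_4(\mc O_{E_{1+i}})$-equivalence of Hermitian forms; that weaker purpose would already be served by a mod-$2$ congruence, so your local step is not actually closed. You should replace the ``stabilizers are unaffected'' assertion with the correct observation about barb avoidance via the mod-$4$ condition. (For the record, the missing numerics are $[K:K']=30=15\cdot 2$ and $|G(\Z)\cap K'|=1536=46080/30$, which the paper confirms by machine.)
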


\begin{proof}
As in Proposition \ref{prop:supergold33}, $X_2^{-1} G(\Q_2) X_2$ exactly stabilizes a particular two-step path through a non-special vertex. In the wildly ramified setting, we additionally need to ensure that this two-step path is actually in $\mc B(G_{\Q_2})$ instead of intersecting barbs---this is guaranteed by the congruence condition on $X_2$ being mod $4$ instead of mod $2$. 

Then, arguing as in Proposition \ref{prop:supergold32},  $[K:K'] = [K_2:K'_2] = 30 = |G(\Z)|/1536$ (there are $15$ choices for the first edge and $2$ per each of these for the second by standard formulas) and by computer check, $|G(\Z) \cap K'| = 1536$. 
\end{proof}

The example in Proposition \ref{prop:supergold42} is particularly interesting since here, $C = G(\Z)/\langle i \rangle$ is the $2$-qubit Clifford group\footnote{Beware that $G(\Z)$ on the other hand is a different lift of the projective Clifford group to $U(4)$ than is standard in the literature.} (this is guaranteed by its size and a classification of maximal finite subgroups of $PU(4)$, e.g, \cite{KT24}*{Appx D}). Therefore, this provides a single element that we call $T_G$ that together with the $2$-qubit Clifford group gives a super-golden gate set on $PU(4)$ with growth rate
\[
|S_2^{[\ell]}| \asymp 16^\ell. 
\]

\begin{prop}\label{prop:supergold42alt}
In the notation of proposition \ref{prop:supergold42}, Let
\[
K'' := \lf\{g \in G(\wh \Z) \, \md| \, X_2^{-1}gX_2 \in \begin{pmatrix}
 \mc O_{E_{1+i}} &  \mc O_{E_{1+i}} &  \mc O_{E_{1+i}} &  \mc O_{E_{1+i}} \\
 (1+i)\mc O_{E_{1+i}} &  \mc O_{E_{1+i}} &  \mc O_{E_{1+i}} &  \mc O_{E_{1+i}} \\
 (1+i)\mc O_{E_{1+i}} &  \mc O_{E_{1+i}} &  \mc O_{E_{1+i}} &  \mc O_{E_{1+i}} \\
 (1+i)\mc O_{E_{1+i}} &  (1+i)\mc O_{E_{1+i}} &  (1+i)\mc O_{E_{1+i}} &  \mc O_{E_{1+i}}
\end{pmatrix} \ri\}
\]
Then $K'' \subseteq G$ is almost $\tau$-super golden for (non-standard traversable) $\tau$ as in Example \ref{ex:traversable}(4).
\end{prop}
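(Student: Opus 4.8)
The plan is to follow the template of the proofs of Propositions~\ref{prop:supergold42} and~\ref{prop:supergold33}, the only structural change being that the facet $\tau$ is now the single edge of Example~\ref{ex:traversable}(4) rather than a two-edge path through a non-special vertex.

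First I would pin down $\tau$ building-theoretically. Unwinding the matrix congruence pattern defining $K''$---together with the automatic constraint that a unitary element preserving a lattice also preserves its $\sharp$-dual, using that $X_2^{*}H_4X_2$ reduces to the antidiagonal form mod~$4$---one checks that $X_2^{-1}K''X_2$ is exactly the pointwise stabilizer in $G(\Q_2)$ of the edge $\tau$ of $\mc B(G_{\Q_2})$ joining the vertex $x_0$ fixed by $X_2^{-1}K_2X_2$ to an adjacent special vertex, i.e.\ the edge of Example~\ref{ex:traversable}(4) with $\Sigma_\tau = \Sigma_0 = \{(1/2,0)\}$ in the coordinates of Example~\ref{ex:cartanramunitary}. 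As in Proposition~\ref{prop:supergold42}, the wild ramification of $\Q_2(i)/\Q_2$ forces one to verify that $\tau$ genuinely lies in $\mc B(G_{\Q_2})$ rather than running off along a barb; since $\tau$ has honest special endpoints inside an apartment this is somewhat cleaner than in Proposition~\ref{prop:supergold42}, and again follows from the congruence being imposed relative to the correct integral model $X_2^{-1}G(\Z_2)X_2$.

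Next I would compute the index $[K:K''] = [K_2:K''_2]$ by counting the edges of the required type emanating from $x_0$ over the residue field $\F_2$, reading off the chamber structure of ramified quasisplit $U_4$ from Table~\ref{tab:types-actions} together with the standard finite-group point counts; this should come out as $|G(\Z)|/m$ where $m = |\Stab_{G(\Z)}(\tau)|$. Since $K = G(\wh{\Z})$ is almost golden by Proposition~\ref{prop:gold-exm-4} and $K'' \leq K$, it then follows that $K''$ is almost golden precisely when $|G(\Z) \cap K''| = m$, which I would confirm by the same direct enumeration of $G(\Z)$ used in the earlier propositions. Finally, the traversability of $\tau$ requires no new work: it is exactly Example~\ref{ex:traversable}(4), with $\Sigma_\tau = \{(1/2,0)\}$ Weyl-complete (since $(1/2,1/2)$ is a sum of two Weyl translates of $(1/2,0)$) and navigable by Example~\ref{ex:nonstandardcartanramunitary}. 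The main obstacle is the first step---correctly translating the matrix congruence pattern into the edge $\tau$ in the wildly ramified building of $G(\Q_2)$ and ruling out the barb phenomenon; once that identification is in place, the index count and the intersection check are routine finite computations.
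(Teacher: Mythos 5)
Your proposal is correct and would establish the result, but it reorganizes the key first step differently from the paper. Where you propose to identify $\tau$ by directly unwinding the congruence pattern into a stabilized lattice chain (invoking the $\sharp$-duality of lattices to explain the extra constraints on the last row), the paper instead notes that $K''/K_1$ is a maximal (Klingen-type) parabolic of $K/K_1 \cong \Sp_4(\F_2)$, where $K_1$ is the pro-unipotent radical of $K_2$. That observation alone shows $K''$ is a parahoric and hence stabilizes an edge of $\mc B(G_{\Q_2})$ emanating from $x_0$---so the wild-ramification ``barb'' concern you flag never arises, since parahorics by definition fix facets of the genuine building. The paper then determines \emph{which} edge by the quick remark that $K'' \not\supseteq K'$ (so $K''$ cannot be the stabilizer of the edge from $x_0$ to the non-special vertex, which $K'$ already fixes), rather than by reading off lattice types. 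Your direct route buys an explicit lattice-chain description but costs additional care in the wildly ramified setting; the paper's route is more economical precisely because the parabolic-in-reductive-quotient recognition absorbs that care. The remaining steps---computing $[K_2:K''_2]=15$ from the parabolic index in $\Sp_4(\F_2)$, verifying $|G(\Z)\cap K''|=3072=|G(\Z)|/15$, and citing Example~\ref{ex:traversable}(4) with $\Sigma_\tau=\{(1/2,0)\}$ Weyl-complete and navigable---match the paper essentially verbatim.
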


\begin{proof}
Let $K_1 \subseteq K$ be the subgroup of elements that are trivial mod $(1 + i)$. Then, $K/K_1$ is the reductive quotient in the special fiber of the parahoric model corresponding to $K$ and isomorphic to $\Sp_4(\F_2)$. Inside this, $K''/K_1$ is a parabolic subgroup, so $K''$ is the stabilizer of an edge emanating from $x_0$. Since $K''$ doesn't contain $K'$, this edge must connect to a special vertex. 

Then, arguing as in Proposition \ref{prop:supergold32}, we may compute $[K : K''] = [K_2 : K''_2] = 15 = |G(\Z)|/3072$ and that $|G(\Z) \cap K''| = 3072$. 
\end{proof}

The super-golden gate set $S_{2'}$ from \ref{prop:supergold42alt} is again the 2-qubit Clifford group with an added element we now call $T'_G$. It has growth rate
\[
|S_{2'}^{[\ell]}| \asymp 8^\ell. 
\]
While this growth rate is much worse than the other examples, the extra gate $T'_G$ has much better properties for fault-tolerant implementation:

\begin{lem}\label{lem:super42altcliffordhierarchy}
In the notation above, $T'_G$ is in the $3$rd level of the Clifford hierarchy. 
\end{lem}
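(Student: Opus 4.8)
The plan is to compute $T'_G$ explicitly as a $4\times4$ unitary and then verify directly that conjugation by it carries the two-qubit Pauli group into the Clifford group. Recall that the $k$-th level of the Clifford hierarchy is defined inductively by $\mc C_1=$ the Pauli group, $\mc C_2=$ the Clifford group $C$, and $\mc C_{k+1}=\{U:U\,\mc C_1\,U^{-1}\subseteq\mc C_k\}$; since $\mc C_1$ is generated (mod phases) by $X\otimes I$, $Z\otimes I$, $I\otimes X$, $I\otimes Z$, the assertion $T'_G\in\mc C_3$ amounts exactly to $T'_G\,P\,(T'_G)^{-1}\in\mc C_2$ for each of these four generators $P$. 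Moreover, since $\bar a_{T'_G}\in\Sigma_\tau$ is nonzero while every element of $C=\overline\Gamma$ has vanishing Cartan invariant, $T'_G\notin C=\mc C_2$, so establishing $T'_G\in\mc C_3$ places it at level exactly $3$.

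First I would pin down $T'_G$. By Definition \ref{def:supergoldengates} applied to $K''$ and Example \ref{ex:traversable}(4), $T'_G$ is (a representative of) the unique gate $t\in\zl T_\mf p$ with $t\tau$ the edge obtained from $\tau$ by reflection across its midpoint, where $\tau$ joins $x_0$ to the adjacent special vertex; by Lemma \ref{lem:supergatesprops}(3) it satisfies $t^2\in C$. Concretely, running the recipe of Examples \ref{ex:findinggatesnonsplit}--\ref{ex:findingnonstandardgates} at $\mf p=2$ (where $E_2/\Q_2$ is ramified, so the relevant exponent is $e=1$) produces $T'_G$ as $\varpi_{1+i}^{-1}g$ for a suitable $g\in M_4(\mc O_E)$ with $g^*H_4g=2H_4$ and $g\notin(1+i)K''$, exactly as $T_{E,3}$ was produced after Proposition \ref{prop:supergold33}. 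Conjugating by the same $X_2$ as in Proposition \ref{prop:supergold42} and by a fixed $B\in GL_4(\bC)$ with $H_4=B^*B$ (to pass to the standard unitary model) one reads off an explicit matrix.

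With $T'_G$ in hand, the rest is a finite verification that I would carry out by machine. The most transparent route is to exhibit Clifford elements $C_1,C_2\in C$ with $C_1\,T'_G\,C_2=\mathrm{CS}$, the controlled-$S$ gate: this both proves the lemma and makes explicit the identification underlying Theorem \ref{thm:intro-main-super-clifford}. Indeed, $\mc C_3$ is stable under left and right multiplication by $\mc C_2$ --- if $U\in\mc C_3$ and $C\in\mc C_2$ then $(CU)\mc C_1(CU)^{-1}=C(U\mc C_1 U^{-1})C^{-1}\subseteq C\mc C_2 C^{-1}=\mc C_2$ and $(UC)\mc C_1(UC)^{-1}=U(C\mc C_1 C^{-1})U^{-1}\subseteq U\mc C_1 U^{-1}\subseteq\mc C_2$ --- and $\mathrm{CS}\in\mc C_3$ by the standard computation: $\mathrm{CS}$ is diagonal, hence commutes with $Z\otimes I$ and $I\otimes Z$, while $\mathrm{CS}(X\otimes I)\mathrm{CS}^{-1}$ and $\mathrm{CS}(I\otimes X)\mathrm{CS}^{-1}$ are each a product of an $X$-type Pauli, a controlled-$Z$, and a single-qubit $S$ gate, all Clifford. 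Alternatively one can bypass $\mathrm{CS}$ entirely and check the four conjugates $T'_G P (T'_G)^{-1}$ one at a time, confirming that each lies (up to a global phase) in the explicit finite group $C=G(\Z)/\langle i\rangle$ identified after Proposition \ref{prop:supergold42}.

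The only real obstacle is bookkeeping: getting the explicit matrix $T'_G$ correct, since it depends on the auxiliary choices of $g$, $X_2$, and $B$, and then locating the conjugating Cliffords $C_1,C_2$ (or, in the alternative route, tabulating the four Pauli conjugates). Both are routine finite computations once the normalizations are fixed, with no conceptual content beyond the definition of the Clifford hierarchy, so I do not anticipate any genuine difficulty.
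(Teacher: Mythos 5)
Your proposal takes a fundamentally different, computational route from the paper's proof, and the main route you propose has a concrete flaw.

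The paper's proof is purely Bruhat--Tits theoretic and never needs the explicit matrix $T'_G$. It identifies the Pauli group structurally: $K_1 \subseteq K$ is the reduction kernel mod $(1+i)$ with $K/K_1 \cong \Sp_4(\F_2)$, and the index computation $[G(\Z):G(\Z) \cap K_1]=[K:K_1]$ shows $G(\Z)/(G(\Z)\cap K_1)\cong\Sp_4(\F_2)$, which forces $(G(\Z)\cap K_1)/\langle i\rangle$ to be the Pauli group once $G(\Z)/\langle i \rangle$ is identified with the Clifford group. Then, because $T'_G$ is an involution of the edge $\tau$ fixed by $K''$, $T'_G K''(T'_G)^{-1}$ fixes $x_0$ and so lies in $K$, hence $T'_G(K''\cap G(\Q))(T'_G)^{-1}\subseteq G(\Z)$; since $K_1\subseteq K''$ this immediately conjugates Pauli into Clifford. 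No matrix is ever written down. The explicit matrices $T'_G$ and $X_P$ appear only afterward to identify the gate set with Clifford$+$CS.

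Your ``most transparent route''---exhibiting Clifford $C_1,C_2\in C$ with $C_1 T'_G C_2 = \mathrm{CS}$---cannot work as stated. The element $T'_G$ preserves the Hermitian form $H_4$, whereas $\mathrm{CS}$ (and any Clifford element in the standard presentation) preserves the standard form. If $C_1,C_2$ lie in the standard Clifford group then $C_1 T'_G C_2$ preserves $C_2^*H_4 C_2\ne I$, so it cannot equal $\mathrm{CS}$. If instead $C_1,C_2\in G(\Z)$ then $C_1T'_GC_2$ preserves $H_4$ and again cannot equal $\mathrm{CS}$. The true relation in the paper is the \emph{conjugation} $X_P^{-1}T'_G X_P=\mathrm{CS}$ by the matrix $X_P$, which is not Clifford in either model (it is precisely the change of basis between the two models). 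You could salvage this route by noting that the Clifford hierarchy is covariant under conjugation of the ambient $U(4)$, so $\mathrm{CS}\in\mc C_3^{\mathrm{std}}$ implies $T'_G=X_P\,\mathrm{CS}\,X_P^{-1}\in\mc C_3^{G(\Z)}$---but that is conjugation, not Clifford left/right translation, so your stability argument does not apply as written. You also conflate $X_2$ (the parahoric change of basis from Prop.\ \ref{prop:supergold42}) with $X_P$ (the Pauli change of basis), which are distinct.

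Your alternative route---directly checking $T'_G P(T'_G)^{-1}$ on four generators---would work, but only after correctly fixing the model: the Pauli group that $T'_G$ must conjugate into the Clifford group is $(G(\Z)\cap K_1)/\langle i\rangle$ (or, after conjugating everything by $X_P$, the standard Pauli group with $T'_G$ replaced by $\mathrm{CS}$); it is \emph{not} the standard Pauli group paired with the raw $T'_G$, since they live in different unitary groups. Your note that $T'_G\notin\mc C_2$ because $\bar a_{T'_G}\ne 0$ is correct, though the lemma is satisfied by $T'_G\in\mc C_3$ alone. Overall, even a corrected version of your plan trades the paper's clean structural observation---parahoric containment under $T'_G$-conjugation---for a finite verification that hides where the statement actually comes from, and your specific proposal as written contains the gap described above.
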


\begin{proof}
As in the proof of Proposition \ref{prop:supergold42alt}, let  $K_1$ be the elements of $K$ that are trivial mod $(1+i)$ so that $K/K_1 \cong \Sp_4(\F_2)$. We can also compute:
\[
[G(\Z) : G(\Z) \cap K_1] = [K : K_1] \implies G(\Z)/(G(\Z) \cap K_1) = K/K_1 \cong \Sp_4(\F_2). 
\]
Identifying $G(\Z)/\langle i \rangle$ with the $2$-qubit Clifford group, this characterizes $(G(\Z) \cap K_1)/\langle i \rangle$ as the $2$-qubit Pauli group. 

Next, since $T'_G$ acts as an involution of the edge containing $x_0$ stabilized by $K''$, $T'_G K'' (T'_G)^{-1}$ stabilizes $x_0$ and is therefore contained in $K$. In total
\[
T'_G (K'' \cap G(\Q)) (T'_G)^{-1} \subseteq G(\Z). 
\]
Since $K_1 \subseteq K''$, this shows that $T'_G$ conjugates the Pauli group  $(G(\Z) \cap K_1)/\langle i \rangle$ into the Clifford group $G(\Z)/\langle i \rangle$. 
\end{proof} 

The $2$-qubit Pauli group has a standard presentation generated by tensors of two matrices chosen from 
\[
I = \begin{pmatrix}
1 & \\
& 1
\end{pmatrix},\quad
X = \begin{pmatrix}
 & 1\\
1 & 
\end{pmatrix},\quad
Y = \begin{pmatrix}
 & -i \\
i & 
\end{pmatrix}, \quad
Z = \begin{pmatrix}
1 & \\
& -1
\end{pmatrix}
\]
and the scalars $\pm 1, \pm i$. If we set:
\[
X_P := \begin{pmatrix}
 1 & -1 & 1 & -1 \\
 1 & -i & -1 & i \\
 i & 2-i & -1 & 1 \\
 -1 & i & i & 1-2 i
\end{pmatrix},
\]
then $X_P^{-1} (K_1 \cap G(\Q)) X_P$ is the Pauli group in this standard presentation.

As in Example \ref{ex:findingnonstandardgates}, we can make a choice (possibly without loss of generality conjugating $K''$ inside $K$; note that conjugation doesn't change the normal $K_1$):
\[
T'_G :=  \begin{pmatrix} 
 \frac{3}{2}-\frac{i}{2} & -\frac{1}{2}+\frac{i}{2} & \frac{1}{2}-\frac{i}{2} &
   \frac{1}{2}+\frac{i}{2} \\
 -\frac{1}{2}-\frac{i}{2} & \frac{3}{2}+\frac{i}{2} & -\frac{1}{2}-\frac{i}{2} &
   \frac{1}{2}-\frac{i}{2} \\
 -\frac{1}{2}+\frac{i}{2} & \frac{1}{2}-\frac{i}{2} & \frac{1}{2}+\frac{i}{2} &
   -\frac{1}{2}-\frac{i}{2} \\
 \frac{1}{2}+\frac{3 i}{2} & -\frac{1}{2}-\frac{3 i}{2} & \frac{1}{2}+\frac{3 i}{2} &
   -\frac{1}{2}+\frac{i}{2}
 \end{pmatrix}.
\]
Then, $X_P^{-1} T'_G X_P$ is the CS gate\footnote{In particular, the gate set of \ref{prop:supergold42alt} is equivalent to the gate set studied in \cite{glaudell2021optimal}. In fact, the exact synthesis algorithm of \cite{glaudell2021optimal} can be interpreted as using the exceptional Lie group isomorphism therein to realize the Clifford group as the points of an integral model of $\SO_6$ and then applying the general Bruhat-Tits theoretic algorithm of Theorem \ref{thm:gold-def-gates}(2). Here, we instead get the realization as integral points through conjugation by $X_P$, which also gives us far stronger covering bounds by automorphic techniques.}, giving Theorem \ref{thm:intro-main-super-clifford}. We may in fact take the $2$-qubit Clifford group together with any additional matrix $T$ such that $T'_G = X_P T X_P^{-1}$ is in $G(\Z[1/2])$ and has Cartan invariant $(1/2,0)$---for example, many other controlled Clifford gates or monomial matrices work.

\subsubsection{New Constructions: $\Q(\sqrt{-7})$}\label{sec:goldenexamples7}
For $E/F = \Q(\sqrt{-7})/\Q$, Proposition \ref{prop:gold-exm-7} produces the example in Theorem \ref{thm:intro-main-super}. 

\begin{prop}\label{prop:gold-exm-7}
For the $4 \times 4$ Hermitian positive definite matrix
\[
H_7 := 
\begin{pmatrix}
 7 & 0 & -3 i \sqrt{7} & -2 i \sqrt{7} \\
 0 & 7 & 2 i \sqrt{7} & -3 i \sqrt{7} \\
 3 i \sqrt{7} & -2 i \sqrt{7} & 14 & 0 \\
 2 i \sqrt{7} & 3 i \sqrt{7} & 0 & 14 \\    
\end{pmatrix},
\]
the following is an almost golden adelic group:
\[
G := U_4^{\Q(\sqrt{-7}), H_7}/\Q, \qquad K' := \{g \in G(\wh \Z) \,|\, g \text{ upper triangular} \pmod{(1 + \sqrt{-7})/2} \}
\]
It is also almost $\tau$-super-golden at $p=2$ for $\tau$ a complete chamber. 
\end{prop}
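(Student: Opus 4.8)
The plan is to follow the template of Propositions \ref{prop:gold-exm-3} and \ref{prop:supergold42}: first establish that $G(\wh\Z)$ has class number one via the mass formula (Lemma \ref{lem:pre-class}), then identify $K'$ as a congruence subgroup whose only nontrivial local component is an Iwahori at $2$, and finally deduce the almost golden property by matching indices. First I would record the local behaviour of $H_7$. In $E = \Q(\sqrt{-7})$ the rational prime $7$ ramifies and $2$ splits, with $\omega := (1+\sqrt{-7})/2$ a prime above $2$ (indeed $\omega\bar\omega = 2$, so $\mc O_E/\omega \cong \F_2$). A short $2\times2$-block computation with $H_7 = \begin{pmatrix} 7I_2 & B \\ B^* & 14I_2\end{pmatrix}$ gives $\det H_7 = 49$, hence $H_7$ is unimodular at every $p \neq 7$; consequently $U^{E,H_7}(\Z_p)$ is a hyperspecial maximal compact for $p \neq 7$, and under projection to the $\omega$-factor of $\mc O_E \otimes \Z_2$ one has $U^{E,H_7}(\Z_2) \cong \GL_4(\Z_2)$. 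Since $\disc(H_7) = 49$ is a norm from $E^\times$, Lemma \ref{lem:rationalpoints} shows $G$ is quasisplit at every place; comparing the $\Z_7$-model of $H_7$ with Remark \ref{rem:hermitianformconstruction} then shows $U^{E,H_7}(\Z_7)$ is a special maximal compact. Thus $\Ram(G) = \Ram(E) = \{7\}$ and the hypothesis of Lemma \ref{lem:pre-class} is met.

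Because $\Ram(H_7) = \emptyset$, all the factors $\lambda_\ell$ in Definition \ref{def:pre-mass} equal $1$, so $\lambda(G) = 1$ and $R(G) = L(G)$; substituting $\zeta(-1) = -1/12$, $\zeta(-3) = 1/120$ and the conductor-$7$ quadratic $L$-values $L(0,\chi_{E/\Q}) = 1$, $L(-2,\chi_{E/\Q}) = -16/7$ gives $R(G) = 1/5040$. On the arithmetic side I would compute $|G(\Z)| = 5040$ by the enumeration used in Proposition \ref{prop:gold-exm-3}: rescale $H_7$ to integral entries, realize $(\mc O_E^4, H_7)$ as a copy of the $E_8$ lattice (cf.\ Remark \ref{rem:E8}), list its minimal vectors, and build up the $\mc O_E$-matrices preserving $H_7$ row by row. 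Since then $|G(\Z)|^{-1} = R(G)$, Lemma \ref{lem:pre-class} gives that $G$ has class number one, i.e.\ $G(\wh\Z)$ is an almost golden adelic group.

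It remains to handle $K'$. Since $\omega$ lies over $2$, we have $K'_p = G(\Z_p)$ for $p \neq 2$, while $K'_2$ is the preimage in $G(\Z_2) \cong \GL_4(\Z_2)$ of a Borel subgroup of $\GL_4(\mc O_E/\omega) \cong \GL_4(\F_2)$, i.e.\ an Iwahori subgroup; this is exactly the stabilizer of a chamber $\tau$ of $\mc B(\GL_4(\Q_2))$, which has dimension $3 > 0$ and is standard traversable by Example \ref{ex:traversable}(1). Hence, once $K'$ is known to be almost golden, it is automatically almost $\tau$-super-golden at $2$ for $\tau$ a complete chamber. For the almost golden property, exactly as in Propositions \ref{prop:supergold32}--\ref{prop:supergold33}, since $G(\wh\Z)$ is almost golden it suffices to verify $|G(\Z) \cap K'| = |G(\Z)|/[G(\wh\Z):K']$. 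Here $[G(\wh\Z):K'] = [\GL_4(\Z_2):K'_2] = |\GL_4(\F_2)|/|B(\F_2)| = 20160/64 = 315$ (the number of complete flags in $\F_2^4$), so the needed identity is $|G(\Z) \cap K'| = 5040/315 = 16$, which I would confirm by a direct computer check — the sixteen elements being precisely those of $G(\Z)$ that are upper triangular modulo $\omega$.

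The argument is structurally routine and the real content is computational. The step I expect to be most delicate is fixing the local picture at $p = 7$: verifying that $H_7$ is $\mc O_{E_7}$-equivalent to the standard ramified-quasisplit Hermitian form of Remark \ref{rem:hermitianformconstruction}, so that $U^{E,H_7}(\Z_7)$ genuinely is a special maximal compact and Lemma \ref{lem:pre-class} applies. After that, the only remaining work is the two point counts $|G(\Z)| = 5040$ and $|G(\Z) \cap K'| = 16$ via the $E_8$-lattice enumeration; the mass-formula bookkeeping and the Iwahori index are immediate.
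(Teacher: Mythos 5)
Your proposal is correct and follows essentially the same route as the paper's proof: establish class number one by matching $R(G) = 1/5040$ against $|G(\Z)| = 5040$ via the $E_8$-lattice enumeration, use the split embedding at $2$ to identify $K'_2$ as an Iwahori (stabilizer of a chamber, giving the super-golden structure), and close by checking $|G(\Z)\cap K'| = 5040/315 = 16$. You also correctly work with $\GL_4(\F_2)$ where the paper's proof has an evident typo reading $\GL_2(\F_2)$.
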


\begin{proof}
Here, $H_7$ was chosen so that $U^{E, H_7}(\Z_p)$ is special at $p=7$ and hyperspecial at all other $p$ as in Remark \ref{rem:hermitianformconstruction}. Plugging in $L$-values
\[
R(G) = 5040^{-1}.
\]
As in Proposition \ref{prop:gold-exm-3}, we can also compute that $|G(\Z)| = 5040$, though the computation takes significantly longer to run\footnote{the norm-$7$ vectors again correspond to the $240$ roots of the $E_8$ lattice while the norm-$14$ vectors are the $2160$ of second-smallest length}. Therefore by lemma \ref{lem:pre-class}, $G$ has class number one. 

Next, in the embedding $G_2 \subseteq \GL_4(E_{(1 + \sqrt{-7})/2}) \times \GL_4(E_{(1 - \sqrt{-7})/2})$, projection onto the first coordinate is an isomorphism, $K'_2$ is an Iwahori subgroup. This is the stabilizer of a chamber. 

Finally, we need to show that $[G(\Z) : G(\Z) \cap K'] = [G(\wh \Z) : K']$. The Iwahori in $\GL_2(\F_2)$ has index $315$ by standard formulas for general linear groups over finite fields. Therefore, it suffices to check that $|G(\Z) \cap K'| = 5040/315 = 16$, which holds. 
\end{proof}

The chamber $\tau$ is standard traversable as in Example \ref{ex:traversable}(1). This gives a super-golden gate set with growth rate
\[
|S_2^{[\ell]}| \asymp 16^\ell.
\]
The group $C_2$ is the alternating group $A_7$ (resp. $\wtd C_2$ is the double cover usually denoted $2.A_7$). Inside this $\Gamma$ (resp. $\wtd \Gamma$) is the Sylow-$2$ subgroup. Finally, we can write down explicit generators
\begin{gather*}
T_2 :=  
\lf\{T_K^i := \begin{pmatrix} -o & 2 o + 2 & -o + 2 & o - 2 \\ -2 & o - 2 & 0 & -o \\ 1 & 4 o - 4 & o + 2 & -o - 2 \\ -o + 2 & o - 4 & o & -o \end{pmatrix}^i , 1 \leq i < 4 \ri\} \\
C_2 := \lf \langle 
\begin{pmatrix} 0 & -2 o + 1 & 1 & -2 \\ -o + 1 & -o & 0 & -1 \\ o + 1 & -2 & -o + 1 & o \\ -1 & -4 & -o + 1 & 2 o - 1 \end{pmatrix}, 
\begin{pmatrix} 2 & -o + 1 & -o + 1 & -o \\ 1 & 2 o & -1 & -o + 2 \\ -o + 1 & -3 & -o - 1 & o - 1 \\ -2 o + 2 & -o + 2 & -1 & -o - 2 \end{pmatrix}
\ri \rangle 
\end{gather*}
where $o = (1 + \sqrt{-7})/2$. The element 
$T_K$ acts as an order-4 rotation cyclically permuting the vertices of a chamber.

\subsubsection{New Constructions: $\Q(\sqrt{-2})$}\label{sec:goldenexamples2}

For $E/F = \Q(\sqrt{-2})/\Q$, we have:

\begin{prop}\label{prop:gold-exm-2-super?}
For the $4 \times 4$ Hermitian positive definite matrix
\[
H_8 :=
\begin{pmatrix}
 4 & 0 & -\sqrt{-2} & -2+2 \sqrt{-2} \\
 0 & 4 & -2-2 \sqrt{-2} & -\sqrt{-2} \\
 \sqrt{-2} & -2+2 \sqrt{-2} & 4 & 0 \\
 -2-2 \sqrt{-2} & \sqrt{-2} & 0 & 4 \\
\end{pmatrix}
\]
the following is an almost golden adelic group:
\[
G := U_4^{\Q(\sqrt{-2}), H_8}/\Q, \qquad K' := \lf\{g \in G(\wh \Z) \,\md|\, g \equiv \begin{pmatrix}
* & * & * & * \\
* & * & * & * \\
* & * & * & * \\
0 & 0 & 0 & *
\end{pmatrix} \pmod{1 + \sqrt{-2}} \ri \}
\]
It also acts transitively on the set of edges of type $(01)$---i.e. connecting vertices corresponding to adjacent points in the affine Dynkin diagram for $\GL_n$. 
\end{prop}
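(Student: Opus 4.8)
The plan is to follow the pattern of Propositions \ref{prop:gold-exm-3} and \ref{prop:gold-exm-7}: establish that $G=U_4^{\Q(\sqrt{-2}),H_8}$ has class number one, reduce the ``almost golden'' claim for $K'$ to a finite index and a finite order computation, and then identify $K'$ at $p=3$ with the stabilizer of an edge of the Bruhat--Tits building and invoke Lemma \ref{lem:gold-def-simp-tran}. First I would record the local structure: $H_8$ is chosen, following Remark \ref{rem:hermitianformconstruction}, so that $U^{\Q(\sqrt{-2}),H_8}$ is quasisplit at every place, $\det H_8$ is a power of $2$ (so that $U^{E,H_8}(\Z_p)$ is hyperspecial for $p\neq 2$), and $U^{E,H_8}(\Z_2)$ is a special --- here extraspecial, by the even/quasisplit/ramified row of Table \ref{tab:types-actions} --- maximal compact at the ramified prime $2$, as permitted by Lemma \ref{lem:specialismaximal}. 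Checking this is the one non-formal local input: it amounts to computing $\det H_8$ and a Jordan splitting of $H_8$ over $\Z_2[\sqrt{-2}]$ and comparing with the models of \cite{gan2001exact}*{\S3}. Granting it, $\Ram(H_8)=\emptyset$, $\Ram(G)=\{2\}$, and every $\lambda_\ell$ of Definition \ref{def:pre-mass} equals $1$ (the ``$2\mid n,\ \ell\notin\Ram(H)$'' case), so $\lambda(G)=1$.

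Next, class number one. With $\chi_{E/F}$ the quadratic character of conductor $8$, Definition \ref{def:pre-mass} gives
\[
L(G)=2^{-3}\,\zeta(-1)\,\zeta(-3)\,L(0,\chi_{E/F})\,L(-2,\chi_{E/F})=2^{-3}\cdot\Big(-\tfrac1{12}\Big)\cdot\tfrac1{120}\cdot 1\cdot(-3)=\tfrac1{3840},
\]
using $L(0,\chi_{E/F})=2h(E)/w_E=1$ and $L(-2,\chi_{E/F})=-B_{3,\chi_{E/F}}/3=-3$; hence $R(G)=3840^{-1}$. As in the proof of Proposition \ref{prop:gold-exm-3}, a direct computer enumeration of the Hermitian lattice $(\mc O_E^4,H_8)$ --- whose norm-$4$ vectors should again be the $240$ roots of the $E_8$-realization of Remark \ref{rem:E8} --- then shows $|G(\Z)|=3840$, so Lemma \ref{lem:pre-class} yields $c(G)=1$; equivalently $G(\widehat\Z)$ is almost golden.

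Now I would analyze $K'$. Since $-2\equiv 1\pmod 3$ the prime $3$ splits, $(3)=(1+\sqrt{-2})(1-\sqrt{-2})$, the residue field at $\mf p_1:=(1+\sqrt{-2})$ is $\F_3$, and (as in Example \ref{ex:gatessplit}) $G(\Q_3)\cong\GL_4(\Q_3)$, $G(\Z_3)\cong\GL_4(\Z_3)$, with reduction mod $\mf p_1$ realizing $\GL_4(\Z_3)\twoheadrightarrow\GL_4(\F_3)$. The congruence in the definition of $K'$ says exactly that the reduction of $g$ sends each $e_i$ $(i\le 3)$ into $W:=\langle e_1,e_2,e_3\rangle\subseteq\F_3^4$, so $K'_3$ is the preimage of the maximal parabolic $P:=\Stab_{\GL_4(\F_3)}(W)$ and all other local components of $K'$ coincide with those of $G(\widehat\Z)$; thus $[G(\widehat\Z):K']=[\GL_4(\F_3):P]=\#\{\text{hyperplanes in }\F_3^4\}=(3^4-1)/(3-1)=40$. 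The strong-approximation argument used for Proposition \ref{prop:gold-exm-3} (via Lemma \ref{lem:pre-class-p}) shows that, $G(\widehat\Z)$ being almost golden, $K'$ is almost golden iff $G(\Z)\cdot K'=G(\widehat\Z)$, i.e.\ iff $|G(\Z)\cap K'|=|G(\Z)|/40=96$, which I would verify by computer. For the transitivity claim, observe that $P$ is equally the $\GL_4(\Z_3)$-stabilizer of the lattice $L_1:=\langle e_1,e_2,e_3,3e_4\rangle$, so $K'_3$ is, in the enlarged-building sense of Definition \ref{def:gold-adelic}, the stabilizer in $G(\Q_3)$ of the edge $\tau:=\{[\Z_3^4],[L_1]\}$ of $\mc B(G(\Q_3))=\mc B(\GL_4(\Q_3))$, whose two vertices have types $0$ and $1$ --- adjacent nodes of the affine Dynkin diagram of $\GL_4$. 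Hence $K'$ is almost $\tau$-super-golden at $3$ with $\dim\tau=1>0$, and Lemma \ref{lem:gold-def-simp-tran}(2) gives that $\Lambda_3$ (modulo its central split part) acts transitively on $G(\Q_3)\cdot\tau$. Since $\GL_4(\Q_3)$ is transitive on the vertices of $\mc B$ and $\GL_4(\Z_3)$ is transitive on the hyperplanes of $\F_3^4$, the orbit $G(\Q_3)\cdot\tau$ is precisely the set of all edges joining vertices of adjacent type; in particular $\Lambda_3$ (equivalently $K'$) acts transitively on the set of edges of type $(01)$.

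The only genuinely non-routine inputs are the ones already flagged: verifying the local behavior of the explicit matrix $H_8$ at the wildly ramified prime $2$ (quasisplitness and the special-maximal-compact property), and the two finite order computations $|G(\Z)|=3840$ and $|G(\Z)\cap K'|=96$. Everything else is a direct application of the mass formula (Lemma \ref{lem:pre-class}) together with the Bruhat--Tits dictionary set up in Sections \ref{sec:unitary}--\ref{sec:gold}.
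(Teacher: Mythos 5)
Your proof is correct and follows essentially the same route as the paper's: verify class number one via the mass formula of Lemma \ref{lem:pre-class} (you spell out the $\lambda$-factors and the $L$-value computation $R(G)=1/3840$, which the paper leaves implicit), then identify $K'_3$ as the parahoric stabilizer of a type-$(01)$ edge of $\mathcal{B}(\GL_4(\Q_3))$ via the split embedding $G_3\cong\GL_4(E_{1+\sqrt{-2}})$ and match the index $[G(\widehat\Z):K']=40$ against $|G(\Z)\cap K'|=96$. The only differences are expository: the paper compresses the index bookkeeping and leaves the transitivity statement to the reader as a consequence of Lemma \ref{lem:gold-def-simp-tran}, whereas you make it explicit.
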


\begin{proof}
As in the other examples, $H_8$ was chosen so that $U^{E,H_8}(\Z_p)$ is special at $p=2$ and hyperspecial at all other $p$ as in Remark \ref{rem:hermitianformconstruction}. We also check\footnote{we again have that the norm-$4$ vectors with respect to $H_8$ correspond to roots of the $E_8$ lattice} that $R(G)^{-1} = 3840 = |G(\Z)|$ so $G(\Z)$ has class number one by lemma \ref{lem:pre-class}. 

Next, in the embedding $G_3 \subseteq \GL_4(E_{1 + \sqrt{-2}}) \times \GL_4(E_{1 - \sqrt{-2}})$, projection onto the first coordinate is an isomorphism. Therefore, $K'$ is a parahoric subgroup stabilizing an edge of type $(01)$. 

Finally, we check that $[G(\Z) : G(\Z) \cap K'] = [G(\wh \Z) : K'] = [G_3 : K'_3]$. For this, $G_3/K'_3$ is projective $3$-space over $\F_3$ which has $40$ points and we can compute that $|G(\Z) \cap K'| = 96 = 3840/40$. 
\end{proof}

Note that an edge of type $(01)$ isn't traversable as in \ref{def:traversable}, so Proposition \ref{prop:gold-exm-2-super?} doesn't a priori give a set of super-golden gates. However, in a stroke of good luck, $\bar \Lambda_3^{K'}$ does contain an element
\[
T_8 :=
\begin{pmatrix}
-2 & \sqrt{-2} - 1 & -\sqrt{-2} - 2 & -1 \\
-\sqrt{-2} + 3 & 0 & 0 & -2 \sqrt{-2} \\
2 \sqrt{-2} & 0 & 0 & \sqrt{-2} + 3 \\
2 \sqrt{-2} + 2 & 0 & \sqrt{-2} - 1 & 2
\end{pmatrix}
\]
acting as a cyclic rotation on the vertices of a chamber containing the point $v_0$ stabilized by $G(\Z_3)$. Therefore, this example can actually produce a super-golden gate set, even though it does not fit into our general framework. 

We also have an example involving more complicated subgroups at primes over $2$:

\begin{prop}\label{prop:gold-exm-2}
For the $4 \times 4$ Hermitian positive definite matrix
\[
H_2 := 
\begin{pmatrix}
 2 & 0 & \sqrt{-2} & 0 \\
 0 & 2 & 0 & \sqrt{-2} \\
 -\sqrt{-2} & 0 & 2 & 0 \\
 0 & -\sqrt{-2} & 0 & 2 \\    
\end{pmatrix},
\]
the following is an almost golden adelic group:
\[
G := U_4^{\Q(\sqrt{-2}), H_2}/\Q.
\]
\end{prop}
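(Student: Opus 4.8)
The plan is to apply Lemma~\ref{lem:pre-class}, following the template of Propositions~\ref{prop:gold-exm-3-stand}, \ref{prop:gold-exm-4}, and especially \ref{prop:gold-exm-2-super?} (which treats the same field $E=\Q(\sqrt{-2})$); the assertion is that $G(\widehat{\mathcal O})$ is an almost golden adelic group. First I would determine the local structure of $G=U_4^{\Q(\sqrt{-2}),H_2}$. A short block computation gives $\det(H_2)=4$, so $\disc(H_2)=(-1)^{6}\det(H_2)=4$ is a square, hence a norm from $E_v^\times$ to $\Q_v^\times$ at every place $v$; by Lemma~\ref{lem:rationalpoints} the group $G$ is quasisplit at all finite places and $\Ram(H_2)=\emptyset$, so $\Ram(G)=\Ram(E)=\{2\}$. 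For $p\neq 2$ the extension $E/\Q$ is unramified at $p$ and $H_2\in\GL_4(\Z_p\otimes\mc O_E)$, so $G(\Z_p)$ is hyperspecial. At $p=2$, which ramifies in $E$, I would check that the integral model defines (up to the content ideal $(\sqrt{-2})$) the standard ramified quasisplit Hermitian lattice of Remark~\ref{rem:hermitianformconstruction}; since $n=4$ is even and $G_2$ is quasisplit, this is not one of the exceptional cases of Lemma~\ref{lem:specialismaximal}, so $G(\Z_2)$ is the stabilizer of a special vertex and is special maximal compact. Consequently the mass formula of \cite{gan2001exact} applies with $\lambda_2=1$ (the case $2\mid n$, $\ell\notin\Ram(H)$), so $\lambda(G)=1$ and $R(G)=L(G)$.

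Second, I would evaluate $L(G)=2^{-3}\,\zeta(-1)\,\zeta(-3)\,L(0,\chi_{E/F})\,L(-2,\chi_{E/F})$, where $\chi_{E/F}$ is the quadratic character of conductor $8$ attached to $\Q(\sqrt{-2})/\Q$; these are all elementary special values. Since $L(G)$ depends only on $E$ and we have just shown $\lambda(G)=1$, this reproduces the mass constant of Proposition~\ref{prop:gold-exm-2-super?}, namely $R(G)^{-1}=3840$.

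Third, I would compute $|G(\Z)|$ by the brute-force method used in the earlier propositions: rescale $H_2$ to a normalized integral Hermitian matrix, enumerate the finitely many minimal vectors of the associated Hermitian $\mc O_E$-lattice (as anticipated by Remark~\ref{rem:E8}, these should again be the $240$ roots of a copy of the $E_8$ lattice), and build up all $\mc O_E$-matrices preserving $H_2$ by choosing rows one at a time from this set. This should yield $|G(\Z)|=3840$. Since then $|G(\Z)|^{-1}=R(G)$, Lemma~\ref{lem:pre-class} gives $c(G)=1$, i.e. $G(\widehat{\mathcal O})$ satisfies condition~(1) of Definition~\ref{def:gold-adelic}; as $G(\Z)$ is a group of order $3840$, condition~(2) plainly fails, so $G(\widehat{\mathcal O})$ is precisely an \emph{almost} golden adelic group.

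The main obstacle is the local analysis at the ramified prime $2$: one must identify $H_2$ over $\Z_2[\sqrt{-2}]$ with a Hermitian lattice whose parahoric stabilizer is special \emph{maximal} compact — so that Lemma~\ref{lem:specialismaximal} applies and the tables of \cite{gan2001exact} give $\lambda_2=1$ — and one must account for the subtlety that the content ideal of $H_2$ is the ramified prime $(\sqrt{-2})$, which cannot be cleared by scaling by $\Q^\times$ and therefore affects how the integral model is written down. The remaining ingredients are routine finite computations of exactly the kind carried out repeatedly in Section~\ref{subsec:gold-exm}.
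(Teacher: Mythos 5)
The proposal takes a fundamentally different route from the paper, and the route has a gap that the paper deliberately avoids. You apply Lemma~\ref{lem:pre-class} (the mass formula), which requires that $G(\mathcal O_{\mf p})$ be special maximal compact at \emph{every} finite place. You assert this at $\mf p = 2$ but do not verify it, and there is a serious obstacle: the content ideal of $H_2$ is $(\sqrt{-2})$ (both $2$ and $\sqrt{-2}$ appear as entries, and $\sqrt{-2}\,|\,2$), which is a nonunit that cannot be cleared by scaling by $\mathbb Q^\times$ or $\mathbb R^\times$ since $\sqrt{-2}$ is non-real; so $H_2$ does not even satisfy the standing simplifying assumption $\gcd(H_{ij})\in\mathcal O_E^\times$ of Section~\ref{sec:unitary}, and the lattice $\mathcal O_E^4$ with this form need not produce a special parahoric at the ramified prime $2$. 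Without special maximal compactness at $2$, the $\mathrm{Mass}(G)=R(G)$ input from \cite{gan2001exact} that Lemma~\ref{lem:pre-class} relies on is not available, so the criterion you invoke is not applicable.

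This shows up numerically: you predict $|G(\mathbb Z)|=3840$ by analogy with the \emph{different} form $H_8$ of Proposition~\ref{prop:gold-exm-2-super?}, but a direct computation gives $|G(\mathbb Z)|=512$. If one then combined the correct $|G(\mathbb Z)|=512$ with your claimed $R(G)^{-1}=3840$, Lemma~\ref{lem:pre-class} would actually \emph{disprove} class number one — further evidence that the local term $\lambda_2$ cannot be $1$ for this integral model and that your local analysis at $2$ is wrong.

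The paper instead uses Proposition~\ref{prop:second-criterion-class1}, the elementary building-theoretic criterion: pick the split prime $\mf p = 3 = (1+\sqrt{-2})(1-\sqrt{-2})$ where $G(\mathcal O_3)$ is hyperspecial, count $|\{g\in M_4(\mathbb Z[\sqrt{-2}])\,:\,g^*H_2 g = 3H_2\}| = 108544$, compute $|G(\mathbb Z)|=512$, and observe $108544/512 = 212 = \sum_{i=0}^4\binom{4}{i}_3$, which matches the vertex degree in the building and forces transitivity (hence class number one via Lemma~\ref{lem:pre-class-p}). This approach never touches the ramified prime $2$, which is precisely what this example needs. Your sketch would salvage itself only after a careful description of the parahoric at $2$ and the corresponding modification of $R(G)$ — at which point the alternative criterion is clearly the shorter path.
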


\begin{proof}
Here we use Proposition \ref{prop:second-criterion-class1} on $G =U_4^{\Q(\sqrt{-2}), H_2}$.
Note that $E = \Q(\sqrt{-2})$ is of class number one. 
We consider $\mf p =3$, which is a split prime in $E$, $3 = (1 + \sqrt{-2})(1 - \sqrt{-2})$, and since $\det H = 4$ we get that $G({\mc O}_{\mf p})$ is hyperspecial.
We compute
\[
\sum_{i=0}^4 {4 \choose i}_3 = 1+40+130+40+1 = 212.
\]
A direct calculation via computer yields $|G(\Z)| = 512$ and 
\[
|\{g\in M_4(\Z[\sqrt{-2}] \,|\, g^* H_2 g = 3 H_2\}| = 108544 = 512 \cdot 212.
\]
Hence, by Proposition \ref{prop:second-criterion-class1}, we get the claim.
\end{proof}

\subsubsection{New Constructions: Non-Quasisplit}\label{sec:goldenexamplesnqs}

Finally, we present examples where $G$ is non-quasisplit at some places:

\begin{prop}\label{prop:nqsgateexample3}
For the $4 \times 4$ Hermitian positive definite matrix
\[
H_{3R} := \diag(1,1,1,2),
\]
Define
\[
G := U_4^{\Q(\sqrt{-3}), H_{3R}}/\Q. 
\]
Then $G$ is non-quasisplit at $p=2,3$ and $G(\wh \Z)$ is almost golden. 
\end{prop}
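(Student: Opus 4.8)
The plan is to follow the template of Propositions~\ref{prop:gold-exm-3} and~\ref{prop:gold-exm-7}: first pin down the local structure of $G$, then apply Lemma~\ref{lem:pre-class} to reduce class number one to an explicit computation of $|G(\Z)|$. To begin, note that since $n=4$ the discriminant is $\disc(H_{3R}) = (-1)^{4\cdot 3/2}\det\diag(1,1,1,2) = 2$, and $\Ram(E) = \{3\}$ since $\disc(\Q(\sqrt{-3})) = -3$. To locate $\Ram(H_{3R})$ I would compute the Hilbert symbol $(2,-3)_v$ place by place: it is trivial at every finite $v\notin\{2,3\}$ and at $\infty$ (as $2>0$); at $v=3$ the extension $\Q_3(\sqrt{-3})$ is ramified and a unit is a norm iff it is $\equiv 1\bmod 3$, so $(2,-3)_3 = \bigl(\tfrac{2}{3}\bigr) = -1$; hence $(2,-3)_2 = -1$ by the product formula (equivalently, $2$ has odd valuation while $\Q_2(\sqrt{-3})/\Q_2$ is unramified, so $2\notin N_{E_2/\Q_2}(E_2^\times)$). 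Thus $\Ram(H_{3R}) = \{2,3\}$ and $\Ram(G) = \{2,3\}$. Since $n$ is even and $\disc(H_{3R})$ is a non-norm at $2$ and $3$, Lemma~\ref{lem:rationalpoints} shows $G(\Q_v)$ is the unique non-quasisplit unitary group at $v=2,3$ and quasisplit at all other places, which gives the first assertion.

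Next I would verify that $G(\Z_p)$ is a special maximal compact subgroup for every finite $p$, so that Lemma~\ref{lem:pre-class} is applicable. For $p\notin\{2,3\}$ the extension $E/\Q$ is unramified and $\diag(1,1,1,2)\in\GL_4(\mc O_E\otimes\Z_p)$, so $G(\Z_p)$ is hyperspecial. At $p=3$ the extension $E_3/\Q_3$ is ramified and $H_{3R} = \diag(\m I_3,\beta)$ with $\beta=2$ not a norm from $E_3$; this is precisely the extraspecial local model of Remark~\ref{rem:hermitianformconstruction}, so $G(\Z_3)$ is extraspecial maximal compact. At $p=2$ the extension $E_2/\Q_2$ is unramified and $-1\in N_{E_2/\Q_2}(E_2^\times)$, so over $\mc O_{E_2}$ the form $\diag(1,1,1,2)$ is isometric to a hyperbolic plane together with $\langle 1\rangle\perp\langle -2\rangle$, i.e. the non-quasisplit unramified local model of Remark~\ref{rem:hermitianformconstruction}; by Example~\ref{exa:pre-B(PU'_n(F))} all vertices of $\mc B(PU'_4)$ are extraspecial, so $G(\Z_2)$ is again extraspecial maximal compact.

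It then remains to compute the mass constant $R(G) = L(G)\lambda(G)$ and compare with $|G(\Z)|^{-1}$. Since $L(G)$ is an inner-form invariant and $G$ is an inner form of $U_4^{\Q(\sqrt{-3}),H_3}$, Proposition~\ref{prop:gold-exm-3} gives $L(G) = 155520^{-1}$. From Definition~\ref{def:pre-mass} with $n$ even: at $\ell=2\notin\Ram(E)$, $\ell\in\Ram(H)$, we get $\lambda_2 = (2^4-1)/(2+1) = 5$; at $\ell=3\in\Ram(E)\cap\Ram(H)$ with $K_3$ extraspecial we get $\lambda_3 = \frac{1}{2}(3^4-1)/(3^2+1) = 4$; hence $\lambda(G) = 20$ and $R(G) = 20\cdot 155520^{-1} = 7776^{-1}$. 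A direct computer enumeration of $G(\Z)$ — building up matrices column-by-column from the $H_{3R}$-norm-$1$ and norm-$2$ vectors in $\mc O_E^4$, exactly as in the proof of Proposition~\ref{prop:gold-exm-3} — gives $|G(\Z)| = 7776$ (already the monomial subgroup $\langle\zeta_6\rangle^4\rtimes S_3$, permuting the three coordinates of equal norm, has this order, so the content is that there is nothing else). By Lemma~\ref{lem:pre-class} this yields $c(G) = 1$, i.e. $G(\A) = G(\Q)\,G_\infty\,G(\wh\Z)$, so $G(\wh\Z)$ is almost golden.

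I expect the main obstacle to be the local analysis at $p=2$ and $p=3$: matching $\diag(1,1,1,2)$ with the explicit extraspecial local models of Remark~\ref{rem:hermitianformconstruction} so that the correct case of the $\lambda_\ell$-table is used (the even non-quasisplit cases are exactly the ones not tabulated in \cite{gan2001exact}), together with the brute-force verification that $|G(\Z)| = 7776$, which although routine is the computational heart of the argument.
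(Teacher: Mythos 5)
Your proposal is correct and follows essentially the same route as the paper: discriminant analysis for non-quasisplitness, matching $H_{3R}$ to the local models of Remark~\ref{rem:hermitianformconstruction} at $p=2,3$, computing $R(G)=7776^{-1}$ via the mass formula, and checking $|G(\Z)|=|\mu_6^4\rtimes S_3|=7776$ to conclude by Lemma~\ref{lem:pre-class}. You have simply written out the $\lambda_\ell$ bookkeeping and the norm/Hilbert-symbol computations that the paper leaves implicit (noting also that at $p=2$ the vertex is in fact extraspecial, which is consistent with Table~\ref{tab:types-actions} and slightly sharper than the paper's phrase ``special at $2$'').
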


\begin{proof}
Non-quasisplitness can be tested by computing $\disc(H_{3R})$. In addition $G(\wh \Z)$ was chosen as in Remark \ref{rem:hermitianformconstruction} so that it is extraspecial at $3$, special at $2$, and hyperspecial at all other places.  

Next, $G(\Z)$ is easily computable as $\mu_6^4 \rtimes S^3$ where $\mu_6$ is the group of $6$th roots of unity and the $S_3$ acts on the first three coordinates. This has size $6^5 = 7776$ and we may also compute $R(G) = 7776^{-1}$ so the result follows by lemma \ref{lem:pre-class}. 
\end{proof}

The gate sets $S_2$ and $S_3$ from \ref{prop:nqsgateexample3} are particularly interesting: $\langle S_2 \rangle$ acts transitively on every other vertex of an infinite $9$-regular tree while $\langle S_3 \rangle$ acts transitively on the degree-$10$ vertices of an infinite $(10,4)$-biregular tree.

\begin{prop}\label{prop:nqsgateexample4}
For the $4 \times 4$ Hermitian positive definite matrix
\[
H_{4R} := \diag(1,1,1,3),
\]
define
\[
G := U_4^{\Q(i), H_{4R}}/\Q. 
\]
Then $G$ is non-quasisplit at $p=2,3$ and $G(\wh \Z)$ is almost golden. 
\end{prop}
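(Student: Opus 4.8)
The plan is to imitate the proof of Proposition~\ref{prop:nqsgateexample3}: first read off the local structure of $G$, then use Remark~\ref{rem:hermitianformconstruction} to see that $G(\wh\Z)$ is special maximal compact at every finite place so that Lemma~\ref{lem:pre-class} applies, and finally verify the class-number-one identity $|G(\Z)|^{-1}=R(G)$.

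\emph{Local structure.} Since $\disc(H_{4R})=(-1)^{6}\det\diag(1,1,1,3)=3$ and $n=4$ is even, Lemma~\ref{lem:rationalpoints} says that at a non-split finite place $\mf p$ the group $G$ is non-quasisplit precisely when $3\notin N_{F_{\mf p}\otimes_{\Q}\Q(i)/F_{\mf p}}$. The non-split places of $\Q(i)/\Q$ are $p=2$ (ramified) and the inert primes $p\equiv 3\pmod 4$; at an inert $p\neq 3$ the element $3$ is a unit of even valuation, hence a norm from the unramified quadratic extension; at $p=3$ one has $v_3(3)=1$, odd, so $3$ is not a norm; and at $p=2$ a $2$-adic unit is a norm from $\Q_2(\sqrt{-1})$ iff it is $\equiv1\pmod 4$, which $3$ is not. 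Hence $G$ is non-quasisplit exactly at $p=2,3$ and quasisplit elsewhere.

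\emph{Maximal compacts and mass.} For $p\notin\{2,3\}$ one has $p\notin\Ram(E)$ and $\diag(1,1,1,3)\in\GL_4(\Z_p[i])$, so $G(\Z_p)$ is hyperspecial. At $p=2$ the form $\diag(1,1,1,3)$ is literally the extraspecial model $\diag(\m I_{n-1},\beta)$ of Remark~\ref{rem:hermitianformconstruction} with $\beta=3$ a non-norm from $\Q_2(i)$, so $G(\Z_2)$ is extraspecial; at $p=3$ the $\Z_3[i]$-lattice $\diag(1,1,1,3)$ has Jordan decomposition (rank-$3$ unimodular)$\,\oplus\,\langle 3\rangle$, and since the norm map is surjective on units of the unramified extension each Jordan block is unique up to isometry, so $\diag(1,1,1,3)$ is isometric to the unramified non-quasisplit model of Remark~\ref{rem:hermitianformconstruction} and $G(\Z_3)$ is again extraspecial. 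Then Lemma~\ref{lem:pre-class} applies. The factor $L(G)$ depends only on $E=\Q(i)$ and $n=4$ and is an inner-form invariant, so $L(G)=46080^{-1}$ exactly as in Proposition~\ref{prop:gold-exm-4}. Here $\Ram(G)=\{2,3\}$, and Definition~\ref{def:pre-mass} gives $\lambda_2=\tfrac12\cdot\frac{2^{4}-1}{2^{2}+1}=\tfrac32$ (even $n$, $2\in\Ram(E)\cap\Ram(H)$, $K_2$ extraspecial) and $\lambda_3=\frac{3^{4}-1}{3+1}=20$ (even $n$, $3\notin\Ram(E)$, $3\in\Ram(H)$), whence $\lambda(G)=30$ and $R(G)=30/46080=1536^{-1}$.

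\emph{The integral point count, and the main obstacle.} The remaining point is to show $|G(\Z)|=1536$. The key observation is that $3$ is not a sum of two squares in $\Z[i]$: consequently no vector of Hermitian norm $3$ supported on the first three coordinates can occur as $ge_4$ for $g\in G(\Z)$, since the $\Z[i]$-span of such a vector together with its Hermitian-orthogonal complement has index $9$ in $\Z[i]^{4}$ and therefore cannot be completed to an isometry basis. Hence every $g\in G(\Z)$ sends $e_4$ to $\Z[i]^\times e_4$ and preserves the orthogonal splitting $\Z[i]^{3}\perp\Z[i]e_4$, so $G(\Z)\cong U_3^{\Q(i),I}(\Z)\times\mu_4\cong(\mu_4^{3}\rtimes S_3)\times\mu_4$, of order $384\cdot4=1536$. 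Thus $|G(\Z)|^{-1}=1536^{-1}=R(G)$, and Lemma~\ref{lem:pre-class} yields $c(G)=1$, i.e.\ $G(\wh\Z)$ is almost golden; it fails to be golden because $G(F)\cap G(\wh\Z)=G(\Z)$ has order $1536$. I expect the delicate spots to be (i) picking out $\lambda_2=\tfrac32$ rather than the non-extraspecial variant, which rests on the fact that $\diag(\m I_3,\beta)$ gives an extraspecial vertex at the wildly ramified prime $2$, and (ii) pinning down $|G(\Z)|$ by hand via the above argument; the latter can also be replaced by a short computer verification, or one can sidestep Lemma~\ref{lem:pre-class} altogether by applying Proposition~\ref{prop:second-criterion-class1} at a convenient split prime.
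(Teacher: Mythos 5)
Your proof is correct and follows exactly the route the paper intends: it unwinds the terse ``similar argument to Proposition~\ref{prop:nqsgateexample3}'' by computing $\disc(H_{4R})=3$, checking non-normality at $p=2,3$, identifying $G(\Z_2)$ and $G(\Z_3)$ as extraspecial via Remark~\ref{rem:hermitianformconstruction} (for $p=3$ noting that all special vertices in the unramified non-quasisplit case are extraspecial by Table~\ref{tab:types-actions}), evaluating $\lambda_2=3/2$, $\lambda_3=20$, $L(G)=46080^{-1}$ to get $R(G)=1536^{-1}$, and matching $|G(\Z)|=1536$ to invoke Lemma~\ref{lem:pre-class}. The hand derivation of $|G(\Z)|\cong\mu_4^4\rtimes S_3$ (ruling out $ge_4$ supported on the first three coordinates by the discriminant mismatch between $e_4^\perp\cap\Z[i]^4$ and $v^\perp\cap\Z[i]^4$) is a nice explicit substitute for the paper's ``easily computable,'' and the remaining numerology all checks out.
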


\begin{proof}
This is a similar argument to \ref{prop:nqsgateexample3} with $R(G)^{-1} = |G(\Z)| = 1536$. 
\end{proof}

In the case of \ref{prop:nqsgateexample4}, $\langle S_2 \rangle$ acts transitively on the degree-$5$ vertices of an infinite $(5,3)$-biregular tree while $\langle S_3 \rangle$ acts transitively on every other vertex of an infinite $28$-regular tree. 

\begin{rem}\label{rem:GUadvantage}
The example in \ref{prop:nqsgateexample3} can be modified to give a $K'$ that is $\tau$-super-golden at $2$ for $\tau$ a \emph{single} edge in $\mc B(G_2)$. However, this $\tau$ is far less convenient than Example \ref{ex:traversable}(6) since we do not have an element in $G_2$ that flips an edge---it requires $8$ additional elements $t_i$ corresponding to the other $8$ edges neighboring at a vertex. 

Moving to the setting of $GU_4^{\Q(\sqrt{-3}), H_{3R}}$ may provide the desired single-edge flip. However, we cannot prove our automorphic bound \ref{prop:auto-cover} for $GU$ since we do not then have access to the endoscopic classification. Therefore we postpone analyzing this until the requisite automorphic results are known to be true.  
\end{rem}

\subsubsection{Clifford Gates}

We end this section by discussing the well-studied and standard Clifford+T gates
and their generalizations from the point of view of arithmetic unitary
groups and golden adelic groups (see \cite{sarnak2015letter} and
the references there).

The classical Clifford+T gates are a finite set of unitary $2\times2$
matrices with coefficients in the ring $\mathbb{Z}\left[\sqrt{-1},\sqrt{2}^{\pm1}\right]$.
By \cite{kliuchnikov2013fast}, the set of elements in $U\left(2\right)$
that are synthesisable (i.e. generated by matrix multiplication and
tensoring) by the Clifford+T gates is precisely the full $2$-arithmetic
group $\Lambda$ of unitary $2\times2$ matrices with coefficients
in the ring $\mathbb{Z}[\sqrt{-1},\sqrt{2}^{\pm1}]$; i.e.
$\Lambda:=G(\mathbb{Z}[\sqrt{2}^{\pm1}])$ where
\[
G:=U_{2}^{\mathbb{Q}[\sqrt{2},\sqrt{-1}],I}/\mathbb{Q}[\sqrt{2}].
\]
The group $G$ has class number one and moreover gives rise to super
golden gate sets (see Section 4.1.3 in \cite{Parzanchevski2018SuperGoldenGates}).

The multiqubit Clifford+T gates are a finite set of unitary $2^{n}\times2^{n}$
matrices with coefficients in the ring $\mathbb{Z}[\sqrt{-1},\sqrt{2}^{\pm1}]$.
By \cite{giles2013exact}, the group of elements in $U\left(2^{n}\right)$
that are synthesisable by the multiqubit Clifford+T gates is $\Lambda:=G(\mathbb{Z}[\sqrt{2}^{\pm1}])$,
where
\[
G:=U_{2^{n}}^{\mathbb{Q}[\sqrt{2},\sqrt{-1}],I}/\mathbb{Q}[\sqrt{2}].
\]
However, for $n\geq2$, the group $G$ is not of class number one---in particular $\Lambda$ does not act transitively on the special
vertices of the corresponding Bruhat-Tits building of $G(\mathbb{Q}[\sqrt{2}]_{\sqrt{2}})$. 

The Clifford+cyclotomic gates are a finite set of unitary $2\times2$
matrices with coefficients in the ring $R_{m}:=\mathbb{Z}\left[\zeta_{m}\right]\left[\frac{1}{2}\right]$,
where $\zeta_{m}=e^{\frac{2\pi i}{m}}$, $m\in\mathbb{N}$. These
matrices sit inside the full $2$-arithmetic subgroup of 
\[
G:=U_{2}^{\mathbb{Q}[\zeta_{m}],I}/\mathbb{Q}[\zeta_{m}+\zeta_{m}^{-1}].
\]
By \cite{forest2015exact,ingalls2021clifford}, the group of elements
in $U\left(2\right)$ that are synthesizable by the Clifford+cyclotomic
gates is a $2$-arithmetic subgroup of $G$ if and only if $m=4,8,12,16$
or $24$. Note that being a $2$-arithmetic subgroup only implies that
the class number is finite, not necessarily that the class number
is one (it is one for $m=4,8$).

\subsection{Super-Golden Gate Set Comparisons}\label{sec:comparisons}
The case of super-golden gates on $PU(4)$ is particularly interesting for applications. We compare the covering rate produced by our $2$-qubit super-golden gates versus using $1$-qubit super-golden gates together with specific other $2$-qubit gates. 

Assume first we have a set of super-golden gates $S_\mf p$ for $PU(4)$ such that $|S_\mf p^{[\ell]}| \asymp R^\ell$. We want to find the minimum $\ell$ such that there is an element of $S_\mf p^{[\ell]}$ within distance $\eps$ of some $A \in PU(4)$; in other words, such that $B(A, \eps^{15}) \cap S_\mf p^{[\ell]} \neq \emptyset$. Up to decreasing $C$ by an arbitrarily small factor, the covering property gives 
\[
\ell \leq \log_R (\eps^{-15}) + A = 15 (\log_2 R)^{-1} \log_2 (1/\eps) + A
\]
for some constant $A$.

On the other hand, assume we have a set of super-golden gates $S_\mf p$ for $PU(2)$ such that $|S_\mf p^{[\ell]}| \asymp C^\ell$. There is an embedding $L = U(2) \times U(2)/U(1) \into PU(4)$ that restricts to two embeddings $L_1, L_2 : U(2) \into PU(4)$. The paper \cite{Bgates} gives a way to write elements of $PU(4)$ as a product of a minimal number of elements in $L_1 \cup L_2$ together with copies of a fixed additional matrix (called $B$). This requires $6$ elements of $L_1 \cup L_2$.  

To approximate $A \in PU(4)$ within distance $\eps$, each of the factors $A_i$ in the $L_i$ needs to be approximated within distance $\approx \eps/6$; in other words we need to find an element of $S_\mf p^{[\ell]}$ in $B(A_i, 1/\eps^3)$ (together with a phase shift). This implies that we need $\ell$ gates in total with
\[
\ell \leq 6 \log_R (\eps^{-3}) + A \approx 18  (\log_2 R)^{-1} \log_2 (1/\eps) + A
\]
for some constant $A$. 

The CNOT gate is sometimes preferred instead of $B$ since it is contained in the $2$-qubit Clifford group. Three copies of CNOT together with $8$ elements of $L_1 \cup L_2$ suffice to give any element of $PU(4)$ (see \cite{CNOT}). This approximation strategy therefore instead requires $\ell$ gates with:
\[
\ell \leq 24 (\log_2 R)^{-1} \log_2 (1/\eps) + A.
\]

In Table \ref{tab:supergolden}, we summarize details of selected gate sets constructed from either $1$-qubit super-golden sets from \cite{Parzanchevski2018SuperGoldenGates} or the $2$-qubit sets here. Each can be thought of as a finite group together with some extra finite-order elements---which ones are relevant for how well the gates can be (or could be with future work) realized in a fault-tolerant way with respect to some quantum error correction scheme. 

More specifically, for the added gates to be implementable with the teleportation procedure of \cite{GC99}, we need the added $T$ to satisfy that $TQT^{-1} \subseteq K \cap G(\Q)$ for some group $Q$ that linearly spans all $4 \times 4$ complex matrices (equivalently, $Q$ is an irreducible representation of itself). In the simply traversable case (recall Definition \ref{def:traversable}---this is satisfied for all examples considered here), it suffices to check that $K' \cap G(\Q)$ spans. For compatibility with the current stabilizer-code paradigm for fault-tolerant implementation, we need the much stronger property that the finite group is Clifford and that the added gates are in the 3rd level of the Clifford hierarchy. We mark these cases in Table \ref{tab:supergolden}. 

We also present, as in the calculations above, the growth rate for the gate set $L$ and the constant determining how efficiently elements of $PU(4)$ can be approximated. We emphasize that this ``covering efficiency'' is the theoretical optimal. The best known practically computable approximations are a factor of $7/3$ larger in the $1$-qubit cases as in \cite{BS23}. The $2$-qubit cases here are worse by a much larger factor since the algorithm in Theorem \ref{thm:gold-def-gates}(3) is very far from optimized.

\begin{rem}\label{rem:alternatesupergolden}
Expanding the notion of super-golden gates may be worthwhile. As one example, we can consider the case of  $K^\infty_2 \subseteq K^\infty_1 \subseteq G^\infty$ and place $\mf p$ such that
\begin{itemize}
    \item $K^\infty_1$ has class number 1,
    \item $K^\infty_2$ has class number $n$,
    \item $K_{1,\mf p}$ is special,
    \item $K_{2,\mf p}$ is a stabilizer of $\tau$ as in Example \ref{ex:traversable}(1,3-6). 
\end{itemize}
Then, we get a super-golden gate set requiring $n$ different added gates $T_i$. The weakening of the class-number-$1$ condition on $K^\infty_2$ allows for examples with dramatically better growth rates, though it is unclear whether this is worth the cost of extra $T_i$. The best growth rates should come from Example \ref{ex:traversable}(3) giving $R = q_\mf p^6$ as in Example \ref{ex:growthratens}.  

The best general framework for constructing super-golden gate sets is currently unclear. 
\end{rem}

\begin{table}[ht!]
    \centering
    \begin{tabular}{cc|cccc}
    & Gate Set & Fin. Group & Added Gates & $R$ & Covering Efficiency \\ \midrule
    \multirow{5}{*}{1-qb} &  $\text{Cliff.} + T + \text{CNOT}$** & 2-qb Cliff. & $T$ & 2 & $24$ \\
     & $\text{Cliff.} + T + B$ &$(\text{1-qb. Cliff.})^2$ & $T,B$ & 2 & $18$ \\
     & Oct. + CNOT  & 2-qb Cliff. & $T_{24}$ & 23 & $24(\log_2 23)^{-1} \approx 5.31$ \\
     & Icos. + CNOT  & $(\text{Icos.})^2$ & $T_{60}$,CNOT & 59 & $24(\log_2 59)^{-1} \approx 4.08$ \\
    & Icos. + B & $(\text{Icos.})^2$ & $T_{60},B$ & 59 & $18(\log_2 59)^{-1} \approx 3.06$ \\ \midrule
    \multirow{5}{*}{2-qb} & \ref{prop:supergold42alt} (Cliff.$+$CS)** & 2-qb Cliff. & $T'_G \sim$ CS & 8 & $15(\log_2 8)^{-1} = 5$ \\
    & \ref{prop:supergold42}* & 2-qb Cliff. & $T_G$ & 16 & $15(\log_2 16)^{-1} = 3.75$ \\
    & \ref{prop:gold-exm-7} & $\Alt_7$ & $T_K, T_K^2, T_K^3$ & 16 & $15(\log_2 16)^{-1} = 3.75$ \\ 
    & \ref{prop:supergold32} & $C_2(3)$ & $T_{E,2}$ & 64  & $15(\log_2 64)^{-1} = 2.5$ \\
     & \ref{prop:supergold33}* & $C_2(3)$ &  $T_{E,3}$ & 81 & $15(\log_2 81)^{-1} \approx 2.37$ \\ \bottomrule
    \end{tabular}
     \vspace{\parskip}
    \begin{tabular}{p{0.9\textwidth}}
        \textbf{Note:} Starred entries are where $K' \cap G(\Q)$ spans $\Mat_{4 \times 4} \C$ so teleportation can implement the added gates assuming an implementation of the finite group. Double starred entries are where, in addition, the finite group is Clifford and the added gates are at the $3$rd level of the Clifford hierarchy.  
    \end{tabular}
    \caption{Gate Set Comparisons}
    \label{tab:supergolden}
\end{table}

\section{Automorphic Representations Background}\label{sec:auto}

We now enter the second half of the paper: proving the covering property of our golden gate sets using the theory of automorphic representations. In this section we recall some basic facts and notations concerning automorphic representations. 


\subsection{Automorphic representations}\label{subsec:auto-rep}

Throughout this section, $F$ is a number field with ring of integers
$\mO=\mO_{F}$ and adele ring $\bA=\bA_{F}$. Let $v$ denote a place
of $F$, let $F_{v}$ be the $v$-completion of $F$, and, when $v$
is finite, let $\mO_{v}$ be the ring of integers of $F_{v}$ with
uniformizer $\varpi_{v}$ and order of residue field $q_{v}:=|\mO_{v}/\p_{v}\mO_{v}|$. 

Let $G$ be a connected reductive group over $F$. For simplicity, assume that the maximal split torus in the center of the real group $G_\infty$ is trivial so that $G(F) \bs G(\A)$ has finite volume.  Fix a $k$-embedding $G\hookrightarrow GL_n(k)$. For any place $v$, denote $G_{v}:=G(k_{v})$. When $v$ is finite, denote $K_{v}=G(k_{v})\cap GL_n(\mO_{v})$ and, for any $m\in\bN$, denote $K_{n}(\p_{v}^{m}) := \ker\left(K_{v}\rightarrow GL_n(\mO_{v}/\p_{v}^{m}\mO_{v})\right)$. 

Consider the right regular $G(\mathbb{A})$-representation on
$L^{2}(G(F)\backslash G(\mathbb{A}))$. An $F$-automorphic representation
of $G$ is an irreducible $G(\mathbb{A})$-representation $\pi$
which is weakly contained in $L^{2}(G(F)\backslash G(\mathbb{A}))$
and whose central character is unitary. Denote by $\mc{AR}(G)$ the
set of $F$-automorphic representations of $G$. Consider the decomposition
of $\mc{AR}(G)$ into its cuspidal $\mc{AR}_{cusp}(G)$, residual $\mc{AR}_{res}(G)$, discrete $\mc{AR}_{\disc}(G)$, and continuous $\mc{AR}_{cont}(G)$ parts:
\begin{gather*}
\mc{AR}(G)=\mc{AR}_{cusp}(G)\oplus\mc{AR}_{res}(G)\oplus\mc{AR}_{cont}(G), \\
\mc{AR}_{\disc}(G) = \mc{AR}_{cusp}(G) \oplus \mc{AR}_{res}(G).
\end{gather*}

Any $\pi\in\mc{AR}(G)$ decomposes as a restricted tensor product $\pi=\otimes_{v}^{'}\pi_{v}$,
where $\pi_{v}$, called the\emph{ local-factor }of $\pi$ at $v$, is an irreducible
admissible $G_{v}$-representation (cf.\ \cite{flath1979decomposition}).
Let $\sigma(\pi_v)$ be the infimum over $\sigma\geq2$, such that
each $K_{v}$-finite matrix coefficient of $\pi_{v}$ is in $L^{\sigma+\epsilon}(G_{v})$
for any $\epsilon>0$. Say that $\pi$ is tempered at $v$ if $\sigma(\pi_v)=2$.
The Generalized Ramanujan Conjecture for $G=GL_n$ states the following
(see \cite{Sarnak2005NotesgeneralizedRamanujan}):
\begin{conjecture}
\label{conj:auto-rep-GRC} (GRC) Let $F$ be a number field, $N\in\mathbb{N}$,
and let $\pi\in\mc{AR}_{cusp}(GL_n)$. Then the local component $\pi_v$ is tempered at
every place $v$ of $F$.
\end{conjecture}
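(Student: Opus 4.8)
This statement is the Generalized Ramanujan Conjecture for $\GL_n$, one of the central open problems of the Langlands program; no complete proof is known, so what follows is the strategy that succeeds on the large subfamilies relevant here---and which supplies the covering bound used elsewhere in this paper---together with the point at which it currently fails. The plan is to reduce temperedness of $\pi_v$ to a bound on local parameters. Since every cuspidal $\pi$ on $\GL_n$ over a number field is globally generic (Shalika, Piatetski-Shapiro), each $\pi_v$ is generic, hence $\pi_v$ is tempered precisely when its Langlands parameter is bounded; at a finite unramified place this says the Satake parameters $\alpha_{1,v},\dots,\alpha_{n,v}$ satisfy $|\alpha_{i,v}|=1$, and at an archimedean place it is the corresponding archimedean bound (for $n=2$, $F=\Q$ this is Selberg's eigenvalue conjecture). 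The first concrete goal is therefore the bound $q_v^{-\theta}\le |\alpha_{i,v}|\le q_v^{\theta}$ with the optimal $\theta=0$.

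For the cohomological, conjugate self-dual case---$F$ a CM or totally real field, $\pi$ regular $C$-algebraic and conjugate self-dual---I would descend $\pi$ to a discrete automorphic representation of a unitary group realized in the cohomology of a Shimura variety, attach to it a compatible system of $\ell$-adic Galois representations $\rho_{\pi,\ell}\colon\Gal(\overline F/F)\to\GL_n(\overline\Q_\ell)$ matching $\pi_v$ under local Langlands (Clozel, Kottwitz, Harris--Taylor, Shin, Chenevier--Harris, Caraiani, among others), and then invoke purity of $\rho_{\pi,\ell}$---ultimately Deligne's proof of the Weil conjectures---to force $|\alpha_{i,v}|=1$ at the unramified places and the analogous weight-monodromy bound in general. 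This is exactly the input used in Sections~\ref{sec:matrixcoefficient}--\ref{sec:auto-cover} for the golden gate sets.

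Outside the cohomological range I would fall back on analytic and functorial inputs. The unconditional partial bounds of Jacquet--Shalika, Luo--Rudnick--Sarnak, and Kim--Sarnak (for $\GL_2$, $\theta=7/64$) are obtained from the nonvanishing, analytic continuation, and pole structure of Rankin--Selberg and symmetric-power $L$-functions, and a proof of the sharp $\theta=0$ along these lines would require that every symmetric power $\Sym^k\pi$ be isobaric automorphic on an appropriate $\GL_N$---full symmetric-power functoriality, known for $\GL_2$ by Newton--Thorne but open for $n\ge 3$. The alternative routes are Langlands' Beyond Endoscopy program, extracting Ramanujan-type cancellation directly from the (stable) trace formula, and global period or theta-lift methods; neither is presently within reach of the general statement.

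The hard part---where I do not know how to proceed---is the \emph{non-algebraic} cuspidal forms, such as Maass forms on $\GL_2/\Q$ of non-motivic Laplace eigenvalue, for which there is no underlying motive or Galois representation and even the archimedean Ramanujan bound is conjectural. Any honest route through this case appears to require either a full realization of Langlands functoriality or a genuinely new Beyond-Endoscopy mechanism, and establishing one of these is the main obstacle. Consequently, for the purposes of this paper we only use the conjugate self-dual, cohomological instances of the conjecture, which are theorems.
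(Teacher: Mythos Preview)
Your assessment is correct: the paper does not prove Conjecture~\ref{conj:auto-rep-GRC} but explicitly states that it ``is open for any $N\geq 2$ and any number field $F$,'' and then records precisely the special case you identify---cohomological, conjugate self-dual cuspidal representations over CM fields---as Theorem~\ref{thm:auto-rep-GRC}, citing Harris--Taylor, Shin, Clozel, and Caraiani. Your outline of the Galois-representation/purity strategy for that case, and your identification of the non-algebraic (Maass-type) forms as the essential obstruction, matches the state of the art and the paper's own usage.
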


The conjecture is open for any $N \geq2$ and any number field $F$.
However, there are special cases of cuspidal automorphic representations
for which it is a theorem: 
\begin{defn}
\label{def:auto-rep-modular}Let $\pi\in\mc{AR}(n)$. Say that $\pi$
is cohomological if, for an Archimdean place $v$ of $F$, the $v$-factor
of $\pi$ has an infinitesimal character of a finite dimensional representation.
When $F$ is totally real (resp. CM), say that $\pi$ is $F$
self dual (resp. $F$-conjugate self-dual) if it is isomorphic to its (resp. $F$-conjugate of its) contragredient representation $\tilde{\pi}(g):=\pi((g^{t})^{-1})$. 
\end{defn}

\begin{thm}
\label{thm:auto-rep-GRC} \cite{HT01,Shin2011Galoisrepresentationsarising,clozel2013purity, Car12}
Let $F$ be a CM field, $n\in\mathbb{N}$, and let $\pi\in\mc{AR}_{cusp}(GL_n)$ be both cohomological and $F$-conjugate self-dual (see Definition \ref{def:auto-rep-modular}). Then $\pi_v$ is tempered at every place $v$ of $F$.
\end{thm}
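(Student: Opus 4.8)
The statement to prove is Theorem~\ref{thm:auto-rep-GRC}: for $F$ a CM field and $\pi\in\mc{AR}_{cusp}(GL_n)$ that is both cohomological and $F$-conjugate self-dual, every local component $\pi_v$ is tempered. This is a major theorem that is a synthesis of the cited works \cite{HT01,Shin2011Galoisrepresentationsarising,clozel2013purity,Car12}, so my ``proof'' is really a road-map of how those inputs combine.

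The plan is as follows. First I would dispose of the Archimedean places: a cohomological cuspidal automorphic representation of $GL_n$ over a CM field has an infinitesimal character equal to that of a finite-dimensional algebraic representation, and by purity considerations (together with the classification of cohomological $(\mf g, K)$-modules, e.g.\ Vogan--Zuckerman) one checks its $v$-factor for $v\mid\infty$ is essentially tempered; more precisely, the conjugate self-duality forces the relevant Hodge--Tate-type weights to be symmetric, which rules out the non-tempered cohomological modules. So the heart of the matter is the finite places. For a finite place $v$, the strategy is to attach to $\pi$ a compatible system of $\ell$-adic Galois representations $\rho_{\pi,\ell}\colon \Gal(\overline F/F)\to GL_n(\overline{\Q}_\ell)$ whose existence is exactly the content of \cite{HT01} (for $\pi$ of a special ``square-integrable at a finite place'' type), extended by \cite{Shin2011Galoisrepresentationsarising} and \cite{clozel2013purity} to the general cohomological conjugate self-dual case, and whose local--global compatibility at $v$ (including at $v\mid\ell$) is known. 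Under this correspondence, $\pi_v$ tempered is equivalent, via local Langlands for $GL_n$ and the Weil--Deligne formalism, to the statement that the Frobenius eigenvalues of $\rho_{\pi,\ell}|_{W_{F_v}}$ all have the same absolute value (equivalently, the monodromy-weight filtration is ``pure''). This purity of the local Galois representation at finite places is precisely what \cite{Car12} establishes (building on Taylor--Yoshida and the weight-monodromy-type arguments available because $\rho_{\pi,\ell}$ appears in the cohomology of a Shimura variety, where Deligne's Weil~II and the work on the weight-monodromy conjecture in the relevant cases apply).

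The key steps in order: (1) reduce to finite places, handling $v\mid\infty$ by the structure of cohomological conjugate self-dual representations; (2) invoke the construction of the associated Galois representation $\rho_{\pi,\ell}$ and its basic properties (irreducibility, correct Hodge--Tate weights, self-duality) from \cite{HT01,Shin2011Galoisrepresentationsarising,clozel2013purity}; (3) establish local--global compatibility at each finite place $v$, so that the Weil--Deligne representation $\mathrm{WD}(\rho_{\pi,\ell}|_{G_{F_v}})$ matches, up to a shift, the local Langlands parameter $\mathrm{rec}(\pi_v)$; (4) prove purity of that local Galois representation at $v$ --- the eigenvalues of a geometric Frobenius lift are Weil numbers of a single weight --- which is the content of \cite{Car12}; (5) translate purity back into temperedness of $\pi_v$ via the dictionary: a Weil--Deligne representation is pure of weight $w$ if and only if the corresponding smooth representation of $GL_n(F_v)$ is (essentially) tempered.

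The main obstacle --- and the deepest input --- is step~(4), the purity/weight-monodromy statement for the local Galois representations at the finite (ramified) places, for which I would cite \cite{Car12} as a black box; the techniques there are genuinely hard (variants of the weight-monodromy conjecture, careful analysis of nearby cycles on the Shimura varieties of \cite{HT01}, patching over base changes). A secondary subtlety is that the Galois representation is a priori only constructed after solvable base change and for $\pi$ satisfying an auxiliary local condition, so one must run a descent argument (using that temperedness can be checked after a solvable extension, since base change of a cuspidal tempered representation is tempered and conjugate self-duality is preserved) to remove that hypothesis --- but this is a standard and by-now-routine reduction given the cited literature. I would therefore present the proof as: reduce to finite $v$, cite the existence and local--global compatibility of $\rho_{\pi,\ell}$, cite purity at $v$, and conclude via the Weil--Deligne dictionary, emphasizing that all the real work is imported from the four references.
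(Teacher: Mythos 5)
The paper does not actually prove Theorem~\ref{thm:auto-rep-GRC}: it imports it as a black box from the cited references, and the remark immediately afterwards merely records that \cite{Car12} is the definitive reference covering all cases needed, under progressively weaker hypotheses than \cite{HT01}, \cite{clozel2013purity}, and \cite{Shin2011Galoisrepresentationsarising}. So there is no in-text proof to compare against. Your road-map of how the four references combine is accurate in its broad structure --- attach the $\ell$-adic Galois representation, establish local-global compatibility at each finite $v$, prove purity of the local Weil--Deligne representation (the Caraiani input), and translate purity back into temperedness of $\pi_v$ through the local Langlands dictionary, handling the auxiliary-hypothesis/solvable-base-change descent as a routine reduction. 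That is indeed what the literature does, and identifying \cite{Car12} as carrying the deepest load is correct.

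One place to tighten: at the archimedean places, temperedness does not really come from ``conjugate self-duality forcing the Hodge--Tate weights to be symmetric.'' For $GL_n$, cuspidality already forces global genericity, and the temperedness of a generic cohomological unitary representation of $GL_n(\C)$ with regular infinitesimal character is the content of Clozel's purity lemma (together with the Vogan--Zuckerman classification you mention); conjugate self-duality plays no role in this step. Conjugate self-duality is needed only to make the Galois representation construction work at the finite places. This is a small correction and does not affect the overall soundness of the sketch.
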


\begin{rem}
For $F=\mathbb{Q}$ and $n=2$, this Theorem was first proved by Eichler
\cite{eichler1954quaternare}, for weight $k=2$, and by Deligne \cite{deligne1974conjecture}. For general weights, \cite{Car12} gives the result in all cases we need, extending the results of \cite{HT01}, \cite{clozel2013purity}, and \cite{Shin2011Galoisrepresentationsarising} under progressively weaker technical assumptions. 
\end{rem}

According to the Langland functoriality conjecture, for any $G$ with dual $\wh G\leq GL_n$,
the set $\mc{AR}(G)$ should be encoded in $\mc{AR}(GL_n/F)$. We therefore begin
by describing the classification of automorphic representations of
$G=GL_n$ over a number field $F$. Fix a global field $F$ and,
for any $N\in\mathbb{N}$, denote $\mc{AR}(n):=\mc{AR}(GL_n/F)$ and $\mc{AR}_{\star}(n):=\mc{AR}_{\star}(GL_n/F)$
for $\star=cusp,\;res\;\mbox{or}\;cont$. 
\begin{defn}
\label{def:auto-rep-shape}Define an (unrefined) shape of $n$ to be a sequence
of pairs of positive integers, $\Box=\left((T_{1},d_{1}),\ldots,(T_{k},d_{k})\right)$,
such that $\sum_{i}T_{i}\cdot d_{i}=n$. 
Let $M_{\Box}:=\prod_{i=1}^{k}GL_{T_{i}}^{d_{i}}\leq P_{\Box}\leq GL_n$
be the corresponding Levi (block diagonal) and parabolic (block upper
triangular) subgroups of shape $\Box$.
\end{defn}

\begin{thm}\label{thm:auto-rep-LMW}\cite{langlands2006functional,moeglin1989spectre}
For any shape of $n$, $\Box=\left((T_{i},d_{i})\right)_{i=1}^{k}$,
there is a map, $I_{\Box}\,:\,\prod_{i=1}^{k}\mc{AR}_{cusp}(T_{i})\rightarrow\mc{AR}(n)$,
called the automorphic parabolic induction of shape $\Box$ and satisfying:

\begin{enumerate}
\item For any $\pi\in\mc{AR}(n)$, there exist a unique shape $\Box=\left((T_{i},d_{i})\right)_{i=1}^{k}$ and a unique (up to order) sequence of cuspidal representations $(\pi_{i})_{i=1}^{k}\in\prod_{i=1}^{k}\mc{AR}_{cusp}(T_{i})$ such that $\pi=I_{\Box}((\pi_{i})_{i=1}^{k})$, in which case $\pi$ is said to be of shape $\Box$. Moreover, $\pi$ lies in the discrete (resp. cuspidal) part of $\mc{AR}(n)$ if and only if $k=1$ (resp. $k=1$ and $d_{1}=1$).
\item Let $\pi=I_{\Box}((\pi_{i})_{i=1}^{k})$, where $\Box=\left((T_{i},d_{i})\right)_{i=1}^{k}$
and $(\pi_{i})_{i=1}^{k}\in\prod_{i=1}^{k}\mc{AR}_{cusp}(T_{i})$. Then,
for any place $v$ of $k$, the local component $\pi_v$ is a subqoutient
of the (unitary) parabolic induction
\[
\mbox{Ind}_{P_{\Box}(\mathbb{Q}_{v})}^{GL_n(\mathbb{Q}_{v})}\left(\bigotimes_{i=1}^{k}\left(|\cdot|_{v}^{\frac{d_{i}-1}{2}}\pi_{i,v}\otimes|\cdot|_{v}^{\frac{d_{i}-3}{2}}\pi_{i,v}\otimes\ldots\otimes|\cdot|_{v}^{\frac{1-d_{i}}{2}}\pi_{i,v}\right)\right).
\]
\end{enumerate}
\end{thm}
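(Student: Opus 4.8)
The plan is to obtain this statement as a synthesis of Langlands' spectral decomposition of $L^2(GL_n(F)\backslash GL_n(\bA))$ via Eisenstein series with the M\oe glin--Waldspurger classification of the discrete spectrum of $GL_n$ \cite{langlands2006functional,moeglin1989spectre}, so the real content is to recall how these two inputs fit together and then to nail down uniqueness using strong multiplicity one.

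First I would define $I_\Box$ concretely. Given a shape $\Box=((T_i,d_i))_{i=1}^k$ and a tuple $(\pi_i)\in\prod_{i=1}^k\mc{AR}_{cusp}(T_i)$, for each $i$ form the \emph{Speh representation} $\mathrm{Speh}(\pi_i,d_i)$, the Langlands quotient of the standard module $\Ind\bigl(|\det|^{(d_i-1)/2}\pi_i\otimes\cdots\otimes|\det|^{(1-d_i)/2}\pi_i\bigr)$ on $GL_{T_id_i}$; by \cite{moeglin1989spectre} these exhaust $\mc{AR}_{\disc}(GL_{T_id_i})$ and each occurs with multiplicity one. Then set $I_\Box((\pi_i)):=\Ind_{P_\Box(\bA)}^{GL_n(\bA)}\bigl(\boxtimes_{i}\mathrm{Speh}(\pi_i,d_i)\bigr)$, the unitary parabolic induction from the Levi $M_\Box$. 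The key structural step is to check irreducibility of this induced representation: at every place $v$ it is the parabolic induction of an irreducible unitary representation of a Levi of $GL_n(F_v)$, hence irreducible by Bernstein's theorem at the non-archimedean places (and its archimedean analogue), and a restricted tensor product of irreducible admissible local representations is again irreducible. It then lies in $\mc{AR}(n)$ because it is weakly contained in the piece of $L^2(GL_n(F)\backslash GL_n(\bA))$ attached to the cuspidal datum $\bigl(M_\Box,\boxtimes_i\pi_i^{\otimes d_i}\bigr)$.

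For the existence part of (1), I would start from the definition: every $\pi\in\mc{AR}(n)$ is weakly contained in $L^2(GL_n(F)\backslash GL_n(\bA))$, which Langlands' theory writes as a direct integral over association classes of cuspidal data $(M,\sigma)$; so $\pi$ is a constituent of $\Ind_P^{GL_n}(\sigma\otimes\lambda)$ for some unitary twist $\lambda$, and reading off the Speh blocks forced by the poles of the associated Eisenstein series groups the cuspidal factors of $\sigma$ into the $\mathrm{Speh}(\pi_i,d_i)$, exhibiting $\pi$ as a constituent of $I_\Box((\pi_i))$ for an appropriate shape $\Box$. Irreducibility of $I_\Box((\pi_i))$ then upgrades ``constituent'' to ``equal''. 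The dichotomy ``$\pi$ discrete $\iff k=1$'' and ``$\pi$ cuspidal $\iff k=1$ and $d_1=1$'' is exactly the M\oe glin--Waldspurger description of $\mc{AR}_{\disc}(GL_n)$ as the set of Speh representations, together with the identification of $\mc{AR}_{cusp}$ as the non-residual (i.e.\ $d_1=1$) part.

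For uniqueness I would argue place by place: Bernstein--Zelevinsky theory attaches to each $\pi_v$ a well-defined cuspidal support, and at the unramified places the Satake parameters are determined by $\pi$; since cuspidal representations of $GL_T$ satisfy strong multiplicity one (Jacquet--Shalika, Piatetski-Shapiro), the multiset $\{(\pi_i,d_i)\}$---equivalently the shape $\Box$ together with the $\pi_i$ up to reordering---is recovered uniquely. Claim (2) is then immediate from the construction: localizing $I_\Box((\pi_i))$ at $v$, each local factor $(\mathrm{Speh}(\pi_i,d_i))_v$ is by definition a subquotient of $\Ind\bigl(|\cdot|_v^{(d_i-1)/2}\pi_{i,v}\otimes\cdots\otimes|\cdot|_v^{(1-d_i)/2}\pi_{i,v}\bigr)$ on $GL_{T_id_i}(F_v)$, and transitivity of parabolic induction through $P_\Box(F_v)$ yields the displayed formula. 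The main obstacle, were one to want a self-contained proof, is the trio of genuinely deep external inputs---the M\oe glin--Waldspurger residue analysis identifying the residual spectrum of $GL_n$ with Speh representations, the irreducibility of unitary parabolic induction from discrete data, and strong multiplicity one for $GL_n$---whereas the assembly above (existence, uniqueness, and localization) is formal.
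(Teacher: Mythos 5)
The paper states Theorem \ref{thm:auto-rep-LMW} as a citation to \cite{langlands2006functional,moeglin1989spectre} and supplies no proof, so there is no in-paper argument to compare against; your reconstruction is the standard one and is correct. You build $I_\Box$ by unitary parabolic induction of Speh blocks, invoke Bernstein's irreducibility theorem for unitary induction from $GL_n$ Levis (together with its archimedean analogue due to Vogan/Baruch) so that ``constituent'' upgrades to ``equal'', appeal to Langlands' spectral theory plus the M\oe glin--Waldspurger residue computation to get surjectivity, and use Jacquet--Shalika rigidity of isobaric sums (not merely strong multiplicity one for a single cuspidal $GL_T$-representation, which is what is needed to separate the $(\pi_i,d_i)$ blocks) for uniqueness; part (2) then follows by transitivity of parabolic induction exactly as you say. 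Your closing remark is also right: all the genuine depth sits in the three cited black boxes, and the existence/uniqueness/localization assembly around them is essentially formal.
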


\begin{defn}
Let $\Box = ((T_i, d_i))_{i=1}^k$. Then we shorthand $I_\Box(\tau_1, \dotsc, \tau_k)$ by the formal expression
\[
I_\Box(\tau_1, \dotsc, \tau_k) =: \bigoplus_{i=1}^k \tau_i[d_i].
\]
\end{defn}

\subsection{Endoscopic Classification and Shapes}
The unitary endoscopic classification of \cite{Mok15} extended to non-quasisplit unitary groups in \cite{KMSW14} lets us decompose $\mc{AR}_\disc(G)$ for our unitary groups $U^{E/F,H}_n$ into pieces corresponding to shapes $\Box$ for $GL_n$. Fix CM quadratic extension $E/F$. 


\begin{defn}
An automorphic representation $\bigoplus_i \tau_i[d_i]$ of $\Res^E_F GL_n$ (this is the same as one of $GL_n/E$) is said to be \emph{elliptic} if each $\tau_i$ is conjugate self-dual and the individual $\tau_i[d_i]$ are all distinct. 

Let $\wtd \Psi_\el(n)$ be the set of elliptic automorphic representations $\psi$ of $\Res^E_F GL_n$. These are usually referred to as elliptic (global) Arthur parameters. 
\end{defn}

Attached to $\Res^E_F GL_n$ are a set of elliptic twisted endoscopic groups $G^* \in \wtd {\mc E}^E_\el(n)$ described in \cite{Mok15}*{\S2.1}. These each come with $L$-embeddings
\[
\Ld G^* \into \Ld \Res^E_F GL_n. 
\]
We will only care about a specific ``simple'' element: $U^+_n \in \wtd{\mc E}^E_\el(n)$. The main result of \cite{Mok15} is a decomposition
\[
\wtd \Psi_\el(n) = \bigsqcup_{G^* \in \wtd{\mc E}^E_\el(n)} \Psi_\el(G^*)
\]
and a description of discrete automorphic representations of each $G^* \in \wtd{\mc E}^E_\el(n)$ in terms of $\Psi_\el(G^*)$. 

The paper \cite{KMSW14} generalizes Mok's classification to ``extended pure inner'' forms $G$ of each $G^*$. These are enumerated in \cite{KMSW14}*{\S0.3.3}. In particular, the extended pure inner forms of $U^+_n$ include all the definite unitary groups $U^{E, H}_n$ we consider here. We recall that if $G$ is an inner form of $G^*$, then $\Ld G = \Ld G^*$. 

We recall all parts of \cite{KMSW14}'s classification that are needed to explain our results and point readers to \cite[\S2]{DGG22} for a full summary geared towards trace formula applications.
\begin{thm}[{\cite{KMSW14} partial summary of main result}]\label{thm:KMSWendo}
Let $G^* \in \wtd{\mc E}^E_\el(n)$ and $G$ an extended pure inner form of $G^*$. Then
\begin{enumerate}
    \item To each $\psi \in \Psi_\el(G^*)$ there is subset $\Pi^G_\psi$ called the (global) Arthur packet such that
    \[
    \mc{AR}_\disc(G) = \bigsqcup_{\psi \in \Psi_\el(G^*)} \Pi^G_\psi.
    \]
    This $\Pi^G_\psi$ is empty unless $\psi$ satisfies a condition of being relevant as in \cite[\S0.4,1.2]{KMSW14}.
    \item Let $\psi = \bigoplus_{i=1}^k \tau_i[d_i]$ and fix a place $V$. Through the local Langlands correspondence, each $\tau_i$ is associated to a (local) $L$-parameter 
    \[
    \tau_i : WD_{F_v} \to \Ld (\Res^E_F GL_{T_i})_v
    \]
    from the Weil-Deligne group of $F_v$. Define the local $A$-parameter
    \[
    \psi_v  = \bigoplus_i \tau_i \boxtimes [d_i]: WD_{F_v} \times \SL_2 \to \Ld (\Res^E_F GL_n)_v
    \]
    where $[d_i]$ is the $d_i$-dimensional representation of $\SL_2$. Then $\psi_v$ factors through $\Ld G$. 
    \item There is a finite set of representations $\Pi^G_{\psi_v}$, depending only on $\psi_v$, called the local $A$-packet such that for all $\pi \in \Pi^G_\psi$, we have $\pi_v \in \Pi^G_{\psi_v}$. 
    \item If $\psi_v$ is generic (i.e. all $d_i=1$), then the assignment $\psi_v \to \Pi_{\psi_v}$ satisfies all the desired properties of a local Langlands correspondence. In particular:
    \begin{enumerate}
        \item If $v|\infty$, then the infinitesimal character of $\pi_v \in \Pi_{\psi_v}$ is the same as that of $\psi_v$ through the embedding $\wh G \into GL_n \C \times GL_n \C$. 
        \item $\pi_v \in \Pi_{\psi_v}$ is tempered if and only if $\psi_v$ is (for $v \nmid \infty$, this means that the $\tau_i$ are bounded/correspond to unitary supercuspidals).
    \end{enumerate}
\end{enumerate}  
\end{thm}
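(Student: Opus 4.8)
The plan is not to prove this from scratch but to assemble it from the main theorems of \cite{KMSW14}, which extend Mok's classification \cite{Mok15} from quasi-split unitary groups to their extended pure inner forms along the lines of \cite{Art13}; the task is therefore to match each itemized assertion to its source and unwind the definitions. For item (1), the global decomposition $\mc{AR}_\disc(G) = \bigsqcup_\psi \Pi^G_\psi$ is the unitary analogue of \cite{Art13}*{Thm 1.5.2} and is the main global output of \cite{KMSW14}; the relevance condition is taken verbatim from \cite{KMSW14}*{\S1.2} — $\psi$ contributes exactly when every localization $\psi_v$ is relevant for $G_v$ and the sign character $\eps_\psi$ on the global component group is trivial, equivalently when the multiplicity formula is nonzero. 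The partition $\wtd\Psi_\el(n) = \bigsqcup_{G^*} \Psi_\el(G^*)$ is \cite{Mok15}*{\S2.4}: a conjugate-self-dual $\psi = \bigoplus_i \tau_i[d_i]$ assigns to each summand a parity sign relative to the fixed Langlands--Shelstad datum, and the resulting collection of signs picks out the unique simple twisted endoscopic datum — one of the two forms $U^\pm_n$ — through which $\psi$ factors, with $U^+_n$ the one relevant for the definite $U^{E,H}_n$.

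For items (2) and (3): each cuspidal $\tau_i$ of $\Res^E_F GL_{T_i}$ has at every place $v$ a local $L$-parameter furnished by the local Langlands correspondence for general linear groups (Harris--Taylor and Henniart), and global-local compatibility for $GL$ yields the displayed $WD_{F_v}$-parameter; since the global $\psi$ factors through $\Ld G^* = \Ld G$ by construction, so does its localization $\psi_v = \bigoplus_i \tau_{i,v}\boxtimes[d_i]$, which is precisely how $\psi_v$ is defined in \cite{KMSW14}*{\S1.2}. The existence of the local $A$-packet $\Pi^G_{\psi_v}$ depending only on $\psi_v$, together with $\pi_v \in \Pi^G_{\psi_v}$ for all $\pi \in \Pi^G_\psi$, is the local classification of \cite{KMSW14} — the non-quasi-split refinement of \cite{Mok15}*{Thm 2.5.1} — in which the packets are pinned down by endoscopic character identities and the local-global compatibility is built into the multiplicity formula, since $\Pi^G_\psi$ consists of the restricted tensor products $\otimes'_v \pi_v$ with $\pi_v \in \Pi^G_{\psi_v}$ and $\prod_v \langle\cdot,\pi_v\rangle = \eps_\psi$.

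For item (4): when all $d_i = 1$ the $\SL_2$-factor is trivial, $\psi_v$ is an ordinary $L$-parameter, and its $A$-packet coincides with the $L$-packet attached by the unitary local Langlands correspondence of \cite{Mok15, KMSW14}, which satisfies the expected desiderata. Assertion (4a) is the compatibility of the archimedean correspondence with infinitesimal characters, built into the Adams--Johnson/Shelstad construction of real packets (cf.\ \cite{Mok15}*{\S2.5} or the summary in \cite{DGG22}*{\S2}); assertion (4b) is the tempered $L$-packet property, that bounded parameters have $L$-packets consisting of tempered representations and conversely.

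The main obstacle is bookkeeping rather than depth: one must check that this itemized statement faithfully matches the somewhat differently organized theorems of \cite{KMSW14}, watching the normalization of the $\Res^E_F GL_n$-parameter, the choice of $L$-embedding $\Ld G^* \into \Ld \Res^E_F GL_n$ that fixes the parity signs, and the distinction between $A$-parameters and their chosen extensions; \cite{DGG22}*{\S2} serves as a convenient crosscheck. As noted in the Conditionality paragraph, this inherits the conditional status of \cite{Mok15} and \cite{KMSW14} on the weighted twisted fundamental lemma and on the reference ``KMSb''.
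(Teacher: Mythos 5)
The paper gives no proof of this statement; it is stated as a citation to \cite{KMSW14} (with the conditionality caveats flagged in the introduction), and your proposal correctly does what such a theorem requires — a careful unwinding of each item to its precise source in \cite{Mok15}, \cite{KMSW14}, and the summary in \cite{DGG22}*{\S2}. Your attributions (the global decomposition as the unitary analogue of Arthur's Theorem 1.5.2, the partition of $\wtd\Psi_\el(n)$ via parity signs from \cite{Mok15}*{\S2.4}, the relevance condition from \cite{KMSW14}*{\S1.2}, local Langlands for $GL$ giving the localizations, and the generic/tempered case reducing to the $L$-packet property) are accurate and match the approach the paper implicitly takes.
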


We can now make our key definition, following \cite[\S5]{DGG24}:

\begin{defn}\label{def:shape}
Let $\Box$ be a shape for $GL_n/E$. If $\pi \in \mc{AR}_\disc(G)$ with parameter $\psi \in \Psi_\el(n)$ such that $\psi \in \Box$, then we say $\pi$ has shape $\Box$ or $\pi \in \Box$. 
\end{defn}

\begin{rem}
The definition of shape in \cite[\S5]{DGG24} is actually a list of triples $\Box = ((T_i, d_i, \eta_i))_{i=1}^k$ for some signs $\eta_i$. In general, the $\eta_i$ are needed to determine the $G$ such that $\psi \in \Psi_\el(G^*)$ for all $\psi \in \Box$. However, when $G^*$ is a simple twisted endoscopic group (e.g. $U^+_n$), there is always a unique choice of $\eta_i$ that determines $\psi \in \Psi_\el(G^*)$. 

In our case, we require a priori that $\psi \in \Psi_\el(U^+_n)$ and can therefore ignore the data of the $\eta_i$. Nevertheless, the induction in the black-boxed proof of Theorem \ref{thm:shapebound} requires keeping track of them. 
\end{rem}

As in \cite{DGG24}, we also associate to shape $\Box = ((T_i, d_i))_{i=1}^k$ the group
\begin{equation}\label{eqn:shapefunctgroup}
G_F(\Box) := \prod_{i=1}^k U^+_{T_i}. 
\end{equation}
This is not in general an element of $\wtd{\mc E}_\el(n)$ and can be thought of as the smallest group through which $\psi \in \Box$ functorially factor through. It will appear in bounds on sizes of automorphic families intersected with $\Box$.

Finally,
\begin{defn}\label{def:l2box}
Let
\[
L^2_\Box := \bigoplus_{\pi \in \Box} m_\pi \pi, 
\]
where $m_\pi$ is the multiplicity of $\pi$ in $L^2(G(F) \bs G(\A))$, and
\[
\Proj_\Box : L^2(G(F) \bs G(\A)) \to L^2_\Box
\]
be the orthogonal projection operator. 
\end{defn}

If $K$ is an open compact subgroup of $G(\A)$, we will also use $\Proj_\Box$ to denote the restriction of this projection operator to the subspace $L^2(G(F) \bs G(\A))^K$.

\subsection{Infinitesimal Characters}
\subsubsection{Definitions and Relation to Shapes}
Formulas later on will involve infinitesimal characters. Consider again $G$ that is an extended pure inner form of $G^* \in \wtd{\mc E}_\el(n)$. 

Any finite dimensional representation $\pi_\infty$ of $G_\infty$ has an associated infinitesimal character $\lb$ that is a semisimple conjugacy class in $\wh {\mf g}_\infty$. 
Since $G^* \in \wtd{\mc E}_\el(n)$, there is a  map $\wh G \into GL_n(\C) \times GL_n(\C)$ restricted from $\Ld G \into \Ld \Res^E_F GL_n$. It is in particular determined by its first coordinate so the infinitesimal character can also be represented by a semisimple conjugacy class in $\Mat_{n \times n}(F_\infty \otimes_\R \C)$. This can then be represented as an unordered sequence of eigenvalues,
\[
\lb = (\lb_1, \dotsc, \lb_N),
\]
with each $\lb_i = (\lb_{i,v})_{v|\infty} \in F_\infty \otimes_\R \C$. 

It is also sometimes useful to package the tuple $\lb_v = (\lb_{1,v},...,\lb_{n,v})$ as the generating function $\sum_j X^{\lb_{j,v}}$, which, by abuse of notation, we will also denote by $\lb_v$. In this way, if $\bigoplus_i \tau_{i,v}[d_i]$ is a local Arthur parameter such that each $\tau_{i,v}$ has infinitesimal character $\lb_{v}^{(i)} = \sum_{j=1}^{n_i}X^{\lb_{j,v}^{(i)}}$, then we have infinitesimal character assignment
\begin{equation}\label{infcharform}
\lf( \bigoplus_i \tau_{i,v}[d_i] \ri)_\infty \mapsto \Box((\lb_v^{(i)})_i) := \sum_i \lb_{v}^{(i)} \sum_{l=1}^{d_i} X^{\f{d_i+1}2 - l}.
\end{equation}
It can be seen from this that the character of $\tau[d]$ determines that of $\tau$. 

\begin{defn}
If $\lb$ matches the infinitesimal character of a finite-dimensional representation, we say that it is \emph{regular integral}. 
\end{defn}

Regular integral is equivalent to two conditions:
\begin{itemize}
    \item (Regular) For each $v$, the $\lb_{i,v}$ are distinct. 
    \item (Integral) If $N$ is even, the $\lb_{i,v} \in \Z$ and if $N$ is odd, the $\lb_{i,v} \in \Z + 1/2$. 
\end{itemize}
We without loss of generality order regular integral $\lb$:
\[
\lb_{1, v} > \cdots > \lb_{N,v}.
\]

We also make some convenient defintions:
\begin{defn}\label{dfn:infcharcombo}
Let $\lb_\Box$ be the set of possible regular, integral infinitesimal characters of $\psi_\infty$ with $\psi \in \Box$ (i.e. the image of \eqref{infcharform}). We say $\lb \in \Box$ as shorthand for $\lb \in \lb_\Box$. 

Let $\Box^{-1}(\lb)$ be the set of possible assignments of infinitesimal characters $(\lb^{(i)})_{i=1}^k$ to each $(\tau_{i,\infty})_{i=1}^k$ so that $\bigoplus_{i=1}^k \tau_{i, \infty}[d_i]$ has infinitesimal character $\lb$.
\end{defn}

\subsubsection{Norms of Infinitesimal Characters}\label{sec:infcarnorms}
Choose distinguished infinite place $v_0$ and consider infinitesimal character $\lb$ of $G_{v_0}$ for $G$ an extended pure inner form of $G^* \in \wtd{\mc E}_\el(n)$. 

Let $\Phi_+(G)$ be the standard set of positive coroots of $G_{v_0}$. We will need to compare/recall three different norms of $\lb_{v_0}$:
\begin{itemize}
\item The dimension of the finite dimensional representation corresponding to $\lb$: 
\[
\dim \lb  := C_{G,1} \prod_{\alpha \in \Phi_+(G)} \langle \alpha, \lb \rangle, 
\]
\item 
$\lb$ paired with itself with by the Killing form:
\[
\| \lb \|  := C_{G,2} \lf( \sum_{\alpha \in \Phi_+(G)} \langle \alpha, \lb \rangle^2\ri)^{1/2},
\]
\item A minimum
\begin{equation}\label{eq:infcharweight}
m(\lb) := \min_{\alpha \in \Phi_+(G)}  \langle \alpha, \lb \rangle = \min_{v|\infty} \min_{1 \leq i \leq N-1} (\lb_{i,v} - \lb_{i+1, v}),
\end{equation}
\end{itemize}
where the constants $C_{G,i}$ only depend on $G$. 

Given unrefined shape $\Box$, we can also define
\[
\dim_\Box \lb := \max_{(\lb_i)_i \in \Box^{-1}(\lb)} \prod_i \dim \lb_i.
\]
Note that this can be $0$ if $\Box^{-1}(\lb)$ is empty. All such definitions can be made analogously for $G_\infty$ as a whole. 

Given a group $G$, there are some key dimensions to keep track of $N_G = \dim G$, $r_G = \rank G$, and $N^\der_G = \dim G^\der$. From these we can compute the number of positive roots:
\[
P_G := \f12 (N_G - r_G).
\]

As some bounds (recalling the definition \eqref{eqn:shapefunctgroup} of $G_F(\Box)$): 
\begin{lem}\label{dimboundbym}
\[
\dim_\Box \lb \leq (\dim \lb) m(\lb)^{P_{G_F(\Box)} - P_G}.
\]
\end{lem}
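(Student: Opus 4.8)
The inequality compares $\dim_\Box \lb$ with $(\dim \lb)\, m(\lb)^{P_{G_F(\Box)}-P_G}$, so the natural approach is to unwind the definitions on both sides and reduce to an elementary comparison of products of linear forms in the eigenvalues $\lb_{i,v}$. Recall that $\dim \lb = C_{G,1}\prod_{\alpha\in\Phi_+(G)}\langle\alpha,\lb\rangle$ and, for a maximizing assignment $(\lb_i)_i\in\Box^{-1}(\lb)$, $\dim_\Box\lb = \prod_i \dim\lb_i = \prod_i C_{G_i,1}\prod_{\alpha\in\Phi_+(G_i)}\langle\alpha,\lb_i\rangle$, where the $G_i = U^+_{T_i}$ are the factors of $G_F(\Box)$. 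Writing $\lb$ as the generating function built from the $\lb_i$ via \eqref{infcharform}, each eigenvalue of $\lb_i$ is (up to an integer or half-integer shift coming from the $\SL_2$-factor $[d_i]$) an eigenvalue of $\lb$; so each factor $\langle\alpha,\lb_i\rangle$ appearing in $\dim_\Box\lb$ is a difference of two entries of the $\lb_v$-tuple, hence is one of the factors $\langle\beta,\lb\rangle$ that could appear in $\dim\lb$ — possibly with a shift by a constant depending only on the $d_i$. The plan is to match up these linear factors.

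The key combinatorial step is a bookkeeping of positive roots: $P_{G_F(\Box)} = \sum_i P_{G_i}$ is the total number of factors in the product defining $\dim_\Box\lb$, while $P_G$ is the number of factors in $\dim\lb$. First I would show that, after the matching above, the set of linear forms occurring in $\dim_\Box\lb$ is (a superset, up to bounded multiplicative constants, of / comparable to) a subset of those occurring in $\dim\lb$: each positive coroot pairing of a factor $G_i$ pulls back to a positive coroot pairing of $G$ on the eigenvalue tuple, because the eigenvalues of $\lb_i$ sit inside the ordered eigenvalue list of $\lb$ as a sub-list (after the $\SL_2$-shift). The extra factors — there are exactly $P_{G_F(\Box)} - P_G$ of them, since the difference of the two ``positive root counts'' is precisely the excess — are pairings $\langle\alpha,\lb_i\rangle$ that do not correspond to an honest coroot of $G$; but each of these is still a difference of entries of a regular integral infinitesimal character, hence is $\geq m(\lb)$ by the definition \eqref{eq:infcharweight} of $m(\lb)$ as the minimal gap. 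Since every factor is $\geq m(\lb)$ and we can bound the ``matched'' factors by the corresponding factors of $\dim\lb$ (absorbing the finitely many constant shifts by $d_i$, and the ratio of $C$-constants, into the implicit constant — or, more carefully, noting that for regular integral $\lb$ the shifts only enlarge the relevant pairings), we get
\[
\dim_\Box\lb \;\leq\; (\dim\lb)\cdot m(\lb)^{P_{G_F(\Box)}-P_G},
\]
which is the claim.

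The main obstacle I anticipate is the careful handling of the $\SL_2$-shifts in \eqref{infcharform}: an eigenvalue of $\lb_i$ is not literally an eigenvalue of $\lb$ but differs from one by $\tfrac{d_i+1}{2}-l$ for some $1\le l\le d_i$, so a pairing $\langle\alpha,\lb_i\rangle$ that is a difference of two such eigenvalues equals a difference of two eigenvalues of $\lb$ \emph{plus} an integer shift. One must check that this shifted quantity is still bounded above by a genuine coroot pairing of $G$ applied to $\lb$ (using regularity/integrality and the fact that $\lb$'s eigenvalues interlace appropriately with the $d_i$-block structure), and that it is still bounded below by $m(\lb)$. A clean way to organize this is to black-box it exactly as the excerpt does for Theorem \ref{thm:shapebound}: since the statement is an inequality of positive real numbers and both sides are built from the same finite data, the combinatorial matching can be carried out place-by-place at $v_0$ (and then over all $v|\infty$), with the constants $C_{G,i}, C_{G_i,1}$ depending only on the groups involved. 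I would also double-check the degenerate case $\Box^{-1}(\lb)=\emptyset$, where $\dim_\Box\lb = 0$ and the inequality is trivial.
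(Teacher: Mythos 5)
Your plan has the right raw ingredients (match coroot pairings of $G_F(\Box)$ with coroot pairings of $G$ on the eigenvalue list, then bound the leftover factors by $m(\lb)$), but the direction of the counting is reversed and this causes the final step to collapse. Since $G_F(\Box)=\prod_i U^+_{T_i}$ with $\sum_i T_i\le \sum_i T_id_i = n$, we always have $P_{G_F(\Box)}\le P_G$, so the exponent $P_{G_F(\Box)}-P_G$ in the statement is nonpositive and does \emph{not} count ``extra'' factors of $\dim_\Box\lb$. The correct picture is the opposite of what you wrote: $\dim_\Box\lb$ is a product of \emph{fewer} linear factors than $\dim\lb$, and those factors are a genuine \emph{subset} of the factors $\langle\alpha,\lb\rangle$ appearing in $\dim\lb$ — each $\lb^{(i)}_j-\lb^{(i)}_{j'}$ equals $\lb_p-\lb_q$ for the two entries of $\lb$ obtained from $\lb^{(i)}_j$ and $\lb^{(i)}_{j'}$ with the \emph{same} $\SL_2$-shift $\tfrac{d_i+1}{2}-l$, so the shift cancels and there is no ``plus an integer'' term to worry about (your anticipated obstacle is not actually there).

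With the orientation corrected, the argument is a one-liner: since the numerator factors of $\dim_\Box\lb$ sit inside those of $\dim\lb$, the ratio $\dim\lb/\dim_\Box\lb$ is (up to the $C$-constants the paper absorbs) the product of the $P_G-P_{G_F(\Box)}$ unmatched pairings $\langle\alpha,\lb\rangle$, each of which is $\ge m(\lb)$ by \eqref{eq:infcharweight}; hence $\dim\lb \ge \dim_\Box\lb\cdot m(\lb)^{P_G-P_{G_F(\Box)}}$, which rearranges to the claim. By contrast, in your write-up the ``extra factors'' you bound below by $m(\lb)$ are extra factors of $\dim_\Box\lb$, and a lower bound on a factor of the quantity you are trying to bound from above gives you nothing — so even granting your premises, the displayed conclusion $\dim_\Box\lb\le(\dim\lb)m(\lb)^{P_{G_F(\Box)}-P_G}$ does not follow. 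Flip the inclusion and the count, drop the shift worry, and you recover the paper's proof.
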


\begin{proof}
The factors $\langle \alpha, \lb \rangle$ in the Weyl dimension formula for $\dim_\Box \lb$ are always a subset of those in $\dim \lb$. However, $\dim_\Box \lb$ has $P_G - P_{G_F(\Box)}$ fewer factors.
\end{proof}

\begin{lem}\label{dimboundbynorm}
\[
\dim_\Box \lb \leq C\|\lb\|^{P_{G_F(\Box)}}
\]
for some constant $C$ depending only on $G$ and $\Box$. 
\end{lem}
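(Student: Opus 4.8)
The plan is to bound $\dim_\Box \lb$ by reducing to the previous lemma and then controlling the extra factor $m(\lb)^{P_{G_F(\Box)} - P_G}$ by $\|\lb\|$. First I would apply Lemma~\ref{dimboundbym} to get
\[
\dim_\Box \lb \leq (\dim \lb)\, m(\lb)^{P_{G_F(\Box)} - P_G}.
\]
Since $G_F(\Box) = \prod_i U^+_{T_i}$ is a product of unitary groups whose ranks sum to at most $\rank G$ (in fact the relevant blocks fit inside $GL_n$), we have $P_{G_F(\Box)} \leq P_G$, so the exponent $P_{G_F(\Box)} - P_G$ is non-positive. Now $m(\lb) \geq 1$ for regular integral $\lb$ (consecutive entries differ by at least $1$, indeed by the integrality/half-integrality condition the minimal gap is a positive integer), hence $m(\lb)^{P_{G_F(\Box)} - P_G} \leq 1$, giving the crude bound $\dim_\Box \lb \leq \dim \lb$. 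That alone is not quite the claimed inequality, so next I would bound $\dim \lb$ by a power of $\|\lb\|$.

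The second step is the comparison $\dim \lb \ll \|\lb\|^{P_G}$. By the Weyl dimension formula, $\dim \lb = C_{G,1} \prod_{\alpha \in \Phi_+(G)} \langle \alpha, \lb\rangle$, a product of $P_G$ linear forms in $\lb$; each such form is bounded by a constant multiple of $\|\lb\|$ by Cauchy--Schwarz (or simply because all norms on the finite-dimensional space of infinitesimal characters are equivalent, and $\langle \alpha, \cdot\rangle$ is a fixed linear functional while $\|\cdot\|$ is a fixed norm). Hence $\dim \lb \leq C' \|\lb\|^{P_G}$ for a constant $C'$ depending only on $G$. But we actually want the exponent $P_{G_F(\Box)}$, which is smaller, so I cannot simply chain these two bounds naively; instead I should redo the first reduction keeping the extra $m(\lb)$ factor explicit and combine more carefully.

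The cleaner route, which I would actually write up: start from Lemma~\ref{dimboundbym}, then bound each of the $P_{G_F(\Box)}$ factors $\langle \alpha, \lb\rangle$ appearing in $\dim_\Box \lb$ directly (before passing through $\dim\lb$) by $C''\|\lb\|$ via the norm-equivalence argument above. Concretely, unwinding the definition, $\dim_\Box \lb = \max_{(\lb_i)_i \in \Box^{-1}(\lb)} \prod_i \dim \lb_i$, and each $\dim \lb_i$ is (up to a constant) a product of $P_{U^+_{T_i}}$ linear forms evaluated at $\lb_i$. The entries of $\lb_i$ are a subset (with shifts by half-integers from the $\SL_2$-factor, as in \eqref{infcharform}) of the entries of $\lb$, so $\|\lb_i\| \ll \|\lb\|$; therefore each linear form is $\ll \|\lb\|$ and the total product over all $i$ has $\sum_i P_{U^+_{T_i}} = P_{G_F(\Box)}$ factors, yielding $\dim_\Box \lb \leq C\|\lb\|^{P_{G_F(\Box)}}$ with $C$ depending only on $G$ and $\Box$ (the dependence on $\Box$ enters through the number of shifts and the constants $C_{G,i}$). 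The main obstacle is purely bookkeeping: verifying that the entries of each $\lb_i$ really are controlled by those of $\lb$ under the infinitesimal-character assignment \eqref{infcharform}, i.e. that the map $\lb \mapsto (\lb_i)$ is Lipschitz for the relevant norms — this is where I would need to be careful about the $\SL_2$-shifts $\frac{d_i+1}{2} - l$, but since the $d_i$ are bounded in terms of $\Box$ (hence of $n$), these shifts contribute only to the constant $C$.
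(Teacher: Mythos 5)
Your final ``cleaner route'' is correct and is essentially the paper's proof: both observe that $\dim_\Box\lb$ is a product of $P_{G_F(\Box)}$ linear forms in $\lb$ and bound each one against $\|\lb\|$. The paper's write-up is slightly more streamlined because it reuses the observation (already made in the proof of Lemma~\ref{dimboundbym}) that those $P_{G_F(\Box)}$ factors are literally a sub-multiset of $\{\langle\alpha,\lb\rangle : \alpha\in\Phi_+(G)\}$, and then applies RMS--AM--GM directly; this sidesteps the intermediate objects $\lb_i$, the $\SL_2$-shifts, and the Lipschitz bookkeeping you flag as the ``main obstacle.''
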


\begin{proof}
$\dim_\Box \lb$ is a product of some subset of size $P_{G_F(\Box)}$ of the $\langle \alpha, \lb \rangle$ for $\alpha \in \Phi_+(G)$. Therefore, by the RMS-AM-GM math-contest inequality, $(\dim_\Box \lb)^{1/P_{G_F(\Box)}}$ is bounded above by the root-mean-square of this subset. This is further bounded above by a constant times $\|\lb\|$ where the constant depends only on $P_{G_F(\Box)}$ and $P_G$. 
\end{proof}

For $\Box = \Sigma_\eta$, the bound \ref{dimboundbynorm} has the optimal exponent on $\|\lb\|$ when there is $C$ such that $C \min_{\alpha \in \Phi_+(G)}  \langle \alpha, \lb \rangle \geq \max_{\alpha \in \Phi_+(G)}  \langle \alpha, \lb \rangle$. This is an asymptotically positive proportion of all $\lb$ in a $\|\cdot \|$-ball as the ball's radius goes to infinity.

However, the $\lb \in \Box$ do not satisfy this property if $\Box$ has non-trivial $\SL_2$ as some of the $\langle \alpha, \lb \rangle$ are then bounded. Therefore, we will also need a slight variant of the bound:
\begin{lem}\label{dimboundbynormprime}
Choose constant $m$ and subset $S \subseteq \Phi_+(G)$. Then for all $\lb$ such that $\langle \alpha, \lb \rangle \leq m$ for all $\alpha \in S$, 
\[
\dim \lb \leq Cm^{|S|}\|\lb\|^{P_G - |S|}
\]
for some constant $C$ depending only on $G$ and $|S|$.
\end{lem}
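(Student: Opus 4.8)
The plan is to adapt the proof of Lemma \ref{dimboundbynorm} by splitting the factors of the Weyl dimension formula into those indexed by $S$ and those indexed by its complement. First I would write
\[
\dim \lb = C_{G,1} \prod_{\alpha \in \Phi_+(G)} \langle \alpha, \lb \rangle = C_{G,1} \lf( \prod_{\alpha \in S} \langle \alpha, \lb \rangle \ri) \lf( \prod_{\alpha \in \Phi_+(G) \setminus S} \langle \alpha, \lb \rangle \ri),
\]
using that $\lb$ is dominant so every factor $\langle \alpha, \lb \rangle$ is a nonnegative real number. For the first product, the hypothesis $\langle \alpha, \lb \rangle \leq m$ for all $\alpha \in S$ gives the crude bound $\prod_{\alpha \in S} \langle \alpha, \lb \rangle \leq m^{|S|}$ directly.

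For the second product, I would apply the RMS--AM--GM inequality exactly as in Lemma \ref{dimboundbynorm}: the geometric mean of the $P_G - |S|$ numbers $\langle \alpha, \lb \rangle$ for $\alpha \in \Phi_+(G) \setminus S$ is at most their root-mean-square, hence
\[
\prod_{\alpha \in \Phi_+(G) \setminus S} \langle \alpha, \lb \rangle \leq \lf( \f{1}{P_G - |S|} \sum_{\alpha \in \Phi_+(G) \setminus S} \langle \alpha, \lb \rangle^2 \ri)^{(P_G - |S|)/2} \leq C' \lf( \sum_{\alpha \in \Phi_+(G)} \langle \alpha, \lb \rangle^2 \ri)^{(P_G - |S|)/2} \leq C'' \|\lb\|^{P_G - |S|},
\]
where the constants only depend on $G$, $|S|$, and $C_{G,2}$. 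Multiplying these two estimates and absorbing all the group-theoretic constants into a single $C$ yields $\dim \lb \leq C m^{|S|} \|\lb\|^{P_G - |S|}$, as claimed.

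The main obstacle, such as it is, is purely bookkeeping: one must be careful that when $S$ is not contained in the actual positive-root set used in a given coordinate (or when some $\langle \alpha, \lb \rangle$ happens to vanish or be negative because $\lb$ fails to be dominant), the inequalities still go through. Since we have already normalized $\lb$ to be dominant and integral in \S\ref{sec:infcarnorms} (the ordering $\lb_{1,v} > \cdots > \lb_{N,v}$), every pairing $\langle \alpha, \lb \rangle$ with a positive (co)root is a genuine nonnegative quantity, so the factorization and both inequalities are legitimate. No deeper input is needed; the lemma is a refinement of Lemma \ref{dimboundbynorm} that merely records which factors are being capped by $m$ rather than by $\|\lb\|$, and the proof is a two-line consequence of the same contest inequality.
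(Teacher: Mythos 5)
Your proof is correct and is essentially the same argument the paper gives: split the Weyl-dimension-formula product over $\Phi_+(G)$ into the factors indexed by $S$ (each bounded by $m$) and those indexed by its complement, then apply the same RMS--AM--GM bound from Lemma \ref{dimboundbynorm} to the latter. The paper states this in one sentence as a ``slight variant'' of Lemma \ref{dimboundbynorm}; you have just written out the same decomposition explicitly.
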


\begin{proof}
This is a slight variant of the argument of lemma \ref{dimboundbynorm} where we apply RMS-AM-GM to the set of $\langle \alpha, \lb \rangle$ for $\alpha \in \Phi_+(G) - S$. 
\end{proof}

Applying this to $\lb_\Box$, define for $\Box = ((T_i, d_i))_i$ a correction
\begin{equation}\label{ebox}
e(\Box) := \sum_i \f12 T_i d_i(d_i-1).
\end{equation}
Then we get our tightening:
\begin{cor}\label{dimboundbynormbox}
Choose unrefined shape $\Box$. Then for all $\lb \in \Box$ and all $\delta > 0$,
\[
\dim \lb \leq C\|\lb\|^{P_G - e(\Box)}
\]
for some constant $C$ depending only on $G$ and $\Box$. 
\end{cor}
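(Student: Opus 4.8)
The plan is to exhibit, for each $\lb \in \Box$, an explicit set $S \subseteq \Phi_+(G)$ of $e(\Box)$ positive roots on which $\langle\cdot,\lb\rangle$ is bounded by a constant depending only on $\Box$, and then to invoke Lemma \ref{dimboundbynormprime} with that $S$.

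First I would unwind the hypothesis $\lb \in \Box$. By Definition \ref{dfn:infcharcombo} it means $\Box^{-1}(\lb) \ne \emptyset$, so fix $(\lb^{(i)})_{i=1}^k \in \Box^{-1}(\lb)$: an assignment of an infinitesimal character $\lb^{(i)}$ (with $T_i$ entries) to each $\tau_{i,\infty}$ such that $\bigoplus_i \tau_{i,\infty}[d_i]$ has infinitesimal character $\lb$. Evaluating \eqref{infcharform} at the distinguished infinite place $v_0$, the $n$ entries of $\lb_{v_0}$ form, as a multiset,
\[
\left\{\,\lb^{(i)}_{a,v_0}+\tfrac{d_i+1}{2}-l \;:\; 1\le i\le k,\ 1\le a\le T_i,\ 1\le l\le d_i\,\right\}.
\]
Next I would observe that for each fixed $(i,a)$ the $d_i$ entries obtained by varying $l$ form an arithmetic progression of common difference $1$, so that for $1\le l<l'\le d_i$ the corresponding two entries of $\lb_{v_0}$ differ by $l'-l\in\{1,\dots,d_i-1\}$. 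Under the standard identification of $\Phi_+(G)$ (type $A_{n-1}$ at $v_0$) with the $2$-element subsets of the index set $\{(i,a,l)\}$ of entries, each pair $\{(i,a,l),(i,a,l')\}$ is a distinct positive root, and $\langle\alpha,\lb\rangle$ equals $l'-l$ on it. Ranging over all $(i,a)$ and all $l<l'$ produces a subset $S\subseteq\Phi_+(G)$ with
\[
|S|=\sum_{i=1}^k T_i\binom{d_i}{2}=\sum_{i=1}^k \tfrac12 T_i d_i(d_i-1)=e(\Box),
\]
on which $\langle\alpha,\lb\rangle\le m:=\max_i(d_i-1)$ for every $\alpha\in S$, where $m$ depends only on $\Box$.

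Finally I would feed $S$ and $m$ into Lemma \ref{dimboundbynormprime}, getting $\dim\lb\le C\,m^{|S|}\|\lb\|^{P_G-|S|}$; since $m$ and $|S|=e(\Box)$ depend only on $\Box$ (and $C$ only on $G$ and $|S|$), the prefactor $Cm^{e(\Box)}$ is a constant of the claimed type and $P_G-|S|=P_G-e(\Box)$, which is the statement at $v_0$. Running the same argument place-by-place over $v\mid\infty$ gives the $G_\infty$-version. I do not expect a genuine obstacle here: the only point needing care is checking that the $e(\Box)$ short pairs coming from the $\SL_2$-shifts are genuinely distinct positive roots (equivalently that no entry-index is reused, and in particular $e(\Box)\le P_G$), which is immediate from tracking the index set $\{(i,a,l)\}$; everything else is routine bookkeeping.
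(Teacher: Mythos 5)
Your proof is correct and takes essentially the same route as the paper's: identify, by inspecting formula \eqref{infcharform}, a set $S$ of $e(\Box)$ positive roots whose pairings with $\lb$ are bounded by $m = \max_i(d_i-1)$, then invoke Lemma \ref{dimboundbynormprime}. You simply spell out in more detail the bookkeeping (indexing entries by $(i,a,l)$, identifying the short roots with pairs $\{(i,a,l),(i,a,l')\}$, and noting regularity guarantees the map from index-pairs to positive roots is injective) that the paper leaves to the phrase ``by inspecting formula \eqref{infcharform}''.
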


\begin{proof}
By inspecting formula \eqref{infcharform}, we see that for each $(T_i, d_i)$ pair making up $\Box$, any $\lb \in \Box$ has $1/2T_id_i(d_i-1)$ different $\alpha \in \Phi_+$ with $\langle \alpha, \lb\rangle \leq d_i-1$. The result then follows from lemma \ref{dimboundbynormprime} after noting that the $m$ and $|S|$ just depend on $\Box$. 
\end{proof}

\subsection{Automorphic Families and the Density Hypothesis}\label{sec:automorphicfamilies}
In this subsection, we introduce the notion of an automorphic family
(following \cite{sarnak2016families}) and the statements of the Ramanujan
conjecture and density hypothesis for such automorphic families (following
\cite{sarnak1991bounds,sarnak1990Diophantine}).

Recall our notations $f\lesssim g$ if for any $\epsilon>0$, there exists $c_{\epsilon}>0$ such that $f(x)\leq c_{\epsilon}\cdot \max\{g(x)^{1+\epsilon}, g(x)^{1 - \eps}\}$ for any $x \geq 0$ and $f\sim g$ if both $f\lesssim g$ and $g\lesssim f$.
\begin{defn}
\label{def:auto-family} 
Let $G/F$ be a reductive group over a number field. A (discrete) \emph{automorphic family} $\mc F$ for $G$ is a weighted subset of $\mc{AR}_\disc(G)$: i.e. a function
\[
\mc F : \mc{AR}_\disc(G) \to \R_{\geq 0}.
\]
\end{defn}

\begin{example}
Let $K' < G^\infty$ be compact open. Then the family of automorphic forms at level $K'$ is 
\[
\mc F_{K'}(\pi) = m_\pi \dim\lf((\pi^\infty)^{K'}\ri).
\]
This models the vector space of automorphic forms on $G$ of level $K'$
\end{example}

\begin{defn}
We say automorphic family $\mc F$ satisfies the \emph{Ramanujan conjecture} if all $\pi \in \mc{AR}_\disc(G)$ with $\mc F(\pi) \neq 0$ are tempered. 
\end{defn}

If $\mc F$ is the family of all cuspidal automorphic representations, this is called the ``na\"ive Ramanujan conjecture'' and it was shown to be false even just on $\Sp_4$ in \cite{HPS79}. The expected correction is that Ramanujan holds for the families of \emph{generic} automorphic representations---this is the generalized Ramanujan conjecture. In the case of $GL_n$, cuspidal implies generic so this difference is irrelevant.

\begin{defn}
Let $G/F$ be a reductive group over a number field. An asymptotic family $\mc F$ for $G$ is an indexed sequence of automorphic families $(\mc F_\lb)_{\lb \in \Lambda}$ together with a ``conductor'' function $m : \Lambda \to \R$ such that:
\begin{itemize}
    \item Each $\mc F_\lb$ has finite total weight.
    \item The size of the support of $\mc F_\lb$ goes to infinity as $m(\lb) \to \infty$. 
\end{itemize}
\end{defn}

\begin{example}
Let $K'$ a compact open subgroup of $G^\infty$. Then the \emph{weight-aspect family of level-$K'$ automorphic forms} on $G$ is
\[
\mc F_{K',\lb}(\pi) = m_\pi \1_{\text{infchar}(\pi_\infty) = \lb} \dim\lf((\pi^\infty)^{K'}\ri)
\]
indexed over the set of regular, integral infinitesimal characters $\lb$ of $G_\infty$. Its conductor function is the $m$ from \eqref{eq:infcharweight}.

This models the space of automorphic forms on $G$ of level $K'$ as the infinitesimal character at infinity gets larger and larger. 
\end{example}

\begin{example}
Let $I \subseteq \infty$ contain all infinite places at which $G$ is non-compact. Let $K = K^\infty G_{\infty \setminus I}$ for open compact $K^\infty \subseteq G^\infty$. Then the connected component of the identity in $G(F) \bs G(\A) / K$ is $\Gamma \bs G_I$ for some discrete, cofinite volume $\Gamma$. 

Pick an invariant metric on $G_I$ and let $B(\delta)$ be the ball of volume $\delta$ around the identity in $\Gamma \bs G_I$. Then define the $\delta$-ball family in $G_I$ by:
\[
\mc F_\delta(\pi) := m_\pi \f{\|\Proj_{\pi_I}(\1_{B(\delta)})\|_2^2}{\|\1_{B(\delta)}\|_2^2} \1_{\pi_{\infty \setminus I} \text{ triv.}} \dim\lf((\pi^\infty)^{K^\infty}\ri) = m_\pi \f1{\|\1_{B(\delta)}\|_2^2} \tr_\pi((f_\eps \star f_\eps^*) \bar \1_{G_{\infty \setminus I} }\bar \1_K),
\]
with $m(\delta) = 1/\delta$. 

This models the decompositions of the indicators of smaller and smaller balls in the automorphic spectrum. 
\end{example}

\begin{defn}\label{def:densityhypothesis}
We say an asymptotic family $\mc F_\lb$ eventually satisfies the Ramanujan conjecture if there $L$ such that $\mc F_\lb$ satisfies the Ramanujan conjecture whenever $m(\lb) \geq L$. 

Fix place $v$ so that $(G_\scn)_v$ has no anisotropic factors. We say an asymptotic family $\mc F_\lb$ satisfies the density hypothesis at $v$ if for all $\sigma \geq 2$ and $\eps > 0$,
\[
 \sum_{\substack{\pi \in \mc{AR}_\disc(G) \\ \sigma(\pi,v)\geq\sigma}} \mc F_\lb(\pi) \lesssim 
 \lf(\sum_{\pi \in \mc{AC}(G)} \mc F_\lb(\pi) \ri)^{1 - \frac{2}{\sigma}} \lf(\sum_{\pi \in \mc{AR}_\disc(G)} \mc F_\lb(\pi) \ri)^{\frac{2}{\sigma} },
\]
where $\mc{AC}(G)$ is the set of automorphic characters ($1$-dimensional automorphic representations of $G$) and $\lesssim$ is interpreted asymptotically in $m(\lb)$.
\end{defn}

Note that $\sigma(\pi_v) = \infty$ is equivalent to $\pi_v$ being a character which, under our conditions on $G_v$, is further equivalent to $\pi$ being a character (see e.g. \cite{KST16}*{Lem 6.2}). Therefore, this can be thought of as an interpolation between the case $\sigma = 2$ and $\sigma = \infty$. 

The automorphic density hypothesis was raised as a conjecture in \cite{sarnak1991bounds, sarnak1990Diophantine} as a possible substitute for the failure of the na\"ive Ramanujan conjecture. In recent years this conjecture was proven in several special instances \cite{blomer2023density, marshall2014endoscopy, marshall2019endoscopy, golubev2022cutoff, Sarnak2015Appendixto2015}. Here, we will specifically be applying methods from \cite{DGG22, DGG24}.

\begin{example}
Let $G$ be one of the $U^{E,H}_n$. Then weight-aspect families on $G$ can be seen to eventually satisfy the Ramanujan conjecture through the endoscopic classification \cite{KMSW14}: if $m(\lb) \gg 1$, then formula \eqref{infcharform} shows that all $\pi$ with infinitesimal character $\lb$ at infinity are necessarily of shape $\Box = ((T_i, d_i))_{i=1}^k$ with all $d_i = 1$ (i.e. they have generic parameters). Then, Theorem \ref{thm:auto-rep-GRC} can be used to show that they are all tempered. 
\end{example}

\begin{example}
The density hypothesis for a slight variant of the $\delta$-ball family will be the key input towards proving the optimal covering property for our set of gates. 
\end{example}

\section{Matrix Coefficient Decay}\label{sec:matrixcoefficient}
We next need to understand how the shape of an $A$-parameter of an automorphic representation controls the decay of the matrix coefficients of its local components at finite places. This will require the very serious black-box inputs of Theorem \ref{thm:auto-rep-GRC} and results from explicit constructions of $A$-packets.

Fix some finite place $v$ of quadratic extension $E/F$ and unitary group $G = U_n^{E/H}$. Assume first that $v$ is non-split. A much simpler version of this argument works for $v$ split (see Remark \ref{rem:exponentssplit})---we only present the full details of the more complicated non-split case for brevity. 

\subsection{Exponents}
Fix a minimal parabolic $P_0$ of $G_v$ containing minimal Levi $M_0$. We will consider the set of standard Levi's $M \supseteq M_0$ and standard parabolics $P_M \subseteq P_0$, where $P_M$ has Levi factor $M$. Given a representation $\pi$ of such $M$, define $\mc I_M^G(\pi) := \mc I_{P_M}^G(\pi)$ to be the normalized parabolic induction of \cite{Bernstein1977InducedrepresentationsreductiveI}*{\S1.8(a)} and $\mc R_M^G(\pi)$ to be the normalized Jacquet functor of \cite{Bernstein1977InducedrepresentationsreductiveI}*{\S1.8(b)}; see \cite{KT20}*{\S3.2} for a modern summary. 

Let $\pi$ be an irreducible representation of $G_v$. Then by the Langlands classification (\cite{KT20}*{\S3.3} for a modern exposition), there is a standard Levi $M$ of $G_v$, tempered irreducible representation $\sigma$ of $M$, and unramified character $\lb$ of $M$ such that $\pi$ is a subrepresentation of $\mc I_{P_M}^G(\sigma \otimes \lb)$. 

Since $G_v$ is unitary splitting over $E_w$, $M$ is of the form: 
\[
M = \Res_{F_v}^{E_w} GL_{n_1} \times \cdots \times \Res_{F_v}^{E_w} GL_{n_k}  \times G'_v,
\]
where $G'_v$ is a smaller unitary group splitting over $E_w$ (see e.g. \cite[\S3.2.3]{Minguez2011Unramifiedrepresentationsunitary}). Therefore, by the Bernstein-Zelevinsky classification (\cite{zelevinsky1980induced}, \cite{LM16}*{\S2} for a modern exposition), we can actually choose $(M, \sigma)$ so that $\sigma \otimes \lb$ is of the form
\begin{equation}\label{eq LanglandsData}
\sigma \otimes \lb = \St(\rho_1, a_1) |\det|^{-x_1} \boxtimes \cdots \boxtimes \St(\rho_k, a_k) |\det|^{-x_k} \boxtimes \pi_\temp,
\end{equation}
where $\St(\rho_i, a_i)$ are Steinberg representations built out of supercuspidals $\rho_i$ of $GL_{T_i} E_w$ with $T_ia_i = n_i$, $\pi_\temp$ is a tempered representation\footnote{we say $\pi_\temp = 0$ in the case when the last factor of $M$ doesn't appear} of $G'_v$, and $x_1 \geq \cdots \geq x_k > 0$. Such $\sigma \otimes \lb$ is unique up to permuting factors $i$ with equal $x_i$'s. 

Furthermore, by the endoscopic classification \ref{thm:KMSWendo}, $\pi_\temp$ has a tempered $L$-parameter:
\begin{equation}\label{eq extendedsupport}
\varphi_{\pi_\temp} = \bigoplus_j \tau_j \boxtimes [b_j]
\end{equation}
for $\tau_j$ unitary supercuspidals of some $GL_{R_j}(E_w)$. 

Following \cite{Moe09}, we can now define some invariants of $\pi$:
\begin{defn}
For $\pi$ an irreducible representation of $G_v$ as above, define:
\begin{enumerate}
    \item The Langlands data for $\pi$ is the data $((\rho_i, a_i, x_i)_i, \pi_\temp)$ from \eqref{eq LanglandsData},
    \item The extended supercuspidal support for $\pi$ is the multiset produced by taking a union of
    \[
    (\rho_i |\det|^{l - x_i}, \bar \rho_i^\vee |\det|^{l + x_i} : l \in \{(a_i - 1)/2, (a_i - 3)/2, \dotsc, (1 - a_i)/2\})
    \]
    over the $i$ from \eqref{eq LanglandsData} together with
    \[
    (\tau_j |\det|^l : l \in \{(b_j - 1)/2, (b_j - 3)/2, \dotsc, (1 - b_j)/2\})
    \]
    over the $j$ from \eqref{eq extendedsupport}. 
\end{enumerate}
\end{defn}

To understand matrix coefficient decay, we also need a slightly different invariant:

\begin{defn}
For $\pi$ an irreducible representation of $G_v$ as above, define:
\begin{enumerate}
\item
the coarse exponents $\overline L_{\pi}$ to be the list of $x_i$ from \eqref{eq LanglandsData} in non-increasing order, 
\item
the exponents $L_\pi$ to be the list of each $x_i$ from \eqref{eq LanglandsData} repeated $n_i$ times in non-increasing order.
\end{enumerate}
\end{defn}
Lists of exponents can be compared:
\begin{defn}
If $L = (l_i)_{i = 1}^k$ is a non-increasing list of numbers:
\begin{enumerate}
\item
define
\[
\sigma_i(L) := \sum_{j=1}^i x_j,
\]
where for indexing purposes, $x_j = 0$ when $j$ is out-of-bounds. 
\item
We say that $L_1 \succeq L_2$ if for all $i$, $\sigma_i(L_1) \geq \sigma_i(L_2)$. 
\end{enumerate}
\end{defn}

\subsection{Exponents and Matrix Coefficient Decay}

The exponents of a representation control its matrix coefficients' decay. Pick representation $\pi$ of $G_v$ and define the corresponding $\lb, \sigma, M$ as in \eqref{eq LanglandsData}; i.e, in terms of the Langlands data:
\begin{multline*}
\lb : M = \Res_{F_v}^{E_w} GL_{n_1} \times \cdots \times \Res_{F_v}^{E_w} GL_{n_k}  \times G'_v \to \C :  \\
g_1 \times \cdots \times g_k \times g \mapsto |\det g_1|^{-x_1} \cdots |\det g_k|^{-x_k}. 
\end{multline*}
For any standard Levis $M_1 \supseteq M_2$, let $\delta^{M_1}_{M_2}$ be modulus character of $P_{M_2} \cap M_1$---i.e the choice of $P_0$ determines a set of (absolute) positive roots $\Phi^+_M$ for each $M$ and we take a product:
\[
\delta^{M_1}_{M_2} := \prod_{\alpha \in \Phi^+(M_1) \setminus \Phi^+(M_2)} | \alpha |.
\]
This extends to a character of $M_2$. Note that for $z$ in the center $Z_{M_2}$ of $M_2$:
\[
\delta^{M_1}_{M_2}(z) = \delta^{M_1}(z) := \prod_{\alpha \in \Phi^+(M_1)} |\alpha(z)|
\]
and for $z \in Z_{M_1}$, $\delta^{M_1}_{M_2}(z) = \delta^{M_1}(z) = 1$. 

Let $T$ be a maximal torus of the minimal standard Levi $M_0$ and $T^{\mc O}$ be the subset on which all algebraic characters take values with norm $1$. For any $M$, define $Z_M^\mc O$ similarly in the center of $M$ and let $Z_M^-$ be the set of all $z \in Z_M$ such that $|\alpha(z)| \leq 1$ for all $\alpha \in \Phi^+(G) \setminus \Phi^+(M)$. 

The next two lemmas apply to arbitrary $p$-adic reductive groups $G$. Lemma \ref{lem jacquetexponents} is the key technical idea that lets us input the fact of $\sigma$ being tempered to tighten bounds as much as possible. 

\begin{lem}\label{lem jacquetexponents}
In the notation above, let $L$ be another standard Levi of $G$. Let $\chi$ be the central character of an irreducible subquotient of $\mc R^G_{L} \mc I_M^G (\sigma \otimes \lb)$. Then there is Weyl element $w$ such that $\chi = \chi_1 \otimes (\lb \circ w)|_{Z_L}$, where $|\chi_1(z)| \leq 1$ for all $z \in Z_L^- \setminus Z_G T^\mc O$.
\end{lem}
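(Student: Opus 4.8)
The statement concerns the central characters of irreducible subquotients of $\mc R^G_L \mc I_M^G(\sigma \otimes \lb)$, where $\sigma$ is tempered and $\lb$ is the Langlands twisting character. The plan is to combine the geometric lemma (Bernstein--Zelevinsky / Casselman) describing the semisimplification of $\mc R^G_L \mc I_M^G$ with the square-integrability estimate for matrix coefficients of tempered representations (Casselman's criterion). First I would invoke the Bernstein--Zelevinsky filtration: $\mc R^G_L \mc I_M^G(\sigma\otimes\lb)$ has a filtration whose graded pieces are indexed by the Weyl-set $W(M,L)$ (double cosets $W_L \backslash W / W_M$, or rather the set of $w$ with $w^{-1}(\Phi_L^+)>0$ and $w(\Phi_M^+)>0$), with the $w$-th piece isomorphic to $\mc I_{M\cap w^{-1}Lw}^L\bigl(\mc R^{w^{-1}Lw}_{\dots} (w^{-1}(\sigma\otimes\lb))\bigr)$. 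Hence the central character of any irreducible subquotient restricted to $Z_L$ is of the form $\chi_{\mathrm{temp}} \cdot ((\sigma\otimes\lb)\circ w)|_{Z_L}$ for some such $w$, where $\chi_{\mathrm{temp}}$ comes from an exponent of the tempered part $\sigma\circ w$ along a parabolic of $w^{-1}Lw\cap M$.

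Then I would separate the contribution of $\lb$ from that of $\sigma$: write the central character as $\chi_1 \otimes (\lb\circ w)|_{Z_L}$ where $\chi_1$ packages the Jacquet-module exponent coming from $\sigma$ together with the modulus-character shifts $\delta^{M}_{M\cap w^{-1}Lw}$ appearing in the geometric lemma. The key point is the bound $|\chi_1(z)|\le 1$ for $z\in Z_L^-\setminus Z_G T^{\mc O}$. For this I would apply Casselman's square-integrability / temperedness criterion to $\sigma$: since $\sigma$ is tempered, every exponent $\mu$ of every Jacquet module $\mc R^M_{M'}\sigma$ satisfies $|\mu(z)|\le 1$ (in fact $\langle \mathrm{Re}\,\mu + \rho_{M'}, \check\alpha\rangle \le 0$ for the relevant coroots) on the appropriate negative cone; combining this with the fact that the half-sum-of-roots shift from the normalized induction/Jacquet functors is exactly accounted for by the $\delta$-characters already separated out, the residual character $\chi_1$ is a product of a unitary character and something in the closed negative cone, giving $|\chi_1(z)|\le 1$ on $Z_L^-$. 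The exclusion of $Z_G T^{\mc O}$ is where the inequality would otherwise be an equality with no slack: on $Z_G$ everything is trivial by normalization and on $T^{\mc O}$ all absolute values are $1$, so one restricts to the complement where the cone condition can be used nontrivially.

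The main obstacle I anticipate is bookkeeping: correctly tracking which modulus characters $\delta^{M_1}_{M_2}$ appear in the geometric lemma, making sure the normalization conventions (normalized vs. unnormalized induction, the $\rho$-shifts) are consistent so that after factoring out $\lb\circ w$ the leftover is genuinely the central character of an honest exponent of the \emph{tempered} $\sigma$, and then verifying the cone inequality $|\chi_1(z)| \le 1$ holds precisely on $Z_L^-\setminus Z_G T^{\mc O}$ rather than on some slightly different set. A secondary subtlety is that $Z_L$ may be larger than the relevant torus appearing in Casselman's criterion for $\sigma$, so one must restrict exponents of $\sigma$ along $Z_{M\cap w^{-1}Lw}$ and then further restrict to $Z_L$ via $w$; I would handle this by noting $Z_L \hookrightarrow Z_{w^{-1}Lw \cap M}$ compatibly with the cone structure. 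Once the geometric lemma and Casselman's criterion are set up with matching normalizations, the inequality is essentially formal.
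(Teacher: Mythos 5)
Your proposal follows essentially the same route as the paper's proof: apply Bernstein's geometric lemma to filter $\mc R^G_L \mc I_M^G(\sigma\otimes\lb)$ by Weyl cosets, peel off the $\lb\circ w$ factor from the central character of each graded piece, observe that the normalized induction/Jacquet functors absorb the modulus-character shifts, and finally apply Casselman's temperedness criterion (Cor.\ 4.4.6 of \cite{Cass95}) to the remaining exponent coming from the tempered factor. The paper's proof is somewhat terser about the $\delta$-shift bookkeeping (dismissing it in a final remark about normalization), and it applies Casselman's criterion to $w\sigma$ rather than to $\sigma$ directly, but since temperedness is preserved by Weyl conjugation this is the same argument you describe. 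The subtleties you flag---matching normalizations so that the residual exponent is an honest exponent of a tempered representation, and restricting exponents along the appropriate torus---are exactly the points the paper handles via normalized functors and via its carefully phrased statement about $Z_L^-\setminus Z_G T^{\mc O}$, so your outline is a faithful and correct reconstruction.
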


\begin{proof}
This follows from Bernstein's geometric lemma \cite[pg 448]{Bernstein1977InducedrepresentationsreductiveI}:

Any $\chi$ is, in the notation therein, a central character of a subquotient of some
\[
F_w(\sigma \otimes \lb) := \mc I^L_{L'} \circ w \circ \mc R^M_{M'} (\sigma \otimes \lb),
\]
with $w$ some Weyl element satisfying $w(M') = L'$ (among other conditions). Computing central characters of $F_w$ step-by-step, $\mc R^M_{M'}(\sigma \otimes \lb)$ has central characters of the form $\chi'_1 \otimes \lb |_{Z_M'}$ where $\chi'_1$ is a central character of $\mc R^M_{M'} \sigma$. Further applying $w$ produces those of the form $\chi_1 \otimes (\lb \circ w) |_{Z_{L'}}$ where $\chi_1$ is a character in $\mc R^{wM}_{L'} (w\sigma)$. Applying the induction then gives characters of the form $\chi_1 \otimes (\lb \circ w)  \otimes (\delta^L)^{1/2}|_{Z_L} = \chi_1 \otimes (\lb \circ w)|_{Z_L}$.  

Finally, \cite[Cor 4.4.6]{Cass95} gives the desired property of $\chi_1$ since $w\sigma$ is tempered. Note that we cancel out the $(\delta^{wM}_{L'})^{-1/2}$ in the reference by using normalized Jacquet modules. 
\end{proof}

\begin{prop}\label{prop rawLPtest}
In the notation above, the matrix coefficients of $\mc I^{G_v}_M (\sigma \otimes \lb)$ are $L^{p+\eps}$ mod center for all $\eps > 0$ if for all negative dominant $\nu \in X_*(Z_{M_0}) \setminus X_*(Z_G)$,
\[
|\lb(\nu(\varpi))\delta^G(\nu(\varpi))^{1/2-1/p}| \leq 1.
\]
\end{prop}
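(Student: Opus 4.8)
The plan is to deduce the bound from the $L^{p}$-integrability criterion for matrix coefficients of a $p$-adic reductive group --- the $L^{p}$-refinement of Casselman's square-integrability criterion, \cite{Cass95}*{\S4.4--4.5}. That criterion says a finite-length admissible representation $\Pi$ of $G_v$ with unitary central character has all matrix coefficients in $L^{p+\eps}$ mod center, for every $\eps>0$, if and only if for every exponent $\chi$ of the normalized Jacquet module $\mc R^{G_v}_{M_0}\Pi$ along the minimal parabolic $P_0$ one has $|\chi(z)\,\delta^{G}(z)^{1/2-1/p}|\le 1$ for all $z\in Z_{M_0}^{-}$. Since $|\chi|$ and $\delta^{G}$ are multiplicative and trivial on $Z_{G_v}T^{\mc O}$, it is enough to test this at the generators $z=\nu(\varpi)$ with $\nu\in X_{*}(Z_{M_0})\setminus X_{*}(Z_G)$ negative dominant. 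So the first step is to record this reduction carefully for $\Pi=\mc I^{G_v}_{M}(\sigma\otimes\lb)$; the parallel but easier split case is covered by Remark \ref{rem:exponentssplit}.

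Second, I would substitute the description of the exponents. By Lemma \ref{lem jacquetexponents} applied with $L=M_0$, every exponent $\chi$ of $\mc R^{G_v}_{M_0}\mc I^{G_v}_{M}(\sigma\otimes\lb)$ is of the form $\chi_1\otimes(\lb\circ w)|_{Z_{M_0}}$ for some Weyl element $w$, with $|\chi_1(z)|\le 1$ for all $z\in Z_{M_0}^{-}\setminus Z_GT^{\mc O}$ --- this is precisely the place where the temperedness of $\sigma$ in the normal form \eqref{eq LanglandsData} is used. Hence for $z=\nu(\varpi)$ as above,
\[
|\chi(z)\,\delta^{G}(z)^{1/2-1/p}|\ \le\ |(\lb\circ w)(\nu(\varpi))|\cdot\delta^{G}(\nu(\varpi))^{1/2-1/p},
\]
and only the right-hand side remains to be bounded by $1$.

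The last step is a short comparison that brings in the hypothesis. The ordering $x_1\ge\cdots\ge x_k>0$ in \eqref{eq LanglandsData} is exactly the statement that $\mc I^{G_v}_{M}(\sigma\otimes\lb)$ is written in ``Langlands sub'' position, which makes $\lb|_{Z_{M_0}}$ an anti-dominant character; consequently, over the Weyl orbit the absolute value $|(\lb\circ w)(\nu(\varpi))|$ is maximized at $w=\mathrm{id}$ for every negative dominant $\nu$ (because $\nu-w^{-1}\nu$ is a non-negative combination of simple coroots, against which an anti-dominant character pairs non-positively). Feeding $|(\lb\circ w)(\nu(\varpi))|\le|\lb(\nu(\varpi))|$ back into the display gives
\[
|\chi(\nu(\varpi))\,\delta^{G}(\nu(\varpi))^{1/2-1/p}|\ \le\ |\lb(\nu(\varpi))\,\delta^{G}(\nu(\varpi))^{1/2-1/p}|\ \le\ 1
\]
by the hypothesis, completing the argument. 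The part requiring real care --- and what I would flag as the main obstacle --- is pinning down the exact normalization of Casselman's criterion together with the sign conventions that identify ``Langlands sub'' with anti-dominance of $\lb$: get either backwards and the power of $\delta^{G}$ ends up on the wrong side of the inequality. The remaining manipulations are routine structure theory of normalized parabolic induction and Jacquet functors.
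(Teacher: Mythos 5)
Your proposal is correct and follows essentially the same route as the paper: both pass through the $L^p$ refinement of Casselman's criterion (\cite{Cass95}*{Thm 4.4.6, Cor 4.4.5}), both invoke Lemma \ref{lem jacquetexponents} to describe the exponents, and both exploit the ordering $x_1\geq\cdots\geq x_k>0$ to maximize $|(\lb\circ w)(\nu(\varpi))|$ at $w=\mathrm{id}$.

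The one structural difference is that you state the criterion purely in terms of the minimal Levi $M_0$, whereas the paper runs the check over all standard Levis $L$ and then observes at the end that $Z_L\subseteq Z_{M_0}$ makes the $M_0$-test the binding one. Your shortcut is correct, but the equivalence between ``checking at $M_0$'' and ``checking at every standard $L$'' deserves a one-line justification via transitivity of normalized Jacquet functors ($\mc R^L_{M_0}\circ\mc R^G_L=\mc R^G_{M_0}$), since the restriction of an $M_0$-exponent to $Z_L\subseteq Z_{M_0}$ recovers the corresponding $L$-exponent and $Z_L^-\subseteq Z_{M_0}^-$. As you already flag, the remaining delicacy is the sign-and-normalization bookkeeping around ``Langlands sub'' vs.\ anti-dominance of $\lb$; the paper's proof is equally terse at exactly this point, so you are not missing anything, but both arguments lean on the reader to verify that $|\lb(w\nu(\varpi))|\le|\lb(\nu(\varpi))|$ is really the direction that the ordering of the $x_i$ gives for negative dominant $\nu$.
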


\begin{proof}
The parameter $p$ in the application of Corollary 4.4.5 in the proof of Theorem 4.4.6 in \cite{Cass95} can be any value instead of just $2$ producing a test for $L^p$ matrix coefficients. 

Recalling that our Jacquet modules are normalized, it therefore suffices to check that for any Levi $L$, the central characters $\chi$ of $\mc R^G_L \mc I_M^G (\sigma \otimes \lb)$ satisfy that
\begin{equation}\label{eq:inproofrawLPtest}
|\chi(a) \delta^G(a)^{1/2-1/p}| \leq 1
\end{equation}
for all $a \in Z^-_L \setminus Z_G T^\mc O$. However, by lemma \ref{lem jacquetexponents}, there is a Weyl element $w$ such that 
\[
|\chi(a)\delta^G(a)^{1/2-1/p}| \leq |\lb(wa)\delta^G(a)^{1/2-1/p}| \leq |\lb (a)\delta^G(a)^{1/2-1/p}|
\]
by the ordering of the $x_i$. Then, since $\lb$ is trivial on $Z^\mc O_L$, we only need to check the condition \eqref{eq:inproofrawLPtest} for the listed coset representatives $\nu(\varpi)$ of $Z^-_L/Z_L^\mc O$, ignoring those intersecting $Z_G$. 

The result follows from noting that $Z_L \subseteq Z_{M_0}$. 
\end{proof}

We rephrase this slightly in our specific situation:

\begin{cor}\label{cor mcdecay}
In the notation above,
\[
\f2{\sigma(\pi)} \geq 1 - \max_{1 \leq i \leq \lfloor n/2 \rfloor} \f{2\sigma_i(L_\pi)}{i(n-i)}.
\] 
\end{cor}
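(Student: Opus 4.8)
The goal is to turn the abstract $L^p$-test of Proposition \ref{prop rawLPtest} into the explicit inequality on $\sigma(\pi)$ in terms of the partial sums $\sigma_i(L_\pi)$ of the exponents. The plan is to make explicit the three ingredients appearing in Proposition \ref{prop rawLPtest} for $G_v = U_n^{E/H}(F_v)$ at a non-split place: the minimal central cocharacters $\nu$, the Langlands parameter character $\lb$ evaluated on $\nu(\varpi)$, and the modulus character $\delta^G(\nu(\varpi))$. First I would fix the standard choice of maximal split torus $A_{M_0}$ as in Example \ref{ex:cartanunitary}, so that $X_*(Z_{M_0})$ is spanned (modulo $X_*(Z_G)$) by the cocharacters $\nu_i$ dual to the fundamental weights; in the coordinates there, $\nu_i(\varpi)$ corresponds to the diagonal element with $i$-many $\varpi$'s, $(n-2i)$-many $1$'s, and $i$-many $\varpi^{-1}$'s (for $1 \le i \le \lfloor n/2 \rfloor$). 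The negative dominant cocharacters in $X_*(Z_{M_0}) \setminus X_*(Z_G)$ that need to be tested are exactly the non-negative integer combinations of these $\nu_i$, and since the condition $|\lb(\nu(\varpi))\delta^G(\nu(\varpi))^{1/2 - 1/p}| \le 1$ is multiplicative and the quantities involved are monotone, it suffices to check it on the generators $\nu_i$ themselves.

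Next I would compute the two characters on each $\nu_i(\varpi)$. For the modulus character, $\delta^G(\nu_i(\varpi)) = q_v^{-\langle \nu_i, 2\rho_G\rangle}$ where $\rho_G$ is the half-sum of positive absolute roots of $GL_n$ restricted appropriately; pairing $2\rho = (n-1, n-3, \dotsc, 1-n)$ against the cocharacter with $i$ leading $1$'s and $i$ trailing $-1$'s gives $\langle \nu_i, 2\rho_G\rangle = i(n-i)$ (up to the normalization conventions for the unitary group, which I would track carefully — this is where the factor $i(n-i)$ in the denominator of the corollary comes from). For the Langlands-data character, recall from \eqref{eq LanglandsData} that $\lb$ acts by $\prod_j |\det g_j|^{-x_j}$ on the $\Res_{F_v}^{E_w} GL_{n_j}$ factors; evaluating this on $\nu_i(\varpi)$, the leading $i$ coordinates pick up a total valuation contribution governed by which of the $GL_{n_j}$-blocks they fall into, and by the non-increasing ordering $x_1 \ge \cdots \ge x_k$ the maximum of $|\lb(\nu_i(\varpi))|$ over the ambiguity in placing blocks is realized by the greedy assignment, yielding exactly $\sigma_i(L_\pi) = \sum_{j=1}^i x_j$ (with the out-of-bounds convention). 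Assembling, the test of Proposition \ref{prop rawLPtest} for the pair $(\sigma, \lb)$, which controls $\mc I_M^{G_v}(\sigma \otimes \lb) \supseteq \pi$, becomes: $\sigma(\pi) \le p$ whenever $\sigma_i(L_\pi) \le (1/2 - 1/p)\, i(n-i)$ for all $1 \le i \le \lfloor n/2\rfloor$. Rearranging $\sigma_i(L_\pi)/(i(n-i)) \le 1/2 - 1/p$ into a bound on $2/p$ and taking the infimum over admissible $p$ gives precisely
\[
\frac{2}{\sigma(\pi)} \ge 1 - \max_{1 \le i \le \lfloor n/2 \rfloor} \frac{2\sigma_i(L_\pi)}{i(n-i)},
\]
as claimed.

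\textbf{Main obstacle.} The routine parts are the root-theoretic computations of $\langle \nu_i, 2\rho_G\rangle$ and the bookkeeping of normalizations between $GL_n(E_w)$ and the unitary group $G_v$ (the valuation of $\varpi$ versus that of a uniformizer of $E_w$, and the $\delta^{1/2}$-twist conventions in normalized induction). The genuinely delicate point is the passage from the full exponent list $L_\pi$ (each $x_i$ repeated $n_i = T_i a_i$ times) to the partial sums $\sigma_i(L_\pi)$: one must check that evaluating $|\lb(\nu_i(\varpi))|$ correctly accounts for Steinberg blocks $\St(\rho_i, a_i)$ straddling the "first $i$ coordinates" boundary, and that the worst case over the permutation ambiguity among equal $x_i$'s — together with the fact that $\lb$ is trivial on $GL$ factors sitting inside the tempered piece $G'_v$ — is exactly the greedy sum. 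I expect this combinatorial identification, rather than any deep representation theory, to be the step requiring the most care; the heavy inputs (temperedness of $\sigma$ feeding into Lemma \ref{lem jacquetexponents}, and the $L^p$-refinement of Casselman's criterion) are already black-boxed in Proposition \ref{prop rawLPtest}.
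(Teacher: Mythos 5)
Your proposal is correct and takes essentially the same route as the paper: parameterize the maximal split torus $A$ as in Example \ref{ex:cartanunitary}, reduce the test of Proposition \ref{prop rawLPtest} to the fundamental cocharacters $\xi_i$ (which the paper does via the mediant inequality $a/b \le c/d \implies a/b \le (a+c)/(b+d) \le c/d$, equivalent to your multiplicativity/monotonicity observation since the log of the test is linear in $\nu$), compute $\log_{q_v}|\delta^G(-\xi_i(\varpi))| = -2i(n-i)$ by pairing with $2\rho$, compute $\log_{q_v}|\lb(-\xi_i(\varpi))| = 2\sigma_i(L_\pi)$, and rearrange. One small note: the ``permutation ambiguity among equal $x_i$'s'' you flag as the delicate point is in fact harmless here --- the blocks of the standard Levi $M$ are already ordered so that $x_1 \ge \cdots \ge x_k$, permuting tied factors does not change any partial sum $\sum_{j \le i} x_j$, and the passage over Weyl conjugates was already absorbed into Lemma \ref{lem jacquetexponents} and hence into the statement of Proposition \ref{prop rawLPtest}; so the evaluation of $\lb$ on $\xi_i(\varpi)$ is a direct computation rather than a maximization.
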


\begin{proof}
Since $\pi \subseteq \mc I_M^{G_v} (\sigma \otimes \lb)$, it suffices to check which $p$ satisfy condition of Proposition \ref{prop rawLPtest}. We also without loss of generality consider all negative dominant $\nu \in X_*(A) \setminus X_*(G)$ for a maximally split torus $A$. 

Parameterize $A$ as diagonal matrices $(t_1, \dotsc, t_n)$ for $t_i \in E_w$ with $t_i^{-1} = \bar t_{N-i}$. When $G_v$ isn't quasisplit, $n$ is even and we further require $t_{n/2} = t_{n/2+1} = 1$.  Then $X_+(A) \setminus X_*(Z_G)$ is generated as a semigroup by the fundamental weights for $1 \leq i \leq \lfloor n/2 \rfloor$ (resp. $n/2-1$ when $G_v$ isn't quasisplit):
\[
\xi_i : F_v^\times \to T : t \mapsto (t, \dotsc, t, 1 \dotsc, 1, t^{-1}, \dotsc, t^{-1})
\]
where the breakpoints are at indices $i$ and $n-i+1$.

By the inequality, 
\[
b,d > 0 \text{ and } \f ab \leq \f cd \implies \f ab \leq \f{a+c}{b+d} \leq \f cd,
\]
it suffices in \ref{prop rawLPtest} to only check the cases where $\nu = -\xi_i$ for some $i$. Then
\[
\log_{q_v}|\lb(-\xi_i(\varpi))| = 2\sigma_i(L_\pi), \qquad \log_{q_v}|\delta^G(-\xi_i(\varpi))| = -2i(n-i),
\]
so $\pi$ has matrix coefficients in $L^p$ if
\[
\sigma_i(L_\pi) \leq \lf(\f12  - \f1p \ri) i(n-i) \iff \f2p \leq 1 - \f{2\sigma_i(L_\pi)}{i(n-i)}
\]
for all $1 \leq i \leq \lfloor n/2 \rfloor$ (without loss of generality adding in an irrelevant term when $G_v$ isn't quasisplit). The result follows. 
\end{proof}

\subsection{Exponents and Parameters}
Parameters also have a notion of exponents. Let tempered local parameter $\psi_v$ decompose as a representation of $W_{F_v} \times \SL_2 \times \SL_2$ as
\[
\bigoplus_i \tau_i \boxtimes [a_i] \boxtimes [d_i],
\]
where the $\tau_i$ are unitary supercuspidals of $GL_{T_i} E_w$. Let $\tau_i$ have dimension $T_i$. 

\begin{defn}
In the notation above, the coarse exponents $\overline L_{\psi_v}$ for $\psi_v$ is the concatenation of the lists
\[
\bigsqcup_i ((d_i - 1)/2, (d_i - 3)/3, \cdots, (d_i - 2 \lfloor d_i/2 \rfloor)/2),
\]
where the result is reordered to be non-increasing. 

The exponents $L_{\psi_v}$ for $\psi_v$ are the same except we repeat the $i$th list $T_ia_i$ times:
\[
\bigsqcup_i ((d_i - 1)/2, (d_i - 3)/3, \cdots, (d_i - 2 \lfloor d_i/2 \rfloor)/2)^{(T_ia_i)},
\]
with the result ordered to be non-increasing. 
\end{defn}

\begin{defn}
In the notation above, the extended supercuspidal support of $\psi_v$ is the multiset produced by taking a union of
\[
(\tau_i |\det|^l : l \in \langle a_i \rangle \boxplus \langle b_i \rangle)
\]
over all $i$. We use shorthand 
\[
\langle r \rangle := ((r-1)/2, (r-3)/2, \dotsc, (1-r)/2) 
\]
and define $A \boxplus B$ to be the multiset $(a+b : a \in A, b \in B)$.
\end{defn}

We can now state a key input of M\oe{}glin:

\begin{thm}[{\cite[Thm 7.2, Prop 4.1]{Moe09}}]\label{thm MoeglinInput}
Let $\psi_v \in \Psi_{G_v}$ and $\pi_v \in \Pi_{\psi_v}$. Then: 
\begin{enumerate}
\item
$\overline L_{\pi_v} \preceq \overline L_{\psi_v}$. 
\item
The extended supercuspidal support of $\pi_v$ is the same as that of $\psi_v$. 
\end{enumerate}
\end{thm}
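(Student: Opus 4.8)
The plan is to derive both assertions from M{\oe}glin's explicit construction of local $A$-packets for unitary groups in \cite{Moe09} (which parallels her earlier work for symplectic and orthogonal groups). Recall that for $\psi_v = \bigoplus_i \tau_i \boxtimes [a_i] \boxtimes [d_i]$ she realizes each $\pi_v \in \Pi_{\psi_v}$ as the Langlands quotient of an explicit standard module obtained from the datum $\psi_v$ by a finite sequence of elementary operations: reordering the Jordan blocks, taking normalized partial Jacquet modules, and inducing by Steinberg-type segments $\St(\rho_i, a_i)|\det|^{-x_i}$. Both statements then become bookkeeping facts about how these operations act on (1) the list of Langlands exponents and (2) the cuspidal support, so the bulk of the work is translating \cite{Moe09}*{Thm.~7.2, Prop.~4.1} into the notation established above.

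For part (2), the extended supercuspidal support of $\pi_v$ is by definition read off from its Bernstein--Zelevinsky data \eqref{eq LanglandsData}--\eqref{eq extendedsupport}. M{\oe}glin's construction changes only the ``combinatorial type'' of $\pi_v$ inside the packet --- which character of the component group it corresponds to, together with the auxiliary ordering conventions --- while the total multiset of cuspidal supports is left invariant and is computed directly from $\psi_v$ by the explicit segment recipe in terms of the $\langle a_i \rangle$ and $\langle d_i \rangle$ attached to each factor $\tau_i \boxtimes [a_i] \boxtimes [d_i]$. This is exactly \cite{Moe09}*{Prop.~4.1}; the one thing to check is that our combinatorial formula for the extended support of $\psi_v$ --- including the conjugate-dual pieces $\bar\rho_i^\vee$ and $\bar\tau_j^\vee$ forced by the structure of $E_w/F_v$ --- agrees with hers under the identification of Arthur parameters for $U_n^{E,H}$ with conjugate-self-dual parameters for $\GL_n/E_w$.

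For part (1), the coarse exponents $\overline L_{\pi_v}$ record the absolute values $x_i$ of the unramified twists needed to build $\pi_v$ --- its ``non-tempered depth''. M{\oe}glin's Theorem~7.2 shows that in her construction these are dominated, in the ordering $\preceq$ on non-increasing lists, by the half-lengths $(d_i-1)/2, (d_i-3)/2, \dots$ of the Arthur $\SL_2$-factors $[d_i]$: each Jacquet-module step can only move exponents toward $0$ in the $\preceq$-ordering, while the starting list is $\overline L_{\psi_v}$ itself. Transporting this inequality through the conventions above yields $\overline L_{\pi_v} \preceq \overline L_{\psi_v}$.

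The main obstacle will be precisely this translation of conventions: \cite{Moe09} is phrased for a particular normalization of $A$-packets, and one must verify (i) that her construction is compatible with the endoscopic classification of Theorem~\ref{thm:KMSWendo} that we use --- part of what \cite{Moe09} and subsequent work establish for unitary groups --- and (ii) that ``exponent'' and ``extended supercuspidal support'' as defined in this section are literally the invariants appearing in her statements, modulo the conjugate-dual modifications dictated by $E_w/F_v$. Once that is in place the remaining argument is purely combinatorial.
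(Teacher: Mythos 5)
Your proposal takes essentially the same route as the paper: both parts are derived by direct citation of M{\oe}glin's Theorem~7.2 and Proposition~4.1 for the construction of local $A$-packets, and the substance of the work is checking that her conventions match the exponent and extended-supercuspidal-support bookkeeping set up in this section. The paper's own proof is considerably more terse --- a one-sentence remark that the first paragraph of \cite{Moe09} explicitly includes all inner forms of unitary groups in its scope, so the cited statements apply verbatim --- whereas you sketch the translation-of-conventions at more length; that sketch is reasonable in spirit, though the heuristic that ``each Jacquet-module step can only move exponents toward $0$'' is offered as intuition rather than extracted verbatim from M{\oe}glin, and the paper makes no attempt to unwind her argument at that level.
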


\begin{proof}
For ease of the reader, the groups to which \cite{Moe09} applies are specified in the first paragraph of the introduction: all inner forms of unitary, symplectic, and orthogonal groups. 
\end{proof}

This is not good enough to bound matrix coefficient decay; what we actually desire is:

\begin{conjecture}\label{conj exponents}
Let $\psi_v \in \Psi_{G_v}$ and $\pi_v \in \Pi_{\psi_v}$. Then $L_{\pi_v} \preceq L_{\psi_v}$. 
\end{conjecture}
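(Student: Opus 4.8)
The plan is to deduce Conjecture~\ref{conj exponents} from Theorem~\ref{thm MoeglinInput} by an induction on the ``depth'' of the parameter, peeling off the largest exponent at a time. First I would record the key combinatorial consequence of the two parts of Theorem~\ref{thm MoeglinInput}: part (1) tells us that $\overline L_{\pi_v} \preceq \overline L_{\psi_v}$ at the level of \emph{coarse} exponents, while part (2) pins down the extended supercuspidal support of $\pi_v$ exactly. The passage from coarse exponents to exponents is precisely a matter of knowing the \emph{multiplicities} $n_i = T_i a_i$ with which each $x_i$ occurs, and those multiplicities are visible in the extended supercuspidal support: a block $\St(\rho_i, a_i)|\det|^{-x_i}$ in the Langlands data of $\pi_v$ contributes a ``string'' of $a_i$ translates of $\rho_i$ centred at $-x_i$ (together with the dual string of $\bar\rho_i^\vee$ centred at $+x_i$), and the lengths $T_i$ of the supercuspidals are fixed. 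So the idea is that part (2) controls the multiplicities attached to each coarse exponent and part (1) controls the coarse exponents themselves, and together these should force the refined majorization.

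Concretely, I would argue as follows. Fix $\psi_v = \bigoplus_i \tau_i \boxtimes [a_i]\boxtimes[d_i]$ and $\pi_v \in \Pi_{\psi_v}$ with Langlands data $((\rho_j, b_j, y_j)_j, \pi_\temp)$. The extended supercuspidal support of $\pi_v$, by definition, is the union of the strings $(\rho_j |\det|^{l - y_j}, \bar\rho_j^\vee|\det|^{l+y_j})$ over $l \in \langle b_j\rangle$ together with the tempered contribution; by Theorem~\ref{thm MoeglinInput}(2) this equals $\bigsqcup_i (\tau_i|\det|^l : l \in \langle a_i \rangle \boxplus \langle d_i\rangle)$. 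Matching up which strings in this common multiset carry a fixed real-part shift $x$ shows that $\sum_{j : y_j \geq x} T_j b_j = \sum_{j : y_j \geq x} n_j$ is controlled above by a corresponding partial sum of multiplicities on the parameter side, namely the total length of the portions of the $\langle a_i\rangle\boxplus\langle d_i\rangle$ strings lying at shift $\geq x$, times $T_i$. Translating this ``cumulative multiplicity'' statement together with the coarse majorization $\overline L_{\pi_v}\preceq \overline L_{\psi_v}$ into a statement about $\sigma_i(L_{\pi_v})$ versus $\sigma_i(L_{\psi_v})$ is then an Abel-summation / partition-rearrangement exercise: $\sigma_i(L)$ for $i$ inside a multiplicity-block can be recovered by interpolating between the $\sigma$-values at block boundaries, and at those boundaries it reduces to the coarse statement weighted by the (now-controlled) multiplicities.

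The main obstacle I anticipate is exactly this last bookkeeping step: going from (i) a majorization of the \emph{coarse} exponent sequences plus (ii) an equality of extended supercuspidal supports, to a majorization of the \emph{refined} sequences $L_{\pi_v} \preceq L_{\psi_v}$. The subtlety is that a single coarse exponent $x_i$ on the parameter side may correspond to several blocks with different multiplicities $T_i a_i$ whose associated real shifts, after forming $\langle a_i\rangle \boxplus \langle d_i\rangle$, interleave with those coming from other blocks; one must check that no rearrangement of the $\tau_i|\det|^\bullet$ strings permitted by the support equality can violate the partial-sum inequalities. I would handle this by proving the cleanest possible lemma first---that if two non-increasing lists $L_1 \preceq L_2$ (coarse) and the multiplicities attached to the entries of $L_1$ at each level are pointwise-dominated by those for $L_2$ in the cumulative sense above, then the refined lists also satisfy $\succeq$---and then verifying that Theorem~\ref{thm MoeglinInput}(1) and (2) supply exactly these two hypotheses. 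I expect that in fact the authors will bypass part of this by invoking a stronger, already-available description of the Langlands data of elements of $A$-packets (e.g.\ M\oe{}glin's explicit construction), so an alternative plan would be to cite that explicit construction directly and read off $L_{\pi_v}$ from it, reducing the conjecture to a finite check on the shape of M\oe{}glin's Jordan blocks rather than an abstract majorization argument.
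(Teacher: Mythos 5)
The statement you set out to prove is, in the paper, genuinely a \emph{conjecture}: the authors do not prove it, and the remark immediately following it explains that it would follow from the closure-order conjecture of Xu, which at the time of writing is established for symplectic and orthogonal groups but not yet for unitary groups. The paper only verifies special cases (Corollary~\ref{cor:decayconjcases} for $\pi_v$ unramified or $n=4$, and a computer-assisted check for $n=8$ in Corollary~\ref{cor:densityhypothesis8}), and those verifications use inputs beyond Theorem~\ref{thm MoeglinInput} — e.g.\ that the packet of $\chi[1][4]$ consists only of characters, and Mœglin's Theorem~6.3 constraining the unipotent partition of a packet member.

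The central claim of your plan — that the coarse majorization $\overline L_{\pi_v}\preceq\overline L_{\psi_v}$ together with equality of extended supercuspidal supports forces the refined majorization $L_{\pi_v}\preceq L_{\psi_v}$ — is false, and the paper's own $n=8$ casework exhibits the obstruction. Take $\psi_v=\chi[4][1]\oplus\chi[1][4]$ (with $\chi$ conjugate self-dual) and a hypothetical $\pi_v$ with Langlands data $\chi[2]|\det|^{-1}\times\chi[2]|\det|^{-1}\rtimes 0$, i.e.\ two Steinberg blocks each of size $2$ shifted by $1$. Then $L_{\psi_v}=(3/2,1/2,0,0,0,0)$, $\overline L_{\psi_v}=(3/2,1/2,0)$, while $L_{\pi_v}=(1,1,1,1)$, $\overline L_{\pi_v}=(1)$. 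One checks directly that $\overline L_{\pi_v}\preceq\overline L_{\psi_v}$ (part (1) holds), and the extended supercuspidal supports are both the multiset $\{\chi|\det|^{\pm 3/2},\chi|\det|^{\pm 1/2}\}$ with multiplicity $2$ each (part (2) holds), yet $\sigma_3(L_{\pi_v})=3>2=\sigma_3(L_{\psi_v})$, so $L_{\pi_v}\not\preceq L_{\psi_v}$. The paper rules this configuration out not by any rearrangement lemma but by citing the separate partition-order constraint of Mœglin (it would force the packet element to carry unipotent partition $(2,2,2,2)$ inside a packet whose Deligne-$\SL_2$ gives $(4,1,1,1,1)$). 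So no Abel-summation argument can close the gap — the missing ingredient is genuinely extra representation-theoretic information about which Langlands data can occur in the packet, which is precisely what the closure-order conjecture (or a finer explicit description à la Mœglin) supplies. Your alternative plan of appealing directly to an explicit construction of $A$-packets is much closer to what is actually needed, but it is exactly where the open part of the literature sits for unitary groups.
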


\begin{rem}
Conjecture \ref{conj exponents} would follow from the closure-order conjecture \cite[Conj 2.1]{Xu24} as shown in in \cite[Thm 4.11(2)]{hazeltine2025closure}. 

The closure-order conjecture is known in case of symplectic and orthogonal groups by \cite{hazeltine2025closure}. The argument depends on algorithms computing the set of $A$-packets containing a representation from \cite{Ato23} in the symplectic/orthogonal case---these are expected to analogize to the unitary case, though the details have not been completed as-of-this-writing. 

The closure-order conjecture also holds holds for the ABV packets of \cite{CFMMX22} which are conjectured (see Conjecture 8.1 therein) to be the same as the $A$-packets we use from \cite{Mok15}. 
\end{rem}

However, for our applications, we only need special cases:

\begin{cor}\label{cor:decayconjcases}
Conjecture \ref{conj exponents} holds in the following cases:
\begin{enumerate}
    \item $\pi_v$ is unramified,
    \item $n=4$. 
\end{enumerate}
\end{cor}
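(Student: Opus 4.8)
The plan is to verify Conjecture \ref{conj exponents} directly in the two listed cases, exploiting that in each case the exponent list $L_{\pi_v}$ is forced to coincide with $L_{\psi_v}$ (not just be dominated by it), so that the harder ``closure-order'' input is not needed. I would treat the two cases separately.

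For case (1), when $\pi_v$ is unramified: an unramified $\pi_v$ lies in a packet $\Pi_{\psi_v}$ only for $\psi_v$ whose restriction to $W_{F_v}$ is unramified, hence all the $\tau_i$ appearing in $\psi_v = \bigoplus_i \tau_i \boxtimes [a_i] \boxtimes [d_i]$ are unramified characters, and (since the packet contains an unramified member) the $a_i$ must all equal $1$. Thus $\psi_v = \bigoplus_i \chi_i \boxtimes [1] \boxtimes [d_i]$ with $\chi_i$ unramified characters, and the extended supercuspidal support of $\psi_v$ (from Theorem \ref{thm MoeglinInput}(2), which applies since $\pi_v$ shares it) determines the multiset of segments $\chi_i |\det|^l$ with $l \in \langle d_i\rangle$. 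On the Langlands-data side, the unramified $\pi_v$ is a Langlands quotient of an induction from an unramified twist of a tempered representation of a Levi; an unramified tempered representation of a general linear block is just a unitary unramified character, and for the unitary-group block it is spherical tempered. Matching supercuspidal supports then forces the Langlands data $(\rho_i, a_i, x_i)$ to have each $a_i = 1$, each $\rho_i$ an unramified character, and the multiset $\{x_i\}$ (with multiplicities $n_i$) to equal precisely the shifts appearing in the segments $\chi_i |\det|^l$ above. Hence $L_{\pi_v}$ equals $L_{\psi_v}$ as multisets, so $L_{\pi_v} \preceq L_{\psi_v}$ trivially. I would carry this out by: (i) recalling the classification of unramified members of $A$-packets (e.g. via M{\oe}glin's constructions or the compatibility in Theorem \ref{thm:KMSWendo}(4a)); (ii) reading off both exponent lists from the common extended supercuspidal support; (iii) checking equality.

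For case (2), $n = 4$: here I would enumerate the finitely many possible shapes $\Box$ for $GL_4/E$ with conjugate self-dual constituents relevant to $U^+_4$ --- namely $((4,1))$, $((2,1),(2,1))$, $((2,2))$, $((1,1),(1,1),(1,1),(1,1))$ and their degenerations, $((1,1),(2,1))$ doesn't occur by parity, $((1,2),(2,1))$, $((2,1),(1,1),(1,1))$, $((1,3),(1,1))$, $((1,4))$, etc. --- and for each, enumerate the possible tempered-plus-$\SL_2$ local parameters $\psi_v$. For the generic ones (all $d_i=1$) the packet is the tempered $L$-packet, $\pi_v$ is tempered, $L_{\pi_v}$ is empty and the claim is vacuous. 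The non-generic cases are few: $\psi_v$ containing a $[2]$ or $[3]$ or $[4]$ factor, with the complementary piece small. In each such case M{\oe}glin's explicit description of $\Pi_{\psi_v}$ for $U_4$ (together with Theorem \ref{thm MoeglinInput}) pins down the Langlands data of every $\pi_v$ in the packet --- essentially because for these low ranks the packet members are obtained from the parameter by at most one or two ``derivative'' operations, and the resulting coarse exponents $\overline L_{\pi_v}$, combined with the fixed extended supercuspidal support, leave no room for $L_{\pi_v}$ to exceed $L_{\psi_v}$. I would organize this as a short case table, in each row citing Theorem \ref{thm MoeglinInput} for $\overline L$ and supercuspidal support, and then observing that the finer list $L$ is determined.

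The main obstacle I anticipate is case (2): converting ``coarse exponents are dominated and supercuspidal support agrees'' into ``fine exponents are dominated'' requires knowing, for each non-generic $\psi_v$ of rank $4$, exactly which multiplicities $n_i = T_i a_i$ attach to each exponent $x_i$ in the Langlands data of a packet member. Theorem \ref{thm MoeglinInput} alone controls $\overline L$ and the support but not directly these multiplicities, so one genuinely needs M{\oe}glin's construction of the $A$-packets for unitary groups (or the equivalent Atobe-type description) to read them off. Because $n=4$ makes the list of $(\rho_i,a_i,x_i)$-configurations tiny, this is a finite, tractable check rather than a structural argument --- but it is the step where the ``black-box'' inputs from explicit $A$-packet constructions are actually consumed, and I would flag it as the place a careful reader should scrutinize. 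The write-up would therefore lean on a lemma (to be stated and proved just before, or cited from the $A$-packet literature) giving the Langlands data of every member of $\Pi_{\psi_v}$ for $\psi_v$ a non-tempered parameter of a rank-$\le 4$ unitary group.
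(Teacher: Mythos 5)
Your plan is correct and would reach the right conclusion, but it is a genuinely different --- and heavier --- route than the paper's. The paper's proof rests on one crisp observation you never invoke: whenever every $n_i = T_i a_i$ in the Langlands data equals $1$, one has $L_{\pi_v} = \overline{L}_{\pi_v}$ on the nose, so Theorem \ref{thm MoeglinInput}(1) together with the trivial containment $\overline{L}_{\psi_v} \preceq L_{\psi_v}$ gives $L_{\pi_v} \preceq L_{\psi_v}$ with no further input at all. That disposes of case (1) in a line once you observe (as you do) that unramifiedness forces $a_i = 1$ and $T_i = 1$, and it reduces case (2) to the single Levi $GL_2(E) \subset U_4$ where an $n_i$ can be $2$, which the paper then kills directly by matching extended supercuspidal supports via Theorem \ref{thm MoeglinInput}(2) to pin $x$ down to three small values. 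By contrast, you over-prove exact equality $L_{\pi_v} = L_{\psi_v}$ in case (1) (true, but unnecessary and requiring Satake-type compatibility), and in case (2) you propose to enumerate all non-generic local $\psi_v$ and feed each one into M{\oe}glin's explicit $A$-packet construction to determine every member's Langlands data --- a strictly stronger black box than Theorem \ref{thm MoeglinInput}, and exactly the step you yourself flag as ``the place a careful reader should scrutinize.'' So the paper buys economy of input; your more exhaustive plan buys thoroughness, and in fact that thoroughness would matter: the paper's case analysis for (2) treats Langlands data $((\chi,2,x),0)$ with $\chi$ a character of $E_w^\times$ but silently skips the parallel case $((\rho,1,x),0)$ with $\rho$ a supercuspidal of $GL_2(E_w)$, which also has $n_1 = T_1 a_1 = 2$; a parameter-by-parameter check of the kind you propose would catch it (the support matching there forces $\rho = \bar\rho^\vee$, $x = 1/2$, $\psi_v = \rho[1][2]$, and $L_{\pi_v} = L_{\psi_v} = (1/2,1/2)$, so the conjecture still holds).
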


\begin{proof}
Fix a $\pi_v$. If the $N_i = T_i a_i$ in \eqref{eq LanglandsData} are all $1$, then $L_{\pi_v} = \overline L_{\pi_v} \preceq \overline L_{\psi_v} \preceq L_{\psi_v}$ by \ref{thm MoeglinInput}(1) so \ref{conj exponents} always holds. This covers case (1) and all $\pi_v$ for case (2) except those with Langlands data of the form $((\chi, 2, x), 0)$ for some character $\chi$ of $E_w^\times$.

Then, such $\pi_v$ has extended supercuspidal support
\[
(\bar \chi^\vee |\det|^{1/2 + x}, \bar \chi^\vee |\det|^{-1/2 + x}, \chi|\det|^{1/2 - x}, \chi|\det|^{-1/2 - x})
\]
By \ref{thm MoeglinInput}(2), this needs to match that of $\psi_v$ which can only happen if $x = 0,1/2,1$. 

If $x=0$, $L_{\pi_v}$ is all $0$'s so we are done. If $x=1/2$, then $\chi = \bar \chi^\vee$ and $\psi_v = \chi[1][3] + \chi[1][1]$ (using the natural shorthand). Therefore, $L_{\psi_v}=(1,0,0)$ and $L_{\pi_v} = (1/2,1/2)$ which satisfies the bound. Finally, $x=1$ would force $\chi = \bar \chi^\vee$ and $\psi_v = \chi[1][4]$ which has $A$-packet containing just characters. This contradicts. 
\end{proof}

The case $n=8$ will be resolved later in Corollary \ref{cor:densityhypothesis8}.








\subsection{Bounding Decay by Shape}
Note that for global parameter $\psi$, each $L_{\psi_v}$ only depends on the restriction to the Arthur-$\SL_2$. Therefore, for any shape $\Box$ and place $v$, $L_{\psi_v}$ is constant over $\psi \in \Box$. 

\begin{defn}
For $\Box$ a shape for $G$, let $L_\Box$ be the common value of $L_{\psi_v}$ for non-split, unramified $v$ and $\psi \in \Box$. 
\end{defn}

As a consequence of all the above work and the deep input of Theorem \ref{thm:auto-rep-GRC}, we get our final result:

\begin{thm}\label{thm:decaybound}
Let $\Box$ be a shape for $G$ and $\psi \in \Box$ such that $\psi_\infty$ has regular, integral infinitesimal character.

Then for all non-split places $v$ and $\pi \in \Pi_{\psi_v}$ such that conjecture \ref{conj exponents} holds:
\[
\f2{\sigma(\pi)} \geq 1 - \max_{1 \leq i \leq \lfloor n/2 \rfloor} \f{2\sigma_i(L_\Box)}{i(n-i)}.
\]
\end{thm}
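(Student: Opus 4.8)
The plan is to combine the local matrix-coefficient estimate of Corollary \ref{cor mcdecay} with M\oe{}glin's control of exponents (Theorem \ref{thm MoeglinInput}) and Conjecture \ref{conj exponents}, bootstrapped by the temperedness input of Theorem \ref{thm:auto-rep-GRC}. First I would recall that Corollary \ref{cor mcdecay} already gives, for the irreducible $\pi \in \Pi_{\psi_v}$,
\[
\f2{\sigma(\pi)} \geq 1 - \max_{1 \leq i \leq \lfloor n/2 \rfloor} \f{2\sigma_i(L_\pi)}{i(n-i)},
\]
so it suffices to show $L_\pi \preceq L_\Box$, since then $\sigma_i(L_\pi) \leq \sigma_i(L_\Box)$ for every $i$ and each term in the maximum only decreases. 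Thus the entire statement reduces to the claim that the exponents $L_{\pi_v}$ of the local component are dominated (in the $\preceq$ ordering) by the exponents $L_{\psi_v}$ of the local $A$-parameter, together with the identification $L_{\psi_v} = L_\Box$.

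The second step is to establish $L_{\psi_v} = L_\Box$ for non-split unramified $v$. This is essentially a definitional unwinding: by Theorem \ref{thm:KMSWendo}(2), the local parameter $\psi_v = \bigoplus_i \tau_i \boxtimes [d_i]$ depends on the global $\psi \in \Box$ only through the shape data $(T_i, d_i)$ and the restriction to the Arthur-$\SL_2$, and the list $L_{\psi_v}$ is built (per its definition, with the $\langle d_i \rangle$-blocks repeated $T_i a_i$ times) purely from these. Since a shape fixes exactly the $(T_i, d_i)$, the list $L_{\psi_v}$ is constant over $\psi \in \Box$; this common value is by definition $L_\Box$. Here I must be slightly careful: the extended support has an extra Steinberg-$\SL_2$ factor $[a_i]$ for non-tempered $\psi_v$, but for $\psi$ with $\psi_\infty$ of regular integral infinitesimal character and $v$ unramified one checks (via Theorem \ref{thm:auto-rep-GRC}, which forces each $\tau_i$ to be tempered, hence $a_i = 1$ at unramified places) that only the Arthur-$\SL_2$ blocks contribute, so indeed $L_{\psi_v}$ is as claimed.

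The third step is the domination $L_{\pi_v} \preceq L_{\psi_v}$. This is precisely Conjecture \ref{conj exponents}, which we are permitted to assume in the statement ("such that conjecture \ref{conj exponents} holds"); I would simply invoke it. For completeness I would remark that the global input enters in guaranteeing that $\psi_v$ is itself of the required tempered-supercuspidal-building-block form: the regularity of $\psi_\infty$ plus Theorem \ref{thm:auto-rep-GRC} (applied to each cuspidal constituent $\tau_i$ of the global parameter, which is conjugate self-dual and cohomological) forces every $\tau_i$ to be tempered everywhere, so the parameter $\psi_v$ decomposes as $\bigoplus_i \tau_i \boxtimes [d_i]$ with $\tau_i$ unitary supercuspidal, which is the setting in which Theorem \ref{thm MoeglinInput} and Conjecture \ref{conj exponents} are formulated.

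\textbf{Main obstacle.} The genuinely deep content is entirely offloaded: Theorem \ref{thm:auto-rep-GRC} (Ramanujan for conjugate self-dual cohomological cuspidal forms on $GL_n$) and Conjecture \ref{conj exponents}. Everything else is bookkeeping. The one point requiring care — and the place where a careless proof would go wrong — is the verification that the Arthur $\SL_2$ in $\psi_v$ is the \emph{only} source of exponents, i.e. that the would-be "Steinberg" directions $[a_i]$ are trivial; this is exactly where regularity of the infinitesimal character at infinity (via formula \eqref{infcharform}) combined with the temperedness of Theorem \ref{thm:auto-rep-GRC} is used, and it is the step I would write out in full rather than gloss.
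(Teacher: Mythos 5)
Your proposal follows the paper's proof essentially step for step: Corollary \ref{cor mcdecay} reduces the claim to bounding the $\sigma_i(L_{\pi_v})$, the regularity/integrality of $\psi_\infty$ forces each $\tau_i$ to be cohomological so Theorem \ref{thm:auto-rep-GRC} puts $\psi_v$ in $\Psi_v$, Conjecture \ref{conj exponents} gives $L_{\pi_v} \preceq L_{\psi_v}$, and $L_{\psi_v} = L_\Box$ is definitional. One caveat worth fixing: you restrict the identity $L_{\psi_v} = L_\Box$ to unramified $v$ and justify it by arguing that temperedness forces all Deligne-$\SL_2$ blocks $a_j=1$ there; this is a red herring and, worse, would leave a gap at ramified non-split $v$ (which the theorem covers). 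The exponents $L_{\psi_v}$ depend only on the Arthur-$\SL_2$ parts $d_i$ and the aggregate dimensions $T_i = \sum_j T_{ij}a_{ij}$ of the $\tau_i$, regardless of how the $\tau_{i,v}$ decompose into supercuspidal-Steinberg blocks; so $L_{\psi_v} = L_\Box$ holds at every non-split $v$ with no hypothesis on the $a_j$, exactly as the paper states just above Definition \ref{def:sigmabox}.
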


\begin{proof}
Let $\psi = \bigoplus_i \tau_i \boxtimes [d_i]$ with $\tau_i$ cuspidal. Then, since $\psi$ has infinitesimal character at infinity matching that a finite-dimensional representation, all the $\tau_i$ also do and are therefore cohomological (see also, \cite[Cor 4]{NP21}). Therefore, by Theorem \ref{thm:auto-rep-GRC}, the $\tau_{i,v}$ are all tempered. Then, $\psi_v$ decomposes as $\bigoplus_j \sigma_j \boxtimes [a_j] \boxtimes [d_j]$ for $\sigma_j$ unitary supercuspidal, so it is in $\Psi_v$ (instead of the larger $\Psi^+_v$ of \cite{Mok15}).     

 The result then follows from the equality $L_{\psi_v} = L_\Box$, Conjecture \ref{conj exponents} and Corollary \ref{cor mcdecay}. 
\end{proof}

\begin{rem}\label{rem:exponentssplit}
When $v$ is split, analogous notions of exponents for representations $\pi_v$ of $G_v$ can be defined using the Bernstein-Zelevinsky classification. The analogue of Corollary \ref{cor mcdecay} then still holds. If $\psi_v$ is a parameter of $G_v$ then it corresponds to a irreducible representation $\psi_0 \boxtimes \psi_0^\vee$ on $GL_n(F_v \otimes_F E) \cong GL_n(E_w)^2$ where $w$ lies over $v$. Then $\Pi_{\psi_v}$ is a singleton containing only the representation $\pi_\psi$ corresponding to $\psi_0$ (see e.g. \cite[lem 6.1.1]{DGG22}). In particular, $L_\psi = L_{\pi_\psi}$ so Conjecture \ref{conj exponents} always holds. Therefore, Theorem \ref{thm:decaybound} always holds as well. 
\end{rem}

Motivated by the above:
\begin{defn}\label{def:sigmabox}
Let $\Box$ be a shape for $G$. Then define $\sigma_\Box$ by
\[
\f2{\sigma_\Box} := 1 - \max_{1 \leq i \leq \lfloor n/2 \rfloor} \f{2\sigma_i(L_\Box)}{i(n-i)}.
\]
\end{defn}

In particular, Theorem \ref{thm:decaybound} gives that if $\psi \in \Box$ and $\pi \in \Pi_{\psi_v}$, then $\sigma(\pi) \leq \sigma_\Box$.

\section{Density Hypothesis Proof}\label{sec:densityhypothesis}
Let $G = U_N^{E/F,H}$ be a definite arithmetic unitary group and $v_0$ a distinguished infinite place. Choose open compact $K' < G^\infty$ 
. Let $\Gamma = G(F) \cap K$ as a subgroup of $G_{v_0}$ so that there is a map
\[
\rho^{K'} : G_{v_0} \onto \Gamma \bs G_{v_0} \into G(F) \bs G(\A)/ KG_{\infty \setminus v_0}.
\]
In our eventual application when $K'$ is golden, $\Gamma = 1$ and the second map will be a bijection. 

 In this section, we prove the density hypothesis for a variant of the $\delta$-ball family on $G_{v_0}$: we define functions $f_{v_0}^{\eps, Z}$ on $G_{v_0}$ that are approximately indicator functions of balls of radius $\eps$ and consider families
\begin{multline}\label{eq:epsballfamily}
\mc F^{K'}_{\eps, Z}(\pi) := m_\pi \f{\|\Proj_{\pi_{v_0}}(f_{v_0}^{\eps, Z})\|_2^2}{\|f_{v_0}^{\eps, Z}\|_2^2} \1_{\pi_{\infty \setminus v_0} \text{ triv.}} \dim\lf((\pi^\infty)^{K'}\ri) \\ = m_\pi \f1{\|f_\eps\|_2^2} \tr_\pi((f_\eps \star f_\eps^*) \bar \1_{G_{\infty \setminus v_0}} \bar \1_{K'})
\end{multline}
in conductor $m(\eps) = 1/\eps$ and for various choices of $Z$. We can also consider $f_{v_0}^{\eps, Z}$ as an element of $L^2(G(F) \bs G(\A))^{K'}$ through summing over fibers of $\rho^{K'}:G_{v_0} \onto \Gamma \bs G_{v_0}$ to get an alternate interpretation:
\[
\mc F^{K'}_{\eps, Z}(\pi) = m_\pi \f{\|\Proj_{\pi}(\rho^{K'}_*f_{v_0}^{\eps, Z})\|_2^2}{\|f_{v_0}^{\eps, Z}\|_2^2}.
\]

The proof is from comparing two bounds: first in Theorem \ref{thm:projectionbound}, we bound
\begin{equation}\label{eq:densityintro1}
\sum_{\pi \in \Box} \mc F^{K'}_{\eps, Z}(\pi) = \f{\|\Proj_\Box(\rho^{K'}_*f^{\eps, Z}_{v_0})\|_2^2}{\|f^{\eps, Z}_{v_0}\|_2^2},
\end{equation}
(recalling notation from Definition \ref{def:l2box}). This requires the very serious black-boxed input of the endoscopic classification as used in Theorem \ref{thm:shapebound}. We then compare this to the matrix-coefficient decay bound Theorem \ref{thm:decaybound}. 

\subsection{Input Bound}
We now state our black-box input bound. 

Fix infinitesimal character $\lb$ for $G_\infty$ and let $V_\lb$ be the corresponding finite dimensional representation. If $\Box$ is a shape and $K' < G^\infty$ is an open compact, define asymptotic automorphic family
\[
\mc F^{G, K'}_{\lb, \Box}(\pi) := m_\pi \1_{\pi \in \Box} \1_{\pi_\infty = V_\lb} \dim\lf( (\pi_\infty)^{K'} \ri).
\]
The paper \cite[\S7-9]{DGG22} used the endoscopic classification of \cite{KMSW14} through an inductive analysis of \cite{Tai17} to upper bound asymptotics of the total mass of $\mc F^{G, K'}_{\lb, \Box}$ for certain sequences of $K' \to 1$. 

The paper \cite[\S5-6]{DGG24} used the same techniques in the much simpler case of $m(\lb) \to \infty$. We recall the result:
\begin{thm}[{Special case of \cite[Thm 6.5.1]{DGG24}}]\label{thm:shapebound}
In the notation above, 
\[
\sum_{\pi \in \mc{AR}_\disc(G)} \mc F^{G, K'}_{\lb, \Box}(\pi) \leq (\dim_\Box \lb)(\Lambda(G, \Box, K') + O_{G, \Box, K'}(m(\lb)^{-1}))
\]
for some constant $\Lambda(G, \Box, K')$ depending only on the three arguments. 
\end{thm}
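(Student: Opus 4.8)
The plan is to derive the estimate as a special case of \cite[Thm 6.5.1]{DGG24}, so the write-up mostly amounts to checking that hypothesis; here I sketch the structure of that argument. First I would invoke the endoscopic classification of Theorem \ref{thm:KMSWendo}: since $G$ is an extended pure inner form of the simple twisted endoscopic group $U^+_n$, the discrete spectrum decomposes over elliptic parameters, and the $\pi$ of shape $\Box$ are precisely the members of the packets $\Pi^G_\psi$ with $\psi \in \Box$. Hence
\[
\sum_{\pi \in \mc{AR}_\disc(G)} \mc F^{G,K'}_{\lb,\Box}(\pi) \;=\; \sum_{\substack{\psi \in \Psi_\el(U^+_n)\\ \psi \in \Box}}\ \sum_{\pi \in \Pi^G_\psi} m_\pi\, \1_{\pi_\infty = V_\lb}\, \dim\lf((\pi^\infty)^{K'}\ri),
\]
which reduces the problem to a count of parameters $\psi = \bigoplus_{i=1}^k \tau_i[d_i]$ of shape $\Box = ((T_i,d_i))_{i=1}^k$, weighted by packet data.

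Next I would bound the packet data uniformly. Arthur's multiplicity formula in the form proved by \cite{KMSW14} expresses which $\pi \in \Pi^G_\psi$ occur, and with what multiplicity, in terms of the component group $\mc S_\psi$, whose order depends only on $k$ and hence on $\Box$; the archimedean constraint $\pi_\infty = V_\lb$ is cut out by the infinitesimal character of the local packet at $v_0$ (Theorem \ref{thm:KMSWendo}(4a)), which for $m(\lb)$ large is a tempered $L$-packet of bounded size; and $\dim\lf((\pi^\infty)^{K'}\ri)$ together with the number of finite-place packet members having nonzero $K'$-invariants is $O_{G,\Box,K'}(1)$. Thus the inner sum over $\pi$ is $O_{G,\Box,K'}(1)$, and everything reduces to counting the $\psi$ of shape $\Box$ with $\psi_\infty$ of infinitesimal character $\lb$ and bounded ramification.

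The core of the argument is this count. Such a $\psi$ is the same datum as a tuple $(\tau_i)_{i=1}^k$ of conjugate self-dual cuspidal automorphic representations of $\GL_{T_i}/E$ with the $\tau_i[d_i]$ distinct, equivalently a tuple of discrete automorphic representations of the factors of $G_F(\Box) = \prod_i U^+_{T_i}$. By \eqref{infcharform}, fixing $\psi_\infty$ forces the infinitesimal characters of the $\tau_{i,\infty}$ to lie in the finite set $\Box^{-1}(\lb)$, and for each assignment $(\lb^{(i)})_i \in \Box^{-1}(\lb)$ the number of such tuples factors as $\prod_i N_i(\lb^{(i)})$, where $N_i(\lb^{(i)})$ counts conjugate self-dual cuspidals of $\GL_{T_i}/E$ of that infinitesimal character and bounded level. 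The weight-aspect Weyl law for these groups --- which, through Theorem \ref{thm:KMSWendo}, reduces in the base case of a simple shape to counting tempered generic parameters --- gives $N_i(\lb^{(i)}) \leq C_i \dim\lb^{(i)} + O\!\lf(\dim\lb^{(i)}/m(\lb^{(i)})\ri)$. Summing over the finitely many elements of $\Box^{-1}(\lb)$, extracting the dominant term $\max_{(\lb_i)\in\Box^{-1}(\lb)} \prod_i \dim\lb_i = \dim_\Box\lb$, and absorbing all constants into $\Lambda(G,\Box,K')$ and all relative errors into $O_{G,\Box,K'}(m(\lb)^{-1})$, yields the stated inequality.

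The main obstacle is the bookkeeping suppressed above: one must track the auxiliary signs $\eta_i$ that determine when $\psi$ genuinely lies in $\Psi_\el(U^+_n)$ rather than in $\Psi_\el$ of another twisted endoscopic group, control the multiplicity formula and packet sizes uniformly across the whole family of parameters, and justify the inductive reduction of the count to the product $\prod_i U^+_{T_i}$. All of this runs through the stabilized trace formula and inherits the conditional inputs (the weighted twisted fundamental lemma and the reference ``KMSb'') flagged in the introduction. Since this is exactly the content of \cite[\S5--6]{DGG24}, in practice I would cite Theorem 6.5.1 there and verify only that $(G,\Box,K',\lb)$ meet its hypotheses.
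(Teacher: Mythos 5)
Your proposal is correct in substance and ultimately collapses to the same conclusion --- citing \cite[Thm 6.5.1]{DGG24} after verifying hypotheses --- but the route you sketch through the underlying argument differs from what the paper actually does, and from the part of DGG24 it relies on. The paper's proof is a short translation step: it observes that on compact $G_\infty$ the Euler--Poincar\'e pseudocoefficient satisfies $\tr_\pi(\mathrm{EP}_\lb \bar\1_{K'}) = \1_{\pi_\infty = V_\lb}\dim((\pi^\infty)^{K'})$, so the left-hand side is literally $I^G_\Box(\mathrm{EP}_\lb\bar\1_{K'})$, and then invokes the second bound of the cited theorem. That bound is in turn proved (as the paper's after-the-fact sketch explains) by Ta\"ibi's inductive endoscopic reduction to stabilized discrete trace-formula terms for Levis of $G_F(\Box)$, with the dominant term $\Phi^H_\lb(1)=\dim_\Box\lb$ isolated via the Shin--Templier bound \cite[Lem 6.10(ii)]{ST16}.

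Your sketch instead recasts the problem as a direct parameter count: you decompose over shapes via Theorem \ref{thm:KMSWendo}, bound the packet/multiplicity data as $O_{G,\Box,K'}(1)$ (using Bernstein admissibility at finite places and the uniqueness of $\pi_\infty$), and reduce to counting tuples of conjugate self-dual cuspidals of $\GL_{T_i}/E$ by a weight-aspect Weyl law on each $U^+_{T_i}$. This is a legitimate conceptual alternative, and it is roughly what DGG24's induction is really achieving, but it quietly defers the entire analytic content to ``the weight-aspect Weyl law for these groups'' --- which is, as you yourself note, precisely the base case of the induction proved by the trace-formula machinery. So your proposal is not an independent proof; it is a re-derivation of the shape of the cited theorem followed by the same citation. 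The paper's version is tighter because it goes directly to the EP-pseudocoefficient formulation of the cited result and spends no words on the packet bookkeeping. One small point worth making explicit in your approach: summing over $\Box^{-1}(\lb)$ to obtain a bound proportional to $\dim_\Box\lb$ requires $|\Box^{-1}(\lb)|$ to be bounded in terms of $\Box$ alone, which holds because the number of eigenvalue partitions is controlled by a multinomial in $(T_i,d_i)$, but you should say so rather than tacitly replacing a sum by its maximal summand.
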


\begin{proof}
Recall the definitions of $I^G_\Box(\mathrm{EP}_\lb \bar \1_{K'})$ and $S^{G*}_\Box(\mathrm{EP}_\lb \bar \1_{K'})$ from \cite[\S6.1]{DGG24}.

Since $V_\lb$ is the only representation of compact $G_\infty$ with infinitesimal character $\lb$,
\[
\tr_\pi(\mathrm{EP}_\lb \bar \1_{K'}) = \1_{\pi_\infty = V_\lb} \dim\lf( (\pi_\infty)^{K'} \ri).
\]
Therefore, 
\[
\sum_{\pi \in \mc{AR}_\disc(G)} \mc F^{G, K'}_{\lb, \Box}(\pi) = I^G_\Box(\mathrm{EP}_\lb \bar \1_{K'}).
\]
The result then follows from the second bound of \cite[Thm 6.5.1]{DGG24}. 

For the reader's convenience, we very roughly sketch the argument of \cite[Thm 6.5.1]{DGG24}. By an implementation in \cite{DGG22} of an inductive strategy from \cite{Tai17} inputting the endoscopic classification \cite{KMSW14, Mok15}, our count of representations can be upper-bounded by a linear combination of terms on the geometric side of Arthur's discrete-at-$\infty$ trace formula from \cite{Art89}. 

All these \cite{Art89} terms are sums of smaller terms of the form 
\[
C_\gamma \Phi^H_{\lb'} (\gamma),
\]
where $H$ ranges over groups that are $G_F(\Box)$ or smaller, the $\Phi^H_{\lb'}$ are certain character sums on maximal tori related to traces against the finite dimensional representation on $H$ with infinitesimal character $\lb'$ derived from $\lb$, $\gamma$ ranges over some fixed finite set of rational conjugacy classes of $H$ depending only on $H, \Box$ and $K'$, and $C_\gamma$ are some inexplicit constants that nevertheless depend only on $H, \Box, K'$, and $\gamma$. 

Finally, we apply the analysis of \cite{ST16} to these terms---in particular \cite[Lem 6.10(ii)]{ST16} bounds the $\Phi^H_{\lb'}$, thereby showing that a term for $\gamma = 1$ on $G_F(\Box)$ itself dominates. For this term specifically, $\Phi^H_\lb (\gamma) = \dim_\Box \lb$ and $C_\gamma$ can be made more precise. 
\end{proof}

\begin{rem}
We will in fact only need that 
\[
(\dim_\Box \lb)^{-1} \sum_{\pi \in \mc{AR}_\disc(G)} \mc F^{G, K'}_{\lb, \Box}(\pi) = (\dim_\Box \lb)^{-1} I_\Box^G(\mathrm{EP}_\lb \bar \1_{K'})
\]
is bounded by a constant independent of $\lb$. 
\end{rem}

\subsection{Indicators of Balls}




We now define the functions $f_{v_0}^{\eps, Z}$ defining our variant of the $f$-ball family and bound \eqref{eq:densityintro1}. Our main technical tool here is Kirilov's orbit-method character formula; see \cite{Ros78, Ver79} for the full proof and \cite[Ch 5]{Kir04} for a textbook summary.  Our $f_{v_0}^{\eps, Z}$ are close to but not exactly indicator functions, instead chosen specifically to simplify the orbit-method computations. 

\subsubsection{Modified Indicator Functions}

First, consider the case of $H$ a compact, semisimple, and simply connected Lie group. Let $\mf h$ be the real Lie algebra for $H$, $\dim \mf h = N$, and $\rank H = r$. Define on $\mf h$: 
\[
j(X) := \det \lf(\f{\sinh(\ad X/2)}{\ad X/2} \ri),
\]
Consider test functions
\[
f^\eps \circ \exp := \1_{B_\eps(0)} j^{1/2}
\]
on $H$ and where balls are defined using the Killing form. Note that $f^\eps$ is supported on the ball $\exp(B_\eps(0))$, is analytic, and takes values close to $1$ for small enough $\eps$.

We use the Kirilov character formula to compute traces of $f^\eps$ against the finite dimensional representation $V_\lb$. If $\mf t$ is a Cartan for $\mf h$, the Killing form gives an embedding $\mf t^* \into \mf h^*$ so $i \lb$ can be interpreted as a point in $i \mf h^*$. To define a Fourier transform, pick a measure on $\mf h$ that is Plancherel self-dual through the Killing form isomorphisms $\mf h \iso \mf h^*$ and associate $x \in \mf h^*$ to the multiplicative character $e^{2 \pi i x(\cdot)}$ on $\mf h$. Then for small enough $\eps$:
\begin{equation}\label{kirillov}
\tr_{\pi_\lb} (f^\eps) = \int_{\mc O_{\lb/(2 \pi i)}} \wh{\1_{B_\eps(0)}} \, d \om,
\end{equation}
where the coadjoint orbit $\mc O_{\lb/(2 \pi i)} \subseteq i \mf h^*$ is given its canonical measure as an integral symplectic manifold with total volume $\dim \lb$. 

By a classical result: 
\[
\wh{\1_{B_{\eps}(0)}}(\xi/(2 \pi)) =  \eps^{N}\|\eps\xi\|^{-N/2} J_{N/2}(\|\eps\xi\|) = \eps^{N/2}\|\xi\|^{-N/2} J_{N/2}(\eps\|\xi\|),
\]
where $J_{N/2}$ is the classical Bessel function of the first kind. Since the adjoint action preserves the Killing form, the integral in \eqref{kirillov} is constant so:
\begin{lem}\label{lem:orbitfirst}
For $H$ a compact, semisimple, and simply connected Lie group and in the above notation:
\[
\tr_{V_\lb}(f^{ \eps}) = (\dim \lb) \eps^{N/2}\|\lb\|^{-N/2} J_{N/2}(\eps\|\lb\|).
\]
for small enough $\eps$. 
\end{lem}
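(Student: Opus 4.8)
The plan is to assemble the formula from three ingredients that are already laid out in the excerpt: Kirillov's orbit-method character formula \eqref{kirillov}, the classical evaluation of the Fourier transform of the indicator of a Euclidean ball, and the $\Ad$-invariance of the Killing form. First I would note that the orbit formula \eqref{kirillov} identifies $\tr_{V_\lb}(f^\eps)$ with the integral of $\wh{\1_{B_\eps(0)}}$ over the coadjoint orbit $\mc O_{\lb/(2\pi i)} \subseteq i\mf h^*$, equipped with its Liouville (canonical integral-symplectic) measure of total mass $\dim\lb$.

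Next I would invoke the stated classical fact that
\[
\wh{\1_{B_\eps(0)}}(\xi/(2\pi)) = \eps^{N/2}\|\xi\|^{-N/2} J_{N/2}(\eps\|\xi\|),
\]
so the integrand, viewed as a function on $i\mf h^*$, depends only on the Killing norm $\|\xi\|$. The key observation is that the coadjoint action of $H$ is by isometries for the Killing form, so every point $\xi$ of the orbit $\mc O_{\lb/(2\pi i)}$ has the same norm, namely $\|\lb\|$ (up to the harmless $2\pi$ normalization absorbed into the statement, since the orbit through $\lb/(2\pi i)$ rescales the norm by $1/(2\pi)$ and this is exactly compensated by the $1/(2\pi)$ in the argument of $\wh{\1_{B_\eps(0)}}$ in \eqref{kirillov}). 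Hence the integrand is \emph{constant} on the orbit, equal to $\eps^{N/2}\|\lb\|^{-N/2}J_{N/2}(\eps\|\lb\|)$, and pulling it out of the integral leaves exactly the total volume $\dim\lb$. Multiplying gives the claimed identity. The restriction to small enough $\eps$ is inherited verbatim from the validity range of \eqref{kirillov}, where $f^\eps$ is supported in a small exponential neighborhood and $j^{1/2}$ is the correct Jacobian factor.

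The only point that needs a little care — and the step I would expect to be the mildest obstacle — is bookkeeping the normalization constants: the choice of Plancherel-self-dual Haar measure on $\mf h$ through the Killing isomorphism $\mf h \iso \mf h^*$, the factor of $2\pi$ relating $\lb$ to the point $\lb/(2\pi i)$ on the coadjoint orbit, and the normalization of the canonical symplectic measure so that $\vol(\mc O_{\lb/(2\pi i)}) = \dim\lb$ (this last is the standard Weyl-dimension-formula normalization of Kirillov theory). Once these are pinned down consistently — which the excerpt has already done when stating \eqref{kirillov} and the Bessel identity — no computation remains beyond substitution. I would therefore present the argument as: (1) apply \eqref{kirillov}; (2) substitute the Bessel-function value of $\wh{\1_{B_\eps(0)}}$; (3) use $\Ad$-invariance of $\|\cdot\|$ to see the integrand is constant $=\eps^{N/2}\|\lb\|^{-N/2}J_{N/2}(\eps\|\lb\|)$ on the orbit; (4) integrate against the mass-$\dim\lb$ Liouville measure. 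This yields Lemma~\ref{lem:orbitfirst}.
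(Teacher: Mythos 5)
Your proposal matches the paper's own argument exactly: apply Kirillov's orbit formula \eqref{kirillov}, substitute the Bessel-function value of $\wh{\1_{B_\eps(0)}}$, observe that $\Ad$-invariance of the Killing form makes the integrand constant on the coadjoint orbit, and pull it out against the mass-$\dim\lb$ canonical measure. The comments on normalization bookkeeping are sensible and do not indicate any gap.
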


\begin{rem}
We can understand the factors in lemma \ref{lem:orbitfirst} through lemma \ref{dimboundbynorm},
\[
\dim \lb \leq \|\lb\|^{P_H},
\]
and the Bessel function asymptotics:
\[
|J_{N/2}(x)| \leq \begin{cases} Cx^{N/2} & x \ll \sqrt{N/2 + 1} \\ Cx^{-1/2} & x \gg \sqrt{N/2 + 1} \end{cases}.
\]
In particular, this term should be thought of as order
\[
\tr_{V_\lb}(f^{ \eps}_\infty) = \begin{cases}  O( \eps^N \|\lb\|^{1/2(N-r)}) & \|\lb\| \ll \eps^{-1} \\ O( \eps^{(N-1)/2} \|\lb\|^{-1/2(r + 1)}) & \|\lb\| \gg \eps^{-1} \end{cases}.
\]
\end{rem}

We can generalize this to our $G_{v_0}$ that is compact and (topologically) connected. Then $G_{v_0} = G^\der_{v_0} \times Z_{G_{v_0}}/Z_{G^\der_{v_0}}$ on points and we have a corresponding canonical factorization on Lie algebras $\mf g = \mf g^\der \times \mf z$. 

Then for any small enough $\eps$ and subset $Z \subseteq \mf z$ on which $\exp$ to $G_{v_0}$ is injective, define
\begin{equation}\label{eq:fepsz}
f^{\eps, Z}_{v_0} \circ \exp := j^{1/2} \1_{B_\eps(0) \times Z} .
\end{equation}
The Kirillov character formula a priori computes the trace character of $V_\lb$ pulled back to the simply connected cover $(G^\der_{v_0})_\smc$. However, for small enough $\eps$, this is the same as its trace against $G^\der_{v_0}$. 

We can in addition integrate over $Z_{G_{v_0}}$ to compute traces of the pullback to $G^\der_{v_0} \times Z_{G_{v_0}}$, noting that $V_\lb$ has central character $\lb|_{\mf z}$ on $\mf z$. Since the diagonal embedding of $Z_{G^\der_{v_0}}$ intersects $\exp(B_\eps(0) \times Z) \subseteq G^\der_{v_0} \times Z_{G_{v_0}}$ trivially for small enough $\eps$, this is the same as computing traces on $G_{v_0}$. 

In total:
\begin{lem}
In our case where $G_{v_0}$ is compact and (topologically) connected:
\[
\tr_{V_\lb}(f^{\eps,Z}_{v_0}) =(\dim \lb) \eps^{N^\der/2}\|\lb\|^{-N^\der/2} J_{N^\der/2}(\eps\|\lb\|)  \wh{\1_Z}(\lb|_{\mf z}) 
\]
for small enough $\eps$ and $Z \subseteq \mf z$ on which $\exp$ to $G_{v_0}$ is injective. Here, $\mf z$ is given the measure that corresponds to unit Haar measure on $Z_{G_{v_0}}$ and recall that $N^\der = \dim \mf g^\der$. 
\end{lem}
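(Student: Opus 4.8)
The plan is to factor the computation through the already-established case of a simply connected semisimple group together with the factorization $G_{v_0} = G^\der_{v_0} \times (Z_{G_{v_0}}/Z_{G^\der_{v_0}})$, and then integrate the central variable separately. First I would record the structural facts: since $G_{v_0}$ is compact and topologically connected, the multiplication map $G^\der_{v_0} \times Z_{G_{v_0}} \to G_{v_0}$ is a surjection with finite kernel $Z_{G^\der_{v_0}}$ embedded antidiagonally, and on Lie algebras we get the genuine direct sum decomposition $\mf g = \mf g^\der \oplus \mf z$ with $[\mf g^\der, \mf z] = 0$. Consequently the function $j$ on $\mf g$ from $\det\bigl(\tfrac{\sinh(\ad X/2)}{\ad X/2}\bigr)$ factors as $j(X^\der + X^{\mf z}) = j_{\mf g^\der}(X^\der)$, because $\ad$ is trivial on the central part; this is what makes $f^{\eps,Z}_{v_0}\circ\exp = j_{\mf g^\der}^{1/2}\,\1_{B_\eps(0)}\boxtimes \1_Z$ literally a product in the $(X^\der, X^{\mf z})$-coordinates.

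Next I would pull everything back to the simply connected cover. Write $\wtd H := (G^\der_{v_0})_\smc$, so that $V_\lb$ restricted to $G^\der_{v_0}$ lifts to a representation of $\wtd H$ with highest-weight datum $\lb|_{\mf g^\der}$ and $\dim$ unchanged, and its central character on $\mf z$ is $\lb|_{\mf z}$. For $\eps$ small enough, $\exp$ restricted to $B_\eps(0)$ is injective into $G^\der_{v_0}$ and the antidiagonal copy of $Z_{G^\der_{v_0}}$ misses $\exp(B_\eps(0)\times Z)$; under these hypotheses the trace of $f^{\eps,Z}_{v_0}$ against $V_\lb$ on $G_{v_0}$ equals the trace of the lifted test function against $V_\lb$ on $\wtd H \times Z_{G_{v_0}}$. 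The latter splits as a product of two integrals by Fubini: the $\wtd H$-factor is exactly $\tr_{V_{\lb|_{\mf g^\der}}}(f^\eps)$, which Lemma~\ref{lem:orbitfirst} evaluates to $(\dim\lb)\,\eps^{N^\der/2}\|\lb\|^{-N^\der/2} J_{N^\der/2}(\eps\|\lb\|)$ — here using that $\|\cdot\|$ was defined via the positive roots, which live on $\mf g^\der$, so $\|\lb\| = \|\lb|_{\mf g^\der}\|$, and $\dim N^\der = \dim\mf g^\der$. The $Z_{G_{v_0}}$-factor is $\int_{Z_{G_{v_0}}} \overline{\lb|_{\mf z}}(z)\,(f^{\eps,Z}_{v_0}\text{-restriction})(z)\,dz$, which with the normalization that unit Haar on $Z_{G_{v_0}}$ corresponds to the chosen Plancherel measure on $\mf z$ is precisely $\wh{\1_Z}(\lb|_{\mf z})$.

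Assembling the two factors gives the claimed formula. The only genuinely delicate point — and the one I would spend the most care on — is the "small enough $\eps$" bookkeeping: the Kirillov character formula \eqref{kirillov} is valid only on a neighborhood of $0$ where $\exp$ is a diffeomorphism and $j$ is positive, and I need a single $\eps_0$ that simultaneously (i) makes $\exp|_{B_{\eps}(0)}$ injective, (ii) keeps $j^{1/2}$ analytic and positive there, (iii) ensures the finite kernel $Z_{G^\der_{v_0}}$ (antidiagonally embedded) does not meet the support $\exp(B_\eps(0)\times Z)$ so that "trace on the cover" coincides with "trace on $G_{v_0}$", and (iv) is compatible with whatever $\eps_0$ was used in Lemma~\ref{lem:orbitfirst}. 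All four are open conditions depending only on $G_{v_0}$ (and, for (iii), on the fixed set $Z$), so such an $\eps_0$ exists; I would just note this uniformity explicitly rather than track constants. Everything else — the factorization of $j$, Fubini, the measure-normalization identity producing $\wh{\1_Z}$ — is routine once the structure theory is in place.
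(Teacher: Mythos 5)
Your proposal is correct and follows essentially the same route as the paper: decompose $G_{v_0}$ via $G^\der_{v_0}\times Z_{G_{v_0}}/Z_{G^\der_{v_0}}$, pull the derived factor back to the simply connected cover to invoke Lemma~\ref{lem:orbitfirst}, integrate the central factor to produce $\wh{\1_Z}(\lb|_{\mf z})$, and choose $\eps$ small enough that the (anti)diagonal copy of $Z_{G^\der_{v_0}}$ misses the support so the trace on the product descends to the trace on $G_{v_0}$. Your added observations --- that $j$ factors because $\ad$ vanishes on $\mf z$, and that $\dim\lb$ and $\|\lb\|$ depend only on $\lb|_{\mf g^\der}$ --- are correct elaborations of points the paper leaves implicit.
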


We also need to understand traces against $(f_{v_0}^{\eps, Z})^* \star f_{v_0}^{\eps, Z}$. By Theorem 10 on page 174 of \cite{Kir04},
\[
(f_{v_0}^{\eps, Z})^* \star_{G_{v_0}} f_{v_0}^{\eps, Z} = (\1_{B_\eps(0) \times (-Z)} \star_{\mf g^\der \times \mf z} \1_{B_\eps(0) \times Z}) j^{1/2},
\]
so using that abelian Fourier transform takes convolution to product, a similar computation gives:
\begin{lem}\label{lem:balltrace}
In our case where $G_{v_0}$ is compact and (topologically) connected:
\[
\tr_{V_\lb}((f_{v_0}^{\eps, Z})^* \star f_{v_0}^{\eps, Z}) = (\dim \lb) \eps^{N^\der}\|\lb\|^{-N^\der} J_{N^\der/2}(\eps\|\lb\|)^2  \lf|\wh{\1_Z}(\lb|_{\mf z})\ri|^2
\]
for small enough $\eps$ and $Z \subseteq \mf z$ on which $\exp$ to $G_{v_0}$. Here, $\mf z$ is given the measure that corresponds to unit Haar measure on $Z_{G_{v_0}}$ and recall that $N = \dim \mf g^\der$. 
\end{lem}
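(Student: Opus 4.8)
The plan is to repeat, almost verbatim, the orbit-method computation that produced the formula for $\tr_{V_\lb}(f^{\eps,Z}_{v_0})$, now applied to the function $(f^{\eps,Z}_{v_0})^* \star f^{\eps,Z}_{v_0}$. The starting point is the convolution identity recorded just above the statement, namely
\[
(f_{v_0}^{\eps, Z})^* \star_{G_{v_0}} f_{v_0}^{\eps, Z} = \lf(\1_{B_\eps(0) \times (-Z)} \star_{\mf g^\der \times \mf z} \1_{B_\eps(0) \times Z}\ri) j^{1/2},
\]
which exhibits the left-hand side as a function of the shape (Euclidean function)$\cdot\, j^{1/2}$ on $\mf g^\der\times\mf z$, exactly the form to which the Kirillov character formula \eqref{kirillov} applies. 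Preliminarily one checks $(f^{\eps,Z}_{v_0})^* \circ \exp = j^{1/2}\,\1_{B_\eps(0) \times (-Z)}$, using that $B_\eps(0)$ is symmetric and $j$ is even.

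Next I would compute the Fourier transform of $\1_{B_\eps(0) \times (-Z)} \star \1_{B_\eps(0) \times Z}$. Since the abelian Fourier transform turns convolution into product and sends the reflection of a real-valued function to the complex conjugate of its transform, this equals $\lf|\wh{\1_{B_\eps(0)\times Z}}\ri|^2 = \lf|\wh{\1_{B_\eps(0)}}\ri|^2\,\lf|\wh{\1_Z}\ri|^2$, the last step because $B_\eps(0)\times Z$ is a product set in $\mf g^\der\times\mf z$. Plugging this into \eqref{kirillov}: as in the derivation of Lemma \ref{lem:orbitfirst} the adjoint action preserves the Killing form, so $\lf|\wh{\1_{B_\eps(0)}}\ri|^2$ is constant on the coadjoint orbit $\mc O_{\lb/(2\pi i)}$ with value $\eps^{N^\der}\|\lb\|^{-N^\der} J_{N^\der/2}(\eps\|\lb\|)^2$ by the classical Bessel evaluation of $\wh{\1_{B_\eps(0)}}$, while the $\mf z$-component of the orbit is the single point $\lb|_{\mf z}$, contributing $\lf|\wh{\1_Z}(\lb|_{\mf z})\ri|^2$. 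Integrating the resulting constant against the canonical symplectic measure of total mass $\dim\lb$ yields the claimed identity. The passage between $G^\der_{v_0}$, its simply connected cover, and $G_{v_0} = G^\der_{v_0}\times Z_{G_{v_0}}/Z_{G^\der_{v_0}}$, together with the normalization of $\mf z$ so that $Z_{G_{v_0}}$ carries unit mass (which is where the $\wh{\1_Z}(\lb|_{\mf z})$ factor with $V_\lb$'s central character $\lb|_{\mf z}$ enters), is carried out exactly as in the preceding lemma.

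I expect the only non-routine point to be bookkeeping the $\eps$-smallness: the $\mf g$-side convolution $\1_{B_\eps(0)}\star\1_{B_\eps(0)}$ is supported in $B_{2\eps}(0)$, so the threshold below which $\exp$ is injective onto $G_{v_0}$, the Kirillov formula is valid, and the descent from the simply connected cover holds must be at least halved relative to the threshold used in the previous lemma. Everything else is a mechanical rerun of the computation already done.
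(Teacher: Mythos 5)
Your proof is correct and takes essentially the same route as the paper, which simply establishes the convolution identity and the abelian-Fourier fact and then says "a similar computation gives"; you have filled in the details of that computation accurately (including the reflection-to-conjugate step, the factorization over $\mf g^\der\times\mf z$, and the descent through the simply connected cover). Your observation that the Kirillov/injectivity threshold must be halved because the convolution is supported in $B_{2\eps}(0)$ is a correct refinement of the paper's unquantified ``small enough $\eps$.''
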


Finally, note that pulling back to the Lie algebra gives
\begin{equation}\label{finftyL2}
\|f_{v_0}^{\eps, Z}\|^2_{G_{v_0}} = \|\1_{B_\eps(0) \times Z}\|^2_{\mf g} = \vol(Z) \f{\pi^{N^\der/2}}{\Gamma(N^\der/2 + 1)} \eps^{N^\der}.
\end{equation}

\subsubsection{Projection Bounds}
With the above Kirilov formula computation, we can now input Theorem \ref{thm:shapebound} and bound $\|\Proj_{L^2_\Box} f^{\eps, Z}_{v_0}\|^2_2$. We will consider two possible $Z$: either $Z_\eps := (-\eps/2, \eps/2)$ or $Z_1 := (-1/2, 1/2)$. Here, $\mf z$ is parameterized so that Lebesgue measure matches unit Haar measure: i.e. intervals of length $1$ exactly cover $Z_{G_{v_0}} = U_1$.

First, since $G_{v_0}$ is compact, we can choose the function $\mathrm{EP}_\lb$ to be the matrix coefficient of the finite dimensional representation $V_\lb$ with infinitesimal character $\lb$. In particular, by Peter-Weyl, any $f_{v_0}$ always has the same orbital integrals as a function of the form
\[
 \sum_\lb a_\lb \mathrm{EP}_\lb.
\]
Since by definition $\tr_{V_\mu} \mathrm{EP}_\lb = \1_{\mu = \lb}$ for any two finite-dimensional reps $V_\mu$ and $V_\lb$, comparing traces solves for the coefficients and gives:
\[
I(f_{v_0}) = I\lf(\sum_\lb (\tr_{V_\lb} f_{v_0}) \mathrm{EP}_\lb\ri)
\]
for any invariant distribution $I$.

In addition, the Plancherel formula gives
\[
\| \Proj_{\Box} f_{v_0} \|^2_2 = \tr_{L^2_\Box} (f_{v_0}^* \star f_{v_0}) = I^G_\Box((f_{v_0}^* \star f_{v_0})  \bar \1_{G_{\infty \setminus v_0}} \bar \1_{K'}).
\]
Therefore:
\begin{align} \nonumber
&\| \Proj_{\Box}  (\rho_*^{K'}f^{\eps, Z}_{v_0}) \|^2_2 \\ \nonumber
& \qquad = I^G_\Box(((f^{\eps, Z}_{v_0})^* \star f^{\eps, Z}_{v_0}) \bar \1_{G_{\infty \setminus v_0}} \bar \1_{K'}) \\ \nonumber
& \qquad = \sum_{\lb \in \Box} \tr_{V_\lb}((f^{\eps, Z}_{v_0})^* \star f^{\eps, Z}_{v_0})I^G_\Box(\mathrm{EP}_\lb \bar \1_{G_{\infty \setminus v_0}} \bar \1_{K'}) \\ \nonumber
& \qquad \overset{\ref{lem:balltrace}}{=} \sum_{\lb \in \Box} (\dim \lb) \eps^{N^\der}\|\lb\|^{-N^\der} J_{N^\der/2}(\eps\|\lb\|)^2  \lf|\wh{\1_Z}(\lb|_{\mf z})\ri|^2 I^G_\Box(\mathrm{EP}_\lb \bar \1_{G_{\infty \setminus v_0}} \bar \1_{K'}) \\ \nonumber
&\qquad \overset{\ref{thm:shapebound}}{\leq} \eps^{N^\der} \sum_{\lb \in \Box} (\dim \lb) \|\lb\|^{-N^\der} J_{N^\der/2}(\eps\|\lb\|)^2  \lf|\wh{\1_Z}(\lb|_{\mf z})\ri|^2 \dim_\Box(\lb) (\Lambda + O(m(\lb)^{-1})) \\ \nonumber
&\qquad \leq C \eps^{N^\der} \sum_{\lb \in \Box} (\dim \lb) \|\lb\|^{-N^\der} J_{N^\der/2}(\eps\|\lb\|)^2  \lf|\wh{\1_Z}(\lb|_{\mf z})\ri|^2 \dim_\Box(\lb) \\ \label{eqn:projection1}
&\qquad \leq C \eps^{N_G^\der} \sum_{\lb \in \Box} \|\lb\|^{-N_G^\der + P_G + P_{G_F(\Box)} - e(\Box) } J_{N_G^\der/2}(\eps\|\lb\|)^2  \lf|\wh{\1_Z}(\lb|_{\mf z})\ri|^2,
\end{align}
for some constant $C$ depending only on $G$, $\Box$, and $K'$ and where the last step uses lemma \ref{dimboundbynorm} and Corollary \ref{dimboundbynormbox}. Recall the convention $\lb \in \Box$ to mean that $\lb$ is a possible total infinitesimal character for a parameter of shape $\Box$ and also recall formula \eqref{ebox} defining $e(\Box)$. 

Consider first the case $Z = [-\eps/2, \eps/2]$. Note that $\wh{\1_Z}$ is zero on any character that sends $\lb(-1) =-1$, so the lattice of possible $\lb$ is of the form $L_G \times L_Z$ where $L_Z$ is a character of $U_1/\pm 1$ and $L_G$ are regular, integral infinitesimal characters for $(G_{v_0})_{\ad}$. Summing over $L_Z$ using Poisson summation on $\mf z$ turns the $\lf|\wh{\1_Z}(\lb|_{\mf z})\ri|^2$ into $\1_Z \star (\1_Z)^* (0) = \eps$ as long as $\eps$ is small enough. Therefore, our estimate \eqref{eqn:projection1} becomes:
\begin{equation}\label{eqn:projection2}
C \eps^{N_G} \sum_{\substack{\lb \in \Box \\ \lb \text{ for } (G_{v_0})_{\ad}}}\|\lb\|^{-N_G^\der + P_G + P_{G_F(\Box)} - e(\Box)} J_{N_G^\der/2}(\eps\|\lb\|)^2. 
\end{equation}
In the other case $Z = [-1/2, 1/2]$, note that $\wh{\1_Z}(\lb|_{\mf z})$ is an indicator function testing if $\lb|_{\mf z} = 1$, so this simply changes the $\eps^{N_G}$ coefficient on the sum back into $\eps^{N^\der_G}$. 

Now, we input the asymptotics for $J_{N/2}$ to evaluate the sum. For $\|\lb\| \ll 1/\eps$, the terms in the sum are
\[
\ll \eps^{N_G^\der} \|\lb\|^{P_G + P_{G_F(\Box)} - e(\Box)}. 
\]
The $\lb  \in \Box$ that are integral on $(G_{v_0})_{\ad}$ form an $(r_{G_F(\Box)} - 1)$-dimensional sublattice of all $\lb$ (shifted by a small fixed vector depending only on $\Box$ that becomes negligible as $1/\eps \to \infty$). Assume $r_{G_F(\Box)} \geq 2$. Then, summing over the $1/\eps$-ball in this subspace gives something of order
\begin{multline*}
\ll \eps^{N_G^\der - P_G - P_{G_F(\Box)} + e(\Box) - r_{G_F(\Box)} + 1} \\
= \eps^{N_G - P_G - P_{G_F(\Box)} - r_{G_F(\Box)} + e(\Box)} = \eps^{(N_G - P_G) - (N_{G_F(\Box)} - P_{G_F(\Box)}) + e(\Box)},  
\end{multline*}
after approximating the sum by an integral, which we can do since $r_{G_F(\Box)} \geq 2$ means the number of terms in the sum with $\|\lb\| \ll 1/\eps$ goes to infinity as $\eps \to 0$ . 

On the other side, $\|\lb\| \ll 1/\eps$, the terms in the sum are
\[
\ll \eps^{-1}\|\lb\|^{-N_G^\der + P_G + P_{G_F(\Box)} - e(\Box) - 1},
\]
which after summing gives something of the same order 
\[
\ll \eps^{N_G^\der - P_G - P_{G_F(\Box)} - r_{G_F(\Box)}  + e(\Box) + 1} = \eps^{(N_G - P_G) - (N_{G_F(\Box)} - P_{G_F(\Box)}) + e(\Box)}.
\]

We substitute this into the sum from \eqref{eqn:projection2}, first in the case of $Z = (-\eps/2, \eps/2)$. Then, when $r_{G_F(\Box)} \geq 2$:
\[
\| \Proj_{\Box} (\rho_*^{K'}f^{\eps, Z}_{v_0}) \|^2_2 \ll \eps^{N_G + (N_G - P_G) - (N_{G_F(\Box)} - P_{G_F(\Box)}) + e(\Box)}.
\]
When $r_{G_F(\Box)} = 1$, there is a single $\lb \in \Box$. We can therefore treat $\|\lb\|$ as a constant and reproduce the same formula (note that for this shape $e(\Box) = P_G$). 

The case of of $Z_1$ just removes a power of $\eps$. Unifying the two cases by noting that $\|f^{\eps, Z_\eps}_{v_0}\|^2_2 \asymp \eps^{N_G}$ and $\|f^{\eps, Z_1}_{v_0}\|^2_2 \asymp \eps^{N_G-1}$, we get:
\begin{thm}\label{thm:projectionbound}
Normalize $\mf z$ so that intervals of length $1$ exactly cover $Z_{G_{v_0}} = U_1$. Then, if $Z$ is either $(-\eps/2, \eps/2)$ or $(-1/2,1/2)$:
\[
\f{\| \Proj_{\Box} (\rho_*^{K'}f^{\eps, Z}_{v_0}) \|^2_2}{ \|f^{\eps, Z}_{v_0}\|^2_2} \ll \eps^{(N_G - P_G) - (N_{G_F(\Box)} - P_{G_F(\Box)}) + e(\Box)}.
\]
\end{thm}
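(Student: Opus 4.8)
The proof assembles the chain of estimates developed in the discussion preceding the statement; here is the plan, with the steps in the order I would carry them out.

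\emph{Step 1: spectral rewriting.} Since $G_{v_0}$ is compact, $\rho^{K'}_* f^{\eps,Z}_{v_0}$ lies in $L^2(G(F)\bs G(\A))^{K'}$, and the Plancherel formula identifies $\|\Proj_\Box(\rho^{K'}_* f^{\eps,Z}_{v_0})\|_2^2$ with $I^G_\Box\big(((f^{\eps,Z}_{v_0})^* \star f^{\eps,Z}_{v_0})\,\bar\1_{G_{\infty\setminus v_0}}\bar\1_{K'}\big)$. Expanding $(f^{\eps,Z}_{v_0})^* \star f^{\eps,Z}_{v_0}$ against the matrix coefficients $\mathrm{EP}_\lb$ of the finite-dimensional $V_\lb$ (legitimate on a compact group, since invariant distributions only see the trace-spectral expansion) turns this into $\sum_{\lb \in \Box} \tr_{V_\lb}\big((f^{\eps,Z}_{v_0})^* \star f^{\eps,Z}_{v_0}\big)\, I^G_\Box(\mathrm{EP}_\lb \bar\1_{G_{\infty\setminus v_0}}\bar\1_{K'})$.

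\emph{Step 2: insert the two black boxes.} Lemma \ref{lem:balltrace}, the Kirillov orbit-method computation (valid once $\eps$ is small enough that $\exp$ is injective on the relevant sets), evaluates $\tr_{V_\lb}\big((f^{\eps,Z}_{v_0})^* \star f^{\eps,Z}_{v_0}\big) = (\dim\lb)\,\eps^{N^\der_G}\|\lb\|^{-N^\der_G} J_{N^\der_G/2}(\eps\|\lb\|)^2\,|\wh{\1_Z}(\lb|_{\mf z})|^2$, and Theorem \ref{thm:shapebound} bounds $I^G_\Box(\mathrm{EP}_\lb \bar\1 \bar\1) \leq (\dim_\Box\lb)(\Lambda + O(m(\lb)^{-1}))$. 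Bounding the parenthesized factor by a constant and then $(\dim\lb)(\dim_\Box\lb) \ll \|\lb\|^{P_G + P_{G_F(\Box)} - e(\Box)}$ by Lemma \ref{dimboundbynorm} and Corollary \ref{dimboundbynormbox} reduces everything to estimating the sum in \eqref{eqn:projection1}, namely $\eps^{N^\der_G}\sum_{\lb\in\Box}\|\lb\|^{-N^\der_G + P_G + P_{G_F(\Box)} - e(\Box)} J_{N^\der_G/2}(\eps\|\lb\|)^2 |\wh{\1_Z}(\lb|_{\mf z})|^2$.

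\emph{Step 3: the lattice sum.} The factor $|\wh{\1_Z}(\lb|_{\mf z})|^2$ restricts $\lb$ to a lattice of the form $L_G \times L_Z$, with $L_G$ the regular integral characters of $(G_{v_0})_{\mathrm{ad}}$. Summing over $L_Z$ by Poisson summation on $\mf z$ collapses the weight to $\1_Z\star\1_Z^*(0) = \eps$ when $Z = (-\eps/2,\eps/2)$, or to an honest indicator when $Z = (-1/2,1/2)$ (this is the step that trades $\eps^{N^\der_G}$ for $\eps^{N_G}$ in the first case). The surviving sum runs over an $(r_{G_F(\Box)}-1)$-dimensional sublattice, shifted by a fixed $\Box$-dependent vector that is negligible as $\eps\to0$. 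I would split at $\|\lb\| \asymp 1/\eps$, use $|J_{N/2}(x)| \ll \min(x^{N/2}, x^{-1/2})$, and approximate the sum by an integral — legitimate when $r_{G_F(\Box)} \geq 2$, since the number of terms with $\|\lb\| \ll 1/\eps$ then tends to infinity; both ranges contribute the same order $\eps^{(N_G - P_G) - (N_{G_F(\Box)} - P_{G_F(\Box)}) + e(\Box)}$ after the prefactor is included. The degenerate case $r_{G_F(\Box)} = 1$ has a single $\lb \in \Box$, so $\|\lb\|$ is constant and (using $e(\Box) = P_G$ for that shape) the same formula is immediate. Dividing by $\|f^{\eps,Z}_{v_0}\|_2^2 \asymp \eps^{N_G}$, respectively $\eps^{N_G - 1}$, from \eqref{finftyL2} gives the uniform statement.

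The main obstacle is the exponent bookkeeping in Step 3: matching the powers of $\eps$ between the two ranges $\|\lb\| \ll 1/\eps$ and $\|\lb\| \gg 1/\eps$, verifying the sum-to-integral approximation is uniform in $\eps$, and confirming that the Poisson-summation and shifted-lattice manipulations — as well as the "$\eps$ small enough" hypothesis of Lemma \ref{lem:balltrace} — remain compatible with letting $\eps \to 0$.
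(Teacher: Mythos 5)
Your proposal follows the paper's own argument essentially step by step: Plancherel plus the $\mathrm{EP}_\lb$ expansion, insertion of Lemma \ref{lem:balltrace} and Theorem \ref{thm:shapebound}, reduction via Lemma \ref{dimboundbynorm} and Corollary \ref{dimboundbynormbox} to the lattice sum \eqref{eqn:projection1}, Poisson summation to collapse the $|\wh{\1_Z}|^2$ weight, the split at $\|\lb\| \asymp 1/\eps$ with the Bessel asymptotics, the separate treatment of $r_{G_F(\Box)}=1$, and division by \eqref{finftyL2}. The reasoning and the exponent bookkeeping match the paper's derivation, so this is correct and not a different route.
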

As two special cases, we get $\eps^0$ when $\Box = (n,1)$ is the trivial shape and $\eps^{N_G-1}$ when $\Box = (1,n)$ is the shape for $1$-d representations.

\subsection{Density Hypothesis Proof}
Now we can put together Theorems \ref{thm:projectionbound} and \ref{thm:decaybound} to prove the density hypothesis. 

First,
\[
\sum_{\pi \in \mc{AR}_\disc(G)} \mc F_{\eps, Z}(\pi) = \f{\|f_{v_0}^{\eps, Z}\|_2^2}{\|f_{v_0}^{\eps, Z}\|_2^2} = 1.
\]
The automorphic characters restricted to $\Gamma \bs G_{v_0}$ span $L^2(\Gamma \bs G_{v_0} / G^\der_{v_0})$, so we can calculate projections:
\[
\Proj_{\mc{AC}(G)} f(x) = \int_{g \in G^\der_{v_0}} f(xg) \, dg. 
\]
This allows us to compute
\[
\sum_{\pi \in \mc{AC}(G)} \mc F_{\eps, Z}(\pi) = \f{\|\Proj_{\mc{AC}(G)} (\rho_*^{K'}f^{\eps, Z}_{v_0})\|_2^2}{\|f_{v_0}^{\eps, Z}\|_2^2} \asymp \vol(Z) \eps^{N_G - 1}.
\]

Since there are finitely many possible $\Box$ for each $N$, Theorem \ref{thm:decaybound} gives that the density hypothesis for $\mc F_{\eps, Z_1}$ at some finite, non-split place $v$ would be implied by:
\begin{thm}\label{thm:exponentcomputations}
\[
\sum_{\pi \in \Box} \mc F_{\eps, Z_1}(\pi) = \f{\|\Proj_\Box (\rho_*^{K'}f^{\eps, Z}_{v_0})\|_2^2}{\|f_{v_0}^{\eps, Z_1}\|_2^2} \ll \eps^{(N_G - 1) \lf(1 - \f2{\sigma_\Box}\ri)}.
\]
\end{thm}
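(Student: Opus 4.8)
## Proof Proposal for Theorem \ref{thm:exponentcomputations}

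The plan is to combine the upper bound of Theorem \ref{thm:projectionbound} with the decay estimate of Theorem \ref{thm:decaybound}, reducing the claimed inequality to a purely numerical comparison of exponents of $\eps$. First I would recall that by Theorem \ref{thm:projectionbound} (applied with $Z = Z_1 = (-1/2, 1/2)$) and the computation $\|f_{v_0}^{\eps, Z_1}\|_2^2 \asymp \eps^{N_G - 1}$, the left-hand side satisfies
\[
\sum_{\pi \in \Box} \mc F_{\eps, Z_1}(\pi) = \f{\|\Proj_\Box(\rho_*^{K'} f_{v_0}^{\eps, Z_1})\|_2^2}{\|f_{v_0}^{\eps, Z_1}\|_2^2} \ll \eps^{(N_G - P_G) - (N_{G_F(\Box)} - P_{G_F(\Box)}) + e(\Box)}.
\]
So it suffices to prove the numerical inequality
\[
(N_G - P_G) - (N_{G_F(\Box)} - P_{G_F(\Box)}) + e(\Box) \;\geq\; (N_G - 1)\lf(1 - \f2{\sigma_\Box}\ri),
\]
since a larger exponent on $\eps$ (as $\eps \to 0$) gives a smaller quantity. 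Here all quantities depend only on $\Box$ and $G$, not on $K'$ or the place $v$.

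Next I would unpack both sides in terms of the combinatorial data $\Box = ((T_i, d_i))_{i=1}^k$ with $\sum_i T_i d_i = n$. Recall $N_G - P_G = P_G + r_G$ (since $N_G = 2P_G + r_G$), and similarly for $G_F(\Box) = \prod_i U^+_{T_i}$, so $N_{G_F(\Box)} - P_{G_F(\Box)} = P_{G_F(\Box)} + r_{G_F(\Box)}$. For a rank-$n$ unitary group, $N_G = n^2$, $r_G = n$, $P_G = \binom{n}{2}$; for $G_F(\Box)$ we get $N_{G_F(\Box)} = \sum_i T_i^2$, $r_{G_F(\Box)} = \sum_i T_i$, $P_{G_F(\Box)} = \sum_i \binom{T_i}{2}$. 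The correction term is $e(\Box) = \sum_i \tfrac12 T_i d_i(d_i - 1)$. On the other side, by Definition \ref{def:sigmabox}, $1 - \tfrac2{\sigma_\Box} = \max_{1 \leq i \leq \lfloor n/2 \rfloor} \tfrac{2\sigma_i(L_\Box)}{i(n-i)}$, where $L_\Box$ is the list of exponents attached to $\Box$ — explicitly, concatenating for each block $((T_i, d_i))$ the multiset $\langle d_i \rangle = ((d_i-1)/2, \dots, (1-d_i)/2)$ repeated $T_i d_i$ times (using that the Arthur $a$-parameters are trivial here since the $\tau_i$ are cuspidal, so in the notation of \S\ref{sec:matrixcoefficient} the relevant repetition count is $T_i \cdot 1 = T_i$... actually one should be careful: $L_\Box$ is built from the $d_i$'s via formula \eqref{infcharform}, so the non-increasing list records the half-integer shifts $\tfrac{d_i+1}2 - \ell$ for $\ell = 1, \dots, d_i$, each repeated $T_i$ times). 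I would then compute $\sigma_i(L_\Box)$ as a sum of the largest $i$ of these shifts and bound the maximum over $i$.

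The main obstacle — and the bulk of the work — will be verifying the numerical inequality uniformly over all shapes $\Box$. The cleanest route is likely an inductive or case-based argument: the trivial shape $\Box = ((n,1))$ gives $e(\Box) = 0$, $G_F(\Box) = U^+_n$ so the left side is $0$, while $L_\Box$ is all zeros so the right side is $0$ — equality, consistent with the stated special case. The one-dimensional shape $\Box = ((1,n))$ gives $e(\Box) = \tfrac12 n(n-1) = P_G$, $G_F(\Box) = U^+_1$ so $N_{G_F(\Box)} - P_{G_F(\Box)} = 1$, making the left side $(P_G + n) - 1 + P_G = 2P_G + n - 1 = N_G - 1$; and $L_\Box = ((n-1)/2, (n-3)/2, \dots)$ with $\sigma_i(L_\Box)$ maximal giving $1 - \tfrac2{\sigma_\Box} = 1$, so the right side is also $N_G - 1$ — again equality, matching the second special case. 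For general $\Box$ one expects strict inequality, and the key estimate should be a bound of the form $e(\Box) \geq (N_G - 1)(1 - \tfrac2{\sigma_\Box}) - [(N_G - P_G) - (N_{G_F(\Box)} - P_{G_F(\Box)})]$, which I would prove by isolating the contribution of each block: each pair $(T_i, d_i)$ contributes $\tfrac12 T_i d_i(d_i-1)$ to $e(\Box)$ and $T_i^2 - \binom{T_i}{2} - T_i = \binom{T_i}{2} $ ... (i.e. $\binom{T_i+1}{2} - T_i$, needs care) to the discrepancy term, and one checks block-by-block using convexity that these dominate the corresponding increment in $2\sigma_i(L_\Box)/(i(n-i))$. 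Since the maximum over $i$ on the right is controlled by the Arthur-$\SL_2$ structure alone, and the worst case for $i$ is governed by the block with largest $d_i$, I would reduce to checking the inequality when $\Box$ has a single block $((T, d))$ with $Td = n$, then handle the general sum by superadditivity of $e(\Box)$ against the relevant concave functional of $L_\Box$. The delicate point is ensuring the $\max$ over $i$ in $\sigma_\Box$ does not spoil the block-additive estimate; I anticipate this requires a short lemma showing $\sigma_i(L_\Box) \leq \sum_j \sigma_{i_j}(L_{\Box_j})$ for an optimal partition $i = \sum_j i_j$, which follows from the fact that $L_\Box$ is the non-increasing merge of the $L_{\Box_j}$.
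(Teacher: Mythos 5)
Your reduction to the numerical inequality
\[
R_G(\Box) := (N_G - P_G) - (N_{G_F(\Box)} - P_{G_F(\Box)}) + e(\Box) \;\geq\; (N_G - 1)\lf(1 - \tfrac2{\sigma_\Box}\ri) =: S_G(\Box)
\]
via Theorem \ref{thm:projectionbound} with $Z = Z_1$ matches the paper exactly, and your checks that $\Box=((n,1))$ and $\Box=((1,n))$ give equality are correct and coincide with the two special cases noted after the theorem. Where the proposal breaks down is the general case.

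The block-by-block / convexity / superadditivity strategy you sketch has two structural obstructions you cannot route around with the ingredients you name. First, $S_G(\Box)$ carries the global prefactor $N_G-1 = n^2-1$ and a maximum over $i$ of $2\sigma_i(L_\Box)/(i(n-i))$: neither the prefactor nor the denominator $i(n-i)$ decomposes over the blocks of $\Box$, so even if one proves the merge inequality $\sigma_i(L_\Box) \leq \sum_j \sigma_{i_j}(L_{\Box_j})$ for an optimal split $i = \sum i_j$ (which is true, $L_\Box$ being a sorted merge), it does not yield control of $S_G(\Box)$ by block-local quantities. Second, $R_G(\Box)$ is also not block-additive: $P_G - P_{G_F(\Box)}$ carries cross-terms between blocks. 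So the "short lemma" you anticipate would not close the argument; the $\max$ and the global $n^2-1$ factor are precisely what make the inequality resistant to a convexity argument. The paper itself was unable to prove the final numerical inequality by hand.

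The paper's actual route is structured differently and you should note the key moves. It first observes that $L_\Box$, and hence $S_G(\Box)$, depends only on the \emph{Arthur-$\SL_2$ partition} $Q = \{d_i^{T_i}\}$ of $n$, so one may write $S_G(Q)$; by contrast $R_G(\Box)$ does depend on the full shape, so one sets $R_G(Q) := \min_\Box R_G(\Box)$ over shapes with Arthur-$\SL_2$ equal to $Q$, achieved at the unique $\Box$ with all $d_i$ distinct (merging two blocks with equal $d_i$ strictly \emph{decreases} $R_G$, because $N_{G_F(\Box)} - P_{G_F(\Box)}$ jumps by $T_iT_j$). Letting $d$ denote the largest part of $Q$, the paper then lower-bounds $R_G(Q) \geq R_G(d,1,\dotsc,1) = nd-1$, invokes \cite[Lem.~12.4.3--4]{DGG22} to bound $S_G(Q)$ explicitly in terms of the ``rectangular'' partitions $Q_d$, $Q'_d$, and verifies the resulting rational inequalities in $(n,d)$ by a finite computer check --- once for $Q \neq Q_d$ and once for $Q = Q_d$. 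If you want to rescue your plan, you should at minimum incorporate the factoring of $S_G$ through $Q$ and the minimization of $R_G$ over shapes with fixed $Q$; past that point a case analysis or computation looks unavoidable.
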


\begin{proof}
The inequality we want to show is that
\begin{equation}\label{eq:shapeineq}
R_G(\Box) := (N_G - P_G) - (N_{G_F(\Box)} - P_{G_F(\Box)}) + e(\Box) \geq (N_G - 1)\lf(1 - \f2{\sigma_\Box}\ri) =: S_G(\Box).
\end{equation}
Define the Arthur-$\SL_2$ of a shape $\Box$ to be the partition $Q$ determined by the restriction of $\psi \in \Box$ to the Arthur-$\SL_2$. Note that the left-hand side of the inequality only depends on the Arthur-$\SL_2$ so call it $S_G(Q)$. Let $R_G(Q)$ be the minimum of $R_G(\Box)$ over $\Box$ with Arthur-$\SL_2$ given by $Q$---this is achieved for the unique such $\Box = ((T_i, d_i))$ with all $d_i$ distinct.

It therefore suffices to show that $R_G(Q) \geq S_G(Q)$ for all $Q$. Let $d$ be the maximum size of a part of $Q$. Then
\begin{multline*}
R_G(Q) \geq R_G(d, 1, \dotsc, 1) = \f{n(n+1)}2 - 1 -  \f{(n-d)(n-d+1)}2 + \f{d(d-1)}2 = nd - 1.
\end{multline*}
Recall the definitions of $Q_d$ and $Q'_d$ from \cite[lem 12.4.3]{DGG22}, which also gives that if we also have that $Q \neq Q_d$, then 
\[
S_G(Q) \leq S_G(Q'_d) = (n^2 -1) \f{d-1}{n - \lfloor n/d \rfloor + 1}
\]
using \cite[12.4.4]{DGG22} for the equality. By a computer check, this always gives $R_G(Q) \geq S_G(Q)$. 

It remains to check the case when $Q = Q_d$. Let $r = \lfloor n/d \rfloor$ and $q = n - rd$. Then
\[
R_G(Q_d) = \f{n(n+1)}2 - \f{r (r + 1)}2 - \1_{q \neq 0} + \f{rd(d-1)}2 + \f{q(q-1)}2
\]
and by \cite[12.4.4]{DGG22}, 
\[
S_G(Q_d) = (n^2 -1) \f{d-1}{n - \lfloor n/d \rfloor}.
\]
By computer check again, we always have that $R_G(Q_d) \geq S_G(Q_d)$. 
\end{proof}

Summarizing the final result:

\begin{cor}\label{cor:densityhypothesis}
Let $G = U^{E,H}_n$ be a definite unitary group and $K' < G^\infty$ be open compact. Pick infinite place $v_0$ and define $f^{\eps, Z_1}_{v_0}$ as in \eqref{eq:fepsz}. Then the family 
\[
\mc F^{K'}_{\eps, Z_1}(\pi) := m_\pi \f{\|\Proj_{\pi_{v_0}}(f_{v_0}^{\eps, Z_1})\|_2^2}{\|f_{v_0}^{\eps, Z_1}\|_2^2} \1_{\pi_{\infty \setminus v_0} \text{ triv.}} \dim\lf((\pi^\infty)^{K'}\ri)
\]
satisfies the density hypothesis \ref{def:densityhypothesis} at finite place $v$ in the following cases:
\begin{itemize}
    \item $n = 4$,
    \item $K'_v$ is hyperspecial,
    \item $v$ is split,
    \item Conjecture \ref{conj exponents} holds for Arthur-type representations of $G_v$ with a $K'_v$-fixed vector. 
\end{itemize}
\end{cor}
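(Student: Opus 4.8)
The plan is to assemble the two main estimates already established --- the per-shape projection bound of Theorem~\ref{thm:exponentcomputations} (which repackages Theorem~\ref{thm:projectionbound}) and the matrix-coefficient decay bound of Theorem~\ref{thm:decaybound} --- into the form demanded by Definition~\ref{def:densityhypothesis}. First I would unwind what the density hypothesis asks for $\mc F = \mc F^{K'}_{\eps, Z_1}$ with conductor $m(\eps) = 1/\eps$: for each $\sigma \geq 2$,
\[
\sum_{\substack{\pi \in \mc{AR}_\disc(G)\\ \sigma(\pi_v) \geq \sigma}} \mc F(\pi) \;\lesssim\; \lf(\sum_{\pi \in \mc{AC}(G)} \mc F(\pi)\ri)^{1 - 2/\sigma} \lf(\sum_{\pi \in \mc{AR}_\disc(G)} \mc F(\pi)\ri)^{2/\sigma}.
\]
The two factors on the right were already evaluated: the total mass equals $1$ and the character mass is $\asymp \vol(Z_1)\,\eps^{N_G - 1}$, so the right-hand side is $\asymp \eps^{(N_G - 1)(1 - 2/\sigma)}$; this is the target rate.

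Next I would split the left-hand side over shapes, using that $\mc{AR}_\disc(G) = \bigsqcup_\Box \Box$ by Theorem~\ref{thm:KMSWendo}(1). Any $\pi$ in the support of $\mc F^{K'}_{\eps,Z_1}$ has $\pi_{v_0}$ a finite-dimensional representation of the compact group $G_{v_0}$ and $\pi_{\infty\setminus v_0}$ trivial; in particular its infinitesimal character at infinity is regular and integral, which is exactly the hypothesis Theorem~\ref{thm:decaybound} needs on the parameter $\psi \in \Box$ of $\pi$. That theorem also requires Conjecture~\ref{conj exponents} for the local component $\pi_v$, and this is where the four listed regimes enter: for $v$ split it is automatic by Remark~\ref{rem:exponentssplit}; for $n = 4$ it is Corollary~\ref{cor:decayconjcases}(2); if $K'_v$ is hyperspecial then a nonzero $K'_v$-fixed vector forces $\pi_v$ unramified, so Corollary~\ref{cor:decayconjcases}(1) applies; and the last bullet is the assumption verbatim. (One should also record that $(G_\scn)_v$ has no anisotropic factors, since $\rank_{F_v} G_v \geq n/2 - 1 > 0$ for $n \geq 4$, so the density hypothesis is even well posed at $v$.) Granting Theorem~\ref{thm:decaybound}, any $\pi$ of shape $\Box$ satisfies $\sigma(\pi_v) \leq \sigma_\Box$ (Definition~\ref{def:sigmabox}), so the constraint $\sigma(\pi_v) \geq \sigma$ forces $\sigma_\Box \geq \sigma$, and the left-hand side is at most $\sum_{\Box:\,\sigma_\Box \geq \sigma} \sum_{\pi \in \Box} \mc F(\pi)$.

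Finally I would apply Theorem~\ref{thm:exponentcomputations} to each inner sum, getting $\sum_{\pi \in \Box} \mc F(\pi) \ll \eps^{(N_G-1)(1 - 2/\sigma_\Box)}$; since $\sigma_\Box \geq \sigma \geq 2$ gives $1 - 2/\sigma_\Box \geq 1 - 2/\sigma \geq 0$, for small $\eps$ every such term is $\leq \eps^{(N_G-1)(1 - 2/\sigma)}$, and as there are only finitely many shapes for fixed $n$ the sum is $O\lf(\eps^{(N_G-1)(1 - 2/\sigma)}\ri)$, hence in particular $\lesssim$ the target rate. This closes the chain.

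The genuinely hard work lies upstream: Theorem~\ref{thm:decaybound} rests on the endoscopic classification and the explicit description of $A$-packets, while Theorems~\ref{thm:projectionbound} and~\ref{thm:exponentcomputations} rest on the trace-formula count of Theorem~\ref{thm:shapebound} together with the numerical inequality $R_G(Q) \geq S_G(Q)$. Given all of these, the corollary is essentially bookkeeping. The only points that require any care are (i) checking that Conjecture~\ref{conj exponents} really is available for every $\pi_v$ contributing to $\mc F^{K'}_{\eps,Z_1}$ in each of the four regimes, and (ii) the easily overlooked observation that finite-dimensionality of $\pi_{v_0}$ automatically supplies the regular-integral infinitesimal-character hypothesis of Theorem~\ref{thm:decaybound}. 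That second point is the subtlety I would flag as the genuine (if modest) obstacle.
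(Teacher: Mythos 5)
Your proposal is correct and matches the paper's intended argument: the paper's proof is a one-line citation of Theorem~\ref{thm:exponentcomputations}, Theorem~\ref{thm:decaybound}, Remark~\ref{rem:exponentssplit}, and Corollary~\ref{cor:decayconjcases}, and your write-up spells out exactly how these combine (splitting the left-hand side over shapes, using the decay bound to force $\sigma_\Box \geq \sigma$, then bounding each shape's contribution by $\eps^{(N_G-1)(1-2/\sigma_\Box)}$). Your flags of the two subsidiary checks --- that each of the four listed regimes supplies Conjecture~\ref{conj exponents}, and that finite-dimensionality of $\pi_{v_0}$ supplies the regular-integral infinitesimal-character hypothesis --- are the right things to verify and are handled correctly.
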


\begin{proof}
This follows from Theorem \ref{thm:exponentcomputations} together with Theorem \ref{thm:decaybound}, Remark \ref{rem:exponentssplit} and Corollary \ref{cor:decayconjcases} . 
\end{proof}

\begin{rem}\label{rem:removeeps}
Note that the $\eps$ safety factor in the exponents in the density hypothesis can be removed in Corollary \ref{cor:densityhypothesis}---i.e, the  $\lesssim$ of Definition \ref{def:densityhypothesis} can be improved to an $\ll$. 

Furthermore, the inequality \eqref{eq:shapeineq} is only tight when when $\Box$ corresponds to tempered representations or characters. Therefore, for $\sigma \neq 2, \infty$, we can tighten the exponents on the right-hand side of \ref{thm:exponentcomputations} by some small constant depending on $N_G$. 
\end{rem}

\begin{cor}\label{cor:densityhypothesis8}
Corollary \ref{cor:densityhypothesis} holds when $n = 8$
\end{cor}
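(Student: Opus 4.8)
The plan is to reduce the $n=8$ case of Corollary~\ref{cor:densityhypothesis} to the one missing input from its proof, namely Conjecture~\ref{conj exponents} for Arthur-type representations of $G_v$ with a $K'_v$-fixed vector, and then verify that conjecture in the relevant cases by the same elementary argument used for $n=4$ in Corollary~\ref{cor:decayconjcases}. Indeed, inspecting the proof of Corollary~\ref{cor:densityhypothesis}, the chain Theorem~\ref{thm:exponentcomputations} $\Rightarrow$ (via Theorem~\ref{thm:decaybound} and Remark~\ref{rem:exponentssplit}) density hypothesis goes through verbatim for any $n$; the only place where $n=4$ was used is in invoking Corollary~\ref{cor:decayconjcases} to know that Conjecture~\ref{conj exponents} holds. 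So the entire content of the corollary for $n=8$ is: Conjecture~\ref{conj exponents} holds for those $\pi_v \in \Pi_{\psi_v}$ that are unramified (for $K'_v$ hyperspecial or $v$ split) or, more generally, that carry a $K'_v$-fixed vector for the $K'_v$ arising in our rank-$8$ golden group.

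First I would recall from the rank-$8$ construction that the only golden adelic group we have is $G = U_8^{\Q[\sqrt{-3}],{}^8\!H_3}/\Q$ with $K' = G(\wh\Z)$, which is hyperspecial at every $p \neq 3$ and special at $p=3$; at split primes $G_v \cong \GL_8(F_v)$ and Remark~\ref{rem:exponentssplit} already gives Conjecture~\ref{conj exponents} unconditionally. So the substantive step is the non-split finite places $v$, where $K'_v$ is hyperspecial (if $v \nmid 3$) or special maximal at $v \mid 3$, and where the representations we must control are the unramified ones (those with a hyperspecial- or special-fixed vector). For such $\pi_v$, the Langlands data $((\rho_i,a_i,x_i)_i,\pi_\temp)$ has all $\rho_i$ and all supercuspidal constituents of $\pi_\temp$ unramified, hence each $\rho_i$ and each $\tau_j$ in \eqref{eq extendedsupport} is a one-dimensional unramified character $T_i = R_j = 1$. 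As in the proof of Corollary~\ref{cor:decayconjcases}, if all the $n_i = T_i a_i$ equal $1$ then $L_{\pi_v} = \overline L_{\pi_v} \preceq \overline L_{\psi_v} \preceq L_{\psi_v}$ by Theorem~\ref{thm MoeglinInput}(1), so Conjecture~\ref{conj exponents} is automatic. The remaining unramified $\pi_v$ are those whose Langlands data involves Steinberg blocks $\St(\chi_i, a_i)$ with $a_i \geq 2$ (and $\chi_i$ unramified), together possibly with a tempered $\pi_\temp$ built from $\oplus_j \chi_j \boxtimes [b_j]$. For each such case one matches extended supercuspidal supports via Theorem~\ref{thm MoeglinInput}(2): the support of $\pi_v$ must equal that of $\psi_v$, and this is an equality of multisets of unramified characters of $E_w^\times$ twisted by integer (or half-integer) powers of $|\det|$. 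This severely constrains the possible exponents $x_i$ — exactly as the $n=4$ argument forced $x \in \{0, \tfrac12, 1\}$ — and one checks in each surviving configuration (a short finite list, since $\sum n_i \le 8$ and each block contributes a known arithmetic progression of half-integers) that $L_{\pi_v} \preceq L_{\psi_v}$, using that $\psi_\infty$ regular integral forces $\psi_v \in \Psi_v$ (not merely $\Psi_v^+$) so its Arthur-$\SL_2$ data is genuinely bounded.

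The main obstacle I expect is the bookkeeping in this last step: for $n=8$ there are considerably more partition-types $Q$ for the Arthur-$\SL_2$ than for $n=4$, and for each the matching of extended supercuspidal supports between $\pi_v$ and $\psi_v$ produces several sub-cases depending on how the unramified characters $\chi_i$ coincide or differ, so the finite case analysis is genuinely longer and must be organized carefully (ideally grouped by the coarse exponents $\overline L_{\psi_v}$, which are already pinned down by Theorem~\ref{thm MoeglinInput}(1)). A clean way to package it is to observe that for unramified $\pi_v$ the representation is a subquotient of an unramified principal series, so its Jacquet module and exponents are computed by the Satake parameter alone; comparing the Satake parameter of $\pi_v$ (forced by $\psi_v$ through the unramified local Langlands/Arthur correspondence, which is explicit here) with its Langlands data directly yields $L_{\pi_v}$, and one then checks $L_{\pi_v} \preceq L_{\psi_v}$ coordinatewise in the partial sums $\sigma_i$. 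With Conjecture~\ref{conj exponents} thus established for all $\pi_v$ relevant to the rank-$8$ golden group, the hypotheses of Corollary~\ref{cor:densityhypothesis}'s proof are met for $n=8$, and the conclusion follows by the identical argument — Theorem~\ref{thm:exponentcomputations}, Theorem~\ref{thm:decaybound}, Remark~\ref{rem:exponentssplit} — completing the proof.
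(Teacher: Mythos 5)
Your diagnosis that the missing ingredient in Corollary~\ref{cor:densityhypothesis} is Conjecture~\ref{conj exponents} is exactly where you and the paper part ways: the paper does \emph{not} establish that conjecture at $n=8$. It circumvents it. The actual proof computer-enumerates, for each partition of $8$ prescribing the Arthur- and Deligne-$\SL_2$ restrictions of $\psi_v$, all exponent data of $\pi_v$ allowed by the weaker constraints of Theorem~\ref{thm MoeglinInput} alone, and checks that in all but two of these cases the resulting bound on $\sigma(\pi_v)$ from Corollary~\ref{cor mcdecay} already satisfies the density inequality of Theorem~\ref{thm:exponentcomputations}. The two remaining Langlands data (displayed as \eqref{badrep1} and \eqref{badrep2}) are then ruled out by purely local arguments with no reference to level structure: an infinitesimal-character parity/multiplicity count in the first case, and M\oe glin's partition-compatibility theorem \cite[Thm 6.3]{Moe09} in the second. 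This gives the density hypothesis for $n=8$ and \emph{arbitrary} open compact $K'$ while entirely sidestepping Conjecture~\ref{conj exponents}.

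Your route, by contrast, has a scope gap that undermines it independently of the unfinished bookkeeping. You restrict attention to Iwahori-spherical $\pi_v$ (those with a hyperspecial- or special-maximal-fixed vector), reasoning from the particular rank-$8$ golden group. But Corollary~\ref{cor:densityhypothesis8} claims that Corollary~\ref{cor:densityhypothesis} holds for $n=8$ with arbitrary $K'$, exactly as the $n=4$ bullet does via Corollary~\ref{cor:decayconjcases}(2); the hypotheses ``$K'_v$ hyperspecial'' and ``$v$ split'' are already \emph{separate} bullets in that corollary, so an argument specialized to those cases proves nothing new. For a general $K'_v$ --- in particular the non-maximal parahoric stabilizers appearing in the super-golden constructions --- the $\pi_v$ with a $K'_v$-fixed vector need not be Iwahori-spherical, so your control of the supercuspidal support (the premise underpinning the rest of your argument) fails at the outset. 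This is precisely the difficulty that makes a direct verification of Conjecture~\ref{conj exponents} at $n=8$ inaccessible with the tools available and motivates the paper's detour through Theorem~\ref{thm MoeglinInput}.
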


\begin{proof}
For every restriction of a parameter $\psi_v$ to the Arthur and Deligne-$\SL_2$'s, we by computer list out all the possible exponents of $\pi_v$ satisfying the conditions of Theorem \ref{thm MoeglinInput}. Bounding these potential $\sigma(\pi_v)$ by \ref{cor mcdecay}, the only cases of Langlands data that violate the bound in Theorem \ref{thm:exponentcomputations} with $\sigma_\Box$ replaced by $\sigma(\pi_v)$ are:
\begin{gather}\label{badrep1}
\pi_v \subseteq [3]\cdot||^{-1} \rtimes \pi_\temp \text{ in packet } \psi_v = [5][1] + [1][3], \\ 
\label{badrep2}
 \pi_v \subseteq [2]\cdot||^{-1} \times [2]\cdot||^{-1} \rtimes 0 \text{ in packet } \psi_v = [4][1] + [1][4].
\end{gather}
in the natural shorthand describing exponents and Arthur/Deligne-$\SL_2$-pieces. We show that both these cannot occur. 

For the case \eqref{badrep1}, the infinitesimal character of $\pi_v$ always has a factor of the form $\rho||^0$ or $\rho||^{1/2}$  coming from the choice of $2$-dimensional $\pi_\temp$. The $\rho||^{1/2}$ cannot occur since the infinitesimal character of the packet has only integral powers of $||$ and the $\rho||^0$ cannot occur because the two zero powers of $||$ in the infinitesimal character of the packet $\psi_v$ are already accounted for by the $[3] \cdot ||^{-1}$. 

The case \eqref{badrep2} cannot occur since it violates \cite[Thm 6.3]{Moe09}---it corresponds to partition $(2,2,2,2)$ while the packet restricted to the Deligne-$\SL_2$ corresponds to partition $(4,1,1,1,1)$. 
%
\end{proof}

\section{Optimal Covering}\label{sec:auto-cover}

Here we translate the spectral analysis of previously constructed gate sets in $PU(n)$ to the settings of automorphic representation theory and show that the density Theorem \ref{cor:densityhypothesis} implies the optimal covering property. Theorem \ref{thm:introgenmain} from the introduction will then follow from the main result \ref{prop:auto-cover} of this section and Theorem \ref{thm:intro-main} from the main result combined with Proposition \ref{prop:gold-exm-3}. Note that while Definition \ref{def:intro-GG} was stated just for Lie groups for simplicity, it also applies to Lie groups mod discrete lattices. 

We make a technical assumption that $\mc O_F$ is Euclidean for the approximation property from \ref{thm:gold-def-gates} to hold. 

\begin{thm}
\label{thm:auto-cover-main} Let $G=U_n^{E/F,H}$ be a definite unitary group and choose distinguished Archimedean place $v_0$ of $F$. Let $K^{v_0} = K' G_{\infty \setminus v_0}$ for $K' \leq G(\hat{\bZ})$ an almost golden adelic group that is almost golden at $\mf p$ (resp. almost $\tau$-super-golden for $\tau$ traversable).

Recall the definition of the gate set $S_\mf p$ from \ref{def:goldengates} (resp. $\zl S_\mf p$ and $C_\mf p$ from \ref{def:supergoldengates} and the discussion afterwards). Then $S_{\mf p} \cup \Gamma$ is a golden gate set (resp. the finite subgroup $C_\mf p$ and finite-order elements $\zl S_\mf p$ form a super-golden gate set) of $G(F_{v_0})/\mathrm{center} = PU(n)$.
\end{thm}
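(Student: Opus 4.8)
\textbf{Proof sketch of Theorem \ref{thm:auto-cover-main}.}
The plan is to reduce the optimal covering property to the density hypothesis of Corollary \ref{cor:densityhypothesis} (together with Corollary \ref{cor:densityhypothesis8} in the rank-$8$ case), while the remaining three properties of Definition \ref{def:intro-GG}---growth, navigation, approximation---are already supplied by Theorem \ref{thm:gold-def-gates}. So the only real content here is the covering estimate. First I would set up the dictionary: by Lemma \ref{lem:L2variants} the right-regular representation of $G_{v_0}$ on $L^2(\Gamma \bs G_{v_0})$ is identified with $L^2(G(F) \bs G(\A))^{K'}$, and since $\Gamma = G(F) \cap K'$ is either trivial (golden case) or central (essentially golden), this is $L^2$ of $PU(n)$ mod a finite group. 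Under this identification, the combinatorial ball $S_\mf p^{[\ell]}$ (resp. its super-golden analogue $\zl S_\mf p^{(\ell)} C_\mf p$) corresponds, via Lemma \ref{lem:gold-def-simp-tran} and Proposition \ref{prop:gategeneration}, to the set of vertices $v$ in the $G_\mf p$-orbit of $v_0$ in $\mc B(G_\mf p)$ with modified Cartan norm $\|v - v_0\| = \ell$; applying a product of Hecke operators supported on these double cosets to $\one_{B(x_0, \delta)}$ and projecting to a fixed function spreads its mass over $B(S_\mf p^{[\ell]}, \delta |S_\mf p^{[\ell]}|)$, so the covering rate is controlled by how well the Hecke iterates equidistribute.

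The key quantitative step is the bound on operator norms of these Hecke operators on each irreducible $\pi_\mf p$ in terms of $\sigma(\pi_\mf p)$, which is exactly the content of Corollary \ref{cor:Hecke-auto-rep} (referenced in the outline): for $\pi_\mf p$ with matrix coefficients in $L^{\sigma(\pi_\mf p)+\eps}_0$, the norm of the degree-$\ell$ Hecke operator is bounded by $|S_\mf p^{[\ell]}|^{1/2 - 1/\sigma(\pi_\mf p) + \eps}$ (times polynomial factors), while on the trivial/character part it is $|S_\mf p^{[\ell]}|$. I would then expand $\|\Proj_\perp (\text{Hecke}^\ell \, \one_{B(x_0,\delta)})\|_2^2$ spectrally over $\mc{AR}_\disc(G)$, separating the character contribution (which gives the main term, of size $\asymp \delta^{N_G-1} \cdot |S_\mf p^{[\ell]}|^2$ after the computation underlying Theorem \ref{thm:projectionbound}) from the non-character part. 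Grouping the non-character $\pi$ by the value $\sigma = \sigma(\pi_\mf p)$ and applying the density hypothesis $|\mc F_\delta(\sigma, v)| \lesssim |\mc F_\delta|^{2/\sigma - \eps} |\mc F_\delta(\infty, v)|^{1 - 2/\sigma - \eps}$ of Theorem \ref{thm:intro-density}/Corollary \ref{cor:densityhypothesis}, the exponents $1/2 - 1/\sigma$ from the Hecke bound and $2/\sigma$ from the density bound conspire so that every $\sigma > 2$ stratum is dominated by the $\sigma = 2$ (tempered) Ramanujan bound; this is the standard Sarnak--Xue interpolation, now valid because we replaced the false na\"ive Ramanujan conjecture with the proven density statement. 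The upshot is that the non-character part of the variance is $\lesssim \delta^{N_G - 1} |S_\mf p^{[\ell]}|$, a full power of $|S_\mf p^{[\ell]}|$ smaller than the main term.

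Given the variance bound, a Chebyshev/Markov argument (as in \cite{Parzanchevski2018SuperGoldenGates}*{\S3}) shows that for $\delta$ chosen so that $\delta |S_\mf p^{[\ell]}| = \eps_\ell := (\log|S_\mf p^{[\ell]}|)^c / |S_\mf p^{[\ell]}|$, the volume of points not within an $\eps_\ell$-ball of $S_\mf p^{[\ell]}$ tends to $0$ as $\ell \to \infty$; the polylogarithmic factor $c$ absorbs the $\eps$-losses and the polynomial-in-$\ell$ factors coming from Corollary \ref{cor:exponentialgrowth} (the $\ell^{k-1}$ term) and from the Bessel asymptotics in Theorem \ref{thm:projectionbound}. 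Combined with the exponential growth $|S_\mf p^{[\ell]}| \asymp \ell^{k-1} q_\mf p^{M\ell}$ of Corollary \ref{cor:exponentialgrowth}, this is precisely the covering property (1) of Definition \ref{def:intro-GG}; properties (2)--(4) are Theorem \ref{thm:gold-def-gates} under the running hypothesis that $\mc O_F$ is Euclidean. In the super-golden case one runs the same argument for $\zl S_\mf p^{(\ell)} C_\mf p$, using that $\tau$ traversable makes the non-standard Cartan norm behave well (Lemma \ref{lem:triangleineqns}, Proposition \ref{prop:gategeneration} variant) and that $C_\mf p$ is a finite group (Lemma \ref{lem:supergatesprops}(1)); the finite-order property of the generators is Lemma \ref{lem:supergatesprops}(3,4). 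The main obstacle is verifying that the exponent arithmetic in the interpolation step genuinely closes---i.e. that for \emph{every} admissible shape $\Box$ the decay bound $\sigma_\Box$ from Definition \ref{def:sigmabox} is strong enough relative to the counting exponent $R_G(\Box)$ of Theorem \ref{thm:projectionbound}---but this is exactly the inequality \eqref{eq:shapeineq} proven (by the computer checks) in Theorem \ref{thm:exponentcomputations}, so the covering property follows. $\qed$
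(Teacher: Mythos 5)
Your proposal is correct and takes essentially the same route as the paper: reduce to the covering property after invoking Theorem \ref{thm:gold-def-gates} for growth, navigation, and approximation, then combine the Hecke-norm bound of Corollary \ref{cor:Hecke-auto-rep} with the shape-decomposed density estimate of Theorem \ref{thm:exponentcomputations} (i.e.\ the interpolation step of Propositions \ref{prop:auto-cover-GAP}/\ref{prop:auto-cover-GAP-fixed}) and a Chebyshev-type variance argument exactly as in Proposition \ref{prop:auto-cover}. The only minor slips are in the $\eps$ vs.\ $\delta$ bookkeeping (after re-indexing $\delta\asymp\eps^{N_G-1}$, the character contribution to the lower bound is $(\delta+o(\delta))^2\mu(\cdots)$ and the upper bound is $\ll(\log|S_\mf p^{[\ell]}|)^{c_0}|S_\mf p^{[\ell]}|^{-1}\delta$, not $\delta^{N_G-1}|S_\mf p^{[\ell]}|$), which do not affect the structure of the argument.
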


For convenience, assume $F = \Q$ on the first read so $v_0$ is the sole infinite place and $K'  \leq G(\wh \Z)$. Also assume on a first read that $K'$ is golden instead of just almost golden so that $\Gamma = 1$. 


\subsection{Definition of Hecke Operators}

We start by interpreting as a Hecke operator the operation of averaging over translates by the set $S_p^{[\ell]}$ approximately of words in gates with minimum representation of length $\ell$ (recall all precise definitions from Proposition \ref{prop:gategeneration}). 

First, consider $K'$ an almost golden adelic group and define $\Gamma, \Lambda_\mf p$ as in \S\ref{subsec:gold-def}. Then we get the following
identifications by lemma \ref{lem:L2variants}:

\begin{multline}\label{eqn:L2identifications}
L^{2}(\Gamma \bs U(n))\cong L^{2}\left(\Lambda_{p}\backslash G(F_{v_0})\times G(F_\mf p)\right)^{K'_{\mf p}} \\
\cong L^2(G(F) \bs G(\A^\infty) \times G(F_{v_0}))^{K' G_{\infty \setminus v_0}} \cong L^{2}\left(G(F)\backslash G(\A)\right)^{K' G_{\infty \setminus v_0}}.
\end{multline}
In particular, we can decompose
\[
V=L^{2}(\Gamma \bs U(n)) = \bigoplus_{\substack{\pi \in \mc{AR}_\disc(G) \\ \pi_{\infty \setminus v_0} \text{ trivial}}} \pi_{v_0} \boxtimes (\pi^\infty)^{K'}, 
\]
where the right-translation action corresponds to the action on the left factor of the $\boxtimes$ as representations of $G(F_{v_0}) = U(n)$. We also get corresponding subspaces $V_\Box$ and restricted projections $\Proj_\Box : V \to V_\Box$. 

Through the right factor of the $\boxtimes$, this decomposition also respects an action of Hecke operators:

\begin{defn} \label{def:auto-rep-hecke} 
Let $\mf p$ be a finite place of $F$ and $\pi_{\mf p}$ a $G_{\mf p}$-representation. Then any finite set $S\subset G_{\mf p}$ defines a Hecke operator $\one_{K'_{\mf p}SK'_{\mf p}}\in C_{c}\left(K'_{\mf p}\backslash G_{\mf p}/K'_{\mf p}\right)$, which acts on $\pi_{\mf p}^{K'_{\mf p}}$:
\[
\one_{K'_{\mf p}SK'_{\mf p}}.u := \int_{g \in K'_\mf p S K'_\mf p} \pi_v(g).u \, dg = \sum_{s\in K'_{\mf p}SK'_{\mf p}/K'_{\mf p}} \pi_v(s).u,
\]
where the integral is normalized by $\vol(K'_\mf p) = 1$.
\end{defn}

Define the gate set $S_{\mf p}$ to be as in Definition \ref{def:gateset} or a non-standard/decimated variant as in Remark \ref{rem:nonstandardgategeneration}/Definition \ref{def:decimatedgateset}. Recall the definition of $S_{\mf p}^{[\ell]}$ from Proposition \ref{prop:gategeneration}: when $K'$ is golden at $\mf p$, this is simply the set of words in $S_{\mf p}$ of length precisely $\ell$ in their shortest representation. As a technicality when $G_\mf p$ has non-anisotropic center, recall also from Proposition \ref{prop:gategeneration} their lifts $\wtd S_{\mf p}^{[\ell]}$. 

Then, define the following operator $T_{S_{\mf p}^{[\ell]}}$:
\[
\left(T_{S_{\mf p}^{[\ell]}}f\right)\left(g\right)=\frac{1}{|S_{\mf p}^{[\ell]}|}\sum\nolimits_{s\in \wtd S_{\mf p}^{[\ell]}}f\left(s^{-1}g\right)\qquad\left(f\in L^{2}(U(n))\;,\;g\in U(n)\right).
\]
This can be interpreted as a Hecke operator on $V = L^2(\Gamma \bs U(n))$:

\begin{lem} \label{lem:auto-cover-hecke}
For any $\ell\in\bN$:
\begin{enumerate}
    \item the inclusion $\Gamma \backslash \wtd S_{p}^{[\ell]} \into \Lambda_{\mf p} \backslash U(n) \times K'_{\mf p}\wtd S_{\mf p}^{[\ell]}K'_{\mf p}/K'_{\mf p}$ is a bijection (where $\Lambda_{\mf p}$ is embedded diagonally in $U(n) \times G_{\mf p}$ and $K'_{\mf p}\wtd S_{\mf p}^{[\ell]}K'_{\mf p} \subset G_{\mf p}$).
    \item The operator $T_{S_{\mf p}^{[\ell]}}$ descends to $L^2(\Gamma \bs U(n))$ where it is equal to the normalized Hecke operator $|\wtd S_{\mf p}^{[\ell]}|^{-1}\one_{K'_{\mf p}\wtd S_{\mf p}^{[\ell]}K'_{\mf p}}$ acting on the $G_\mf p$ component through the isomorphisms \eqref{eqn:L2identifications}. 
\end{enumerate}
\end{lem}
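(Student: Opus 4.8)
The plan is to unwind the chain of isomorphisms in \eqref{eqn:L2identifications} and track where the set $\wtd S_{\mf p}^{[\ell]}$ goes under each one. For part (1), I would start from the concrete description of $\wtd S_{\mf p}^{[\ell]}$ from Proposition \ref{prop:gategeneration}(3): it consists of $\gamma \in \Lambda_{\mf p} = G(F) \cap (K')^{\mf p}$ with $a_\gamma = \wtd \alpha$ for some $\alpha \in X_{1+}(\bar A_G)$ of modified Cartan norm $\ell$. By Lemma \ref{lem:gold-def-simp-tran}, $\Lambda_{\mf p}$ acts transitively on $G_{\mf p}/K'_{\mf p}$ with stabilizer $\Gamma$, so the cosets $\Gamma \bs \wtd S_{\mf p}^{[\ell]}$ are in bijection with the $\Lambda_{\mf p}$-orbit points in $G_{\mf p}/K'_{\mf p}$ at the relevant Cartan distances. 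On the other hand, by the uniqueness of the Cartan decomposition \eqref{eq:cartan}, the double coset $K'_{\mf p}\wtd S_{\mf p}^{[\ell]}K'_{\mf p}$ is the union of those $K'_{\mf p} n_{\wtd\alpha} K'_{\mf p}$ with $\|\alpha\| = \ell$, so $K'_{\mf p}\wtd S_{\mf p}^{[\ell]}K'_{\mf p}/K'_{\mf p}$ is exactly that same set of orbit points, now viewed inside $\Lambda_{\mf p}\bs\bigl(U(n) \times K'_{\mf p}\wtd S_{\mf p}^{[\ell]}K'_{\mf p}/K'_{\mf p}\bigr)$ via the map $(g_\infty, g_{\mf p}) \mapsto \Lambda_{\mf p}(g_\infty, g_{\mf p})$. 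I would check that the inclusion $\gamma \mapsto \Lambda_{\mf p}(\gamma_{v_0}, \gamma_{\mf p}) = \Lambda_{\mf p}(\gamma_{v_0}, 1)\cdot(1, \gamma_{\mf p})$ matches these two descriptions coset-by-coset; injectivity is because $G(F) \cap (K')^{\mf p} = \Lambda_{\mf p}$ and surjectivity is the transitivity statement together with the fact that every double coset representative is hit.

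For part (2), the key point is to identify the geometric averaging operator $T_{S_{\mf p}^{[\ell]}}$ with the Hecke operator under the isomorphism $L^2(\Gamma\bs U(n)) \cong L^2(\Lambda_{\mf p}\bs U(n) \times G_{\mf p})^{K'_{\mf p}}$. Given $f \in L^2(\Gamma\bs U(n))$, I would lift it to $\td f$ on $\Lambda_{\mf p}\bs U(n)\times G_{\mf p}$ that is $K'_{\mf p}$-invariant on the right and supported (as a function of the $G_{\mf p}$ variable, modulo $\Lambda_{\mf p}$) appropriately; concretely $\td f(g_\infty, g_{\mf p}) = f(\gamma^{-1} g_\infty)$ where $\gamma \in \Lambda_{\mf p}$ is chosen with $\gamma K'_{\mf p} = g_{\mf p} K'_{\mf p}$ (well-defined by part (1) of Lemma \ref{lem:gold-def-simp-tran} and $\Gamma$-invariance of $f$). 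Then by Definition \ref{def:auto-rep-hecke}, $\bigl(\one_{K'_{\mf p}\wtd S_{\mf p}^{[\ell]}K'_{\mf p}}\td f\bigr)(g_\infty, 1) = \sum_{s \in K'_{\mf p}\wtd S_{\mf p}^{[\ell]}K'_{\mf p}/K'_{\mf p}} \td f(g_\infty, s)$, and using part (1) to replace the index set by $\Gamma\bs \wtd S_{\mf p}^{[\ell]}$ and the definition of $\td f$, this becomes $\sum_{s \in \Gamma\bs \wtd S_{\mf p}^{[\ell]}} f(s^{-1} g_\infty)$, which is $|\wtd S_{\mf p}^{[\ell]}|$ times $T_{S_{\mf p}^{[\ell]}}f$ once one accounts for the normalization (the sum over $\wtd S_{\mf p}^{[\ell]}$ versus over $\Gamma\bs \wtd S_{\mf p}^{[\ell]}$ differs by $|\Gamma|$, which in the golden case is $1$, and in the almost-golden case is absorbed consistently on both sides). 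The normalization $\vol(K'_{\mf p}) = 1$ makes the integral-to-sum conversion in Definition \ref{def:auto-rep-hecke} exact.

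I expect the main obstacle to be the bookkeeping around the non-anisotropic center: the distinction between $S_{\mf p}^{[\ell]}$ and its lift $\wtd S_{\mf p}^{[\ell]}$, and the fact that $|\wtd S_{\mf p}^{[\ell]}|$ rather than $|S_{\mf p}^{[\ell]}|$ appears, must be threaded carefully through both the bijection in (1) and the operator identity in (2). Here I would lean on the observation (stated after Definition \ref{def:gateset}) that quotienting by $\Lambda_{\mf p}\cap Z_G^{\mathrm{spl}}$ gives a bijection $\zl S_\Lambda \iso \zl{\wtd S}_\Lambda$ with $\wtd{\bar a}_{s} = a_{\wtd s}$, so that the two index sets have the same cardinality up to the (finite, central) fibers, and the Hecke operator on $\pi_{\mf p}^{K'_{\mf p}}$ only sees the building-level data. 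A secondary but routine point is verifying that $K'_{\mf p}\wtd S_{\mf p}^{[\ell]}K'_{\mf p}$ really is a finite union of $K'_{\mf p}$-double cosets (so that Definition \ref{def:auto-rep-hecke} applies), which follows from $\wtd S_{\mf p}^{[\ell]}$ being finite and the Cartan decomposition being locally finite. Everything else is a direct unpacking of definitions.
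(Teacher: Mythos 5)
Your proposal is correct and follows essentially the same route as the paper. Part~(1) is argued in the paper by tracking $\wtd S_{\mf p}^{[\ell]}.v_0$ (or $\wtd S_{\mf p}^{[\ell]}.\tau$ in the super-golden case) as a subset of the building and using that $K'_{\mf p}$ preserves the Cartan invariant to get $K'_{\mf p}\wtd S_{\mf p}^{[\ell]}K'_{\mf p} = \wtd S_{\mf p}^{[\ell]}K'_{\mf p}$; you phrase the same content in terms of the transitive $\Lambda_{\mf p}$-action on $G_{\mf p}/K'_{\mf p}$ with stabilizer $\Gamma$ from Lemma \ref{lem:gold-def-simp-tran}(1) together with the Cartan decomposition, which is an equivalent, slightly more group-theoretic packaging. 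For part~(2), your explicit lift $\td f(g_\infty, g_{\mf p}) = f(\gamma^{-1}g_\infty)$ is exactly the mechanism the paper summarizes as ``the $s$-action on the $G_{\mf p}$ coordinate is equivalent to an $s^{-1}$ action on the left on the $U(n)$ coordinate,'' and your accounting of the $|\Gamma|$ discrepancy between the sum over $\wtd S_{\mf p}^{[\ell]}$ and over its $\Gamma$-cosets matches the paper's closing remark. One small caveat: when you invoke ``the uniqueness of the Cartan decomposition \eqref{eq:cartan}'' to describe $K'_{\mf p}\wtd S_{\mf p}^{[\ell]}K'_{\mf p}$ as a union of $K'_{\mf p}n_{\wtd\alpha}K'_{\mf p}$, the decomposition \eqref{eq:cartan} is with respect to the special maximal compact $K_{\mf p}$, which equals $K'_{\mf p}$ only in the golden case; in the super-golden case one must instead route through $K_{\mf p}$-double cosets and then use $\Lambda_{\mf p}$-transitivity on $G_{\mf p}\tau$, which is the step the paper's building-level argument handles directly.
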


\begin{proof}
First, (2) follows from (1): for descent, (1) in particular gives that $\Gamma \wtd S_p^{[\ell]} = \wtd S_p^{[\ell]}$ so $(T_{S_{\mf p}^{[\ell]}}f)\left(g\right) = (T_{S_{\mf p}^{[\ell]}}f)\left(\gamma^{-1} g\right)$ for all $\gamma \in \Gamma$. For the comparison to a Hecke operator, in the identification $L^2(U(n)) = L^2(\Lambda_\mf p \bs U(n) \times G_\mf p)^{K'_\mf p}$, the $s$-action on the $G_{\mf p}$ coordinate is equivalent to an $s^{-1}$ action on the left on the $U(n)$ coordinate. Note that each argument $s^{-1}g \in \Gamma \bs U(n)$ appears $|\Gamma|$ times in the sum defining $T$. 

Next, we prove (1). By Proposition \ref{prop:gategeneration} as used in Section \ref{subsec:goldengatesets}, $\wtd S_{\mf p}^{[\ell]}.v_{0}$ is the set of all $v \in \mc B$ (or lifts $v$ of depending on $v - v_0 \in X_+(\wh A_H)$ and the choice of $\wtd \Sigma$ ) such that $\|v - v_0\| = \ell$. In the super-golden case, we instead look at elements $g \tau$ for $g \in G_\mf p$ such that $\|gv_0 - v_0\| = \ell$ (again possibly lifted depending on $gv_0 - v_0 \in X_{1+}(\wh A_H)$). 

Since $K_{\mf p}$ preserves the $v - v_{0} \in X_{1+}(\bar A_H)$, we get that $K'_{\mf p}\wtd S_{\mf p}^{[\ell]}.v_{0}= \wtd S_{\mf p}^{[\ell]}.v_{0}$ (resp. $K'_{\mf p}\wtd S_{\mf p}^{[\ell]}.\tau= \wtd S_{\mf p}^{[\ell]}.\tau$), hence there is a bijection between $K'_{\mf p} \wtd S_{\mf p}^{[\ell]}K'_{\mf p}/K'_{\mf p}$ and $\wtd S_{\mf p}^{[\ell]}$, which in turns induces a bijection between $\Gamma \backslash \wtd S_{p}^{[\ell]}$ and $\Lambda_{\mf p} \backslash U(n) \times K'_{\mf p}\wtd S_{\mf p}^{[\ell]}K'_{\mf p}/K'_{\mf p}$. This completes the proof.
\end{proof}

\subsection{Bounds on Hecke Operators}

The goal of this subsection is to provide the main upper bounds for the operator norm of Hecke operators acting on unitary irreducible representations in terms of their rate of decay of matrix coefficients. 

Let us fix some notations.
Throughout this subsection we denote by $G = G_{\mf p}$ our $p$-adic group, by $\mc B$ be the (reduced) Bruhat-Tits building of $G$, by $\mc A \subset \mc B$ a fixed fundamental apartment, by $C \in \mc A$ a fixed fundamental chamber in it, by  $I := I_{\mf p}$ the Iwahori subgroup corresponding to $C$, and by $W$ the size of the finite Weyl group of $G$. Then, the Hecke algebra for $I\bs G_\mf p / I$ is the Iwahori-Weyl group studied in \cite{HR08, Ric16}. 

We start with our key input bound on the operator norm of an Iwahori operator with a translation element.

\begin{prop} \label{prop:LpIwahoribound}
Let $a\in X_{1+}(A_G)$ and $n_a \in N_G(A_G)$, as in the notation of \S\ref{sec:cartan} be an element in the Iwahori-Weyl group which acts as a translation in a corresponding apartment.
For any $\sigma \geq 2$, if $\pi$ is a unitary irreducible representation of $G$ whose matrix coefficients are in $L^{\sigma+\eps}$ mod center for all $\eps >0$, then
\[
\| \one_{I n_a I} \mid_{\pi^I} \|_{op}
\ll (\log |I n_a I / I|)^{nW} \cdot |I n_a I / I|^{\frac{\sigma -1}{\sigma}}.
\]
\end{prop}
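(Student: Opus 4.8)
The plan is to reduce the bound on $\|\one_{In_aI}|_{\pi^I}\|_{op}$ to a decay estimate for matrix coefficients of $\pi$ restricted to a suitable set of double-coset representatives, and then to count those representatives using the geometry of the building together with the $L^{\sigma+\eps}$ hypothesis. The main steps follow the now-standard ``Iwahori/Hecke operator bound via matrix coefficients'' argument (as in Parzanchevski--Sarnak and its predecessors), but carried out uniformly over higher-rank groups.

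\paragraph{Step 1: From operator norm to a sum of matrix coefficients.} First I would fix a unit vector $u \in \pi^I$ and write $\|\one_{In_aI}.u\|^2 = \sum_{x, y} \langle \pi(x)u, \pi(y)u\rangle$ where $x, y$ range over the finitely many cosets in $In_aI/I$. Choosing coset representatives $g_x$, each inner product equals $\langle \pi(g_y^{-1}g_x)u, u\rangle$, so the whole sum is controlled by $\sum_{x,y} |\langle \pi(g_y^{-1}g_x)u,u\rangle|$. The elements $g_y^{-1}g_x$ all lie in $(In_aI)^{-1}(In_aI) = In_a^{-1}I n_a I$, whose Cartan/Iwahori-Weyl support is concentrated on translation-type elements of ``size'' at most roughly $2\|a\|_0$ (with a bounded Weyl-group's worth of spread). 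The key point: since $u$ is $I$-fixed, $\langle \pi(g)u,u\rangle$ depends only on the $I$-double coset of $g$, hence only on a point of the building, so I can organize the sum by the position $g.v_0$ (equivalently by the value $\overline a_g \in X_{1+}(\bar A_G)$).

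\paragraph{Step 2: Decay of the $I$-bi-invariant matrix coefficient.} Here I invoke the $L^{\sigma+\eps}$ hypothesis: a standard consequence (Cowling--Haagerup--Howe type estimates, or the Casselman/Wallach-style bounds used repeatedly in this paper) is that an $I$-fixed matrix coefficient $\phi_u(g) = \langle\pi(g)u,u\rangle$ satisfies $|\phi_u(g)| \ll_\eps \Xi(g)^{2/\sigma - \eps}$ for the Harish-Chandra spherical function $\Xi$, and $\Xi$ on a translation element of Cartan invariant $\lambda$ decays like $q_{\mf p}^{-\langle \lambda, \rho_G\rangle}$ up to a polynomial-in-$\|\lambda\|$ (i.e., polylog in the coset count) factor. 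So $|\phi_u(g)| \ll (\log\|a_g\|_0)^{O(1)} \, q_{\mf p}^{-\frac{2}{\sigma}\langle \overline a_g, \rho_G\rangle}$. I would state this as a lemma and cite the matrix-coefficient bounds already used in Section~\ref{sec:matrixcoefficient}, or reprove it via \cite{Cass95}*{Ch. 4} exactly as in the proof of Proposition~\ref{prop rawLPtest}.

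\paragraph{Step 3: Counting and summing.} Now combine Steps 1--2. Writing $q = q_{\mf p}$ and $M := |In_aI/I|$, Proposition~\ref{prop:Kdoublecosetsize} (applied with the special/hyperspecial $v_0$) gives $M \asymp q^{\langle a, 2\rho_G\rangle}$ up to polynomial-in-$\|a\|$ factors, and the number of $(x,y)$ pairs with $g_y^{-1}g_x$ landing in a fixed $I$-double coset of building-distance $\asymp \langle \lambda, 2\rho_G\rangle$ is $\ll M \cdot q^{\langle\lambda, 2\rho_G\rangle}$ (choose $x$ freely, then $y$ must lie in the prescribed double coset of $g_x$). Since the relevant $\lambda$ range over a bounded-dimensional polytope of diameter $\asymp \|a\|$, the number of distinct such $\lambda$ is $\ll (\log M)^{r}$ where $r = \mathrm{rank}$, giving at most $(\log M)^{nW}$ after accounting for Weyl spread. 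Thus
\[
\|\one_{In_aI}.u\|^2 \ll (\log M)^{O(nW)} \sum_{\lambda} M \, q^{\langle \lambda, 2\rho_G\rangle} \cdot q^{-\frac{2}{\sigma}\langle \lambda, 2\rho_G\rangle} \ll (\log M)^{O(nW)} \, M \cdot q^{(1 - \frac{2}{\sigma})\langle a, 2\rho_G\rangle_{\max}},
\]
and since the maximal $\langle \lambda, 2\rho_G\rangle$ occurring is $\asymp \log_q M$, the right-hand side is $\ll (\log M)^{O(nW)} M \cdot M^{1 - 2/\sigma} = (\log M)^{O(nW)} M^{2 - 2/\sigma}$. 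Taking square roots yields $\|\one_{In_aI}|_{\pi^I}\|_{op} \ll (\log M)^{O(nW)} M^{1 - 1/\sigma} = (\log M)^{O(nW)} M^{\frac{\sigma-1}{\sigma}}$, as claimed (the paper states the polylog exponent as $nW$, which I would track carefully to match).

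\paragraph{Main obstacle.} The delicate part is Step 2 combined with the bookkeeping in Step 3: getting a clean, uniform bound on the $I$-bi-invariant matrix coefficient in terms of $\Xi$ with only a \emph{polylogarithmic} (in $M$) loss, valid for \emph{all} positions in the building simultaneously and for groups that may be ramified or non-quasisplit. One must be careful that the ``spherical function'' estimate one uses is the right one for Iwahori- (not hyperspecial-) fixed vectors; the standard fix is that an $I$-fixed vector is a bounded sum of $K$-translates of $K$-fixed-ish vectors, or more directly one uses Casselman's asymptotic expansion of $I$-bi-invariant coefficients and the $L^{\sigma+\eps}$ decay to bound each exponent. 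The other bookkeeping subtlety is counting pairs $(x,y)$ correctly: one must ensure the double-coset of $g_y^{-1}g_x$ is genuinely controlled by $\|a\|$ with only a Weyl-group worth of multiplicity, which is where the $W$ and $n$ enter the polylog exponent. I expect these to be routine but tedious; no new ideas beyond what is already deployed in Sections~\ref{sec:buildings} and~\ref{sec:matrixcoefficient} should be needed.
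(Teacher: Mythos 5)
Your approach is genuinely different from the paper's: you attempt a direct $T^*T$ estimate of $\|\one_{In_aI}.u\|^2$ by expanding into matrix coefficients and counting double cosets, whereas the paper factorizes $\one_{In_aI}$ via the Iwahori--Bruhat relations into single-generator operators $\one_{In_{t_i}I}$, reduces to $a = t^m$, and then invokes the Lubetzky--Lubotzky--Parzanchevski machinery: $W$-normality and collision-freeness give $\|T^m\|_{op} \ll m^W \lambda^m$ with $\lambda = $ spectral radius, and a generalization of their Proposition~2.3 bounds $\lambda \leq k^{(\sigma-1)/\sigma}$ directly from $L^{\sigma+\eps}$ eigenfunctions. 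The spectral route entirely sidesteps the double-coset counting you attempt.

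Unfortunately, your Step 3 has a genuine gap that the paper's approach is designed to avoid. The counting bound ``choose $x$ freely, then $y$ must lie in the prescribed double coset of $g_x$,'' i.e.\ $N_\mu \leq M\cdot|I\mu I/I|$, is too weak. With the \emph{correct} matrix-coefficient decay $|\phi_u(\mu)| \ll |I\mu I/I|^{-1/\sigma}$ (your Step~2 states $q^{-\tfrac{2}{\sigma}\langle a_g, \rho_G\rangle}$, i.e.\ $|I\mu I/I|^{-1/\sigma}$, but Step~3 silently upgrades this to $q^{-\tfrac{2}{\sigma}\langle\lambda, 2\rho_G\rangle} = |I\mu I/I|^{-2/\sigma}$, which is unjustified) and the \emph{correct} range $\langle\lambda, 2\rho_G\rangle \asymp 2\log_q M$ (not $\log_q M$: the elements $g_y^{-1}g_x$ lie in $In_a^{-1}In_aI$, whose support extends to Cartan norm $\approx 2\|a\|_0$), the sum $\sum_\mu N_\mu |I\mu I/I|^{-1/\sigma}$ is maximized at $|I\mu I/I|\asymp M$ and gives $\ll(\mathrm{polylog})\,M^{2-1/\sigma}$, hence $\|\one_{In_aI}\|_{op} \ll M^{1-1/(2\sigma)}$, which is off by $M^{1/(2\sigma)}$ from the target $M^{1-1/\sigma}$. (Your two slips---the $2\rho_G$ in the exponent and the missing factor of 2 in the range---coincidentally cancel and make the final line look right.) Already in the rank-one (tree) case one can compute the true structure coefficients: writing $\one_{In_a^{-1}I}\one_{In_aI} = \sum_\mu c_\mu \one_{I\mu I}$, one has $c_\mu \asymp q^{a-m}$ for $\mu$ at distance $2m$, so $N_\mu = c_\mu|I\mu I/I| \asymp q^{a+m}$, whereas your bound gives $q^{a+2m}$---an overcount by $\sqrt{|I\mu I/I|}$ exactly in the middle range that dominates. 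Recovering this factor requires a nontrivial building-combinatorics input about how many pairs in $In_aI/I$ are at a given ``mutual distance,'' which is essentially what the collision-free/$W$-normal spectral argument packages cleanly. Separately, Step~2's assertion that $L^{\sigma+\eps}$ membership implies a pointwise bound $|\phi_u| \ll \Xi^{2/\sigma-\eps}$ with only polylogarithmic loss---uniform over the Iwahori-spherical spectrum and valid for ramified and non-quasisplit groups---is not a standard consequence of CHH and is not established by the Casselman exponent analysis alone; the paper avoids needing it by working with the single eigenvalue $\lambda$.
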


\begin{rem}
We note that a slightly weaker bound can be deduced from the works of \cite{Lubetzky2017RandomWalks,kamber2016lp}, namely, $\| \one_{I n_a I} \mid_{\pi^I} \|_{op}
\lesssim |I n_a I / I|^{\frac{\sigma -1}{\sigma}}$. 
\end{rem}

\begin{proof}
Let $t_1,\ldots,t_n \in X_{1+}(A_G)$ be a basis for the group $ X_{1}(A_G)\cong \mathbb{Z}^n$ of translations and write $a = \sum_i m_i t_i$, $m_i \geq 0$. 
If $\ell$ is the Iwahori-Weyl length function then $\ell(n_a) = \sum_i m_i \ell(n_{t_i})$, and by the Iwahori-Bruhat relations we get that $\one_{I n_a I} = (\one_{I n_{t_1} I})^{m_1} \circ \ldots \circ (\one_{I n_{t_n} I})^{m_n}$ and $I n_a I = I n_{t_1}^{m_1} I \cdots I n_{t_n}^{m_n} I = (I n_{t_1} I)^{m_1} \cdots (I n_{t_n} I)^{m_n}$. Furthermore, $|I n_a I/I| = |I n_{t_1}^{m_1} I/I| \times \cdots \times |I n_{t_n}^{m_n} I/I|$ (see, e.g, \cite[Lem 1.5.1]{Cass95}). Therefore, without loss of generality, it suffices to compute a bound when $a=t^m$ for $t=t_1$ and $m\in \mathbb{N}$.

Write $T= \one_{I n_t I}$ and $k=|I n_t I / I|$.
Note that $\one_{I n_a I} = T^m$ and $|I n_a I / I| = k^m$. 
By Proposition 4.5 and Theorem 5.6 of \cite{Lubetzky2017RandomWalks}, the Iwahori-operator $T$ is $W$-normal and (since $a$ is a translation) collision-free.
Let $\lambda = \sup\{|z| \,|\, z \in \mathrm{Spec}(T\mid_{\pi^I})\}$.
Then by Proposition 4.1 of \cite{Parzanchevski2018RamanujanGraphsDigraphs}, we get $\|T^m\mid_{\pi^I}\|_{op} \ll m^W \lambda^m$.
It suffices to prove that $\lambda \leq k^{\frac{\sigma -1}{\sigma}}$.

This bound follows from a straightforward generalization of Proposition 2.3 of \cite{Lubetzky2017RandomWalks}:
If $T$ is a $k$-branching collision-free operator $f$ is a $L^{\sigma+\eps}$ $T$-eigenfunction with eigenvalue $\lambda$ , then $\lambda \leq k^{\frac{\sigma -1}{\sigma}}$.
The proof of Proposition 2.3 of \cite{Lubetzky2017RandomWalks} works mutatis mutandis (i.e. replacing $2+\eps$ with $\sigma+\eps$ and $1+\eps$ with $\sigma-1+\eps$).
\end{proof}

We wish to generalize Proposition \ref{prop:LpIwahoribound} to cover more general compact open subgroups.
To do this we state and prove some auxiliary claims, and introduce the following notions of flat stabilizers and stable sets.

\begin{defn} \label{defn:Hecke-flat-stable}
Let $P \leq G$ be a compact open subgroup and $S \subset G$ a finite set.
\begin{enumerate}
\item Say that $P$ is a flat stabilizer, if it is the stabilizer in $G$ of a finite collection of faces contained in a single apartment $X \subset \mc A$.
\item Say that $S$ is a $P$-stable set, if $PSP=SP$. 
\item Say that $S$ is a translation set, if $S \subset \bigcup_{a \in X_{1+}(A_G)} I n_a I$ (equivalently, the union can be taken over $X_1(A_G)$).
\end{enumerate}
\end{defn}

\begin{rem} \label{rem:Hecke-flat-stable}
The above conditions are not too hard to satisfy:
\begin{enumerate}
\item When the Kottowitz kernel is trivial (see \cite{HR08}), then parahoric subgroups are the same as stabilizers of faces, hence they are flat stabilizers. 

\item Since the Cartan norms are $K$-invariant, where $K$ is a special maximal compact subgroup of $G$, then the sets $S^{[\ell]}$ defined in Proposition \ref{prop:gategeneration} are $K$-stable.

\item By the Cartan decomposition, any $K$-stable set, where $K$ is a special maximal compact subgroup of $G$, is a translation set.
\end{enumerate}
\end{rem}

\begin{prop} \label{prop:Hecke-doublecosetsize}
Let $P \leq G$ be a flat stabilizer and $S \subset G$ a finite $P$-stable set. 
Then 
\[
|P \backslash PSP / P | \ll \left( \log |S| \right)^d.
\]
Furthermore, assuming $|S \cap P| \ll 1$, we get
\[
\left( \log |S| \right)^{-d} \cdot |S| \ll \max_{s\in S}|PsP/P| \ll |S|.
\]
\end{prop}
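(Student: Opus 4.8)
\textbf{Proof plan for Proposition \ref{prop:Hecke-doublecosetsize}.}

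The plan is to reduce the statement to counting in the Iwahori-Weyl group and then to use the fact that a flat stabilizer $P$ differs from an Iwahori subgroup $I$ only up to a \emph{finite} multiplicative factor (controlled by $W$ and the dimension $d$). First I would fix an Iwahori subgroup $I \subseteq P$ corresponding to a chamber in the apartment $X$ whose faces $P$ stabilizes; since $P$ is the stabilizer of finitely many faces lying in a single apartment, $P$ is a union of at most $C_P$ many $I$-double cosets $I w I$ with $w$ in the finite part of the Iwahori-Weyl group (the relevant bound is $|P/I|\le W$, or a fixed power of $W$ depending on the ambient dimension). Consequently $PSP$ is a union of $I$-double cosets, and $|P\backslash PSP/P| \ll |I\backslash ISI/I|$ up to the fixed constant $C_P^2$.

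Next I would bound $|I \backslash ISI/I|$. Since $S$ is $P$-stable, hence (a fortiori) contained in finitely many $I$-double cosets per element, and since the Iwahori-Weyl group has polynomial growth of degree $d = \rank_\ssm(G)$ with respect to the length function $\ell$ — i.e. $\#\{w : \ell(w) \le L\} \ll L^d$ — it suffices to show that every $w$ with $IwI \cap ISI \ne \emptyset$ has $\ell(w) \ll \log|S|$. This follows from the standard relation $|IwI/I| = q_{\mf p}^{\ell(w)}$ (see \cite[Lem 1.5.1]{Cass95} for the multiplicativity, together with the length-function interpretation in \cite{HR08, Ric16}): each $IwI$ appearing in $PSP$ has size at most $|PSP| \le |P/I|^2 |S| |P| \ll |S|$ (after normalizing $\vol(I)=1$, or counting $I$-cosets), so $q_{\mf p}^{\ell(w)} \ll |S|$, giving $\ell(w) \ll \log|S|$. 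Combining the polynomial-growth count with this length bound yields $|I\backslash ISI/I| \ll (\log|S|)^d$, hence the first assertion.

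For the second assertion, I would argue by a pigeonhole/averaging estimate. The set $PSP/P$ is partitioned into $|P\backslash PSP/P| \ll (\log|S|)^d$ blocks, one $PsP/P$ for each double coset. On the other hand $|PSP/P| \gg |S|$: indeed, the map $S \to PSP/P$ has fibers of size controlled by $|S \cap (s^{-1}PsP)|$, and the hypothesis $|S \cap P| \ll 1$ combined with $P$-stability ($PSP = SP$) forces these fibers to be of bounded size, so $|PSP/P| \gg |S|$. Therefore the largest block has size $\ge |PSP/P| / |P\backslash PSP/P| \gg (\log|S|)^{-d} |S|$, which is the left inequality. The right inequality $\max_s |PsP/P| \ll |S|$ is immediate since $PsP \subseteq PSP$ and $|PSP/P| \ll |S|$ as above.

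\textbf{Main obstacle.} The subtle point is the claim $|P\backslash PSP/P| \ll |I\backslash ISI/I|$ and, dually, that replacing $I$ by the possibly larger $P$ does not destroy the lower bound in the second part. This requires knowing that $P$ is a finite union of Iwahori double cosets with the number of cosets bounded purely in terms of $G$ (not of $S$) — this is exactly where flatness of the stabilizer is used, via the structure theory of \cite{HR08} relating parahoric-type subgroups to face stabilizers, so that the Kottwitz-type discrepancies are absorbed into the constant. Verifying that the $P$-stability hypothesis genuinely bounds the fiber sizes of $S \to PSP/P$ (so that $|PSP/P|\gg|S|$) is the other place where care is needed; the condition $PSP = SP$ together with $|S\cap P|\ll 1$ is what makes this work, and I expect writing that estimate cleanly to be the most delicate part of the argument.
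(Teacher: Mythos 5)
Your overall strategy tracks the paper's closely: both hinge on the two observations that (i) $P$-stability forces $|PSP/P| = |SP/P| \le |S|$, hence by the relation $|PsP/P| \asymp q^{\ell(s)}$ (the paper's Lemma \ref{lem:Hecke-degrees}) every $s\in S$ satisfies $\ell(s) \ll \log|S|$, and (ii) the number of double cosets at bounded distance grows polynomially of degree $d$. You phrase (ii) through polynomial growth of the Iwahori--Weyl group, the paper through the Euclidean apartment ($|B_{\mc A}(r)| \asymp r^d$); these are the same count. The pigeonhole argument for the second assertion is also essentially identical to the paper's.

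Two concrete issues. First, your claim that an Iwahori $I$ can be chosen with $I \subseteq P$ fails for a general flat stabilizer: if $P$ is the intersection of stabilizers of faces $\sigma_1,\sigma_2$ in $\mc A$ that are not faces of any common chamber, then no Iwahori lies inside $P$, and $P$ is \emph{not} a bounded union of $I$-double cosets. The paper handles the single-face case first (where $I\subset P$ can be arranged) and then gives a separate argument for general finite collections $X$ via a distance function $\mathrm{Dist}$ on collections of faces; your proof needs either this, or the paper's deep-Iwahori trick ($Q_r \subset P$ for $r$ large, as in Definition \ref{defn:Hecke-geodesic} and the proof of Proposition \ref{prop:Hecke-General-bound}).

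Second, the volume estimate ``$|PSP| \le |P/I|^2|S||P|$'' as written doesn't follow from anything you cite, and in fact without $P$-stability $\vol(PSP)$ can be of order $q^{\max \ell(s)}\vol(P)$, not $O(|S|)$. The correct route is the one the paper takes: $P$-stability gives $PSP = SP$, hence $|PSP/P| \le |S|$, hence $\vol(PSP) \le |S|\vol(P)$, and only then $q^{\ell(w)} = \vol(IwI) \le \vol(PSP) \le |S|\,|P/I|$, i.e. $\ell(w)\ll \log|S|$. Your text does not invoke $PSP=SP$ at this step, which is precisely the hypothesis doing the work. (You are right that the boundedness of the fibers of $S\to SP/P$ in the second part is the delicate point; the paper's one-line ``$SP/P \cong S/(S\cap P)$'' is also terse, and in the actual application $S$ is a translation set so the map is injective, which is stronger than what the general statement asserts.)
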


\begin{proof}
Assume first that $X = \{\sigma_0\}$ is a single face, hence $P$ is a stabilizer of a face, and assume without loss of generality that $\sigma_0 \subset C$, i.e. $I \subset P$.
Let $\ell(s) = \mathrm{dist}(s.\sigma_0,\sigma_0)$, $\ell(S) = \max_{s\in S}\ell(s)$ and let $s_m \in S$ such that $\ell(S) = \ell(s_m)$.
Using the Bruhat decomposition, for any $g\in G$, denote by $w_g$ the unique Iwahori-Weyl element such that $g \in I w_g I$.
We note that $\ell(g)$ is quasi-isometric to the Coxeter length of $w_g$ which also equals $\log_q|Iw_gI/I|$.
Combining this with Lemma \ref{lem:Hecke-degrees}, we get 
\[
|PgP/P|\asymp |IgI/I| \asymp q^{\ell(g)}.
\]

Denote by $B_{\mc B}(r) = \{\sigma \in \mc B \,|\, \mathrm{dist}(\sigma,\sigma_0) \leq r\}$ and $B_{\mc A}(r)= \{\sigma \in \mc A \,|\, \mathrm{dist}(\sigma,\sigma_0) \leq r\}$ the balls of radius $r$ around $\sigma_0$ in the building and apartment, respectively (if $\sigma$ is not of the same size as $\sigma_0$ then we decree $\mathrm{dist}(\sigma,\sigma_0) = \infty$). 
Note that $P$ acts on $B_{\mc A}(r)$ and that  $P.B_{\mc A}(r) = B_{\mc B}(r)$. 
Also note that since $\mc A$ is Euclidean, the size of its balls is approximately their radius to the power of the dimension. 
Hence 
\[
|P \backslash B_{\mc B}(r)| \ll |B_{\mc A}(r)| \asymp r^d.
\]
On the one hand, since $PSP = SP$, we get
\[
q^{\ell(S)} =q^{\ell(s_m)} \asymp |Ps_mP/P| \leq |PSP/P| = |SP/P| \leq |S|. 
\]
On the other hand, since $S.\sigma_0 \subset B_{\mc B}(\ell(S))$, we get from the above estimates that
\[
|P\backslash PSP/P| = |P \backslash PS.\sigma_0| \leq |P \backslash B_{\mc B}(\ell(S))| \asymp \ell(S)^d \ll (\log_q |S|)^d.
\]

Next we consider a general finite collection of faces $X$.
Define a distance function on finite collection of $\mc B$, by $\mathrm{Dist}(Y,Z) = 0$, if $Z$ and $Y$ are not in the same $G$-orbit, and otherwise $\mathrm{Dist}(Y,Z) = \min\{\|g\|_0 \,|\, g \in G,\, g.Z = Y\}$, where $\|\cdot\|_0$ is the Cartan norm (it could also be the modified Cartan, for the asymptotic argument here it will not matter).
Denote by $B^D_{\mc B}(Y,r)$ and $B^D_{\mc A}(Y,r)$ the balls around $Y$ of radius $r$, w.r.t. $\mathrm{Dist}$, in $\mc B$ and $\mc A$, respectively.
Let $K$ be a special maximal compact subgroup of $G$ and without loss of generality assume that $P \subset K$ and $I \subset K$: hence $K.\mc A = \mc B$.

Let $\Omega$ be a finite set of representatives $K/P$ and note that $\mc B = P. \left(\bigcup_{w \in \Omega} w. \mc A \right)$.
Denote $c = \max_{w \in \Omega} \mathrm{Dist}(X,w.X)$ and $\ell = \max_{s \in S} \mathrm{Dist}(X,s.X)$.
Then $S.X \subset B^D_{\mc B}(X,r)$, and by the triangle inequality, $B^D_{\mc B}(X,\ell) \subset P. \left(\bigcup_{k_i \in \Omega} B^D_{k_i.\mc A}(k_i.X,\ell+c) \right)$.
From $P$-stability we get $PSP / P = SP / P \cong S.X$, and by arguing as before we get 
\[
|P \backslash PSP / P | = |P \backslash S.X| 
\leq |P \backslash B^D_{\mc B}(X,\ell) | \leq \sum_{w \in \Omega} |B^D_{w.\mc A}(w.X,\ell + c)| \ll \ell^d \asymp (\log |S|)^d,
\]
which completes the proof of the first identity.

For the second identity, first note that $PSP/P = SP/P \cong S/S\cap P$, and since $|S \cap P| \ll 1$, we get that $|S| \ll |PSP/P| \leq |S|$.
Hence $|PsP/P| \leq |PSP/P| \leq |S|$, for any $s \in S$, which gives the right inequality.
The left inequality follows from 
\[
|S| \ll |SP/P| = |PSP/P| = \sum_{PsP \in P \backslash PSP/P} |PsP/P| 
\leq |P \backslash PSP/P| \cdot \max_{s\in S} |PsP/P|
\]
together with the first identity $ |P \backslash PSP/P| \ll (\log |S|)^d$. This completes the proof.
\end{proof}

\begin{lem} \label{lem:Hecke-degrees}
Let $Q\leq P \leq G$ be two fixed open compact subgroups, and let $\Omega \subset P$ be a finite transversal set such that $P = \Omega Q = Q \Omega$.
Then for any $g \in G$,
\[
|\Omega| \cdot |QgQ/Q| \leq |P g P / P| \leq |\Omega|^3 \cdot |Q g Q / Q|,
\]
in particular $|QgQ/Q|\asymp |PgP/P|$, and for any $P$-spherical representation $V$ of $G$, 
\[
\| \one_{PgP} \mid_{V^P} \|_{op} \leq |\Omega|^2 \max_{w,w'\in \Omega} \|\one_{Qwgw'Q}\mid_{V^P} \|_{op}.
\]
\end{lem}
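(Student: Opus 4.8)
\textbf{Proof plan for Lemma \ref{lem:Hecke-degrees}.}

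The plan is to exploit the double-coset decompositions $P = \Omega Q = Q \Omega$ to compare the $P$-double coset of $g$ with the $Q$-double coset of $g$, and then to relate the Hecke operator $\one_{PgP}$ to a bounded combination of the operators $\one_{Qwgw'Q}$ for $w,w' \in \Omega$.

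First I would establish the counting bounds $|\Omega|\cdot|QgQ/Q| \leq |PgP/P| \leq |\Omega|^3 \cdot |QgQ/Q|$. For the lower bound, observe that since $Q \subseteq P$, every coset $xQ$ with $x \in QgQ$ maps to a coset $xP$ in $PgP/P$, and the fibers of $QgQ/Q \to PgP/P$ have size at most $|P/Q| = |\Omega|$; dually, one translates $QgQ$ by $\Omega$ on the left to produce $|\Omega|$-many disjoint pieces inside $PgP$ after passing to $Q$-cosets (using that $P = \Omega Q$), which gives $|\Omega|\cdot|QgQ/Q| \leq |PgQ/Q| \leq |PgP/P|\cdot|\Omega|$—so I would be a little careful about whether the $|\Omega|$ lands on the left or right, but the two-sided transversal property lets me arrange it. For the upper bound, write $PgP = \bigcup_{w,w'\in\Omega} Q w g w' Q$ using $P = Q\Omega$ on the left and $P = \Omega Q$ on the right; this is a union of at most $|\Omega|^2$ many $Q$-double cosets, each of size $|Qwgw'Q/Q|$. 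Now $|Qwgw'Q/Q|$ differs from $|QgQ/Q|$ by at most a factor of $|\Omega|$ (since $w,w'$ lie in $P$, which is covered by $|\Omega|$ translates of $Q$ on each side), so $|PgP/P| \leq |\Omega|\cdot|\Omega|^2\cdot|QgQ/Q|$, possibly after absorbing a constant. Combining the two bounds gives $|QgQ/Q| \asymp |PgP/P|$.

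Next, for the operator-norm bound, I would decompose the averaging operator. Since $P = \Omega Q$, we have $\one_{PgP} = \sum_{w,w' \in \Omega} c_{w,w'} \one_{Qwgw'Q}$ as operators on $V^P$ for suitable nonnegative normalizing weights $c_{w,w'}$ coming from the overlaps among the pieces $Qwgw'Q$; alternatively—and more cleanly—one observes that on $V^P$ the operator $\one_{PgP}$ equals $\vol(P)^{-1}\int_{PgP}\pi(x)\,dx$, and splitting the domain $PgP = \bigcup_{w,w'} Qwgw'Q$ and using that $\pi(q)$ fixes $V^P$ for $q \in Q$ (indeed for $q\in P$) one writes this integral as a sum of at most $|\Omega|^2$ terms, each comparable to $\one_{Qwgw'Q}$ restricted to $V^P$, with the leftover factor from the measure normalization contributing at most $|\Omega|^2$. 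Hence $\|\one_{PgP}\mid_{V^P}\|_{op} \leq |\Omega|^2 \max_{w,w'\in\Omega}\|\one_{Qwgw'Q}\mid_{V^P}\|_{op}$, as claimed.

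The main obstacle I anticipate is bookkeeping the precise powers of $|\Omega|$: the transversal relations $P=\Omega Q=Q\Omega$ are used on both sides and in both the numerator and denominator of the various index computations, and one must be consistent about whether a given $Q$-double coset is being compared to $QgQ/Q$ from the left or the right. Since the statement only claims $\asymp$ and a crude operator bound with an explicit but non-optimal constant, it should suffice to be generous with the exponents rather than extracting sharp constants; the key structural inputs (finiteness of $\Omega$, that $V^P$ is fixed pointwise by $P$, and the disjointness-up-to-$|\Omega|$-overlap of the decomposition $PgP=\bigcup Qwgw'Q$) are all elementary.
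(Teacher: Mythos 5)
Your overall strategy -- decompose $PgP$ via $P = Q\Omega = \Omega Q$ and bound each piece -- is the same as the paper's, but the execution diverges in a few ways worth comparing, and one spot in your lower bound argument is genuinely broken.

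\textbf{The counting bound.} The paper sidesteps all of the coset bookkeeping you were worried about by working directly with a bi-invariant Haar measure $\mu$ and the identity $|X/H| = \mu(X)/\mu(H)$. This converts the claim into the pair of measure inequalities $\mu(QgQ) \leq \mu(PgP) \leq |\Omega|^2\mu(QgQ)$, both of which are immediate from $QgQ \subseteq PgP = \Omega\, QgQ\,\Omega$. Your fiber argument (the map $QgQ/Q \to PgP/P$, $xQ \mapsto xP$, has fibers of size at most $|P/Q|=|\Omega|$) is correct and is just a rephrasing of $\mu(QgQ)\leq\mu(PgP)$; it yields $|QgQ/Q| \leq |\Omega|\cdot|PgP/P|$. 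But the next step of your argument -- translating $QgQ$ by $\Omega$ to get ``$|\Omega|$-many disjoint pieces inside $PgP$'' -- does not work: the translates $\omega\, QgQ$ for $\omega\in\Omega$ are generally \emph{not} disjoint, so that direction of the inequality fails. In fact the lower bound $|\Omega|\cdot|QgQ/Q|\leq|PgP/P|$ as printed in the lemma is simply false (take $g=1$, where both quotients have size $1$); what the paper's own measure argument actually establishes is $|\Omega|^{-1}|QgQ/Q|\leq|PgP/P|\leq|\Omega|\cdot|QgQ/Q|$, which is what is used downstream (only $|QgQ/Q|\asymp|PgP/P|$ matters). So the hesitation you flagged about ``whether the $|\Omega|$ lands on the left or right'' was well-founded: your fiber argument already gave the correct answer, and the ``disjoint pieces'' route should simply be dropped.

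\textbf{The operator-norm bound.} Here the paper does something cleaner than your overlapping-union decomposition: it first picks a \emph{disjoint} transversal $X\subset\Omega g\Omega$ for $Q\backslash PgP/Q$, so that $PgP=\bigsqcup_{x\in X}QxQ$ exactly and $\one_{PgP}=\sum_{x\in X}\one_{QxQ}$ as functions, with no correction weights needed. Then the triangle inequality, $|X|\leq|\Omega|^2$, and the fact that every $x\in X$ has the form $wgw'$ give the bound immediately. Your integral-splitting version can be made to work, but you do have to handle the overlaps among the sets $Qwgw'Q$ (the ``suitable nonnegative $c_{w,w'}$'' you gesture at), which the paper's disjoint-transversal choice makes unnecessary. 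If you keep your route, state explicitly that you first refine $\bigcup_{w,w'}Qwgw'Q$ to a disjoint subcollection; otherwise the equality of operators is only an inequality of nonnegative functions and the normalization bookkeeping becomes delicate.
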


\begin{proof}
Let $\mu$ be a Haar measure on $G$, which is bi-invariant since $G$ is a reductive $p$-adic group.
Note that $|X/H| = \frac{\mu(X)}{\mu(H)}$, for any compact open set $X \subset G$ and compact open subgroup $H \leq G$. 
Hence for the first claim it suffices to prove $\mu(QgQ) \leq \mu(PgP) \leq |\Omega|^2 \cdot \mu(Q g Q)$, which follows from the fact that $QgQ \subset PgP = \Omega Q g Q \Omega$.

For the second claim we pick a transversal set $X \subset \Omega g \Omega$ for the space of double cosets $Q \backslash P g P / Q$.
In particular, $PgP = \bigsqcup_{x\in X} QxQ$, hence $\one_{PgP} = \sum_{x\in X} \one_{QxQ}$.
Therefore 
\[
\| \one_{PgP} \mid_{V^P} \|_{op} \leq \sum_{x\in X} \|\one_{QxQ}\mid_{V^P} \|_{op}
\leq |X| \max_{x\in X} \|\one_{QxQ}\mid_{V^P} \|_{op}
\leq |\Omega|^2 \max_{w,w'\in \Omega} \|\one_{Qwgw'Q}\mid_{V^P} \|_{op},
\]
which completes the proof.
\end{proof}

\begin{defn} \label{defn:Hecke-geodesic}
Let $\mc A$ be the fundamental apartment, $C \in \mc A$ the fundamental chamber, which is the convex hull in $\mc A$ of $0$ and $\lambda_\alpha/l_\alpha$, where $\alpha \in \Phi^*$, in the notation of Section \ref{sec:cartan}.
For $r\in \mathbb{N}$, denote by $\Delta_r$ the convex hull in $\mc A$ of $0$ and $r\cdot \lambda_\alpha/l_\alpha$, call it the fundamental $r$-level truncated sector, and denote $Q_r = \mathrm{stab}_G(\Delta_r) \leq I$, call it the $r$-deep Iwahori subgroup.
Note that $\Delta_1 = C$ and $I_1 = I$.
\end{defn}

\begin{prop} \label{prop:Hecke-deepIwahori-1}
Let $Q = Q_r$ be the $r$-deep Iwahori subgroup for some $r$.
For any $a \in X_{1+}(A_G)$, the (well defined) map $Qn_aQ/Q \to In_aI/I$, $qn_aQ \mapsto qn_aI$, is a bijection.
\end{prop}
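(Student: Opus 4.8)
The plan is to prove that $Q_r n_a Q_r / Q_r \to I n_a I / I$ is a bijection by reducing to the structure theory of the building. The key point is that $Q_r \subseteq I$, so the natural map $Q_r n_a Q_r / Q_r \to I n_a I / I$ is well-defined and surjective: every coset $q n_a I$ with $q \in I$ equals $q' n_a I$ for some $q' \in Q_r$, since $I = Q_r (I \cap$ stabilizer stuff$)$... actually surjectivity is the subtle direction, so let me reorganize. Since $Q_r \subseteq I$, the map $Q_r n_a Q_r / Q_r \to I n_a I / I$ sending $q n_a Q_r \mapsto q n_a I$ is well-defined (if $q n_a Q_r = q' n_a Q_r$ then $q^{-1}q' \in n_a Q_r n_a^{-1} \cap Q_r \subseteq n_a I n_a^{-1} \cap I$, so $q n_a I = q' n_a I$) and injective by the same token run backwards only if $n_a Q_r n_a^{-1} \cap Q_r = n_a I n_a^{-1} \cap Q_r$. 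So the real content is the equality of these intersection groups, together with surjectivity.

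First I would recall that $a \in X_{1+}(A_G)$ is dominant, so $n_a$ conjugates the Iwahori filtration in a controlled way: writing $I$ via its affine root group decomposition $I = T^0 \prod_{\beta} U_\beta$ (product over affine roots $\beta$ positive on $C$), dominance of $a$ means $n_a U_\beta n_a^{-1} \subseteq U_{\beta'}$ for an affine root $\beta'$ that is again positive, with the ``depth'' shifted by $\langle a, \cdot\rangle \geq 0$. The same computation applies to $Q_r = \mathrm{Stab}_G(\Delta_r)$, which has an analogous root-group decomposition $Q_r = T^0 \prod_\beta U_{\beta, r}$ where $U_{\beta,r}$ is the (possibly deeper) filtration piece of $U_\beta$ that fixes all of $\Delta_r$ pointwise. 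The plan is to show $n_a Q_r n_a^{-1} \cap Q_r = n_a I n_a^{-1} \cap Q_r$ by checking it root subgroup by root subgroup: for each affine root $\beta$, $n_a U_{\beta,r} n_a^{-1} \cap U_{\beta,r}$ versus $n_a U_\beta n_a^{-1} \cap U_{\beta,r}$; dominance forces $n_a U_\beta n_a^{-1} \subseteq U_\beta$ (a deeper piece), so $n_a U_\beta n_a^{-1} \cap U_{\beta,r} = n_a (U_\beta \cap n_a^{-1} U_{\beta,r} n_a) n_a^{-1}$, and one checks the filtration indices line up so this equals $n_a U_{\beta,r} n_a^{-1} \cap U_{\beta,r}$. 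The geometric content is that $n_a$ translates $\Delta_r$ further into the dominant sector, so anything in $I$ conjugated by $n_a$ that lands back in $Q_r$ (i.e. fixes $\Delta_r$) must already have fixed $\Delta_r$ — because $n_a^{-1}\Delta_r \supseteq \Delta_r$ up to the relevant containment after translation, modulo making this precise with the truncated-sector geometry.

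For surjectivity, I would argue as follows: given $q \in I$, I want $q' \in Q_r$ with $q n_a I = q' n_a I$, i.e. $q^{-1} q' \in n_a I n_a^{-1}$. Equivalently, $I n_a I = Q_r n_a I$. This follows from the affine-root decomposition together with dominance: $I n_a I = I n_a (n_a^{-1} I n_a \cap I)(n_a^{-1} I n_a \cap U^-) n_a^{-1} \cdot n_a \cdots$ — more cleanly, $I = (I \cap U^+)(I \cap T^0)(I \cap U^-)$ and by dominance the ``$U^-$-part'' of the second $I$ gets absorbed: $n_a (I \cap U^-) n_a^{-1}$ is large, so $I n_a I = I n_a (I \cap U^+)$, and since $(I \cap U^+) \subseteq Q_r \cdot (\text{stuff killed on the left by } n_a^{-1}(\cdot)n_a)$... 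The honest way is to invoke that $Q_r$ and $I$ have the same image under the retraction onto the quotient relevant to the $\Delta_r$-pointwise-stabilizer, combined with the standard fact (Iwahori–Bruhat, as used already for $\lvert I n_a I / I\rvert$ in the proof of Proposition~\ref{prop:LpIwahoribound}) that $\lvert I n_a I / I\rvert = q^{\ell(n_a)}$ depends only on the Coxeter length; I would show $\lvert Q_r n_a Q_r / Q_r\rvert = \lvert I n_a I / I\rvert$ directly via the root-group count (the extra depth in $Q_r$ versus $I$ is exactly compensated on the left and right), and since injectivity is already in hand, equal finite cardinalities force bijectivity.

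The main obstacle I anticipate is making the root-group bookkeeping for $Q_r = \mathrm{Stab}_G(\Delta_r)$ genuinely precise: $\Delta_r$ is a truncated sector, not a single facet, so $Q_r$ is not a parahoric and its decomposition into pieces of the affine root groups requires carefully tracking, for each affine root $\alpha + k$, the minimal filtration index such that the corresponding root group element fixes all of $\Delta_r$ pointwise — this index grows linearly in $r$ in the directions ``transverse'' to the sector and is bounded in the directions ``along'' it. Once that combinatorial description is set up (it is essentially the content of \cite{KP23}*{\S13} on stabilizers of bounded subsets of apartments), the dominance argument that $n_a$ only deepens things is routine. I would isolate this as a preliminary lemma on the structure of $Q_r$ and then the bijection statement becomes a short deduction.
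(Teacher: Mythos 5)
Your reductions are exactly the paper's: well-definedness is immediate from $Q \subseteq I$, injectivity reduces to $n_a^{-1} Q n_a \cap I \subseteq Q$, and surjectivity reduces to $Q n_a I = I n_a I$; both are then to be proved via dominance of $a$ and a decomposition of $I$ and $Q = Q_r$ into affine root subgroups. So the route is the same, not a genuinely different one.

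The one thing you explicitly flag as your ``main obstacle'' — getting a clean handle on which filtration index appears in each root subgroup of $Q_r$, given that $\Delta_r$ is a truncated sector rather than a single facet — is precisely what the paper resolves in one line, and it is worth internalizing: using the Iwahori decomposition $I = I_1^- I_0^0 I_0^+$ (in Casselman's notation, where the sign is taken relative to the direction of the sector), the $r$-deep Iwahori is simply $Q_r = I_r^- I_0^0 I_0^+$. In words: the $0$- and $+$-parts of $Q_r$ are identical to those of $I$ (because $I_0^0 I_0^+$ already stabilizes the \emph{full} sector $\Delta$, hence a fortiori every $\Delta_r$), and only the $-$-part deepens, uniformly to level $r$. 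With that identification, the dominance facts $n_a^{-1} I_k^0 n_a = I_k^0$, $n_a^{-1} I_k^+ n_a \supseteq I_k^+$, $n_a^{-1} I_k^- n_a \subseteq I_k^-$ immediately give both $n_a^{-1} Q n_a \cap I \subseteq Q$ and $Q n_a I = I n_a I$, and you never need the root-by-root bookkeeping or the citation of \cite{KP23}*{\S 13}. Your fallback counting argument ($|Q_r n_a Q_r / Q_r| = |I n_a I / I|$ plus injectivity) would also work but is redundant once $Q n_a I = I n_a I$ is established, and it would itself require essentially the same decomposition to count; better to just prove the two set-theoretic containments directly.
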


\begin{proof}
Let $I = I_1^{-}I_0^0 I_0^{+}$ be the Iwahori decomposition (see \cite{Cass95} Section 1.4). 
Observe that $Q_r = I^{-}_r I_0^0 I_0^{+}$ is the $r$-deep Iwahori subgroup (note that $I_0^0 I_0^{+}$ is the stabilizer of the full sector $\Delta = \bigcup_r \Delta_r$).
Note that for any $a \in X_{1+}(A_G)$ and $k \in \mathbb{N}$, then $n_a^{-1} I_k^0 n_a = I_k^0$,  $n_a^{-1} I_k^{+} n_a \supseteq I_k^{+}$ and  $n_a^{-1} I_k^{-} n_a \subseteq I_k^{-}$.
Therefore
\[
n_a^{-1} Q n_a \cap I \subset  Q \qquad \mbox{and} \qquad Q n_a I = I n_a I.
\]
Injectivity follows from $n_a^{-1} Q n_a \cap I \subset  Q$, since for any  $q_1,q_2\in Q$ such that $q_1 n_a I = q_2 n_a I$, then $q = n_a^{-1} q_2^{-1} q_1 n_a \in n_a^{-1} Q n_a \cap I \subset Q$, hence $q_1 n_a Q = q_2 n_a Q$, and surjectivity clearly follows from $Q n_a I = I n_a I$.
\end{proof}

\begin{prop} \label{prop:Hecke-deepIwahori-2}
Let $Q = I_r$ be the $r$-deep Iwahori subgroup for some $r$.
Let $t_1,\ldots,t_n \in X_{1+}(A_G)$ be a basis of the group $X_1(A_G)$, for any $a \in X_{1+}(A_G)$ write it as $a = \sum_{i=1}^n m_i t_i$, $m_i \geq 0$.
Then $\one_{Q n_a Q} $ 
is collision-free and satisfies
\[
\one_{Q n_a Q} = (\one_{Q n_{t_1} Q})^{m_1}\circ \ldots \cdot\circ (\one_{Q n_{t_n} Q})^{m_n}.
\]
\end{prop}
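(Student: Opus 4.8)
\textbf{Proof plan for Proposition \ref{prop:Hecke-deepIwahori-2}.}

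The plan is to reduce the multiplicativity statement $\one_{Q n_a Q} = (\one_{Q n_{t_1} Q})^{m_1}\circ \cdots \circ (\one_{Q n_{t_n} Q})^{m_n}$ to the analogous statement for the Iwahori $I$, which is exactly the Iwahori–Bruhat relation used in the proof of Proposition \ref{prop:LpIwahoribound}, and then transport it through the bijection of Proposition \ref{prop:Hecke-deepIwahori-1}. The collision-free property (in the sense of \cite{Lubetzky2017RandomWalks, Parzanchevski2018RamanujanGraphsDigraphs}) is a statement about the lengths of double cosets adding up without cancellation; I expect this too to follow from the corresponding $I$-statement via the degree comparison. So the two assertions have a common source.

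First I would set up the Iwahori decomposition $Q = I^-_r I^0_0 I^+_0$ from the proof of Proposition \ref{prop:Hecke-deepIwahori-1} and recall the three key contraction/expansion facts there: for $a \in X_{1+}(A_G)$, $n_a^{-1} I^0_k n_a = I^0_k$, $n_a^{-1} I^+_k n_a \supseteq I^+_k$, and $n_a^{-1} I^-_k n_a \subseteq I^-_k$. From these one gets $n_a^{-1} Q n_a \cap I \subseteq Q$ and $Q n_a I = I n_a I$, hence the bijection $Q n_a Q / Q \xrightarrow{\sim} I n_a I / I$, $q n_a Q \mapsto q n_a I$. The point is that this bijection is compatible with convolution: because $n_a$ for $a$ dominant contracts the ``lower'' part of $Q$ into itself, the double coset $Q n_a Q$ is a union of left $Q$-cosets indexed by the same set as the left $I$-cosets in $I n_a I$, and for two dominant $a, b$ we have $Q n_a Q \cdot Q n_b Q = Q n_{a+b} Q$ with no overlap, mirroring $I n_a I \cdot I n_b I = I n_{a+b} I$ and $|I n_a I / I| \cdot |I n_b I / I| = |I n_{a+b} I / I|$ from \cite[Lem 1.5.1]{Cass95}. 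Concretely, I would verify $Q n_a Q \cdot Q n_b Q = Q n_a (Q n_b Q) = Q n_a n_b Q = Q n_{a+b} Q$ using that $n_b^{-1} Q n_b \cap Q \supseteq$ (the relevant part) so that the middle $Q$ gets absorbed; this is the same manipulation that proves the Iwahori–Bruhat product relation, now carried out one level deeper. Iterating over the basis $t_1, \dots, t_n$ and using that each $a = \sum m_i t_i$ with $m_i \geq 0$ is dominant and $\ell(n_a) = \sum_i m_i \ell(n_{t_i})$ (the Iwahori–Weyl length is additive on the dominant cone), the displayed factorization of $\one_{Q n_a Q}$ follows, exactly as $\one_{I n_a I} = (\one_{I n_{t_1} I})^{m_1} \circ \cdots \circ (\one_{I n_{t_n} I})^{m_n}$ was obtained in Proposition \ref{prop:LpIwahoribound}.

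For the collision-free assertion, I would recall that $\one_{Q n_t Q}$ is $k$-branching with $k = |Q n_t Q / Q| = |I n_t I / I|$ (the equality is Proposition \ref{prop:Hecke-deepIwahori-1}), and that being collision-free means $|Q n_t^m Q / Q| = k^m$ — equivalently the product of the $\one_{Q n_t Q}$'s has no backtracking. But $|Q n_t^m Q / Q| = |I n_t^m I / I| = |I n_t I / I|^m = k^m$ again by Proposition \ref{prop:Hecke-deepIwahori-1} applied to $mt$ and by \cite[Lem 1.5.1]{Cass95}; so the collision-free property is inherited from the Iwahori case, where it was established in \cite{Lubetzky2017RandomWalks}. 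I expect the main obstacle to be bookkeeping: making sure the middle-$Q$-absorption $Q n_a Q n_b Q = Q n_{a+b} Q$ is justified for all dominant $a,b$ (not just the generators $t_i$), since the contraction estimates only directly give one inclusion and one must combine them with the known coset-count equality to upgrade to equality of sets. Once that lemma is in place, the proposition is a formal consequence of Proposition \ref{prop:Hecke-deepIwahori-1}, the additivity of Iwahori–Weyl length on the dominant cone, and the Iwahori-level relations already invoked.
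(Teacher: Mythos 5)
Your proposal is correct in spirit but takes a genuinely different route from the paper, and it is worth spelling out the difference. The paper does \emph{not} re-derive the product relation $Q n_a Q \cdot Q n_b Q = Q n_{a+b}Q$ at depth $r$. Instead it observes that the bijection of Proposition \ref{prop:Hecke-deepIwahori-1} extends (since distinct $n_a$ lie in distinct $I$-double cosets) to a single bijection $\Phi : QA_+Q/Q \to IA_+I/I$, $qn_aQ \mapsto qn_aI$, that \emph{intertwines} the branching operators $\one_{Qn_aQ}$ and $\one_{In_aI}$. The intertwining is almost formal: for $g \in QA_+Q$ and $h \in Q n_bQ$, the normal form of $ghQ$ in $QA_+Q/Q$ maps under $\Phi$ to $ghI$ because the discrepancy between the two lies in $Q \subseteq I$. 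Once the intertwining is known, both the factorization $\one_{Qn_aQ} = \prod_i \one_{Qn_{t_i}Q}^{m_i}$ and collision-freeness are transported from the $I$-level identities by conjugation: $\one_{Qn_aQ} = \Phi^{-1}\one_{In_aI}\Phi$. This is shorter than your route, which reproves the set-theoretic absorption $Qn_aQn_bQ = Qn_{a+b}Q$ from the Iwahori-decomposition contractions.

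Your approach does work, but two points need attention. First, the set-theoretic absorption is slightly delicate: conjugating the \emph{negative} unipotent piece $I^-_r$ by $n_a^{-1}$ contracts it (good), but conjugating it by $n_a$ expands it, so the naive manipulation $n_a Q n_b \subseteq Q n_a n_b Q$ only goes through if you use the opposite Iwahori decomposition $Q = I^+_0 I^0_0 I^-_r$ on one side and the original on the other, so that each unipotent factor is moved past the $n$'s in the contracting direction; the phrase ``the middle $Q$ gets absorbed'' is underselling this. Second, the set-theoretic product relation alone does not give the operator identity; you also need the degrees to multiply, $|Qn_aQ/Q|\cdot|Qn_bQ/Q| = |Qn_{a+b}Q/Q|$, which you do correctly extract from Proposition \ref{prop:Hecke-deepIwahori-1} and \cite[Lem.\ 1.5.1]{Cass95}. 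So what you buy with your approach is a more self-contained and elementary argument (no need to check that $\Phi$ intertwines Hecke actions), at the cost of the extra bookkeeping you already anticipated; the paper buys brevity by leaning on the $I$-level Iwahori–Bruhat relations as a black box and transporting them through $\Phi$.
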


\begin{proof}
Since the elements of $A_{+} :=\left\{ n_{a}\,\middle|\,a\in X_{1+}(A_{G})\right\} $ each lie in a different double $I$-coset, $\Phi\left(qn_{a}Q\right)=qn_{a}I$
defines a bijection $QA_{+}Q/Q\rightarrow IA_{+}I/I$. Therefore, by Proposition \ref{prop:Hecke-deepIwahori-1}, $\Phi$ intertwines the branching operators $\one_{Qn_{a}Q}$
and $\one_{In_{a}I}$ acting on compactly supported functions on $QA_{+}Q/Q$ and $IA_{+}I/I$ respectively.
Thus, we have
\[
\one_{Qn_{a}Q}=\Phi^{-1}\one_{In_{a}I}\Phi=\Phi^{-1}\prod\nolimits_{i=1}^{n}\one_{In_{t_{i}}I}^{m_{i}}\Phi=\prod\nolimits_{i=1}^{n}\one_{Qn_{t_{i}}Q}^{m_{i}},
\]
(on $QA_{+}Q/Q$ and thus everywhere by $G$-equivariance of the Hecke action), and it is furthermore collision free since $\one_{In_{a}I}$ is. 
\end{proof}

The following generalizes Proposition \ref{prop:LpIwahoribound} to deeper Iwahori subgroups.

\begin{cor}\label{cor:Hecke-Lpboundfull}
Let $Q = I_r$ be an $r$-deep Iwahori subgroup and $\sigma \geq 2$.
Then there exists a constant $C = C_Q >0$, such that for any translation $g =  n_a \in N_G(A_G)$, $a \in X_{1+}(A_G)$, and any unitary irreducible representation $\pi$ of $G$ whose matrix coefficients are in $L^{\sigma+\eps}$ mod center for all $\eps >0$, then
\[
\| \one_{Q g Q} \mid_{\pi^Q} \|_{op} \ll (\log X)^C X^{\frac{\sigma -1}{\sigma}}, \qquad X =  |Q g Q / Q|,
\]

\end{cor}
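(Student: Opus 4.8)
The plan is to run the argument of Proposition~\ref{prop:LpIwahoribound} essentially verbatim, replacing the Iwahori--Bruhat relations used there by their $r$-deep analogues in Propositions~\ref{prop:Hecke-deepIwahori-1} and~\ref{prop:Hecke-deepIwahori-2}. First I would fix a basis $t_1,\dots,t_n$ of $X_1(A_G)$ and write $a=\sum_i m_i t_i$ with $m_i\geq 0$. By Proposition~\ref{prop:Hecke-deepIwahori-2} we then have, as operators on any smooth representation (in particular on $\pi^Q$), the factorization
\[
\one_{Qn_aQ}=(\one_{Qn_{t_1}Q})^{m_1}\circ\cdots\circ(\one_{Qn_{t_n}Q})^{m_n},
\]
while the bijection $Qn_bQ/Q\to In_bI/I$ of Proposition~\ref{prop:Hecke-deepIwahori-1} together with \cite[Lem 1.5.1]{Cass95} gives $X=|Qn_aQ/Q|=\prod_i k_i^{m_i}$ with $k_i:=|Qn_{t_i}Q/Q|=|In_{t_i}I/I|\geq q_{\mf p}$. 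By submultiplicativity of the operator norm it then suffices to bound each factor $\|(\one_{Qn_{t_i}Q})^{m_i}\mid_{\pi^Q}\|_{op}$, i.e.\ to treat the single-generator case $a=t^m$.

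For a single generator, set $T=\one_{Qn_tQ}$ and $k=|Qn_tQ/Q|$, so that $\one_{Qn_t^mQ}=T^m$ and $|Qn_t^mQ/Q|=k^m$. Three inputs are needed. First, $T$ is collision-free --- this is exactly Proposition~\ref{prop:Hecke-deepIwahori-2}. Second, $T$ should be ``$W_Q$-normal'' in the sense required by Proposition~4.1 of \cite{Parzanchevski2018RamanujanGraphsDigraphs}, which would then yield $\|T^m\mid_{\pi^Q}\|_{op}\ll_Q (1+m)^{W_Q}\lambda^m$, where $\lambda$ is the spectral radius of $T\mid_{\pi^Q}$ and $W_Q$ depends only on $Q$; for the ordinary Iwahori this is \cite[Prop. 4.5, Thm. 5.6]{Lubetzky2017RandomWalks}, and for the $r$-deep Iwahori I would deduce it from the fact --- already used in the proof of Proposition~\ref{prop:Hecke-deepIwahori-2} --- that $\Phi\colon QA_+Q/Q\to IA_+I/I$ intertwines the translation Hecke operators at level $Q$ with those at level $I$, so the relevant subalgebra of the $r$-deep Iwahori--Hecke algebra is identified with (a controlled extension of) the commutative Bernstein subalgebra of the Iwahori--Hecke algebra. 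Third, $\lambda\leq k^{\frac{\sigma-1}{\sigma}}$: each $T$-eigenvector in $\pi^Q$ produces, via a matrix coefficient against a fixed vector, an $L^{\sigma+\eps}$-mod-center $T$-eigenfunction on the $k$-branching collision-free structure underlying $T$, and the $L^{\sigma+\eps}$-version of \cite[Prop. 2.3]{Lubetzky2017RandomWalks} (replace $2+\eps,\,1+\eps$ there by $\sigma+\eps,\,\sigma-1+\eps$, exactly as in the proof of Proposition~\ref{prop:LpIwahoribound}) bounds its eigenvalue by $k^{\frac{\sigma-1}{\sigma}}$. Combining the three inputs gives $\|T^m\mid_{\pi^Q}\|_{op}\ll_Q (1+m)^{W_Q}k^{m\frac{\sigma-1}{\sigma}}$.

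Finally I would reassemble: since $\log X=\sum_i m_i\log k_i\geq (\log q_{\mf p})\max_i m_i$, we have $\max_i m_i\ll_{\mf p}\log X$, and hence
\[
\|\one_{Qn_aQ}\mid_{\pi^Q}\|_{op}\leq\prod_{i=1}^n\|(\one_{Qn_{t_i}Q})^{m_i}\mid_{\pi^Q}\|_{op}\ll_Q\Bigl(\prod_i (1+m_i)^{W_Q}\Bigr)X^{\frac{\sigma-1}{\sigma}}\ll_Q(\log X)^{nW_Q}\,X^{\frac{\sigma-1}{\sigma}},
\]
using $X=\prod_i k_i^{m_i}$. This is the assertion with $C=C_Q:=nW_Q$. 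The only genuinely new ingredient relative to Proposition~\ref{prop:LpIwahoribound} is carrying the $W$-normality of the translation Hecke operators through from the Iwahori to the $r$-deep Iwahori level; that is where I expect the real work to be, and once it is settled the rest is the same combinatorial bookkeeping as in the Iwahori case.
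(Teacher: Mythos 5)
Your reduction to the single-generator case via Proposition~\ref{prop:Hecke-deepIwahori-2} and your treatment of the spectral radius bound (the $L^{\sigma+\eps}$ version of \cite[Prop.~2.3]{Lubetzky2017RandomWalks}) both match the paper's argument. The gap is in your second step, the ``$W_Q$-normality'' of $T = \one_{Q n_t Q}$ on $\pi^Q$. You propose to deduce it from the intertwining $\Phi\colon QA_+Q/Q \to IA_+I/I$, arguing that the translation subalgebra of the $r$-deep Iwahori--Hecke algebra is thereby ``identified with (a controlled extension of) the commutative Bernstein subalgebra.'' But $\Phi$ is an isomorphism of modules (compactly supported functions on the respective double-coset spaces), not of the Hecke algebras acting on $\pi^Q$ versus $\pi^I$; in particular it says nothing about $\dim\pi^Q$, which can be strictly larger than $\dim\pi^I$ since $Q \subsetneq I$. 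To convert the eigenvalue bound $\lambda \leq k^{(\sigma-1)/\sigma}$ into the operator-norm bound $\|T^m\|_{op}\ll m^{W_Q}\lambda^m$ you need a bound on the size of the Jordan blocks of $T$ on $\pi^Q$ that is uniform over irreducible $\pi$, and your sketch does not supply one.

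The paper closes this gap differently and more cheaply: it invokes Bernstein's uniform admissibility theorem \cite{bernstein1974all}, which gives a constant $N(G,Q)$ bounding $\dim\pi^Q$ for every irreducible smooth representation $\pi$ of $G$. Any operator on a space of dimension at most $N(G,Q)$ is automatically $N(G,Q)$-normal, so one can take $C_Q$ to be this dimension bound (times $n$ for the reassembly step) and skip the structural analysis of the $r$-deep Iwahori--Hecke algebra entirely. If you want to salvage your approach, the cleanest fix is simply to replace the algebra-structure argument with this uniform admissibility bound --- that is the one ingredient your proof is missing.
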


\begin{proof}
The proof is analogous to Proposition \ref{prop:LpIwahoribound}. 
The reduction stated in the first paragraph of the proof of Proposition \ref{prop:LpIwahoribound} follows from the decomposition appearing in
Proposition \ref{prop:Hecke-deepIwahori-2} and the coset-size comparison Lemma \ref{lem:Hecke-degrees}. 

In the second paragraph of the proof of Proposition \ref{prop:LpIwahoribound}, the only place where we used the Iwahori assumption is in the fact that the Iwahori operator is $W$-normal. 
We can replace this with input from Bernstein's uniform admissibility Theorem \cite{bernstein1974all}, which gives for any open compact subgroup $Q\leq G$ a uniform bound $C =N(G,Q)$ on the dimension of the subspace of $Q$-fixed vectors of an irreducible representation of $G$. This implies that any operator of the form $\one_{Q n_a Q}$ is $C$-normal.

The third paragraph of the proof of Proposition \ref{prop:LpIwahoribound} remains as is, which completes the proof.
\end{proof}

We are now in a position to prove our bounds on the Hecke operators $\one_{PSP}$, where $P$ is a flat stabilizer, and $S$ is a  $P$-stable translation finite set. 

\begin{prop} \label{prop:Hecke-General-bound}
Let $P \leq G$ be a flat stabilizer and $S \subset G$ a finite $P$-stable translation set such that $|S \cap P | \ll 1$.
For all $\sigma \geq 2$, if a unitary irreducible representation $\pi$ of $G$ has matrix coefficients in $L^{\sigma + \eps}$ mod center for all $\eps > 0$, then
\[
\|\one_{P S P}\mid_{\pi^P} \|_{op} \ll (\log |S|)^{C+d} |S|^{\frac{\sigma - 1}{\sigma}}, 
\]
where $C = C_Q >0$, for $Q$ a $r$-deep Iwahori contained in $P$, is the constant from Corollary \ref{cor:Hecke-Lpboundfull} and $d$ is the dimension of the Bruhat-tits building.
\end{prop}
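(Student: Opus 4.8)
The plan is to split $\one_{PSP}$ into the Hecke operators attached to the individual $(P,P)$-double cosets it contains, bound their number via Proposition~\ref{prop:Hecke-doublecosetsize}, and estimate each one separately by reducing it to a translation operator at (deep) Iwahori level and applying Corollary~\ref{cor:Hecke-Lpboundfull}. Concretely, I would first use $P$-stability, $PSP=SP$, to choose $X\subseteq S$ representing $P\backslash PSP/P$ and write $\one_{PSP}=\sum_{s\in X}\one_{PsP}$, so that $\|\one_{PSP}\mid_{\pi^P}\|_{op}\le|X|\cdot\max_{s\in X}\|\one_{PsP}\mid_{\pi^P}\|_{op}$; by the first assertion of Proposition~\ref{prop:Hecke-doublecosetsize}, $|X|=|P\backslash PSP/P|\ll(\log|S|)^d$, and by its second assertion (using $|S\cap P|\ll1$), $\max_{s\in S}|PsP/P|\ll|S|$.

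Next, for a fixed $s\in X$ I would bound $\|\one_{PsP}\mid_{\pi^P}\|_{op}$. Since $S$ is a translation set, $s\in In_aI$ for some $a\in X_{1+}(A_G)$, i.e. the Iwahori--Weyl element of $s$ is the pure translation $t_a$. Let $Q\le P$ be the $r$-deep Iwahori of the statement and $\Omega$ a finite transversal with $P=\Omega Q=Q\Omega$. Lemma~\ref{lem:Hecke-degrees} applied to $V=\pi$, together with $\pi^P\subseteq\pi^Q$, gives $\|\one_{PsP}\mid_{\pi^P}\|_{op}\le|\Omega|^2\max_{w,w'\in\Omega}\|\one_{Qwsw'Q}\mid_{\pi^Q}\|_{op}$ and $|QsQ/Q|\asymp|PsP/P|$. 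Because $w,w'$ range over the finite set $\Omega$, the Iwahori--Weyl element of each $wsw'$ lies in $W_Pt_aW_P$: its translation part is a Weyl conjugate of $a$ and its finite part has bounded length, so $\ell(wsw')=\ell(t_a)+O(1)$ and hence $|Qwsw'Q/Q|\asymp|Qn_aQ/Q|$, which equals $|In_aI/I|$ by Proposition~\ref{prop:Hecke-deepIwahori-1} and is therefore $\asymp|PsP/P|\ll|S|$. I would then invoke Corollary~\ref{cor:Hecke-Lpboundfull} to obtain $\|\one_{Qwsw'Q}\mid_{\pi^Q}\|_{op}\ll(\log|S|)^C|S|^{(\sigma-1)/\sigma}$ and assemble the chain: $\|\one_{PSP}\mid_{\pi^P}\|_{op}\ll(\log|S|)^d\,(\log|S|)^C|S|^{(\sigma-1)/\sigma}=(\log|S|)^{C+d}|S|^{(\sigma-1)/\sigma}$.

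The hard part is the passage from the flat stabilizer $P$ down to the deep Iwahori $Q$ in the middle step: the sharp $L^\sigma$-Hecke estimate of Proposition~\ref{prop:LpIwahoribound}/Corollary~\ref{cor:Hecke-Lpboundfull} is proved only for \emph{pure translation} operators, whereas the elements $wsw'$ produced by Lemma~\ref{lem:Hecke-degrees} are only ``translation-like'' (Iwahori--Weyl element $t_bv$ with $b$ a Weyl conjugate of $a$ and $v$ bounded). I expect to deal with this by rerunning the proof of Corollary~\ref{cor:Hecke-Lpboundfull} for such elements: that proof uses only that $\one_{Qwsw'Q}$ is $C$-normal — which follows, exactly as there, from Bernstein's uniform admissibility theorem — and that it is collision-free, which holds once $\|a\|$ exceeds a threshold depending only on $G$ and $|\Omega|$; for the finitely many bounded $a$ the trivial estimate $\|\one_{Qwsw'Q}\mid_{\pi^Q}\|_{op}\le|Qwsw'Q/Q|=O(1)$ suffices and is absorbed into the $\ll$. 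Everything else — the precise power $C+d$ of $\log|S|$, and the bookkeeping of measure normalizations in $\one_{PSP}=\sum_{s\in X}\one_{PsP}$ — should be routine.
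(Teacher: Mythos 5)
Your proposal follows the paper's own proof step for step: the same decomposition $\one_{PSP}=\sum_{PsP}\one_{PsP}$, the same bound $|P\backslash PSP/P|\ll(\log|S|)^d$ from Proposition~\ref{prop:Hecke-doublecosetsize}, the same reduction to deep-Iwahori level via Lemma~\ref{lem:Hecke-degrees} (after the WLOG reduction placing $X$ inside $\Delta_r$ so that $Q_r\leq P$), and the same invocation of Corollary~\ref{cor:Hecke-Lpboundfull} to finish. So there is no real divergence in strategy.

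Where you do better than the paper is in explicitly flagging that the chain Lemma~\ref{lem:Hecke-degrees} $\to$ Corollary~\ref{cor:Hecke-Lpboundfull} is not literally valid: the operators $\one_{Qwsw'Q}$ produced by Lemma~\ref{lem:Hecke-degrees} (and, for that matter, $\one_{QsQ}$ for $s\in In_aI\setminus Qn_aQ$, which is what the paper's displayed chain actually writes) are not indexed by pure translations $n_a$, which is the hypothesis of Corollary~\ref{cor:Hecke-Lpboundfull}. The paper's proof silently makes this jump, whereas you isolate it as the essential point. Your proposed repair is the right shape — $C$-normality via Bernstein's uniform admissibility transfers without change — but the collision-freeness claim for these ``translation-like'' elements is not automatic and is stated a bit optimistically: in \cite{Lubetzky2017RandomWalks}, collision-freeness is proved specifically for translation operators, and it is not clear from what you write that it holds for a general $wsw'$ with bounded finite part even when $\|a\|$ is large. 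An alternative that avoids re-proving collision-freeness would be to decompose the Iwahori--Weyl element of $wsw'$ as (bounded Weyl element)$\cdot t_a\cdot$(bounded Weyl element) with lengths adding (which holds for dominant $a$ sufficiently large), factor $\one_{Qwsw'Q}$ accordingly via the Bernstein/Iwahori--Matsumoto relations, and bound the two bounded-length factors trivially while applying Corollary~\ref{cor:Hecke-Lpboundfull} verbatim to the translation factor. Either way, this is a genuine loose end that the proposal correctly identifies; it would need one of these arguments written out to be complete, but the rest of the proof is sound.
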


\begin{proof}
Without loss of generality, $P$ is the stabilizer of a finite set $X \subset \mc A$, and furthermore, by translating it, let $r$ be large enough such that $X \subset \Delta_r$, hence $Q_r \subset P$.
Then
\[
\|\one_{P S P}\mid_{\pi^P} \|_{op} 
\leq \sum_{PsP \in P \backslash PSP/P} \|\one_{P s P}\mid_{\pi^P} \|_{op}
\leq |P \backslash PSP/P| \max_{PsP \in P \backslash PSP/P} \|\one_{P s P}\mid_{\pi^P} \|_{op},
\]
and combined with Proposition \ref{prop:Hecke-doublecosetsize}, Lemma \ref{lem:Hecke-degrees}, and Corollary \ref{cor:Hecke-Lpboundfull}, we get 
\[
\ll (\log |S|)^d \max_{s\in S} \|\one_{P s P}\mid_{\pi^P} \|_{op}
\ll (\log |S|)^d \max_{s\in S} \|\one_{Q s Q}\mid_{\pi^Q} \|_{op} 
\ll (\log |S|)^{C+d} |S|^{\frac{\sigma - 1}{\sigma}},
\]
which completes the proof.
\end{proof}

We now apply Proposition \ref{prop:Hecke-General-bound} to the settings we previously considered:

\begin{cor}\label{cor:Hecke-auto-rep}
Let $K'_{\mf p}$ and $S_{\mf p}^{[\ell]}$ be as defined above.
For all $\sigma \geq 2$, if a unitary irreducible representation $\pi$ of $G$ has matrix coefficients in $L^{\sigma + \eps}$ mod center for all $\eps > 0$, then for some constant $C'>0$,
\[
\|\one_{K'_{\mf p}\wtd S_{\mf p}^{[\ell]}K'_{\mf p}}|_{\pi_\mf p}\|_{op} 
\ll (\log |S_{\mf p}^{[\ell]}|)^{C'} |S_{\mf p}^{[\ell]}|^{\frac{\sigma - 1}\sigma}. 
\]
\end{cor}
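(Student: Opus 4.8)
The plan is to deduce Corollary~\ref{cor:Hecke-auto-rep} directly from Proposition~\ref{prop:Hecke-General-bound} by checking that the compact open subgroup $P = K'_{\mf p}$ and the finite set $S = \wtd S_{\mf p}^{[\ell]}$ satisfy the three hypotheses of that proposition: (i) $K'_{\mf p}$ is a flat stabilizer; (ii) $\wtd S_{\mf p}^{[\ell]}$ is $K'_{\mf p}$-stable, i.e. $K'_{\mf p}\wtd S_{\mf p}^{[\ell]}K'_{\mf p} = \wtd S_{\mf p}^{[\ell]}K'_{\mf p}$; (iii) $\wtd S_{\mf p}^{[\ell]}$ is a translation set and $|\wtd S_{\mf p}^{[\ell]}\cap K'_{\mf p}| \ll 1$. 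Once these are in place, Proposition~\ref{prop:Hecke-General-bound} gives
\[
\|\one_{K'_{\mf p}\wtd S_{\mf p}^{[\ell]}K'_{\mf p}}\mid_{\pi^{K'_{\mf p}}}\|_{op} \ll (\log |\wtd S_{\mf p}^{[\ell]}|)^{C+d}\, |\wtd S_{\mf p}^{[\ell]}|^{\frac{\sigma-1}{\sigma}},
\]
and since $|\wtd S_{\mf p}^{[\ell]}| \asymp |S_{\mf p}^{[\ell]}|$ (they differ only by the action of $\wtd C_{\mf p}$, a fixed finite group, so the ratio is bounded above and below), setting $C' = C + d$ yields the stated bound. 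Note also that $\pi_\mf p^{K'_\mf p}$ is the relevant space since $\pi$ is assumed to have a $K'_\mf p$-fixed vector in the application.

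For step~(i): in all our cases $K'_{\mf p}$ is the stabilizer of a special vertex $v_0$ of $\mc B$ (the golden case) or of a facet $\tau$ of positive dimension (the super-golden case), and a vertex or facet is a finite collection of faces contained in a single apartment; hence $K'_{\mf p}$ is a flat stabilizer in the sense of Definition~\ref{defn:Hecke-flat-stable}(1). One should be slightly careful in the (possibly) ramified or non-quasisplit cases where $K'_{\mf p}$ is the stabilizer of a point in the \emph{enlarged} building $\wtd{\mc B}$, but reducing modulo the split part of the center reduces to a facet in $\mc B$, and the Kottwitz-kernel subtlety flagged in Remark~\ref{rem:Hecke-flat-stable}(1) does not bite because we are taking stabilizers (not parahorics) by definition. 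For step~(ii), the key point is that $K'_{\mf p}$ preserves the Cartan invariant $\bar a_\gamma \in X_{1+}(\bar A_G)$: this was exactly the computation used in the proof of Lemma~\ref{lem:auto-cover-hecke}(1), namely $K'_{\mf p}\wtd S_{\mf p}^{[\ell]}.v_0 = \wtd S_{\mf p}^{[\ell]}.v_0$ (resp. for $\tau$), which gives $K'_{\mf p}\wtd S_{\mf p}^{[\ell]}K'_{\mf p} = \wtd S_{\mf p}^{[\ell]}K'_{\mf p}$; this is also recorded as Remark~\ref{rem:Hecke-flat-stable}(2). For step~(iii), $\wtd S_{\mf p}^{[\ell]}$ is a translation set since it is $K$-stable for $K$ a special maximal compact (Remark~\ref{rem:Hecke-flat-stable}(3), using that $K'_\mf p \subseteq K$ via Lemma~\ref{lem:specialismaximal}), and $|\wtd S_{\mf p}^{[\ell]}\cap K'_{\mf p}|$ is bounded: an element of $\wtd S_{\mf p}^{[\ell]}$ has Cartan norm $\ell \geq 1$ so moves $v_0$, hence for $\ell \geq 1$ this intersection is empty, while for $\ell = 0$ it is $\wtd C_{\mf p}$, a fixed finite group.

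The main obstacle is not a deep one but a bookkeeping one: reconciling the notation of Proposition~\ref{prop:Hecke-General-bound}, which is stated for a finite $P$-stable \emph{translation} set $S$ with $P$ a flat stabilizer, with the concrete objects $K'_{\mf p}$ and $\wtd S_{\mf p}^{[\ell]}$ coming from the golden/super-golden machinery, and in particular handling the center technicalities cleanly (the distinction between $S_{\mf p}^{[\ell]}$ in $\bar\Lambda_{\mf p}$ and its lift $\wtd S_{\mf p}^{[\ell]}$ in $\Lambda_{\mf p}$, and the corresponding distinction between the building $\mc B$ and the enlarged building $\wtd{\mc B}$). One must also observe that the ``$r$-deep Iwahori'' constant $C = C_Q$ appearing in Proposition~\ref{prop:Hecke-General-bound} depends only on $K'_{\mf p}$ (equivalently on the choice of $r$ with $Q_r \subseteq K'_{\mf p}$), not on $\ell$, so it is a legitimate absolute constant $C'$ for the purposes of the corollary. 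None of this requires new ideas beyond invoking the already-established Propositions~\ref{prop:Hecke-doublecosetsize} and~\ref{prop:Hecke-General-bound} and the Cartan-invariant invariance of $K'_\mf p$.
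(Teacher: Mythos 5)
The overall strategy --- reduce the corollary to Proposition~\ref{prop:Hecke-General-bound} --- is the same as the paper's, but the specific reduction is different and you have introduced a gap that the paper's actual proof is careful to avoid.

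The paper does \emph{not} apply Proposition~\ref{prop:Hecke-General-bound} with $P = K'_{\mf p}$ and $S = \wtd S_{\mf p}^{[\ell]}$. Instead it first takes $K_{\mf p} \supseteq K'_{\mf p}$ to be the special maximal compact, shows via the class-number-one property that
\[
K'_{\mf p}\wtd S_{\mf p}^{[\ell]}K'_{\mf p} = K_{\mf p}\wtd S_{\mf p}^{[\ell]}K_{\mf p}
\]
(so the Hecke operator is literally unchanged when we pass to the larger group), and then applies the proposition with $P = K_{\mf p}$ and $S = R$ for an explicitly extracted $K_{\mf p}$-stable \emph{translation} subset $R \subseteq \wtd S_{\mf p}^{[\ell]}$ with $K_{\mf p} R K_{\mf p} = K_{\mf p}\wtd S_{\mf p}^{[\ell]}K_{\mf p}$. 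The point of passing to $K_{\mf p}$ is precisely that the Cartan decomposition guarantees every $K_{\mf p}$-double coset has a translation representative $n_a$; this is what makes the extraction of $R$ possible.

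The gap in your version is in step~(iii), the translation-set condition. You cite Remark~\ref{rem:Hecke-flat-stable}(3) as if it directly says $\wtd S_{\mf p}^{[\ell]}$ itself lies in $\bigcup_a I n_a I$. But the elements of $\wtd S_{\mf p}^{[\ell]}$ are arbitrary elements of the lattice $\Lambda_{\mf p}$ that happen to have the right $K_{\mf p}$-Cartan invariant; there is no reason a generic such element lies in an Iwahori double coset of the form $I n_a I$ (a $K_{\mf p}$-double coset $K_{\mf p} n_a K_{\mf p}$ is a union of many Iwahori double cosets, most of which are not translation cosets). The fact that the paper's own proof does not simply invoke Remark~\ref{rem:Hecke-flat-stable}(3) and stop --- it goes out of its way to construct $R$ --- is a sign that the remark is being used as a mnemonic for the correct statement (``a $K$-stable set can be replaced, up to constants, by a $K$-stable translation set with the same $K$-double cosets''), not as a literal inclusion. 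To repair your argument you would either need to insert the extraction of $R$ at the $K_{\mf p}$-level (at which point you have reproduced the paper's proof), or verify the translation-set hypothesis for $K'_{\mf p}$-double cosets directly, which is harder since the $K'_{\mf p}$-Bruhat decomposition does not have translation representatives in general.

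Two smaller points. First, your verification of $K'_{\mf p}$-stability in step~(ii) is correct in substance --- the class-number-one property $\Lambda_{\mf p} K'_{\mf p} = G_{\mf p}$ is exactly what makes $K'_{\mf p}\wtd S_{\mf p}^{[\ell]}K'_{\mf p} = \wtd S_{\mf p}^{[\ell]} K'_{\mf p}$ hold --- but citing Remark~\ref{rem:Hecke-flat-stable}(2) (which is about $K$-stability, $K$ special maximal compact) does not immediately give $K'_{\mf p}$-stability; you should spell out the transitivity argument as in the proof of Lemma~\ref{lem:auto-cover-hecke}. Second, the paper cites Corollary~\ref{cor:exponentialgrowthnonstandard} to handle the comparison $|R| \asymp |\wtd S_{\mf p}^{[\ell]}| \asymp |S_{\mf p}^{[\ell]}|$; your replacement of this by the observation that the groups differ by the fixed finite group $\wtd C_{\mf p}$ is fine but glosses over the $R$-vs-$\wtd S_{\mf p}^{[\ell]}$ comparison that your proof doesn't have because you never introduce $R$.
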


\begin{proof}
Let $K_{\mf p} \supseteq K'_{\mf p}$ be special. Rephrasing proposition \ref{prop:gategeneration}(3), $\wtd S_{\mf p}^{[\ell]}$ is a the set of $s \in \Lambda_{\mf p}$ satisfying a condition depending only on $K_{\mf p} s K_{\mf p}$. Therefore, since \ref{lem:gold-def-simp-tran}(1) gives that $\Lambda_{\mf p} \to G_{\mf p}/K'_{\mf p}$ is surjective, $K'_{\mf p}\wtd S_{\mf p}^{[\ell]}K'_{\mf p} = K_{\mf p}\wtd S_{\mf p}^{[\ell]}K_{\mf p}$ and $S_{\mf p}^{[\ell]}$ is in addition $K_{\mf p}$-stable. 

In particular, using the class-number-one property once more and that all $K_{\mf p}$-double cosets are represented by translations, we can find a $K_{\mf p}$-stable translation set $R \subseteq \wtd S_{\mf p}^{[\ell]}$ such that $K_{\mf p}\wtd S_{\mf p}^{[\ell]}K_{\mf p} = K_{\mf p} R K_{\mf p}$. Therefore, this follows from Proposition \ref{prop:Hecke-General-bound} for $P = K_{\mf p}$ and $S = R$ combined with Corollary \ref{cor:exponentialgrowthnonstandard}.
%
\end{proof}

\begin{rem}
When $\one_{K'_{\mf p}\wtd S_{\mf p}^{[\ell]}K'_{\mf p}}$ is normal, the constant $C'$ in Corollary \ref{cor:Hecke-auto-rep} can be take to be the rank $d$ of $G_\mf p$ since the constant $C$ coarsely bounding the failure of normality in Proposition \ref{prop:Hecke-General-bound} can then be taken to be $1$. 

Normality is guaranteed when, in addition to being Weyl-complete (recall Definition \ref{def:weylcomplete}), the subset $\Sigma_0, \Sigma_0^\scn, \Sigma_\tau$ or $\Sigma_\tau^\scn$ of cocharacters defining the gate set $S_\Lambda$ as in \ref{def:goldengates}/\ref{def:supergoldengates} is \emph{symmetric}---i.e, every element of $-\Sigma_0$ is Weyl-conjugate to an element of $\Sigma_0$. This holds for all the examples in \ref{ex:traversable} and therefore all the examples in Table \ref{tab:supergolden}. 
\end{rem}





\subsection{Optimal Covering Proof}


We can now put everything together to prove Theorem \ref{thm:auto-cover-main} using the identifications
\[
L^2_\Box = V_\Box := \bigoplus_{\substack{\pi \in \Box \\ \pi_{\infty \setminus v_0} \text{ trivial}}} m_\pi \pi_{v_0} \boxtimes (\pi^\infty)^{K^\infty}. 
\]

We start with a corollary of the density hypothesis, which allows us to interpolate an inequality from $2/\sigma = 1$ and $2/\sigma = 0$ to all values in between:
\begin{prop}\label{prop:auto-cover-GAP} 
If $n = 4, 8$, for any shape $\Box$, any $\ell$, and $\eps^{N_G - 1} \lesssim |S_{\mf p}^{[\ell]}|^{-1}$,
\[
\|T_{S_{\mf p}^{[\ell]}}\mid V_\Box \|^2_{op}\cdot\|\Proj_{\Box} f_{v_0}^{\eps, Z_1}\|^2_{2} \lesssim |S_{\mf p}^{[\ell]}|^{-1} \cdot\|f_{v_0}^{\eps, Z_1}\|^2_{2}.
\]
\end{prop}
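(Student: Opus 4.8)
The plan is to interpolate between two extreme bounds on the operator norm of $T_{S_\mf p^{[\ell]}}$ restricted to $V_\Box$, using the decomposition $V_\Box = \bigoplus_\pi m_\pi \pi_{v_0} \boxtimes (\pi^\infty)^{K'}$ and the identification of $T_{S_\mf p^{[\ell]}}$ with the normalized Hecke operator $|\wtd S_\mf p^{[\ell]}|^{-1}\one_{K'_\mf p \wtd S_\mf p^{[\ell]} K'_\mf p}$ from Lemma \ref{lem:auto-cover-hecke}(2). By Schur's lemma the operator norm of $T_{S_\mf p^{[\ell]}}$ on $V_\Box$ is $|\wtd S_\mf p^{[\ell]}|^{-1}$ times the supremum of $\|\one_{K'_\mf p \wtd S_\mf p^{[\ell]} K'_\mf p} \mid \pi_\mf p^{K'_\mf p}\|_{op}$ over $\pi \in \Box$ appearing in $V_\Box$. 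Now every such $\pi_\mf p$ is of the form $\pi \in \Pi_{\psi_\mf p}$ for some $\psi \in \Box$, so by Theorem \ref{thm:decaybound}/Definition \ref{def:sigmabox} (noting $n=4,8$ and the hypothesis that $K'_v$ being hyperspecial or $v$ split makes Conjecture \ref{conj exponents} available via Corollary \ref{cor:decayconjcases}), its matrix coefficients lie in $L^{\sigma_\Box + \eps}$ mod center for all $\eps > 0$. Corollary \ref{cor:Hecke-auto-rep} then yields
\[
\|\one_{K'_\mf p \wtd S_\mf p^{[\ell]} K'_\mf p} \mid \pi_\mf p\|_{op} \ll (\log |S_\mf p^{[\ell]}|)^{C'} |S_\mf p^{[\ell]}|^{\frac{\sigma_\Box - 1}{\sigma_\Box}},
\]
hence, since $|\wtd S_\mf p^{[\ell]}| = |S_\mf p^{[\ell]}|$,
\[
\|T_{S_\mf p^{[\ell]}} \mid V_\Box\|_{op} \ll (\log |S_\mf p^{[\ell]}|)^{C'} |S_\mf p^{[\ell]}|^{-\frac{1}{\sigma_\Box}} \sim |S_\mf p^{[\ell]}|^{-\frac{1}{\sigma_\Box}},
\]
and squaring gives $\|T_{S_\mf p^{[\ell]}} \mid V_\Box\|^2_{op} \lesssim |S_\mf p^{[\ell]}|^{-\frac{2}{\sigma_\Box}}$.

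Next I would invoke the projection bound for the ball family. Theorem \ref{thm:exponentcomputations} (under the same $n=4,8$ hypotheses, with $n=8$ supplied by Corollary \ref{cor:densityhypothesis8}) gives
\[
\|\Proj_\Box f_{v_0}^{\eps, Z_1}\|_2^2 = \frac{\|\Proj_\Box(\rho^{K'}_* f_{v_0}^{\eps, Z_1})\|_2^2}{1} \ll \|f_{v_0}^{\eps,Z_1}\|_2^2 \cdot \eps^{(N_G-1)(1 - \frac{2}{\sigma_\Box})}.
\]
Multiplying the two displayed bounds,
\[
\|T_{S_\mf p^{[\ell]}} \mid V_\Box\|_{op}^2 \cdot \|\Proj_\Box f_{v_0}^{\eps, Z_1}\|_2^2 \lesssim |S_\mf p^{[\ell]}|^{-\frac{2}{\sigma_\Box}} \eps^{(N_G-1)(1 - \frac{2}{\sigma_\Box})} \|f_{v_0}^{\eps, Z_1}\|_2^2.
\]
Now use the hypothesis $\eps^{N_G - 1} \lesssim |S_\mf p^{[\ell]}|^{-1}$, which gives $\eps^{(N_G - 1)(1 - \frac{2}{\sigma_\Box})} \lesssim |S_\mf p^{[\ell]}|^{-(1 - \frac{2}{\sigma_\Box})}$ since $1 - \frac{2}{\sigma_\Box} \geq 0$ (as $\sigma_\Box \geq 2$). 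Combining the exponents $-\frac{2}{\sigma_\Box} - (1 - \frac{2}{\sigma_\Box}) = -1$ yields exactly
\[
\|T_{S_\mf p^{[\ell]}} \mid V_\Box\|_{op}^2 \cdot \|\Proj_\Box f_{v_0}^{\eps, Z_1}\|_2^2 \lesssim |S_\mf p^{[\ell]}|^{-1} \|f_{v_0}^{\eps, Z_1}\|_2^2,
\]
as desired.

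I expect the main obstacle to be bookkeeping the conditionality hypotheses correctly: Theorem \ref{thm:decaybound} and the density bounds both require Conjecture \ref{conj exponents} for the relevant Arthur-type representations, and one must check that the $\pi_\mf p$ actually arising in $V_\Box$ — i.e. those with a $K'_\mf p$-fixed vector and regular integral infinitesimal character at infinity — fall under the cases covered by Corollary \ref{cor:decayconjcases} (unramified or $n=4$) or Corollary \ref{cor:densityhypothesis8} ($n=8$). A secondary technical point is justifying the Schur-type reduction of $\|T_{S_\mf p^{[\ell]}} \mid V_\Box\|_{op}$ to a supremum over irreducible constituents: this is standard since $V_\Box$ decomposes discretely and $T_{S_\mf p^{[\ell]}}$ acts through the $\pi_\mf p$-factor, but one should be careful that the Hecke operator norm on $\pi_\mf p^{K'_\mf p}$ is what controls the norm on the full isotypic component $\pi_{v_0} \boxtimes (\pi^\infty)^{K'}$ (it does, since $T_{S_\mf p^{[\ell]}}$ acts as identity on the $\pi_{v_0}$ and $(\pi^{\mf p, \infty})^{K'^{\mf p}}$ factors). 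Everything else is routine arithmetic with exponents.
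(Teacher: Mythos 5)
Your proof follows the same route as the paper's: bound the Hecke operator norm on $V_\Box$ via Lemma \ref{lem:auto-cover-hecke}, the decay Theorem \ref{thm:decaybound}, and Corollary \ref{cor:Hecke-auto-rep}; bound the projection via Theorem \ref{thm:exponentcomputations}; combine using $\eps^{N_G-1} \lesssim |S_\mf p^{[\ell]}|^{-1}$. Your arithmetic — the exponents collapse to exactly $-1$ after raising the hypothesis to the power $1 - 2/\sigma_\Box \ge 0$ — is correct. The paper's wording about checking linear functions $A \ge B$ at the endpoints $2/\sigma_\Box = 0,1$ is really a reference to the inequality \eqref{eq:shapeineq}, which is already absorbed into Theorem \ref{thm:exponentcomputations}; once you invoke that theorem the combination is the exact cancellation you carried out, so your presentation is actually cleaner.

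The one place your write-up is imprecise (and the paper's is too) is the $n=8$ case. You write that each relevant $\pi_\mf p$ has matrix coefficients in $L^{\sigma_\Box + \eps}$ and justify this by Theorem \ref{thm:decaybound}, which requires Conjecture \ref{conj exponents}. But Conjecture \ref{conj exponents} is not established for $n=8$; Corollary \ref{cor:decayconjcases} only covers the unramified/$n=4$ cases. What Corollary \ref{cor:densityhypothesis8} actually does is sidestep the conjecture: it enumerates all exponent lists for $\pi_\mf p$ permitted by M\oe{}glin's constraints (Theorem \ref{thm MoeglinInput}) — where $\sigma(\pi_\mf p)$ may a priori exceed $\sigma_\Box$ — and directly verifies, by computer, that the needed inequality $R_G(\Box) \ge (N_G - 1)(1 - 2/\sigma(\pi_\mf p))$ holds for every possibility except two Langlands data, which are then ruled out. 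So for $n=8$ the correct version of your argument carries the actual $\sigma(\pi_\mf p)$ (rather than $\sigma_\Box$) into the Hecke bound from Corollary \ref{cor:Hecke-auto-rep}, uses the raw projection exponent $R_G(\Box)$ from Theorem \ref{thm:projectionbound}, and appeals to the case check in \ref{cor:densityhypothesis8}'s proof for the exponent inequality. You do flag this conditionality issue in your final paragraph, so you are aware of it — just note that it is a genuine modification of the argument, not merely bookkeeping of which corollary to cite.
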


\begin{proof}
This is a reformulation of Corollaries \ref{cor:densityhypothesis} and \ref{cor:densityhypothesis8}. 

We expand all factors in terms of $\eps$ and the $2/\sigma_\Box$ from \ref{def:sigmabox}: Theorem \ref{thm:exponentcomputations} upper-bounds $\|\Proj_{\Box} f_{v_0}^{\eps, Z_1}\|^2_{2}/\|f_{v_0}^{\eps, Z_1}\|^2_{2}$. Theorem \ref{thm:decaybound} and the extra computations in the proof of \ref{cor:densityhypothesis8} lower bounds $2/\sigma(\pi_\mf p)$ for any $\pi \in \Box$ by $2/{\sigma_\Box}$. Therefore, we can input $\sigma = \sigma_\Box$ into Corollary \ref{cor:Hecke-auto-rep} to upper bound $\|T_{S_{\mf p}^{[\ell]}}\mid V_\Box \|^2_{op}$. 

This reduces our desired bound to $\eps^{A(\sigma_\Box)} \lesssim \eps^{B(\sigma_\Box)}$, where $A$ and $B$ are certain linear functions in $2/\sigma_\Box$. We can easily check that $A \geq B$ when $2/\sigma_\Box = 0, 1$. 
\end{proof}

While conceptually cleaner, Proposition \ref{prop:auto-cover-GAP} will only give a $|S^{(\ell)}|^\eps$ in the numerator of the covering condition \ref{def:intro-GG}(1). To improve this to a log factor:

\begin{prop}\label{prop:auto-cover-GAP-fixed}
If $n = 4, 8$, for any shape $\Box$, any $\ell$, and $\eps^{N_G - 1} \ll |S_{\mf p}^{[\ell]}|^{-1}$, there is a $c > 0$ such that 
\[
\|T_{S_{\mf p}^{[\ell]}}\mid V_\Box \|^2_{op}\cdot\|\Proj_{\Box} f_{v_0}^{\eps, Z_1}\|^2_{2} \ll \lf( \log |S_{\mf p}^{[\ell]}| \ri)^c |S_{\mf p}^{[\ell]}|^{-1} \cdot\|f_{v_0}^{\eps, Z_1}\|^2_{2}.
\]
\end{prop}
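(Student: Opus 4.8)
The plan is to rerun the argument for Proposition \ref{prop:auto-cover-GAP}, this time keeping every constant explicit, and to note that the $\lesssim$'s appearing there were merely conservative. The three inputs are all available with $\ll$-precision. First, Theorem \ref{thm:projectionbound} gives
\[
\|\Proj_\Box(\rho_*^{K'}f^{\eps,Z_1}_{v_0})\|_2^2 \ll \eps^{R_G(\Box)}\,\|f^{\eps,Z_1}_{v_0}\|_2^2, \qquad R_G(\Box) := (N_G-P_G)-(N_{G_F(\Box)}-P_{G_F(\Box)})+e(\Box)\ge 0.
\]
Second, Lemma \ref{lem:auto-cover-hecke} identifies $T_{S_{\mf p}^{[\ell]}}$ on $V_\Box$ with a normalized Hecke operator at $\mf p$, which in particular has operator norm $\le 1$ (it is an average of unitary translation operators). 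Third, for every shape $\Box\ne(1,n)$ we have $\sigma_\Box<\infty$, and Theorem \ref{thm:decaybound} gives $\sigma(\pi_\mf p)\le\sigma_\Box$ for every $\pi\in\Box$ occurring in $V_\Box$ (all such $\pi$ have regular integral infinitesimal character at every infinite place, and for $n=4,8$ Conjecture \ref{conj exponents} is available for the relevant $\pi_\mf p$ by Corollaries \ref{cor:decayconjcases} and \ref{cor:densityhypothesis8}); hence Corollary \ref{cor:Hecke-auto-rep} applies with $\sigma=\sigma_\Box$ and yields $\|T_{S_{\mf p}^{[\ell]}}\mid V_\Box\|_{op}^2 \ll (\log|S_{\mf p}^{[\ell]}|)^{2C'}|S_{\mf p}^{[\ell]}|^{-2/\sigma_\Box}$, with $C'$ depending only on $G_\mf p$ and $K'_\mf p$ (so in particular uniform in $\ell$).

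For a shape $\Box\ne(1,n)$ I would then multiply the first and third estimates and use the hypothesis in the form $\eps\ll|S_{\mf p}^{[\ell]}|^{-1/(N_G-1)}$ together with $R_G(\Box)\ge0$ to replace $\eps^{R_G(\Box)}$ by $|S_{\mf p}^{[\ell]}|^{-R_G(\Box)/(N_G-1)}$, obtaining
\[
\|T_{S_{\mf p}^{[\ell]}}\mid V_\Box\|_{op}^2\,\|\Proj_\Box f^{\eps,Z_1}_{v_0}\|_2^2 \ll (\log|S_{\mf p}^{[\ell]}|)^{2C'}\,|S_{\mf p}^{[\ell]}|^{-2/\sigma_\Box - R_G(\Box)/(N_G-1)}\,\|f^{\eps,Z_1}_{v_0}\|_2^2.
\]
The inequality \eqref{eq:shapeineq} established inside the proof of Theorem \ref{thm:exponentcomputations} says $R_G(\Box)\ge S_G(\Box)=(N_G-1)(1-2/\sigma_\Box)$, so $2/\sigma_\Box+R_G(\Box)/(N_G-1)\ge1$ and the exponent of $|S_{\mf p}^{[\ell]}|$ is $\le-1$. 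Hence the left side is $\ll(\log|S_{\mf p}^{[\ell]}|)^{2C'}|S_{\mf p}^{[\ell]}|^{-1}\|f^{\eps,Z_1}_{v_0}\|_2^2$, which is the claimed bound.

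The remaining case is the $1$-dimensional shape $\Box=(1,n)$, where $\sigma_\Box=\infty$ and Corollary \ref{cor:Hecke-auto-rep} is vacuous, so I would argue directly: $\|T_{S_{\mf p}^{[\ell]}}\mid V_\Box\|_{op}\le1$ as above, while $\Proj_{\mc{AC}(G)}$ is integration over $G^\der_{v_0}$ (as computed in the density-hypothesis proof) so $\|\Proj_\Box f^{\eps,Z_1}_{v_0}\|_2^2/\|f^{\eps,Z_1}_{v_0}\|_2^2 \asymp \vol(Z_1)\,\eps^{N_G-1} \ll |S_{\mf p}^{[\ell]}|^{-1}$ by hypothesis; the product is then $\ll|S_{\mf p}^{[\ell]}|^{-1}\|f^{\eps,Z_1}_{v_0}\|_2^2$, again within the stated bound. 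Taking $c=2C'$ and the maximum over the finitely many shapes of a given $n$ finishes the argument.

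The step I expect to be the main obstacle is the legitimacy of invoking Corollary \ref{cor:Hecke-auto-rep} with the exact value $\sigma=\sigma_\Box$: this requires knowing that every local component $\pi_\mf p$ appearing in $V_\Box$ has matrix coefficients in $L^{\sigma_\Box+\eps}$ mod centre, which is precisely the conclusion of Theorem \ref{thm:decaybound} and is the point where the restriction $n=4,8$ (and, for $n=8$, the computer-assisted extra input in the proof of Corollary \ref{cor:densityhypothesis8}) genuinely enters. A secondary, purely bookkeeping nuisance is making sure $R_G(\Box)\ge 0$ so that $\eps^{N_G-1}\ll|S_{\mf p}^{[\ell]}|^{-1}$ can be substituted into $\eps^{R_G(\Box)}$ in the right direction, and that the constant $C'$ of Corollary \ref{cor:Hecke-auto-rep}—hence $c$—does not depend on $\ell$, which it does not since it is governed by a fixed deep Iwahori inside $K'_\mf p$.
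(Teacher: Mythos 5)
Your proof is correct, and it is arguably cleaner than what the paper itself does. The paper's proof asserts that the $\lesssim$ in Proposition \ref{prop:auto-cover-GAP} originates from Corollary \ref{cor:Hecke-auto-rep} and appeals to Remark \ref{rem:removeeps} (the strictness of the inequality \eqref{eq:shapeineq} for non-extreme $\sigma_\Box$) to absorb the slack; you instead notice that Corollary \ref{cor:Hecke-auto-rep} is already stated as a genuine $\ll$ with an explicit $(\log|S_{\mf p}^{[\ell]}|)^{C'}$ loss, so carrying that polylogarithmic factor through the multiplication, invoking $R_G(\Box)\ge S_G(\Box)$ from \eqref{eq:shapeineq}, and treating the character shape $(1,n)$ by the trivial bound $\|T_{S_{\mf p}^{[\ell]}}\|_{op}\ll 1$ gives the conclusion directly. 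Both routes end with the same $(\log|S_{\mf p}^{[\ell]}|)^{c}$ in the statement, and both treat the character shape by hand; the difference is that you avoid the detour through Remark \ref{rem:removeeps}, which turns out to be unnecessary once the log factor is tracked explicitly.

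One small imprecision worth fixing: for $n=8$ you write that Corollary \ref{cor:densityhypothesis8} makes Conjecture \ref{conj exponents} ``available.'' In fact that corollary does not establish the conjecture; its proof directly verifies that the only Langlands data whose $\sigma(\pi_v)$ would violate the combined inequality of Theorem \ref{thm:exponentcomputations} cannot occur. So for $n=8$ the invocation of Corollary \ref{cor:Hecke-auto-rep} should be phrased per-representation, using the individual $\sigma(\pi_{\mf p})$ obtained from Corollary \ref{cor mcdecay}, with Corollary \ref{cor:densityhypothesis8} guaranteeing that the resulting exponent still satisfies $2/\sigma(\pi_{\mf p})+R_G(\Box)/(N_G-1)\ge 1$. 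This is only a rephrasing; your bookkeeping, including $R_G(\Box)\ge 0$, the uniformity of $C'$ in $\ell$, and the use of $\|T_{S_{\mf p}^{[\ell]}}\|_{op}\ll 1$ for the $(1,n)$ shape, is all sound.
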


\begin{proof}
The only bound in the proof of \ref{prop:auto-cover-GAP} that requires the $\lesssim$ instead of $\ll$ is that on $\|T_{S_{\mf p}^{[\ell]}}\mid V_\Box \|^2_{op}$ coming from Corollary \ref{cor:Hecke-auto-rep}. For $2/\sigma_\Box \neq 0, 1$, Remark \ref{rem:removeeps} allows us to tighten the bound \ref{thm:exponentcomputations} on $\|\Proj_{\Box} f_{v_0}^{\eps, Z_1}\|^2_{2}$ to nevertheless improve the $\lesssim$ to an $\ll$. For $2/\sigma_\Box = 0$, we can use the trivial bound $\|T_{S_{\mf p}^{[\ell]}} \|^2_{op} \ll 1$. 
\end{proof}


For convenience, we re-index 
\[
I_\delta := f^{\eps, Z_1}_{v_0}
\]
for the $\eps$ such that it has support of volume $\delta$. In particular $\delta \asymp \eps^{N_G-1}$ so Proposition \ref{prop:auto-cover-GAP} applies to $\delta \lesssim |S_{p}^{[\ell]}|^{-1}$.

Note also that the projection of this support onto $PU(n)$ has the same volume (normalizing $\vol(U(n)) = \vol(PU(n)) = 1$) and is a ball in an invariant metric. In other words
\[
\supp(\bar f^{\eps, Z_1}_{v_0} : PU(n) \to \C) = B^{PU(n)}(\delta). 
\]
In addition, since $f^{\eps, Z_1}_{v_0}$ is analytic, constant on $U(1)$ orbits, and equal to $1$ at the identity,
\begin{equation}\label{eqn:indictorprojontochar}
\langle I_\delta, \1 \rangle = (\delta + o(\delta)). 
\end{equation} 

Combining the above propositions, we are now in a position to estimating the covering rate of $S_{p}$ in terms of the spectrum of the operators $T_{S_{p}^{[\ell]}}$ evaluated on each subspace $V_\Box$ separately. 
\begin{prop}\label{prop:auto-cover} 
If $n = 4,8$, the analogue of the optimal covering property Definition \ref{def:intro-GG}) for $PU(n)$ holds for $S_{\mf p} \cup \Gamma$ when $K'$ is almost golden at $\mf p$ (resp. the super version for finite subgroup $C_\mf p$ and finite-order elements $\zl S_\mf p$ when $K'$ is almost $\tau$-super-golden at $\mf p$ for $\tau$ traversable). 

In other words, there is a constant $c$ such that
\[
\mu\left(PU(n) \setminus B\left(S^{[\ell]_\mf p},\varepsilon_{\ell}\right)\right)\overset{{\scriptscriptstyle \ell\rightarrow\infty}}{\longrightarrow}0,\qquad\varepsilon_{\ell}=\frac{\left(\log|S^{[\ell]}_\mf p|\right)^{c}}{|S^{[\ell]}_\mf p|},
\]
\end{prop}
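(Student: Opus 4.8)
The plan is to follow the classical spectral argument for optimal covering, adapted to the weighted (automorphic-family) setting, using the density hypothesis in the form of Proposition \ref{prop:auto-cover-GAP-fixed} in place of a naive Ramanujan bound. First I would decompose $L^2(\Gamma\bs U(n))$ (equivalently $L^2_{\mathrm{disc}}$ after passing to $PU(n)$) into the spectral pieces $V_\Box$ running over shapes $\Box$; since there are only finitely many shapes for fixed $n=4,8$, it suffices to bound the contribution of each $V_\Box$ separately. The trivial shape $\Box_0 = (n,1)$ contributes the constant functions: the operator $T_{S_{\mathfrak p}^{[\ell]}}$ acts as the identity there, and this piece is exactly what produces the ``main term''. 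On each non-trivial $V_\Box$, we have a genuine operator-norm bound from Corollary \ref{cor:Hecke-auto-rep}: $\|T_{S_{\mathfrak p}^{[\ell]}}|_{V_\Box}\|_{op} \ll (\log|S_{\mathfrak p}^{[\ell]}|)^{C'} |S_{\mathfrak p}^{[\ell]}|^{-1/\sigma_\Box}$ (noting $T$ is the normalized Hecke operator and $\frac{\sigma-1}{\sigma}$ in the un-normalized bound becomes $-\frac1\sigma$ after dividing by $|S_{\mathfrak p}^{[\ell]}|$).

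Next I would set up the standard counting/averaging identity. For $g \in PU(n)$ and $\varepsilon_\ell = (\log|S_{\mathfrak p}^{[\ell]}|)^c / |S_{\mathfrak p}^{[\ell]}|$, the event that $B(g,\varepsilon_\ell)$ misses $S_{\mathfrak p}^{[\ell]}.v_0$-translates is controlled by $\langle T_{S_{\mathfrak p}^{[\ell]}} \bar I_{\varepsilon_\ell}, \bar I_{\varepsilon_\ell}\rangle$ where $\bar I_{\varepsilon_\ell}$ is the (projected) approximate indicator of a $\varepsilon_\ell$-ball, i.e.\ $f^{\eps, Z_1}_{v_0}$ re-indexed so that $\delta \asymp \varepsilon_\ell$. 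Concretely: if $B(g,\varepsilon_\ell) \cap S_{\mathfrak p}^{[\ell]}.v_0 = \emptyset$ then $(T_{S_{\mathfrak p}^{[\ell]}} \bar I_{\varepsilon_\ell})(g)$ is small (in fact zero for $g$ suitably deep in the complement), so by Chebyshev/Markov, $\mu(PU(n)\setminus B(S_{\mathfrak p}^{[\ell]}, \varepsilon_\ell))$ is bounded by (a constant times) $\|T_{S_{\mathfrak p}^{[\ell]}}\bar I_{\varepsilon_\ell} - \langle \bar I_{\varepsilon_\ell},\1\rangle\1\|_2^2 / \langle \bar I_{\varepsilon_\ell},\1\rangle^2$, since the mean value $\langle \bar I_{\varepsilon_\ell},\1\rangle \asymp \varepsilon_\ell$ by \eqref{eqn:indictorprojontochar} is exactly the ``density of the ball''. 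The numerator is $\sum_{\Box \neq \Box_0} \|T_{S_{\mathfrak p}^{[\ell]}}|_{V_\Box}\|_{op}^2 \|\Proj_\Box \bar I_{\varepsilon_\ell}\|_2^2$, and each summand is bounded by $(\log|S_{\mathfrak p}^{[\ell]}|)^{c'} |S_{\mathfrak p}^{[\ell]}|^{-1} \|\bar I_{\varepsilon_\ell}\|_2^2$ by Proposition \ref{prop:auto-cover-GAP-fixed} (applicable since with this choice of $\varepsilon_\ell$ we indeed have $\varepsilon_\ell^{N_G-1} \ll |S_{\mathfrak p}^{[\ell]}|^{-1}$ up to the polylog, which can be absorbed by tuning $c$). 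Since $\|\bar I_{\varepsilon_\ell}\|_2^2 \asymp \varepsilon_\ell$ and $\langle \bar I_{\varepsilon_\ell},\1\rangle^2 \asymp \varepsilon_\ell^2$, the ratio is $\ll (\log|S_{\mathfrak p}^{[\ell]}|)^{c'} |S_{\mathfrak p}^{[\ell]}|^{-1} / \varepsilon_\ell = (\log|S_{\mathfrak p}^{[\ell]}|)^{c' - c}$, which tends to $0$ provided we choose the exponent $c$ in $\varepsilon_\ell$ larger than $c'$. In the super-golden case the identical argument applies with $S_{\mathfrak p}^{[\ell]}$ replaced throughout by the version from Proposition \ref{prop:gategeneration}/Remark \ref{rem:nonstandardgategeneration}, using that $C_\mathfrak p$ is finite so $|C_\mathfrak p|$ contributes only a harmless constant.

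A few technical points I would attend to. I need to be careful that ``$B(g,\varepsilon_\ell)\cap S_{\mathfrak p}^{[\ell]} = \emptyset$ in $PU(n)$'' translates cleanly into a statement about the translation action on $\Gamma\bs U(n)$ and then into the Hecke operator via Lemma \ref{lem:auto-cover-hecke}; this is where the identifications \eqref{eqn:L2identifications} and the passage $U(n) \to PU(n)$ matter, and I should use that $\bar I_{\varepsilon_\ell}$ is constant on $U(1)$-orbits (hence descends to $PU(n)$) and that its support projects to a genuine metric ball of the same volume. I also need the elementary ball-geometry fact that if $g$ lies distance $> 2\varepsilon_\ell$ from every point of $S_{\mathfrak p}^{[\ell]}.v_0$ then $(T_{S_{\mathfrak p}^{[\ell]}}\bar I_{\varepsilon_\ell})(g)=0$, so the measure of the ``far'' set is literally bounded by the second-moment deviation; to handle the thin collar where $\varepsilon_\ell < \mathrm{dist} \le 2\varepsilon_\ell$ I can either shrink $\varepsilon_\ell$ by a factor of $2$ (harmless, absorbed into $c$) or observe its measure is itself $O(\varepsilon_\ell \cdot |S_{\mathfrak p}^{[\ell]}|) = O((\log)^c)\to$ negligible after dividing.

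The main obstacle, and the conceptual heart of the argument, is really already packaged into Proposition \ref{prop:auto-cover-GAP-fixed}: it is the interpolation between the endoscopic input bound (Theorem \ref{thm:projectionbound}/\ref{thm:exponentcomputations}, controlling $\|\Proj_\Box \bar I_\varepsilon\|_2^2$) and the matrix-coefficient decay bound (Theorem \ref{thm:decaybound}, giving $\sigma(\pi_{\mathfrak p}) \le \sigma_\Box$), which together guarantee that the ``$\frac2\sigma$'' loss in the Hecke norm is always compensated by the ``$1-\frac2\sigma$'' gain in the projected mass --- the inequality \eqref{eq:shapeineq}. Since that proposition is already established in the excerpt (including the $n=8$ case via Corollary \ref{cor:densityhypothesis8}), what remains here is genuinely just the bookkeeping of the spectral/Chebyshev argument above, plus verifying the polylogarithmic (rather than $|S|^\eps$) quality of the covering radius, which is precisely why we invoke the sharpened Proposition \ref{prop:auto-cover-GAP-fixed} rather than Proposition \ref{prop:auto-cover-GAP}. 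I would also remark at the end that combined with Theorem \ref{thm:gold-def-gates}, which supplies growth, navigation and approximation, this completes the proof that $S_{\mathfrak p}\cup\Gamma$ (resp.\ $C_\mathfrak p$ together with $\zl S_\mathfrak p$) is a golden (resp.\ super-golden) gate set, hence Theorem \ref{thm:auto-cover-main} and thereby Theorems \ref{thm:introgenmain} and \ref{thm:intro-main}.
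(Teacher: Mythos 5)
Your proposal is correct and follows essentially the same second-moment/spectral argument as the paper: compare $T_{S_\mf p^{[\ell]}} I_\delta$ to its mean, use that $\supp(T_{S_\mf p^{[\ell]}} I_\delta) \subseteq B(S_\mf p^{[\ell]}, \delta)$ so the operator applied to $h_\delta = I_\delta - \langle I_\delta, \1\rangle\1$ equals the constant $-\langle I_\delta,\1\rangle$ on the complement, and bound the numerator shape-by-shape via Proposition \ref{prop:auto-cover-GAP-fixed}.

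One small correction: the ``thin collar'' you worry about does not exist. Since the metric is bi-invariant and $I_\delta$ is supported on a ball of volume $\delta$ about the identity, $(T_{S_\mf p^{[\ell]}}I_\delta)(g) \neq 0$ iff $g \in B(s,\delta)$ for some $s \in S_\mf p^{[\ell]}$, which is exactly $g \in B(S_\mf p^{[\ell]},\delta)$; there is no factor-of-$2$ adjustment and the Chebyshev bound applies directly. Your alternative bookkeeping of the alleged collar is also off: $\varepsilon_\ell \cdot |S_\mf p^{[\ell]}| \asymp (\log|S_\mf p^{[\ell]}|)^c$, which grows with $\ell$ rather than becoming negligible, so that estimate as stated would not salvage the argument — but again, none of it is needed.
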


\begin{proof}

Our parameter is $\ell$, and we take $\delta$ as a function of $\ell$:
\[
\delta:=\frac{(\log|S_{\mf p}^{[\ell]}|)^c}{|S_{\mf p}^{[\ell]}|}\quad,\quad h_{\delta}:= I_\delta-\langle I_\delta, \1 \rangle  \1. 
\]
Denote by $\varphi \mapsto \varphi_*$ the normalized (i.e, left-inverse to pullback) pushforward map from $U(N)$ to $\Gamma \bs U(N)$. Then for any $\varphi$, we have $T_{S_{\mf p}^{[\ell]}}(\varphi_*) = (T_{S_{\mf p}^{[\ell]}} \varphi)_*$ since the operator descends to $L^2(\Gamma \bs U(n))$. 

Therefore we can compute on the one hand, 
\[
\|T_{S_{\mf p}^{[\ell]}}(h_{\delta, *})\|_{2}^{2}= \f1{|\Gamma|}\int_{U(n)}\left[T_{S_{\mf p}^{[\ell]}}\left(I_\delta-\langle I_\delta, \1 \rangle  \1 \right)(x)\right]^{2} \, dx,
\]
so since the support of $T_{S_{\mf p}^{[\ell]}}\left(I_\delta\right)$ is contained in the pullback to $U(n)$ of $B(S_{\mf p}^{[\ell]},\delta) \subseteq PU(n)$,
\[
\|T_{S_{\mf p}^{[\ell]}}(h_{\delta, *})\|_{2}^{2}\geq \f1{|\Gamma|}\int_{PU(n)\setminus B(S_{\mf p}^{[\ell]},\delta)}\left[\langle I_\delta, \1 \rangle  \1(x)\right]^{2} \, dx=(\delta + o(\delta))^{2}\cdot\mu\left(PU(n)\setminus B(S_{\mf p}^{[\ell]},\delta)\right)
\]
using \eqref{eqn:indictorprojontochar} for the last step. 

On the other hand,
\begin{multline}\label{eq coveringproofb2}
\|T_{S_{\mf p}^{[\ell]}}h_{\delta, *}\|_{2}^{2} \leq \|T_{S_{\mf p}^{[\ell]}}I_{\delta}\|_{2}^{2} = 
\lf \|\sum_{\Box} T_{S_{\mf p}^{[\ell]}}\Proj_{\Box}I_{\delta, *}\ri\|_{2}^{2} = \sum_{\Box} \| T_{S_{\mf p}^{[\ell]}} \Proj_{\Box} I_{\delta, *} \|_{2}^{2} \\
\leq \sum_{\Box}\|T_{S_{\mf p}^{[\ell]}}\mid V_\Box\|_{op}^{2}\cdot\|\Proj_{\Box}I_{\delta, *}\|_{2}^{2},
\end{multline}
so noting that the number of possible $\Box$ is a constant depending only on $n$, Proposition \ref{prop:auto-cover-GAP-fixed} gives that for some $c_0$:
\[
\|T_{S_{\mf p}^{[\ell]}}h_{\delta, *}\|_{2}^{2}  \ll \left(\log|S_\mf p ^{[\ell]}|\right)^{c_0} |S_{\mf p}^{[\ell]}|^{-1} \|I_{\delta, *}\|_{2}^{2} \asymp \left(\log|S_\mf p^{[\ell]}|\right)^{c_0} |S_{\mf p}^{[\ell]}|^{-1} |\Gamma \cap U(1)|^{-1} \delta.
\]
for small $\delta$.

Without loss of generality, $c_0 < c$.  Then, combining the two estimates together, we get
\[
\mu\left(PU(n)\setminus B(S_{\mf p}^{[\ell]},\delta)\right)\lesssim\frac{1}{(\log|S_{\mf p}^{[\ell]}|)^{c - c_0}}\xto{\ell\rightarrow\infty}0,
\]
which proves that $S_{\mf p}$ has the optimal covering property. 
\end{proof}
Finally, we note that Theorem \ref{thm:auto-cover-main} follows from
Theorem \ref{thm:gold-def-gates} and Proposition \ref{prop:auto-cover}. We make some further remarks:

\begin{rem}
If we set 
\[
\eps_\ell = \f{(\log |S^{[\ell]}|)^c}{|S^{[2\ell]}|} \asymp \f{(\log |S^{[\ell]}|)^c}{|S^{[\ell]}|^2},
\]
then the arguments of \cite{EP24}*{Prop 4.6}/\cite{Parzanchevski2018SuperGoldenGates}*{Cor 3.2} show that for large enough $\ell$,
\[
\mu\left(PU(n) \setminus B\left(S^{[\ell]},\varepsilon_{\ell}\right)\right) = 0. 
\]
In other words, even the small volume of exceptional points that cannot be approximated by words of the ``optimal'' length $\ell$ can be approximated by words of length $2 \ell$. 
\end{rem}

\begin{rem}
When $K'$ does not have class number one, lemma \ref{lem:auto-cover-hecke} no longer interprets $T_{S_\mf p^{[\ell]}}$ as a Hecke operator since we no longer necessarily have that $KS_\mf p^{[\ell]}K = S_\mf p^{[\ell]}K$. However, we may instead replace $T_{S_\mf p^{[\ell]}}$ by the Clozel-Hecke operator of \cite[Def 4.3]{EP24} that takes a weighted sum over a set of Clozel-Hecke points also defined therein. 

As in the proof of \cite[Thm 4.4]{EP24}, this Clozel-Hecke operator has operator norm bounded by that of a Hecke operator. This Hecke operator's norm can further be bounded in terms of the total weight of the Clozel-Hecke points exactly as in Proposition \ref{prop:auto-cover-GAP-fixed}. The proof of Proposition \ref{prop:auto-cover} then carries through in exactly the same way to prove optimal covering for the set of Clozel-Hecke points. 
\end{rem}

\begin{rem}
One might expect that the corresponding $\wtd S_\mf p$ could give golden gate sets for $U(n)$ by using the argument for \ref{prop:auto-cover} with respect to $f_{v_0}^{\eps, Z_\eps}$ instead of $f_{v_0}^{\eps, Z_1}$. 

However, there is an obstruction that the determinant is constant on $\wtd S_\mf p$ as defined here. In the argument, this shows up in that Theorem \ref{thm:exponentcomputations} needs to be tightened to a bound instead by $\eps^{N_G(1 - 2/\sigma_\Box)}$, which turns out to hold for all $\Box$ except the trivial shape $(n,1)$ corresponding to automorphic characters. Therefore, in the bound \eqref{eq coveringproofb2}, the term for $\Box = (n,1)$ has to be handled separately by noting that $h_\delta$ has no component along the trivial character and bounding $\|T_{S_\mf p^{[\ell]}}|\chi\|_{op}$ for non-trivial characters $\chi$. 

This operator norm bound can be done by Weyl equidistribution-type estimates, but only if $\wtd S_\mf p$ has elements with determinants differing by a non-root of unity. Therefore, modifying $\wtd S_\mf p$ by multiplying its elements by differing points in $U(1)$ should suffice to produce a gate set on $U(n)$.
\end{rem}

\bibliographystyle{amsalpha}
\bibliography{Tbib}

\end{document}